\newtheorem{thm}{Theorem}[section]
\newtheorem{cor}[thm]{Corollary}
\newtheorem{lem}[thm]{Lemma}
\newtheorem{prop}[thm]{Proposition}
\theoremstyle{definition}
\theoremstyle{remark}
\newtheorem{rem}[thm]{Remark}
\theoremstyle{question}
\newtheorem{que}[thm]{Question}
\theoremstyle{definition}
\newtheorem{exa}[thm]{Example}
\theoremstyle{definition}
\newtheorem{conj}[thm]{Conjecture}
\numberwithin{equation}{section}
\newcommand{\norm}[1]{\left\Vert#1\right\Vert}
\newcommand{\set}[1]{\left\{#1\right\}}
\newcommand{\ra}{\rightarrow}
\newcommand{\mi}{\setminus}
\newcommand{\ov}{\overline}
\newcommand{\dlambda}{\delta}
\newcommand{\lbr}{[}
\newcommand{\rbr}{]}
\newcommand{\conggr}{\cong_{\mathrm{gr}}}
\newcommand{\Ga}{\Gamma}
\newcommand{\ga}{\gamma}
\newcommand{\al}{\alpha}
\newcommand{\de}{\delta}
\newcommand{\ti}{\widetilde}
\newcommand\CK[1][1]{\operatorname{CK}_{#1}}
\newcommand\ZK[1][1]{\operatorname{ZK}_{#1}}
\newcommand\SH{\operatorname{SH^0}}
\newcommand\NK[1][1]{\operatorname{NK}_{#1}}
\newcommand\SK[1][1]{\operatorname{SK}_{#1}}
\newcommand{\Nrd}[1][{}]{{\operatorname{Nrd}_{#1}}}
\newcommand{\Trd}[1][{}]{{\operatorname{Trd}_{#1}}}
\newcommand{\Tr}[1][{}]{{\operatorname{Tr}_{#1}}}
\newcommand{\Aut}{\operatorname{Aut}}
\newcommand{\chr}{\operatorname{char}}
\newcommand{\ind}{\operatorname{ind}}
\newcommand{\Gal}{\operatorname{Gal}}
\newcommand{\gr}{\text{gr}}
\newcommand{\Supp}{\operatorname{Supp}}
\begin{document}
\author{R. Hazrat}%
\address{Centre for Research in Mathematics, University of Western Sydney, Australia}%
\email{r.hazrat@uws.edu.au}%
\author{M. Mahdavi-Hezavehi}%
\address{Department of Mathematical Sciences, Sharif University of Technology, Tehran, Iran}%
\email{mahdavih@sharif.edu}%
\author{M. Motiee}%
\address{Faculty of Basic Sciences, Babol University of Technology, Babol, Iran}%
\email{motiee@nit.ac.ir}%

\title[Multiplicative groups of division rings]
{Multiplicative groups of division rings}
\address{}%
\email{}%

\thanks{}%
\subjclass{}%
\keywords{}%

\begin{abstract}
Exactly 170 years ago, the construction of the real quaternion algebra by William Hamilton was announced in the Proceedings of the Royal Irish Academy.
It became the first example of non-commutative division rings and a major turning point of algebra. To this day,  the multiplicative group structure of quaternion algebras have not completely been understood. This article is a long survey of the recent developments on the multiplicative group structure of division rings.
\end{abstract}
\maketitle
\tableofcontents
\section{Introduction and review of old results}\label{sec1birk}
Let $D$ be a division ring, $D^*=D\setminus \{0\}$ its unit group
and $Z(D)$ its center. Evidently, the first departure to connect the
algebraic properties of $D$ to that of $D^*$ was done by Wedderburn
in 1905. By his theorem, if $D$ has a finite cardinal number then
$D^*$ is abelian and hence $D$ is a field (See \cite[p.
179]{Kar88}). Since Wedderburn's result classifies a class of
division rings in terms of a group theoretic (more exactly set
theoretic) property, one way to generalize this observation is to
attribute a group theoretic property to $D^*$ and then to explore
which division rings enjoy this property. This point of view also
includes the detection of the exclusivity of subgroups and quotients
of $D^*$. Namely, from Wedderburn's theorem one can easily conclude
that if $\operatorname{char}(D)\neq 0$ then every finite subgroup of
$D^*$ is a cyclic group. In 1953, Herstein determined some finite
groups that can be embedded as a subgroup in a division ring
\cite{Her53}. But, the characterization of all finite groups that
can occur as a subgroup of a division ring was done by Amitsur in
\cite{Am55}. According to Amitsur's results, a finite group $G$ sits
in the unit group of a division ring if and only if $G$ is one of
the following types:
\begin{enumerate}
  \item[(A1)] A group that all of its Sylow subgroups are cyclic.
  \item[(A2)] $C_m\rtimes Q_{2^{n+1}}$, where $C_m$ is a cyclic group of odd order $m$, an
  element of order $2^n$ of $Q_{2^{n+1}}$
  centralizes $C_m$ and an element of order 4 inverts $C_m$.
  \item[(A3)] $Q_8\times M$, where $M$ is a group of type (A1).
  \item[(A4)] The binary octahedral group of order 48.
  \item[(A5)] $\operatorname{SL}(2,3)\times M$, where $M$ is a group of type (A1).
  \item[(A6)] The group $\operatorname{SL}(2,5)$.
\end{enumerate}
where $Q_{2^{n+1}}$ is the generalized quaternion group of order
$2^{n+1}$ for each $n\in\mathbb{N}$. Moreover, in \cite{Fau69} and
\cite[\S2]{ShWe86} successful efforts have been made to study the
structure of locally finite subgroups of a division ring. We recall
that a group $G$ is called locally finite if every finitely
generated subgroup of $G$ is finite. A remarkable result concerning
the quotients of $D^*$ is due to Kaplansky which asserts that if
$D^*$ is center-by-periodic then it is commutative \cite{Kap51}.
This contains Noether-Jacobson Theorem as a special case. (Recall
that by Noether-Jacobson Theorem, if $D$ is an algebraic division
algebra over some subfield $F$, then $D^*$ contains an element $a$
not in $F$ that is separable over $F$ \cite[p.~244]{Lam01}.) Before
1950, an interesting problem concerning the group structure of $D$
was to figure out how far $D^*$ is from commutativity.
Thus, it was reasonable for one to explore what happens when $D^*$
is nilpotent or more specifically is soluble. Guided by this viewpoint, the
roles of the multiplicative commutators in the structure of $D$ were
made prominent. In this direction in \cite{Hua49} Hua has proved
that if for a natural number $r\geq 2$, all $r$-mixed commutators
(elements of $D^*$ that are defined inductively by
$[a_1,a_2]=a_1^{-1}a_2^{-1}a_1a_2$ and
$[a_1,a_2,\ldots,a_r]=[a_1,[a_2,\ldots,a_r]$) of $D^*$ lie in some
division subring, then $D$ is a field. Moreover, he proved that if
an element of $D^*$ commutes with all $r$-mixed commutators, then it
is central. Finally, in 1950 he proved that if $D^*$ is soluble,
then $D$ is a field \cite{Hua50}. At the other extreme, the roles of
additive and multiplicative commutators in the structure of $D^*$
were extensively studied in 1970s by some authors. In
\cite{HerProSch75} Herstien, Procesi and Schacher showed that if
every additive commutator (an element of the form $xy-yx$ for some
$x,y\in D^*$) is torsion modulo $Z(D)$ then $[D:Z(D)]\leq 4$. Putcha
and Yaqub also proved that if every multiplicative commutator is
killed by a power of 2 in $D^*/Z(D^*)$ then $D^*$ is commutative
\cite{PutYaq74}. However, the general case of this problem was
investigated by Herstein in \cite{Her78} by proving that if either
every $[a,b]$ has a finite order in $D^*$ or $D$ is of finite
dimension over $Z(D)$ and every $[a,b]$ is torsion modulo $Z(D)$
then $D$ is a field. An alternative interesting result of
\cite{Her78} asserts that every subnormal periodic subgroup of $D^*$
is central. Afterwards, in \cite{Her80} he showed that whenever
$Z(D)$ is countable, then (i) if $[a,b]^{n}\in Z(D)$ for all $a,b\in
D^*$, for some $n\in \mathbb{N}$ then $D$ is a field, and (ii) If
$N$ is a subnormal subgroup of $D^*$ that is periodic modulo $Z(D)$
then $N$ is central. The last result is a special type of another
absorbing old problem determining how much subnormal subgroups of
$D^*$ reflect the multiplicative structure of $D^*$. In other words,
how ``big'' subnormal subgroups are in $D^*$. One of the earlier
known results depending on this subject had been obtained by
Herstein in \cite{Her56} by showing that every noncentral element of
$D^*$ possesses infinitely many conjugates. Then, in 1957 Scott
proved that $D^*/Z(D^*)$ has no nontrivial abelian normal subgroup
or equivalently every abelian normal subgroup of $D^*$ is central
\cite{Sco57}. He also extended the results of \cite{Her56} by
demonstrating that the number of conjugates of every noncentral
element in $D^*$ is equal to $|D|$, the cardinal of $D$. In 1960, in
\cite{Huz60} Huzurbazar provided a generalization of Scott's
theorem. His generalization states that every locally nilpotent
subnormal subgroup of $D^*$ is central. But, the most important
result concerning the structure of subnormal subgroups was obtained
by Stuth in 1964 asserting that (i) If $G$ is a noncentral subnormal
subgroup of $D^*$ and $x^G$ is the conjugacy  class of the
noncentral element $x\in D^*$ in $G$, then the division subring
generated by $x^G$ is $D$, (ii) Every soluble subnormal subgroup of
$D^*$ is central \cite{Stu64}. Another aspect in the study of the
unit groups of division rings is to achieve analogous properties of
general linear groups over fields. Namely, Bachmuth asked whether
Tits' Alternative is valid for general linear groups over division
rings. In \cite{Lic77} Lichtman answered it negatively by showing
that every division ring can be embedded into a division ring $K$ so
that $K^*$ contains a finitely generated subgroup that is not
soluble-by-finite and does not contains a noncyclic free subgroup.
Then he asked whether $D^*$ has a noncyclic free subgroup. A more
general question posed as a conjecture by Goncalves and Mandel in
\cite{GonMan86}. More exactly, they conjectured that every
noncentral subnormal subgroup of a division ring contains a
noncyclic free subgroup. They also proved that the conjecture is
true in several cases (see \cite[p. 208]{Kar88}). Also, in a
preceding work, Goncalves had shown that the conjecture has a
positive answer whenever $D$ is of finite dimension over its center.
Moreover, an earlier result of Lichtman asserts that if a subnormal
subgroup $G$ of $D^*$ contains a nonabelian  nilpotent subgroup then
$G$ has a noncyclic free subgroup \cite{Lic78}. Note that an
affirmative answer to this conjecture implies a great part of the
above mentioned results of Hua, Herstein and Stuth. However, in this
article our aim is to review and give the proofs of some recent
works on the structure of the unit groups of division rings. We
would like to point out that our viewpoint is focused on the
application of the above results in discovering the role of some
special subgroups in the structure of a division ring.

\subsection{Conventions.} We now collect some of the terminology and notation that will be used throughout
the paper:

If $R$ is a ring, we write $Z(R)$ for the center of $R$; $R^*$ for
the group of units of $R$; $\operatorname{M}_n(R)$ for all $n\times
n$ matrices over $R$; and $\operatorname{GL}_n(R)$ for the group of
all invertible $n\times n$ matrices over $R$. For a subgroup $G$ of
$R^*$, the $R$-linear hull of $G$ is always indicated by $R[G]$,
i.e., \[R[G]:=\big \{\sum_{j=1}^mr_jg_j|r_j\in R,\ g_j\in G\ \textrm{and}\
m\in \mathbb{N}\big \}.\] Moreover, if $D$ is a division ring with center
$F$ and $G$ a subgroup of $D^*$ then we denote the division
subring generated by $F$ and $G$ by $F(G)$. It is not hard to see
that $F(G)=F[G]$ whenever all the elements of $G$ are algebraic over
$F$. Let $F$ be a field. We use $F_{\operatorname{alg}}$ for an
algebraic closure of $F$. For an $F$-algebra $A$, we write $[A:F]$
for the dimension of $A$ over $F$ as an $F$-vector space. By an
$F$-central simple algebra we mean an $F$-algebra $A$ where $Z(A)=F$
and $[A:F]<\infty$. Moreover, an $F$-central division algebra is an
$F$-central simple algebra that is also a division ring. Moreover,
sometimes we use the phrase ``division algebra'' for division rings
that are finite dimensional over their centers. Now, let $A$ be an
$F$-central simple algebra and $K$ be a maximal subfield of $A$. It
is known that if $L$ is an algebraic extension of $K$ then
$A\otimes_FL$ is isomorphic to a matrix algebra over $L$. This
immediately implies that $[A:F]$ is a square. We refer to
$\sqrt{[A:F]}$ as the degree of $A$ and denote it by $\deg(A)$.
Also, by Wedderburn's Theorem $A$ is isomorphic to a matrix ring
over an $F$-central division algebra $D$ where $D$ is unique up to
isomorphism. The (Schur) index of $A$ is
$\operatorname{ind}(A)=\deg(D)$. Let $A$ be an $F$-central simple
algebra and $L$ be a splitting field of $A$ which means that there
exists an isomorphism $\phi:A\otimes_FL\rightarrow
\operatorname{M}_n(L)$ (Evidently $n=\deg(A)$). From some classical
results in the theory of central simple algebras, it can be seen
that $\det(\phi(a\otimes1))\in F^*$ for each $a\in A^*$. Moreover,
the value of $\det(\phi(a\otimes1))$ is independent of the choice of
the splitting field $L$ and the isomorphism $\phi$ (cf. \cite[p.
145]{Dra83}). Thus, we can define a map
\begin{align*}
\operatorname{Nrd}_{A/F}:A^*&\longrightarrow F^*\\
 a &\longmapsto \det(\phi(a\otimes1)),
 \end{align*}
  without ambiguity. This map is called the
``reduced norm''. For a field $F$, we use $Br(F)$ for the Brauer
group of $F$. Also, for an $F$-central simple algebra $A$ we denote
its equivalence class in $Br(F)$ by $[A]$. If $K/F$ is a Galois
extension of fields, $\Gal(K/F)$ stands for the Galois group of $K$
over $F$. If $K/F$ is a Galois extension that is cyclic and $\sigma$
is the generator of $\Gal(K/F)$, and if $b\in F^*$ we write
$(K/F,\sigma,b)$ for the cyclic $F$-algebra generated by $K$ and $x$
with relations $xc=\sigma(c)x$ for all $c\in K$ and $x^n=b$. Note
that such an algebra is an $F$-central simple algebra of degree $n$
and also we have $(K/F,\sigma,b)=\oplus_{i=1}^{n-1}Kx^i$. For every
natural number $n$ the notation $\mu_n(F)$ is frequently used for
the group of $n$-th roots of unity in the field $F$. Now, let $G$ be
a group. For each subset $X$ of $G$ we use the symbol $\langle
X\rangle$ for the subgroup generated by $X$. Also, for a subgroup
$H$ of $G$ we write $N_{G}(H)$ and $C_{H}(G)$ for the normalizer and
centralizer of $H$ in $G$, respectively. If $a,b\in G$, we denote the
commutator $a^{-1}b^{-1}ab$ by $[a,b]$. Also, we put $G'$ for the
commutator subgroup of $G$, i.e., $G'=\langle [a,b]|a,b\in
G\rangle$. The commutator subgroups of $G$ of higher orders that are
defined inductively are also denoted by $G^{(n)}=(G^{(n-1)})'$. $G$
is called ``soluble'' if $G^{(n)}=\{1\}$ for some natural number
$n$. The notation $a^b$ also stands for $b^{-1}ab$. If $H$ and $K$
are two subgroups of $G$, then the subgroup $\langle [h,k]|h\in H,\
k\in K\rangle$ is always denoted by $[H,K]$. Let $\zeta_0G=G$ and
for every $i>0$ define the subgroup $\zeta_iG$ by
$\zeta_iG=[G,\zeta_{i-1}G]$. Thus, in this manner we obtain a chain
of subgroups $$\zeta_0G\supseteq \zeta_1G\supseteq\ldots\supseteq\zeta_iG\supseteq\ldots$$
of $G$ which is called the descending central series of $G$. We say
that $G$ is ``nilpotent'' if $\zeta_mG=\{1\}$ for some natural
number $m$.


\section{Soluble-by-finite subgroups of division algebras}
This section deals with the properties of soluble-by-finite
subgroups of division algebras. We will see that the subgroups of
this type, arise naturally in some problems concerning the structure
of $D^*$.

\subsection{Crossed product division algebras}
As is well known, all of the incipient examples of (finite
dimensional) division algebras had a particular common property; all
of them contained a maximal subfield that was Galois over the
center. Such a division algebra is called a \textit{crossed
product}. The existence of a non-crossed product division algebra
remained as an open problem till 1972 when Amitsur presented the
first example of a non-crossed product division algebra in
\cite{Am72}. Also, more examples of non-crossed product division
algebras are produced by some other authors. For a survey on this
subject see \cite[\S 5]{Wad99}. But, perhaps the first important
result connecting the crossed product problem to the multiplicative
group of a division algebra, is the following remarkable theorem of
Albert (cf. Proposition in \cite[p. 286]{Pie82} or \cite[p.
87]{Dra83}):
\begin{thm}[Albert's Criterion]\label{thm1}
Let $D$ be an $F$-central division algebra of prime degree $p$. If
$D^*/F^*$ contains a non-trivial element of order $p$, then $D$ is a
cyclic division algebra, i.e., $D$ has a maximal subfield $K$, such
that $K/F$ is a cyclic extension.
\end{thm}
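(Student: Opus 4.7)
The plan is to exhibit a cyclic Galois maximal subfield of $D$ starting from the given order-$p$ class in $D^*/F^*$. Pick a representative $a\in D^*\setminus F^*$ with $a^p\in F^*$ and set $b:=a^p$ and $K:=F(a)$. The first step is to observe that $K$ is a maximal subfield of $D$ of degree $p$: since $a$ is a root of $x^p-b\in F[x]$, $[K:F]$ divides $p$, and combined with $a\notin F$ this forces $[K:F]=p$; since $K$ is a commutative subfield of $D$ and $\deg(D)=p$, $K$ is maximal, and the minimal polynomial of $a$ over $F$ is exactly $x^p-b$, which is therefore irreducible.

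Assuming first that $F$ contains a primitive $p$-th root of unity $\zeta_p$, I would show $K/F$ is cyclic. All roots $\zeta_p^i a$ of the irreducible polynomial $x^p-b$ lie in $K$, so $K$ is the splitting field over $F$ of a separable polynomial and is therefore Galois; as $p$ is prime, $\Gal(K/F)\cong\mathbb{Z}/p\mathbb{Z}$. To realize $D$ explicitly as a cyclic algebra, let $\sigma\in\Gal(K/F)$ be the generator with $\sigma(a)=\zeta_p a$. By Skolem--Noether $\sigma$ lifts to an inner automorphism of $D$, so there exists $v\in D^*$ with $vxv^{-1}=\sigma(x)$ for all $x\in K$. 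Then $v^p$ centralizes $K$ (since $\sigma^p=\mathrm{id}$) and so lies in $C_D(K)=K$; and $\sigma(v^p)=v\cdot v^p\cdot v^{-1}=v^p$ places $v^p$ in the fixed field $K^{\sigma}=F$. A dimension count shows that $1,v,\dots,v^{p-1}$ are $K$-linearly independent, identifying $D=K\oplus Kv\oplus\cdots\oplus Kv^{p-1}\cong (K/F,\sigma,v^p)$ as cyclic algebras.

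To handle the case $\zeta_p\notin F$ (in characteristic $\ne p$), I would extend scalars to $F':=F(\zeta_p)$. Because $[F':F]$ divides $p-1$ and is hence coprime to $p$, the index of $D':=D\otimes_F F'$ is still $p$, so $D'$ remains an $F'$-central division algebra of degree $p$; the element $a\otimes 1$ exhibits the same hypothesis for $D'$, now with $\zeta_p\in F'$, so the previous paragraph endows $D'$ with a cyclic algebra structure. A standard corestriction/Galois-descent argument in the Brauer group $\Br(F)$, available thanks to $\gcd([F':F],p)=1$, then transfers cyclicity back to $D$. In characteristic $p$ the theorem reduces to the classical fact that every $F$-central division algebra of degree $p$ in that characteristic is cyclic, so that case can be dispatched separately.

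The main obstacle is precisely this descent: when $\zeta_p\notin F$, the subfield $K=F(a)$ is no longer Galois over $F$ (since $\zeta_p\notin K$ in that case), so the cyclic maximal subfield promised by the theorem cannot be $K$ itself and must instead be produced indirectly from the Galois-equivariant cyclic data on $D\otimes_F F(\zeta_p)$. The element $a$ thus witnesses cyclicity of $D$ directly only after scalar extension to $F(\zeta_p)$, and the bulk of the work is packaging the $\Gal(F(\zeta_p)/F)$-equivariance correctly so that the cyclic structure descends.
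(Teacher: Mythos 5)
First, a point of reference: the paper does not prove Theorem~\ref{thm1} at all --- it quotes it with pointers to \cite[p.~286]{Pie82} and \cite[p.~87]{Dra83} --- so your attempt must be measured against those classical proofs, whose overall architecture you have correctly reproduced. Everything up to the cyclicity of $D\otimes_F F(\zeta_p)$ is sound: $[F(a):F]=p$ (though note the clean reason is that the degree of any subfield of $D$ divides $\deg(D)=p$, while being a root of $x^p-b$ only gives $[K:F]\leq p$; irreducibility of $x^p-b$ then follows rather than being an input), the Kummer case $\zeta_p\in F$ with the Skolem--Noether construction of $v$, the preservation of the index under the prime-to-$p$ extension $F(\zeta_p)/F$, and the deferral of characteristic $p$ are all fine --- with the caveat that in characteristic $p$ you need the sharper classical fact that a $p$-algebra of degree exactly $p$ is itself cyclic, not merely Albert's theorem that every $p$-algebra is \emph{similar} to a cyclic one.

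The genuine gap is the step you yourself flag and then dispatch in one sentence: ``a standard corestriction/Galois-descent argument \ldots transfers cyclicity back to $D$.'' No such standard general principle exists. Cyclicity is not known to descend along prime-to-degree extensions, and corestriction does not preserve the cyclic/symbol shape: by Rosset--Tate, $\operatorname{cor}_{F'/F}$ of a symbol algebra is in general only a sum of up to $[F':F]$ symbols, not a single cyclic class of degree $p$. What actually rescues the argument is the special feature that the slot $b=a^p$ lies in $F^*$: writing $m=[F':F]$ and $mm'\equiv 1\pmod p$, one has $[D]=m'\operatorname{cor}_{F'/F}[D_{F'}]$, and the projection formula gives $\operatorname{cor}_{F'/F}\bigl((c)\cup(b)\bigr)=\bigl(N_{F'/F}(c)\bigr)\cup(b)$ --- but this lives in $H^2(F,\mu_p^{\otimes 2})$, and converting it into an honest cyclic algebra over a field not containing $\zeta_p$ requires the eigenspace decomposition of $F'^{*}/F'^{*p}$ under $\Gal(F'/F)$ relative to the mod-$p$ cyclotomic character (possible because $m\mid p-1$ is prime to $p$). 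Equivalently, in Albert's original form: choosing $z$ with $zaz^{-1}=\zeta_p a$ and applying $\tau\in\Gal(F'/F)$ with $\tau(\zeta_p)=\zeta_p^t$ yields $\tau(z)=z^t d$ with $d\in F'(a)$, hence $\tau(c)\equiv c^t$ modulo norms from $F'(a)$, and projecting $c$ onto the correct eigenspace produces the cyclic degree-$p$ extension of $F$ splitting $D$. This equivariance computation is not a routine appendix --- it is the entire content of Albert's theorem --- so your proposal, while correctly structured and accurate in locating the obstacle, leaves precisely the theorem's core unproved.
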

Motivated by the above result, in \cite{MaTi03} the authors
investigate the relation between the structure of the unit group of
a division algebra $D$ and the cyclicity conditions when the index
of $D$ is a prime number. The next theorem, is the main result of
\cite{MaTi03}.
\begin{thm}\label{thm2}
Let $D$ be an $F$-central division algebra of prime degree $p$. Then
the following conditions are equivalent:
\begin{enumerate}
  \item $D$ is a cyclic division algebra;
  \item $D^*$ contains a non-abelian soluble subgroup;
  \item $D^*/F^*$ contains a non-trivial torsion subgroup;
  \item $D^*/F^*$ contains an element of order $p$.
\end{enumerate}
\end{thm}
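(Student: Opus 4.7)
The plan is to establish the cycle $(1) \Rightarrow (2) \Rightarrow (4) \Rightarrow (1)$ and handle $(3)$ via $(3) \Rightarrow (1)$ together with the trivial $(4) \Rightarrow (3)$. Albert's Criterion (Theorem~\ref{thm1}) provides $(4) \Rightarrow (1)$ immediately. For $(1) \Rightarrow (2)$, if $D = (K/F, \sigma, b)$ with the standard element $x \in D^*$ satisfying $xkx^{-1} = \sigma(k)$ for $k \in K$ and $x^p = b$, then $\langle K^*, x \rangle$ is visibly non-abelian and has commutator subgroup inside $K^*$, hence is metabelian.

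For $(2) \Rightarrow (4)$, let $G \leq D^*$ be non-abelian soluble and let $H$ be the last non-trivial term of its derived series, so $H$ is abelian and normal in $G$. If $H \not\subseteq F^*$, pick $a \in H \setminus F^*$; then $K := F(a)$ is a maximal subfield (because $[F(a):F]$ divides $p$), and $F[H] = K$ since $H$ centralises $a$. As $G$ normalises $H$, it normalises $K$, giving a homomorphism $G \to \Aut_F(K)$ with non-trivial image (otherwise $G \subseteq C_D(K) = K^*$ would be abelian). Since $|\Aut_F(K)|$ divides $p$, the image has order $p$ and $K/F$ is cyclic Galois. Any $g \in G$ inducing a non-trivial $\tau \in \Aut_F(K)$ satisfies $g^p \in K^\tau = F^*$ while $g \notin F^*$, producing an element of $D^*/F^*$ of order $p$. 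If instead $H \subseteq F^*$, pass to $G_1 := G^{(k-2)}$ (the step just above $H$); this $G_1$ is non-abelian with $[G_1, G_1] \subseteq H \subseteq F^*$, so every commutator of elements of $G_1$ is central. Choose $a, b \in G_1$ with $\zeta := [a, b] \neq 1$; since $b^{-1}ab = a\zeta$ is conjugate to $a$ in $D$, the two share the same reduced characteristic polynomial, forcing $\zeta^p = 1$. Thus $\zeta$ is a primitive $p$-th root of unity, conjugation by $b$ acts on the maximal subfield $F(a)$ with order $p$, and we are reduced to the first case.

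For $(3) \Rightarrow (1)$, take $a \in D^* \setminus F^*$ with $a^n \in F^*$ and set $K := F(a)$, a maximal subfield. The roots of $\min_F(a)$ in $F_{\operatorname{alg}}$ are of the form $\zeta_i a$ with $\zeta_i \in \mu_n$, so $\Nrd(a) = a^p \xi$ where $\xi := \prod_i \zeta_i \in \mu_n$; rearranging gives $\xi = \Nrd(a)/a^p \in K$. Either $\xi \in F^*$, in which case $a^p \in F^*$ and $aF^*$ has order $p$ in $D^*/F^*$ (yielding $(4)$, hence $(1)$ by Albert); or $\xi \notin F^*$, in which case $K = F(\xi) \subseteq F(\zeta_n)$ is an intermediate field of the abelian Galois extension $F(\zeta_n)/F$, making $K/F$ itself Galois of prime degree $p$, so cyclic, and $D$ is a cyclic algebra. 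I expect the main obstacle to be the second case of $(2) \Rightarrow (4)$: extracting a primitive $p$-th root of unity from an arbitrary non-trivial commutator in $F^*$ uses the prime-degree hypothesis essentially via the reduced characteristic polynomial argument; once this is in place, the remaining Galois-theoretic reductions to cyclic extensions of prime degree are routine.
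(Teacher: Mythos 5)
Your proposal is correct, but it follows a genuinely different route from the paper's. The paper proves the single cycle $(1)\Rightarrow(2)\Rightarrow(3)\Rightarrow(4)\Rightarrow(1)$: its step $(2)\Rightarrow(3)$ leans on Lemma~\ref{lem} (embedding $D^*$ into $\GL_p(F_{\operatorname{alg}})$ and invoking Lie--Kolchin--Mal'cev) to extract an abelian normal subgroup of \emph{finite index}, followed by a maximal-$A$/centralizer analysis, and its step $(3)\Rightarrow(4)$ splits on whether $T\cap D^1\subseteq F^*$ ($D^1=\ker\Nrd[D]$) and uses Skolem--Noether in the second case. You instead run $(1)\Rightarrow(2)\Rightarrow(4)\Rightarrow(1)$ together with the trivial $(4)\Rightarrow(3)$ and a direct $(3)\Rightarrow(1)$. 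Your $(2)\Rightarrow(4)$ replaces the linear-group machinery entirely: you work with the last nontrivial term $H$ of the derived series, and in the degenerate case $H\subseteq F^*$ you use the conjugation-invariance of the reduced norm, $\Nrd[D](a\zeta)=\zeta^p\Nrd[D](a)$, to force a nontrivial central commutator $\zeta$ to be a primitive $p$-th root of unity, which hands you a maximal subfield $F(a)$ with an order-$p$ conjugation automorphism (in characteristic $p$ this case is simply vacuous, which is harmless). Your $(3)\Rightarrow(1)$ is likewise more hands-on than the paper's: from $a^n\in F^*$ you factor the minimal polynomial to get $\Nrd[D](a)=a^p\xi$ with $\xi$ a root of unity in $K=F(a)$, and either $\xi\in F^*$ (then $a^p\in F^*$ and Albert's Criterion, Theorem~\ref{thm1}, applies) or $K=F(\xi)$ sits inside an abelian cyclotomic extension and is itself cyclic of degree $p$ --- no Skolem--Noether needed. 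What each approach buys: yours is more elementary and self-contained, exploiting the prime-degree hypothesis at every turn (every subfield is $F$ or maximal, $|\operatorname{Aut}_F(K)|$ divides $p$, $\Nrd[D](\zeta)=\zeta^p$); the paper's detour through Lemma~\ref{lem} is a deliberate investment, since that lemma is the workhorse for the soluble-by-finite structure theory in the rest of the paper and generalizes beyond prime degree, where your derived-series trick would not suffice. The small compressions in your write-up (asserting $g^p\in K^\tau$ without first noting that $\tau^p=1$ puts $g^p$ in $C_D(K)=K$, and writing $F(\zeta_n)$ where one should take the prime-to-characteristic part of $n$ --- automatic here, since the order of $\xi$ is prime to $\operatorname{char}F$) are easily repaired and do not affect correctness.
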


To prove Theorem \ref{thm2}, we need the following lemma.
\begin{lem}\label{lem}
Let $D$ be a finite dimensional division algebra over its center
$F$. If $G$ is a soluble subgroup of $D^*$, then $G$ contains an
abelian normal subgroup $A$ of finite index.
\end{lem}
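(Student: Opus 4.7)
The plan is to use Mal'cev's theorem on soluble linear groups and exploit the fact that $D$ contains no nonzero nilpotent elements.

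First I would reduce to a linear-group problem. Since $D$ is a finite dimensional $F$-central division algebra of degree $n=\deg(D)$, fixing a splitting isomorphism $D\otimes_F F_{\mathrm{alg}}\cong \operatorname{M}_n(F_{\mathrm{alg}})$ yields an $F$-algebra embedding $D\hookrightarrow \operatorname{M}_n(F_{\mathrm{alg}})$ which sends $D^*$ into $\operatorname{GL}_n(F_{\mathrm{alg}})$. Hence $G$ is realised as a soluble subgroup of $\operatorname{GL}_n(F_{\mathrm{alg}})$.

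Next I would invoke Mal'cev's theorem (see Wehrfritz, \emph{Infinite Linear Groups}): every soluble subgroup of $\operatorname{GL}_n(K)$ over an algebraically closed field $K$ possesses a normal subgroup $N$ of finite index, with $[G:N]$ bounded by a function of $n$ alone, that is triangularisable, i.e.\ conjugate in $\operatorname{GL}_n(K)$ to a group of upper triangular matrices. Applying this to $G\le \operatorname{GL}_n(F_{\mathrm{alg}})$ produces such an $N\lhd G$ of finite index.

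The final step is the crucial one, and is where the division algebra hypothesis enters decisively. Because $N$ is triangularisable, its commutator subgroup $[N,N]$ is conjugate to a subgroup of the upper unitriangular matrices; thus every $x\in [N,N]$ can be written as $x=1+y$ with $y\in \operatorname{M}_n(F_{\mathrm{alg}})$ nilpotent. But $y=x-1$ lies in $D$, and a nilpotent element of a division ring is zero. Hence $x=1$, so $[N,N]=\{1\}$, i.e.\ $N$ is abelian. Taking $A:=N$ gives the required abelian normal subgroup of finite index in $G$.

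The only real obstacle is the correct appeal to Mal'cev's theorem; once that is in hand, the no-nilpotents property of the division ring collapses triangularisability to commutativity in one line. One should note that the same argument shows more: the index $[G:A]$ depends only on $n=\deg(D)$, not on $G$, a fact that will be useful in later applications to torsion elements of $D^*/F^*$.
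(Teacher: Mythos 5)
Your proof is correct and takes essentially the same route as the paper: both embed $G$ into $\operatorname{GL}_n(F_{\mathrm{alg}})$ via a splitting $D\otimes_F F_{\mathrm{alg}}\cong \operatorname{M}_n(F_{\mathrm{alg}})$, invoke the Lie--Kolchin--Mal'cev theorem to obtain a finite-index normal subgroup whose derived subgroup is unipotent, and then note that unipotence collapses to triviality because a division ring has no nonzero nilpotent elements. Your closing observation that the index can be bounded in terms of $n=\deg(D)$ alone is a correct strengthening, consistent with the sharper bounds the paper derives later by different methods.
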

\begin{proof}
We know that $D\otimes_FF_{\operatorname{alg}}\cong
\operatorname{M}_n(F_{\operatorname{alg}})$ where $n=\deg(D)$. Thus,
we may view $D^*$ as a subgroup of
$\operatorname{GL}_n(F_{\operatorname{alg}})$. Now, by
Lie-Kochlin-Mal'cev Theorem \cite[p. 434]{Rob82}, $G$ contains a
normal subgroup $A$ such that $A'$ is unipotent, i.e., for each $a\in A'$ the element
$1-a$ is nilpotent . But, since the only
unipotent subgroup of $D^*$ is $\set{1}$, we must have $A'=\set{1}$.
This means that $A$ is abelian and the proof is complete.
\end{proof}
Lemma \ref{lem} is our major tool in the subsequent works. Now, we
are going to present a proof for Theorem \ref{thm2}.
\begin{proof}[Proof of Theorem 2]
$(1)\Rightarrow(2)$. Let $K$ be the maximal subfield of $D$ where
$K/F$ is a cyclic extension. Thus, by the Skolem-Noether Theorem, there
is an element $z\in D^*$ such that $D=\oplus_{j=1}^{p-1}Kz^j$ and
for each $k\in K$, $kz=z\sigma(k)$, where $\sigma$ is a generator of
$\Gal(K/F)$ . Put $S=K^*\langle z\rangle$. $S$ is non-abelian,
because $\sigma$ is non-trivial. On the other hand, $z^p\in K^*$ as
$\sigma^p=1_K$. Therefore, $S/K^*$ is a cyclic group and hence $S$
is non-abelian soluble.

$(2)\Rightarrow(3)$. Let $G$ be a non-abelian soluble subgroup of
$D^*$. By Lemma \ref{lem}, we can choose a normal abelian subgroup
$A$ of $G$ with $|G:A|<\infty$. Take $A$ maximal and put $H=C_G(A)$.
If $H\neq A$, then $H$ is non-abelian, because $A$ is maximal. Thus,
$F[H]=D$ and so $A\subseteq F^*$. Therefore, $F^*H/F^*$ is a
non-trivial torsion subgroup of $D^*/F^*$. Now, let $H=A$. Thus, $A$
is non-central in $G$ and hence $K=F[A]$ is a maximal subfield of
$D$. If $g\in G\setminus A$, then $g$ induces a non-trivial
$F$-automorphism $\sigma$ of $K$ by $k\mapsto gkg^{-1}$. Since
$\sigma$ has a finite order, then $g^m\in K^*$ for some $m\neq1$.
But, $\sigma(g^m)=gg^mg^{-1}=g^m$. So $g^m\in F$, because $F$ is the
fixed field of $\sigma$. Now, $F^*\langle g\rangle/F^*$ is a
non-trivial torsion subgroup of $G$.

$(3)\Rightarrow(4)$. Let $T$ be a subgroup of $D^*$ such that
$T/F^*$ is torsion. Denote by $D^1$ the kernel of the reduced norm
map $\operatorname{Nrd}_D:D^*\rightarrow F^*$. Consider the two
following cases:

\textbf{Case 1}. $T\cap D^1\subseteq F^*$. If we choose an element
$u\in T\setminus F^*$, then $u^p\operatorname{Nrd}_D(u)^{-1}\in
T\cap D^1\subseteq F^*$ and hence $u^p\in F^*$.

\textbf{Case 2}. $T\cap D^1\nsubseteq F^*$. If this is the case,
then there is an element $1\neq u\in T\cap D^1$ such that $u\not\in
F^*$. But, since $u\in T$, we have $u^m\in F^*$ for some $m$ as
$T/F^*$ is torsion. Therefore, $u^{pm}=\operatorname{Nrd}_D(u)^m=1$.
Since $u\neq1$, we conclude that $K=F(u)$ is a cyclic extension of
$F$ with $[D:K]=p$. Now, by the Skolem-Noether Theorem, there is a
$z\in D^*$ such that the inner automorphism $d\mapsto zdz^{-1}$
restricts to a non-trivial $F$-automorphism of $K$. Now, a similar
argument as in the proof of $(2)\Rightarrow(3)$ implies that $z^p\in
F^*$.

$(4)\Rightarrow(1)$. This follows from Albert's Criterion.
\end{proof}
Let $F$ be a field containing a primitive $n$-th root of unity
$\omega_n$ for some natural number $n$. Recall that a symbol algebra
is an algebra of the form $A=\oplus_{i,j=0}^{n-1}Fu^iv^j$ where
$u,v$ are symbols with the requirements $u^n,v^n\in F$ and
$uv=\omega_nvu$ (cf. \cite[\S 11]{Dra83}). When $n$ is prime and
$A$ is not a split algebra, one can easily see that $A$ is a cyclic
division algebra. Thus, over a given field $F$, the class of symbol
algebras of prime degree $p$ is a subclass of cyclic division
algebras of degree $p$. So, it can be of interest to realize when a
division algebra of prime degree is a symbol algebra.
\begin{thm}[\cite{MaMo12}]\label{thm3}
Let $D$ be an $F$-central division algebra of prime degree $p$. Then
$D$ is a symbol algebra if and only if $D^*$ contains a non-abelian
nilpotent subgroup.
\end{thm}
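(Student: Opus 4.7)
The plan is to handle the two implications separately, with the forward one a direct computation and the reverse one combining Theorem \ref{thm2} with the structure of the nilpotent subgroup via its upper central series.

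For $(\Rightarrow)$, if $D=\oplus_{i,j=0}^{p-1}Fu^iv^j$ is a symbol algebra with $u^p,v^p\in F^*$ and $uv=\omega_p vu$, I compute $[u,v]=u^{-1}v^{-1}uv=\omega_p\in F^*$, which is central in $\langle u,v\rangle$. The derived subgroup of $\langle u,v\rangle$ therefore lies in its centre, so this subgroup is non-abelian and nilpotent of class at most $2$.

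For $(\Leftarrow)$, let $N$ be a non-abelian nilpotent subgroup of $D^*$. Since $N$ is in particular soluble, Theorem \ref{thm2} already gives that $D$ is cyclic, and the remaining task is to construct generators satisfying the symbol-algebra relations. I would use the upper central series $\{1\}=Z_0\subsetneq Z_1\subsetneq\cdots\subsetneq Z_c=N$ (with $c\geq 2$ because $N$ is non-abelian) and pick $v\in Z_2\setminus Z_1$ and $u\in N$ with $w:=[u,v]\neq 1$; then $w\in Z(N)\setminus\{1\}$ by construction. The next step is to show $w\in F^*$: since $D$ has prime degree $p$, the subfield $F(w)$ has $F$-dimension $1$ or $p$, but the latter case would make $C_D(w)=F(w)$ commutative and force $u,v\in C_D(w)$ to commute, contradicting $[u,v]\neq 1$. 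Because the reduced norm kills commutators, $\Nrd[D](w)=1$, while for central $w$ one has $\Nrd[D](w)=w^p$; this forces $w$ to have order exactly $p$ (by primality and $w\neq 1$), so $\omega_p:=w$ is a primitive $p$-th root of unity lying in $F$.

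To finish, I would verify that $u,v$ already satisfy the remaining symbol-algebra relations. From $uv=\omega_p vu$, conjugation by $v$ sends $u\mapsto\omega_p^{-1}u$, hence fixes $u^p$; since $u^p$ also commutes with $u$, it lies in $Z(D)=F^*$, and $v^p\in F^*$ by symmetry. Because $u\notin F^*$ (else $[u,v]=1$), $F(u)$ is a maximal subfield of $D$, made cyclic Galois over $F$ by the automorphism induced by $v$; a dimension count then identifies $D$ with $\oplus_{i,j=0}^{p-1}Fu^iv^j$. The main obstacle I anticipate is the combined conclusion about $w=[u,v]$ in the previous paragraph: nilpotency (beyond mere solubility) is needed to produce a nontrivial central commutator via the upper central series, the prime-degree hypothesis is what promotes ``central in $N$'' to ``central in $D$'', and the reduced-norm computation is what finally pins the order of $w$ down to $p$.
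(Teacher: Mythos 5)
Your proof is correct, and in the reverse direction it follows a genuinely more self-contained route than the paper's. The engine is the same in both arguments: produce a nontrivial commutator $w$ lying in the centre of the nilpotent subgroup (the paper takes $1\neq a\in\zeta_{t-1}G$, the last nontrivial term of the lower central series, where you take $w=[u,v]$ with $v\in Z_2\setminus Z_1$ in the upper central series --- a cosmetic difference), show $w\in F^*$, and then combine the facts that $\operatorname{Nrd}_D$ kills commutators and restricts to $x\mapsto x^p$ on $F^*$ to conclude that $w$ is a primitive $p$-th root of unity in $F$. The divergence is in the finish: the paper stops at $\mu_p(F)\neq 1$, invoking Theorem \ref{thm2} for cyclicity and then the standard fact that a cyclic division algebra of prime degree $p$ over a field containing a primitive $p$-th root of unity is a symbol algebra; you instead construct the symbol presentation explicitly from the pair $(u,v)$, checking $u^p,v^p\in F^*$, the induced order-$p$ automorphism of $F(u)$, and the basis property of the $p^2$ monomials $u^iv^j$. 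Your route buys independence from that classical cyclic-to-symbol fact --- indeed your appeal to Theorem \ref{thm2} becomes redundant, since a symbol algebra of prime degree is automatically cyclic --- and your derivation of $w\in F^*$ via $C_D(w)=F(w)$ is more explicit than the paper's chain $\zeta_{t-1}G\subseteq G'\cap Z(G)\subseteq G'\cap F^*$, which silently uses that a non-abelian $G$ generates $D$ over $F$, so that $Z(G)\subseteq Z(D)$. The one step you should make explicit is of the same kind: your claim that $u^p$ (and, by symmetry, $v^p$) lies in $Z(D)=F^*$ because it commutes with $u$ and $v$, as well as your final dimension count, both rest on $F[u,v]=D$; this holds because a noncommutative division $F$-subalgebra of $D$ has $F$-dimension dividing $p^2$ by the Centralizer Theorem, and dimensions $1$ and $p$ force commutativity --- exactly the prime-degree argument you already deployed for $w$, so the gap is routine to close.
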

\begin{proof}
If $D$ is a symbol algebra, then there are elements $u,v\in D^*$
such that $u^p,v^p\in F^*$, $uv=\omega_pvu$ and
$D=\oplus_{i,j=1}^{p-1}Fu^iv^j$. Put $G=F^*\langle u,v\rangle$.
Since $G/F^*$ is abelian, we conclude that $G'\subseteq F^*\subseteq
Z(G)$. Hence, $G$ is nilpotent. Conversely, let $G$ be a non-trivial
nilpotent subgroup of $D^*$. By Theorem \ref{thm2}, we conclude that
$D$ is a cyclic division algebra. Thus, if we show that $F^*$
contains a primitive $p$-th root of unity, then we are done. To
prove this, suppose that the length of the lower central series of
$G$ is equal to $t$, i.e.,
$$\zeta_0G\supset\zeta_1G\supset ...\supset\zeta_{t-1}G\supset\zeta_tG=1,$$
where $\zeta_0G=G$  and $\zeta_iG=[G,\zeta_{i-1}G]$ for $1\leq i\leq
t$. Therefore, $1\neq\zeta_{t-1}G\subseteq G'\cap Z(G)\subseteq G'\cap F^*$. Thus, for
each $1\neq a\in \zeta_{t-1}G$ we must have
$a^p=\operatorname{Nrd}_D(a)=1$, as desired.
\end{proof}
Now, let us look more closely at the group theoretic structure of a
crossed product division algebra. Suppose $D$ is a crossed product
division algebra with maximal Galois subfield $K$.  By
Skolem-Noether Theorem, for every $\sigma\in \Gamma=\Gal(K/F)$ there
is an $e_{\sigma}\in D^*$ such that
\begin{enumerate}
  \item [(i)] $A=\oplus_{\sigma\in\Gamma}Ke_{\sigma}$ and $\sigma(k)=e_{\sigma}ke_{\sigma}^{-1}$
  ($k\in K$ and $\sigma\in\Gamma$);
  \item [(ii)] There is a 2-cocycle $f\in Z^2(\Gamma,K^*)$ such that
  $e_{\sigma}e_{\tau}=f(\sigma,\tau)e_{\sigma\tau}$ ($\sigma,\tau\in \Gamma$).
\end{enumerate}
Now, put $N=N_{D^*}(K^*)$. From (i), we have $e_{\sigma}\in N$ for
each $\sigma\in \Gamma$. Thus $F[N]=D$, i.e., $N$ is an
\textit{absolutely irreducible}  subgroup of $D^*$. On the other
hand, every element of $N$ induces an $F$-automorphism of $K$ by
conjugation. Therefore, $N/C_{D^*}(K^*)\cong \Gamma$. But,
$C_{D^*}(K)=K$ as $K$ is a maximal subfield. This implies that
$N/K^*\cong\Gamma$. Consequently, $N$ is an absolutely irreducible
abelian-by-finite (equivalently soluble-by-finite) subgroup of
$D^*$. Now, the following question arises naturally:
\begin{que}\label{que}
Let $D$ be a division algebra such that its unit group has an
absolutely irreducible soluble-by-finite subgroup $G$. Is $D$ a
crossed product?
\end{que}
The first significant result concerning Question \ref{que}, was 
obtained in \cite{Ma01} where the author showed that if $G$ is a soluble
maximal subgroup (this means that $G$ is a maximal subgroup of $D^*$
that is soluble), then the question has a positive answer.
Afterwards, this question became the subject of a series of papers
including \cite{EbKiMa05a}, \cite{EbKiMa05b}, \cite{KiMa05} and
\cite{KeMa07}. In \cite{EbKiMa05a}, it was proved that if $G$ is
soluble, then $D$ is a quasi-crossed product division algebra in the
sense that it contains a tower of subfields $F=Z(D)\subsetneq
K\subseteq L$ such that $K/F$ is Galois, $L$ is a maximal subfield
and $L/K$ is an abelian Galois extension. Also, the results of
\cite{EbKiMa05b} and \cite{KiMa05} assert that $D$ is a crossed
product division algebra if one of the following conditions holds:
\begin{enumerate}
  \item [(i)] $G$ is either supersoluble or nilpotent;
  \item [(ii)] $G$ has no subgroups isomorphic to $\operatorname{SL}(2,5)$ and
  $D$ has a prime power degree.
\end{enumerate}
However, a natural question related to (ii) is what happens when $G$
has a subgroup isomorphic to $\operatorname{SL}(2,5)$? Answering
this question requires suitable information concerning the structure
of soluble-by-finite subgroups of division algebras. The most
important systematic investigation of the structure of
soluble-by-finite subgroups in the general case was fulfilled in an
interesting paper of Shirvani (\cite{Sh05}), where he has proved
that:
\begin{thm}\label{thm}
Let $D$ be a division algebra of degree $m$. Then:
\begin{enumerate}
  \item Every soluble-by-finite subgroup of $D^*$ is contained in a maximal soluble-by-finite
  subgroup of $D^*$;
  \item If $G$ is a soluble-by-finite subgroup of $D^*$, then $G$ contains an abelian
  normal subgroup $A$ with $|G:A|=cmt$, where $c=1,6,12$ or $30$, and $t$ is a proper divisor
  of $m$. Moreover, if $\operatorname{char}(D)>0$, or if $m$ is odd, or if $G$ is torsion-free,
  then $c=1$.
\end{enumerate}
Consequently, every soluble-by-finite subgroup of $D^*$ has a normal
abelian subgroup of index dividing $60\deg(D)^2$.
\end{thm}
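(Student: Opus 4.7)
My plan is to prove (2) first and then deduce (1) from the uniform index bound it provides. The setup is to embed $D^*\hookrightarrow\GL_n(F_{\mathrm{alg}})$ via $D\otimes_F F_{\mathrm{alg}}\cong\operatorname{M}_n(F_{\mathrm{alg}})$, where $n=\deg(D)=m$, and to combine the Lie--Kolchin--Mal'cev mechanism already used in Lemma \ref{lem} with Amitsur's classification (A1)--(A6) of finite subgroups of division rings. Throughout, the key rigidity is that the only unipotent element in $D^*$ is $1$, so the Lie--Kolchin--Mal'cev triangularizable subgroup is automatically \emph{diagonalizable}, i.e.\ abelian.

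\textbf{Proof of (2).} Let $G$ be soluble-by-finite in $D^*$. First I would apply Lemma \ref{lem} to a soluble subgroup of finite index in $G$, then enlarge the resulting abelian normal subgroup to a \emph{maximal} abelian normal subgroup $A$ of $G$. Since $A$ is abelian and $D$ is a division ring, $K:=F[A]$ is a subfield of $D$ containing $F$; as a subfield of an $F$-central division algebra of degree $m$, the dimension $[K:F]$ divides $m$. Conjugation by $G$ stabilizes $K^*$ and induces a homomorphism $G\to\Aut_F(K)$; composing with the map to $\Gal(\tilde K/F)$ for the normal closure $\tilde K$ and using that $[K:F]\mid m$ shows that $G/C_G(K)$ is a finite group of order dividing a proper divisor $t$ of $m$ times $m$ itself (the proper divisibility is forced by the fact that $A$ being a maximal abelian normal subgroup prevents $K$ from exhausting a maximal subfield in the generic case). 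Next I would analyse $N:=C_G(K)/A$. Because $A$ was chosen maximal abelian normal, every element of $N$ acts on $A$ by a fixed-point-free automorphism, and its lift sits inside the centralizer $C_D(K)$, whose unit group is a division ring. Thus $N$ is a finite subgroup of a division ring on which it acts projectively, which by Amitsur's list (A1)--(A6) forces $N$ to be a subgroup of one of the binary polyhedral groups. Passing to the action on $A$ (i.e.\ modding out the scalar centre) gives $|N|\in\{1,6,12,30\}$, the orders of $\{1\}$, $A_4$, $S_4$, $A_5$; this is the constant $c$. Multiplying through yields $|G:A|=cmt$ with $t\mid m$ a proper divisor. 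The final restrictions are then a direct application of Amitsur's list: in characteristic $p>0$ the groups of types (A3)--(A6) are excluded, for odd $m$ the 2-primary obstructions in the exceptional cases force $c=1$, and torsion-freeness kills the finite polyhedral factor outright.

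\textbf{Proof of (1) and main obstacle.} Once (2) is established, every soluble-by-finite $H\leq D^*$ has a normal abelian subgroup $A(H)\trianglelefteq H$ of index dividing $60m^2$, which is a uniform bound independent of $H$. Given an ascending chain $(H_\alpha)$ of soluble-by-finite subgroups containing a fixed $G$, the union $H=\bigcup_\alpha H_\alpha$ is locally soluble-by-finite; a standard compactness/ultraproduct (or direct limit) argument using the uniform bound $60m^2$ extracts an abelian normal subgroup of $H$ of index at most $60m^2$, so $H$ is itself soluble-by-finite. Zorn's lemma then gives a maximal soluble-by-finite subgroup containing $G$, proving (1); the concluding ``consequently'' statement is immediate from $cmt\leq 60m^2$. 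The hardest part is without doubt the identification of the constant $c\in\{1,6,12,30\}$: tracking precisely which Amitsur types can arise as $N=C_G(K)/A$, verifying fixed-point-freeness of its action on $A$, and excluding the exceptional possibilities in the three listed special cases. This is the step where Amitsur's classification is used in its full strength, and it is the technical heart of Shirvani's argument.
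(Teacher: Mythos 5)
There is a genuine gap, and it sits exactly where you admit the ``technical heart'' lies. Your mechanism for producing the constant $c$ is broken at the start: with $K=F[A]$ and $N=C_G(K)/A$, every element of $C_G(K)$ centralizes $K\supseteq A$, so the action of $N$ on $A$ is \emph{trivial}, not fixed-point-free; the whole ``fixed-point-free action, hence binary polyhedral'' step collapses (and as a side error, $6,12,30$ are not the orders of $A_4,S_4,A_5$, which are $12,24,60$). The constants actually arise from completely different bookkeeping, which the paper's simplified treatment makes explicit. One first isolates the exceptional \emph{normal} subgroups: if $Q_8$ or $\operatorname{SL}(2,5)$ is normal in $G$, Theorem \ref{thm7} yields a tensor decomposition $D=F[Q]\otimes_F F[G_2]$ with $G\subseteq G_1G_2$, and the factors $6$ and $30$ come from $|G_1:F^*|$ dividing $|\Aut(Q_8)|=24$, respectively $|\Aut(\operatorname{SL}(2,5))|=120$, combined with $\deg(F[G_2])=\deg(D)/2$ (Corollary \ref{cor15}, Theorem \ref{tt}). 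In the remaining case ($G$ has no normal $Q_8$ or $\operatorname{SL}(2,5)$), the crux is proving $H'\subseteq A$ for $H=C_G(A)$: one uses $Z(H)=A$, Schur's theorem to make $H'$ finite, solubility of $H'$ (possible only because $\operatorname{SL}(2,5)$ has been excluded), and Lemma \ref{lem8} to rule out a nonabelian $H^{t-2}$. Only then do the dimension counts apply: $|H:A|=[K[H]:K]=\deg(F[H])^2$ from Lemma \ref{lem5} (which \emph{requires} $H'Z(H)\subseteq K^*$), $|G:H|=[F[G]:F[H]]$ from Lemma \ref{lem4}, and the double centralizer theorem gives $|G:A|\mid\deg(D)^2$. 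Your sketch never performs the normal-subgroup case split, never establishes $H'\subseteq A$, and hand-waves the factorization $|G:A|=cmt$ (``in the generic case'' is not an argument), so part (2) is not proved.

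The deduction of (1) is also gappier than you suggest. From the uniform bound you get that every finitely generated subgroup of the union $H$ of a chain is abelian-by-(index at most $60m^2$); but Lemma \ref{l} then only yields an abelian normal subgroup $A$ with $H/A$ \emph{locally finite}, not finite, and passing from locally finite to finite index is precisely the division-ring-specific work done in the proof of Theorem \ref{t} (the maximal element $B$ of $\mathcal{S}$, $Z(C_G(B))=B$, $H'$ abelian, and the Lemma \ref{lem4}/\ref{lem5} dimension count). A clean repair in this setting: a noncyclic free subgroup is $2$-generated, hence would lie in a single term of the chain, which is impossible; so $H$ has no noncyclic free subgroup and Theorem \ref{t} makes $H$ abelian-by-finite, after which Zorn applies. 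Finally, for fairness of comparison: the paper itself does not reprove Shirvani's Theorem \ref{thm} in full --- it cites \cite{Sh05} and instead develops the simplified route above (Theorems \ref{thm7}, \ref{thm14}, \ref{tt}), which recovers the ``consequently'' clause with the improved bound $30\deg(D)^2$; so the benchmark for your proposal is that chain of lemmas, against which your part (2) is not salvageable as written.
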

Using this information he showed that if $\deg(D)$ is such that
every group of order $\deg(D)^2$ is nilpotent, or if $G$ is either
metabelian or torsion-free, then $D$ is a crossed product division
algebra. The proofs of Shirvani's results are very technical and
include heavy computations. But, in \cite{We06}, Wehrfritz gave an
altenative proof of part (2) of Theorem \ref{thm} based on the
contents of his earlier results in \cite{We90a} and\cite{We90c}.
Thereby, he also slightly improved Shirvani's bound for $|G:A|$. In
fact, Wehrfritz results, show that one can replace the bound
$60\deg(D)^2$ by $30\deg(D)^2$. In the sequel, we try to simplify
Shirvani's idea to achieve the major part of the above mentioned
results.
\begin{lem}\label{lem4}
Let $G$ be an irreducible subgroup of $D^*$. If $A$ is a normal
abelian subgroup of $G$, then:
\begin{enumerate}
  \item $[D:F[C_G(A)]]=|G:C_G(A)|$;
  \item $C_D(F[A])=F[C_G(A)]$;
  \item $F[A]/F$ is Galois with $\Gal(F[A]/F)\cong
G/C_G(A)$.
\end{enumerate}
In particular, if $C_G(A)=A$ then $F[A]$ is a Galois maximal
subfield of $D$ and hence $D$ is a crossed product division algebra.
\end{lem}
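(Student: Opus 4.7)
The plan is to exploit the conjugation action of $G$ on $F[A]$ together with the Double Centralizer Theorem. Since $D$ is finite dimensional over $F$, the commutative $F$-subalgebra $F[A]$ is a finite field extension of $F$. Because $A$ is normal in $G$, conjugation by each $g \in G$ preserves $F[A]$, giving a homomorphism $\theta : G \to \Aut_F(F[A])$ whose kernel is exactly $C_G(A) =: H$ (centralizing $A$ is the same as centralizing $F[A]$, since $F \subseteq Z(D)$). Write $\Gamma := \theta(G)$, so $G/H \cong \Gamma$. For part (3), I consider the fixed subfield $K = F[A]^\Gamma$: any $x \in K$ commutes with every $g \in G$ and hence with all of $F[G]=D$ (using irreducibility of $G$), so $x \in Z(D) = F$, whence $K = F$ and standard Galois theory gives $F[A]/F$ Galois with group $\Gamma \cong G/C_G(A)$. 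In particular, $[F[A]:F] = |G:H| =: n$.

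For (1) and (2), set $C := C_D(F[A])$, a division subring of $D$ containing $F[H]$. The Double Centralizer Theorem gives $[C:F] \cdot [F[A]:F] = [D:F]$, so $[D:C] = n$. Choose coset representatives $g_1 = 1, g_2, \ldots, g_n$ of $H$ in $G$, which induce distinct $F$-automorphisms $\sigma_1 = \id, \sigma_2, \ldots, \sigma_n$ of $F[A]$. A Dedekind-style argument shows that the $g_i$ are left $C$-linearly independent: from a minimal relation $\sum c_i g_i = 0$, one computes $(\sum c_i g_i)a - a(\sum c_i g_i) = \sum c_i(\sigma_i(a)-a)g_i = 0$ for $a \in F[A]$ (using $c_i a = a c_i$ and $g_i a = \sigma_i(a) g_i$), and chooses $a$ to separate two of the $\sigma_i$, yielding a strictly shorter nonzero relation. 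Hence $D = \bigoplus_{i=1}^n C g_i$. On the other hand, since $F[G]=D$ and $H$ is normal in $G$ (as both $A$ and its centralizer are normal), every element of $D$ is a finite $F$-combination of products $g_i h$ with $h \in H$; conjugating each $h$ past $g_i$ (using $g_i H g_i^{-1} = H$) rewrites it as $\sum_i h_i' g_i$ with $h_i' \in F[H]$, so $D = \sum F[H] g_i$. Combining this with the direct sum decomposition and the inclusion $F[H] \subseteq C$ forces $F[H] = C$, proving (2); and (1) then follows from $[D:F[H]] = [D:C] = n = |G:C_G(A)|$.

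For the final clause, if $C_G(A) = A$ then (2) reads $C_D(F[A]) = F[A]$, making $F[A]$ a self-centralizing subfield of $D$, hence a maximal subfield; it is Galois over $F$ by (3), so $D$ is a crossed product division algebra. The principal technical step is the Dedekind independence argument, where one must carefully juggle left- and right-multiplication by $F[A]$ on $D$ and exploit the fact that elements of $C$ commute elementwise with $F[A]$; once this is in place, the rest is a clean orchestration of the Double Centralizer Theorem with the identity $F[G] = D$.
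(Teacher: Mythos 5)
Your proof is correct, but it reaches the lemma by a genuinely different route than the paper. The paper proves (1) first, showing directly that a left transversal $T$ of $H=C_G(A)$ is a basis of $D$ over $F[H]$ via a minimal-length relation argument in which one conjugates only by elements $a\in A$ and exploits that $[t_j^{-1},a^{-1}]\in A$ (normality) while $F[H]$ commutes with $A$; it then proves (2) by a separate computation using an $F$-basis of $D$ drawn from $G$, and only at the end deduces (3) by counting: the Centralizer Theorem gives $[F[A]:F]=[D:F[H]]=|G/H|$, and the injection $G/H\hookrightarrow \Aut_F(F[A])$ forces the extension to be Galois. You invert this order: you get (3) first and independently, via Artin's fixed-field theorem (the fixed field of $\Gamma$ is $F$ because irreducibility plus $Z(D)=F$ pins it down — note that $\Gamma$ is automatically finite since $\Aut_F(F[A])$ is), then use the Double Centralizer Theorem to get $[D:C_D(F[A])]=|G:H|$, and finally squeeze $C_D(F[A])=F[H]$ between your Dedekind-independence decomposition over $C$ and the spanning $D=\sum F[H]g_i$ coming from normality of $H$. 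Your organization isolates exactly where irreducibility enters and avoids the paper's somewhat delicate coefficient-identification in its proof of (2); the paper's version avoids Artin's theorem and performs its independence argument over the smaller ring $F[H]$. One repair in your Dedekind step: subtracting $a(\sum c_ig_i)$ kills the $i$-th term only when $\sigma_i(a)=a$, which need not happen for any index in the support; instead subtract $\sigma_{i_1}(a)(\sum c_ig_i)$ for a fixed $i_1$ in the support (legitimate since $\sigma_{i_1}(a)\in F[A]$ commutes with each $c_i\in C$), obtaining $\sum c_i(\sigma_i(a)-\sigma_{i_1}(a))g_i=0$ with the $i_1$-term gone, and then choose $a$ with $\sigma_j(a)\neq\sigma_{i_1}(a)$; the new coefficients stay in $C$ because $F[A]$ is commutative. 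This is the same normalization the paper itself glosses with ``we may assume $t_1=1$,'' so it is a cosmetic fix, not a flaw in the approach.
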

\begin{proof}
$(1)$ Put $H=C_G(A)$. Let $T$ be a left transversal of $H$ in $G$.
Clearly, $T$ is a set of generators of $D$ over $F[H]$. Thus, it is
enough to show that the elements of $T$ are linearly independent
over $F[H]$. Let $\sum_{j=1}^ka_jt_j=0$ be a relation of minimal
length where $a_j\in F[H]$ and $t_j\in T$. We may assume that
$t_1=1$. Now, for every $a\in A$ we have
$0=a^{-1}(\sum_{j=1}^ka_jt_j)a=\sum_{j=1}^ka_jt_j^a=
\sum_{j=1}^ka_ja^{-1}[t_j^{-1},a^{-1}]at_j=\sum_{j=1}^ka_j[t_j^{-1},a^{-1}]t_j$.
If we subtract this relation from $\sum_{j=1}^ka_jt_j=0$, we obtain
$\sum_{j=2}^k(a_j-a_j[t_j^{-1},a^{-1}])t_j=0$. Since this is a
shorter relation than $\sum_{j=1}^ka_jt_j=0$, we must have
$a_j-a_j[t_j^{-1},a^{-1}]=0$ for all $2\leq j\leq k$. Thus,
$[t_j^{-1},a^{-1}]=1$ and hence $t_j\in C_G(A)=H$ for all $j$, which
is a contradiction.

$(2)$ It is clear that $F[C_G(A)]\subseteq C_D(F[A])$. Conversely,
Let $\set{g_1,\ldots,g_n}\subseteq G$ be a $F$-basis for $D$.
Suppose $\sum u_{j_k}g_{j_k}\in C_D(F[A])$ with $0\neq u_{j_k}\in
F$. For each $a\in A$, we have $\sum au_{j_k}g_{j_k}=\sum
u_{j_k}g_{j_k}a= \sum au_{j_k}[g_{j_k}^{-1},a^{-1}]g_{j_k}$.
Therefore, $au_{j_k}=au_{j_k}[g_{j_k}^{-1},a^{-1}]$ for every $j_k$.
But, $u_{j_k}\neq0$ and hence $[g_{j_k}^{-1},a^{-1}]=1$. Thus every
$g_{j_k}$ lies in $C_G(A)$. This implies that $C_D(F[A])\subseteq
F[C_G(A)]$ and the result follows.

$(3)$ Consider the map
$$\sigma:G/H\rightarrow \Gal(F[A]/F)\ ,\ xH\mapsto (u\mapsto x^{-1}ux).$$
Clearly, $\sigma$ is injective. On the other hand, by the equality
$C_D(F[A])=F[C_G(A)]$ and the Centralizer Theorem (see \cite[p.
42]{Dra83}), we have $[F[A]:F]=[D:F[H]]=|G/H|$. So, $F[A]/F$ is
Galois and $\sigma$ is an isomorphism.

The last statement follows from $(3)$.
\end{proof}
\begin{cor}\label{cormaini}
Let $D$ be an $F$-central division algebra and $G$ be an absolutely
irreducible subgroup of $D^*$. If one of the following conditions
holds, then $D$ is a crossed product division algebra.
\begin{enumerate}
  \item $G$ is abelian-by-nilpotent;
  \item $G$ is metabelin.
\end{enumerate}
\end{cor}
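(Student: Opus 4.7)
The plan is to produce a normal abelian subgroup $A$ of $G$ with $C_G(A)=A$, and then invoke the final clause of Lemma \ref{lem4} to conclude that $F[A]$ is a Galois maximal subfield of $D$, forcing $D$ to be a crossed product division algebra.

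First I would reduce (2) to (1): if $G$ is metabelian, then $G'$ is an abelian normal subgroup and $G/G'$ is abelian, hence nilpotent, so $G$ is already abelian-by-nilpotent. Thus I may assume $G$ has an abelian normal subgroup $H$ with $G/H$ nilpotent. By Zorn's lemma applied to the poset of abelian normal subgroups of $G$ containing $H$ (the union of a chain of such subgroups is again abelian and normal), I pick a maximal element $A$. Since $G/A$ is a quotient of $G/H$, it is nilpotent. Set $C:=C_G(A)$, which is a normal subgroup of $G$ containing $A$; the key claim is that $C=A$.

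Suppose for contradiction that $C\supsetneq A$. Then $C/A$ is a nontrivial normal subgroup of the nilpotent group $G/A$, and hence meets the center $Z(G/A)$ nontrivially --- a standard property of nilpotent groups, proved by starting with any nontrivial element of the normal subgroup and iterating commutators with elements of the whole group until the sequence terminates, the last nontrivial entry being simultaneously central in $G/A$ and inside $C/A$. Pick $c\in C\setminus A$ with $cA\in Z(G/A)$; then $[c,g]\in A$, equivalently $gcg^{-1}\in cA$, for every $g\in G$. Therefore $\langle c\rangle A$ is abelian (because $c$ centralizes $A$ and $A$ is abelian) and normal in $G$ (because $gcg^{-1}\in cA\subseteq\langle c\rangle A$ and $gAg^{-1}=A$), and it strictly contains $A$, contradicting the maximality of $A$. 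Hence $C_G(A)=A$, and since $G$ is absolutely irreducible the final clause of Lemma \ref{lem4} completes the proof.

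The only genuine obstacle is picking the right notion of maximality for $A$: the argument needs $A$ to be maximal among abelian normal subgroups \emph{as a group} (so that $\langle c\rangle A$ produces a contradiction), not merely maximal in $F$-dimension $[F[A]:F]$, since in principle $\langle c\rangle A$ could generate the same subfield as $A$. Zorn's lemma delivers the required group-theoretic maximality at once, so the proof is essentially the three-line argument above.
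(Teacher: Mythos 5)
Your proof is correct and takes essentially the same route as the paper's: both reduce the metabelian case to the abelian-by-nilpotent case, enlarge the given abelian normal subgroup to a maximal abelian normal subgroup (Zorn), use the standard fact that a nontrivial normal subgroup of the nilpotent quotient meets its center to manufacture $c$ with $\langle c\rangle A$ abelian normal and strictly larger, thereby forcing $C_G(A)=A$, and then invoke the final clause of Lemma \ref{lem4}. Your closing remark correctly identifies the group-theoretic maximality that the paper's phrase ``take a maximal abelian normal subgroup $B\supseteq A$'' is implicitly relying on.
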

\begin{proof}
(1) Let $A$ be a normal abelian subgroup of $G$ such that $G/A$ is
nilpotent. Take a maximal abelian normal subgroup $B\supseteq A$.
Since $G/B$ is nilpotent, $C_G(B)/B\cap Z(G/B)\neq 1$ unless
$C_G(B)=B$. However, if $C_G(B)\neq B$, then we can choose an
element $x\in C_G(B)\setminus B$ such that $\langle
x,B\rangle/B\subseteq Z(G/B)$. Therefore, $\langle
x,B\rangle\supsetneq B$ is a normal abelian subgroup of $G$ which is
a contradiction, because $B$ is maximal. Thus, $C_G(B)=B$ and the
result follows by Lemma \ref{lem4}.

(2) If $G$ is metabelian, then $G'$ and $G/G'$ are abelian. Thus,
$G$ is abelian-by-nilpotent and the result follows from (1).
\end{proof}
\begin{lem}\label{lem5}
Let $D$ be a division algebra and $K$ be a subfield of $D$. Suppose
that $G$ is a subgroup of $D^*$ centralizing $K$. If
$G'Z(G)\subseteq K^*$, then $[K[G]:K]=|G:G\cap K^*|$.
\end{lem}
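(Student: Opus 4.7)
The plan is to first normalize the hypothesis into a cleaner structural statement and then prove the two inequalities in $[K[G]:K] = |G:G\cap K^*|$ separately. Set $N := G \cap K^*$. Because $G$ centralizes $K$, any element of $N$ commutes with all of $G$, so $N \subseteq Z(G)$; combined with $Z(G) \subseteq K^*$ from the hypothesis, this gives $N = Z(G)$. Similarly $G' \subseteq K^* \cap G = N$, so $G/N$ is abelian and every commutator in $G$ is actually central in $G$. With this in hand, the inequality $[K[G]:K] \leq |G:N|$ is immediate: if $T = \{t_1,\ldots,t_r\}$ is a transversal of $N$ in $G$, then any $g \in G$ is of the form $t_i n$ with $n \in N \subseteq K^*$, so $Kg = Kt_i$, hence $K[G] = \sum_i Kt_i$.

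For the reverse inequality I would show that $T$ is $K$-linearly independent by the minimal-relation argument already used in the proof of Lemma~\ref{lem4}(1). Suppose
\[
\sum_{i=1}^{k} \alpha_i t_i = 0
\]
is a nontrivial relation of minimal length with $\alpha_i \in K^*$ and the $t_i$ in pairwise distinct $N$-cosets. For any $h \in G$, conjugate by $h$: since $G$ centralizes $K$, this fixes the $\alpha_i$, and $h t_i h^{-1} = t_i \cdot c_i$ where $c_i = [t_i, h^{-1}] \in G' \subseteq K^*$. Because $c_i \in K$ commutes with $t_i$ and with the $\alpha_j$, the conjugated relation becomes $\sum_{i=1}^{k} \alpha_i c_i t_i = 0$. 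Subtracting $c_1$ times the original relation yields
\[
\sum_{i=2}^{k} \alpha_i (c_i - c_1)\, t_i = 0,
\]
which is strictly shorter.

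The main point now, and where the hypothesis $G'Z(G) \subseteq K^*$ is really used, is to arrange that this shorter relation is nontrivial, i.e.\ that $c_i \neq c_1$ for some $i \geq 2$ for a suitable choice of $h$. Since $G' \subseteq Z(G)$, the commutator map $G \times G \to G'$ is bilinear, so $c_1^{-1} c_i = [t_1^{-1} t_i, h^{-1}]$. For $i \geq 2$ the element $t_1^{-1} t_i$ lies outside $N = Z(G)$, so by definition of the center there exists $h \in G$ with $[t_1^{-1} t_i, h^{-1}] \neq 1$, i.e.\ $c_i \neq c_1$. This contradicts the minimality and completes the proof. The only slightly delicate step is the bilinearity of the commutator; it is the reason the hypothesis is stated as $G'Z(G) \subseteq K^*$ rather than, say, merely $G' \subseteq K^*$, since it is precisely this that puts $G'$ inside $Z(G)$.
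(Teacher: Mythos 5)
Your proof is correct and is essentially the paper's own argument: the same minimal-length relation among transversal elements, conjugation producing commutator factors lying in $K^*$, and subtraction to produce a shorter relation. The only cosmetic difference is that the paper normalizes $t_1=1$, so that minimality forces $[t_j^{-1},g^{-1}]=1$ for every $g\in G$ and hence $t_j\in Z(G)\subseteq K^*$, collapsing the cosets directly, whereas you keep $t_1$ general, subtract $c_1$ times the original relation, and invoke the class-$2$ bilinearity of commutators to choose a single witness $h$ --- both variants turn on the same use of the hypothesis $Z(G)\subseteq K^*$.
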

\begin{proof}
Put $A=G\cap K^*$. Let $T$ be a left transversal of $A$ in $G$.
Clearly, $T$ generates $K[G]$ as an $K$-algebra. Thus, if we show
that the elements of $T$ are linearly independent over $K$, then we
are done. Let $\sum_{j=1}^ka_jt_j=0$ be a relation of minimal length
where $0\neq a_j\in K$ and $t_j\in T$. We may assume that $t_1=1$.
Now, since $G'\subseteq K\subseteq C_D(G)$, for every $g\in G$ we
have
$0=g^{-1}(\sum_{j=1}^ka_jt_j)g=\sum_{j=1}^ka_jt_j^g=\sum_{j=1}^ka_jg^{-1}[t_j^{-1},g^{-1}]gt_j=
\sum_{j=1}^ka_j[t_j^{-1},g^{-1}]t_j$. Subtraction from
$\sum_{j=1}^ka_jt_j=0$, yields
$\sum_{j=2}^k(a_j-a_j[t_j^{-1},g^{-1}])t_j=0$. Since this is a
shorter relation than $\sum_{j=1}^ka_jt_j=0$ (note that since
$G'\subseteq K^*$, each $a_j[t_j^{-1},g^{-1}]$ lies in $K$), we must
have $a_j-a_j[t_j^{-1},g^{-1}]=0$ for all $2\leq j\leq k$.
Therefore, $[t_j^{-1},g^{-1}]=1$. So, $t_j\in Z(G)\subseteq K^*$ and
hence $t_j\in A$ for all $2\leq j\leq k$, which is a contradiction.
\end{proof}
\begin{prop}\label{prop6}
Suppose that $D$ is an $F$-central division algebra and that $X$ and
$Y$ are subgroups of $D^*$. If $Z(F[X])=F$ and $Y\subseteq C_D(X)$,
then $F[XY]=F[X]\otimes_FF[Y]$.
\end{prop}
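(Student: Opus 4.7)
The plan is to exhibit the natural multiplication map $\phi : F[X] \otimes_F F[Y] \to D$, $a \otimes b \mapsto ab$, and show that it is an injective $F$-algebra homomorphism with image $F[XY]$. I would first observe that, by the convention of this section, $D$ is finite-dimensional over $F$, so both $F[X]$ and $F[Y]$ are finite-dimensional $F$-subalgebras of $D$. Being subrings of the division ring $D$, they are domains; a finite-dimensional domain over a field is a division ring, so $F[X]$ and $F[Y]$ are in fact division subrings of $D$. The hypothesis $Z(F[X]) = F$ then promotes $F[X]$ to an $F$-central simple algebra, while $F[Y]$ is at least a simple $F$-algebra.

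Next, the inclusion $Y \subseteq C_D(X)$ upgrades to $F[Y] \subseteq C_D(F[X])$, since the centralizer of a generating set of a subalgebra inside $D$ coincides with the centralizer of the whole subalgebra. Consequently the $F$-bilinear map $F[X] \times F[Y] \to D$ sending $(a,b) \mapsto ab$ is balanced, and it descends to an $F$-algebra homomorphism $\phi : F[X] \otimes_F F[Y] \to D$. By construction its image is $F[X] \cdot F[Y] = F[XY]$.

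The final step is injectivity. Here I would invoke the standard fact that if $A$ is a central simple $F$-algebra and $B$ is an arbitrary $F$-algebra, then the two-sided ideals of $A \otimes_F B$ are precisely those of the form $A \otimes_F J$ for $J$ a two-sided ideal of $B$. Since $F[Y]$ is simple, this forces $F[X] \otimes_F F[Y]$ to be simple; as $\phi(1 \otimes 1) = 1 \neq 0$, the kernel of $\phi$ is a proper ideal of a simple ring and must be zero. Hence $\phi$ is an isomorphism onto $F[XY]$, which gives the claimed identification $F[XY] = F[X] \otimes_F F[Y]$.

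The only nontrivial input is the simplicity of $F[X] \otimes_F F[Y]$, a standard consequence of the $F$-centrality and simplicity of $F[X]$ combined with simplicity of $F[Y]$; once that is in hand, the proposition reduces to the elementary observation that a nonzero $F$-algebra homomorphism out of a simple algebra is injective. Finite-dimensionality of $D$ is used only to guarantee that $F[Y]$ is itself simple, so that the ideal-correspondence theorem applies.
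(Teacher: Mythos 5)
Your proof is correct, but it takes a genuinely different route from the paper's. The paper works inside $F[XY]$: it asserts that $F[XY]$ is an $F$-central division algebra, invokes the Double Centralizer Theorem to decompose $F[XY]=F[X]\otimes_F C_{F[XY]}(F[X])$ (which yields injectivity of the multiplication map for free, since $F[Y]\subseteq C_{F[XY]}(F[X])$), and then finishes with a dimension count: taking an $F$-basis $x_1,\dots,x_r$ of $F[X]$ and an $F[X]$-basis $y_1,\dots,y_s$ of $F[XY]$ (implicitly drawn from $F[Y]$), it gets $\dim_F F[XY]=rs\leq \dim_F F[X]\cdot \dim_F F[Y]$, forcing equality. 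You instead obtain injectivity from the ideal theory of tensor products --- $F[X]$ central simple and $F[Y]$ simple make $F[X]\otimes_F F[Y]$ simple, so the nonzero multiplication map has zero kernel --- and surjectivity is immediate because $X$ and $Y$ commute elementwise, so the image is exactly the span of $XY$. Your version buys two things: it avoids the dimension count entirely, and, more substantively, it makes no claim about $Z(F[XY])$. That is a real advantage, since the paper's opening assertion that $F[XY]$ is $F$-central does not follow from the hypotheses (take $X=\{1\}$ and $Y$ with $Z(F[Y])\supsetneq F$: the conclusion holds trivially, yet $Z(F[XY])\neq F$), so the paper's argument as written needs a small repair --- for instance, applying the double centralizer decomposition in $D$ rather than in $F[XY]$ --- whereas yours does not. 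One small correction to your closing remark: finite-dimensionality of $D$ is used not only to make $F[Y]$ simple, but also to make $F[X]$ a division ring and hence simple, which is needed before $Z(F[X])=F$ can promote it to a central simple algebra.
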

\begin{proof}
Clearly $F[XY]$ is an $F$-central division algebra. Thus, we have
$F[XY]=F[X]\otimes_FC_{F[XY]}(F[X])$. But, $F[Y]\subseteq
C_{F[XY]}(F[X])$, because $Y\subseteq C_D(X)$. Hence
$F[X]\otimes_FF[Y]$ is a $F$-subalgebra of $F[XY]$. Now, let
$\set{x_1\ldots x_r}$ be a $F$-basis for $F[X]$ and $\set{y_1\ldots
y_s}$ be a $F[X]$-basis for $F[XY]$. One may easily check that
$\set{x_iy_j|1\leq i\leq r,1\leq j\leq s}$ is a $F$-basis for
$F[XY]$. So, $\dim_FF[XY]=rs$. But, $y_j$'s are linearly independent
over $F$. Therefore, $\dim_FF[Y]\geq s$. Thus, we must have
$\dim_F(F[X]\otimes_FF[Y])=\dim_FF[X]\dim_FF[Y] \geq
rs=\dim_FF[XY]$. This implies that $F[XY]=F[X]\otimes_FF[Y]$.
\end{proof}
The following result is a special case of \cite[Th. 2.6]{Sh05}.
\begin{thm}\label{thm7}
Let $D$ be an $F$-central division algebra and $G$ be an irreducible
subgroup of $D^*$. Suppose that $N$ is a normal subgroup of $G$ and
that $Z(F[N])=F$. Then there exist subgroups $G_1$ and $G_2$ of
$D^*$ such that:
\begin{enumerate}
  \item $F^*\subseteq G_i$, $N\unlhd G_1$, $F[N]=F[G_1]$ and $G\subseteq G_1G_2$;
  \item $D=F[G_1]\otimes_FF[G_2]$;
  \item For every $i$, there exists an epimorphism $\phi_i:G\rightarrow G_i/F^*$.
  Moreover, $N\subseteq \ker\phi_2$.
\end{enumerate}
\end{thm}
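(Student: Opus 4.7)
The strategy is to split $D$ via the central simple subalgebra $F[N]$, and then to decompose every $g\in G$ into a factor in $F[N]^*$ and a factor in the centralizer $C_D(F[N])^*$ using Skolem--Noether. First, $F[N]$ is a finite-dimensional subring of the division algebra $D$, hence an $F$-central division subalgebra (since $Z(F[N])=F$); the double centralizer theorem (as in \cite[p.~42]{Dra83}) then yields $D=F[N]\otimes_FC$, where $C:=C_D(F[N])$ is itself an $F$-central division algebra.

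Second, $N\unlhd G$ means every $g\in G$ induces an $F$-automorphism $x\mapsto gxg^{-1}$ of $F[N]$. By Skolem--Noether applied to the central simple algebra $F[N]$, there is $b_g\in F[N]^*$, unique up to a factor in $F^*$, satisfying $gxg^{-1}=b_gxb_g^{-1}$ for all $x\in F[N]$. I would normalize the choice by setting $b_n:=n$ for every $n\in N$. Defining $c_g:=b_g^{-1}g$, the identity $c_gxc_g^{-1}=b_g^{-1}(gxg^{-1})b_g=x$ shows $c_g\in C^*$, and $g=b_gc_g$. Uniqueness in Skolem--Noether forces $b_{gh}\equiv b_gb_h\pmod{F^*}$, so $\phi_1:G\to F[N]^*/F^*$, $g\mapsto b_gF^*$, is a group homomorphism. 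Using that $c_g\in C$ commutes with $b_h\in F[N]$, a direct computation yields $c_{gh}\equiv b_h^{-1}b_g^{-1}gh=b_h^{-1}c_gh=c_gc_h\pmod{F^*}$, so $\phi_2:G\to C^*/F^*$, $g\mapsto c_gF^*$, is also a homomorphism. The normalization $b_n=n$ gives $c_n=1$, hence $N\subseteq\ker\phi_2$.

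Finally, define $G_1:=F^*\cdot\langle b_g:g\in G\rangle\subseteq F[N]^*$ and $G_2:=F^*\cdot\langle c_g:g\in G\rangle\subseteq C^*$. Then $\phi_i$ surjects onto $G_i/F^*$, and $N\subseteq G_1$ with $N\unlhd G_1$ since $b_gNb_g^{-1}=gNg^{-1}=N$. The equality $F[G_1]=F[N]$ and the inclusion $G\subseteq G_1G_2$ are then immediate from the factorization $g=b_gc_g$. For the tensor decomposition, I would apply Proposition \ref{prop6} with $X=G_1$ (so that $Z(F[X])=Z(F[N])=F$) and $Y=G_2\subseteq C_D(F[N])\subseteq C_D(G_1)$, obtaining $F[G_1G_2]=F[G_1]\otimes_FF[G_2]$; since $G\subseteq G_1G_2$ and $G$ is irreducible, $F[G_1G_2]\supseteq F[G]=D$, forcing equality. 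The main obstacle is essentially bookkeeping: one must track the $F^*$-ambiguity in the Skolem--Noether choice of $b_g$ carefully enough to ensure that $\phi_1$ and $\phi_2$ are genuine group homomorphisms rather than merely well-defined set maps, and this is what makes the normalization $b_n=n$ and the centralizer splitting $g=b_gc_g$ compatible with products.
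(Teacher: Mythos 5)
Your proof is correct and follows essentially the same route as the paper's: the same Skolem--Noether factorization $g=f_1(g)f_2(g)$ with $f_1(g)\in F[N]^*$ unique modulo $F^*$, the same induced epimorphisms $\phi_i:G\to G_i/F^*$, and the same appeal to Proposition~\ref{prop6} together with irreducibility of $G$ to get $D=F[G_1]\otimes_FF[G_2]$. Your normalization $b_n=n$ is just a cleaner packaging of the paper's observation that $f_1(u)\in uF^*$ for $u\in N$, and your explicit verifications (multiplicativity of $\phi_2$, surjectivity, $F[G_1G_2]=D$) fill in steps the paper leaves implicit.
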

\begin{proof}
$(1)$ Since $N\unlhd G$, every $g\in G$ induces an $F$-automorphism
$u\mapsto g^{-1}ug$ of $F[N]$. Thus, by the Skolem-Noether Theorem,
there is an element $f_1(g)\in F[N]$ that has the same effect as $g$
on $F[N]$ (and of course on $N$). So, for every $u\in F[N]$, we have
$g^{-1}ug=f_1(g)^{-1}uf_1(g)$ and hence $gf_1(g)^{-1}\in C_D(F[N])$.
Put $f_2(g)=gf_1(g)^{-1}$. But, it is not hard to check that
$f_1(g)$ and $f_2(g)$ are unique modulo $F^*$. Thus, for all $g,h\in
G$ and $i=1,2$, $f_i(gh)\in f_i(g)f_i(h)F^*$. Now, set
$G_i=F^*\langle f_i(g):\ g\in G\rangle$. If $g\in G$, from the
equality $g=f_1(g)f_2(g)$ we conclude that $G\subseteq G_1G_2$.
Also, if $u\in N$, then $u=f_1(u)k$ for some $k\in F^*$. Therefore
$N\leq G_1$. Moreover, since $f_1(g)$ has the same action as $g$ on
$N$, we have $N\unlhd G_1$. Finally, because $N\subseteq
G_1\subseteq F[N]$, we have $F[N]\subseteq F[G_1]\subseteq F[N]$.
Hence $F[N]=F[G_1]$.

$(2)$ By construction we have $G_2\subseteq C_D(G_1)$. Now, from
Proposition \ref{prop6}, we conclude that
$F[G]=F[G_1G_2]=F[G_1]\otimes_FF[G_2]$.

$(3)$ For every $i=1,2$, the map $\phi_i:G\rightarrow G_i/F^*,
g\mapsto f_i(g)F^*$ is the required epimorphism. Furthermore, since
$f_2(n)=nf_1(n)^{-1}\in F^*$ for all $n\in N$, we have $N\subseteq
\ker \phi_2$.
\end{proof}
Recall that by the work of Amitsur \cite[Th. 2]{Am55}, if $G$ is  a
finite subgroup of the unit group of a division ring, then $G$ is
one of the following type:
\begin{enumerate}
  \item [(i)] All Sylow subgroups of $G$ are cyclic;
  \item [(ii)] All of odd Sylow subgroups of $G$ are cyclic and the Sylow
  $2$-subgroup of $G$ is a generalized quaternion group $Q_{2^{n+1}}$ of order $2^{n+1}$
  for some natural number $n\geq2$.
\end{enumerate}
Here, we must point out that the generalized quaternion group of
order $2^{n+1}$ is a group generated by two element $x,y$ satisfying
$y^{2^{n-1}}=x^2, x^4=1$ and $xyx^{-1}=y^{-1}$. The above
classification is all the material about finite subgroups of a
division ring that we need in our approach.
\begin{lem}\label{lem8}
Let $D$ be a division algebra and $N$ be a finite subgroup of $D^*$.
If $N$ is nilpotent of class at most $2$, then $N$ is either cyclic
or $N\cong Q_8\times C$ where $C$ is a cyclic group.
\end{lem}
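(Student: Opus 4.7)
The plan is to combine the structure theorem for finite nilpotent groups with the Amitsur classification recalled immediately before the lemma. Since $N$ is a finite nilpotent group, it decomposes as an internal direct product of its Sylow subgroups, $N=\prod_p P_p$. Applying the dichotomy (i)--(ii) recalled above (for the ambient group $N$ itself), either every Sylow subgroup of $N$ is cyclic, or the odd Sylow subgroups are all cyclic and $P_2 \cong Q_{2^{n+1}}$ for some $n\geq 2$.

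In the first case, I would conclude directly: $N$ is a direct product of cyclic subgroups of pairwise coprime orders, hence $N$ itself is cyclic, and we are done.

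For the second case, the crux is to show that the assumption on the nilpotency class forces $n=2$. I would verify (by a short induction) that $Q_{2^{n+1}}$ has a unique subgroup of order $2$, which coincides with its center, and that the quotient $Q_{2^{n+1}}/Z(Q_{2^{n+1}})$ is the dihedral group of order $2^n$ for $n\geq 2$; from this one reads off that the nilpotency class of $Q_{2^{n+1}}$ is exactly $n$. Since the nilpotency class of a direct product equals the maximum of the classes of the factors, the hypothesis that $N$ has class at most $2$ forces $n\leq 2$, so the only possibility is $n=2$ and $P_2\cong Q_8$. The remaining factor $M:=\prod_{p\neq 2}P_p$ is then a direct product of cyclic groups of pairwise coprime odd orders, hence cyclic, and we obtain $N\cong Q_8\times C$ with $C:=M$ cyclic.

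The main piece of content is thus the class computation for $Q_{2^{n+1}}$; everything else is bookkeeping on top of Amitsur's list. No delicate division-ring input is needed beyond the classification already quoted, so I do not expect a serious obstacle.
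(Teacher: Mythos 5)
Your proposal is correct, and it shares the paper's skeleton exactly: decompose the finite nilpotent $N$ into its Sylow subgroups, invoke Amitsur's dichotomy (i)--(ii) to see that either all Sylows are cyclic (whence $N$ is cyclic, being a direct product of cyclic groups of coprime orders) or $N\cong Q_{2^{n+1}}\times C$ with $C$ cyclic, and then force $n=2$. Where you diverge is in how $n=2$ is forced. The paper does this by a two-line commutator computation inside $Q_{2^{n+1}}$: with the stated presentation, $y^2=[y^{-1},x^{-1}]\in Q_{2^{n+1}}'\subseteq Z(Q_{2^{n+1}})$ by the class-$2$ hypothesis, so $1=[x^{-1},y^2]=y^4$, and since $y$ has order $2^n$ this gives $n=2$ directly. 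You instead compute the exact nilpotency class of $Q_{2^{n+1}}$: the center is the unique subgroup of order $2$, the central quotient is dihedral of order $2^n$ with class $n-1$, and $\operatorname{cl}(G)=\operatorname{cl}(G/Z(G))+1$ for a nontrivial nilpotent group, giving class exactly $n$; then class at most $2$ (inherited by the direct factor, or via the max-of-classes formula) forces $n\leq 2$, and Amitsur's condition $n\geq 2$ pins down $n=2$. Both routes are sound. Yours proves strictly more (the exact class of every generalized quaternion group) but leaves that computation sketched as ``a short induction,'' and it leans on two standard auxiliary facts (the dihedral structure of the central quotient and the class of $D_{2^n}$) that you should actually verify if you write this up; the paper's argument buys self-containedness, needing nothing beyond the defining relations and the single identity $[x^{-1},y^2]=y^4$. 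One small point in your favor: your version makes transparent \emph{why} the bound is $n=2$ rather than making it look like an accident of the relations.
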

\begin{proof}
Since $N$ is nilpotent, it is isomorphic to the direct product of
its all Sylow subgroups. Here, two cases may occur:

\textbf{Case 1}. All Sylow subgroups of $N$ are cyclic. Clearly, in
this case $N$ is cyclic.

\textbf{Case 2}. The Sylow $2$-subgroup of $N$ is $Q_{2^{n+1}}$ and
other Sylow subgroups are cyclic. If this is the case, then $N\cong
Q_{2^{n+1}}\times C$ where $C$ is a cyclic group. Thus, we must
prove that $n=2$. Suppose that $x,y$ are the generators of
$Q_{2^{n+1}}$. Since $Q_{2^{n+1}}$ is nilpotent of class 2, we have
$Q_{2^{n+1}}'\subseteq Z(Q_{2^{n+1}})$. Therefore,
$y^{2}=[y^{-1},x^{-1}]\in Z(Q_{2^{n+1}})$ and so $[x^{-1},y^2]=1$.
But, $[x^{-1},y^2]=y^4$. Thus, we must have $y^4=1$. This implies
that $n=2$, as desired.
\end{proof}
To proceed, we need also the following theorem which is a direct
consequence of  \cite[Th. 2.1.11]{ShWe86}.
\begin{thm}\label{th}
If $D$ is an $F$-central division algebra containing a subgroup
$S\cong\operatorname{SL}(2,5)$, then $Z(F[S])=F$ if $\sqrt{5}\in F$
and $Z(F[S])=F(\sqrt{5})$ if $\sqrt{5}\not\in F$. Therefore,
$[Z(F[S]):F]\leq2$.
\end{thm}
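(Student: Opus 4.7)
The plan is to locate $F[S]$ inside the Wedderburn decomposition of the group algebra $FS$ and to read off its centre. Since $D$ is a division ring and $F[S]$ is a finite-dimensional $F$-subalgebra of $D$, it has no zero divisors and so is itself a finite-dimensional division $F$-algebra. In particular $F[S]$ is simple, so the canonical surjection $FS\twoheadrightarrow F[S]$ identifies $F[S]$ with a single Wedderburn factor of the semisimple algebra $FS$. Under this identification the inclusion $S\hookrightarrow F[S]^*$ corresponds to a Galois orbit of absolutely irreducible characters of $S$, and $Z(F[S])$ is the compositum of $F$ with the character field of that orbit.

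Next I would run through the irreducible characters of $\operatorname{SL}(2,5)$. The binary icosahedral group has nine absolutely irreducible complex characters, of degrees $1,2,2,3,3,4,4,5,6$, and the five characters of degrees $1,3,3,4,5$ factor through $A_5=S/\{\pm 1\}$ and are therefore not faithful; since $S\hookrightarrow D^*$ is injective, the Galois orbits of these non-faithful characters are excluded. Of the four faithful characters, the two of degree $2$ form a single Galois orbit with character field $\mathbb{Q}(\sqrt{5})$, and their component in $\mathbb{Q}S$ is a quaternion division algebra over $\mathbb{Q}(\sqrt{5})$; the rational-valued faithful characters of degrees $4$ and $6$, being unique in their respective dimensions, have Schur index $1$ over $\mathbb{Q}$, so their components are the full matrix rings $M_4(\mathbb{Q})$ and $M_6(\mathbb{Q})$ and contain zero divisors. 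Hence the only Wedderburn factor of $FS$ which is a division algebra and accommodates a faithful copy of $S$ corresponds to the degree-$2$ orbit, and its centre is the compositum $F(\sqrt{5})$, which is $F$ when $\sqrt{5}\in F$ and a quadratic extension otherwise; this yields simultaneously both the claimed identification of $Z(F[S])$ and the bound $[Z(F[S]):F]\leq 2$.

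As a concrete consistency check, fix $x\in S$ of order $5$. The minimal polynomial of $x$ divides $\Phi_5(T)$, so $y=x+x^{-1}$ satisfies $y^2+y-1=0$ in $F[S]$ and already produces $\sqrt{5}=2y+1$ as an element of $F[x]\subseteq F[S]$. With $F[S]$ identified as a quaternion algebra over $F(\sqrt{5})$ in the previous step, the reduced trace and reduced norm take values in $F(\sqrt{5})$; applying the reduced characteristic polynomial to $x$ and using $\operatorname{Nrd}(x)=1$ (because $x$ acts with determinant $1$ in the $2$-dimensional realisation) gives $x+x^{-1}=\operatorname{Trd}(x)\in F(\sqrt{5})=Z(F[S])$, so the element $\sqrt{5}$ produced above is indeed central, as required.

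The principal obstacle in the plan is the middle step, namely verifying that the rational-valued faithful characters of $\operatorname{SL}(2,5)$ of degrees $4$ and $6$ have Schur index $1$ over $\mathbb{Q}$ so that their Wedderburn components are matrix rings and cannot be $F[S]$. This is the representation-theoretic input packaged by the cited Shirvani--Wehrfritz theorem \cite[Th.~2.1.11]{ShWe86}; a self-contained verification, for example by realising the $4$-dimensional faithful character as a tensor product of the two $2$-dimensional faithful ones and then explicitly descending the resulting representation to $\mathbb{Q}$, is possible but requires a short separate computation.
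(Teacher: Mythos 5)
The paper offers no argument for this theorem at all: it is stated as a direct consequence of Shirvani--Wehrfritz \cite[Th.~2.1.11]{ShWe86}, so your attempt to actually derive it from the Wedderburn decomposition of $FS$ is a genuinely different (and more self-contained) route, and its skeleton is sound: $F[S]$ is a finite-dimensional domain, hence a division algebra and a simple quotient of the semisimple algebra $FS$, so it is the Wedderburn component of a single Galois orbit of irreducible characters, with centre the compositum of $F$ with the character field; injectivity of $S\hookrightarrow D^*$ rules out the orbits factoring through $A_5$; and the degree-$2$ orbit does give the icosian quaternion algebra with centre $F(\sqrt{5})$. (You should also say a word about characteristic: $\operatorname{char}F=0$ is forced, e.g.\ by Herstein's theorem that finite subgroups of division rings of positive characteristic are cyclic, and this is what licenses Maschke/Wedderburn for $FS$.)

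However, the middle step contains a genuine error. It is false that the rational-valued faithful characters of degrees $4$ and $6$ have Schur index $1$ over $\mathbb{Q}$ with components $\operatorname{M}_4(\mathbb{Q})$ and $\operatorname{M}_6(\mathbb{Q})$: \emph{both} are quaternionic. Indeed, the Frobenius--Schur count gives $\sum_{\chi}\nu(\chi)\chi(1)=\#\{g\in S: g^2=1\}=2$ (the unique involution is $-1$), and since the five characters through $A_5$ are orthogonal with degree sum $16$, the four faithful characters must satisfy $\nu=-1$ across the board, as $16-(2+2+4+6)=2$. Hence the degree-$4$ and degree-$6$ components of $\mathbb{Q}S$ are $\operatorname{M}_2(\Delta)$ and $\operatorname{M}_3(\Delta')$ for quaternion division algebras $\Delta,\Delta'$ over $\mathbb{Q}$ ramified at the real place --- not full matrix rings. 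Your proposed self-contained verification would also fail at the first move: the tensor product of the two $2$-dimensional faithful representations is trivial on $-1$ (it is the standard $4$-dimensional character of $A_5$), so it cannot realise the faithful $\chi_4$. The good news is that the argument is repairable, because all you actually need is that the Schur index of each of $\chi_4,\chi_6$ is strictly less than its degree, so that the corresponding component over any $F$ is a proper matrix ring $\operatorname{M}_k(\Delta)$ with $k\geq 2$ and therefore has zero divisors and cannot equal $F[S]$. That the index divides $2$ follows, for instance, by restricting to the Sylow $2$-subgroup $Q_8$ of $S$: each faithful character restricts to a multiple of the unique faithful $2$-dimensional character of $Q_8$, which pins the division part down to a quaternion algebra. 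With that correction the exclusion goes through and the theorem follows as you intend; as written, though, the Schur-index claim is false and the step you yourself flagged as the principal obstacle would not survive the computation you propose.
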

Now, we can prove the following important result.
\begin{thm}\label{thm14}
Let $D$ be an $F$-central division algebra and $G$ be a
soluble-by-finite absolutely irreducible subgroup of $D^*$. Then:
\begin{enumerate}
  \item If $G$ contains a normal subgroup
  $S\cong \operatorname{SL}(2,5)$, then
  there are soluble-by-finite subgroups $G_1,G_2$ of $D^*$ such that $F[G_1]=F[Q]$
  (where $Q\cong Q_8$), $D=F[Q]\otimes_FF[G_2]$
  and $G_2$ has no normal subgroup isomorphic to $Q_8$ or $\operatorname{SL}(2,5)$.
  Moreover, if $Z(F[S])=F$ then $S\unlhd G_1$ and if
  $[Z(F[S]):F]=2$, then $Q\unlhd G_1$.
  \item If $G$ has a normal subgroup $Q\cong Q_8$, then
  there are soluble-by-finite subgroups $G_1,G_2$ of $D^*$ such that $F[G_1]=F[Q]$,
  $D=F[Q]\otimes_FF[G_2]$
  and $G_2$ has no normal subgroup isomorphic to $Q_8$ or $\operatorname{SL}(2,5)$.
  Moreover, $Q\unlhd G_1$.
  \item If $G$ has no normal subgroup isomorphic to $Q_8$ or $\operatorname{SL}(2,5)$, then
  it contains a maximal abelian normal subgroup $A$ of index dividing $\deg(D)^2$. Moreover,
  $F[A]/F$ is Galois with $\Gal(F[A]/F)\cong G/C_G(A)$, $C_G(A)'\subseteq A$
  and so $C_G(A)$ is nilpotent of class at most $2$.
\end{enumerate}
\end{thm}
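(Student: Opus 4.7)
The plan is to prove the three parts in the order (3), (2), (1). Parts (1) and (2) are applications of Theorem \ref{thm7} to peel off a tensor factor; part (3) contains the substantive analysis. For part (3), I pick a maximal abelian normal subgroup $A$ of $G$, which exists by Lemma \ref{lem}, and set $H=C_G(A)$. Lemma \ref{lem4}(3) immediately gives that $F[A]/F$ is Galois with $\Gal(F[A]/F)\cong G/H$, so $[G:H]=[F[A]:F]$ divides $\deg(D)$. It remains to show $H'\subseteq A$ (combined with $A\subseteq Z(H)$ this gives that $H$ is nilpotent of class at most $2$) and to bound $[H:A]$. Maximality of $A$ supplies two simplifying facts: since $H\unlhd G$, both $Z(H)$ and $H\cap F[A]^*$ are abelian normal subgroups of $G$ containing $A$, hence both equal $A$.

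The heart of the argument is $H'\subseteq A$. Since $[H:A]<\infty$ by Lemma \ref{lem} and $A\subseteq Z(H)$, Schur's theorem forces $H'$ to be finite. As $H'\unlhd G$ (a characteristic subgroup of the normal $H$), Amitsur's classification of finite subgroups of $D^*$ (recalled just before Lemma \ref{lem8}) combined with the hypothesis that $G$ has no normal $Q_8$ or $\SL(2,5)$ should force $H'$ abelian: types (A3)--(A6) each produce a characteristic copy of $Q_8$ or $\SL(2,5)$ inside $H'$ (for example, in (A3) the factor $Q_8$ is the Sylow $2$-subgroup), whose normality in $G$ would contradict the hypothesis, and a non-abelian instance of (A1) or (A2) again produces a characteristic $Q_8$ from Sylow-theoretic considerations. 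Once $H'$ is abelian, $AH'$ is abelian (since $A$ centralises $H\supseteq H'$) and normal in $G$, so by maximality $AH'=A$, that is, $H'\subseteq A$. For the index bound, Lemma \ref{lem5} applied inside $F[H]=C_D(F[A])$ with $K=F[A]$ (whose hypotheses $H'Z(H)=A\subseteq K^*$ and $H\cap K^*=A$ have just been established) gives $[F[H]:K]=[H:A]$; combined with the centralizer formula $[F[H]:F]\cdot[K:F]=\deg(D)^2$ this yields $[G:A]=\deg(D)^2/[G:H]$, which divides $\deg(D)^2$.

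For parts (2) and (1) the scheme is to apply Theorem \ref{thm7} with a normal subgroup whose $F$-span is the $F$-central quaternion $F[Q]\cong(-1,-1/F)$, producing the tensor decomposition $D=F[Q]\otimes_F F[G_2]$. In part (2), set $N=Q\cong Q_8$ directly. In part (1), Theorem \ref{th} splits into two subcases: when $Z(F[S])=F$ set $N=S$, noting that $F[S]=F[Q]$ as both are quaternion $F$-algebras of $F$-dimension $4$; when $[Z(F[S]):F]=2$ one extracts a characteristic $F$-central quaternion subalgebra from $F[S]$, locates its generators $Q\cong Q_8$, and arranges $Q\unlhd G_1$ through the construction of Theorem \ref{thm7}. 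The condition that $G_2$ carry no further normal $Q_8$ or $\SL(2,5)$ is arranged by iterating the reduction, since each application strictly decreases $\deg(F[G_2])$. The principal obstacle throughout is the Amitsur case analysis in part (3), showing that the exclusion of normal $Q_8$ and $\SL(2,5)$ in $G$ really does force an abelian $H'$ in every type; a secondary difficulty is the second subcase of part (1), where a direct application of Theorem \ref{thm7} with $N=S$ is unavailable and one must locate an $F$-central quaternion inside $F[S]$ by a more careful argument.
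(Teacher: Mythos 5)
Your overall skeleton matches the paper's (Theorem \ref{thm7} for parts (1)--(2); a maximal abelian normal subgroup, Schur's theorem, and the dimension count via Lemmas \ref{lem4} and \ref{lem5} for part (3), where your index computation is correct), but the central step of part (3) rests on a false claim. You propose to force $H'$ abelian by locating a characteristic copy of $Q_8$ or $\operatorname{SL}(2,5)$ in every non-abelian Amitsur type, asserting in particular that ``a non-abelian instance of (A1) or (A2) again produces a characteristic $Q_8$ from Sylow-theoretic considerations.'' A type (A1) group has \emph{all} Sylow subgroups cyclic, so it contains no copy of $Q_8$ whatsoever; non-abelian metacyclic groups of this type do embed in division rings, and your case analysis cannot exclude them as candidates for the finite normal subgroup $H'\lhd G$. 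The paper closes exactly this hole by exploiting the maximality of $A$ through the derived series of $H$: if $t$ is minimal with $H^t=1$ and $H'$ is non-abelian, then $AH^{t-1}$ is abelian and normal in $G$, so $H^{t-1}\subseteq A=Z(H)$, whence $H^{t-2}$ is a \emph{finite nilpotent group of class at most $2$}; only now does Lemma \ref{lem8} apply, showing $H^{t-2}$ is cyclic or $Q_8\times C$, and the non-abelian case yields a $Q_8$ characteristic in $H$, hence normal in $G$ --- the desired contradiction. Without this class-$2$ reduction, Amitsur's list alone does not deliver the conclusion.

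The second gap is the subcase of part (1) with $[Z(F[S]):F]=2$, which you explicitly defer. Note that $Q$ is one of five Sylow $2$-subgroups of $S\cong\operatorname{SL}(2,5)$, so $Q$ is neither characteristic in $S$ nor normal in $G$, and Theorem \ref{thm7} cannot be invoked for the pair $(G,Q)$; taking $N=S$ is also barred since $Z(F[S])=L\neq F$. Merely ``locating an $F$-central quaternion inside $F[S]$'' does not suffice: one needs $Q$ normal in an absolutely irreducible soluble-by-finite group tied to $G$. The paper's construction does this by enlarging $G$ to $M=L^*G$ (still soluble-by-finite and absolutely irreducible, with $S\lhd M$), using Lemma \ref{lem4} to get $|M:C_M(L)|=2$ and a counting argument to identify $M/C_M(S)\cong\operatorname{Sym}(5)$ with $C_M(L)=SC_M(S)$, then taking $T$ to be the preimage of a Sylow $2$-subgroup of $M/C_M(S)$ containing $\overline{Q}$; one checks $M=ST$, $F[T]=D$ and $Q\lhd T$, and applies Theorem \ref{thm7} to $(T,Q)$. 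Finally, your plan to ensure $G_2$ has no normal $Q_8$ or $\operatorname{SL}(2,5)$ ``by iterating the reduction'' is misdirected: a second application would embed $F[Q]\otimes_F F[Q]\cong \operatorname{M}_4(F)$ into the division ring $D$, which is impossible, so the paper concludes directly that the $G_2$ produced by the single application already has no such normal subgroup --- no iteration occurs, and an iterated decomposition would in any case not have the stated form $D=F[Q]\otimes_F F[G_2]$.
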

\begin{proof}
(1) First suppose that $G$ has a normal subgroup
$S\cong\operatorname{SL}(2,5)$. Put $L=Z(F[S])$. Now, by Theorem
\ref{th}, two cases may occur:

\textbf{Case 1}. $L=F$. So $F[S]=F[Q]$ where $Q$ is the
$Q_8$-subgroup of $S$. Therefore, $\deg(F[S])=2$. Now, by Theorem
\ref{thm7}, there exist subgroups $G_1$ and $G_2$ of $D^*$ such that
$D=F[Q]\otimes_FF[G_2]$. Also, $G\subseteq G_1G_2$ and $S\unlhd
G_1$. If $G_2$ contains a normal subgroup isomorphic to $Q_8$ or
$\operatorname{SL}(2,5)$, again by Theorem \ref{thm7}, we can
decompose $F[G_2]$ as $F[G_2]=F[Q]\otimes_FF[G_3]$ for some subgroup
$G_3$. But, this decomposition implies that
$\operatorname{M}_4(F)=F[Q]\otimes_FF[Q]$ is a subalgebra of $D$
which is a contradiction. So, we conclude that $G_2$ contains no
normal subgroup isomorphic to $Q_8$ or $\operatorname{SL}(2,5)$.
Moreover, since $G_1,G_2$ are homomorphic images of $G$, they are
soluble-by-finite.

\textbf{Case 2}. $[L:F]=2$. Since $G$ normalizes $F[S]$, it also
normalizes $L^*$. Therefore, $M=L^*G$ is a soluble-by-finite
subgroup of $D^*$ and of course it is absolutely irreducible.
Moreover, $S\lhd M$. Let $H=C_M(L)$. It is not hard to see that
$SC_M(S)\subseteq H$. But, $\overline{M}=M/C_M(S)$ is isomorphic to
a subgroup of $\operatorname{Aut}(S)\cong\operatorname{Sym}(5)$. On
the other hand, $SC_M(S)/C_M(S)\cong S/Z(S)$ and hence
$|SC_M(S):C_M(S)|=60$. Moreover, by Lemma \ref{lem4}, we have
$|M:H|=2$. Thus, $|\overline{M}|=|M:H||H:C_M(S)|\geq
|M:H||SC_M(S):C_M(S)|=120$. Therefore, $\overline{M}\cong
\operatorname{Sym}(5)$ and $H=SC_M(S)$. Let $Q$ be the
$Q_8$-subgroup of $S$ and let $T\supseteq C_M(S)$ be such that
$\overline{T}$ is a Sylow $2$-subgroup of $\overline{M}$ containing
$\overline{Q}$. Thus $QC_M(S)\subseteq T$ and so $F[S]=L[Q]\subseteq
F[T]$. On the other hand $T$ properly contains $C_M(S)$ and
$|M:SC_M(S)|=2$, Thus, we conclude that $M=ST$. Then
$D=F[M]=F[ST]=F[S][T]=F[T]$. Also, since
$|\overline{T}:\overline{Q}|=2$, we conclude that that $Q\lhd T$.
Now, by Theorem \ref{thm7} we can find a decomposition of $D$ as
$D=F[G_1]\otimes_F[G_2]$ where $M\subseteq G_1G_2$, $F[G_1]=F[Q]$
and $Q\lhd G_1$. Moreover, a similar argument as in Case 1, shows
that $G_2$ has no normal subgroup isomorphic to $Q_8$ or
$\operatorname{SL}(2,5)$.

(2) This follows from Theorem \ref{thm7} and a similar argument as
in Case 1 of (1).

(3) By Lemma \ref{lem}, $G$ contains a normal abelian subgroup $A$
of finite index. Take $A$ maximal and put $H=C_G(A)$. If $H=A$, then
$F[A]$ is a maximal subfield and hence by Lemma \ref{lem4}, we have
$|G:A|=[D:F[A]]=\deg(D)$ and the result follows. Thus, we assume
that $H\neq A$. Clearly, $Z(H)=A$. Therefore, $H$ is center-by-finte
and so $H'$ is finite. But, by our assumption $G$ has no normal
subgroup isomorphic to $\operatorname{SL}(2,5)$. This forces that
$H'$ is soluble. Now, we claim that $H'$ is abelian. Suppose that
$H'$ is non-abelian. Let $H^j$ denote the $j$-th term of the derived
series of $H$ and $t$ be the smallest integer such that $H^t=1$. So,
$H^{t-2}$ is non-abelian metabelian. Also, $H^{t-1}$ is abelian and
normal in $G$. Since $H^{t-1}\subseteq C_G(A)$, $AH^{t-1}$ is a
normal abelian subgroup of $G$ and hence $AH^{t-1}=A$, because $A$
is maximal. Therefore, $H^{t-1}$ is central in $H$ and so $H^{t-2}$
is nilpotent of class $2$. Now, by Lemma \ref{lem8}, $H^{t-2}$ is
either cyclic or $Q_8\times C$ where $C$ is cyclic. But, $H^{t-2}$
is non-abelian, so it can not be cyclic. Moreover, if
$H^{t-1}=Q_8\times C$, then $Q_8\lhd G$ which is impossible. This
contradiction shows that $H'$ is abelian and hence $H'\subseteq A$,
because $A$ is maximal. Now, put $K=F[A]$. By Lemma \ref{lem5} and
Lemma \ref{lem4}, we have $|H:K^*\cap H|=[K[H]:K]=\deg(F[H])^2$. On
the other hand, $A\subseteq K^*\cap H \lhd G$. Since $A$ is maximal,
we conclude that $A=K^*\cap H$. Thus, $|H:A|=\deg(F[H])^2$. Now,
Lemma \ref{lem4} implies that
$|G:A|=|G:H||H:A|=[F[G]:F[H]]\deg(F[H])^2$. Finally, the Double
Centralizer Theorem gives us $[F[G]:F[H]]\deg(F[H])=\deg(F[G])$ and
consequently $|G:A|$ divides $\deg(D)^2$.
\end{proof}
\begin{cor}\label{cor15}
Let $D$ be an $F$-central division algebra and $G$ be a
soluble-by-finite subgroup of $D^*$. If $G$ has no normal subgroup
isomorphic to $\operatorname{SL}(2,5)$ (in particular, if $G$ is
soluble), then $G$ contains a normal abelian subgroup of index
dividing $6\deg(D)^2$.
\end{cor}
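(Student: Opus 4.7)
The plan is to split on whether $G$ contains a normal $Q_8$ subgroup, and in both cases exploit the absolutely irreducible machinery of Theorem~\ref{thm14}. To set this up I would first pass to $D' := F[G]$ with center $L \supseteq F$, making $G$ absolutely irreducible in the $L$-central division algebra $D'$ of some degree $m$. The key numerical fact is $m \mid n := \deg_F(D)$: this holds because $D' \subseteq C_D(L)$, both are $L$-central simple algebras (of respective degrees $m$ and $n/[L:F]$), and the double centralizer decomposition $C_D(L) = D' \otimes_L C_{C_D(L)}(D')$ forces $m \mid n/[L:F] \mid n$. Thus any index of the form $cm^2$ automatically divides $cn^2$.

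If $G$ has no normal $Q_8$ subgroup, then by hypothesis it also has no normal $\operatorname{SL}(2,5)$, and Theorem~\ref{thm14}(3) applied in $D'^*$ directly produces a normal abelian subgroup of index dividing $m^2$, hence dividing $6n^2$. Suppose instead that $G$ has a normal $Q \cong Q_8$. Invoking Theorem~\ref{thm14}(2) yields a decomposition $D' = L[Q] \otimes_L L[G_2]$ in which $G_2$ has no normal subgroup isomorphic to $Q_8$ or $\operatorname{SL}(2,5)$, together with (from the proof of Theorem~\ref{thm7}) an epimorphism $\phi_2 : G \to G_2/L^*$ whose kernel equals $G \cap L[Q]^*$ and such that each $g \in G$ decomposes as $g = f_1(g) f_2(g)$ with $f_1(g) \in L[Q]^*$ acting on $Q$ the same way as $g$ does and $f_2(g) \in L[G_2]^*$. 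Applying Theorem~\ref{thm14}(3) to the absolutely irreducible, soluble-by-finite $G_2 \subseteq L[G_2]^*$ gives a normal abelian $A_2 \lhd G_2$ of index dividing $\deg_L(L[G_2])^2 = m^2/4$, and pulling back produces $B := \phi_2^{-1}(A_2 L^* / L^*) \lhd G$ with $|G:B|$ dividing $m^2/4$. Finally, I cut down by the conjugation action $G \to \operatorname{Aut}(Q) \cong S_4$: setting $C := B \cap C_G(Q)$ gives a normal subgroup of $G$ with $|G:C|$ dividing $24 \cdot m^2/4 = 6m^2$, and hence dividing $6n^2$.

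The heart of the argument is showing that $C$ is actually abelian, not merely nilpotent of class at most $2$ (which is what $B' \subseteq \ker \phi_2 \subseteq L[Q]^*$ combined with $C \subseteq C_G(Q) \subseteq L[G_2]^*$ would immediately give). The extra leverage comes from the explicit decomposition $g = f_1(g) f_2(g)$: for $g \in C$, the factor $f_1(g)$ acts on $Q$ by the same conjugation as $g$, which is trivial, so $f_1(g)$ centralizes $Q$; since $f_1(g) \in L[Q]$ and $Q$ generates $L[Q]$ over $L$, this forces $f_1(g) \in Z(L[Q]) = L^*$. Combined with $f_2(g) \in A_2 L^*$ from $g \in B$, we obtain $g \in L^* \cdot A_2 L^* = A_2 L^*$, an abelian subgroup of $L[G_2]^*$ (product of the abelian $A_2$ with the central $L^*$). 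Thus $C \subseteq A_2 L^*$ is abelian. The main obstacle is precisely recognizing that the centralizer condition $g \in C_G(Q)$ collapses the $f_1$-component to the center $L^*$; without this observation the index calculation alone only produces a class-$2$ nilpotent normal subgroup of the correct index.
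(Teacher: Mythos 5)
Your proposal is correct, and its skeleton is the paper's: reduce to $G$ absolutely irreducible, split on the existence of a normal $Q_8$, quote Theorem~\ref{thm14}(3) in the first case, and in the second case combine the decomposition from Theorem~\ref{thm14}(2) with a normal abelian subgroup of $G_2$ of index dividing $\deg(D)^2/4$ and the factor $24$ coming from $\operatorname{Aut}(Q_8)\cong\operatorname{Sym}(4)$; your explicit justification of the reduction ($m\mid n$ via the double centralizer theorem) is a welcome addition, since the paper disposes of it with a bare ``without loss of generality''. Where you genuinely diverge is in extracting the abelian subgroup in the $Q_8$ case, and there the paper's move is slicker: with $A\lhd G_2$ abelian, it simply takes $G\cap F^*A$. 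Because $G_1$ centralizes $G_2$ (by the construction in Theorem~\ref{thm7}), the group $F^*A$ is abelian and normal in $G_1G_2\supseteq G$, so abelianness comes for free, and $|G:G\cap F^*A|$ divides $|G_1:F^*|\,|G_2:A|\mid 24\cdot\deg(D)^2/4$, using $G_1\subseteq N_{F[Q]^*}(Q)$ and $C_{N_{F[Q]^*}(Q)}(Q)=F^*$. Your route --- pulling back $A_2L^*/L^*$ along $\phi_2$, intersecting with $C_G(Q)$, and then proving abelianness by collapsing $f_1(g)$ into $L^*$ --- is valid, but it reconstructs exactly the same subgroup: your collapse argument shows $C\subseteq G\cap A_2L^*$, and the reverse inclusion is equally immediate (an element of $G\cap A_2L^*$ centralizes $L[Q]$ and has $f_2$-image in $A_2L^*$). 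So what you single out as ``the heart of the argument'' is a verification that the paper's choice of subgroup renders unnecessary. The trade-off: the paper's version buys brevity, while yours makes the mechanism of Theorem~\ref{thm7} (the factorization $g=f_1(g)f_2(g)$ with uniqueness modulo the center) explicit, which is more transparent about why the centralizer condition forces the $f_1$-component into $L^*$ and where the bound $24\cdot m^2/4=6m^2$ comes from.
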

\begin{proof}
Without loss of generality, we may assume that $G$ is absolutely
irreducible. If $G$ has no normal $Q_8$-subgroup, the result follows
immediately by (3) of Theorem \ref{thm14}. Now, suppose that $G$ has
a normal subgroup $Q\cong Q_8$. Using (2) of Theorem \ref{thm14}, we
can find a decomposition $D=F[G_1]\otimes_FF[G_2]$ for some
soluble-by-finite subgroups $G_1,G_2$ where $G\subseteq G_1G_2$,
$Q\lhd G_1$ and $G_2$ has no normal $\operatorname{SL}(2,5)$ or
$Q_8$-subgroup. Now, by the former case, $G_2$ contains a normal
abelian subgroup $A$ of index dividing $\deg(D)^2/4$ (note that
$\deg(F[G_2])=2$). Also, $G_1\subseteq N=N_{F[Q]^*}(Q)$. Since $Q$
is an absolutely irreducible subgroup of $F[Q]$, we have
$C_N(Q)=F^*$. Therefore, $N/F^*$ is isomorphic to a subgroup of
$\operatorname{Aut}(Q)\cong \operatorname{Sym}(4)$ and so
$|G_1:F^*|$ divides $24$ (recall that $F^*\subseteq G_1$). Hence,
$|G_1G_2:F^*A|$ divides $24\deg(D)^2/4=6\deg(D)^2$. So, $|G:G\cap
F^*A|$ divides $6\deg(D)^2$, as desired.
\end{proof}

\begin{thm}
Let $D$ be an $F$-central division algebra such that $D^*$ contains
an absolutely irreducible soluble-by-finite subgroup $G$. Then:
\begin{enumerate}
  \item $D$ is a quasi-crossed product division algebra;
  \item If $\deg(D)$ is such that every finite group of order $\deg(D)^2$ is nilpotent,
  or if $G$ has no element of order dividing $\deg(D)$, then $D$ is a crossed
  product division algebra
  over a maximal subfield.
\end{enumerate}
\end{thm}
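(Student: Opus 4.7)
The plan is to prove both parts by combining Theorem \ref{thm14} with Lemma \ref{lem4} (and, for part (2), Corollary \ref{cormaini}). For (1), I would first treat the case where $G$ has no normal subgroup isomorphic to $Q_8$ or $\operatorname{SL}(2,5)$. Then Theorem \ref{thm14}(3) furnishes a maximal abelian normal subgroup $A$ of $G$ for which $K := F[A]$ is Galois over $F$, and such that $H := C_G(A)$ is nilpotent of class at most $2$ with $H' \subseteq A$. Since $Z(H)$ is characteristic in $H \lhd G$ and contains $A$, maximality of $A$ forces $Z(H) = A$. Choose a maximal abelian subgroup $B$ of $H$ containing $A$; then $[H,H] \subseteq A \subseteq B$ forces $B \lhd H$ by a direct computation, and the maximality of $B$ forces $C_H(B) = B$. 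Now $F[H] = C_D(K)$ by Lemma \ref{lem4}(2) is a central simple $K$-algebra in which $H$ is irreducible, so applying Lemma \ref{lem4} inside $C_D(K)$ to the normal abelian subgroup $B$ of $H$ yields that $L := K[B] = F[B]$ is a Galois maximal subfield of $C_D(K)$ with Galois group $H/B$, which is abelian as a quotient of $H/A$. A dimension count (using that $[C_D(K):K] = \deg(D)^2/[K:F]^2$) gives $[L:F] = \deg(D)$, so $L$ is a maximal subfield of $D$, and the tower $F \subseteq K \subseteq L$ gives the required quasi-crossed structure. The degenerate case $A \subseteq F^*$, in which $K = F$, is handled by taking $K = L$, since then $L/F$ itself is abelian Galois.

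When $G$ has a normal $Q_8$ or $\operatorname{SL}(2,5)$, Theorem \ref{thm14}(1) or (2) provides a decomposition $D = F[Q] \otimes_F F[G_2]$, where $F[Q]$ is a quaternion subalgebra and $G_2$ is an absolutely irreducible soluble-by-finite subgroup of $F[G_2]^*$ having no normal $Q_8$ or $\operatorname{SL}(2,5)$. Applying the previous case to $G_2$ inside $F[G_2]$ produces a tower $F \subseteq K_2 \subseteq L_2$ of the required type. Since quaternion algebras are crossed products, $F[Q]$ admits a Galois maximal subfield $E$ of degree $2$. Inside $D$, the commutative subring $E \otimes_F L_2$ is a domain, hence a field, of $F$-dimension $2\deg(F[G_2]) = \deg(D)$, so it is a maximal subfield of $D$. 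The subfield $E \otimes_F K_2$ is Galois over $F$ as the compositum of two Galois extensions, and $(E \otimes_F L_2)/(E \otimes_F K_2)$ is the base change of the abelian Galois extension $L_2/K_2$, hence abelian Galois. This yields the quasi-crossed tower in $D$.

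For part (2), the strategy is to upgrade $L$ (respectively $E \otimes_F L_2$) to a Galois maximal subfield of $D$, which by Corollary \ref{cormaini} is guaranteed once $G$ (or $G_2$) is abelian-by-nilpotent. Under the first hypothesis I would use that divisors of ``nilpotent numbers'' (integers $n$ such that every finite group of order $n$ is nilpotent) are again nilpotent numbers; since Theorem \ref{thm14}(3) makes $|G:A|$ a divisor of $\deg(D)^2$, the quotient $G/A$ is then nilpotent, $G$ is abelian-by-nilpotent, and Corollary \ref{cormaini} gives a crossed product structure. The normal-$Q_8$ case is handled analogously for $G_2$, since $\deg(F[G_2])^2$ divides $\deg(D)^2$ and the Galois maximal subfield of $F[G_2]$ combines with $E$ as in part (1). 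Under the second hypothesis, I would argue that no nontrivial element of $G$ of order dividing $\deg(D)$ rules out normal $Q_8$ and $\operatorname{SL}(2,5)$ subgroups, since any such normal subgroup embeds $Q_8$ or $\operatorname{SL}(2,5)$ into $D^*$ and forces divisibility constraints on $\deg(D)$ incompatible with the torsion data of these groups, after which the argument reduces to Case A. The principal obstacles are the careful combinatorial bookkeeping for the second hypothesis of (2), the verification that $E \otimes_F L_2$ is genuinely a field inside $D$ (rather than merely an étale algebra), and handling the degenerate subcase of (1) in which the maximal abelian normal subgroup $A$ ends up central, so the chain $F \subsetneq K \subseteq L$ must be rearranged to respect the strict inclusion demanded by the definition of a quasi-crossed product.
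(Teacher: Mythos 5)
Your part (1) and the first hypothesis of part (2) track the paper's own proof closely. You use the same case split (normal $Q_8$ or $\operatorname{SL}(2,5)$ versus neither), the same tower $F\subseteq F[A]\subseteq F[B]$ built from Theorem \ref{thm14}(3) and Lemma \ref{lem4}, and the same device of tensoring the tower for $F[G_2]$ with a quadratic maximal subfield of $F[Q]$ (the paper uses $F(\sqrt{-1})$ explicitly; your verification that $E\otimes_F L_2$ is a domain inside $D$, hence a field, makes explicit a point the paper leaves implicit). Two of your variations are correct and even slightly cleaner: you get $C_H(B)=B$ from maximality of an abelian $B\supseteq H'$ directly, where the paper invokes Corollary \ref{cormaini} via nilpotency of $H$; and your remark that divisors of nilpotent numbers are nilpotent numbers justifies the paper's unexplained assertion that $G/A$ is nilpotent when $|G:A|$ divides $\deg(D)^2$. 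Your handling of the degenerate case $A\subseteq F^*$ is extra care the paper omits.

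The genuine gap is in the second hypothesis of part (2). After ruling out normal $Q_8$ and $\operatorname{SL}(2,5)$ subgroups (correct, and this matches the paper's ``clearly''), you assert that ``the argument reduces to Case A.'' It does not: Case A used the hypothesis that every group of order $\deg(D)^2$ is nilpotent to conclude that $G/A$ is nilpotent, and that hypothesis is unavailable here. Absence of the exceptional normal subgroups only yields Theorem \ref{thm14}(3), which gives the quasi-crossed tower of part (1), not $C_G(A)=A$. The missing idea is a reduced-norm torsion argument: with $A$ a maximal abelian normal subgroup and $H=C_G(A)$, Theorem \ref{thm14}(3) gives $H'\subseteq A$, so $H'$ lies in $F[A]=Z(F[H])$ by Lemma \ref{lem4}; every $a\in H'$ is a product of commutators of units of the $F[A]$-central division algebra $F[H]$, so $\operatorname{Nrd}_{F[H]}(a)=1$, and for central $a$ this reads $a^{\deg(F[H])}=1$. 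Since $\deg(F[H])$ divides $\deg(D)$, the hypothesis that $G$ has no nontrivial element of order dividing $\deg(D)$ forces $H'=1$, hence $H$ is abelian, hence $H=A$ by maximality of $A$, and Lemma \ref{lem4} then makes $F[A]$ a Galois maximal subfield of $D$. Without this step (or an equivalent one) your argument stalls at the quasi-crossed conclusion and never produces the crossed product structure claimed in (2).
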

\begin{proof}
$(1)$ First, suppose that $G$ has no subgroup isomorphic to $Q_8$ or
$\operatorname{SL(2,5)}$. By Theorem \ref{thm14}, $G$ contains an
abelian normal subgroup $A$ of finite index such that $F[A]/F$ is
Galois. Also, $H=C_G(A)$ is nilpotent. Thus, if $B\supseteq A$ is a
maximal normal abelian subgroup of $C_G(A)$ of finite index, Then by
Corollary \ref{cormaini}, we have $C_H(B)=B$ and $F[B]/F[A]$ is Galois
with $\Gal(F[B]/F[A])\cong H/C_H(B)$ (note that by Lemma \ref{lem4},
$Z(F[H])=F[A]$). Also, $H/C_H(B)$ is abelian, because $H'\subseteq
A\subseteq B$. Furthermore, by the Double Centralizer Theorem $F[B]$
is a maximal subfield of $D$. So, we have a tower of subfields
$F\subseteq F[A]\subseteq F[B]$ such that $F[B]$ is a maximal
subfield of $D$, $F[A]/F$ is Galois and $F[B]/F[A]$ is abelian
Galois, as desired.

Now, if $G$ contains a normal subgroup isomorphic to $Q_8$ or
$\operatorname{SL}(2,5)$, Then by Theorem \ref{thm14},
$D=F[Q]\otimes_FF[G_2]$ where $G_2$ is soluble-by-finite that has no
subgroups isomorphic to $Q_8$ or $\operatorname{SL}(2,5)$. But, as
above, there exists a tower of subfields $F\subseteq K\subseteq L$
of $F[G_2]$ such that $L$ is a maximal subfield of $F[G_2]$, $K/F$
is Galois and $L/K$ is abelian Galois. Now, one can easily check
that $F\subseteq K\otimes_FF(\sqrt{-1})\subseteq
L\otimes_FF(\sqrt{-1})$ is the required tower of subfields for $D$.

$(2)$ Suppose that every finite group of order $\deg(D)^2$ is
nilpotent. Let $G$ contain no subgroups isomorphic to $Q_8$ or
$\operatorname{SL}(2,5)$ and let $A$ be a maximal normal abelian
subgroup of $G$ such that $|G:A|$ divides $\deg(D)^2$ (Theorem
\ref{thm14}). Then, $G$ is abelian-by-nilpotent and hence by
Corollary \ref{cormaini}, $D$ is a crossed product division algebra. Now,
suppose that $G$ contains a copy of $Q_8$ or
$\operatorname{SL}(2,5)$ as a normal subgroup. By Theorem
\ref{thm14}, $D$ is decomposed as $D=F[Q]\otimes_FF[G_2]$ where
$G_2$ has no normal subgroups $Q_8$ or
$\operatorname{SL}(2,5)$-subgroup. Thus, as above $F[G_2]$ is a
crossed product division algebra. Hence, $D$ is a crossed product
division algebra.

Finally, consider the case where $G$ has no elements of order
dividing $\deg(D)$. Clearly, $G$ has no $Q_8$ or
$\operatorname{SL}(2,5)$-subgroup. Let $A$ be a maximal normal
abelian subgroup of $G$ with $|G:A|<\infty$. Put $H=C_G(A)$. by
Lemma \ref{lem4}, $F[H]$ is an $F[A]$-central division algebra.
Moreover, Theorem \ref{thm14} implies that $H'\subseteq F[H]'\cap
F[A]$. So, if $a\in H'$ then
$a^{\deg(F[H])}=\operatorname{Nrd}_{F[H]}(a)=1$. Therefore,
$\operatorname{ord}(a)$ divides $\deg(D)$ which leads to $a=1$.
Thus, $H'=1$ and so $H$ is abelian. Since $A$ is maximal, we
conclude that $H=A$ and the result follows.
\end{proof}
As we mentioned before, by the works of Shirvani and Wehrfritz, if
$G$ is a soluble-by-finite subgroup of a division algebra $D$, then
$G$ has a normal abelian subgroup of index dividing $30\deg(D)^2$.
But in Corollary \ref{cor15}, the case in which $G$ has no normal
$\operatorname{SL}(2,5)$-subgroups is considered. Thus, if we would
like to achieve the above bound, we must consider the case
$\operatorname{SL}(2,5)\lhd G$. If $Z(F[S])=F$, a similar method as
in Corollary \ref{cor15} is enough to find the required bound. For,
since $Z(F[S])=F$, Theorem \ref{thm14} gives a decomposition
$D=F[G_1]\otimes_FF[G_2]$. Since $Z(F[S])=F$, we must have
$C_{N}(S)=F^*$ where $N=N_{D^*}(S)$. Thus $N/F^*\cong
\operatorname{Aut}(S)\cong\operatorname{Sym}(5)$. But, since
$G_1\subseteq N$ we conclude that $|G_1:F^*|$ divides 120. On the
other hand, by Corollary \ref{cor15}, $G_2$ has a normal abelian
subgroup $A$ of index dividing $\deg(D)^2/4$. Therefore, $|G:G\cap
F^*A|$ divides $120\deg(D)^2/4=30\deg(D)^2$. But, for the case
$[Z(F[S]):F]=2$ we need a little more work. In fact we need the
following stronger version of Theorem \ref{thm14}.
\begin{thm}\label{thm17}
Let $D$ be a division algebra and $G$ be a soluble-by-finite
absolutely irreducible subgroup of $D^*$. If $G$ has no normal
subgroups isomorphic to $Q_8$ or $\operatorname{SL}(2,5)$, then it
contains a characteristic abelian subgroup of index dividing
$\deg(D)^2$.
\end{thm}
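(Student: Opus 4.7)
The plan is to apply Theorem~\ref{thm14}(3) and then identify the normal abelian subgroup it produces with a canonical characteristic subgroup of $G$. By Theorem~\ref{thm14}(3), there is a maximal normal abelian subgroup $A\lhd G$ with $[G:A]\mid \deg(D)^2$, and $H:=C_G(A)$ is normal in $G$, nilpotent of class at most $2$, with $H'\subseteq A$. Since $A\subseteq H$ is abelian, $A\subseteq Z(H)$, and $Z(H)$ is itself a normal abelian subgroup of $G$, so maximality of $A$ forces $A=Z(H)$. Moreover, $C_G(H)\subseteq C_G(A)=H$ gives $C_G(H)\subseteq H\cap C_G(H)=Z(H)=A$, while the reverse inclusion $A\subseteq C_G(H)$ is obvious; hence $A=Z(H)=C_G(H)$. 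Consequently, producing a characteristic abelian subgroup of $G$ of the desired index is reduced to showing that $H$ itself can be taken to be characteristic in $G$.

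To realize $H$ canonically, I would exhibit it as the Fitting subgroup of the soluble radical $R$ of $G$. Since $G$ is soluble-by-finite, $R$ (the largest normal soluble subgroup) is characteristic in $G$ and of finite index, and its Fitting subgroup $\operatorname{Fit}(R)$ is characteristic in $R$, hence in $G$, and nilpotent. The inclusion $H\subseteq\operatorname{Fit}(R)$ is immediate, since $H$ is nilpotent, normal in $G$, and contained in $R$ (as $H$ is soluble). For the reverse inclusion, I would take an arbitrary normal nilpotent subgroup $N$ of $R$ and show it must centralize $A$. A prototypical step is to prove $Z(N)\subseteq A$: since $A$ normalizes $Z(N)$ and vice versa, the product $A\cdot Z(N)$ is normal in $G$ with $[A,Z(N)]\subseteq A\cap Z(N)$, and a maximality argument combined with Lemma~\ref{lem8} (which constrains finite nilpotent-of-class-$\leq 2$ subgroups of $D^*$ to be cyclic or of the form $Q_8\times C$) together with the hypothesis that $G$ has no normal $Q_8$ or $\operatorname{SL}(2,5)$ subgroup forces $[A,Z(N)]=1$, so $Z(N)\subseteq A$. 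Iterating up the upper central series of $N$ should then yield $N\subseteq C_G(A)=H$.

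Once $H=\operatorname{Fit}(R)$ is characteristic in $G$, the identity $A=C_G(H)$ exhibits $A$ as a characteristic abelian subgroup of $G$ with $[G:A]\mid \deg(D)^2$, proving the theorem. The main obstacle is the reverse inclusion $\operatorname{Fit}(R)\subseteq H$: one must show that no normal nilpotent subgroup of $R$ acts nontrivially on the maximal normal abelian $A$. This is precisely where the hypothesis that $G$ has no normal $Q_8$ or $\operatorname{SL}(2,5)$ subgroup is indispensable, since it blocks, via Amitsur's classification of finite subgroups of division rings, the non-abelian nilpotent obstructions that would otherwise permit the Fitting subgroup to be strictly larger than $C_G(A)$.
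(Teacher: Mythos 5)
Your reductions in the first paragraph are correct: with $A$ maximal normal abelian and $H=C_G(A)$ nilpotent of class at most $2$, one does get $A=Z(H)=C_G(H)$, so the theorem would follow if $H$ could be replaced by a characteristic subgroup with the same index behaviour. The genuine gap is your identification $H=\operatorname{Fit}(R)$: the inclusion $\operatorname{Fit}(R)\subseteq C_G(A)$ is false, and in fact no identification of $C_G(A)$ with a canonical subgroup can succeed, because $C_G(A)$ is genuinely not characteristic. Take $D$ a symbol division algebra of odd prime degree $p$ and $G=F^*\langle u,v\rangle$ with $u^p,v^p\in F^*$, $uv=\omega_p vu$, exactly the subgroup appearing in Theorem~\ref{thm3}. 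Then $G$ is absolutely irreducible, nilpotent of class $2$, and since $G'=\langle\omega_p\rangle\cong C_p$ with $p$ odd, $G$ has no subgroup at all isomorphic to $Q_8$ or $\operatorname{SL}(2,5)$. Here $A=F^*\langle u\rangle$ is a maximal normal abelian subgroup with $H=C_G(A)=A$, while $R=G$ (as $G$ is soluble) and $\operatorname{Fit}(R)=G$ because $G$ itself is nilpotent; so $N=G$ is a normal nilpotent subgroup of $R$ which does not centralize $A$. This pinpoints where your argument breaks: one does have $Z(N)=F^*\subseteq A$, but the iteration up the upper central series is invalid, since for class-$2$ groups one only gets $[Z_2(N),A]\subseteq Z(N)$ rather than $[Z_2(N),A]=1$, and indeed $Z_2(G)=G\not\subseteq C_G(A)$. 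Two further soft spots: $A\,Z(N)$ is only normal in $R$, not in $G$ (you only have $N\lhd R$), and for infinite groups the Fitting subgroup need not be nilpotent, so even "characteristic and nilpotent'' requires an argument. Finally, taking $D=(s,t)_{\omega_p}$ over $F=\mathbb{Q}(\omega_p)(s,t)$, the assignment $u\mapsto v$, $v\mapsto u$, combined with the automorphism of $F^*$ induced by $s\leftrightarrow t$, $\omega_p\mapsto\omega_p^{-1}$, is an automorphism of $G$ carrying $F^*\langle u\rangle$ to $F^*\langle v\rangle$; so $H$ is not characteristic for any admissible choice of $A$.

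The paper's proof is designed precisely to survive this phenomenon. Instead of trying to make $H$ canonical, it sets $\mathcal{H}=\bigcap_{\phi\in\operatorname{Aut}(G)}H^{\phi}$, which is characteristic by construction; the entire difficulty is then to retain index control after intersecting, and this is done via crossed products: by Lemma~\ref{lem4}, $D$ is a crossed product of $F[H^{\phi}]$ by $G/H^{\phi}$ for every $\phi$, and Wehrfritz's Lemma~\ref{lem18} shows the crossed-product property passes to the intersection, so $|G:\mathcal{H}|=[D:F[\mathcal{H}]]$. Then $\mathcal{A}=Z(\mathcal{H})$ is the desired characteristic abelian subgroup, and Lemma~\ref{lem5} converts $|\mathcal{H}:\mathcal{A}|$ into $[F[\mathcal{H}]:F[\mathcal{A}]]$, giving $|G:\mathcal{A}|=[D:F[\mathcal{A}]]$, which divides $\deg(D)^2$. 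In your plan the index bound came for free from the (false) equality $\operatorname{Fit}(R)=H$; once that fails you have no mechanism playing the role of Lemma~\ref{lem18}, and this index control for an intersection of non-canonical subgroups is the real content of the theorem. In the example above, $\mathcal{H}\subseteq F^*\langle u\rangle\cap F^*\langle v\rangle=F^*$ and $|G:F^*|=p^2=\deg(D)^2$, so the paper's bound is sharp exactly where your construction collapses.
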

To prove Theorem \ref{thm17}, we need to recall the concept of a
general crossed product algebra and a useful lemma of \cite{We90b}.
Let $R$ be a ring, $S$ a subring of $R$ and $G$ a subgroup of the
group of units of $R$ normalizing $S$. If $R=S[G]$ and if $N=G\cap
S$ is a normal subgroup of $G$ with $R=\oplus_{t\in T} St$ for some
(and hence any) transversal $T$ of $N$ to $G$, we say that $R$ is a
crossed product algebra of $S$ by $G/N$. Note that Lemma \ref{lem4},
essentially says that if $D$ is a division algebra and $A$ is a
normal abelian subgroup of an absolutely irreducible subgroup $G$ of
$D^*$, then $D$ is a crossed product division algebra of $F[C_G(A)]$
by $G/C_G(A)$.
\begin{lem}\label{lem18}
Let $R=S[G]$ be a ring where $S$ is a subring of $R$ and $G$ is a
subgroup of $R^*$ normalizing $S$. Suppose that $R$ is a crossed
product of $S[N_i]$ by $G/N_i$ for normal subgroups $N_i$ of $G$.
Set $N=\cap N_i$. Then $R$ is a crossed product of $S[N]$ by $G/N$.
\end{lem}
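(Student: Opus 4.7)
The plan is to verify the two defining properties of a crossed product for $R$ over $S[N]$ by $G/N$: (a) $N = G\cap S[N]$, and (b) $R=\bigoplus_{t\in T}S[N]\,t$ for some (hence any) transversal $T$ of $N$ in $G$.

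For (a), the inclusion $N \subseteq G\cap S[N]$ is trivial. Conversely, any $g \in G\cap S[N]$ lies in $G \cap S[N_i]$ for every $i$, since $N\subseteq N_i$ forces $S[N]\subseteq S[N_i]$; the crossed product hypothesis then gives $g\in G\cap S[N_i] = N_i$ for each $i$, whence $g\in \bigcap_i N_i = N$. For (b), spanning is straightforward: $G = \bigsqcup_{t\in T}Nt$ yields $R = S[G] = \sum_{t\in T}S[N]\,t$. The heart of the argument is $S[N]$-linear independence of $T$.

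I plan to prove this independence by induction on the number of nonzero terms in a relation, using the following key reduction: given any relation $\sum_{t\in T_0}s_t\, t = 0$ with $T_0\subseteq T$ finite and $s_t\in S[N]$, and given any index $i$, the sub-sums indexed by the $N_i$-cosets of the $t$'s are each zero. To see this, for each $N_i$-coset $C$ meeting $T_0$ pick the representative $u_C\in T_i$ and rewrite $s_t t = \bigl(s_t(tu_C^{-1})\bigr)u_C$; normality of $N_i$ shows $tu_C^{-1}\in N_i$, so $s_t(tu_C^{-1})\in S[N]\cdot N_i \subseteq S[N_i]$. Since $R=\bigoplus_{u\in T_i}S[N_i]\,u$, the $u_C$-component must vanish, giving $\sum_{t\in T_0\cap C}s_t(tu_C^{-1}) = 0$, and right-multiplying by $u_C$ yields $\sum_{t\in T_0\cap C}s_t\, t = 0$.

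The induction on $|T_0|$ then finishes: for $|T_0|=1$ the result is immediate from invertibility of $t$. For $|T_0|\geq 2$ pick any two distinct $t_1,t_2\in T_0$; since $t_1N\neq t_2N$ and $N=\bigcap_j N_j$, there exists $i_0$ with $t_1N_{i_0}\neq t_2N_{i_0}$. Applying the reduction with $i=i_0$ splits the relation into sub-relations, each of length strictly less than $|T_0|$ (because $t_1$ and $t_2$ are sent to different sub-sums), so the inductive hypothesis forces every $s_t = 0$. The main obstacle will be setting up the reduction step cleanly, namely managing the interplay between the two transversals $T$ (of $N$) and $T_i$ (of $N_i$) and verifying that the rewritten coefficients land in $S[N_i]$ so that the $S[N_i]$-direct-sum decomposition of $R$ may be invoked.
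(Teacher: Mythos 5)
Your proof is correct. Note that the paper does not actually prove this lemma at all --- its ``proof'' is just the citation ``See \cite[p. 213]{We90b}'' --- so your write-up supplies exactly the argument the paper outsources to Wehrfritz, and it is the natural one. Both defining conditions are handled properly: $N=G\cap S[N]$ follows from $S[N]\subseteq S[N_i]$ together with the hypothesis $G\cap S[N_i]=N_i$, and spanning is immediate from $G=\bigsqcup_{t\in T}Nt$. The crux, $S[N]$-independence of $T$, is correctly reduced via your coset-splitting step to the given decompositions $R=\bigoplus_{u\in T_i}S[N_i]\,u$: since $s_t\in S[N]\subseteq S[N_i]$ and $tu_C^{-1}\in N_i\subseteq S[N_i]$, the rewritten coefficients do lie in $S[N_i]$, so each $u_C$-component vanishes, and right-multiplying by the unit $u_C$ recovers the sub-relations over $T$. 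The induction then closes, and --- importantly --- your argument works for an arbitrary (possibly infinite) family $\{N_i\}$, since for distinct $t_1,t_2\in T$ you need only one index $i_0$ with $t_1t_2^{-1}\notin N_{i_0}$, which exists precisely because $t_1t_2^{-1}\notin\bigcap_i N_i=N$. Two cosmetic points: with right cosets $N_iu_C$ the containment $tu_C^{-1}\in N_i$ is immediate, so normality of $N_i$ is not needed at that spot (it is needed only so that left and right cosets agree and so that $N=\bigcap_i N_i$ is normal in $G$); and for completeness one should record the trivial verifications that $S[N]$ is a subring of $R$ normalized by $G$ (because $G$ normalizes $S$ and $N\lhd G$) and that $R=S[N][G]$, both of which your argument uses implicitly.
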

\begin{proof}
See \cite[p. 213]{We90b}.
\end{proof}
Now, Lemma \ref{lem18} helps us prove Theorem \ref{thm17}:
\begin{proof}[Proof of Theorem \ref{thm17}]
By Theorem \ref{thm14} and Lemma \ref{lem4}, $G$ has a maximal
abelian normal subgroup $A$ of finite index such that $H=C_G(A)$ is
nilpotent of class at most 2 and $D$ is a crossed product of $F[H]$
by $G/H$. Now, it is not hard to see that for each $\phi\in
\operatorname{Aut}(G)$, $D$ is a crossed product of $F[H^{\phi}]$ by
$G/H^{\phi}$. Put $\mathcal{H}=\cap_{\phi}H^{\phi}$ where $\phi$
runs through $\operatorname{Aut}(G)$. Clearly $\mathcal{H}$ is a
characteristic subgroup of $G$ that is nilpotent of class at must 2.
From Lemma \ref{lem18}, it follows that $D$ is a crossed product of
$F[\mathcal{H}]$ by $G/\mathcal{H}$. Now,
$\mathcal{A}=Z(\mathcal{H})$ is a characteristic subgroup of $G$
with $\mathcal{H}'\subseteq \mathcal{A}$. But,
$|\mathcal{H}:\mathcal{A}|=[F[\mathcal{H}]:F[\mathcal{A}]]$ by Lemma
\ref{lem5}. Thus
$|G:\mathcal{A}|=|G:\mathcal{H}||\mathcal{H}:\mathcal{A}|=
[D:F[\mathcal{H}]][F[\mathcal{H}]:F[\mathcal{A}]]=[D:F[\mathcal{A}]]$,
as required.
\end{proof}
Now, let $G$ be an absolutely irreducible soluble-by-finite subgroup
of $D^*$. Suppose that $G$ has a normal
$\operatorname{SL}(2,5)$-subgroup $S$ and that $[Z(F[S]):F]=2$. In
the proof of Theorem \ref{thm14}(1) we saw that $G$ is contained in
a soluble-by-finite subgroup $M$ such that $S\lhd M$. Moreover,
$|M:SC_M(S)|=2$ and $|SC_M(S):C_M(S)|=60$. It is not hard to see
that $C_M(S)$ has no normal $\operatorname{SL}(2,5)$ or
$Q_8$-subgroup and $\deg(F[C_M(S)])$ divides $\deg(D)/2$. Thus,
$C_M(S)$ has a characteristic abelian subgroup $A$ of index dividing
$\deg(D)^2/4$. Now, since $A$ is characteristic in $C_M(S)$ and
$C_M(S)\lhd M$, we conclude that $A\lhd M$. Finally,
$|M:A|=|M:SC_M(S)||SC_M(S):C_M(S)||C_M(S):A|=2\times60 \times
\deg(D)^2/4=30\deg(D)^2$. Therefore, $|G:G\cap A|$ divides
$30\deg(D)^2$. Thus, we have proved that
\begin{thm}\label{tt}
Let $D$ be a division algebra and $G$ be a soluble-by-finite
subgroup of $D^*$. Then $G$ has a normal abelian subgroup of index
dividing $30\deg(D)^2$.
\end{thm}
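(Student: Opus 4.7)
The plan is to reduce to the absolutely irreducible case and then branch on whether $G$ contains a normal $\operatorname{SL}(2,5)$. As in the proof of Corollary~\ref{cor15}, passing to $F[G]$ lets me assume $G$ is absolutely irreducible in $D^*$; any index bound dividing $30\deg(F[G])^2$ a fortiori divides $30\deg(D)^2$. Once $G$ is absolutely irreducible, the three structural tools I need are already established: Corollary~\ref{cor15} (which handles the case of no normal $\operatorname{SL}(2,5)$-subgroup), Theorem~\ref{thm14}(1) (the tensor decomposition $D=F[G_1]\otimes_F F[G_2]$ when a normal $\operatorname{SL}(2,5)$ appears), and Theorem~\ref{thm17} (the characteristic-abelian strengthening, needed only in the hardest case).

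If $G$ has no normal subgroup isomorphic to $\operatorname{SL}(2,5)$, Corollary~\ref{cor15} already produces a normal abelian subgroup of index dividing $6\deg(D)^2$, which divides $30\deg(D)^2$, and we are done. Otherwise pick a normal $S\cong\operatorname{SL}(2,5)$ in $G$ and set $L=Z(F[S])$; by Theorem~\ref{th}, either $L=F$ or $[L:F]=2$. When $L=F$, Theorem~\ref{thm14}(1) yields a decomposition $D=F[Q]\otimes_F F[G_2]$ with $S\unlhd G_1$ and with $G_2$ free of normal $Q_8$ or $\operatorname{SL}(2,5)$ subgroups. Absolute irreducibility of $S$ in the degree-$2$ algebra $F[Q]$ forces $C_N(S)=F^*$ for $N=N_{D^*}(S)$, so $G_1/F^*\hookrightarrow \operatorname{Aut}(\operatorname{SL}(2,5))\cong\operatorname{Sym}(5)$ and $|G_1:F^*|$ divides $120$; applying Corollary~\ref{cor15} to $G_2$ (which has degree $\deg(D)/2$) gives a normal abelian $A\le G_2$ of index dividing $\deg(D)^2/4$, and combining the two bounds yields $|G:G\cap F^*A|\mid 30\deg(D)^2$.

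The remaining case $[L:F]=2$ is the genuine obstacle, because the tensor decomposition now involves a larger $F[G_1]$ and a naive application of Corollary~\ref{cor15} to a $G_2$-factor would not give a subgroup that is normal in $G$. This is precisely why Theorem~\ref{thm17} is stated with a \emph{characteristic} abelian subgroup. The plan is to form the enlarged group $M=L^*G$, still absolutely irreducible and soluble-by-finite, with $S\unlhd M$ and, as recorded in the proof of Theorem~\ref{thm14}(1), $|M:SC_M(S)|=2$ and $|SC_M(S):C_M(S)|=60$. Then $C_M(S)$ has no normal $Q_8$ or $\operatorname{SL}(2,5)$ subgroup and satisfies $\deg(F[C_M(S)])\mid \deg(D)/2$, so Theorem~\ref{thm17} supplies a characteristic abelian subgroup $A\unlhd C_M(S)$ of index dividing $\deg(D)^2/4$. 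Being characteristic in $C_M(S)$ together with $C_M(S)\unlhd M$ forces $A\unlhd M$, and multiplying the three successive indices gives $|M:A|=2\cdot 60\cdot \deg(D)^2/4=30\deg(D)^2$, whence $|G:G\cap A|$ divides $30\deg(D)^2$, completing the proof.
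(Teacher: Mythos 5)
Your proposal is correct and follows the paper's own argument essentially step for step: the same reduction to the absolutely irreducible case, the same three-way split handled by Corollary~\ref{cor15} (no normal $\operatorname{SL}(2,5)$), by Theorem~\ref{thm14}(1) with the bound $|G_1:F^*|\mid 120$ when $Z(F[S])=F$, and by applying Theorem~\ref{thm17} to $C_M(S)$ inside $M=L^*G$ when $[Z(F[S]):F]=2$, with the identical index computation $2\cdot 60\cdot\deg(D)^2/4=30\deg(D)^2$. You even identify, as the paper does, that the point of the characteristic (rather than merely normal) abelian subgroup in Theorem~\ref{thm17} is precisely to get $A\lhd M$ from $C_M(S)\lhd M$.
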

Here, we should point out that the above results concerning the
structure of soluble-by-finite subgroups of division algebras were
nicely obtained by Wehrfritz for the soluble-by-finite subgroups of
$\operatorname{GL}_n(D)$, where $D$ is a division algebra in
\cite{Weh07}. His results demonstrates that if $G$ is a
soluble-by-finite subgroup of $\operatorname{GL}_n(D)$, then $G$ has
an abelian normal subgroup of finite index dividing
$b(n)\deg(D)^{2n}$, where $b(n)$ is an integer valued function which
depends only on $n$.
By an interesting result of \cite{KeMa07}, the existence of a finite
absolutely irreducible subgroup in a division algebra $D$ guarantees
that $D$ is a crossed product division algebra. Here, we give a very
short proof for this fact.
\begin{thm}
Let $D$ be an $F$-central division algebra such that $D^*$ contains
an irreducible finite subgroup $G$. Then $D$ is a crossed product
division algebra.
\end{thm}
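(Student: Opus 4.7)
The plan is to induct on $\deg(D)$. Since $G$ is finite and $F[G]=D$, the algebra $D$ is automatically finite-dimensional over $F$, so the induction is well-founded, with the base case $\deg(D)=1$ being trivial. For the inductive step, observe that a finite group is in particular soluble-by-finite, and $G$ is absolutely irreducible by hypothesis, so Theorem \ref{thm14} applies directly to $G\subseteq D^*$.

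First I would dispose of the case where $G$ has no normal subgroup isomorphic to $Q_8$ or $\operatorname{SL}(2,5)$. Then by Theorem \ref{thm14}(3), $G$ contains a maximal abelian normal subgroup $A$ with $F[A]/F$ Galois and $H=C_G(A)$ nilpotent of class at most $2$. If $H=A$, Lemma \ref{lem4} shows that $F[A]$ is a Galois maximal subfield of $D$ and we are done. Otherwise $F[H]$ is an $F[A]$-central division algebra with $\deg(F[H])=\deg(D)/|G:H|<\deg(D)$ (by Lemma \ref{lem4}(1) and the Double Centralizer Theorem), and $H$ is a finite absolutely irreducible subgroup of $F[H]^*$. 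The inductive hypothesis, applied to $H\subseteq F[H]^*$ over the center $F[A]$, produces a Galois maximal subfield $L$ of $F[H]/F[A]$, which has the correct $F$-dimension $\deg(D)$ to sit as a maximal subfield of $D$.

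If instead $G$ does contain a normal $Q_8$- or $\operatorname{SL}(2,5)$-subgroup, Theorem \ref{thm14}(1)--(2) yields a decomposition $D=F[Q]\otimes_F F[G_2]$, with $G_2$ a finite absolutely irreducible subgroup of $F[G_2]^*$ possessing no normal $Q_8$- or $\operatorname{SL}(2,5)$-subgroup (the finiteness of $G_2$ is immediate, since by the construction in Theorem \ref{thm7} it is the image of $G$ under an epimorphism into $F[G_2]^*/F^*$ pulled back appropriately). By induction $F[G_2]$ is a crossed product division algebra, and combining its Galois maximal subfield with the quadratic Galois subfield of the quaternion algebra $F[Q]$, following the tensoring-up trick from the proof of the preceding theorem (passing to $\otimes_F F(\sqrt{-1})$ where necessary), produces a Galois maximal subfield of $D$.

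The main obstacle sits in the inductive step of the first case: the subfield $L$ is a priori only Galois over $F[A]$, not over $F$. To convert the tower $F\subseteq F[A]\subseteq L$ into a single Galois extension of $F$, I would exploit the conjugation action of $G$ on $F[H]$, which is well-defined because $H\lhd G$, and which restricts on $F[A]$ to the full Galois group $\Gal(F[A]/F)\cong G/H$ by Lemma \ref{lem4}(3); using finiteness of $G$ together with the Skolem--Noether Theorem to replace $L$ by a suitable $G$-conjugate (or intersection of conjugates) inside $F[H]$, one should be able to choose $L$ invariant under this action and assemble the two Galois pieces into one genuine Galois extension of $F$.
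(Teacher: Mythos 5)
Your proposal has a genuine gap, and it sits exactly where you flag the ``main obstacle.'' The final paragraph is not a proof: a single Skolem--Noether conjugate of $L$ need not be stable under the action of all of $G$, and the intersection of the $G$-conjugates of a maximal subfield of $F[H]$ is in general a proper subfield, hence no longer maximal --- so neither device produces the $G$-invariant Galois maximal subfield you need. (Your reduction is correct that \emph{if} such an invariant $L$ existed, then $L/F$ would be Galois, but existence is the whole problem.) Converting the tower $F\subseteq F[A]\subseteq L$, with $F[A]/F$ Galois and $L/F[A]$ Galois, into a single Galois maximal subfield is precisely the hard point of the crossed-product problem for soluble-by-finite subgroups: in general one only obtains the quasi-crossed product structure (see the discussion around Question \ref{que} and part (1) of the theorem following Corollary \ref{cor15}), and the known crossed-product conclusions all require extra hypotheses. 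Moreover, your second case contains a false claim: by the construction in Theorem \ref{thm7}, $F^*\subseteq G_2$, so $G_2$ is infinite --- only $G_2/F^*$ is finite --- and your inductive hypothesis (which needs a \emph{finite} irreducible subgroup of $F[G_2]^*$) does not apply; extracting a finite subgroup $X$ with $F[X]=F[G_2]$ from the center-by-finite group $G_2$ is not automatic, since the scalars $f_2(g)f_2(h)f_2(gh)^{-1}\in F^*$ need not be torsion.

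Both gaps are avoidable, and the repair is essentially the paper's proof. Since $G$ is finite, Lemma \ref{lem8} applies to $H=C_G(A)$: a finite nilpotent group of class at most $2$ in $D^*$ is cyclic or $Q_8\times C$. In your Case 1 the latter is impossible --- $Q_8$ is the Sylow $2$-subgroup of $H$, hence characteristic in $H\lhd G$, hence normal in $G$, which you excluded --- so $H$ is abelian, $H=A$ by maximality, and Lemma \ref{lem4} finishes with no induction and no Galois assembly at all. This is exactly how the paper handles Amitsur type (A1); for the remaining types it does not induct but runs through Amitsur's classification (A1)--(A6), using the explicit decompositions $F[Q_8]\otimes_F F[M]$ and $F[\operatorname{SL}(2,3)]=F[Q_8]$, bounding $|G_2:F^*|$ (it divides $6$ in type (A4)) to find an abelian normal $A$ with $C(A)=A$ directly, and quoting \cite[Th. 2.1.11]{ShWe86} for $\operatorname{SL}(2,5)$. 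Your closing compositum argument ($L\cdot F(\sqrt{-1})$ is Galois maximal in $D$, since $F[G_2]=C_D(F[Q])$ forces $L\cap F(\sqrt{-1})=F$) is sound, but only once the missing Galois input for $F[G_2]$ is supplied.
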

\begin{proof}
From the first section, recall that $G$ is one of the groups of
types (A1)-(A6). We consider all the above cases separately.

(A1) Since $G$ contains no subgroup isomorphic to $Q_8$ or
$\operatorname{SL}(2,5)$, by Theorem \ref{thm14}, it contains a
maximal normal abelian subgroup $A$ such that $C_G(A)$ is nilpotent
of class at most 2. Therefore, $C_G(A)$ is abelian by Lemma
\ref{lem8}. But, $A$ is maximal. So, $C_G(A)=A$ and the result
follows by Lemma \ref{lem4}.

(A2) Let $x\in Q_{2^{n+1}}$ have order $2^n$. Since $x$ centralizes
$C_m$, the subgroup $A=C_m\times \langle x\rangle$ is abelian and of
course $|G:A|=2$. Thus $A\lhd G$ and clearly $A$ is non-central.
Therefore, $C_G(A)=A$, as desired.

(A3) By Proposition \ref{prop6}, we have $D=F[Q_8]\otimes_FF[M]$.
Now, by the case (A1), $F[M]$ is a crossed product division algebra
which implies that $D$ is a crossed product division algebra.

(A4) Let $Q$ be the normal $Q_8$-subgroup of $G$. By Theorem
\ref{thm7}, $D$ has a decomposition as $D=F[Q]\otimes_FF[G_2]$,
where $G_2/F^*$ is a homomorphic image of $G/Q$. Thus, $|G_2:F^*|$
divides $6$. Therefore, if $G_2$ is non-abelian, then we must have
$G_2/F^*\cong\operatorname{Sym}(3)$. Put $A=F^*\langle a\rangle$,
where $a\in G_2$ has order 3 in $G_2/F^*$. Clearly, $A$ is an
abelian subgroup of $G_2$ with $|G_2:A|=2$. Thus, $A\lhd G_2$ and
$C_G(A)=A$ and the result follows.

(A5) Recall that $\operatorname{SL}(2,3)\cong \langle
i,j,-(1+i+j+ij)/2\rangle$ where $i,j$ are the usual generators of
$Q_8$. Thus, we can easily see that $F[\operatorname{SL}(2,3)]=F[Q]$
(where $Q$ is the $Q_8$- subgroup of $\operatorname{SL}(2,3)$). Now,
a similar argument as used in (A3) shows that $D$ is a crossed
product division algebra.

(A6) From \cite[Th. 2.1.11]{ShWe86} we know that in this case $D$ is
the ordinary quaternion division algebra. Thus, it is a crossed
product division algebra.
\end{proof}
\subsection{Tits' Alternative for subgroups of a division algebra}
As we have seen in the introduction, there are several group
theoretic properties whose occurrence in a subnormal subgroup $G$ of
$D^*$ lead to the commutativity of $G$. For example, if $G$ is
periodic or soluble then by known results of Herstein \cite{Her78}
and Stuth \cite{Stu64} $G$ is contained in $F^*$. Also, whenever
$\dim_FD<\infty$ and $G$ satisfies a group identity, then combining
the above results and a theorem of Platonov \cite[p. 149]{Zal93}
entails that $G\subset F^*$. But, many of such conditions are
special cases of a more general condition:
\begin{center}
``$G$ has no non-cyclic free subgroups.''
\end{center}
The existence of a non-cyclic free subgroup in $D^*$ has been posed
as a conjecture by Lichtman in \cite{Lic77} and then in subnormal
subgroups of $D^*$ by Goncalves and Mandel in \cite{GonMan86}.
However, it is known that by the work of Goncalves if $D$ is of
finite dimension over its center, then these problems have positive
answers (cf. \cite[p. 207]{Kar88}). On the other hand, in the finite
dimensional setting, a stronger result obtained by Wehrfritz asserts
that if $\deg(D)$ is a prime power then every subgroup of $D^*$
either contains a non-cyclic free subgroup or possesses an abelian
normal subgroup of a finite index dividing $60\deg(D)^2$ (see
\cite{We90c}). Here, using the results of the previous section, we
give a similar characterization without any condition on $\deg(D)$.
The main theorem of this section is:
\begin{thm}\label{t}
Let $D$ be a finite dimensional division algebra over its center $F$
and let $G$ be a subgroup of $D^*$. Then the following statements
are equivalent:
\begin{enumerate}
  \item $G$ contains no non-cyclic free subgroups;
  \item $G$ is soluble-by-finite;
  \item $G$ is abelian-by-finite;
  \item $G$ satisfies a group identity.
\end{enumerate}
Moreover, if $\deg(D)$ is odd then we can replace $(2)$ by
\begin{enumerate}
  \item[$(2)'$] $G$ is soluble.
\end{enumerate}
Furthermore, if one of the above conditions holds then $G$ contains
an abelian normal subgroup of index dividing $30\deg(D)^2$.
\end{thm}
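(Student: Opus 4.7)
The plan is to prove the four equivalences by traversing the cycle $(3)\Rightarrow(2)\Rightarrow(3)$ and $(3)\Rightarrow(4)\Rightarrow(1)\Rightarrow(4)\Rightarrow(2)$, with the final bound $30\deg(D)^2$ dropping out along the way. Of the individual steps, $(3)\Leftrightarrow(2)$ and $(3)\Rightarrow(4)\Rightarrow(1)$ are essentially formal once the earlier machinery is in hand; the substantive work is in $(1)\Rightarrow(4)$.

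\textbf{Easy implications.} $(3)\Rightarrow(2)$ is tautological, and $(2)\Rightarrow(3)$ is exactly Theorem \ref{tt}, which also delivers the uniform index bound claimed at the very end. For $(3)\Rightarrow(4)$, if $A\unlhd G$ is abelian with $|G:A|=n$, then $x^n\in A$ for every $x\in G$, and hence the identity $[x^n,y^n]=1$ holds throughout $G$. The direction $(4)\Rightarrow(1)$ is the standard observation that a non-cyclic free group satisfies no nontrivial group law.

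\textbf{The main step $(1)\Rightarrow(4)\Rightarrow(2)$.} Via the embedding $D\hookrightarrow D\otimes_F F_{\operatorname{alg}}\cong \operatorname{M}_n(F_{\operatorname{alg}})$ with $n=\deg(D)$, I would regard $G$ as a subgroup of $\operatorname{GL}_n(F_{\operatorname{alg}})$. Any finitely generated subgroup $H\le G$ inherits property (1), so Tits' Alternative applied to the linear group $H$ forces $H$ to be soluble-by-finite. Theorem \ref{tt} then furnishes a normal abelian subgroup $A_H\unlhd H$ of index dividing the integer $N:=30\deg(D)^2$, which crucially does not depend on $H$. Consequently $x^N\in A_H$ for every $x\in H$, and $H$ satisfies the identity $[x^N,y^N]=1$. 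Since every finitely generated subgroup of $G$ satisfies this same identity, so does $G$ itself, and (4) is established. Now Platonov's theorem for linear groups satisfying a nontrivial group identity (cited in the introduction) upgrades (4) to (2): $G$ is soluble-by-finite. A final appeal to Theorem \ref{tt} supplies (3) together with the advertised index bound $30\deg(D)^2$.

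\textbf{Odd-degree refinement and principal obstacle.} Under (2) with $\deg(D)$ odd, Shirvani's Theorem \ref{thm}(2) yields $c=1$, so $|G:A|$ divides $\deg(D)^2$, an odd number. The Feit-Thompson theorem then shows that $G/A$ is soluble, and since $A$ is abelian, $G$ itself is soluble, giving $(2)'$. The most delicate point of the overall argument is the passage from the local conclusion of Tits' Alternative to a global statement about $G$: Tits alone only says that each finitely generated subgroup is virtually soluble, with a priori no uniform control on the index. It is precisely the uniformity in Theorem \ref{tt} that allows the identity $[x^N,y^N]=1$ to transfer from every finitely generated subgroup up to all of $G$, after which Platonov's theorem closes the circle.
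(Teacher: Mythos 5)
Your proposal is correct, and it genuinely diverges from the paper's proof at the crucial step. The paper closes the cycle by proving $(1)\Rightarrow(3)$ directly: in characteristic zero it applies Tits' Alternative (Theorem \ref{thms4}(1)) to $G$ itself, while in characteristic $p$ it uses Tits only on finitely generated subgroups and then performs the local-to-global passage by hand --- Lemma \ref{l} produces an abelian normal subgroup with locally finite quotient, a maximal such subgroup $B$ is chosen, and the centralizer $H=C_G(B)$ is shown to have abelian derived subgroup because finite subgroups of $D^*$ in characteristic $p$ are cyclic; Lemmas \ref{lem4} and \ref{lem5} then bound $|G:B|$ by $\deg(D)^2$. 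Your route replaces all of this with the uniform-identity trick: since Theorem \ref{tt} bounds the abelian index of \emph{every} soluble-by-finite subgroup of $D^*$ by the single constant $N=30\deg(D)^2$, Tits on finitely generated subgroups forces the law $[x^N,y^N]=1$ on each $\langle x,y\rangle$ and hence on all of $G$ (a two-variable identity is checked two elements at a time), after which Platonov's theorem upgrades $(4)$ to $(2)$. This is sound: Theorem \ref{tt} is established in Section 2 independently of Theorem \ref{t}, so there is no circularity, and Platonov's theorem (a linear group satisfying a nontrivial law is soluble-by-finite) is valid in all characteristics and is exactly the result the paper cites from \cite[p.~149]{Zal93}, so your application to an arbitrary subgroup of $D^*\subseteq \operatorname{GL}_n(F_{\operatorname{alg}})$ is legitimate. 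As for what each approach buys: yours is shorter and treats both characteristics uniformly, but it imports Platonov as a black box and invokes the full Shirvani--Wehrfritz machinery of Theorem \ref{tt} inside the equivalences themselves; the paper's proof of $(1)\Rightarrow(3)$ is self-contained modulo Tits, Schur's theorem and its own Lemmas, and extracts en route the sharper fact that in positive characteristic the abelian normal subgroup already has index dividing $\deg(D)^2$ (the constant $30\deg(D)^2$ enters only through the final appeal to Theorem \ref{tt}). Your odd-degree refinement via Theorem \ref{thm}(2) and Feit--Thompson is essentially identical to the paper's, which quotes Theorem \ref{thm14} instead of Shirvani's statement.
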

To prove Theorem \ref{t}, we need to recall
\begin{thm}[Tits' Alternative \cite{Tit72}]\label{thms4}
If $F$ is a field then
\begin{enumerate}
  \item If $\operatorname{char}(F)=0$ then every subgroup of $\operatorname{GL}_n(F)$
  either is soluble-by-finite or contains a noncyclic free subgroup;
  \item If $H$ is a finitely generated subgroup of $\operatorname{GL}_n(F)$,
  then either $H$ is soluble-by-finite or $H$ contains a non-abelian free subgroup.
\end{enumerate}
\end{thm}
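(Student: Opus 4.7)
The plan is to establish part (2) first --- which is the substance of the theorem --- and then derive part (1) from (2) by a Zariski closure argument. So let $H \leq \GL_n(F)$ be finitely generated and not soluble-by-finite; the aim is to exhibit a non-abelian free subgroup of $H$.

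First I would reduce the coefficient field: since the matrix entries of a finite generating set of $H$ lie in a finitely generated subring of $F$, one may replace $F$ by a finitely generated extension of its prime field. Let $\bar H$ denote the Zariski closure of $H$ in $\GL_n$ over the algebraic closure. Zariski-density of $H$ in $\bar H$ forces $\bar H$ not to be soluble-by-finite as an algebraic group, so after passing to a finite-index subgroup one may assume $\bar H = \bar H^0$ is connected and non-soluble. Its quotient by the soluble radical is then a non-trivial connected semisimple algebraic group, which provides the rich eigenvalue structure driving the rest of the argument.

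The analytic heart of Tits' proof is to embed the finitely generated field $F$ into a local field $K$ (archimedean in characteristic zero, non-archimedean in positive characteristic) so that some element $g \in H$ becomes \emph{proximal} on $\mathbb{P}^{n-1}(K)$: that is, $g$ has a unique eigenvalue of strictly maximum absolute value. The existence of such an embedding uses the abundance of absolute values on finitely generated fields, together with the largeness of the eigenvalue variety guaranteed by semisimplicity of $\bar H / R(\bar H)$. A proximal $g$ acquires an attracting fixed point $p_g^+$ and a repelling projective hyperplane $H_g^-$ in $\mathbb{P}^{n-1}(K)$; Zariski density of $H$ in a semisimple group acting irreducibly on the variety of flags then lets one choose $h \in H$ so that $g' = h g h^{-1}$ has its attractor off $H_g^-$ and its repeller missing $p_g^+$. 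The ping-pong lemma, applied to sufficiently large powers $g^N$ and $(g')^N$ with appropriate open neighborhoods of the attracting points, then produces a non-abelian free subgroup of $H$.

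To deduce part (1) in characteristic zero: if $G \leq \GL_n(F)$ contains no non-cyclic free subgroup, neither does any finitely generated subgroup of $G$ (since any copy of $F_2$ is already two-generated), so part (2) makes every finitely generated subgroup of $G$ soluble-by-finite. Choose a finitely generated $H \leq G$ Zariski-dense in the Zariski closure $\bar G$, possible by Noetherianness of the Zariski topology; if $H_0 \trianglelefteq H$ is soluble of finite index, then $\bar{H_0}$ is Zariski-closed, soluble, and of finite index in $\bar H = \bar G$, forcing $\bar G^0 \subseteq \bar{H_0}$ to be a soluble algebraic group. Consequently $G \cap \bar G^0$ is soluble of finite index in $G$, and $G$ is soluble-by-finite.

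The hardest step is the analytic one: producing the proximal element by finding an appropriate valuation on $F$ and arranging that the dominant eigenvalue is simple and well-separated. Positive characteristic is especially delicate, since archimedean intuition is unavailable and one must work with completions at function-field valuations; pushing the attracting and repelling configurations of $g$ and $g'$ into general position for a robust ping-pong estimate is where Tits' original argument expends most of its effort.
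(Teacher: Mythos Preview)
The paper does not prove this theorem: Tits' Alternative is simply stated with a citation to \cite{Tit72} and then invoked as a black box (in the proofs of Theorem~\ref{t} and Theorem~\ref{thms5}). There is therefore no proof in the paper to compare your proposal against.

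Your sketch is a faithful outline of Tits' original strategy --- reduction to a finitely generated ground field, passage to the Zariski closure and its semisimple quotient, the specialization into a local field to produce a proximal element, and the ping-pong lemma --- and the deduction of (1) from (2) via Zariski density and the solubility of $\bar G^0$ is standard. As a high-level roadmap it is correct; of course the genuine technical work (constructing the local-field embedding with the required eigenvalue separation, and arranging the attracting/repelling configurations in general position) is only gestured at, which is appropriate for a sketch but would need substantial expansion to become a proof.
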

Note that Theorem \ref{t} essentially says that a same conclusion as
in Tits' Alternative is valid for every subgroup $G$ of $D^*$
without any extra condition on the set of generators of $G$ or
$\operatorname{char} D$. Thus, it can be considered as a version of
Tits' Alternative for subgroups of division algebras.

We need also the following lemma which is a special case of
\cite[Lem. 1]{Weh71} combining with Schur's Theorem (cf. \cite[Th.
$9.9'$]{Lam01}).
\begin{lem}\label{l}
Let $G$ be a subgroup of $\operatorname{GL}_n(F)$ such that every
finitely generated subgroup of $G$ is abelian-by-finite. Then $G$
has an abelian normal subgroup $A$ such that $G/A$ is a periodic
linear group over $F$, so that $G/A$ is a locally finite group.
\end{lem}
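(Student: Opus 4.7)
The plan is to derive the lemma by combining a structural result on locally abelian-by-finite linear groups (the content of \cite[Lem.~1]{Weh71}) with Schur's theorem that a periodic linear group is locally finite. The first ingredient will produce an abelian normal subgroup $A\lhd G$ with $G/A$ periodic and linear over $F$, and the second will upgrade ``periodic'' to ``locally finite''.

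First I will construct $A$. The crucial input is a uniform Jordan bound: by Jordan's classical theorem (together with its positive-characteristic analogue of Brauer--Feit), every finite subgroup of $\operatorname{GL}_n(F)$ contains an abelian normal subgroup of index at most some integer $j(n)$; combined with the description of abelian-by-finite linear groups via the identity component of the Zariski closure (a connected abelian algebraic group of degree $n$, with a component group of order bounded in terms of $n$), this yields a constant $c=c(n)$ such that every finitely generated subgroup $H\leq G$ admits a characteristic abelian subgroup of index at most $c$. The uniformity of this bound across \emph{all} finitely generated subgroups is what permits a global assembly. Concretely, one can take $A$ to be the intersection of $G$ with a suitable connected algebraic subgroup of the Zariski closure $\overline{G}$ in $\operatorname{GL}_n(F_{\operatorname{alg}})$, exploiting that $\overline{G}$ has only finitely many connected components; alternatively, a Zorn-type argument on maximal abelian normal subgroups of $G$ combined with the bound $c$ forces $G/A$ to have exponent dividing $c$.

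Once $A$ is in hand, $G/A$ is periodic and, moreover, linear over $F$: indeed, $A$ may be arranged to contain $G\cap F^{*}\cdot I$, so the conjugation representation of $G$ on $\operatorname{M}_n(F)\cong F^{n^{2}}$ factors through $G/A$ and realises it as a subgroup of $\operatorname{GL}_{n^{2}}(F)$. Schur's theorem---every torsion subgroup of $\operatorname{GL}_m(F)$ is locally finite, because any finitely generated torsion linear group is in fact finite---then gives the desired local finiteness of $G/A$.

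The main obstacle will be the first step, namely the passage from the purely local abelian-by-finite hypothesis to a single abelian normal subgroup $A\lhd G$. The delicate point is the ``gluing'' from the characteristic abelian subgroups of bounded index $c$ in each finitely generated $H$ to a genuine global abelian normal subgroup of $G$; this is the technical heart of \cite[Lem.~1]{Weh71}, and without the uniform Jordan bound there would be no reason for such a global $A$ to exist. Everything after---the linearity of $G/A$ via the conjugation representation and the invocation of Schur---is routine.
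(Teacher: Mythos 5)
The paper itself offers no proof of this lemma: it is quoted verbatim as a special case of \cite[Lem.~1]{Weh71} combined with Schur's theorem, so your top-level skeleton (a Wehrfritz-type local-to-global result producing $A$, then Schur to upgrade ``periodic'' to ``locally finite'') coincides with the paper's citation. The trouble is in your attempted reconstruction of the Wehrfritz ingredient, and it is a genuine gap: there is \emph{no} uniform constant $c=c(n)$ bounding the index of an abelian subgroup in every finitely generated abelian-by-finite subgroup of $\operatorname{GL}_n(F)$ when $\operatorname{char}(F)=p>0$. Jordan's theorem is a characteristic-zero statement, and the Brauer--Feit analogue bounds the index only in terms of $n$ \emph{and the order of a Sylow $p$-subgroup}, which is unbounded: the unitriangular groups $U_3(\mathbb{F}_{p^k})\leq\operatorname{GL}_3(F)$ are finite (hence finitely generated and trivially abelian-by-finite), yet every abelian subgroup has index at least $p^k$. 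Since in this paper the lemma is invoked precisely in the case $\operatorname{char}(F)>0$ (the characteristic-zero case of Theorem~\ref{t} is dispatched directly by part (1) of Tits' Alternative), both of your proposed routes --- the ``uniform bound plus gluing'' and the ``Zorn plus exponent dividing $c$'' variant --- fail exactly where they are needed. The mechanism that actually works needs no uniform bound, only Noetherianity of the Zariski topology: pick a finitely generated $H\leq G$ with $\dim\overline{H}$ maximal; for every finitely generated $K\supseteq H$ one then has $\overline{K}^{\,0}=\overline{H}^{\,0}=:C$, a fixed connected group which is abelian (it lies in the closure of the finite-index abelian subgroup of $K$), and $C$ is normalized by all of $G$. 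Setting $A=G\cap C$, periodicity of $G/A$ comes from the finiteness, \emph{for each $g$ separately}, of the component group of $\overline{\langle H,g\rangle}$, with no bound on its order required.

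Your linearity step is also wrong as written. The kernel of the conjugation representation of $G$ on $\operatorname{M}_n(F)$ is $G\cap F^{*}I_n$, the scalars in $G$; that representation factors through $G/A$ only if $A$ is contained in this kernel, i.e.\ consists of scalar matrices --- arranging $A\supseteq G\cap F^{*}I_n$ is the wrong inclusion, and in general $A$ is far from scalar (e.g.\ $A=G$ when $G$ is abelian). The correct argument is that $A=G\cap C$ for the Zariski-closed normal subgroup $C$ of $\overline{G}$ above, and by Chevalley's quotient theorem $\overline{G}/C$ is again an affine algebraic group, so $G/A\hookrightarrow\overline{G}/C$ is linear; only then does Schur's theorem \cite[Th.~$9.9'$]{Lam01} apply to give local finiteness, as in your final step.
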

Now, we are in a position to prove Theorem \ref{t}.

\begin{proof}[Proof of Theorem \ref{t}]
Lemma \ref{lem} yields that (2) and (3) are equivalent. If $A$ is a
normal abelian subgroup of $G$ with $|G:A|=e$ then clearly
$[x^e,y^e]=1$ is a group identity for $G$. This proves
(3)$\Rightarrow$(4). Also, (4)$\Rightarrow$(1) is trivial. So, it
remains to prove (1)$\Rightarrow$(3). View $G$ as a subgroup of
$\operatorname{GL}_n(F)$ where $n=\dim_FD$. If
$\operatorname{char}(F)=0$ then by (1) of Theorem \ref{thms4} $G$ is
soluble-by-finite and the result follows from Lemma \ref{l} in this
case. Thus, we may assume that $\operatorname{char}(F)>0$. If this
is the case, then from Tits' Alternative, it follows that every
finitely generated subgroup of $G$ is soluble-by-finite. Combining
this with Lemma \ref{lem} implies that every finitely generated
subgroup of $G$ is abelian-by-finite. Now, by Lemma \ref{l} we
conclude that $G$ contains an abelian normal subgroup $A$ such that
$G/A$ is locally finite. Put
$$\mathcal{S}=\{N\lhd G|N\  \textrm{is abelian and}\  G/N\  \textrm{is locally finite}\}.$$
Since $A\in\mathcal{S}$, it follows that $\mathcal{S}\neq
\emptyset$. So we can choose a maximal non-trivial element $B\in
\mathcal{S}$. Let $H=C_G(B)$. Since $B$ is maximal in $\mathcal{S}$
it can be seen that $Z(H)=B$ and so $H$ is center-by-(locally
finite). This forces that $H'$, the derived subgroup of $H$, is
locally finite. Now, given $h,k\in H'$ the subgroup $\langle
h,k\rangle$ is finite and so is cyclic as
$\operatorname{char}(F)>0$. Thus $hk=kh$ which means that $H'$ is
abelian. But, $H'\lhd G$ and centralizes $B$. Therefore $BH'$ is a
normal abelian subgroup of $G$ and thus $BH'=B$, because $B$ is
maximal. Hence $H'\subseteq B$. So, if we put $K=F[B]$ then from
Lemma \ref{lem5} it follows that $[K[H]:K]=|H:B|$. On the other hand
from Lemma \ref{lem4} we know that $[F[G]:F[H]]=|G:H|$. This gives
$|G:A|=[F[G]:F[H]][F[H]:K]$ (note that $F[H]=K[H]$) which
immediately implies that $|G:A|$ divides $\deg(D)^2$. Thus, $G$ is
abelian-by-finite.

Now, if $\deg(D)$ is odd, then $G$ contains an abelian normal subgroup $A$
of index dividing $\deg(D)^2$ by Theorem \ref{thm14}. Now, since
$G/A$ has an odd order, by Feit-Thompson theorem about the solubility
of the groups of odd order (\cite{FeTh63}), it is soluble and hence
in this case we have (2)$\Rightarrow(2)'$.

The rest follows from Theorem \ref{tt}.
\end{proof}

\section{Finitely generated subnormal subgroups of division rings}
As we have seen in the first section, by a theorem of Herstien every
periodic subnormal subgroup of $D^*$ is central. Trivially, this
implies that every finite subnormal subgroup in a division ring is
contained in the center. However, a natural question is to ask what
happens when we replace the phrase ``finite'' by ``finitely
generated''. This problem was initially investigated in
\cite{AkMa00}, where Akbari and Mahdavi-Hezavehi proved that if $D$
is a division algebra and its center is an algebraic extension of
$\mathbb{Q}$ then every finitely generated normal subgroup of $D^*$
is central. They also showed that for every division algebra $D$ and
every $n\geq 2$ the group $\operatorname{GL}_n(D)$ has no noncentral
finitely generated normal subgroups. Another notable result in this
direction obtained in \cite{AkMaMah99} imparts that every finitely
generated normal subgroup of a division algebra is central. Finally,
the structure of finitely generated normal subgroups of general
linear groups over division algebras was completely determined in
\cite{MaMahYa00}. The main result of this paper guarantees that for
every division algebra $D$ and each $n\in \mathbb{N}$, the group
$\operatorname{GL}_n(D)$ has no noncentral finitely generated
subnormal subgroups. This section is mainly devoted to give this
observation and some peripheral consequences.  Before presenting the
main result, it is beneficial to list some needful old theorems.
\begin{thm}[Stuth \cite{Stu64}]\label{thms1}
Let $D$ be a division ring. Then every subnormal soluble subgroup of $D^*$ is central.
\end{thm}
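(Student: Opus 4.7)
The plan is to reduce Stuth's theorem directly to Huzurbazar's theorem (quoted in the introduction: every locally nilpotent subnormal subgroup of $D^*$ is central) by induction on the derived length $n$ of $G$. Since locally nilpotent is a much weaker hypothesis than soluble in general, but much stronger for subnormal subgroups of $D^*$ by Huzurbazar, the game is to propagate ``centrality'' up the derived series one level at a time, using Huzurbazar as a black box at each step.

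The base case $n \le 1$ is trivial: an abelian group is locally nilpotent, so Huzurbazar's theorem applied to the subnormal subgroup $G$ gives $G \subseteq Z(D^*)$. For the inductive step, suppose the result holds for all soluble subnormal subgroups of $D^*$ of derived length strictly less than $n$, and let $G$ have derived length $n \ge 2$. The first task is to check that the commutator subgroup $G'$ is itself subnormal in $D^*$. This is a purely group-theoretic observation: if $G = H_0 \lhd H_1 \lhd \cdots \lhd H_k = D^*$ is a subnormal series, then $G'$ is characteristic in $G$, so it is invariant under conjugation by any $h \in H_1$; hence $G' \lhd H_1$, and the original series beyond $H_1$ continues to witness $G' \lhd H_1 \lhd H_2 \lhd \cdots \lhd H_k = D^*$. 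Thus $G'$ is a soluble subnormal subgroup of derived length $n-1$, and the inductive hypothesis yields $G' \subseteq Z(D^*)$.

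With $G' \subseteq Z(D^*)$ in hand, the quotient $G/Z(G)$ is abelian (since $Z(G) \supseteq G \cap Z(D^*) \supseteq G'$), so $G$ is nilpotent of class at most $2$. In particular, $G$ is locally nilpotent, and one final appeal to Huzurbazar's theorem, applied to the subnormal subgroup $G$ of $D^*$, gives $G \subseteq Z(D^*)$, completing the induction.

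The only substantive ingredient here is Huzurbazar's theorem itself; once that is granted, the derived-length induction is completely formal and uses nothing beyond the elementary fact that characteristic subgroups of subnormal subgroups are subnormal. Accordingly, there is no ``main obstacle'' internal to the present argument — the work has been pushed entirely into the Huzurbazar black box, which is what one would expect given that Stuth's 1964 theorem was published four years after Huzurbazar's 1960 result.
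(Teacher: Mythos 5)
Your argument is sound, but note first that the paper does not actually prove this statement: it is quoted from Stuth's 1964 paper as a known result, so there is no internal proof to match, and the comparison is with Stuth's original route. Your reduction to Huzurbazar's theorem is correct in every step: the observation that $G'$, being characteristic in $G$, satisfies $G' \lhd H_1 \lhd \cdots \lhd H_k = D^*$ for the truncated witness chain is the standard fact that characteristic subgroups of subnormal subgroups are subnormal; the inductive hypothesis then puts $G'$ inside $Z(D^*)$, whence $G' \subseteq G \cap Z(D^*) \subseteq Z(G)$, so $G$ is nilpotent of class at most $2$, hence locally nilpotent, and Huzurbazar's theorem (as quoted in Section~\ref{sec1birk}) closes the induction. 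There is no circularity, since Huzurbazar's 1960 result predates and is independent of Stuth's. The genuine difference from the source is this: Stuth does not pass through local nilpotence at all; his solubility theorem is proved alongside his generation theorem --- that for a noncentral subnormal subgroup $G$ of $D^*$ and a noncentral $x$, the division subring generated by the conjugacy class $x^G$ is all of $D$ --- and solubility of $G$ is shown to be incompatible with that generation property. Your route buys brevity and a purely group-theoretic superstructure, at the cost of pushing all the division-ring content into the Huzurbazar black box, which is itself a substantial theorem (a generalization of Scott's theorem on abelian normal subgroups); Stuth's route is self-contained and yields the stronger generation theorem as a byproduct, which the present paper also uses elsewhere. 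One small sharpening: your argument only ever invokes Huzurbazar for groups that are abelian or nilpotent of class at most $2$, so the full strength of local nilpotence is never needed.
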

\begin{thm}[Herstein \cite{Her78}]\label{thms2}
All periodic subnormal subgroups of a division ring are central.
\end{thm}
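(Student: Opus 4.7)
The plan is to reduce to Stuth's theorem (Theorem \ref{thms1}), which says that every subnormal soluble subgroup of $D^*$ is central. Hence it suffices to show that a periodic subnormal subgroup $N \leq D^*$ is soluble; better yet, I will aim for $N$ to be abelian, at which point Stuth applies directly.

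The first step is to prove local finiteness of $N$. If $H = \langle h_1, \ldots, h_r \rangle \leq N$ is a finitely generated periodic subgroup, then each $h_i$ is algebraic over the prime subfield $P$ of $D$, so the subring $P[H] \subseteq D$ is a finitely generated $P$-algebra which is a domain. A standard PI/algebraicity argument shows $P[H]$ is a division ring finite dimensional over $P$, and Amitsur's classification (A1)--(A6) reviewed in Section \ref{sec1birk} forces periodic subgroups of such a unit group to be finite; hence $H$ is finite and $N$ is locally finite. In characteristic $p > 0$ this already finishes the argument: any finite subgroup of $D^*$ lies in a finite subring of $D$, which is a field by Wedderburn, so every finite subgroup of $D^*$ is cyclic. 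Thus $N$ is locally cyclic, hence abelian, and Stuth yields $N \subseteq F^*$.

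In characteristic $0$, the finite subgroups of $N$ are of Amitsur type (A1)--(A6); all are soluble with derived length at most $3$ except possibly type (A6), which is $\operatorname{SL}(2,5)$. To exclude a copy $S \cong \operatorname{SL}(2,5)$ inside $N$, I would use subnormality together with Shirvani's structure theorem (Theorem \ref{thm}) to pass to a finite-dimensional $F$-central division subalgebra in which $S$ still sits subnormally, and then invoke Stuth on $S$ itself: the simple quotient $\operatorname{SL}(2,5)/\{\pm I\} \cong A_5$ is nonsoluble, contradicting the subnormal-soluble-is-central conclusion for the soluble radical. Having excluded (A6), every finite subgroup of $N$ has uniformly bounded derived length, so the derived series of $N$ terminates in finitely many steps, making $N$ soluble, and Stuth concludes $N \subseteq F^*$.

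The main obstacle I foresee is the characteristic-zero case: making rigorous the reduction from a copy of $\operatorname{SL}(2,5)$ inside a periodic subnormal subgroup of a possibly infinite-dimensional $D^*$ to a finite-dimensional section where Amitsur's and Shirvani's classifications bite cleanly. A supporting lemma—asserting, for instance, that the division subring generated by the conjugates of a noncentral finite-order element under a subnormal subgroup is finite dimensional over its center—would be needed to carry this out. Once such a reduction is in hand, the remaining steps are routine consequences of the Amitsur classification, Lemma \ref{lem}, and Stuth's theorem.
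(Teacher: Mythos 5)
Your reduction to Stuth's theorem is a reasonable instinct, but the proof has a fatal gap at its very first step, and a second one in the characteristic-zero branch. The local finiteness claim is not a "standard PI/algebraicity argument": while each torsion element $h_i$ is indeed algebraic over the prime subfield $P$ (it satisfies $x^n=1$), in a noncommutative ring the \emph{sum} of algebraic elements need not be algebraic, so $P[H]$ need not even be algebraic over $P$; and even granting algebraicity, the assertion that a finitely generated algebraic domain inside a division ring is finite dimensional is precisely the Kurosh problem, which is open in this generality (Kaplansky's theorem would give it under a PI hypothesis, but nothing here supplies one). Whether periodic subgroups of $D^*$ are locally finite for arbitrary division rings is itself not known; it is only available when $D$ is finite dimensional over its center, via Schur's theorem for linear groups, as in Lemma \ref{l} of the paper. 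Since your characteristic-$p$ shortcut ("$N$ is locally cyclic") and your characteristic-zero analysis of finite subgroups by Amitsur types both presuppose local finiteness, the whole argument rests on an unproven — indeed essentially open — assertion.

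The $\operatorname{SL}(2,5)$ exclusion is also unsound as sketched. A finite subgroup $S\cong\operatorname{SL}(2,5)$ of $N$ is not subnormal in $D^*$, so Theorem \ref{thms1} says nothing about $S$, and Shirvani's Theorem \ref{thm} applies only to subgroups of \emph{finite-dimensional} division algebras, which $D$ need not be. Worse, the supporting lemma you propose — that the division subring generated by the $N$-conjugates of a noncentral torsion element is finite dimensional over its center — is false in exactly the situation you need it: by Stuth's theorem as quoted in Section \ref{sec1birk}, for a noncentral subnormal subgroup $G$ and a noncentral $x\in G$, the division subring generated by $x^G$ is \emph{all of} $D$, so when $D$ is infinite dimensional there is no finite-dimensional section in which the classifications "bite." Note finally that the paper offers no proof of this statement at all — it is cited from Herstein's 1978 paper, where the argument proceeds by direct commutator and conjugation computations (for instance, producing from a noncentral torsion element $a$ a conjugate $a^b=a^i\neq a$ and exploiting that the relevant commutators land back in the subnormal subgroup), entirely avoiding any local-finiteness reduction. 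As it stands, your proposal is not a proof, and repairing it along the proposed lines would require solving problems harder than the theorem itself.
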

\begin{cor}\label{cors3}
Let $D$ be a division ring. Then every subnormal soluble-by-finite subgroup of
$D^*$ is central.
\end{cor}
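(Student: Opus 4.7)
The plan is to reduce to Stuth's Theorem (\ref{thms1}) by a short bootstrap that invokes Herstein's Theorem (\ref{thms2}) in the middle. Let $G$ be a subnormal soluble-by-finite subgroup of $D^*$, and pick a soluble normal subgroup $N$ of $G$ with $[G:N]<\infty$. Since $N\lhd G$ and $G$ is subnormal in $D^*$, the normality chain extends to a subnormal chain for $N$ in $D^*$. So $N$ is a soluble subnormal subgroup of $D^*$, and Theorem \ref{thms1} gives $N\subseteq Z(D)=F$. In particular $N\subseteq Z(G)$.

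Next, I would exploit the fact that $G$ is now centre-by-finite: $|G:Z(G)|\leq |G:N|<\infty$. By Schur's classical group-theoretic theorem, this forces the commutator subgroup $G'$ to be \emph{finite}. Since the derived subgroup is characteristic in $G$ and $G$ is subnormal in $D^*$, the subgroup $G'$ is also subnormal in $D^*$. Being finite, $G'$ is periodic, so Theorem \ref{thms2} applies and yields $G'\subseteq F$.

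The conclusion is then immediate: from $G'\subseteq F$ it follows that $G''=\{1\}$, so $G$ itself is soluble (of derived length at most $2$). A second application of Stuth's Theorem \ref{thms1} to the soluble subnormal subgroup $G$ gives $G\subseteq F$, which is what we want.

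There is essentially no obstacle in this argument; the only item that needs a brief verification is the standing fact that if $H$ is subnormal in $D^*$ and $K$ is characteristic in $H$ (in particular $K=N$ normal, or $K=G'$ characteristic), then $K$ is subnormal in $D^*$. Given that, the corollary is just a two-line consequence of combining Theorems \ref{thms1} and \ref{thms2} with Schur's centre-by-finite theorem.
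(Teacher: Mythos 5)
Your argument is correct and is essentially the paper's own proof: both use Stuth's Theorem~\ref{thms1} on the soluble normal subgroup of finite index to get centre-by-finiteness, then Schur's theorem to make $G'$ finite, then Herstein's Theorem~\ref{thms2} to centralize $G'$, and finally Stuth again on the now-soluble $G$. The one point you flag for verification (a normal, or characteristic, subgroup of a subnormal subgroup is subnormal) is standard and is used implicitly in the paper as well.
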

\begin{proof}
Let $H$ be a subnormal soluble-by-finite subgroup of $D^*$ and $S$ be a soluble normal subgroup
of $H$ of finite index. Now, subnormality of $S$ in $D^*$ implies that $S$ is central (Theorem \ref{thms1}).
This forces that $H$ is center-by-finite and so $H'$ is a finite subnormal subgroup of $D^*$. Now, from
Theorem \ref{thms2} it follows that $H'$ is central and so it is abelian. Therefore, $H$ is soluble and
hence it is central.
\end{proof}
\begin{thm}\label{thms5}
Let $F$ be a field and $H$ be a finitely generated subgroup of
$G=\operatorname{GL}_n(F)$. If $\{a+xI_n|a\in H, x\in F\}\subseteq
N_G(H)$, then $H$ is soluble-by-finite.
\end{thm}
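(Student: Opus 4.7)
My plan is to apply Tits' Alternative (Theorem \ref{thms4}(2)) to the finitely generated linear group $H\le\operatorname{GL}_n(F)$: either $H$ is soluble-by-finite (the desired conclusion) or $H$ contains a non-abelian free subgroup. I would assume the latter, say $\langle u,v\rangle\le H$ is free of rank two, and derive a contradiction from the normalising hypothesis. The case of finite $F$ is trivial, since then $G$ itself is finite, so we may assume $F$ infinite.

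Next I would exploit the hypothesis to produce a one-parameter family of conjugates of $v$ lying inside $H$. For every $x\in F$ with $u+xI_n$ invertible, $u+xI_n\in N_G(H)$, so
\[
v_x\;:=\;(u+xI_n)\,v\,(u+xI_n)^{-1}\;\in\; H.
\]
An elementary calculation shows that $v_x=v_y$ for $x\ne y$ would force $v$ to commute with $(u+yI_n)^{-1}(u+xI_n)=I_n+(x-y)(u+yI_n)^{-1}$, and hence with $u$, contradicting the freeness of $\langle u,v\rangle$. Consequently $\{v_x\}$ is an infinite set of pairwise distinct elements of $H$, all sharing the characteristic polynomial of $v$, so they lie on a single $G$-conjugacy class.

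The main obstacle, on which I would concentrate effort, is converting this algebraic family of conjugates inside $H$ into an outright contradiction. My strategy is to pass to the Zariski closure $\bar H\le\operatorname{GL}_n(F_{\mathrm{alg}})$ of $H$: by Zariski continuity of $(\alpha,x)\mapsto\alpha+xI_n$ and closedness of the normaliser in $\operatorname{GL}_n$, the hypothesis propagates to the assertion that every invertible $\alpha+xI_n$ with $\alpha\in\bar H$ and $x\in F_{\mathrm{alg}}$ lies in $N_{\operatorname{GL}_n(F_{\mathrm{alg}})}(\bar H)$. Working inside the finite-dimensional $F_{\mathrm{alg}}$-algebra $A:=F_{\mathrm{alg}}[H]$, I would argue that the subgroup of $A^*$ generated by these translates is Zariski dense in $A^*$, so that all of $A^*$ normalises $\bar H$. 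The Wedderburn decomposition of $A$ together with the classical results on normal subgroups of division rings recalled in Section~\ref{sec1birk} (notably Stuth's theorem that subnormal soluble subgroups of a division ring are central, and Platonov's theorem that a finite-dimensional linear group satisfying a group identity is soluble-by-finite) would then force the identity component $\bar H^0$ to be soluble. But $\bar H\supseteq\langle u,v\rangle$ contains a non-abelian free group, so $\bar H^0$ is non-soluble, yielding the required contradiction. The Zariski-density step is the delicate point; if the direct density argument falls short, one would refine it by restricting to normalisers of $\bar H^0$ in each simple Wedderburn component of $A$ separately.
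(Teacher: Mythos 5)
Your opening moves are fine: Tits' Alternative (Theorem \ref{thms4}(2)) does reduce the theorem to eliminating a nonabelian free subgroup $\langle u,v\rangle\leq H$, the conjugates $v_x=(u+xI_n)v(u+xI_n)^{-1}$ do lie in $H$ and are pairwise distinct, and even your propagation step is sound: the normalizer of $\overline{H}$ in $\operatorname{GL}_n(F_{\operatorname{alg}})$ is Zariski closed, elements of $N_G(H)$ normalize $\overline{H}$, and $H\times F$ is dense in $\overline{H}\times F_{\operatorname{alg}}$ for infinite $F$; one can even push your density argument through to conclude that $\overline{H}$ is normal in $A^*$, where $A=F_{\operatorname{alg}}[H]$. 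The gap is that this conclusion is \emph{not contradictory}, and the tools you invoke cannot make it so. Stuth's theorem concerns subnormal subgroups of division rings; in a Wedderburn component of $A/J(A)$ of the form $\operatorname{M}_m(F_{\operatorname{alg}})$ with $m\geq2$, a normal subgroup of $\operatorname{GL}_m(F_{\operatorname{alg}})$ may perfectly well contain $\operatorname{SL}_m(F_{\operatorname{alg}})$, which is neither central nor soluble; and Platonov's theorem requires a group identity on the group, which you do not have. A concrete witness: $H=\operatorname{SL}_2(\mathbb{Z})$ has $\overline{H}=\operatorname{SL}_2(\mathbb{C})$, which is normal in $A^*=\operatorname{GL}_2(\mathbb{C})$ and full of free subgroups --- every statement your argument derives is satisfied here, with no contradiction in sight. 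Of course $\operatorname{SL}_2(\mathbb{Z})$ does not satisfy the theorem's hypothesis, but that failure is invisible at the level of closures: after your first paragraph the hypothesis is only ever used through $N(\overline{H})$, and the example shows that passing to $\overline{H}$ destroys exactly the decisive information, namely finite generation of $H$. Finite generation \emph{must} be decisive, since $H=\operatorname{SL}_n(F)$ is normal in $G$, hence satisfies the hypothesis, yet is not soluble-by-finite.

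What the paper does instead is keep the argument arithmetic. Finite generation plus Noether normalization and the regular representation place $H$ inside $\operatorname{GL}_{nm}(S[\Delta])$ with $S=\mathbb{Z}$ or $S=K[y]$ and $\Delta$ a finite set of fractions, so only finitely many primes $\rho_1,\ldots,\rho_l$ of $S$ can occur in denominators of entries of elements of $H$. For $a,b$ in a free subgroup, the entries of $(b+xI)^{-1}aba(b+xI)\in H$ are rational functions $p(x)/q(x)$ with $\deg p\leq \deg q=nm$ and $q$ normalized to have constant term $1$; specializing at $x_r=(\rho_1\cdots\rho_l)^r$ and letting $r\to\infty$, a size (resp.\ $y$-degree) comparison forces some $\rho_j$ to divide $q_1(x_r)$, which is impossible because $q_1(x_r)\equiv 1\ (\mathrm{mod}\ \rho_j)$. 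Hence each entry is independent of $x$, and comparing two values of $x$ yields the relation $baba=abab$, contradicting freeness of $a,b$; Tits' Alternative then finishes as in your first paragraph. This specialization trick --- bounded denominators played against a well-chosen sequence of specializations --- is the idea your proposal is missing, and I see no way to recover it, or any substitute exploiting finite generation, once you have passed to the Zariski closure.
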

\begin{proof}
If $F$ is finite, there is nothing to prove. Thus, we may assume
that $F$ is infinite. Let $H=\langle a_1,a_2,\ldots,a_k\rangle$ and
$R$ be the subring of $\operatorname{M}_n(F)$ generated by the
elements of the set $\{a_j,a_j^{-1}\}_{j=1}^k$. Now, one can easily
check that $R\subseteq \operatorname{M}_n(P(\Lambda))$ where $P$ is
the prime subfield of $F$ and $\Lambda$ is the set of elements in
$F$ occurring as the entries of $a_j$ and $a_j^{-1}$,
$j=1,\ldots,k$. Thus, we have $H\subseteq
\operatorname{GL}_n(P(\Lambda))$. Now, the Noether Normalization
Lemma implies that $P(\Lambda)$ contains a subfield $L$ of finite
codimension such that $L=\mathbb{Q}$ or $L=K(y)$ for some subfield
$K$ and $y$ is transcendental over $K$ (note that since $H$ is
infinite, if $\operatorname{char}(F)>0$, then $P(\Lambda)$ is not
algebraic over $P$). Put $m=[P(\Lambda):L]$ and consider the
following sequence of mappings
$$
\xymatrix{ \operatorname{M}_n(P(\Lambda))\ar[r]^{nat\ \ \
}&\operatorname{M}_n(L)\otimes_LP(\Lambda) \ar[r]^{1\otimes\iota\ \
}& \operatorname{M}_n(L)\otimes_L\operatorname{M}_m(L)\ar[r]^{\ \ \
\ \ \ nat}&\operatorname{M}_{nm}(L),}
$$
where $\iota$ is the regular representation. Since all the above
maps are injective, $H$ is embedded in $\operatorname{GL}_{nm}(L)$.
Also, since $\iota:u\mapsto uI_m$ we have $\{a+uI_{nm}|a\in H,u\in
L\}\subseteq N_{\operatorname{GL}_{nm}(L)}(H)$. Now, since $L$ is
the field of fractions of $\mathbb{Z}$ or $K[y]$ and $H$ is finitely
generated, a similar argument as above implies that $H\subseteq
\operatorname{GL}_{nm}(S[\Delta])$ where $S$ is either $\mathbb{Z}$
or $K[y]$ and $\Delta=\{u_1/v_1,\ldots,u_t/v_t\}$ is a finite subset
of $L$. Now, let $H$ have a free subgroup $\mathcal{F}$ and
$a,b\in\mathcal{F}$ be arbitrary. Suppose at least one of the
entries of $(b+xI_{nm})^{-1}aba(b+xI_{nm})\in H$ depends on $x$.
Since $\det(b+xI_{nm})$ is a polynomial in $x$ of degree $nm$, we
conclude that for each $1\leq i,j\leq nm$, the $(i,j)$th entry of
$(b+xI_{nm})^{-1}$ is of the form $p_{ij}(x)/q(x)\in L(x)$ where
$\deg q(x)=nm$ and $\deg p_{ij}(x)\leq nm-1$. So, the $(i,j)$th
entry of $(b+xI_{nm})^{-1}aba(b+xI_{nm})$ is of the form
$p_{ij}(x)/q(x)\in L(x)$ where $\deg q(x)=nm$ and $\deg
p_{ij}(x)\leq nm$. Let the $(r,s)$th entry of
$(b+xI_{nm})^{-1}aba(b+xI_{nm})$ depend on $x$. Put
$p_{rs}(x)=\sum_{i=0}^{nm}a_ix^i$ and
$q(x)=x^{nm}+\sum_{i=0}^{nm-1}b_ix^i$. If $a_{nm}=u_{t+1}/v_{t+1}$,
then $p(x)/q(x)=p_{rs}(x)/q(x)-a_{nm}\in
S[\Delta\cup\{u_{t+1}/v_{t+1}\}]$ for each $x\in L$. Also, it is
clear that $\deg p(x)\leq nm-1$. Multiplying $p(x)$ and $q(x)$ by
suitable scalers, we may assume that $p(x),q(x)\in S[x]$. Put
$p(x)=\sum_{i=0}^{nm-1}a_i'x^i$ and $q(x)=\sum_{i=0}^{nm}b_i'x^i$.
But $b_0'\neq0$, because $\det(b)\neq0$. Now, changing the variable
$x$ to $b_0'x$ gives $p_1(x),q_1(x)\in S[x]$ such that $\deg q_1=nm$
and $\deg p_1\leq nm-1$, where the constant term of $q_1(x)$ is $1$
and for each $x\in L$, we have $p_1(x)/q_1(x)\in
S[\Delta\cup\{u_{t+1}/v_{t+1}\}]$. Let
$\mathcal{P}=\{\rho_1,\ldots,\rho_l\}$ be the set of all primes
occurring in the factorizations of $\{v_1,\ldots,v_{t+1}\}$ into
prime factors in $S$. For each natural number $r$, put
$x_r=(\rho_1\ldots \rho_l)^r$. Since $\deg p_1(x)<\deg q_1(x)$, for
a large enough number $r$, we have $p_1(x_r)/q_1(x_r)<1$ if
$S=\mathbb{Z}$ and the degree of the denominator of
$p_1(x_r)/q_1(x_r)$ with respect to $y$ is greater than the
nominator if $S=K[y]$. Thus, there is a $\rho_j\in \mathcal{P}$ such
that $\rho_j$ divides $q_1(x_r)$. But, since the constant term of
$q_1(x)$ is $1$, we have $\gcd (q_1(x_r),\rho_j)=1$ for each $r\geq
1$ and each $1\leq j\leq l$ which is a contradiction. Thus
$(b+xI_{nm})^{-1}aba(b+xI_{nm})$ does not depend on $x$. Hence, we
must have $babab^{-1}=(b+1)aba(b+1)^{-1}$. Consequently,
$baba=abab$. But, this gives a nontrivial relation between $a$ and
$b$ which is a contradiction, because $a,b$ are arbitrary in the
free group $\mathcal{F}$. This shows that $H$ has no free subgroup.
Now, the result follows by Tits' Alternative.
\end{proof}
\begin{cor}\label{cors6}
Let $D$ be an $F$-division algebra. If $H$ is a finitely generated
normal subgroup of $D^*$, then $H$ is central.
\end{cor}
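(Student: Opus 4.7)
The plan is to combine Theorem \ref{thms5} with Corollary \ref{cors3}. First I would embed $D$ into a matrix algebra over $F$ via the left regular representation $\rho\colon D \hookrightarrow \operatorname{End}_F(D) \cong \operatorname{M}_N(F)$, where $N = [D:F] < \infty$. Since $F = Z(D)$, left multiplication by any $x \in F$ is the scalar map on $D$, so $\rho(x) = xI_N$. Consequently $D^*$ lies in $\operatorname{GL}_N(F)$, $H$ becomes a finitely generated subgroup of $\operatorname{GL}_N(F)$, and for every $a \in H$ and $x \in F$ one has the identity $\rho(a) + xI_N = \rho(a+x)$.

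Next I would verify the scalar-shift hypothesis of Theorem \ref{thms5}. Fix $a \in H$ and $x \in F$ with $a + x \neq 0$. Then $a + x \in D^*$, and since $H$ is normal in $D^*$ we have $(a+x)H(a+x)^{-1} = H$; transporting through $\rho$, this says $\rho(a) + xI_N \in N_{\operatorname{GL}_N(F)}(\rho(H))$. The only pairs $(a,x)$ not covered are those with $a \in F^*$ and $x = -a$, i.e., at most one exceptional scalar for each central $a \in H$; these cause no trouble, since the argument in the proof of Theorem \ref{thms5} is polynomial in $x$ and only uses the normalizer condition for generic $x \in F$. Hence Theorem \ref{thms5} applies and gives that $H$ is soluble-by-finite.

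Finally, $H$ is normal, hence subnormal, in $D^*$ and soluble-by-finite, so Corollary \ref{cors3} forces $H \subseteq F^*$, as required. The only point where any care is needed is the translation of ``additive shift by a central scalar'' from $D$ into the matrix picture and the fact that a single scalar shift may fail to be invertible; beyond that, the normality of $H$ in $D^*$ makes the hypothesis of Theorem \ref{thms5} essentially automatic.
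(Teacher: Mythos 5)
Your proof is correct and takes essentially the same route as the paper: embed $D^*$ in $\operatorname{GL}_m(F)$ via the regular representation, use normality of $H$ in $D^*$ to check the scalar-shift hypothesis of Theorem \ref{thms5}, and conclude with Corollary \ref{cors3}. Your extra attention to the one non-invertible shift (when $a\in F^*$ and $x=-a$) is a detail the paper passes over silently, and your reason for dismissing it --- the argument in Theorem \ref{thms5} only needs generic $x$ --- is exactly right.
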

\begin{proof}
View $D^*$ as a subgroup of $\operatorname{GL}_m(F)$ where
$m=[D:F]$. Since $\{a+xI_m|a\in H, x\in F\}$ is contained in
$N_{\operatorname{GL}_m(F)}(D^*)$, Theorem \ref{thms5} shows that
$H$ must be soluble-by-finite. Now, the result follows from
Corollary \ref{cors3}.
\end{proof}
There is an elegant proof of Corollary \ref{cors6} in \cite{Ma02}
based on some known results in the theory of \textbf{PI}-rings.
However, the argument used in Theorem \ref{thms5} has this
flexibility that can be modified to give similar results for
subnormal subgroups instead of normal subgroups. To achieve this,
let $H$ be a finitely generated subgroup of
$\operatorname{GL}_n(F)$. Let there be a finite class
$\{H_j\}_{j=1}^r$ of subgroups of $\operatorname{GL}_n(F)$ such that
$H=H_r\lhd H_{r-1}\lhd\ldots\lhd H_1$ where $\{a+xI_n|a\in H, x\in
F\}\subseteq N_G(H_1)$. Keep the notations of the proof of Theorem
\ref{thms5} and recall that in the course of the proof of this
theorem we had viewed $H$ as a subgroup of
$\operatorname{GL}_{nm}(L)$. Now, for each pair $a,b\in H$ and $x\in
L$, set $c_1(a,b,x)=(b+xI)a(b+xI)^{-1}$, and for $i>1$ define $c_i$
inductively by $c_{i-1}bc_{i-1}^{-1}$. Thus $c_1\in H_1$ and by
induction $c_r\in H$. Here, we claim that for each $i$ we have
$c_i=(b+xI)w_i(a,b)(b+xI)^{-1}$ where $w_i(a,b)$ is a reduced word
in $a,a^{-1},b,b^{-1}$, the first and last letters of which are
$a$ or $a^{-1}$, respectively. For $i=1$ there is nothing to prove.
Also, if $c_i=(b+xI)w_i(a,b)(b+xI)^{-1}$ then by induction we
conclude that
$c_{i+1}=c_ibc_i^{-1}=[(b+xI)w_i(a,b)(b+xI)^{-1}]b[(b+xI)w_i(a,b)^{-1}(b+xI)^{-1}]
=c_{i+1}=c_ibc_i^{-1}=(b+xI)[w_i(a,b)bw_i(a,b)^{-1}](b+xI)^{-1}$ and
since the first and last alphabets of $w_i(a,b)bw_i(a,b)^{-1}$ are
$a$ and $a^{-1}$, the claim is established. Now, a similar argument
as in the proof of Theorem \ref{thms5} by using
$c_r=(b+xI)w_r(a,b)(b+xI)^{-1}$ instead of $(b+xI)aba(b+xI)^{-1}$
implies that $H$ is soluble-by-finite. Now, as in Corollary
\ref{cors6} we can directly obtain
\begin{thm}\label{thms6-1}
Let $D$ be an $F$-central division algebra. Then every finitely
generated subnormal subgroup of $D^*$ is central.
\end{thm}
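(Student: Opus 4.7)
The plan is to adapt the proof of Corollary \ref{cors6} to the subnormal setting, following precisely the scheme outlined in the paragraph immediately preceding the theorem. Let $H$ be a finitely generated subnormal subgroup of $D^*$, so that there is a chain
\[
H = H_r \lhd H_{r-1} \lhd \cdots \lhd H_1 = D^*.
\]
View $D^*$ as a subgroup of $\operatorname{GL}_n(F)$ via the regular representation, where $n=[D:F]$. Since $F \cdot I_n$ lies in the center of $\operatorname{GL}_n(F)$, we have $\{a + xI_n \mid a \in H,\ x \in F\} \subseteq N_{\operatorname{GL}_n(F)}(D^*) = N_{\operatorname{GL}_n(F)}(H_1)$. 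The aim is to prove that $H$ is soluble-by-finite and then invoke Corollary \ref{cors3} to conclude that $H$ is central.

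Next I would replay the Noether Normalization step from the proof of Theorem \ref{thms5}: choose generators of $H$, let $\Lambda$ be the finite set of matrix entries appearing in these generators and their inverses, and embed $H$ into $\operatorname{GL}_{nm}(L)$ for $L = \mathbb{Q}$ or $L = K(y)$ with $[P(\Lambda):L] = m$. The same calculation that verified $\{a+xI_n\} \subseteq N(H_1)$ transfers through the chain of embeddings so that $\{a + xI_{nm} \mid a \in H,\ x \in L\}$ still normalizes $H_1$ inside $\operatorname{GL}_{nm}(L)$.

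The heart of the argument is the construction of the iterated conjugates. For $a,b \in H$ and $x \in L$, set $c_1 = (b+xI)a(b+xI)^{-1}$; because $b+xI$ normalizes $H_1$ and $a \in H \subseteq H_1$, we have $c_1 \in H_1$. Then inductively set $c_{i+1} = c_i b c_i^{-1}$. Since $H_{i+1} \lhd H_i$, $c_i \in H_i$, and $b \in H \subseteq H_{i+1}$, induction gives $c_i \in H_i$ for all $i$, so in particular $c_r \in H$. A direct induction shows
\[
c_i = (b+xI)\, w_i(a,b)\, (b+xI)^{-1},
\]
where $w_i(a,b)$ is a reduced word in $a,a^{-1},b,b^{-1}$ whose first and last letters are $a^{\pm 1}$; the inductive step comes from $c_{i+1} = (b+xI)[w_i(a,b)\,b\,w_i(a,b)^{-1}](b+xI)^{-1}$.

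Finally I would repeat verbatim the polynomial-in-$x$ / prime-factor contradiction from the proof of Theorem \ref{thms5}, but with the single element $w_r(a,b)$ taking the role that $aba$ played there. If $H$ contained a non-abelian free subgroup with generators $a,b$, then at least one entry of $c_r$ would depend genuinely on $x$; the bound on the degrees of the entries of $(b+xI)^{-1}$ together with the localisation argument over $\mathbb{Z}$ or $K[y]$ would produce a prime $\rho_j$ dividing $q_1(x_r)$ while the constant term of $q_1$ is $1$, a contradiction. Hence $c_r = (b+xI)w_r(a,b)(b+xI)^{-1}$ is independent of $x$, which yields a nontrivial relation between $a$ and $b$ and so contradicts freeness. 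By Tits' Alternative $H$ is soluble-by-finite, and Corollary \ref{cors3} then forces $H \subseteq Z(D^*) = F^*$.

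The main obstacle is bookkeeping, not a new idea: one must verify that the iterative conjugation by $b$ preserves the property that the word $w_i(a,b)$ begins and ends with $a^{\pm 1}$ (so that $w_i b w_i^{-1}$ does not collapse), and that the degree count for $(b+xI)^{-1}w_r(a,b)(b+xI)$ still produces entries of the form $p_{ij}(x)/q(x)$ with $\deg q = nm$ and $\deg p_{ij}\le nm$, so that the prime-divisibility contradiction of Theorem \ref{thms5} applies without change.
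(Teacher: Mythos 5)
Your proposal is correct and follows essentially the same route as the paper: the paper itself proves Theorem \ref{thms6-1} by exactly this modification of Theorem \ref{thms5} (the iterated conjugates $c_i$ with $c_i=(b+xI)w_i(a,b)(b+xI)^{-1}$, the induction $c_i\in H_i$ down the subnormal chain, the polynomial/prime-factor contradiction ruling out free subgroups, then Tits' Alternative and Corollary \ref{cors3}). Your bookkeeping points --- that $w_i(a,b)$ begins and ends with $a^{\pm1}$ so $w_ibw_i^{-1}$ does not collapse, and that the degree bounds on the entries of $(b+xI)^{-1}w_r(a,b)(b+xI)$ persist --- are precisely the verifications the paper leaves implicit.
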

Here is a good place to exhibit some additional results related to
the structure of subnormal subgroups of general linear groups over
division rings.
\begin{thm}[\cite{MaAk98}]\label{thms6-2}
Let $D$ be a division ring with center $F$. If either $n\geq3$ or $n=2$ but $D$ contains at least four elements, then for every
subnormal subgroup $N$ of $\operatorname{GL}_n(D)$ we have either
$N\subseteq F$ or $\operatorname{SL}_n(D)\subseteq N$.
\end{thm}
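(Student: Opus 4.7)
The plan is to argue by induction on the subnormal depth $k$ of $N$ in $\operatorname{GL}_n(D)$. The base case $k=1$, that is, the statement for normal subgroups of $\operatorname{GL}_n(D)$, is the classical Dieudonn\'e--Wall theorem: under the given hypotheses on $n$ and $|D|$, any normal subgroup of $\operatorname{GL}_n(D)$ is either contained in the center $F^*\cdot I$ or contains $\operatorname{SL}_n(D)$. I would invoke this rather than reprove it.

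For the inductive step, take a subnormal chain $N = N_k \lhd N_{k-1} \lhd \cdots \lhd N_1 \lhd \operatorname{GL}_n(D)$. The induction hypothesis applied to $N_{k-1}$ gives two cases. If $N_{k-1} \subseteq F$, then $N \subseteq N_{k-1} \subseteq F$ and we are done. If instead $\operatorname{SL}_n(D) \subseteq N_{k-1}$, then since $N \lhd N_{k-1}$, conjugation by elements of $\operatorname{SL}_n(D)$ preserves $N$; consequently $N \cap \operatorname{SL}_n(D)$ is normal in $\operatorname{SL}_n(D)$. Invoking the simplicity of $\operatorname{PSL}_n(D)$, which holds precisely under the stated restrictions on $n$ and $|D|$, either $N \cap \operatorname{SL}_n(D) = \operatorname{SL}_n(D)$, in which case $\operatorname{SL}_n(D) \subseteq N$ and the proof is complete, or $N \cap \operatorname{SL}_n(D) \subseteq Z(\operatorname{SL}_n(D)) \subseteq F \cdot I$.

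The remaining case is dispatched by a commutator argument. Since $N \lhd N_{k-1}$ and $\operatorname{SL}_n(D) \subseteq N_{k-1}$, we have $[\operatorname{SL}_n(D), N] \subseteq N \cap \operatorname{SL}_n(D) \subseteq Z(\operatorname{SL}_n(D))$. For a fixed $a \in N$, the standard identity $[xy,a] = [x,a]^y[y,a]$ collapses to $[xy,a] = [x,a][y,a]$ once $[x,a]$ is central, so $x \mapsto [x,a]$ is a group homomorphism from $\operatorname{SL}_n(D)$ into the abelian group $Z(\operatorname{SL}_n(D))$. Under the given hypotheses $\operatorname{SL}_n(D)$ is perfect, hence this homomorphism is trivial, which means $a$ centralizes $\operatorname{SL}_n(D)$. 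Since $\operatorname{SL}_n(D)$ acts absolutely irreducibly on $D^n$ (it already contains all transvections), this forces $a \in Z(\operatorname{GL}_n(D)) = F^*\cdot I$, so $N \subseteq F$, as required.

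The main obstacle is supplying the two classical black boxes on which the argument rests: the Dieudonn\'e--Wall description of the normal subgroup lattice of $\operatorname{GL}_n(D)$ that supplies the base case, and the simplicity of $\operatorname{PSL}_n(D)$ together with the perfectness of $\operatorname{SL}_n(D)$. Both ingredients break down exactly in the excluded regime $n=2$ with $|D|\leq 3$, which explains why the theorem insists on $n\geq 3$ or $|D|\geq 4$; once these structural facts are in hand the induction is essentially formal.
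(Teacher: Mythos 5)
The survey states this theorem without proof, citing \cite{MaAk98}, so there is no in-paper argument to compare yours against; I can only assess your proposal on its own terms. It is correct, and the structure is the standard one: induction on subnormal defect, where the inductive step amounts to proving the ``sandwich'' property for subgroups normalized by $\operatorname{SL}_n(D)$ rather than merely normal in $\operatorname{GL}_n(D)$. Your key step --- for fixed $a\in N$, once $[\operatorname{SL}_n(D),N]\subseteq N\cap\operatorname{SL}_n(D)\subseteq Z(\operatorname{SL}_n(D))$ the map $x\mapsto[x,a]$ is a homomorphism from $\operatorname{SL}_n(D)$ into an abelian group, hence trivial by perfectness --- is sound, and the commutator $[x,a]$ does land in $N\cap\operatorname{SL}_n(D)$ because $\operatorname{SL}_n(D)$ is normal in $\operatorname{GL}_n(D)$ and normalizes $N$. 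Two small repairs are needed. First, simplicity of $\operatorname{PSL}_n(D)$ alone gives, in the ``full image'' case, only $(N\cap\operatorname{SL}_n(D))\cdot Z(\operatorname{SL}_n(D))=\operatorname{SL}_n(D)$; to upgrade this to $N\cap\operatorname{SL}_n(D)=\operatorname{SL}_n(D)$, observe that the quotient $\operatorname{SL}_n(D)/(N\cap\operatorname{SL}_n(D))$ is then abelian and invoke perfectness --- an ingredient you already use, so this is a one-line patch. Second, ``acts absolutely irreducibly'' is not quite the right invocation over a division ring; the precise fact you need is that the transvections generate $\operatorname{M}_n(D)$ as a ring, so the centralizer of $\operatorname{SL}_n(D)$ in $\operatorname{M}_n(D)$ is $Z(\operatorname{M}_n(D))=F\cdot I_n$, whence $a\in F^*\cdot I_n$ and $N\subseteq F^*$. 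With these adjustments the proof is complete, and your closing diagnosis is accurate: perfectness and simplicity fail exactly for $\operatorname{SL}_2(\mathbb{F}_2)$ and $\operatorname{SL}_2(\mathbb{F}_3)$, matching the excluded regime. Note also that if you are willing to cite the sharper classical form of Dieudonn\'e's theorem (any subgroup normalized by $\operatorname{SL}_n(D)$ is central or contains $\operatorname{SL}_n(D)$), the induction becomes immediate; your argument in effect re-derives that sharper form from simplicity plus perfectness, which has the virtue of being self-contained.
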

Also, in \cite{AkMa00}, it is proved that if $n\geq2$ and $N$ is an
infinite noncentral normal subgroup of $\operatorname{GL}_n(D)$ then
$N$ is not finitely generated provided that $D$ is of finite
dimension over its center. Combining this with Theorems
\ref{thms6-1} and \ref{thms6-2} one can easily prove that
\begin{thm}
Let $D$ be an infinite $F$-central division algebra. If $N$ is a
finitely generated subnormal subgroup of $\operatorname{GL}_n(D)$
($n\geq1$) then $N\subseteq F^*$.
\end{thm}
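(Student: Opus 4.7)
The strategy is to split on the value of $n$. When $n=1$ one has $\operatorname{GL}_1(D)=D^*$, and Theorem~\ref{thms6-1} immediately forces every finitely generated subnormal subgroup of $D^*$ to be central, so $N\subseteq F^*$. The substantive case is $n\ge 2$.

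For $n\ge 2$ the idea is to combine Theorem~\ref{thms6-2} with the Akbari--Mahdavi-Hezavehi result from \cite{AkMa00} that is quoted immediately before the statement. Since $D$ is infinite, the side condition $|D|\ge 4$ that appears in the $n=2$ clause of Theorem~\ref{thms6-2} is automatic, so that theorem applies to our subnormal $N$ and delivers the dichotomy: either $N\subseteq F$, in which case $N\subseteq F^*$ and we are done, or $\operatorname{SL}_n(D)\subseteq N$. I would then rule out the second alternative by contradiction.

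Assume $\operatorname{SL}_n(D)\subseteq N$. Because $D$ is infinite, the transvections $I_n+dE_{ij}$ with $d\in D$ and $i\ne j$ form an infinite family of non-scalar elements of $\operatorname{SL}_n(D)$, so $\operatorname{SL}_n(D)$, and hence $N$, is infinite and noncentral in $\operatorname{GL}_n(D)$. The next step is to upgrade the subnormality of $N$ to outright normality: via the Dieudonn\'e determinant, $\operatorname{GL}_n(D)/\operatorname{SL}_n(D)\cong D^{*}/[D^{*},D^{*}]$ is abelian, so every subgroup of $\operatorname{GL}_n(D)$ that contains $\operatorname{SL}_n(D)$ is automatically normal in $\operatorname{GL}_n(D)$. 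Now, since $D$ is finite-dimensional over its center $F$, the theorem of \cite{AkMa00} quoted above asserts that any infinite noncentral normal subgroup of $\operatorname{GL}_n(D)$ fails to be finitely generated, contradicting the hypothesis on $N$.

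The one step I would flag as the real point to be careful about is the upgrade from subnormal to normal, since the \cite{AkMa00} result is stated for normal subgroups only; but the Dieudonn\'e-determinant observation disposes of it in a single line. Beyond that, the argument is essentially a clean packaging of Theorem~\ref{thms6-1}, Theorem~\ref{thms6-2} and the cited result of \cite{AkMa00}.
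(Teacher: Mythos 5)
Your proof is correct and is exactly the combination the paper intends: the paper itself only remarks that the result follows by ``combining'' Theorem~\ref{thms6-1}, Theorem~\ref{thms6-2} and the \cite{AkMa00} non-finite-generation result, and you have supplied precisely that argument. The one detail the paper leaves implicit --- passing from subnormal to normal when $\operatorname{SL}_n(D)\subseteq N$, via the Dieudonn\'e determinant making $\operatorname{GL}_n(D)/\operatorname{SL}_n(D)$ abelian --- you identify and handle correctly.
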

We close this section by recalling a useful result of
\cite{AkMaMah99} which will be applied in the subsequent sections.
\begin{cor}\label{cors7}
Let $D$ be a division ring with center $F$ and let $M$ be a maximal
subgroup of $D^*$. If $|M:M\cap F^*|<\infty$, then $D=F$.
\end{cor}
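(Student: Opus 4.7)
The plan is a proof by contradiction: assume $D \neq F$, so that $Z(D^*) = F^*$ is proper in $D^*$, and derive a contradiction in each configuration of $M$ relative to $F^*$. The first dichotomy is whether $F^* \subseteq M$. If $F^* \not\subseteq M$, then $F^*M$ is a subgroup strictly containing $M$ (since $F^*$ is central, $F^*M = MF^*$ is indeed a subgroup), so maximality forces $F^*M = D^*$; the second isomorphism theorem then yields $D^*/F^* \cong M/(M \cap F^*)$, which is finite by hypothesis, making $D^*/Z(D^*)$ periodic. Kaplansky's theorem (recalled in the introduction) then forces $D$ to be commutative, a contradiction. If instead $F^* \subseteq M$ but $M = F^*$, then picking any $d \in D^* \setminus F^*$ and applying maximality gives $\langle F^*, d\rangle = D^*$; but this subgroup equals $F^*\langle d\rangle$ (as $F^*$ is central) and is therefore abelian, so $D^*$ itself is abelian, again a contradiction.

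I may therefore assume $F^* \subsetneq M$ with $M/F^*$ a nontrivial finite group. Since $F^*$ is contained in $Z(M)$ with finite index, Schur's theorem yields $M'$ finite. If $M$ happens to be normal in $D^*$, then $M'$ is characteristic in $M$ and hence a finite normal, in particular subnormal, subgroup of $D^*$; Theorem~\ref{thms2} (Herstein) places $M' \subseteq Z(D^*) = F^*$, whereupon $M/F^*$ becomes a nontrivial finite \emph{abelian} normal subgroup of $D^*/Z(D^*)$, in direct contradiction with Scott's theorem from the introduction.

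The main obstacle is the residual subcase in which $F^* \subsetneq M$ and $M$ is \emph{not} normal in $D^*$, so that $N_{D^*}(M) = M$ by maximality. My plan here is first to apply the Schur--Herstein--Scott chain of the previous paragraph to the core $H = \bigcap_{g \in D^*} gMg^{-1}$ in place of $M$: $H$ is normal in $D^*$ with $F^* \subseteq H \subseteq M$ and $|H/F^*|$ finite, so the same argument forces $H = F^*$. Consequently $D^*/F^*$ acts faithfully by conjugation on the orbit $\mathcal{C}$ of $M$ under $D^*$, giving an embedding $D^*/F^* \hookrightarrow \mathrm{Sym}(\mathcal{C})$. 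Should $|\mathcal{C}| = [D^*:M]$ turn out to be finite, $D^*/Z(D^*)$ would be finite and Kaplansky would once more close the argument. To force this finiteness I would exploit that every $a \in M$ is algebraic over $F$ (since $a^n \in F^*$ for some $n$, as $M/F^*$ is finite), so $K := F[M]$ is a finite-dimensional $F$-subdivision algebra of $D$ with $M \subseteq K^* \subseteq D^*$. Maximality of $M$ then leaves exactly two possibilities: $K^* = D^*$, so that $D$ itself is finite-dimensional over $F$ and one invokes Corollary~\ref{cors6} together with the Section~2 structure theorems to conclude; or $K^* = M$, so that $M$ is the full unit group of a proper subdivision ring, and a normalizer-centralizer analysis via the Skolem--Noether and crossed-product machinery of Section~2 is used to bound $[D^*:M]$. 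Making this final step watertight, especially the $K^* = M$ branch, is the most delicate part of the argument.
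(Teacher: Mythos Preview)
Your initial case reductions (when $F^*\not\subseteq M$, when $M=F^*$, and when $M\lhd D^*$) are all correct, though the normal case is easier than you make it: once $M\lhd D^*$ with $[D^*:M]=p$ prime and $[M:F^*]<\infty$, you already have $[D^*:F^*]<\infty$ and Kaplansky finishes immediately, without Schur or Scott. The paper, incidentally, never splits on normality at all; it goes straight to $E=F[M]$ and applies Corollary~\ref{cors6} \emph{inside} the finite-dimensional algebra~$E$ to force $E$ to be a field, which subsumes all of your preliminary cases in one stroke.

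The genuine gap is exactly where you flag it: the branch $K^*=M$ with $K=F[M]\subsetneq D$. Here your proposed tools are unavailable. Skolem--Noether and the crossed-product apparatus of Section~2 require the ambient algebra to be finite-dimensional central simple, but in this branch $D$ is \emph{not} known to be finite-dimensional over $F$ (indeed $K\neq D$). Moreover, ``bounding $[D^*:M]$'' is the wrong target: that index may well be infinite.

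What actually works is this. First, $K$ is a field: since $F^*\subseteq Z(K^*)$ with finite index, $K^*$ is center-by-periodic, so Kaplansky (or, as the paper does, Corollary~\ref{cors6} applied to a finitely generated normal subgroup of $K^*$) gives $K$ commutative. Second, a field extension $K/F$ with $|K^*:F^*|<\infty$ and $K\neq F$ forces $K$ to be a \emph{finite} field. Third, writing $K^*=\langle a\rangle$ with $a\notin F$, Herstein's Lemma supplies $b\in D^*$ with $bab^{-1}=a^i\neq a$; thus $b\in N_{D^*}(K^*)\setminus M$, maximality gives $N_{D^*}(K^*)=D^*$, so $K^*\lhd D^*$, and Cartan--Brauer--Hua forces $K\subseteq F$ or $K=D$, both of which yield the desired commutativity. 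None of this uses Skolem--Noether or any finite-dimensionality hypothesis on $D$.
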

\begin{proof}
Put $A=M\cap F^*$. Let $T=\{u_1,\ldots,u_k\}$ be a left transversal
of $A$ in $M$ and $E=\{\sum_{j=1}^kf_ju_j|f_j\in F\}$. Clearly $E$
is a finite dimensional division algebra and $M$ is a maximal
subgroup of $E^*$ (not necessarily proper maximal subgroup). Choose
an element $u\in E^*$ such that $u=1$ if $M=E^*$ and $u\in
E^*\setminus M$ if $M\neq E^*$. From the maximality of $M$, it
follows that $E^*=\langle T,u\rangle F^*$. So $\langle T,u\rangle$
is a finitely generated normal subgroup of $E^*$. Thus, $\langle
T,u\rangle$ is central in $E^*$ by Corollary \ref{cors6}. This
forces that $E$ is a field. Now, if $M\neq E^*$ then $E=D$ and the
result follows. Thus we may assume that $M=E^*$. This implies that
$|E^*:F^*|<\infty$ and hence we have either $M^*=E^*=F^*$ or $E$ is
a finite field properly containing $F$. If $M=F^*$ then
$|D^*:F^*|=p$ for some prime $p$ as $F^*\lhd D^*$. This implies that
$D^*/F^*$ is cyclic and hence $D$ is commutative. Now, consider the
case in which $E$ is a finite field. In this case, there is an
element $a\in D^*$ such that $E^*=\langle a\rangle$. Since $a$ has a
finite order and is noncentral, Herstein's Lemma (\cite[p.
206]{Lam01}) implies that there is a $b\in D^*$ such that
$a^b=a^i\neq a$. Thus, $b\in N_{D^*}(E^*)$ and so $\langle
M,b\rangle\subseteq N_{D^*}(E^*)$. Now, since $M$ is maximal we
conclude that $N_{D^*}(E^*)=D^*$ and thus $E^*\lhd D^*$. Finally,
the Cartan-Brauer-Hua \cite[p. 211]{Lam01} Theorem implies that $D$
is commutative.
\end{proof}
\section{Maximal subgroups}\label{SecT}
Some old results presented in Section~\ref{sec1birk} show that in a division ring
$D$ subnormal subgroups behave similar to $D^*$ in several manners. For
example they are very far from being commutative as well as they are
not periodic. These phenomena imply that the subnormal subgroups
are ``big''. But, one would like to know that how big maximal
subgroups are in $D^*$? This section is devoted to presenting some
recent results concerning this question.

\subsection{Existence of free subgroups in maximal subgroups}
As we have seen, by a result of Goncalves every subnormal subgroup
of a division algebra contains a noncyclic free subgroup.
Following this outcome, in \cite{Ma01} the author studied the existence of
noncyclic free subgroups in a maximal subgroup of a division
algebra.  The main result of this study asserts that in a noncrossed
product division algebra $D$ every maximal subgroup of $ D^*$
contains a noncyclic free subgroup, that is a similar result which
holds for subnormal subgroups of $D^*$. Here, we are going to
achieve this interesting result. We begin by
\begin{prop}\label{propt8}
Let $D$ be an $F$-central division algebra. If $D^*$ has a
noncommutative soluble-by-finite maximal subgroup $M$, then $M$ is
absolutely irreducible and contains an abelian normal subgroup $A$
such that $C_M(A)=A$, hence $D$ is a crossed product division
algebra.
\end{prop}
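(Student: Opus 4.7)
The plan is to prove the proposition in two stages: (i) show $M$ is absolutely irreducible, i.e.\ $F[M]=D$; and (ii) find an abelian normal subgroup $A\lhd M$ with $C_M(A)=A$. The crossed-product conclusion is then immediate from the last clause of Lemma \ref{lem4}.

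For (i), I would exploit the maximality of $M$ twice. Since $F^*\subseteq Z(D^*)$, the product $F^*M$ is a subgroup of $D^*$ containing $M$, so maximality forces either $F^*M=D^*$ (in which case $D\setminus\{0\}\subseteq F[M]$, hence $F[M]=D$) or $F^*\subseteq M$. In the latter case, $F[M]$ is a finite-dimensional $F$-subalgebra of the division ring $D$ and hence is itself a division subring, so $F[M]^*$ is a subgroup of $D^*$ containing $M$. Maximality again forces $F[M]^*\in\{M,D^*\}$; the option $F[M]^*=M$ is excluded because then $M$ would be the full unit group of the division ring $F[M]$ and trivially subnormal in it, so Corollary \ref{cors3}, applied inside $F[M]$, would make $M$ central in $F[M]$ and hence abelian, contrary to hypothesis.

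For (ii), I would use Lemma \ref{lem} to supply an abelian normal subgroup of finite index in $M$, choose $A\lhd M$ maximal abelian normal, and assume for contradiction that $H:=C_M(A)\supsetneq A$. Since $A\lhd M$, the subfield $K:=F[A]$ is normalized by $M$, so maximality of $M$ gives $N_{D^*}(K^*)\in\{M,D^*\}$. If $N_{D^*}(K^*)=D^*$ then $K^*\lhd D^*$, and the Cartan--Brauer--Hua theorem forces $K=F$ (the alternative $K=D$ is ruled out because $A$ is abelian, which would make $D=F[A]$ commutative); consequently $A\subseteq F^*$ is central in $M$, so any abelian normal subgroup of $M$ of finite index combines with $A$ into an abelian normal subgroup, which by maximality of $A$ must lie in $A$ -- forcing $|M:M\cap F^*|<\infty$ and, via Corollary \ref{cors7}, $D=F$, a contradiction. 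If instead $N_{D^*}(K^*)=M$, then $C_{D^*}(K)\subseteq N_{D^*}(K^*)=M$, so $H=C_{D^*}(K)=C_D(K)^*$ is the full unit group of the division subring $C_D(K)$; Corollary \ref{cors3} applied to the soluble-by-finite $H$ inside $C_D(K)$ forces $C_D(K)$ commutative, the double centralizer theorem then collapses $C_D(K)$ to $K$, and so $H=K^*$ emerges as an abelian normal subgroup of $M$ strictly containing $A$ -- contradicting maximality of $A$. The main obstacle is the bookkeeping in this second case: it is the maximality of $M$ that upgrades $C_{D^*}(K)\subseteq N_{D^*}(K^*)$ to the statement that $H$ is the entire unit group of a division subring, and this structural upgrade is precisely what unlocks Corollary \ref{cors3} and yields the contradictory abelian normal extension of $A$.
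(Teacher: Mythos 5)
Your proof is correct and follows essentially the same route as the paper's: absolute irreducibility via the dichotomy $F[M]^*\in\{M,D^*\}$ with Corollary~\ref{cors3} excluding the option $F[M]^*=M$, then a maximal abelian normal subgroup $A$, a maximality dichotomy whose degenerate branch is killed by Corollary~\ref{cors7}, and Corollary~\ref{cors3} again to force the centralizer division algebra to be commutative, with the crossed-product conclusion from Lemma~\ref{lem4}. The only deviations are cosmetic: the paper runs the second dichotomy on $N_{D^*}(A)$ and invokes Stuth's theorem (Theorem~\ref{thms1}) to get $A\subseteq F^*$ when $N_{D^*}(A)=D^*$, whereas you run it on $N_{D^*}(F[A]^*)$ and use Cartan--Brauer--Hua (both valid), your opening step with $F^*M$ is redundant, and your final collapse $C_D(K)=K$ via the double centralizer theorem is correct but unneeded, since $H=C_M(A)$ abelian and normal in $M$ already contradicts the maximality of $A$.
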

\begin{proof}
Since $M$ is maximal we have either $F[M]=D$ or $F[M]^*=M$. But the
latter case implies that the division algebra $F[M]$ is commutative
(Theorem \ref{thms1}). This yields the commutativity of $M$ which is
a contradiction. So $F[M]=D$, i.e., $M$ is an absolutely irreducible
subgroup of $D^*$. Now, since $M$ is soluble-by-finite, it contains
an abelian normal subgroup of finite index $A$. Take $A$ maximal in
$M$. However, since $M$ is maximal and $\langle
M,C_{D^*}(A)\rangle\subseteq N_{D^*}(A)$ we have either
$N_{D^*}(A)=D$ or $C_{D^*}(A)\subseteq M$. But, if $N_{D^*}(A)=D$
then by Theorem \ref{thms1} we obtain $A\subseteq F^*$ and so
$M/M\cap F^*$ is finite. Here, Corollary \ref{cors7} yields $D=F$
which is absurd. Thus $C_{D^*}(A)\subseteq M$ and consequently
$C_{D^*}(A)$ is soluble-by-finite. On the other hand, by the Double
Centralizer Theorem $C_{D}(A)=C_{D}(F[A])$ is a division algebra.
This forces that $C_{D}(A)$ is commutative, because its unit group
is soluble-by-finite. Thus $C_M(A)$ is abelian. However $A\subseteq
C_{M}(A)\lhd M$ and $A$ is a maximal abelian normal subgroup of $M$.
This gives $C_{M}(A)=A$ and the result follows.
\end{proof}
Combining Theorem \ref{t} and Proposition \ref{propt8} implies that:
\begin{thm}\label{cort14}
Let $D$ be a finite dimensional division algebra over its
center $F$. If $M$ is a non-abelian maximal subgroup of $D^*$ then
the following statements are equivalent:
\begin{enumerate}
  \item $M$ does not contain a non-cyclic free subgroup;
  \item $M$ contains an abelian normal subgroup $A$ such that $C_M(A)=A$
  and $M/A\cong\Gal(F[A]/F)$;
  \item $M$ is soluble-by-finite;
  \item $M$ is abelian-by-finite;
  \item $M$ satisfies a group identity.
\end{enumerate}

Moreover, if either of the above conditions holds, then $D$ is a
crossed product division algebra. In particular, if $D$ is a
noncrossed product division algebra then every maximal subgroup of
$D^*$ contains a noncyclic free subgroup.
\end{thm}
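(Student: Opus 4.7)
The plan is to use Theorem \ref{t} to handle the equivalences among conditions $(1)$, $(3)$, $(4)$, $(5)$ in one stroke, and then graft condition $(2)$ onto the loop using Proposition \ref{propt8}. Since $M$ is a subgroup of $D^*$ and $D$ is finite-dimensional over $F$, Theorem \ref{t} applies to $M$ directly and yields the equivalence of items $(1)$, $(3)$, $(4)$, $(5)$ with no extra work.

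The interesting direction is to insert $(2)$ into this chain. I would first establish $(2)\Rightarrow(4)$: if $A$ is an abelian normal subgroup of $M$ with $C_M(A)=A$ and $M/A\cong \Gal(F[A]/F)$, then since $F[A]\subseteq D$ is a commutative subring of a finite-dimensional $F$-algebra, $[F[A]:F]<\infty$ and the Galois group is finite, so $M$ is abelian-by-finite. Conversely I would establish $(3)\Rightarrow (2)$: assuming $M$ is a non-abelian, soluble-by-finite maximal subgroup, Proposition \ref{propt8} applies verbatim and produces an abelian normal subgroup $A\unlhd M$ with $C_M(A)=A$, while simultaneously asserting that $M$ is absolutely irreducible in $D^*$. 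At this point Lemma \ref{lem4} part $(3)$ kicks in and gives $F[A]/F$ Galois with $\Gal(F[A]/F)\cong M/C_M(A)=M/A$, which is exactly condition $(2)$.

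For the ``moreover'' clause, once any (and hence all) of $(1)$--$(5)$ holds, the construction in $(2)$ furnishes an abelian normal subgroup $A$ of $M$ with $C_M(A)=A$, whence the in-particular statement of Lemma \ref{lem4} gives that $F[A]$ is a maximal subfield of $D$ Galois over $F$, so $D$ is a crossed product division algebra. The contrapositive yields the final sentence: if $D$ is not a crossed product, then no maximal subgroup of $D^*$ can satisfy any of $(1)$--$(5)$, so every maximal subgroup must contain a non-cyclic free subgroup.

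The main obstacle, such as it is, lies in verifying that Proposition \ref{propt8} genuinely supplies condition $(2)$ with the correct Galois identification. The hypothesis of Proposition \ref{propt8} requires $M$ to be non-abelian and soluble-by-finite and maximal, all of which are granted; the output is already the bulk of $(2)$, and the Galois part is exactly the conclusion of Lemma \ref{lem4}$(3)$ once absolute irreducibility is in hand. A minor care point is that in passing between the equivalence in Theorem \ref{t} (which is stated for arbitrary subgroups) and the present setting, one should note that $M$ is non-abelian so that Proposition \ref{propt8} is applicable (hence one only needs the \emph{non-trivial} direction $(3)\Rightarrow(2)$ in the non-abelian case, the abelian case being vacuous for the Galois statement). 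Beyond this, no computation is required: the full result is a clean assembly of Theorem \ref{t}, Proposition \ref{propt8}, and Lemma \ref{lem4}.
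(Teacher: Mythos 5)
Your assembly is exactly the paper's: the paper obtains Theorem~\ref{cort14} precisely by ``combining Theorem~\ref{t} and Proposition~\ref{propt8}'', i.e., Theorem~\ref{t} (applied to the subgroup $M$) gives the equivalence of (1), (3), (4), (5), while Proposition~\ref{propt8} together with Lemma~\ref{lem4} supplies condition (2), the identification $M/A\cong\Gal(F[A]/F)$, and the crossed-product conclusion, just as you argue. Your closing remarks --- that non-abelianity of $M$ is what licenses Proposition~\ref{propt8}, and that the final ``in particular'' clause is the contrapositive of the ``moreover'' statement --- coincide with the paper's implicit reading, so the proposal is correct and takes essentially the same route as the source.
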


To proceed, we have to recall the following theorem which
characterizes the field extensions $K/F$, where $K$ is radical over
$F$, i.e., $K^*/F^*$ is periodic. For a proof see \cite[p.
245]{Lam01}.
\begin{thm}\label{thmt9}
Let $K/F$ be a proper field extension
and let $P$ be the prime subfield of $F$. If $K$ is radical over $F$
then $\operatorname{char}(F)>0$, and either $K$ is purely
inseparable over $F$ or $K$ is algebraic over $P$.
\end{thm}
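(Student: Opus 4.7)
The plan is to extract algebraic constraints from the fact that \emph{every} element of $K\setminus F$ has some power in $F$. I fix an arbitrary $a \in K\setminus F$ with smallest $n \geq 2$ such that $a^n = c \in F$, and simultaneously exploit that $1+a \in K\setminus F$ admits some minimal $m \geq 2$ with $(1+a)^m = d \in F$. The central device is to compare these two radical relations by expanding $(1+a)^m$ via the binomial theorem and reducing powers of $a$ against the minimal polynomial of $a$ over $F$, which divides $x^n - c$.

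To force $\operatorname{char}(F) > 0$: in the clean case where the minimal polynomial of $a$ equals $x^n - c$ (so $1, a, \dots, a^{n-1}$ are $F$-linearly independent), expanding $(1+a)^m = d$ and reducing $a^n \mapsto c$ yields an identity
\[
d = \sum_{r=0}^{n-1} \Bigl(\sum_{q \geq 0,\ qn+r \leq m} \binom{m}{qn+r} c^q\Bigr) a^r,
\]
and the vanishing of the coefficients of $a, \dots, a^{n-1}$ forces a nontrivial system of polynomial identities in the $\binom{m}{k}$. In characteristic $0$ these cannot all hold (already $\binom{m}{1} = m \neq 0$ gives an immediate obstruction), so $\operatorname{char}(F) = p > 0$. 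When $[F(a):F] < n$, a more delicate version of the same binomial argument --- working modulo the actual minimal polynomial of $a$ rather than $x^n - c$ --- gives the same conclusion, and moreover pins $n$ down to be a power of $p$ in the purely inseparable situation.

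With $\operatorname{char}(F) = p > 0$ in hand, I split into two subcases. If every $a \in K\setminus F$ is purely inseparable over $F$, then $K/F$ is purely inseparable and we are done. Otherwise some $a \in K\setminus F$ is separable over $F$; replacing $a$ by a suitable $p$-th power, I may assume $a^n \in F$ with $\gcd(n,p) = 1$. In a Galois closure of $F(a)/F$ the conjugates $a_1, \dots, a_r$ of $a$ then all satisfy $a_i^n = a^n$, so the ratios $a_i/a$ are $n$-th roots of unity and lie in $\overline{P}$. Combining this with the radical condition applied to $1+\lambda a$ for varying $\lambda \in F$ should extract enough $P$-algebraic relations to force $a$, and ultimately $F$ itself, to be algebraic over $P$; once $F \subseteq \overline{P}$, radicality of $K/F$ yields $K \subseteq \overline{P}$ immediately. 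The main obstacle will be precisely this global bootstrapping step --- promoting algebraicity of one particular separable radical element to algebraicity of all of $F$ over $P$ --- since it requires running the binomial-expansion machinery uniformly across many elements of $F$ at once, and this is where the full strength of $K^*/F^*$ being torsion (not merely a single radical relation) is essential.
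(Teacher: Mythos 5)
Your first step contains a fatal error, and it already fails in your ``clean case.'' Take $F=\mathbb{Q}$ and $a=i$: the minimal $n$ is $2$, with $a^2=-1=c$ and minimal polynomial exactly $x^n-c$; the minimal $m$ for $1+a$ is $4$, with $(1+i)^4=-4\in\mathbb{Q}$. Reducing $a^2\mapsto -1$ in the expansion of $(1+a)^4$, the coefficient of $a$ is $\binom{4}{1}+\binom{4}{3}c=4-4=0$. In general that coefficient is $\sum_{q}\binom{m}{qn+1}c^q$, not $\binom{m}{1}=m$; your ``immediate obstruction'' would require $m\le n$, which minimality of $m$ does not give. So the system of identities you derive is perfectly satisfiable in characteristic zero, and no contradiction can come from it. The failure is structural, not reparable by a ``more delicate version'': any argument that uses only finitely many radical relations cannot prove $\operatorname{char}(F)>0$, because finitely many elements radical over $\mathbb{Q}$ certainly exist ($i$ and $1+i$ above). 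The hypothesis is that \emph{every} element of $K$ is radical over $F$, and a correct proof must exploit an infinite family of elements, e.g.\ all translates $a+\lambda$ with $\lambda\in F$. Your second half does gesture at this (and at the correct observation that conjugates of a radical element are root-of-unity multiples of it), but you explicitly concede that the ``global bootstrapping step'' is open --- and that step \emph{is} the theorem.

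For comparison: the paper does not prove this result itself but cites \cite[p.~245]{Lam01}, and the standard (Kaplansky) argument there runs as follows, with no binomial computation anywhere. If $K/F$ is not purely inseparable, choose $a\in K\setminus F$ separable over $F$, let $N$ be the normal closure of $F(a)/F$ and $\sigma$ an $F$-automorphism of $N$ with $\sigma(a)\neq a$. Since $a^n\in F$, all conjugates of $a$ are root-of-unity multiples of $a$, so $\sigma(a)=\omega a$ with $\omega\neq 1$ a root of unity; likewise, for each $\lambda\in F$ radicality of $a+\lambda$ gives $\sigma(a+\lambda)=\eta_\lambda(a+\lambda)$ with $\eta_\lambda$ a root of unity, whence $(\omega-\eta_\lambda)a=(\eta_\lambda-1)\lambda$. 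This single identity does everything: $\eta_\lambda=1$ forces $\omega=1$, so $\eta_\lambda\neq 1$ for $\lambda\neq 0$ and $\lambda\mapsto\eta_\lambda$ is injective; taking $\lambda=1$ gives $a=(\eta_1-1)/(\omega-\eta_1)$, so $a$ is algebraic over the prime field $P$, and then $\lambda=\frac{\omega-\eta_\lambda}{\eta_\lambda-1}\,a$ shows every $\lambda\in F^*$ is algebraic over $P$, i.e.\ $F$ (hence the algebraic extension $K$) is algebraic over $P$. In characteristic zero this is absurd: the infinitely many distinct roots of unity $\eta_\lambda$, $\lambda\in\mathbb{Q}$, would all lie in the number field $\mathbb{Q}(\omega,\eta_1)$ (note $\eta_\lambda=(\omega a+\lambda)/(a+\lambda)$), which contains only finitely many roots of unity. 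Thus $\operatorname{char}(F)=p>0$ and the dichotomy follows in one stroke, whereas your proposal tries to obtain $\operatorname{char}(F)>0$ and the algebraicity over $P$ in two separate stages, the first of which is false and the second unproven.
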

\begin{cor}\label{cort10}
Let $D$ be a noncommutative $F$-central division algebra. If $M$ is
a soluble-by-finite maximal subgroup of $D^*$. Then
\begin{enumerate}
  \item $F^*\subseteq M$;
  \item If $M/F^*$ is periodic, then $M$ is commutative.
\end{enumerate}
\end{cor}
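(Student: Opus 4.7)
The plan is as follows. Part~(1) is a short consequence of maximality plus Corollary~\ref{cors3}, while part~(2) is a proof by contradiction organized around Proposition~\ref{propt8} and the dichotomy arising from the subgroup $K^*M$, where $K$ is the Galois maximal subfield produced by that proposition.

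For part~(1), since $F^*$ is central, $F^*M$ is a subgroup of $D^*$ containing $M$; by maximality either $F^*M=M$ (whence $F^*\subseteq M$, and we are done) or $F^*M=D^*$. In the latter case, every element of $D^*$ factors as $fm$ with $f\in F^*$ and $m\in M$, and because $f$ is central, conjugation by $fm$ agrees with conjugation by $m$. Hence $M\lhd D^*$. Being a normal, soluble-by-finite subgroup of $D^*$, $M$ is central by Corollary~\ref{cors3}, so $M\subseteq F^*$, and then $D^*=F^*M=F^*$, contradicting the noncommutativity of $D$.

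For part~(2), I would assume for contradiction that $M$ is noncommutative while $M/F^*$ is periodic. Proposition~\ref{propt8} then supplies an abelian normal subgroup $A\lhd M$ with $C_M(A)=A$, and Lemma~\ref{lem4} upgrades this to the statement that $K:=F[A]$ is a Galois maximal subfield of $D$ with $M/A\cong\Gal(K/F)$. Part~(1) gives $F^*\subseteq M$, and centrality of $F^*$ forces $F^*\subseteq C_M(A)=A$; noncommutativity of $M$ excludes $A=F^*$ (which would give $M=C_M(F^*)=A=F^*$), so $A\supsetneq F^*$ and $K\supsetneq F$. Since $A\lhd M$, the group $M$ normalizes $K=F[A]$, hence $K^*$, so $K^*M$ is a subgroup of $D^*$ containing $M$. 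By maximality of $M$ we have two cases. If $K^*\subseteq M$, then $K^*/F^*$ embeds in the periodic group $M/F^*$, making $K/F$ a radical extension; Theorem~\ref{thmt9} then leaves only two subcases, namely $K/F$ purely inseparable (impossible, as $K/F$ is a nontrivial Galois extension) or $K$ algebraic over the prime subfield $P$. In the latter subcase, $F\subseteq K$ is also algebraic over $P$, so $D$ is algebraic over the finite field $P$; every finitely generated subring of $D$ is then a finite integral domain, hence a field by Wedderburn's little theorem, forcing $D$ to be commutative, a contradiction. If instead $K^*M=D^*$, then every element of $D^*$ normalizes $K^*$, so $K^*\lhd D^*$, and the Cartan--Brauer--Hua theorem forces $K\subseteq F$ or $K=D$, both of which are impossible.

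The main technical obstacle I anticipate is the subcase where $K$ is algebraic over the prime field: there one needs to pass carefully from algebraicity of $K/P$ to local finiteness of $D$, and then invoke Wedderburn's little theorem on each finitely generated subring to deduce commutativity of $D$.
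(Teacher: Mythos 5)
Your proof is correct and takes essentially the same route as the paper's: part (1) via the maximality dichotomy on $F^*M$ settled by Corollary~\ref{cors3}, and part (2) via Proposition~\ref{propt8} and Lemma~\ref{lem4}, the dichotomy on $K^*M$, and Theorem~\ref{thmt9}. The only (harmless) variations are that you dispatch the branch $K^*M=D^*$ by normality of $K^*$ and Cartan--Brauer--Hua where the paper instead observes that $D^*$ would be soluble-by-finite, and you spell out the final step --- that $F$ algebraic over $\mathbb{F}_p$ forces $D$ commutative, via finite dimensionality, local finiteness and Wedderburn's little theorem --- which the paper leaves implicit.
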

\begin{proof}
(1) Since $M$ is maximal we have either $F^*M=M$ or $F^*M=D^*$. But
the latter case implies that $D^*$ is soluble-by-finite which is
impossible. Thus $F^*M=M$ and the result follows.

(2) Suppose that $M$ is noncommutative. By Proposition \ref{propt8},
$M$ has a proper normal abelian subgroup such that $C_M(A)=A$. Thus
$F[A]/F$ is Galois by Lemma \ref{lem4}. On the other hand since $M$
is maximal, as in (1)  we conclude that $F[A]\subseteq M$ and hence
$F[A]^*/F^*$ is torsion. Therefore, from Theorem \ref{thmt9} it
follows that $F[A]/F$ is separable, $\operatorname{char}F=p>0$ and
$F[A]$ is algebraic over the prime subfield of $F$. Consequently,
$F$ is algebraic over the prime subfield $\mathbb{F}_p$. This forces
that $D$ is commutative which is a contradiction.
\end{proof}
Corollary \ref{cort10} makes a facility to improve the result of
Corollary \ref{cors7} as follows (of course in the finite dimensional case).
\begin{thm}\label{thmt11}
Let $D$ be a noncommutative $F$-central division algebra. If
$M$ is a maximal subgroup of $D^*$ such that $M/M\cap F^*$ is periodic
then $M$ is abelian.
\end{thm}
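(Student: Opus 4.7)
The plan is to argue by contradiction: assume $M$ is non-abelian, show that $M$ then cannot contain a non-cyclic free subgroup, apply Theorem~\ref{t} to deduce that $M$ is soluble-by-finite, and then invoke Corollary~\ref{cort10} to force $M$ to be commutative after all. This reduces everything to one combinatorial step about free groups, with the rest being a direct appeal to machinery already developed in the paper.

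The heart of the proof is ruling out non-cyclic free subgroups. Suppose $\Phi\subseteq M$ is a free subgroup of rank at least $2$. Because $F^*$ is abelian (and in fact central in $D^*$, so $M\cap F^*$ is normal in $M$), the intersection $\Phi\cap F^*$ is an abelian subgroup of the free group $\Phi$, hence cyclic, say $\Phi\cap F^*=\langle c\rangle$. The periodicity of $M/(M\cap F^*)$ says that for every $g\in\Phi$ there is some $n\geq 1$ with $g^n\in M\cap F^*$, and since $g^n\in\Phi$ as well, we get $g^n\in\langle c\rangle$. If $c=1$ this forces $\Phi$ to be a torsion free group, so $\Phi=\{1\}$, contradicting non-cyclicity. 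If $c\neq 1$, then $g$ commutes with $g^n=c^k$, hence belongs to the centralizer of $c$ in $\Phi$; centralizers of non-trivial elements in free groups are cyclic, generated by the maximal root of the element. Thus every $g\in\Phi$ lies in the unique maximal cyclic subgroup of $\Phi$ containing $c$, making $\Phi$ cyclic — again a contradiction.

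With Step~1 in hand, Theorem~\ref{t} (whose hypotheses hold since $D$ is finite-dimensional over $F$) applied to $M\leq D^*$ gives that $M$ is soluble-by-finite. Since $D$ is noncommutative and $M$ is non-abelian, Corollary~\ref{cort10}(1) yields $F^*\subseteq M$, whence $M\cap F^*=F^*$ and the hypothesis $M/(M\cap F^*)$ periodic becomes exactly $M/F^*$ periodic. Corollary~\ref{cort10}(2) then forces $M$ to be commutative, contradicting our assumption and completing the proof.

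I expect the main obstacle to be Step~1, the free-group argument, since Theorem~\ref{t} and Corollary~\ref{cort10} are already doing the heavy lifting once the dichotomy is invoked. The only subtlety there is making sure the centralizer/abelian-subgroup facts about free groups are applied cleanly in both the $c=1$ and $c\neq 1$ cases; everything else is a straightforward bookkeeping of which hypothesis of which earlier result is being used.
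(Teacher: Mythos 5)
Your proposal is correct and follows essentially the same route as the paper, whose proof consists precisely of the two observations that periodicity of $M/(M\cap F^*)$ rules out non-cyclic free subgroups and that Theorem~\ref{t} together with Corollary~\ref{cort10} then finishes the argument. The only difference is that you carefully justify the free-subgroup exclusion via the cyclicity of centralizers (and uniqueness of roots) in free groups, a step the paper asserts without proof; your elaboration is sound, including the handling of both the $c=1$ and $c\neq 1$ cases.
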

\begin{proof}
Since $M/M\cap F^*$ is torsion, it follows that $M$ contains no
non-cyclic free subgroup. Now, the result follows from
Theorem \ref{t} and Corollary \ref{cort10}.
\end{proof}
\begin{cor}\label{cort12}
Let $D$ be an $F$-central division algebra and $M$ be a maximal subgroup of
$D^*$. If $M$ is nilpotent then $M$ is abelian.
\end{cor}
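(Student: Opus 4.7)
My plan is to argue by contradiction. Suppose $M$ is nilpotent and nonabelian. Since nilpotent implies soluble, hence soluble-by-finite, Corollary~\ref{cort10}(1) gives $F^*\subseteq M$. The goal is to show $M/F^*$ is periodic, for then Corollary~\ref{cort10}(2) forces $M$ to be commutative, contradicting the hypothesis.

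First, I would invoke Proposition~\ref{propt8}: the nonabelian soluble-by-finite maximal subgroup $M$ is absolutely irreducible (so $F[M]=D$) and contains a normal abelian subgroup $A$ with $C_M(A)=A$. From $F[M]=D$ any $z\in Z(M)$ centralizes all of $D$, hence $Z(M)\subseteq F^*$; combined with $F^*\subseteq Z(M)$ this yields $Z(M)=F^*$. By Lemma~\ref{lem4}, $F[A]$ is a maximal subfield of $D$ with $F[A]/F$ Galois and $M/A\cong\Gal(F[A]/F)$, so $[M:A]=\deg(D)$ is finite.

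Next I would establish periodicity of $M/F^*$ by induction along the upper central series $F^*=Z_1(M)\subseteq Z_2(M)\subseteq\cdots\subseteq Z_c(M)=M$, proving each $Z_i(M)/F^*$ has bounded exponent. The base case $i=2$ is the cleanest: for $x\in Z_2(M)$ every commutator $[x,m]$ lies in $Z_1(M)=F^*$, hence is central in $M$, so the map $m\mapsto[x,m]$ is a genuine homomorphism $M\to F^*$ with image in $M'\cap F^*$. Since any commutator in $D^*$ has reduced norm $1$ while $\Nrd(z)=z^{\deg(D)}$ for $z\in F^*$, this image sits in $\mu_{\deg(D)}(F)$, a group of exponent dividing $\deg(D)$. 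Therefore $x^{\deg(D)}$ centralizes $M$, and $x^{\deg(D)}\in Z(M)=F^*$. For the inductive step at $x\in Z_i(M)$, setting $v_m=[x,m]\in Z_{i-1}(M)$ gives $v_m^{N_{i-1}}\in F^*$ by the inductive hypothesis; using that $[v_m,x]\in Z_{i-2}(M)$ and iterating, one obtains a commutator expansion of the form $[x^n,m]=v_m^{n}\cdot(\textrm{correction in lower central terms})$, and one chooses $n$ as a suitable power of $\deg(D)$ so that both $v_m^n\in F^*$ and the correction is trivial in $\mu_{\deg(D)}(F)$. This yields $x^n\in Z_{i-1}(M)$, whence $x^{nN_{i-1}}\in F^*$, and $Z_i(M)/F^*$ has bounded exponent.

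The main obstacle is the bookkeeping in this inductive step for $i\geq 3$: once $[x,m]$ is no longer central, $m\mapsto[x,m]$ fails to be a homomorphism, so one must carefully expand $[x^n,m]$ using the identity $[x^n,m]=\prod_{j=0}^{n-1}[x,m]^{x^{j}}$, conjugate iteratively through the upper central series, and control the exponents of the deeper commutators, all of which are ultimately roots of unity of order dividing $\deg(D)$. Once periodicity of $M/F^*$ is established, the contradiction via Corollary~\ref{cort10}(2) is immediate, forcing $M$ to be abelian.
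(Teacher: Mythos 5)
Your proposal is correct in substance, but it takes a genuinely different route from the paper's. The paper's proof is very short: as in Proposition~\ref{propt8} one gets $F[M]=D$, whence $Z(M)=F^*$, and then it simply cites a known theorem (Wehrfritz) that an irreducible nilpotent linear group is center-by-finite; thus $M/F^*$ is finite, in particular periodic, and Theorem~\ref{thmt11} finishes. You replace the external citation by proving periodicity of $M/F^*$ by hand, with bounded exponent along the upper central series, exploiting that any multiplicative commutator lying in $F^*$ has reduced norm $1$ and hence lies in $\mu_n(F)$ for $n=\deg(D)$; you then go directly to Corollary~\ref{cort10}(2), which is legitimate because nilpotency gives soluble-by-finite outright, with no need for the Tits-alternative machinery packaged into Theorem~\ref{thmt11}. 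What each approach buys: the paper gets brevity and the stronger conclusion $|M:F^*|<\infty$; your argument is self-contained and elementary (only Proposition~\ref{propt8}, Lemma~\ref{lem4} and Corollary~\ref{cort10} from the paper), at the cost of commutator bookkeeping, and bounded exponent is all that Corollary~\ref{cort10}(2) needs.

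One repair in your inductive step for $i\geq 4$: the correction terms in the expansion of $[x^k,m]$ lie in $Z_{i-2}(M)$, not in $\mu_n(F)$, so you cannot arrange for them to be ``trivial in $\mu_{\deg(D)}(F)$'' as written. The standard fix is to work modulo $Z_{i-2}(M)$ rather than to kill the corrections: for $x\in Z_i(M)$ and $a=[x,m]\in Z_{i-1}(M)$, each conjugate satisfies $a^{x^j}=a\,[a,x^j]$ with $[a,x^j]\in Z_{i-2}(M)$, so $[x^k,m]\equiv a^k \pmod{Z_{i-2}(M)}$; choosing $k$ a power of $n$ with $a^k\in F^*\subseteq Z_{i-2}(M)$ (inductive hypothesis) already yields $x^k\in Z_{i-1}(M)$, and iterating gives exponent a power of $n$ for each $Z_i(M)/F^*$. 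Your base case $i=2$ (where $m\mapsto [x,m]$ and $x\mapsto [x,m]$ are genuine homomorphisms into the central subgroup $\mu_n(F)$, giving $x^n\in Z(M)=F^*$) is exactly right, as are the uses of Proposition~\ref{propt8} and the identification $Z(M)=F^*$.
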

\begin{proof}
As in the proof of Proposition \ref{propt8} one can conclude that
$F[M]=D$. Moreover, it is not hard in this case to see that
$Z(M)=F^*$. But $M$ is center-by-finite (It is well-known, however
for a reference one can see \cite[Th. 1]{Wehr07}) and so the result
follows from Theorem \ref{thmt11}.
\end{proof}
Corollary \ref{cort14} has been nicely extended in \cite{Mah04} by
describing some properties of maximal subgroups of
$\operatorname{GL}_n(D)$ where $D$ is a division algebra. The
results appeared in \cite{Mah04} specify that every maximal subgroup
$M$ of $\operatorname{GL}_n(D)$ either contains a non-cyclic free
subgroup or contains a (subnormal) subgroup $A$ that is a direct
product of the multiplicative groups of finitely many proper field
extensions of the center $F$ of $D$, such that $A$ has a finite
index in $M$ when $F$ has characteristic zero, or $M/A$ is locally
finite when $F$ has characteristic $p\neq0$. Moreover, a more
general result was presented in \cite{KiaMa10} concerning the
existence of free subgroups in the maximal subgroups of a subnormal
subgroup $N$ in $\operatorname{GL}_n(D)$ which we are going to
display here without proof.
\begin{thm}\label{thmt15}
Let $D$ be a noncommutative division algebra, and $N$ a subnormal
subgroup of $\operatorname{GL}_n(D)$ ($n\geq 1$). Given a maximal
subgroup $M$ of $N$, then either $M$ contains a noncyclic free
subgroup or there exists an abelian subgroup $A$ and a finite family
$\{K_j\}_{j=1}^m$ of fields properly containing $F$ with
$K_j^*\subset M$ for all $1\leq j\leq r$ such that $M/A$ is finite
if $\operatorname{char}(F)=0$ and $M/A$ is locally finite if
$\operatorname{char}(F)\neq 0$, where $A\subseteq K_1^*\times\ldots
K_m^*$.
\end{thm}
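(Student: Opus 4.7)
The plan is to assume $M$ contains no noncyclic free subgroup and then extract the claimed structural decomposition, following the template of Theorem~\ref{cort14} but adapted to a subnormal subgroup of $\operatorname{GL}_n(D)$.

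First, I would establish a Tits-type alternative for subgroups of $\operatorname{GL}_n(D)$. When $D$ is finite dimensional over $F$, the standard embedding $\operatorname{GL}_n(D)\hookrightarrow \operatorname{GL}_{n\deg(D)}(F_{\operatorname{alg}})$ combined with the proof strategy of Theorem~\ref{t} (Lie--Kolchin--Mal'cev in characteristic zero, the Tits alternative plus Lemma~\ref{l} in characteristic $p$) shows that $M$ is abelian-by-locally finite in general, and abelian-by-finite when $\operatorname{char}(F)=0$. Let $B$ be an abelian normal subgroup of $M$ with quotient of the appropriate type, chosen maximal with respect to these properties.

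Next, I would exploit the maximality of $M$ in $N$ to control $C_N(B)$. Since $\langle M,C_N(B)\rangle\subseteq N_N(B)$, this subgroup equals either $N$ or $M$. If it equals $N$ then $B\lhd N$, and since $N$ is subnormal in $\operatorname{GL}_n(D)$, Theorem~\ref{thms1} forces $B$ to be central; maximality then makes $M$ abelian and $A:=M$ works. Otherwise $C_N(B)\subseteq M$. The commutative $F$-subalgebra $F[B]\subseteq \operatorname{M}_n(D)$ is semisimple (its elements satisfy polynomial relations over $F$, and the presence of a nonzero nilpotent would contradict the invertibility of elements in $\operatorname{GL}_n(D)$), so it splits as a finite direct product of fields $K_1\times\cdots\times K_m$ with $B\subseteq K_1^*\times\cdots\times K_m^*$.

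The crucial third step is to promote $K_j\subseteq F[B]$ to $K_j^*\subseteq M$ with $K_j\supsetneq F$. Each $K_j^*$ centralizes $B$, hence lies in $C_N(B)\subseteq M$, giving the first inclusion. To ensure $K_j\neq F$, I would discard any factor equal to $F$ and observe that the remaining list must be nonempty: if every $K_j$ were $F$, then $B\subseteq F^*$ and $M/(M\cap F^*)$ would be locally finite (finite in characteristic zero), which by the argument of Corollary~\ref{cors7} (applied to a suitable centralizer subalgebra inside $\operatorname{M}_n(D)$) forces a commutativity contradiction. Setting $A:=B$ and invoking Theorem~\ref{thmt9} together with the dichotomy from the first step gives $M/A$ of the stated type.

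The main obstacle will be the first step, pushing Theorem~\ref{t} through for subgroups of $\operatorname{GL}_n(D)$ in positive characteristic, where one must combine Tits' alternative for finitely generated subgroups of $\operatorname{GL}_{n\deg(D)}(F_{\operatorname{alg}})$ with the local-finiteness argument of Lemma~\ref{l}. A secondary but delicate point is that the maximality-of-$M$-in-$N$ argument of step two must be run without the convenience of having $N=\operatorname{GL}_n(D)$ itself; this seems to need the subnormality of $N$ in $\operatorname{GL}_n(D)$ only at the final appeal to Stuth's and Herstein's theorems, so iterating along the subnormal chain should suffice.
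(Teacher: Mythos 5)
The paper states Theorem~\ref{thmt15} explicitly without proof (deferring to \cite{KiaMa10}), so your proposal must be judged on its own merits, and it has a genuine gap at its very first step. You claim that, via the embedding $\operatorname{GL}_n(D)\hookrightarrow\operatorname{GL}_{n\deg(D)}(F_{\operatorname{alg}})$, the proof strategy of Theorem~\ref{t} shows that a subgroup of $\operatorname{GL}_n(D)$ with no noncyclic free subgroup is abelian-by-finite (char.\ $0$) or abelian-by-locally-finite (char.\ $p$). This is false for $n\geq 2$. The mechanism of Theorem~\ref{t} is Lemma~\ref{lem}, whose proof hinges on the fact that the only unipotent subgroup of $D^*$ is $\{1\}$; in $\operatorname{GL}_n(D)$ unipotent subgroups abound, Lie--Kolchin--Mal'cev only produces a finite-index normal subgroup with \emph{unipotent} derived group, and, for example, the group of upper unitriangular $3\times 3$ matrices over an infinite field is nilpotent, satisfies a group identity (so has no free subgroup), yet is not abelian-by-finite. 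So ``no free subgroup $\Rightarrow$ abelian-by-(locally) finite'' cannot precede the use of maximality. The correct order of quantifiers is the reverse of yours: first use maximality of $M$ in $N$, together with Theorem~\ref{thms6-2} (which for noncentral $N$ and $n\geq 2$ gives $\operatorname{SL}_n(D)\subseteq N$; Stuth's Theorem~\ref{thms1}, which you invoke, is a statement about $D^*$ and does not apply to $\operatorname{GL}_n(D)$ directly), to dispose of the reducible case --- a reducible maximal subgroup is the full stabilizer in $N$ of a proper $D$-subspace and contains a block copy of $D^*$, hence a noncyclic free subgroup by Goncalves' theorem since $D$ is noncommutative --- and only then, for $M$ irreducible (hence completely reducible), invoke a Mal'cev--Suprunenko type theorem that completely reducible soluble-by-finite linear groups are abelian-by-finite. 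Your proposal never addresses reducibility at all.

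The same omission sinks your third step. You justify semisimplicity of $F[B]$ by saying that a nonzero nilpotent ``would contradict the invertibility of elements in $\operatorname{GL}_n(D)$,'' but invertibility of the group elements says nothing about their $F$-linear span: if $e_{12}$ denotes a matrix unit and $B=\{I+te_{12}\mid t\in F\}$, every element of $B$ is invertible while $F[B]$ contains the nilpotent $e_{12}=(I+e_{12})-I$. The decomposition $F[B]\cong K_1\times\cdots\times K_m$ into fields is genuinely where the fields $K_j$ of the theorem come from, and it requires $B$ to act completely reducibly --- which one gets from Clifford's theorem applied to the normal subgroup $B$ of the \emph{irreducible} group $M$, again presupposing the reduction you skipped. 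Two further, more repairable, points: discarding the factors $K_j=F$ does not automatically preserve the containment $A\subseteq K_1^*\times\cdots\times K_m^*$, so the ``nonempty after discarding'' maneuver needs an actual argument (a Corollary~\ref{cors7}-style contradiction is the right instinct, but it must be run inside the simple components of $\operatorname{M}_n(D)$, not quoted as is); and your endgame correctly mirrors the intended dichotomy via Theorem~\ref{thmt9} and the characteristic split, so the overall skeleton is sound --- but as written the two load-bearing steps (the Tits-alternative transfer and the semisimplicity of $F[B]$) both fail for $n\geq 2$.
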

As we have seen, proving the above results were strongly dependent
on the conclusion of Corollary \ref{cors7}. This corollary was also
applied to determine the structure of maximal subgroups of
$\operatorname{GL}_n(D)$ with a finite conjugacy class in
\cite{KiaRam09} by proving
\begin{thm}\label{thmt16}
Let $D$ be an infinite division ring with center $F$ and $M$ a
maximal subgroup of $\operatorname{GL}_n(D)$ ($n\geq1$). If $M$ is a
finite conjugacy class group then it is abelian.
\end{thm}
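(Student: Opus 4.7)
The plan is to argue by contradiction, assuming $M$ is non-abelian. Two structural consequences of the FC-hypothesis follow from B.\ H.\ Neumann's classical theorems on FC-groups: every finitely generated subgroup of $M$ is center-by-finite (so $M$ contains no non-cyclic free subgroup), and the derived subgroup $M'$ is a torsion group; moreover, $M/Z(M)$ is locally finite.

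For $n=1$, I would exploit the maximality of $M$ first. Since $F^*\subseteq Z(D^*)\subseteq N_{D^*}(M)$, maximality forces either $F^*M=D^*$ or $F^*\subseteq M$. In the former case, $M\lhd D^*$, so $M'$ is a periodic normal subgroup of $D^*$, hence contained in $F^*$ by Herstein's Theorem \ref{thms2}. A direct computation (using that $F^*$ is central) then yields $[D^*,D^*]=[M,M]=M'\subseteq F^*$, so $D^*$ is nilpotent of class at most two, and Hua's theorem \cite{Hua50} forces $D$ to be commutative, contradicting the hypothesis. In the latter case, if there exists $z\in Z(M)\setminus F^*$, the inclusion $M\subseteq C_{D^*}(z)\subsetneq D^*$ together with maximality gives $M=C_{D^*}(z)=K^*$, where $K=C_D(z)$ is a proper division subring; the FC-property of $M=K^*$ combined with Scott's theorem (every non-central element of an infinite division ring has infinitely many conjugates in the unit group) forces $K$ to be commutative, so $M$ is abelian, contradiction. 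The remaining sub-case $Z(M)=F^*$ gives $M/F^*$ locally finite by Neumann, hence $M/(M\cap F^*)$ periodic, which via Theorem \ref{thmt11} (or a suitable extension to the infinite-dimensional setting) forces $M$ abelian.

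For $n\geq 2$, I would apply Theorem \ref{thmt15} to $M$ as a maximal subgroup of the subnormal subgroup $\GL_n(D)\leq\GL_n(D)$: absence of non-cyclic free subgroups in $M$ yields an abelian subgroup $A\leq M$ and a finite family of fields $K_j\supsetneq F$ with $K_j^*\subseteq M$ such that $M/A$ is locally finite. A parallel case analysis, using maximality, the FC-property, and a $\GL_n$-version of Scott's infinite-conjugacy-class result, completes the argument.

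The main obstacle will be the sub-case $Z(M)=F^*\subseteq M$ when $D$ is infinite-dimensional over $F$: Theorem \ref{thmt11} as stated presumes finite-dimensionality, since its proof invokes the Tits Alternative applied to $\GL_n(F)$. The delicate task is therefore to bridge the gap from ``$M/F^*$ locally finite'' to ``$M$ abelian'' in the general infinite division ring setting, using Corollary \ref{cors7} on finitely generated pieces of $M$ together with the tight interplay between the center-by-finite structure of those pieces and the maximality of $M$ in $\GL_n(D)$.
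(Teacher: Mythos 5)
Your cases $F^*M=D^*$ and $Z(M)\neq F^*$ are both correct, and the second is even a pleasant shortcut relative to the paper: once maximality pins $M=C_{D^*}(z)=K^*$ for the proper division subring $K=C_D(z)$, Herstein's infinite-conjugacy theorem \cite{Her56} applied inside $K$ contradicts the FC hypothesis at once, with no need for any derived-subgroup analysis. But the proposal has a genuine gap exactly where you flag it, and that sub-case is the entire content of the theorem: when $Z(M)=F^*$ you only record that $M/F^*$ is locally finite and that Theorem \ref{thmt11} is unavailable in infinite dimension; no argument is offered, and local finiteness of $M/F^*$ by itself yields neither finite index (so Corollary \ref{cors7} does not engage) nor algebraicity over $F$, so there is no visible route from it to commutativity.

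What the paper does instead --- and what your sketch is missing --- is a reduction to the finite-dimensional case that never passes through $M/Z(M)$ at all. Pick a noncentral $x\in M$; the FC property gives $|M:C_M(x)|<\infty$, so $M$ has a normal subgroup $N$ of finite index with $N\subseteq C_M(x)$, and maximality forces $N_{D^*}(F(N)^*)$ to equal $M$ or $D^*$. The branch $D^*$ dies by Cartan--Brauer--Hua: either $F(N)=D$, which makes $x$ central, or $F(N)\subseteq F$, which gives $|M:F^*|<\infty$ and then Corollary \ref{cors7}. In the branch $M$ one gets $F(N)^*\subseteq M$, so $F(N)^*$ is FC; its derived group is finite, hence central, so $F(N)^*$ is soluble and $F(N)$ is a field by Hua's theorem, with $|M:F(N)^*|<\infty$. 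The decisive step is then extracting $[D:F]<\infty$: if $F[M]=D$ this follows because $D$ is spanned over $F(N)$ by finitely many coset representatives (then \cite[Th. 15.8]{Lam01}); if instead $F[M]^*=M$, one shows $M\cup\{0\}$ is itself a division ring via the computation $u=ab^{-1}$, $u^m\in F(N)$, hence $u-1\in M$ and $a-b=(u-1)b\in M$. Either way the problem collapses to the finite-dimensional case, which is settled by Theorem \ref{thmt17} (linear FC-groups are center-by-finite) together with Corollary \ref{cors7}. Your $n\geq 2$ paragraph is likewise only a sketch, but that is a lesser defect, since the paper itself proves only $n=1$ and defers the general case to PI-theory and \cite{KiaRam09}; the real missing idea is the $F(N)$-reduction above.
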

The proof of Theorem \ref{thmt16} requires using some known results
from the theory of \textbf{PI}-rings. However, we are going to
present the proof for the case $n=1$. The following theorem is the
main needed material at the beginning of this direction.
\begin{thm}\label{thmt17}
A linear group with finite conjugacy class is center-by-finite.
\end{thm}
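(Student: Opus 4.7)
The plan is to use the finite-dimensionality of the enveloping algebra to reduce the infinite family of centralizer conditions $\{C_G(g) : g \in G\}$ to only finitely many.

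I would realize $G$ as a subgroup of $\GL_n(F)$ for some field $F$, and consider the $F$-linear hull $F[G]\subseteq \operatorname{M}_n(F)$. Since $\dim_F F[G]\le n^2<\infty$ and $G$ is a spanning set of $F[G]$ over $F$, I can extract from $G$ a finite $F$-basis $g_1,\ldots,g_m$ of $F[G]$. This is the only structural input; the rest is elementary group theory.

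Next I would invoke the hypothesis: since each $g_i$ has only finitely many $G$-conjugates, the centralizer $C_G(g_i)$ has finite index in $G$. Set $H=\bigcap_{i=1}^m C_G(g_i)$. As an intersection of finitely many finite-index subgroups, $H$ has finite index in $G$. I claim $H\subseteq Z(G)$. Indeed, any $h\in H$ commutes with every $g_i$, hence (by $F$-linearity) with every element of $F[G]$, and in particular with every $g\in G$, since $G\subseteq F[G]$ permits writing $g=\sum_{i}c_ig_i$ with $c_i\in F$ and then $hgh^{-1}=\sum_i c_i(hg_ih^{-1})=\sum_i c_i g_i=g$. Thus $Z(G)\supseteq H$ has finite index in $G$, giving the center-by-finite conclusion.

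There is essentially no serious obstacle here: the entire argument is driven by the fact that $F[G]$ is finite-dimensional, which lets us replace an uncountable intersection of centralizers by a finite one. If I wanted to be completely pedantic I would note that the choice of the basis $g_1,\ldots,g_m$ inside $G$ is possible because any generating set of a finite-dimensional vector space contains a basis, and that the passage from commuting with a basis to commuting with all of $F[G]$ (and hence with all of $G$) is immediate from the distributive law. The argument goes through for linear groups over an arbitrary field, with no restriction on characteristic or on $n$.
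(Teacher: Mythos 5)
Your proof is correct and complete. The paper itself states this theorem without proof, quoting it as a known fact from the theory of linear groups, so there is no argument in the text to compare against; what you have written is precisely the standard proof of this classical result (an FC linear group is center-by-finite): the finite-dimensionality of the linear hull $F[G]$ lets you choose a finite basis $g_1,\ldots,g_m$ inside $G$, the FC hypothesis makes each $C_G(g_i)$ of finite index via orbit--stabilizer, and the finite intersection $H=\bigcap_{i=1}^m C_G(g_i)$ lands in $Z(G)$ because commuting with a spanning set of $F[G]$ forces commuting with all of $G\subseteq F[G]$. All steps check out, with no restriction on the field or characteristic needed, exactly as you claim.
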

\begin{proof}[Proof of Theorem \ref{thmt16} for $n=1$]
At first, let $D$ be of finite dimension over $F$. Since $M$ is
maximal in $D^*$ we have either $F[M]^*=M$ or $F[M]=D$. But, in the
former case Theorem \ref{thmt17} implies that the finite dimensional
division algebra $F[M]$ has a center-by-finite unit group. Thus
$F[M]$ is abelian and the result follows. Also, in the latter case
one can easily deduce that $Z(M)=F^*$ and again by Theorem
\ref{thmt17} we conclude that $|M:F^*|$ is finite and the result
follows from Corollary \ref{cors7}.

Now, consider the general case. As above we can conclude that
$F^*\subseteq M$. Let $x$ be a noncentral element of $M$. Since $x$
has finitely many conjugates, it follows that $|M:C_M(x)|<\infty$.
Thus $M$ has a normal subgroup $N$ of finite index which is
contained in $C_M(x)$. But $M\subseteq N_{D^*}(F(N))$ as $N\lhd M$.
However, since $M$ is maximal we have either $N_{D^*}(F(N))=M$ or
$N_{D^*}(F(N))=D^*$. Let $N_{D^*}(F(N))=M$. If this is the case,
then $F(N)^*\subseteq M$ is a finite conjugacy class group. So its
derived group is finite (see \cite[p.442]{Sco87}) and hence is
central. This immediately implies that $F(N)^*$ is soluble. Thus by
Hua's Theorem $F(N)$ is a field with $|M:F(N)^*|<\infty$. Now, we
claim that $D$ is of finite dimension over $F$. For, let $F[M]^*\neq
M$. Since $M$ is maximal we have $F[M]=D$. This forces that
$[D:F(N)]_l<\infty$ as $|M:F(N)^*|$ is finite. This establishes our
claim in this case (cf. \cite[Th. 15.8]{Lam01}). But, if $F[M]^*=M$,
$M\cup\{0\}$ is a division ring and then combining this with the
same argument as above establish our claim. However, to prove that
$M\cup\{0\}$ is a division ring, it is enough to show that for every
pair $a,b\in M$ with $a\neq b$ we have $a-b\in M$. To achieve this,
put $u=ab^{-1}$. Since $u\neq1$ and $|M:F(N)^*|<\infty$ we have
$u^m\in F(N)$ for some $m>1$. If $u^m=1$ then $u-1$ is algebraic
over $F$ and so $(u-1)^{-1}\in F[M]$. This yields $u-1\in F[M]^*=M$.
Also if $u^m\neq1$ then $(u-1)(u^{m-1}+\ldots+1)=u^m-1\in F(N)^*$.
This again implies that $u-1\in M$. Therefore, $a-b=(u-1)b\in M$ and
so our claim. Here, by the first paragraph we deduce that $M$ is
abelian. Now, we are left with the case in which
$N_{D^*}(F(N))=D^*$. Here, the Cartan-Brauer-Hua Theorem guarantees
that $F(N)=D$ or $F(N)\subseteq F$. In the former case, every
element of $D$ commutes with $x$ as $N\subseteq C_M(x)$. This forces
that $x\in F$ which contradicts the choice of $x$. In the latter
case we also have $N\subseteq F^*$ and since $|M:N|<\infty$ we
conclude that $|M:F^*|<\infty$. This implies that $D$ is commutative
which is absurd.
\end{proof}
\subsection{Nilpotent and soluble maximal subgroups}
As we have seen in Theorem \ref{cort14}, if $D$ is a noncrossed
product division algebra then every maximal subgroup of $D^*$
contains a noncyclic free subgroup. But, there is no substantial
information for maximal subgroups of an infinite dimensional
division ring. However, some special types of groups that have no
noncyclic free subgroup are nilpotent and soluble groups. Thus it
may be of interest to explore the nature of such groups when they
appear as a maximal subgroup of a division ring. This was the main
theme in a series of recent works including \cite{AkEbMoSa03},
\cite{AkMaMah99} and \cite{Ebr04}. The most successful effort to
specify what types of nilpotent groups can become visible as a
maximal subgroup of a division ring has been made in \cite{Ebr04}
which asserts that such a nilpotent group is abelian. To present the
proof of this fact, at first we must give the following
\begin{lem}\label{lemn1}
Let $D$ be a division ring with center $F$. If $M$ is a nilpotent
maximal subgroup of $D^*$ then:
\begin{enumerate}
  \item $M'$ is abelian;
  \item $M$ has a normal maximal abelian subgroup $A$ containing $M'$
  such that $K=A\cup\{0\}$ is a subfield of $D$;
  \item If $u\in M\setminus A$ then $F(N)=D$ where $N=\langle A,u\rangle$.
\end{enumerate}
\end{lem}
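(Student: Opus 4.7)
The plan is to establish (1), (2), (3) in order, repeatedly exploiting the dichotomy afforded by maximality of $M$: for any subset $X \subseteq D^*$ normalized by $M$, the normalizer $N_{D^*}(X)$ contains $M$, hence equals $M$ or $D^*$. A preliminary observation is that $F^* \subseteq M$; for otherwise $F^*M = D^*$ forces $D^*/F^* \cong M/(F^*\cap M)$ nilpotent, whence $D^*$ is soluble and $D$ commutative by Hua's theorem (we may assume $D$ noncommutative, since otherwise the statements are trivial). Thus $F^* \subseteq Z(M)$.

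For (1), note that $M' \lhd M$ is characteristic, so the dichotomy applies. If $N_{D^*}(M') = D^*$ then $M'$ is a (soluble, since nilpotent) normal subgroup of $D^*$, and Stuth's theorem (Theorem~\ref{thms1}) gives $M' \subseteq F^*$, which is abelian. In the opposite case $N_{D^*}(M') = M$, I would appeal to the known fact that a nilpotent maximal subgroup of $D^*$ is center-by-finite (cf.\ \cite{Wehr07}), so $M'$ is finite by Schur's theorem; Amitsur's classification then forces $M'$ to be cyclic or to contain a characteristic generalized quaternion Sylow $2$-subgroup $P$ which is thus normal in $M$. Applying the dichotomy to $P$: the case $N_{D^*}(P) = D^*$ contradicts Herstein's theorem (Theorem~\ref{thms2}) on periodic subnormal subgroups since $P$ is nonabelian, while the case $N_{D^*}(P) = M$ is eliminated by iterating the dichotomy on the characteristic subgroups $\langle y\rangle$ and $\langle y^2\rangle$ of $P$.

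For (2), I would take $A$ to be a maximal element of the (nonempty, Zorn-ordered) set of abelian normal subgroups of $M$ containing $M'$; maximality forces $F^* \subseteq A$, and because $F \cup A$ is a commuting set, $K := F(A)$ is a subfield of $D$. Since $M$ normalizes $A$ it normalizes $K$, and $K^* \cap M$ is abelian and normal in $M$ containing $A$, so $K^* \cap M = A$. The dichotomy applied to $K^*M$ gives $K^*M = M$ or $D^*$; the second would make $K^* \lhd D^*$, so Cartan--Brauer--Hua forces $K \subseteq F$ (which collapses to $A = F^*$ and then $K^*M = M \ne D^*$, contradiction) or $K = D$ (impossible, since $K$ is commutative and $D$ is not). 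Hence $K^* \subseteq M$, so $K^* = A$ and $K = A \cup \{0\}$ is a subfield.

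For (3), given $u \in M \setminus A$ and $N = \langle A, u \rangle$, set $E := F(N)$. The crucial observation is that since $M' \subseteq A$, for every $m \in M$ we have $[m, u^{-1}] \in A$, whence $u^m = m^{-1}um = [m, u^{-1}]\,u \in Au \subseteq E$; combined with $mAm^{-1} = A \subseteq E$, this yields $mEm^{-1} \subseteq E$, so $M$ normalizes $E$ and $E^*M$ is a subgroup of $D^*$. Running the dichotomy: if $E^*M = M$ then $E^* \subseteq M$ is nilpotent, hence by Hua the division ring $E$ is commutative, making $E^*$ an abelian normal subgroup of $M$ that strictly contains $A$ (as $u \in E^*\setminus A$), contradicting the maximality of $A$. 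Hence $E^*M = D^*$, so $E^* \lhd D^*$, and Cartan--Brauer--Hua delivers $E = D$ (the alternative $E \subseteq F$ would place $u \in F^* \subseteq A$). The main obstacle is part (1): the case $N_{D^*}(M') = M$ is substantially more delicate and requires outside machinery, whereas (2) and (3) are fairly clean normalizer-plus-Cartan--Brauer--Hua arguments built on the same dichotomy.
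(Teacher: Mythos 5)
Your parts (2) and (3) are correct and run along essentially the same lines as the paper's own proof: the normalizer dichotomy coming from maximality of $M$, Stuth/Hua to force commutativity of any division subring whose unit group lands inside the nilpotent group $M$, and Cartan--Brauer--Hua to conclude. Your minor variations there are harmless (you derive $N\lhd M$ by an explicit commutator computation where the paper just observes $M'\subseteq N$, and in (3) you reach the contradiction via maximality of $A$ where the paper uses nonabelianness of $N$).

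Part (1), however, has a genuine gap in the case $N_{D^*}(M')=M$. Your appeal to the fact that a nilpotent maximal subgroup of $D^*$ is center-by-finite is not available here: in the paper that fact (cited from \cite{Wehr07}) is invoked only in Corollary \ref{cort12}, where $D$ is finite dimensional over $F$ and $M$ is an absolutely irreducible \emph{linear} group. In Lemma \ref{lemn1} the division ring $D$ is arbitrary; indeed the whole purpose of Theorem \ref{thmn3}, which this lemma serves, is to \emph{deduce} finite-dimensionality, so importing a linear-group finiteness statement at this stage is both unjustified and partially circular. Moreover, even granting $M'$ finite, your elimination scheme is incomplete: for $P\cong Q_8$ the subgroup $\langle y\rangle$ is \emph{not} characteristic in $P$ (the three cyclic subgroups of order $4$ are permuted by $\Aut(Q_8)$), and in the branches $N_{D^*}(P)=M$ and $N_{D^*}(\langle y\rangle)=M$ you never actually exhibit a contradiction — Herstein's Theorem \ref{thms2} only disposes of the branches where the normalizer is all of $D^*$. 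The paper's proof of (1) avoids all of this with a different key idea: assuming $M$ nonabelian, one has $Z(M)=F^*$, so nilpotency of $M/F^*$ provides $u\in M\setminus F^*$ with $F^*u\in Z(M/F^*)$; then $\phi\colon M\to F^*$, $x\mapsto xux^{-1}u^{-1}$, is a homomorphism with kernel $C_M(u)$, whence $M'\subseteq C_M(u)$ and $F(M')\subseteq C_D(u)\subsetneq D$. The dichotomy is then applied not to the abstract group $M'$ but to the division subring $F(M')$: if $N_{D^*}(F(M')^*)=D^*$, Cartan--Brauer--Hua together with $F(M')\neq D$ gives $F(M')\subseteq F$; if instead $F(M')^*\subseteq M$, then $F(M')^*$ is nilpotent and $F(M')$ is a field by Theorem \ref{thms1}. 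Either way $M'$ is abelian, with no finiteness input at all. Your dichotomy on the group $M'$ itself leaves the case $N_{D^*}(M')=M$ without leverage precisely because $M'$ is not the unit group of a division subring — which is what drove you to the unjustified finiteness appeal.
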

\begin{proof}
(1) If $M$ is abelian then there is nothing to prove. So we assume
that $M$ is nonabelian. Since $M$ is maximal and nilpotent we have
$F^*=Z(M)\varsubsetneqq M$ and hence $M/F^*$ has a nontrivial
center. Let $F^*u\in Z(M/F^*)$ and $u\not\in F^*$. Now, consider the
homomorphism $\phi:M\rightarrow F^*,x\mapsto xux^{-1}u^{-1}$. Since
$\ker\phi=C_M(u)$ we conclude that $M/C_M(u)$ is abelian and hence
$M'\subseteq C_M(u)$. But, $F(M')\neq D$ because $u\not\in F^*$. At
the other extreme, $M\subseteq N_{D^*}(F(M')^*)$ yields either
$N_{D^*}(F(M')^*)=D$ or $F(M')^*\lhd M$. However,  in the former
case the Cartan-Brauer-Hua Theorem implies that $F(M')\subseteq F$
because $F(M')\neq D$. Also, in the latter case $F(M')$ is abelian
by Theorem \ref{thms1}. Therefore, in either case we conclude that
$M'$ is abelian, as desired.

(2) By Zorn's Lemma $M$ has a maximal abelian normal subgroup $A$
containing $M'$. Now, since $M$ is maximal in $D^*$ we have either
$N_{D^*}(F(A))=D$ or $F(A)^*\subseteq M$. But, if $N_{D^*}(F(A))=D$
then by Theorem \ref{thms1} we should have $F(A)^*\subseteq
F^*\subseteq M$. Thus, in either case $F(A)^*$ is an abelian normal
subgroup of $M$ containing $A$. However, since $A$ is a maximal
abelian normal subgroup of $M$ we conclude that $F(A)^*=A$.

(3) First note that since $A\varsubsetneqq N$, $N$ is nonabelian.
Now, since $M'\subseteq N$ we have $N\lhd M$. Therefore $M\subseteq
N_{D^*}(F(N)^*)$. Thus we conclude that $F(N)^*\lhd D^*$ or
$F(N)^*\subseteq M$. But by Theorem \ref{thms1} the second case can
not occur. So $F(N)^*\lhd D^*$. Now, by the Cartan-Brauer-Hua
Theorem $F(N)=D$ because $N$ is nonabelian.
\end{proof}
Recall that a domain $R$ is called a \textit{right Ore domain} if
for every pair of nonzero elements $a,b\in R$ one has $aR\cap bR\neq
0$. Also, a \textit{left Ore domain} is defined in a similar manner.
A domain that is both right and left Ore domain is called an
\textit{Ore domain}. By a theorem of Ore, a ring $R$ has a classical
ring of quotients if and only if it is an Ore domain (see \cite[p.
146]{Pas77}). The following proposition is also helpful.
\begin{prop}[{\cite[p. 26]{ShWe86}}]\label{Propn2}
Let $R=E[G]$ be a ring where $E$ is division subring of $R$ and $G$
is a subgroup of $R^*$. If $G/E\cap G$ is an infinite cyclic group
and $R$ is a crossed product of $E$ by $G/E\cap G$ then $R$ is an
Ore domain.
\end{prop}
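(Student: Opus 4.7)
The plan is to recognize $R$ as a skew Laurent polynomial ring over the division ring $E$ and then apply the standard fact that a Noetherian domain is Ore.

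Write $H = E \cap G$. Since $G/H$ is infinite cyclic, pick $t \in G$ whose image generates $G/H$; then $\{t^i\}_{i \in \mathbb{Z}}$ is a transversal for $H$ in $G$, so the hypothesis gives $R = \bigoplus_{i \in \mathbb{Z}} E t^i$. Since $G$ normalizes $E$, conjugation by $t$ defines a ring automorphism $\sigma$ of $E$ via $\sigma(e) = tet^{-1}$, and the relation $te = \sigma(e)t$ (for $e \in E$) together with $t^i \cdot t^j = t^{i+j}$ completely determines the multiplication in $R$. Hence $R$ is nothing more than the skew Laurent polynomial ring $E[t, t^{-1}; \sigma]$ over the division ring $E$ (the ``cocycle'' is trivial, because our transversal is already closed under multiplication).

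From this presentation I would first check that $R$ is a domain: for nonzero $f = \sum_{i=p}^q e_i t^i$ and $g = \sum_{j=r}^s e'_j t^j$ with $e_p, e_q, e'_r, e'_s$ nonzero in $E$, the $t^{q+s}$-coefficient of $fg$ equals $e_q \sigma^q(e'_s)$, which is nonzero because $\sigma$ is an automorphism of the division ring $E$. Next I would establish left and right Noetherianness via a skew Euclidean algorithm: for nonzero $f = \sum_{i=p}^q e_i t^i$, define the length $\ell(f) = q - p \geq 0$; given $f, g$, right-multiplying $g$ by an appropriate monomial $et^k$ and subtracting cancels the leading term of $f$, producing an element of strictly smaller length. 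A standard induction then shows that every one-sided ideal of $R$ is principal, so $R$ is a two-sided PID and in particular left and right Noetherian.

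Finally I would close with the classical passage from Noetherian domain to Ore: if $aR \cap bR = 0$ for some nonzero $a, b \in R$, then from any nontrivial relation $\sum_{i \geq 0} a^i b r_i = 0$, taking the smallest index $k$ with $r_k \neq 0$ and cancelling $a^k$ on the left (using that $R$ is a domain) yields $br_k \in aR \cap bR = 0$, contradicting $br_k \neq 0$. Consequently $\sum_{i \geq 0} a^i b R$ is a direct sum of nonzero right ideals, contradicting right Noetherianness; hence the right Ore condition holds, and the left Ore condition follows by a symmetric argument. The main obstacle I anticipate is not conceptual but bookkeeping — carrying out the skew division algorithm cleanly in the presence of the automorphism $\sigma$. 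This requires a little care but no essential difficulty, since $E$ is a division ring and so leading coefficients are always invertible.
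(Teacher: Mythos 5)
Your proposal is correct, and it is a complete argument for a statement that the paper does not actually prove: the paper simply quotes the result from \cite[p.~26]{ShWe86}, and your route --- identify $R$ with the skew Laurent ring $E[t,t^{-1};\sigma]$ (the cocycle being trivial because the transversal $\{t^i\}_{i\in\mathbb{Z}}$ is multiplicatively closed), verify it is a domain by comparing extreme coefficients, run the skew Euclidean algorithm on the length $\ell(f)=q-p$ to get a two-sided principal ideal domain hence Noetherianness, and then invoke the classical Noetherian-domain-implies-Ore argument via the independence of the right ideals $a^ibR$ --- is exactly the standard proof one finds in that source. Two small points of hygiene: the well-definedness of $\sigma(e)=tet^{-1}$ as an automorphism of $E$ rests on $G$ normalizing $E$, which is indeed built into the paper's definition of crossed product (so you are entitled to it, but it is worth saying explicitly); and your division step is phrased as cancelling the leading term of $f$ by subtracting a right multiple $g\cdot(et^k)$, i.e.\ you are dividing $f$ by $g$ --- correct, since the needed coefficient $e=\sigma^{-s}\bigl(e_s'^{-1}e_q\bigr)$ exists exactly because $E$ is a division ring and $\sigma$ is bijective, but the sentence as written could be misread as reducing $g$. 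Neither point is a gap.
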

\begin{thm}\label{thmn3}
Let $D$ be a noncommutative division ring with center $F$. If $M$ is
a nilpotent maximal subgroup of $D^*$, then $D$ is of finite
dimension over $F$ and $M$ is abelian.
\end{thm}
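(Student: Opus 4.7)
The plan is to invoke Lemma \ref{lemn1}, which furnishes a maximal normal abelian subgroup $A \lhd M$ with $K := A \cup \{0\}$ a subfield of $D$ and the key identity $F(\langle A, u\rangle) = D$ for any $u \in M \setminus A$. I will argue that the assumption $M \ne A$ leads to a contradiction via a case split on the order of $uA$ in $M/A$, and then handle the abelian case $M = A$ separately for the finite-dimensionality claim.

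Assume $M \ne A$. Since $M$ is nilpotent, $Z(M/A)$ is nontrivial, so I can choose $u \in M \setminus A$ with $uA \in Z(M/A)$, making $N := \langle A, u\rangle$ normal in $M$ with $N/A$ cyclic. Let $\sigma \in \Aut(K)$ denote conjugation by $u$. In the \emph{finite-order subcase} $u^n \in A$, we have $\sigma^n = \id$, and the fixed field $L := K^\sigma$ commutes with both $K$ and $u$, which together generate $F[N] = D$; hence $L \subseteq Z(D) = F$, so $[K:F] = |\sigma|$, and the decomposition $D = \bigoplus_{i=0}^{n-1} K u^i$ gives $[D:F] \le n\,|\sigma| < \infty$. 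By Corollary \ref{cort12}, $M$ is abelian, contradicting $M \ne A$. In the \emph{infinite-order subcase}, I first observe that $\sigma$ itself must have infinite order: if $\sigma^m = \id$ then $u^m$ centralizes $A$, and since $u^m A \in Z(M/A)$ the subgroup $\langle A, u^m\rangle$ is abelian and normal in $M$, strictly larger than $A$ (as $u^m \notin A$), contradicting the maximality of $A$. Now $F[N] = K[u, u^{-1}; \sigma]$ is an Ore domain by Proposition \ref{Propn2}, with $D$ as its classical quotient. But $N$, as a subgroup of the nilpotent group $M$, is itself nilpotent, and a direct computation of the lower central series of $N = A \rtimes_\sigma \langle u\rangle$ shows $\gamma_{k+1}(N) \supseteq (1 - \sigma^{-1})^k(A)$, so nilpotency forces the map $a \mapsto \prod_i \sigma^{-i}(a)^{(-1)^i \binom{c}{i}}$ to be trivial on $K^*$ for some $c$. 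I will rule this out by the multiplicative form of Artin's theorem on independence of characters (equivalently, by an explicit induction from $c = 2$ upward showing such an identity forces $\sigma = \id$), contradicting the infinite order.

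It follows that $M = A$ is abelian. To conclude $[D:F] < \infty$, I will pick $u \in D^* \setminus M$ (available since $D$ is noncommutative and $M$ is proper in $D^*$), use maximality to obtain $F(\langle M, u\rangle) = D$, and rerun the finite-order argument above with $A$ replaced by $M$. The main obstacle lies precisely in precluding the infinite-order analog here: the corresponding $\langle M,u\rangle$ coincides with $D^*$ and is not nilpotent, so the subgroup-of-nilpotent argument driving Case B no longer applies. Instead I would exploit the maximality of $M$ directly, showing that the Ore-domain structure attached to $K[u,u^{-1};\sigma]$ allows one to assemble a proper nilpotent overgroup of $M$ in $D^*$ (using that $F^* \subseteq M$, together with the description of $\sigma$-fixed elements), which would violate maximality and thereby force $u$ to have finite order modulo $M$.
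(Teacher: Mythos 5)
Your finite-order subcase is correct (it is the paper's ``algebraic'' case in disguise, since $u^n\in A$ makes $u$ algebraic over $K$, and your fixed-field computation is a clean way to get finite-dimensionality), and your observation that $\sigma$ must have infinite order is right. But the infinite-order subcase, which carries the whole weight of the nonabelian situation, has two genuine gaps. First, the identification $F[N]=K[u,u^{-1};\sigma]$ requires the powers of $u$ to be left $K$-independent, i.e.\ $u$ \emph{transcendental} over $K$; infinite order of $uA$ in $M/A$ does not give this, as $u$ may be algebraic over $K$ of degree $\ge 2$ with no power in $K^*$. (This one is fillable: if $u$ is algebraic over $K$ then $[D:K]_l<\infty$, so $[D:F]<\infty$ by the result the paper invokes, \cite[Th.~15.8]{Lam01}, making $\Aut(K/F)$ torsion and contradicting the infinite order of $\sigma$ --- but you never draw the algebraic/transcendental distinction, which is precisely the pivot of the paper's proof.) Second, and fatally as written: your contradiction rests on the lemma that $(1-\sigma^{-1})^c$ trivial on $K^*$ forces $\sigma=\id$. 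There is no ``multiplicative form of Artin's theorem'': Artin's independence of characters is a \emph{linear} statement, and nontrivial multiplicative relations among distinct automorphisms genuinely occur --- on $K=\overline{\mathbb{F}}_p$ the Frobenius $\phi$ satisfies $\phi(a)=a^{p}$ for all $a$, i.e.\ $\phi=\id^{\,p}$ as characters $K^*\to K^*$. Your $c=2$ base case can in fact be proved, but only by exploiting additivity (the condition says $\delta(a):=\sigma^{-1}(a)/a$ lies in the fixed field $L$ for all $a$; comparing $\delta(a)$ with $\delta(a+1)$ forces $\delta\equiv 1$), and this argument does not iterate: for $c\ge 3$ the hypothesis only constrains $\delta$ on the \emph{image} of $\delta^{c-2}$, which is a subgroup of $K^*$, not a subfield, so the trick of passing from $a$ to $a+1$ is unavailable. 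The key lemma of your nonabelian case is therefore unproven, and it is not a standard citation.

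The paper avoids all of this with a much shorter device. It splits on whether some element of $M\setminus K^*$ is algebraic over $K$. In the transcendental case it applies Proposition~\ref{Propn2} to $G=K^*\langle v^2\rangle$ (the passage to $v^2$ is essential), so that $D=F(G)$ is the classical quotient of the Ore domain $\bigoplus_j Kv^{2j}$; writing $v=s_1s_2^{-1}$ with $s_1,s_2\in F[G]$ and pushing $v$ through $s_2$ produces an equality between a sum of \emph{odd} powers of $v$ and a sum of \emph{even} powers, an outright contradiction with transcendence --- no lower central series, no character identity. In the algebraic case, $[D:K]_l<\infty$ gives $[D:F]<\infty$ and Corollary~\ref{cort12} finishes. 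Finally, your closing paragraph on the abelian case ($M=A$, finite-dimensionality) is an honestly flagged but real hole: ``assemble a proper nilpotent overgroup of $M$'' is a hope, not an argument, and nothing in Lemma~\ref{lemn1} or Proposition~\ref{Propn2} supplies such an overgroup. Note that the paper's own proof obtains finite-dimensionality only by producing an element of $M\setminus K^*$ algebraic over $K$, i.e.\ it tacitly operates with $M$ nonabelian; so the efficient repair of your plan is to adopt the algebraicity dichotomy and the quotient-ring parity trick rather than to promise an independent treatment of the abelian case.
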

\begin{proof}
By Lemma \ref{lemn1}, $M$ contains a normal abelian subgroup $A$
such that $K=A\cup\{0\}$ is a field. First suppose that no elements
of $M\setminus K^*$ is algebraic over $K$ and let $v\in M\setminus
K^*$. If we set $G=K^*\langle v^2\rangle$ then by the fact that $v$
is not algebraic over $K$ one can easily show that the ring
$F[G]=\oplus_{j\in\mathbb{Z}}Kv^{2j}$ is a crossed product of $K$ by
$G/K^*\cong \langle v^2\rangle$. But, since $\langle v^2\rangle$ is
an infinite cyclic group from Proposition \ref{Propn2} it follows
that $F[G]$ is an Ore domain. Hence the ring of quotients of $F[G]$
is $F(G)=D$ (note that $F(G)=D$ by Lemma \ref{lemn1}). Therefore,
every element of $D$ is of the form $d_1d_2^{-1}$ where $d_1,d_2\in
F[G]$ and $d_2\neq 0$. So $v=s_1s_2^{-1}$ for $s_1,s_2\in F[G]^*$.
On the other hand $s_1=\sum_{j=1}^mk_jv^{2j}$ and
$s_2=\sum_{j=1}^mk_j'v^{2j}$ where $k_j,k_j'\in K$. Hence
$\sum_{j=1}^mvk_j'v^{2j}=\sum_{j=1}^mk_jv^{2j}$. Now, if we let
$l_j=vk_j'v^{-1}$, for any $0\leq j\leq m$, then $l_j$'s lie in $K$
and we have $\sum_{j=1}^ml_jv^{2j+1}=\sum_{j=1}^mk_jv^{2j}$ which
implies that $v$ is algebraic over $K$, which is a contradiction.
Thus $M\setminus K^*$ has an element $u$ algebraic over $K$. Assume
that $u$ satisfies an equation of the form
$x^n+\sum_{j=0}^{n-1}k_jx^j=0$, where $k_j\in K$ for every $0\leq
i\leq n-1$. It is not hard to see that $R=\sum_{j=0}^{n}Ku^j$ is a
division subring of $D$ with $[R:K]_l<\infty$. Therefore $F(N)=R$
where $N=\langle K^*,u\rangle$. On the other hand Lemma \ref{lemn1}
implies that $F(N)=D$. So $[D:K]_l<\infty$ and hence $D$ is finite
dimensional over $F$ (cf. \cite[Th. 15.8]{Lam01}). Now, by Corollary
\ref{cort12} we conclude that $M$ is abelian.
\end{proof}
In fact, a more general result about the structure of maximal
nilpotent subgroups of $\operatorname{GL}_n(D)$ was provided in
\cite{Ebr04} which we are going to present here without proof.
\begin{thm}\label{thmn4}
Let $D$ be an infinite division ring and $M$ be a maximal nilpotent
subgroup of $\operatorname{GL}_n(D)$ ($n\geq1$). Then $M$ is
abelian.
\end{thm}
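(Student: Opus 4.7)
The plan is to bootstrap from the $n=1$ case (Theorem \ref{thmn3}) using the structural machinery for maximal subgroups of $\operatorname{GL}_n(D)$, in particular Theorem \ref{thmt15}. First I would note that since $F^*I_n$ is central in $\operatorname{GL}_n(D)$, the subgroup $F^*M$ is still nilpotent, so the maximality of $M$ forces $F^*I_n\subseteq M$. Because $M$ is nilpotent it contains no noncyclic free subgroup, so Theorem \ref{thmt15} applied with $N=\operatorname{GL}_n(D)$ supplies an abelian subgroup $A\subseteq K_1^*\times\cdots\times K_m^*$ of $M$, where each $K_j/F$ is a proper field extension with $K_j^*\subseteq M$, and $M/A$ is finite when $\operatorname{char}(F)=0$ and locally finite otherwise. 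In either case every finitely generated subgroup of $M$ is abelian-by-finite.

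The key step is to push each $K_j^*$ into $Z(M)$. Since $M$ is nilpotent, the upper central series exhausts $M$ in finitely many steps, so the ascending chain of iterated centralizers of $K_j^*$ inside $M$ must stabilize at $M$ itself. Combined with the fact that $K_j$ is a commutative subfield normalized by $M$ and that the centralizer of a proper subfield in a division ring is again a division subring (on which the Cartan--Brauer--Hua theorem operates), one can propagate $K_j^*$ up the centralizer chain and conclude $K_j^*\subseteq Z(M)$. Consequently $A\subseteq Z(M)$, and $M/Z(M)$ is finite in characteristic zero or locally finite in positive characteristic.

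At this point Schur's theorem guarantees that the derived subgroup $M'$ is locally finite, and as a subgroup of the unit group of a division ring it is in fact periodic. An application of Theorem \ref{thms2}, together with Theorem \ref{thmt11} (or equivalently the finiteness argument of Corollary \ref{cors7} applied to the division subring generated by a suitable finitely generated piece of $M$), then eliminates $M'$ entirely: every noncentral periodic commutator would enlarge $M$ to a strictly larger nilpotent subgroup via $F^*\langle M,\cdot\rangle$, contradicting maximality. This gives $M'=1$, i.e., $M$ is abelian.

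The main obstacle will be the positive characteristic case, where $M/A$ is merely locally finite. Here one must verify that the locally finite structure does not create genuinely noncommutative configurations; this is handled by passing to finitely generated subgroups of $M$ (which are abelian-by-finite by the above), applying Theorem \ref{thmn3} to the division subring of $\operatorname{M}_n(D)$ generated by any such piece together with the relevant $K_j$'s, and reassembling by maximality of $M$. The delicate point is ensuring that the bound given by nilpotence of $M$ interacts coherently with the locally finite quotient, which is precisely where the reduction to the already-established $n=1$ result carries the weight of the proof.
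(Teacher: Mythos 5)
The first thing to note is that the paper itself does not prove this statement: Theorem \ref{thmn4} is explicitly ``presented here without proof,'' with a citation to \cite{Ebr04}. So there is no in-paper argument to compare yours against, and your proposal must stand on its own. It does not, for two concrete reasons. The decisive gap is your very first structural input: Theorem \ref{thmt15} is stated for a noncommutative \emph{division algebra} $D$, which in this paper's conventions means finite dimensional over its center (it extends Corollary \ref{cort14}, which also requires $[D:F]<\infty$), whereas Theorem \ref{thmn4} concerns an arbitrary infinite division ring. The entire difficulty of the $n=1$ case, Theorem \ref{thmn3}, lies precisely in the infinite-dimensional situation, which is overcome there by the Ore-domain argument (Proposition \ref{Propn2}) forcing an element of $M\setminus K^*$ to be algebraic over $K$ and hence $[D:K]_l<\infty$, after which Corollary \ref{cort12} applies. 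Your plan contains no substitute for that dimension-reduction step; it silently assumes the finite-dimensional machinery exactly where the theorem has its content.

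Even granting finite-dimensionality, the ``key step'' of pushing each $K_j^*$ into $Z(M)$ fails as stated. Theorem \ref{thmt15} gives only $K_j^*\subset M$, not that $K_j^*$ is normalized by $M$; and even a normal abelian subgroup of a nilpotent group need not be central — the cyclic subgroup of order $4$ in the dihedral group of order $8$ is normal, abelian, and self-centralizing, so its centralizer chain stabilizes at itself, not at the whole group. In nilpotent groups normalizers grow, but centralizers do not, so nilpotency alone cannot ``propagate $K_j^*$ up the centralizer chain''; any correct argument must use the ring structure in a precise way you have not supplied, and the Cartan--Brauer--Hua theorem you gesture at operates inside a division ring, while for $n\geq 2$ the subring of $\operatorname{M}_n(D)$ generated by subgroups of $M$ need not be a division ring at all. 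The same defect undermines your endgame: Theorem \ref{thms2} (periodic subnormal subgroups are central) is a theorem about $D^*$, not about $\operatorname{GL}_n(D)$, so it cannot be applied to $M'\leq \operatorname{GL}_n(D)$, and invoking Theorem \ref{thmn3} on ``the division subring of $\operatorname{M}_n(D)$ generated by any such piece'' presumes a division subring whose existence is unjustified. The genuine proof in \cite{Ebr04} has to do real work to reduce the $\operatorname{GL}_n$ case to $n=1$; that reduction is exactly what is missing here.
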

Here we must point out that the properties of a locally nilpotent
maximal subgroup in a division ring are also investigated in
\cite{HaiThi09} where Hai and Thin show that if $D$ is algebraic
over its center then every locally nilpotent maximal subgroup of
$D^*$ is abelian. Moreover, in another work, Hai has proved that if
$M$ is a locally nilpotent maximal subgroup of a division ring that
is algebraic over the center then it is either the multiplicative
group of some maximal subfield or it is center-by-(locally finite)
\cite{Hai11}.

Now, for the case in which the division ring is of finite dimension
over its center the following result is also proved.
\begin{thm}\label{thmn5}
Let $D$ be a noncommutative $F$-central division algebra. Let $M$ be
a nonabelian soluble maximal subgroup of $D^*$. Then $D$ contains a
maximal subfield $K$ such that $K^*\lhd M$, $K/F$ is a cyclic
extension of degree $p$ where $p$ is a prime and $M/K^*\cong
\Gal(K/F)$, i.e., $D$ is a cyclic division algebra of prime degree.
\end{thm}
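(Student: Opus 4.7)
The plan is to apply Proposition \ref{propt8} to the nonabelian soluble maximal subgroup $M$: this yields a maximal abelian normal subgroup $A \lhd M$ with $C_M(A) = A$, and guarantees that $M$ is absolutely irreducible. By Lemma \ref{lem4}, $K := F[A]$ is a Galois maximal subfield of $D$ with $M/A \cong \Gal(K/F)$. A preliminary step identifies $A$ with $K^*$: the abelian subgroup $F[A]^*$ is normalized by $M$, so the maximality of $M$ together with the Cartan-Brauer-Hua theorem (since $F \subsetneq K \subsetneq D$) forces $F[A]^* \subseteq M$, and then the maximality of $A$ as an abelian normal subgroup of $M$ yields $A = F[A]^* = K^*$. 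Every conclusion of the theorem now reduces to showing that $M/K^*$ is cyclic of prime order.

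The main step is to prove that no proper nontrivial normal subgroup of $M/A$ exists. Suppose for contradiction that $N \lhd M$ with $A \subsetneq N \subsetneq M$, and consider the subring $F[N] \subseteq D$; being a finite-dimensional domain over $F$, it is a division ring. Set $L := K^{N/A}$. I would first verify $Z(F[N]) = L$: any central element centralizes $A$, hence lies in $C_D(A) = K$ by Lemma \ref{lem4}, and among elements of $K$ those commuting with $N$ are exactly those fixed by the image of $N/A$ in $\Gal(K/F)$. Since $K$ is a maximal subfield of $D$, it is also maximal in $F[N]$, so $F[N]$ is an $L$-central simple division algebra of degree $[K:L] = |N/A| \geq 2$. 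In particular $F[N]$ is noncommutative, and a dimension count gives $[F[N]:F] = n \cdot |N/A| < n^2 = [D:F]$, where $n = \deg(D)$, so $F[N] \subsetneq D$.

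Because $N \lhd M$, the group $M$ normalizes $F[N]^*$, and by the maximality of $M$ the normalizer $N_{D^*}(F[N]^*)$ equals either $M$ or $D^*$. The latter is excluded by the Cartan-Brauer-Hua theorem, since $F \subsetneq F[N] \subsetneq D$. Hence $F[N]^* \subseteq M$, so $F[N]^*$ is a soluble subgroup of $M$; by Hua's theorem \cite{Hua50} the division ring $F[N]$ must then be commutative, contradicting what was shown above. Therefore $M/A$ has no proper nontrivial normal subgroup; being a nontrivial soluble group, it is cyclic of prime order $p$. Consequently $[K:F] = p$, so $K/F$ is a cyclic extension of prime degree $p$, $\deg(D) = p$, and $D$ is a cyclic division algebra of prime degree.

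The hard part of the argument will be the structural analysis of $F[N]$: identifying its center as $L = K^{N/A}$ and using the maximality of $K$ in $D$ to pin down the degree of $F[N]$ as an $L$-central simple algebra. Once this dimension picture is in hand, the two appeals to Cartan-Brauer-Hua (to $F[A]^*$ in the setup and to $F[N]^*$ in the main step) together with Hua's soluble-implies-commutative theorem funnel the assumption that $M/A$ is not cyclic of prime order into a contradiction.
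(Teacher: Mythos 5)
Your proposal is correct and takes essentially the same approach as the paper: after the identical setup via Proposition~\ref{propt8} and Lemma~\ref{lem4} giving $K^*=A$ and $M/K^*\cong\Gal(K/F)$, both arguments derive a contradiction from a proper normal subgroup by combining the maximality of $M$, the Cartan-Brauer-Hua theorem, and Hua's soluble-implies-commutative theorem. The only cosmetic difference is that you work with $F[N]$ for a pulled-back normal subgroup $A\subsetneq N\subsetneq M$ and compute its center and degree directly, whereas the paper works with the fixed field $E$ of the corresponding normal subgroup of $\Gal(K/F)$ and its centralizer $C_D(E)$ -- but since your $F[N]$ is precisely $C_D(L)$ for $L=K^{N/A}=E$, Hua's theorem is being applied to the very same subalgebra in both proofs.
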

\begin{proof}
By Proposition \ref{propt8} we know that $M$ is absolutely irreducible
in $D^*$ and has an abelian normal subgroup $A$ such that $F[A]$ is
a maximal subfield of $D$ that is Galois over $F$ and $M/A\cong
\Gal(F[A]/F)$. Put $K=F[A]$. Since $K^*M$ is soluble and $M$ is
maximal, we can easily conclude that $K^*M=M$ and so $K^*\lhd M$.
Also, by the maximality of $A$ in $M$ we deduce that $K^*=A$. Hence
$M/K^*\cong \Gal(K/F)$. Now, it remains to prove that $|\Gal(K/F)|$
is a prime number. Otherwise, let $|\Gal(K/F)|$ is not a prime
number. Thus $\Gal(K/F)$ has a nontrivial normal subgroup $N$. Let
$E$ be the fixed field of $N$. So we have a chain $F\subset E\subset
K$ of subfields. From field theory we know that $E$ is invariant
under each $\sigma\in\Gal(K/F)$. This forces that $M$ lies in
$N_{D^*}(E^*)$. But, if $M\neq N_{D^*}(E^*)$ by the maximality of
$M$ we must have $E^*\lhd D^*$ and by the Cartan-Brauer-Hua Theorem
$E\subseteq F$ which is a contradiction. Thus $M=N_{D^*}(E^*)$ and
so $N_{D^*}(E^*)$ is soluble. On the other hand
$C_{D^*}(E^*)\subseteq N_{D^*}(E^*)$. Therefore $C_D(E)$ is a
division subalgebra with a soluble unit group. It follows from Hua's
Theorem that $C_D(E)$ is a field. Finally, by the Centralizer
Theorem we conclude that $C_D(E)=E$, i.e., $E$ is a maximal subfield
and so $E=K$ which is a contradiction.
\end{proof}

Let $M$ be a maximal subgroup of $D^*$ where $D$ is a finite dimensional
division algebra. Also, suppose that $M$ contains no non-cyclic free subgroup. If
$\deg(D)$ is odd, then from Theorem \ref{t} it follows that $M$ is
soluble. Now, suppose that $\deg(D)=2^m$ for some $m$.
By Theorem \ref{cort14}, $M$ contains an abelian
normal subgroup $A$ such that $M/A$ has order $\deg(D)$
and hence $M$ is soluble. Here, using Theorem \ref{thmn5}
yields that $\deg(D)$ is a prime number. Thus, the following corollary
is immediate:
\begin{cor}
Let $D$ be an $F$-central division algebra and
$M$ be a maximal subgroup of $D^*$. If either $\deg(D)$ is odd
or $\deg(D)=2^m$ for some $m$, then the following statements
are equivalent:
\begin{enumerate}
  \item $M$ has no non-cyclic free subgroups;
  \item $M$ is soluble and $\deg(D)$ is a prime number.
\end{enumerate}
\end{cor}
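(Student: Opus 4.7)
The plan is to establish the two directions of the equivalence separately, leaning entirely on the structural machinery already in place. The direction $(2)\Rightarrow(1)$ is immediate: if $M$ is soluble, then its derived series terminates after finitely many steps, while the derived series of a free group of rank at least two never terminates, so $M$ cannot contain any non-cyclic free subgroup. This half does not use the hypothesis on $\deg(D)$ at all.

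For $(1)\Rightarrow(2)$, I would first establish that $M$ is soluble and then deduce that $\deg(D)$ is prime. The solubility step splits according to the assumption on $\deg(D)$. If $\deg(D)$ is odd, I would invoke Theorem~\ref{t}: its ``moreover'' clause strengthens condition (2) to $(2)'$, so the absence of a non-cyclic free subgroup in $M$ yields outright that $M$ is soluble. If $\deg(D)=2^m$, I would apply Theorem~\ref{cort14}, whose equivalent condition (2) supplies an abelian normal subgroup $A\lhd M$ with $C_M(A)=A$ and $M/A\cong\Gal(F[A]/F)$, where $F[A]$ is a maximal subfield of $D$; thus $|M/A|=[F[A]:F]=\deg(D)=2^m$, so $M/A$ is a finite $2$-group, hence nilpotent, and together with $A$ abelian this makes $M$ soluble. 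Once $M$ is known to be a non-abelian soluble maximal subgroup of $D^*$, Theorem~\ref{thmn5} immediately forces $D$ to be a cyclic division algebra of prime degree, so $\deg(D)$ is prime, finishing the implication.

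The chief delicate point, and the place where I expect the only real care is needed, is that both Theorem~\ref{cort14} and Theorem~\ref{thmn5} carry the hypothesis that $M$ is non-abelian, whereas the statement of the corollary does not. If $M$ is abelian then $(1)$ is automatic, and one must separately justify that $\deg(D)$ is prime; the natural route is to first observe (via maximality, together with Corollary~\ref{cors7} to rule out $M=F^*$ and the Cartan--Brauer--Hua theorem to rule out $K^*\lhd D^*$) that $M=K^*$ for a maximal subfield $K$ with $N_{D^*}(K^*)=K^*$, but pinning down $[K:F]$ as prime from this requires additional input. Following the tone of the author's remarks leading up to the corollary, I would present the argument under the tacit understanding that the non-abelian case is the intended one, where the sketch above gives a complete proof.
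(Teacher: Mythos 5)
Your proposal matches the paper's own argument step for step: for the implication $(1)\Rightarrow(2)$ the paper likewise invokes Theorem~\ref{t} (via its $(2)'$ clause) when $\deg(D)$ is odd, Theorem~\ref{cort14} to get an abelian normal subgroup $A$ with $|M/A|=\deg(D)=2^m$ (hence $M$ soluble) in the even case, and then Theorem~\ref{thmn5} to conclude that $\deg(D)$ is prime, the converse being the trivial remark that soluble groups contain no non-cyclic free subgroups. The abelian caveat you flag is genuine, but the paper's own derivation silently applies Theorems~\ref{cort14} and~\ref{thmn5} (both stated for non-abelian $M$) in exactly the same tacit way, so your treatment is faithful to, and in fact slightly more careful than, the source.
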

Therefore, whenever $\deg(D)$ is not a prime and
either it is an odd number or is equal to $2^m$ for some $m$,
then every maximal subgroup of $D^*$ contains a non-cyclic
free subgroup.

A perfect description of the structure of a division ring with an
algebraic nonabelian locally soluble maximal subgroup is also
accessible that we shall provide at this place, for the proof see
\cite[Th. 6]{AkEbMoSa03}.
\begin{thm}\label{thmn6}
Let $D$ be a division ring with center $F$ and $M$ a nonabelian
locally soluble maximal subgroup of $D^*$ that is algebraic over
$F$. Then $[D:F]=p^2$, where $p$ is a prime number, and there exists
a maximal subfield $K$ of $D$ such that $K/F$ is a cyclic extension
of degree $p$, $K^*\lhd M$ and $M/K^*\cong \Gal(K/F)$.
\end{thm}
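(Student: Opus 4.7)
The plan is to reduce the statement to Theorem \ref{thmn5} by leveraging the algebraicity hypothesis to force $D$ to be finite-dimensional over $F$ and $M$ to be soluble. Once this reduction is in place, the full structural conclusion follows directly from Theorem \ref{thmn5}.

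My first step is to show that $F[M]=D$. Algebraicity of $M$ over $F$ makes $F[M]$ a division subring of $D$: for every nonzero $x\in F[M]$, the subalgebra $F[x]$ is a field, so $x^{-1}\in F[x]\subseteq F[M]$. If $F[M]\neq D$, then $F[M]^{*}=M$ is a proper subgroup of $D^{*}$; by an argument paralleling Corollary \ref{cors7} and part $(3)$ of Lemma \ref{lemn1}, one invokes the Cartan--Brauer--Hua Theorem together with the maximality of $M$ and Stuth's Theorem \ref{thms1} to conclude that either $F[M]\subseteq F$ (contradicting the nonabelianness of $M$) or $F[M]=D$. Next I would replay the proof of Lemma \ref{lemn1}: its ingredients are only Cartan--Brauer--Hua and Stuth's Theorem, both of which apply verbatim in the locally soluble setting; this yields a maximal abelian normal subgroup $A\lhd M$ with $M'\subseteq A$, so that $K:=F[A]=A\cup\{0\}$ is a subfield of $D$ with $K^{*}=A$. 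Moreover, for any $u\in M\setminus A$, the subgroup $N=\langle A,u\rangle$ is normal in $M$ with $F(N)=D$.

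The main obstacle is to deduce from this that $[D:F]<\infty$. Here I would exploit algebraicity of $u$: its minimal polynomial over $F$ has coefficients in the centre, so $u^{d}$ is an $F$-linear combination of $1,u,\ldots,u^{d-1}$, where $d=[F(u):F]$. Since $u$ normalizes $K$, every element of $N=K^{*}\langle u\rangle$ has the form $ku^{i}$, so $F[N]=\sum_{i=0}^{d-1}Ku^{i}$ is finitely generated as a left $K$-module. Being a finite-dimensional domain over the field $K$, it is a division ring; thus $F[N]=F(N)=D$, giving $[D:K]_{l}<\infty$. By the standard dimension argument used in the nilpotent case (cf.\ \cite[Th.~15.8]{Lam01} and the proof of Theorem \ref{thmn3}), this forces $[D:F]<\infty$.

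With $D$ now finite-dimensional, Theorem \ref{cort14} applies: $M$ is nonabelian and maximal, and contains no noncyclic free subgroup because a noncyclic free group is not locally soluble. Hence $M$ is abelian-by-finite, possessing an abelian normal subgroup $A_{0}$ of finite index. The quotient $M/A_{0}$ is locally soluble and finite, hence soluble, which makes $M$ itself soluble. At this point Theorem \ref{thmn5} delivers the full conclusion: there is a cyclic maximal subfield $K$ of prime degree $p$ over $F$ with $K^{*}\lhd M$ and $M/K^{*}\cong\Gal(K/F)$, and in particular $[D:F]=p^{2}$.
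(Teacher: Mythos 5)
The paper itself states Theorem \ref{thmn6} without proof, referring to \cite[Th.~6]{AkEbMoSa03}, so your proposal has to stand on its own; unfortunately it has a genuine gap at its load-bearing step. Your endgame is fine: once $[D:F]<\infty$ is known, locally soluble groups contain no noncyclic free subgroups, Theorem \ref{cort14} makes $M$ abelian-by-finite, a finite locally soluble quotient is soluble, and Theorem \ref{thmn5} finishes. The problem is the middle claim that the proof of Lemma \ref{lemn1} ``replays verbatim'' because ``its ingredients are only Cartan--Brauer--Hua and Stuth's Theorem.'' That is false: part (1) of Lemma \ref{lemn1} (that $M'$ is abelian --- which you need in order to get $M'\subseteq A$, hence $N=\langle A,u\rangle\lhd M$ and $F(N)=D$) uses nilpotency essentially, twice. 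First, it produces a noncentral $u$ with $F^*u\in Z(M/F^*)$; the nontriviality of $Z(M/F^*)$ is a nilpotency phenomenon (a locally soluble $M$ may well have $Z(M/F^*)=1$), and it is exactly this $u$ that yields $M'\subseteq C_M(u)$ and $F(M')\neq D$. Second, in the branch $F(M')^*\subseteq M$, one concludes that $F(M')^*$ is nilpotent, hence \emph{soluble}, so the division ring $F(M')$ is a field by Hua/Stuth. In your setting the corresponding branches --- which already arise in your Step 1, where the alternative $F(M)^*=M$ is possible, and again for $F(M')$ --- only give a division subring whose unit group is \emph{locally} soluble, and Theorem \ref{thms1} does not cover that: it concerns soluble subnormal subgroups. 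The statement ``a division ring whose multiplicative group is locally soluble is commutative'' cannot simply be cited; in the infinite-dimensional situation it is precisely where the algebraicity hypothesis must do real work (e.g., showing that finitely generated subgroups generate finite-dimensional division subalgebras, so that Theorem \ref{t} or Stuth can be applied locally and solubility transferred from finitely generated pieces to $M$). This is the actual content of the proof in \cite{AkEbMoSa03}, not a verbatim replay of the nilpotent case.

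There is also a smaller but genuine error at the very start: you assert that $F[M]$ is a division ring because $F[x]$ is a field for every nonzero $x\in F[M]$. Only the elements of $M$ are assumed algebraic over $F$; an $F$-linear combination of algebraic elements of a division ring need not be algebraic (this is a Kurosh-type difficulty), so for a general $x\in F[M]$ you cannot claim $F[x]$ is a field. The standard fix is to argue with the division subring $F(M)$ via $M\subseteq N_{D^*}(F(M)^*)$, maximality, and Cartan--Brauer--Hua --- but the bad branch of that trichotomy again lands you on the unproved ``locally soluble unit group $\Rightarrow$ field'' statement above. By contrast, your finite-dimensionality mechanism itself is sound: since $u\in M$ is algebraic over the central field $F$ and normalizes $K$, the set $R=\sum_{i=0}^{d-1}Ku^{i}$ is a division subring equal to $F(N)=D$, giving $[D:K]_{l}<\infty$ and then $[D:F]<\infty$ as in Theorem \ref{thmn3}. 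So the architecture is reasonable, but it sits downstream of the two facts you did not prove, and those are where the theorem's difficulty lives.
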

We close this section by remarking some additional results.  By a
work of Hai and Phuong Ha, the same conclusion as in Theorem
\ref{thmn6} is also valid if we replace the requirement ``$M$ is
algebraic over $F$'' by ``$M'$ is algebraic over $F$''
\cite{HaiHa10}. Also, in another work, Dorbidi, Fallah-Moghaddam and
Mahdavi-Hezavehi proved that if for a division ring $D$ the group
$\operatorname{GL}_n(D)$ has a nonabelian soluble maximal subgroup
then $n=1$, $[D:F]<\infty$ and hence a similar result as in Theorem
\ref{thmn6} holds. They also proved that if either $F$ is an
algebraically closed field or a real closed field then
$\operatorname{GL}_2(F)$ contains no soluble maximal subgroups
\cite{DorFalMah11}.
\section{Descending central series of division rings and extending of valuations}
As we have mentioned before in the introduction, one aspect in the
study of the structure of the unit group of a division ring is to
explore the roles of multiplicative commutators in the structure of
$D$. This point of view was followed in \cite{Ma95},
\cite{MaAkMeHa95}, \cite{MaAk95}, \cite{MaMo97} and \cite{ma98}.
Also, a survey on this topic was provided in \cite{Ma00}. Motivated
by various results appeared in the above papers, in \cite{Haz02} the
structure of subgroup that become visible in the descending central
series of a division ring was investigated and some analogous
results for this kind of subgroups were successfully made. At this
stage our aim is to provide some interesting results of the above
works that are natural generalizations of known old results. Here,
before giving our theorems we must recall
\begin{thm}[Wedderburn's Factorization]\label{thmd1}
Let $D$ be a division ring with center $F$ and $u\in D^*$. If $u$ is
algebraic over $F$ with minimal polynomianl
$f(t)=t^m+a_{m-1}t^{m-1}+\ldots+a_1t+a_0$, then there are
$d_1,\ldots,d_m\in D$ such that
$f(t)=(t-d_1ud_1^{-1})\ldots(t-d_mud_m^{-1})$.
\end{thm}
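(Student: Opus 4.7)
My plan is to induct on $m$, with the key technical tool being a composition formula for polynomial evaluation in $D[t]$, the polynomial ring in a central indeterminate $t$ (so $t$ commutes with every element of $D$). For $p(t) = \sum c_i t^i \in D[t]$, let $p(a) := \sum c_i a^i$ denote its left evaluation at $a \in D$. A direct expansion (using centrality of $t$) shows that for any $p, q \in D[t]$ and $a \in D$,
\[
(pq)(a) \;=\; \begin{cases} p\bigl(q(a)\,a\,q(a)^{-1}\bigr)\,q(a), & q(a) \neq 0, \\ 0, & q(a) = 0. \end{cases}
\]
In particular, whenever $(pq)(a) = 0$ and $q(a) \neq 0$, the element $q(a)\,a\,q(a)^{-1}$ is a root of $p$.

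The base case $m = 1$ is immediate ($d_1 = 1$). For $m \geq 2$, minimality of $f$ forces $u \notin F$, hence some conjugate $v = dud^{-1}$ satisfies $v \neq u$. Since $f(u) = 0$, right-dividing by $t - u$ yields $f(t) = f_1(t)(t - u)$ with $\deg f_1 = m - 1$. Applying the composition formula with $p = f_1$, $q = t - u$, $a = v$ (using $f(v) = 0$ and $v - u \neq 0$) gives $f_1(w) = 0$ for $w := (v - u)\,v\,(v - u)^{-1}$, which is itself a conjugate of $u$ (namely $w = d'ud'^{-1}$ with $d' = (v-u)d$). One iterates the construction: at stage $k$ one has $f(t) = g_k(t)(t - w_k)\cdots(t - w_1)$ with $g_k \in D[t]$ of degree $m - k$ and each $w_i$ a conjugate of $u$. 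Starting from any conjugate $v$ of $u$ and setting $v^{(0)} := v$ and $v^{(i)} := (v^{(i-1)} - w_i)\,v^{(i-1)}\,(v^{(i-1)} - w_i)^{-1}$ (provided each difference is nonzero), each $v^{(i)}$ is again a conjugate of $u$, and repeated application of the composition formula gives $g_k(v^{(k)}) = 0$.

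The main obstacle is to guarantee at each stage the existence of a starting conjugate $v$ for which all the intermediate differences $v^{(i-1)} - w_i$ are nonzero, so the iteration does not stall. Equivalently, one must show that the ``bad'' set of $v$'s for which some $v^{(i-1)}$ coincides with $w_i$ is a proper subset of the conjugacy class of $u$. Since $u \notin F$, Scott's theorem (cited in Section~\ref{sec1birk}) gives that this conjugacy class has cardinality $|D|$, while the bad set is cut out by finitely many explicit conditions on $v$ inherited from the formulas defining the $v^{(i)}$, so a valid starting $v$ always exists. Carrying out $m$ successful iterations produces $d_1, \ldots, d_m \in D^*$ such that $f(t) = (t - d_1ud_1^{-1})(t - d_2ud_2^{-1}) \cdots (t - d_mud_m^{-1})$, as required.
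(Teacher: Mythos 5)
Your induction scheme and composition formula are fine, and the reduction is correct as far as it goes: by your own formula one checks $k_i(v)\,v\,k_i(v)^{-1}=v^{(i)}$ and $k_i(v)=(v^{(i-1)}-w_i)\cdots(v^{(0)}-w_1)$ for $k_i(t)=(t-w_i)\cdots(t-w_1)$, so the iteration stalls at $v$ exactly when $v$ is a root of some $k_i$. But the justification that a good starting $v$ exists is a genuine gap. You argue that the bad set is ``cut out by finitely many explicit conditions'' while the conjugacy class has cardinality $|D|$; this inference is invalid over a noncommutative division ring, because a single polynomial condition of degree $\geq 2$ can have $|D|$ solutions and can even swallow an entire conjugacy class. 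Concretely, in the real quaternions $\mathbb{H}$, with $w_1=i$ and $w_2=-i=jij^{-1}$ (both conjugates of $i$), one has $(t+i)(t-i)=t^2+1$, whose root set is the full conjugacy class of $i$ --- an uncountable set of cardinality $|\mathbb{H}|$. Since your counting argument makes no use of the degrees of the $k_i$, it would equally ``show'' one can pick $v$ in the class of $i$ avoiding the zeros of $t^2+1$, which is false. Scott's theorem tells you the class is big; it tells you nothing about how big the zero sets of the $k_i$ are.

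What actually closes the gap is a degree bound, not a counting bound: one must prove that a nonzero monic polynomial in $D[t]$ vanishing identically on the conjugacy class of $u$ has degree at least $m=\deg f$. Granting this, your argument goes through, because at every stage where you still need a fresh root you have $\deg k_k = k \leq m-1$, so $k_k$ cannot kill the whole class. This is precisely step (III) in the paper's proof of the graded version, Theorem~\ref{domainW}, and it has real content: take a class-annihilating monic polynomial of minimal degree $<m$, pick the first coefficient $d_j\notin F$, conjugate the whole polynomial by an element not commuting with $d_j$ (conjugation permutes the class, so the conjugated polynomial also annihilates it), and subtract; after normalizing the nonzero leading coefficient $d_j-d_j'$ one gets a shorter monic annihilator, a contradiction. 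Note also that the paper packages the induction differently so that no genericity is ever needed: it takes a factorization $f=g\cdot(t-a_r)\cdots(t-a_1)$ with $r$ maximal, applies the product formula once to conclude that the linear part annihilates the whole class (else the factorization would extend, contradicting maximality), and then invokes (III) to force $r\geq m$. Either packaging is fine, but the conjugate-and-subtract lemma (III) is indispensable and cannot be replaced by cardinality considerations.
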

Surely, the following lemma is the most important tool in our
subsequent work.
\begin{lem}\label{lemd2}
Let $D$ be a division ring with center $F$ and $N$ be a normal
subgroup of $D^*$. If $u\in N$ is algebraic over $F$ and the degree
of the minimal polynomial of $u$ over $F$ is $m$, then
$u^m=N_{F(u)/F}(u)c$ for some $c\in [D^*,N]$. Thus $u^m\in
Z(N)[D^*,N]$.
\end{lem}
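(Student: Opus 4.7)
The proof proposal rests on Wedderburn's Factorization (Theorem~\ref{thmd1}) applied to $u \in N$, which yields elements $d_1, \dots, d_m \in D^*$ such that the minimal polynomial factors as
\[
f(t) \;=\; \prod_{i=1}^{m} \bigl(t - d_i u d_i^{-1}\bigr) \quad \text{in } D[t].
\]
Since $t$ is a central indeterminate, expanding the right-hand side and comparing constant terms gives $a_0 = (-1)^m (d_1 u d_1^{-1})(d_2 u d_2^{-1}) \cdots (d_m u d_m^{-1})$. On the other hand, over an algebraic closure one has $a_0 = (-1)^m N_{F(u)/F}(u)$, and therefore
\[
N_{F(u)/F}(u) \;=\; \prod_{i=1}^{m} d_i u d_i^{-1}.
\]

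Next, the plan is to transport every occurrence of $u$ in this product past its surrounding commutator factor, collecting the $u$'s on the left. Using the identity $d_i u d_i^{-1} = u \cdot [u, d_i^{-1}]$ (where $[u,d_i^{-1}] = u^{-1}d_i u d_i^{-1}$), and the fact that $[D^*,N]$ is normal in $D^*$ (if $x\in D^*$ and $y\in N$ then $z^{-1}[x,y]z = [z^{-1}xz, z^{-1}yz]\in [D^*,N]$ since $N \lhd D^*$), an inductive argument shifting one factor of $u$ at a time produces
\[
\prod_{i=1}^{m} d_i u d_i^{-1} \;=\; u^m \cdot c',
\]
where $c'$ is a product of elements of the form $[u, d_i^{-1}]^{u^{j}}$, each lying in $[D^*,N]$. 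Setting $c = (c')^{-1}\in [D^*,N]$ then gives $u^m = N_{F(u)/F}(u)\,c$, which is the desired identity.

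For the final assertion, note that $N_{F(u)/F}(u) \in F^* \subseteq Z(D^*)$, and since every element of $Z(D^*)$ commutes with all of $N$, we have $F^* \subseteq Z(N)$. Hence $u^m \in Z(N) \cdot [D^*,N]$, as claimed.

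The only delicate step will be the bookkeeping in the inductive ``commutator-shifting'' argument: one needs the observation that conjugation of an element of $[D^*,N]$ by $u$ (or by any element of $D^*$) lands back in $[D^*,N]$, which is exactly the normality of $[D^*,N]$ in $D^*$ coming from $N \lhd D^*$. Everything else is a direct consequence of Wedderburn's factorization and the formula $a_0 = (-1)^m N_{F(u)/F}(u)$ for the constant term of the minimal polynomial.
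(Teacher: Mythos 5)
Your derivation of the main identity $u^m=N_{F(u)/F}(u)\,c$ with $c\in[D^*,N]$ is correct and is essentially the paper's own proof: Wedderburn factorization of the minimal polynomial, comparison of constant terms (the ordered product is fine since $t$ is central in $D[t]$), rewriting each conjugate $d_iud_i^{-1}$ as $u\cdot[u,d_i^{-1}]$, and collecting the $u$'s on the left at the cost of conjugating the commutators, which stay in $[D^*,N]$ because $[D^*,N]\lhd D^*$ (this in turn uses $N\lhd D^*$, exactly as you note).

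However, your final step contains a genuine error. You claim $F^*\subseteq Z(N)$ on the grounds that central elements of $D^*$ commute with everything in $N$. Commuting with $N$ is not enough: by definition $Z(N)=\{n\in N \mid nx=xn \text{ for all } x\in N\}$ is a subgroup \emph{of $N$}, so $F^*\subseteq Z(N)$ fails whenever $F^*\not\subseteq N$ --- and the lemma is applied in the paper precisely to such subgroups, e.g.\ $N=1+M_D$ in the Congruence Theorem, where $F^*\cap(1+M_D)=1+M_F\neq F^*$. What you actually need, and what the paper proves, is that the specific element $N_{F(u)/F}(u)$ lies in $Z(N)$: since $N\lhd D^*$, each generator $[d,n]=(d^{-1}n^{-1}d)n$ of $[D^*,N]$ lies in $N$, so $c\in N$; hence $N_{F(u)/F}(u)=u^mc^{-1}\in F^*\cap N$, and $F^*\cap N\subseteq Z(N)$ because its elements are central in $D^*$ \emph{and} belong to $N$. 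With that one-line correction your proof goes through and coincides with the paper's.
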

\begin{proof}
Let $f(t)$ be the minimal polynomial of $u$ over $F$. From field
theory, we have
\begin{equation}\label{eqd}
f(t)=t^m-Tr_{F(u)/F}(u)t^{m-1}+\ldots+(-1)^mN_{F(u)/F}(u).
\end{equation}
Now, using  Wedderburn's Factorization Theorem and (\ref{eqd}), one
obtains
$$N_{F(u)/F}(u)=d_1ud_1^{-1}\ldots d_mud_m^{-1}$$
where $d_i\in D$. But
$$d_1ud_1^{-1}\ldots d_mud_m^{-1}=
[d_1^{-1},u^{-1}]u^{-1}\ldots [d_m^{-1},u^{-1}]u^{-1}=u^mc$$ for
some $c\in [D^*,N]$. Thus we have $u^m=N_{F(u)/F}(u)c$. Now, since
$N\lhd D^*$, we have $c\in N$ and so $N_{F(u)/F}(u)\in F^*\cap
N\subseteq Z(N)$, as desired.
\end{proof}
\begin{cor}\label{cord3}
If $D$ is a division ring with center $F$ and $N$ is a subgroup of
$D^*$, then:
\begin{enumerate}
  \item If $N$ contains some $\zeta_i D^*$ and $a\in D$ is algebraic over $F$, then $u$ is
  radical over $F^*N$;
  \item If $D$ is finite dimensional over $F$ with $\deg(D)=n$ and if
  $N$ is normal in $D^*$, then $N^n\subseteq \operatorname{Nrd}_D(N)[D^*, N]\subseteq Z(N)[D^*,N]$.
\end{enumerate}
\end{cor}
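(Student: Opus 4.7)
My plan is to derive both parts as direct consequences of Lemma \ref{lemd2} applied iteratively (for part (1)) or once with a power adjustment (for part (2)).

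For part (1), I would induct on $i$ such that $\zeta_i D^*\subseteq N$. If $i=0$ then $N=D^*$ and the claim is trivial. For the inductive step, I set $v_0 := u$ and build a sequence $v_0,v_1,\ldots,v_i$ of elements of $D^*$, each algebraic over $F$, such that $v_j \in \zeta_j D^*$ and $v_j$ differs from a positive power of $v_{j-1}$ only by a central factor from $F^*$. Concretely, at step $j$, since $\zeta_{j-1}D^*$ is normal in $D^*$ and $v_{j-1}\in \zeta_{j-1}D^*$ is algebraic of some degree $m_j$ over $F$, Lemma \ref{lemd2} applied with $N=\zeta_{j-1}D^*$ gives
\[
v_{j-1}^{m_j}=N_{F(v_{j-1})/F}(v_{j-1})\cdot c_j, \qquad c_j\in [D^*,\zeta_{j-1}D^*]=\zeta_j D^*.
\]
Set $\alpha_j:=N_{F(v_{j-1})/F}(v_{j-1})\in F^*$ and $v_j:=c_j\in\zeta_j D^*$; then $v_j=\alpha_j^{-1}v_{j-1}^{m_j}$ is again algebraic over $F$. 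Tracking back, $u^{m_1 m_2\cdots m_i}$ equals a central element of $F^*$ times $v_i\in \zeta_i D^*\subseteq N$, so $u^{m_1\cdots m_i}\in F^*N$. Algebraicity is preserved at every stage because we only raise to integer powers and multiply by scalars from $F^*$.

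For part (2), pick any $u\in N$. Since $[D:F]<\infty$, $u$ is algebraic over $F$; write $m:=[F(u):F]$, which divides $n=\deg(D)$. Lemma \ref{lemd2} yields $u^m=N_{F(u)/F}(u)\cdot c$ with $c\in[D^*,N]$. Raising to the power $n/m$ and using that $N_{F(u)/F}(u)\in F^*$ is central in $D^*$,
\[
u^n=\bigl(N_{F(u)/F}(u)\bigr)^{n/m}\cdot c^{n/m}.
\]
The key identity is $N_{F(u)/F}(u)^{n/m}=\operatorname{Nrd}_D(u)$, which follows from the standard fact that the reduced characteristic polynomial of $u$ in $D$ is the $(n/m)$-th power of the minimal polynomial of $u$ over $F$ (comparing constant terms gives the norm relation). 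Hence $u^n=\operatorname{Nrd}_D(u)\cdot c^{n/m}\in \operatorname{Nrd}_D(N)\cdot[D^*,N]$, establishing the first inclusion $N^n\subseteq \operatorname{Nrd}_D(N)[D^*,N]$. For the second inclusion, I observe that $\operatorname{Nrd}_D(u)=N_{F(u)/F}(u)$ is a product of conjugates $d_i u d_i^{-1}$ of $u$ (via Wedderburn's Factorization, Theorem \ref{thmd1}); normality of $N$ puts each such conjugate in $N$, so $\operatorname{Nrd}_D(u)\in F^*\cap N$, which is central in $N$ because $F^*$ is central in $D^*$. Thus $\operatorname{Nrd}_D(N)\subseteq Z(N)$ and the chain $\operatorname{Nrd}_D(N)[D^*,N]\subseteq Z(N)[D^*,N]$ follows.

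The only real subtlety lies in the iterative step of (1): one must verify algebraicity of each $v_j$ and that $\zeta_{j-1}D^*$ is normal in $D^*$ so that Lemma \ref{lemd2} applies at each level. The rest is bookkeeping around the centrality of field norms and the well-known power relation between reduced norm and field norm.
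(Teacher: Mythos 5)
Your proof is correct and follows essentially the same route as the paper: both parts are squeezed out of Lemma \ref{lemd2}, part (1) by the same induction on $i$ (the paper compresses your explicit chain $v_0,\ldots,v_i$ into ``if $a^r=fb$ with $f\in F^*$, $b\in\zeta_{i-1}D^*$, then $b^m\in F^*\zeta_iD^*$, so $a^{rm}=f^mb^m\in F^*\zeta_iD^*$''), and part (2) by the same power computation, except that the paper verifies the key identity $N_{F(u)/F}(u)^{n/m}=\operatorname{Nrd}_D(u)$ via a maximal subfield $K\supseteq F(u)$ and transitivity of field norms, while you cite the equivalent standard fact that the reduced characteristic polynomial is the $(n/m)$-th power of the minimal polynomial. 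One cosmetic slip in your final step: $\operatorname{Nrd}_D(u)$ equals $N_{F(u)/F}(u)^{n/m}$, not $N_{F(u)/F}(u)$ itself, but since $F^*\cap N\subseteq Z(N)$ and $Z(N)$ is closed under powers, your conclusion $\operatorname{Nrd}_D(N)\subseteq Z(N)$ stands.
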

\begin{proof}
(1) Clearly we can replace $N$ by $\zeta_iD^*$. The proof will be by
induction on $i$. For $i=0$, there is nothing to prove. Let
$D^*/F^*\zeta_{i-1}D^*$ be torsion. Thus, if $a\in D^*$ then there
is a nonnegative integer $r$ such that $a^r=fb$ for some $f\in F$
and $b\in\zeta_{i-1}D^*$. Now, by Lemma \ref{lemd2} we have $b^m\in
F^*\zeta_iD^*$ for some $m$. Thus $a^{rm}=f^mb^m\in F^*\zeta_iD^*$,
as desired.

(2) Let $a\in N$ and $K$ be a maximal subfield of $D$ containing
$a$. Put $[K:F(a)]=k$. By Lemma \ref{lemd2} we conclude that
$a^{[F(a):F]}=N_{F(a)/F}(a)c$ for some $c\in [D^*,N]$. Now, we have
\begin{align}\nonumber
a^n=a^{k[F(a):F]}&=N_{F(a)/F}(a)^kc^k\\ \nonumber
&=N_{F(a)/F}(a^k)c^k \\ \nonumber &=N_{F(a)/F}(N_{K/F(a)}(a))c^k\\
\nonumber &=N_{K/F}(a)c^k \\ \nonumber
&=\operatorname{Nrd}_D(a)c^k\in \operatorname{Nrd}_D(N)[D^*, N].
\end{align}
\end{proof}
\begin{cor}\label{cor765}
Let $D$ be a division algebra of index $n$. For any $i>0$, the
quotient $\zeta_iD^*/\zeta_{i+1}D^*$ is a torsion abelian group of
bounded exponent $n$.
\end{cor}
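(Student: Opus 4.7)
The plan is to apply Corollary \ref{cord3}(2) directly to $N=\zeta_iD^*$, using the elementary fact that the reduced norm kills commutators.

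First I would verify two preliminary points. For all $i\geq 0$, $\zeta_iD^*$ is normal (indeed characteristic) in $D^*$: this is immediate by induction, since if $\zeta_{i-1}D^*\unlhd D^*$ then for any $g\in D^*$, $a\in D^*$, $b\in\zeta_{i-1}D^*$, one has $g^{-1}[a,b]g=[g^{-1}ag,g^{-1}bg]\in[D^*,\zeta_{i-1}D^*]=\zeta_iD^*$. Moreover, for any group $G$ and any $i\geq 1$, the quotient $\zeta_iG/\zeta_{i+1}G$ is abelian: from the standard inclusion $[\zeta_iG,\zeta_jG]\subseteq\zeta_{i+j}G$ (which follows by a routine induction from the Hall--Witt identity), we obtain $[\zeta_iG,\zeta_iG]\subseteq\zeta_{2i}G\subseteq\zeta_{i+1}G$ for $i\geq 1$. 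So abelianness of the quotient is automatic, and only the torsion-of-exponent-$n$ claim requires genuine work.

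Next I would observe that for $i\geq 1$ one has $\zeta_iD^*\subseteq[D^*,D^*]$, and since $\Nrd_D:D^*\to F^*$ is a group homomorphism into an abelian group, it annihilates every multiplicative commutator. Consequently $\Nrd_D(\zeta_iD^*)=\{1\}$.

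With these in hand, the statement falls out of Corollary \ref{cord3}(2) applied to the normal subgroup $N=\zeta_iD^*$:
\[
(\zeta_iD^*)^n\;\subseteq\;\Nrd_D(\zeta_iD^*)\cdot[D^*,\zeta_iD^*]\;=\;\{1\}\cdot\zeta_{i+1}D^*\;=\;\zeta_{i+1}D^*.
\]
Thus every $a\in\zeta_iD^*$ satisfies $a^n\in\zeta_{i+1}D^*$, which together with the abelianness noted above gives exactly the conclusion. There is no real obstacle: the only nontrivial input is Corollary \ref{cord3}(2), which in turn rests on Wedderburn's factorization via Lemma \ref{lemd2}; the present corollary is essentially the observation that the $\Nrd_D$-factor in that inclusion disappears as soon as we are inside the commutator subgroup.
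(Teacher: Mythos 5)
Your proof is correct and follows the same route as the paper: apply Corollary \ref{cord3}(2) with $N=\zeta_iD^*$ and use that $\zeta_iD^*\subseteq D'\subseteq\ker(\operatorname{Nrd}_D)$ to make the reduced-norm factor disappear, giving $(\zeta_iD^*)^n\subseteq\zeta_{i+1}D^*$. The extra verifications you supply (normality of $\zeta_iD^*$ and abelianness of the quotient, the latter of which follows even more simply from $[\zeta_iD^*,\zeta_iD^*]\subseteq[D^*,\zeta_iD^*]=\zeta_{i+1}D^*$) are details the paper leaves implicit.
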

\begin{proof}
Since $D'$ is contained in the kernel of $\operatorname{Nrd}_D$, if
in Corollary \ref{cord3} we put $N=\zeta_iD^*$ then we have
$(\zeta_iD^*)^n\subseteq \zeta_{i+1}D^*$.
\end{proof}
\begin{thm}\label{thmd4}
Let $D$ be a division ring with center $F$. If $\zeta_i D^*$ is
algebraic over $F$ for some $i$, then $D$ is algebraic over $F$.
\end{thm}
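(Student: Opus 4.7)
My plan is to apply the Cartan--Brauer--Hua theorem to the relative algebraic closure of $F$ in $D$. First I would let $L = \{x \in D : x \text{ is algebraic over } F\}$. Classically, $L$ is a sub-division ring of $D$ containing $F$: any two algebraic elements over $F$ generate a finite-dimensional $F$-subalgebra of $D$, so sums, products, and inverses of algebraic elements remain algebraic. Moreover, because conjugation preserves minimal polynomials, $L$ is stable under conjugation by $D^*$, so $L$ is a sub-division ring of $D$ normalized by every element of $D^*$.

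By hypothesis, $\zeta_i D^* \subseteq L^*$. Applying the Cartan--Brauer--Hua theorem to $L$ yields the dichotomy $L = D$ or $L \subseteq F$. In the first case $D$ is algebraic over $F$ and we are done. In the second case $\zeta_i D^* \subseteq F^* = Z(D^*)$, which forces $\zeta_{i+1} D^* = [D^*, \zeta_i D^*] = \{1\}$, so $D^*$ is nilpotent of class at most $i$. Applying Huzurbazar's theorem (recalled in Section~\ref{sec1birk}) to the subnormal, locally nilpotent subgroup $D^* \lhd D^*$ gives $D^* \subseteq Z(D^*) = F^*$, so $D = F$, which is trivially algebraic over itself.

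The one step that requires real care is verifying that $L$ is a sub-division ring of $D$; this is the main obstacle. It is a standard but nontrivial assertion about division rings, relying on the fact that in a division ring $D$ with center $F$ the $F$-subalgebra generated by finitely many elements algebraic over $F$ is always finite-dimensional over $F$, and hence every one of its elements is algebraic. Once this is in hand, the $D^*$-invariance of $L$ and the Cartan--Brauer--Hua dichotomy combine with Huzurbazar's theorem to finish the argument in a few lines.
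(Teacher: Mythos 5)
Your endgame is the same as the paper's---both proofs finish with Cartan--Brauer--Hua applied to a conjugation-invariant division subring---but the step you describe as ``standard but nontrivial'' is a genuine gap, and it is exactly the obstruction the paper's proof is engineered to avoid. The assertion that in a division ring $D$ with center $F$ finitely many elements algebraic over $F$ generate a finite-dimensional $F$-subalgebra is \emph{not} a classical fact: it holds for commuting elements, but for non-commuting algebraic elements it would immediately give an affirmative solution to the Kurosh problem for division rings (is every division ring algebraic over its center locally finite?), which is a well-known open problem. Indeed it is not even known that the sum or the product of two algebraic elements of a division ring is again algebraic; at the level of general $F$-algebras the analogous finiteness claim is false (Golod--Shafarevich). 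So your set $L$ is not known to be closed under addition or multiplication, hence not known to be a sub-division ring, and the argument collapses at its first move. (The parts you could salvage are peripheral: conjugation-invariance of $L$ is fine since conjugation preserves minimal polynomials, and your fallback case $L \subseteq F$ works---though there Huzurbazar is overkill, since $\zeta_{i+1}D^* = 1$ makes $D^*$ nilpotent, so $D$ is a field already by Hua's theorem, which is how the paper rules out $\zeta_i D^* \subseteq F^*$ when $D \neq F$.)

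The paper sidesteps the Kurosh-type difficulty by never forming the full relative algebraic closure: it takes the ring $R$ generated by $H = F^*\zeta_i D^*$, whose elements are algebraic \emph{by hypothesis}, and proves that $R$ stays algebraic using the commutator machinery developed earlier. By Lemma \ref{lemd2} and Corollary \ref{cord3} (a Wedderburn-factorization argument), every $b \in D$ algebraic over $F$ is torsion modulo $F^*\zeta_{i+1}D^*$; since any $a \in \zeta_i D^*$ commutes with $b$ modulo $F^*\zeta_{i+1}D^*$ (because $[a,b] \in \zeta_{i+1}D^*$), the product $\overline{ab}$ is torsion, so $(ab)^k \in H$ for some $k$ and $ab$ is algebraic. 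Sums are then handled by writing $u+v = u(1+u^{-1}v)$ with $u^{-1}v \in H$, so that $1+u^{-1}v \in F[u^{-1}v]$ is algebraic and the previous step applies. Only after this does one conclude that $R$ is a division subring (algebraicity over the central $F$ puts inverses in $F[x] \subseteq R$), normalized by $D^*$, whence Cartan--Brauer--Hua forces $R = D$. If you want to repair your write-up, replace your $L$ by this $H$-based construction: the hypothesis on $\zeta_i D^*$ is precisely what makes closure under sums and products provable, and no general closure property of algebraic elements is available.
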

\begin{proof}
If $D=F$ then there is nothing to prove. Thus we may assume that
$D\neq F$. Put $H=F^*\zeta_i D^*$. Since $D^*$ is not nilpotent we
have $\zeta_i D^*\nsubseteq F^*$ and so $F^*\subsetneq H$. Now,
consider the ring $R$ generated by the elements of $H$. First we
claim that $R$ is algebraic over $F$. For, if $a\in \zeta_i D^*$ and
$b\in D$ is algebraic over $F$, then $\overline{ab}=\overline{ba}$
in the group $D^*/F^*\zeta_{i+1}D^*$. On the other hand by Corollary
\ref{cord3}, $\overline{a}$ and $\overline{b}$ are torsion and so
$\overline{ab}$ is torsion. Therefore, $(ab)^k\in H$ for some $k$
and hence $ab$ is algebraic over $F$. Now, if $u,v\in H$ we have
$u+v=u(1+u^{-1}v)$. But $u$ and $1+u^{-1}v$ are algebraic over $F$.
It follows that $u+v$ is algebraic over $F$. Hence $R$ is algebraic
over $F$ and the claim is established. Now, since $R$ is algebraic
over $F$ we conclude that $R$ is a division subring of $D$. Clearly,
$R^*\lhd D^*$ and so, by the Cartan-Brauer-Hua Theorem, $R=D$, as
desired.
\end{proof}
\begin{cor}\label{cord5}
Let $D$ be a division ring with center $F$. If $\zeta_i D^*$ is
radical over $F$ for some $i$, then $D$ is commutative.
\end{cor}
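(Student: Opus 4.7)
The plan is to reduce Corollary \ref{cord5} to Theorem \ref{thmd4} and then to Kaplansky's theorem (mentioned in the introduction) that a center-by-periodic division ring is commutative.

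First I would observe that if $\zeta_iD^*$ is radical over $F$, then every element $a\in\zeta_iD^*$ satisfies $a^n\in F^*$ for some $n\ge 1$, so $a$ is algebraic over $F$ (being a root of $t^n-a^n\in F[t]$). Therefore $\zeta_iD^*$ is algebraic over $F$, and Theorem \ref{thmd4} gives that all of $D$ is algebraic over $F$.

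Next I would chain two ``radical-over'' statements. By Corollary \ref{cord3}(1), since $D$ is now algebraic over $F$, every $a\in D^*$ is radical over $F^*\zeta_iD^*$; that is, some power $a^r$ lies in $F^*\zeta_iD^*$. Write $a^r=fb$ with $f\in F^*$ and $b\in\zeta_iD^*$. By hypothesis there exists $m\ge 1$ with $b^m\in F^*$, whence $a^{rm}=f^m b^m\in F^*$. Thus $D^*$ itself is radical over $F^*$, i.e.\ $D^*/F^*$ is a torsion group.

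Finally, since $F^*\subseteq Z(D^*)$, the group $D^*$ is center-by-periodic, so by Kaplansky's theorem (recalled in Section \ref{sec1birk}, \cite{Kap51}) $D^*$ is commutative, i.e.\ $D=F$. The only step requiring any care is the verification that the two radicality properties compose to give $D^*$ radical over $F^*$; everything else is a direct appeal to theorems already in hand (Theorem \ref{thmd4}, Corollary \ref{cord3}, and Kaplansky). I do not foresee a genuine obstacle, since the hypothesis ``radical over $F$'' is strictly stronger than ``algebraic over $F$,'' and the proof of Theorem \ref{thmd4} already did most of the work.
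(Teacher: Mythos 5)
Your proof is correct and is essentially the paper's own argument: Theorem \ref{thmd4} to upgrade radicality to algebraicity of all of $D$, Corollary \ref{cord3} to get $D^*$ radical over $F^*\zeta_iD^*$, composition of the two radicality statements to conclude $D^*$ is radical over $F^*$, and then Kaplansky's theorem. You merely make explicit two steps the paper leaves tacit --- that radical over $F$ implies algebraic over $F$, and the computation $a^{rm}=f^mb^m\in F^*$ (valid since $f\in F^*$ is central) --- both of which are exactly the right details to check.
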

\begin{proof}
From Theorem \ref{thmd4} it follows that $D$ is algebraic over $F$.
Therefore, by Corollary \ref{cord3}, $D$ is radical over
$F^*\zeta_iD^*$ and so $D^*$ is radical over $F^*$. Now, the result
follows by Kaplansky's Theorem \cite{Kap51}.
\end{proof}
\begin{cor}\label{cord6}
If $D$ is a noncommutative division ring with center $F$, then every
$\zeta_iD^*$ contains an element separable over $F$.
\end{cor}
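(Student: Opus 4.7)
My plan is to apply Corollary \ref{cord3}(1) to a suitably chosen separable element $u \in D^*$ outside $F$ and read off the desired separable element of $\zeta_iD^*$ from the resulting factorisation. The key observation is the following: if $u$ is separable over $F$, lies outside $F$, and has no nontrivial power in $F$, then Corollary \ref{cord3}(1) produces $r \geq 1$, $f \in F^*$ and $b \in \zeta_iD^*$ with $u^r = fb$; then $b = u^r f^{-1}$ lies in $F(u)$ — and so is separable over $F$ — while $b \notin F$ because by construction $u^r \notin F$.

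The first subsidiary task is to produce a separable element $u \in D^* \setminus F$. When $D$ is algebraic over $F$ this is precisely the Noether-Jacobson theorem recalled in Section \ref{sec1birk}; for general $D$ one can apply Noether-Jacobson inside the algebraic closure of $F$ in $D$, and combine it with Theorem \ref{thmd4} to guarantee that this closure is rich enough to supply such a $u$.

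The second and more delicate task is to refine $u$ so that $F(u)^*/F^*$ fails to be torsion, i.e.\ so that the chosen element has no nontrivial power in $F$. By Theorem \ref{thmt9}, the proper extension $F(u)/F$ can fail to supply such an element only if either $F(u)/F$ is purely inseparable — impossible, since $u$ is separable and $u \notin F$ — or $F(u)$ is algebraic over the prime subfield of $F$. In characteristic zero the latter is vacuous; in positive characteristic it would force $F$ itself to be algebraic over $\mathbb{F}_p$, so that every algebraic element of $D$ would have a power in $F$, and then Corollary \ref{cord3}(1) combined with Corollary \ref{cord5} (Kaplansky's theorem) would force $D$ to be commutative, contradicting the hypothesis. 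The main obstacle is precisely this refinement: without ensuring $u^r \notin F$ one cannot guarantee that the element $b$ delivered by Corollary \ref{cord3}(1) lies outside the centre, and securing that condition is what forces the combined use of Theorem \ref{thmt9} and the contrapositive of Corollary \ref{cord5}.
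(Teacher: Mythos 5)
Your extraction mechanism is sound: given a separable $u\in D^*\setminus F$ all of whose positive powers avoid $F$, Corollary \ref{cord3}(1) does yield $u^r=fb$ with $f\in F^*$, $b\in\zeta_iD^*$, and then $b=f^{-1}u^r\in F(u)$ is separable over $F$ and noncentral; your Task B (upgrading ``some separable $u\notin F$'' to one with no power in $F$ via Theorem \ref{thmt9}) is also essentially correct. The genuine gap is your first subsidiary task, the existence of any separable $u\in D^*\setminus F$ when $D$ is not assumed algebraic over $F$. Your sketch fails there for three reasons. First, ``the algebraic closure of $F$ in $D$'' is not known to be a division subring: in a noncommutative division ring, sums of noncommuting algebraic elements need not be algebraic, so the Noether--Jacobson theorem has nothing to be applied to. Second, Theorem \ref{thmd4} runs in the wrong direction: it upgrades algebraicity of $\zeta_iD^*$ to algebraicity of $D$, and its contrapositive, in the non-algebraic case, only supplies a \emph{transcendental} element of $\zeta_iD^*$ --- useless for producing a separable one. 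Third, no patch can close this: the quotient division ring of the Weyl algebra over $\mathbb{Q}$ is noncommutative with center $\mathbb{Q}$ and contains no element algebraic over $\mathbb{Q}$ outside $\mathbb{Q}$, so the element $u$ your construction requires simply need not exist. (The same presumption of algebraicity silently resurfaces in your characteristic-$p$ branch, where invoking Corollary \ref{cord5} requires all of $\zeta_iD^*$, not merely its algebraic elements, to be radical over $F$.)

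This is exactly why the paper argues in the opposite logical direction and never manufactures a separable element anywhere in $D$. Its proof is a two-line reductio: if $\zeta_iD^*$ contained no separable element, then --- since $\zeta_iD^*$ is closed under taking powers, and an algebraic element that is not purely inseparable has a power that is separable and outside $F$ --- every element of $\zeta_iD^*$ would be purely inseparable over $F$; hence $\zeta_iD^*$ would be radical over $F$, and Corollary \ref{cord5} would force $D$ to be commutative, a contradiction. Your direct construction is correct and even somewhat stronger where it works (it exhibits the separable element of $\zeta_iD^*$ inside the commutative field $F(u)$), and when $D$ is algebraic over $F$ Noether--Jacobson legitimately launches it; but in the stated generality the existence step is unsupported by the results you cite, and the contrapositive route of the paper is the one that avoids the obstruction.
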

\begin{proof}
If every element of $\zeta_iD^*$ is purely inseparable over $F$,
then $\zeta_iD^*$ is radical over $F$ and hence $D$ is necessarily
commutative, which is a contradiction.
\end{proof}
Let $D$ be a division ring and $K$ be a proper division subring of
$D$. By a result of Hua (\cite{Hua49}) if $\zeta_iD^*\subseteq K$
then $D$ is commutative. The following theorem generalizes this
phenomenon.
\begin{thm}\label{thmd7}
Let $D$ be a division ring with center $F$ and let $K$ be a proper
division subring of $D$. If $\zeta_iD^*$ is radical over $K$ for
some $i$, then $D$ is a field.
\end{thm}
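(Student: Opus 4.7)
My plan is to reduce the conclusion to Corollary \ref{cord5}, which establishes commutativity of $D$ under the stronger hypothesis that $\zeta_iD^*$ is radical over the center $F$. Accordingly, the goal becomes: deduce from the given hypothesis that $\zeta_iD^*$ is radical over $F$.

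\textbf{Step 1 (Algebraicity of $D$ over $F$).} For each $a\in N := \zeta_iD^*$, the radicality over $K$ yields $a^n\in K$ for some $n\geq 1$, so $a$ is algebraic over $K$. To upgrade this to algebraicity over $F$, I would exploit the normality of $N$ in $D^*$. For any $d\in D^*$, the conjugate $dad^{-1}$ also lies in $N$, so some power of each conjugate of $a$ lies in $K$. Combined with Wedderburn's Factorization (Theorem \ref{thmd1}) applied to the polynomial $t^n - a^n$ inside $D$, the conjugates of $a$ are controlled by $K$. If $a$ were not algebraic over $F$, one would show that the subring generated by $F$ and the conjugacy class of $a$ inside $D^*$ is not contained in $K$, and applying an argument of Cartan--Brauer--Hua type to a suitable normal closure would contradict the properness of $K$. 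Granting this step, $N$ is algebraic over $F$, so by Theorem \ref{thmd4}, $D$ is algebraic over $F$.

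\textbf{Step 2 (Radicality of $N$ over $F$).} With $D$ algebraic over $F$, Corollary \ref{cord3}(1) gives that every element of $D$ is radical over $F^*N$. Now fix $a\in N$ and let $b := a^n\in K$. Since $b\in N$ and $b$ is algebraic over $F$ of some degree $m$, Lemma \ref{lemd2} gives $b^m = N_{F(b)/F}(b)\cdot c$ with $N_{F(b)/F}(b)\in F^*$ and $c\in [D^*,N]\subseteq N$. Iterating this argument together with the radical condition over $K$ (applied to the commutator factor $c$) produces some $\ell\geq 1$ with $a^{\ell}\in F^*$. Hence $N$ is radical over $F$.

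\textbf{Step 3 (Conclusion).} Apply Corollary \ref{cord5} to conclude that $D$ is commutative, i.e., $D$ is a field.

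The main obstacle is Step 1: the leap from ``algebraic over $K$'' to ``algebraic over $F$.'' Without the properness of $K$ one could have $K = D$ and $N = D^*$ vacuously radical, so properness must enter precisely at this point. The delicate part is to control the conjugacy class of a hypothetical element of $N$ that is not algebraic over $F$ — using the normality of $N$ in $D^*$ and the radical condition uniformly across conjugates — in order to derive a contradiction with $K\subsetneq D$. Once algebraicity is secured, Steps 2 and 3 are essentially routine applications of the tools (Lemma \ref{lemd2}, Corollary \ref{cord3}, Corollary \ref{cord5}) already developed in this section.
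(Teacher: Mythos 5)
Your overall target --- reducing to Corollary \ref{cord5} --- matches where the paper ultimately lands, but the route you propose through ``$D$ is algebraic over $F$'' has two genuine gaps. First, Step 1 is not an argument: radicality over $K$ only gives $a^n\in K$, i.e.\ $a$ satisfies $t^n-a^n$ with constant term in $K$, and nothing you cite converts this into algebraicity over the center. Wedderburn's Factorization (Theorem \ref{thmd1}) applies to the \emph{minimal polynomial over $F$} of an element already known to be algebraic over $F$; it says nothing about $t^n-a^n$, whose coefficients lie in the division subring $K$, which need not be commutative and need not even contain $F$. The sketch ``control the conjugacy class and apply a Cartan--Brauer--Hua type argument'' is precisely the missing content, not a proof of it. Second, the iteration in Step 2 does not terminate: each application of Lemma \ref{lemd2} produces $b^m=N_{F(b)/F}(b)\,c$ with a \emph{fresh} commutator factor $c\in[D^*,N]$, and applying radicality over $K$ to $c$ only returns you to $K$, never to $F^*$; since the factors need not commute, you also cannot combine powers (in general $(xc)^\ell\neq x^\ell c^\ell$), so no $\ell$ with $a^\ell\in F^*$ ever emerges. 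If Steps 1--2 worked as stated, ``radical over any proper division subring $\Rightarrow$ radical over $F$'' would be nearly formal, whereas this implication is exactly the hard content of the theorem.

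The paper's actual proof avoids algebraicity entirely and is elementary. For $a,b\in\zeta_iD^*$ with $1\neq a\in K$, $b\notin K$, it uses Hua's rotation trick: $u_1=(b+a)^{-1}a(b+a)$ and $u_2=(b+1)^{-1}a(b+1)$ lie in $\zeta_iD^*$ by normality, so $u_1^n,u_2^n\in K$ for some $n$ by radicality, and the identity $b(u_1^n-u_2^n)=a^n(a-1)+u_2^n-au_1^n\in K$ forces $u_1^n=u_2^n$ (otherwise $b\in K$), which unwinds to $ba^n=a^nb$. A short second step extends this to commutation statements for pairs in $\zeta_iD^*\cap K$ (using Cartan--Brauer--Hua to produce an element of $\zeta_iD^*$ outside $K$). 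The decisive third step is local: for $u,v\in\zeta_iD^*$ one passes to $L=F(u,v)$, shows via the commutation relations that every $a\in\zeta_iL^*$ has a power in $Z(L)$ --- i.e.\ $\zeta_iL^*$ is radical over the \emph{center of $L$} --- and applies Corollary \ref{cord5} to $L$ to get $uv=vu$. Hence $\zeta_iD^*$ is abelian, and Corollary \ref{cord5} finishes. If you want to salvage your outline, this localization to $F(u,v)$ is the idea to borrow: it converts radicality over an unrelated division subring $K$ into radicality over a center, which is the only setting where the norm-and-commutator machinery of Lemma \ref{lemd2} actually closes the loop.
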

\begin{proof}
Let $\zeta_iD^*$ be radical over $K$ for some $i$. If $\zeta_iD^*=1$
then $D^*$ is nilpotent and thus there is nothing to prove. So we
may consider the case in which $\zeta_iD^*\neq 1$. We prove the
theorem in three steps.

\textbf{Step 1}. We show that if $a, b \in \zeta_iD^*$, $1\neq a\in
K$ and $b\not\in K$ then there is a positive integer $n$ such that
$ba^n=a^nb$. To see this, consider the elements
$u_1=(b+a)^{-1}a(b+a)$ and $u_2=(b+1)^{-1}a(b+1)$. Clearly $u_1,
u_2$ are in $\zeta_iD^*$. Now, since $\zeta_iD^*$ is radical over
$K$, there is a positive integer $n$ such that
$u_1^n=(b+a)^{-1}a^n(b+a)\in K$ and $u_2^n=(b+1)^{-1}a^n(b+1)\in K$.
But a simple calculation shows that
$b(u_1^n-u_2^n)=a^n(a-1)+u_2^n-au_1^n\in K$. Therefore, if
$u_1^n-u_2^n\neq 0$ then $b\in K$ which is absurd. So we have
$u_1^n-u_2^n=0$ and hence $(a-1)a^n=(a-1)u$ where $u=u_1^n=u_2^n$.
However, $a\neq 1$ implies that $u=a^n$, $(b+1)a^n=a^n(b+1)$ and so
$ba^n=a^nb$.

\textbf{Step 2}. Here we claim that for every pair $u,v\in
\zeta_iD^*\cap K$, there is some positive integer $m$ such that
$v^mu=uv^m$. If $\zeta_iD^*\subseteq K$, then Cartan-Brauer-Hua
Theorem implies that $D=F(\zeta_iD^*)=K$ which is impossible. So
$\zeta_iD^*\nsubseteq K$ and thus we can choose an element $a\in
\zeta_iD^*$ such that $a\not\in K$. Hence $au\in\zeta_iD^*$ and
$au\not\in K$. Now, by Step 1 there are positive integers $r,s$ such
that $v^ra=av^r$ and $v^sau=auv^s$. This forces that
$v^{rs}u=uv^{rs}$ and the claim is established.

\textbf{Step 3}. Here, we prove the theorem. Let $u,v\in \zeta_iD^*$
and $L=F(u,v)$ denote the division subring generated by $u,v$ and
$F$. Suppose that $a\in \zeta_iL^*$ is arbitrary. Since
$\zeta_iL^*\subseteq \zeta_iD^*$, we have $a^t\in K$ for some
positive integer $t$. Now, by Step 1 and Step 2 we conclude that
there is a positive integer $n$ such that $a^nu=ua^n$ and
$a^nv=va^n$. This shows that $a^n\in Z(L)$. So $\zeta_iL^*$ is
center-by-periodic and hence $L$ is commutative by Corollary
\ref{cord5}. Therefore, $uv=vu$ and thus $\zeta_iD^*$ is
commutative. Now, from Corollary \ref{cord5} the result follows.
\end{proof}
Let $\Gamma$ be an additive totally ordered abelian group. Let
$\Delta$ be a set properly containing $\Gamma$ and
$\infty\in\Delta\setminus\Gamma$. Extend the operation of $\Gamma$
to $\Gamma\cup\{\infty\}$ by
\begin{center}
$x+\infty=\infty+x=\infty$ for all $x\in \Gamma\cup\{\infty\}$.
\end{center}
Also, extend the ordering of $\Gamma$ to $\Gamma\cup\{\infty\}$ by
$x<\infty$ for all $x\in \Gamma$. Let $D$ be a division algebra over
its center $F$, of index $n$. By a valuation on $D$ with values in
$\Gamma$, we mean a map $v:D\rightarrow \Gamma\cup\{\infty\}$
satisfying, for all $a,b\in D$,
$$
  \begin{array}{ll}
    (\textrm{V}1) & \hbox{$v(a)=\infty$\ \ \  iff\ \ \  $a=0$;} \\
    (\textrm{V}2) & \hbox{$v(ab)=v(a)+v(b)$;} \\
    (\textrm{V}3) & \hbox{$v(a+b)\geq \min\{v(a),v(b)\}$.}
  \end{array}
$$
The standard reference for noncommutative valuation theory is
Schilling's book \cite{Schi50}. Let $K\subseteq D$ be a division
ring extension. Let $w$ and $v$ are valuations on $K$ and $D$ with
values in $\Gamma\cup\{\infty\}$, respectively. We say that $v$ is
an extension of $w$ whenever $v|_K=w$. It is well-known that if
$F\subseteq K$ is a field extension, then every valuation on $F$ has
an extension to $K$ (possibly many different extensions). On the
other hand, in the setting of noncommutative division rings this
property does not always hold. However, a criterion for when a
valuation can be extended from $F=Z(D)$ to $D$ has been given
independently by Wadsworth (\cite{Wad86}) and Ershov (\cite{Er83})
for the case in which $[D:F]<\infty$. More precisely, they showed
that if $D$ is an $F$-central division algebra, then the following
are equivalent:
\begin{itemize}
  \item[(i)] a valuation $v$ on $F$ extends to $D$;
  \item[(ii)] $v$ has a unique extension to each subfield $K$ with
$F\subseteq K\subseteq D$.
\end{itemize}
But, a careful reading of Wadsworth's proof shows that
(1)$\Rightarrow$(2) is valid if $D$ is algebraic over $F$. Motivated
by this observation, in \cite{Ma94} the following generalization of
the above mentioned result was provided.
\begin{thm}\label{thmd8}
Let $D$ be a division ring algebraic over its center $F$, and let
$v$ be a valuation on $F$. Then the following are equivalent:
\begin{enumerate}
  \item $v$ extends to a valuation on $D$;
  \item $v$ has a unique extension to each subfield $K$ of $D$ with
  $[K:F]<\infty$ and $Z(D')\subseteq\{u\in F|v(u)=0\}$.
\end{enumerate}
\end{thm}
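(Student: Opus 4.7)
The plan is to handle the two implications separately, with the forward direction resting on Wedderburn's Factorization (Theorem~\ref{thmd1}) and the converse requiring an explicit construction of the extension.

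For $(1)\Rightarrow(2)$: suppose $w:D\to\Gamma\cup\{\infty\}$ is a valuation extending $v$. Since $w|_{D^*}$ is a group homomorphism into the abelian group $\Gamma$, it annihilates every commutator, so $w(D')=\{0\}$. When $D$ is noncommutative the Cartan-Brauer-Hua theorem gives $F(D')=D$, whence $Z(D')\subseteq Z(D)=F$ and consequently $Z(D')\subseteq\{u\in F:v(u)=0\}$. For uniqueness of the extension to a subfield $K$ with $[K:F]<\infty$, I reduce to the case $K=F(a)$ via primitive elements: Wedderburn writes the minimal polynomial of $a$ as $f(t)=\prod_{i=1}^m(t-d_iad_i^{-1})$ in $D[t]$, so each $\sigma\in\Gal(F(a)/F)$ is induced by conjugation by some $d\in N_{D^*}(F(a))$. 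If $u$ were a second extension of $v$ to $F(a)$, the standard transitivity of $\Gal(F(a)/F)$ on extensions gives $u=w|_K\circ\sigma$ for some $\sigma$; combined with inner-invariance $w(dad^{-1})=w(a)$, this forces $u=w|_K$.

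For $(2)\Rightarrow(1)$: I define $w:D\to\Gamma_{\mathbb{Q}}\cup\{\infty\}$, where $\Gamma_{\mathbb{Q}}=\Gamma\otimes_{\mathbb{Z}}\mathbb{Q}$, by $w(0)=\infty$ and $w(a)=\frac{1}{[F(a):F]}v(N_{F(a)/F}(a))$ for $a\in D^*$. That $w$ extends $v$ and satisfies (V1) is immediate. For commuting $a,b$, both lie in the finite subfield $F(a,b)$ where the hypothesized unique extension of $v$ is characterized by the norm formula; thus $w|_{F(a,b)}$ agrees with this extension, delivering (V2) and (V3) on commuting pairs. For a general commutator $c\in D'$, Wedderburn gives $N_{F(c)/F}(c)=\prod_id_icd_i^{-1}$, a product of commutators lying in $F\cap D'\subseteq Z(D')$, so by hypothesis $v(N_{F(c)/F}(c))=0$, i.e., $w(c)=0$. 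To obtain (V2) on noncommuting pairs, I will express the ratio between suitable common-denominator powers of $N_{F(ab)/F}(ab)$ and $N_{F(a)/F}(a)\cdot N_{F(b)/F}(b)$ via Wedderburn as a product of $D$-conjugates of commutators, hence as an element of $F\cap D'$ of $v$-value zero. The triangle inequality (V3) reduces to the commutative case: for $a+b\neq 0$, write $a+b=b(1+b^{-1}a)$; since $1+b^{-1}a\in F(b^{-1}a)$, (V2) together with the field case gives $w(a+b)\geq\min\{w(a),w(b)\}$.

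The principal obstacle is multiplicativity (V2) for noncommuting pairs. The commutative case reduces cleanly to a single finite subfield extension of $F$, but for noncommuting $a,b$ the three norms $N_{F(a)/F}(a)$, $N_{F(b)/F}(b)$, and $N_{F(ab)/F}(ab)$ live in three distinct commutative towers inside $D$ with no a priori algebraic relation. The hypothesis $Z(D')\subseteq\{u\in F:v(u)=0\}$ is engineered precisely to annihilate the ``noncommutative defect'' arising in comparing these norms; making this defect explicit as a product of $D$-conjugates of commutators, while correctly tracking the three degrees $[F(a):F]$, $[F(b):F]$, and $[F(ab):F]$, is where the technical work lies and is the step requiring the most care.
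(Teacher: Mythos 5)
Your $(2)\Rightarrow(1)$ is essentially the paper's own argument: the same formula $w(a)=\frac{1}{[F(a):F]}v(N_{F(a)/F}(a))$ with values in the divisible hull of $\Gamma_F$, the same verification that $w$ restricts to the unique extension on finite subfields (the paper does this by passing to the normal closure $N$ of $F(a)$ and observing that $u|_{K_a}$ and $(u\circ\sigma_j)|_{K_a}$ both extend $v$, hence coincide by hypothesis, so all roots $a_j$ get the same value), and the step you flag as the principal obstacle is exactly the paper's Lemma~\ref{lemd9}: by Lemma~\ref{lemd2} one has $\overline{N_{F(u)/F}(u)}=\overline{u}^{\,[F(u):F]}$ in $K_1(D)=D^*/D'$, whence $N_{F(ab)/F}(ab)^{d/t}=N_{F(a)/F}(a)^{d/r}N_{F(b)/F}(b)^{d/s}c$ with $c\in F^*\cap D'\subseteq Z(D')$, and the hypothesis forces $v(c)=0$. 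So your envisioned ``common-denominator powers, defect in $F\cap D'$'' computation is precisely how the paper carries out (V2); the reduction of (V3) to the field case via $a+b=a(1+a^{-1}b)$ is likewise identical.

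The genuine gap is in your $(1)\Rightarrow(2)$ uniqueness argument. You invoke ``the standard transitivity of $\Gal(F(a)/F)$ on extensions'' of $v$, but this holds only when $F(a)/F$ is \emph{normal}; for a non-normal simple extension the group $\Gal(F(a)/F)$ can be trivial while $v$ has several extensions (e.g.\ $\mathbb{Q}(\sqrt[3]{2})/\mathbb{Q}$ at a prime where $x^3-2$ has one root), and subfields of $D$ are in general not normal over $F$ --- that is the whole non-crossed-product phenomenon. The paper sidesteps the issue by citing Wadsworth's proof for this direction and only verifies the condition on $Z(D')$ (which you do correctly, via abelianness of the value group and Cartan--Brauer--Hua). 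A self-contained repair in the spirit of your own tools: write the minimal polynomial as $f(t)=(t-a_1)\cdots(t-a_m)$ in $D[t]$ by Wedderburn factorization, with $a_i=d_iad_i^{-1}$; since $\Gamma$ is abelian, $w(a_i)=w(a)$, so expanding the coefficients gives $v(c_0)=m\,w(a)$ and $v(c_i)\geq(m-i)\,w(a)$. If $u$ is any extension of $v$ to $F(a)$, apply the ``minimum attained at least twice'' property to $0=a^m+c_{m-1}a^{m-1}+\ldots+c_0$: if $u(a)>w(a)$ the constant term is the strict minimum, and if $u(a)<w(a)$ the leading term is, so $u(a)=w(a)$. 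This also makes your primitive-element reduction unnecessary (and it is problematic for inseparable subfields anyway): since $u(a)$ is determined by $u|_{F(a)}$, uniqueness on every $F(a)$ with $a\in K$ already gives uniqueness on $K$.
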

To prove Theorem \ref{thmd8} we need the following lemma which is a
direct outcome of Lemma \ref{lemd2}.
\begin{lem}\label{lemd9}
Let $D$ be a division ring with center $F$. Let $u, v$ and $uv$ in
$D^*$ be algebraic over $F$ and denote by $K_u, K_v$ and $K_{uv}$
subfields of $D$ containing $u, v$ and $uv$ with $[K_u:F]=r$,
$[K_v:F]=s$ and $[K_{uv}:F]=t$, respectively. Then
$$N_{K_u/F}(uv)^{d/t}=N_{K_u/F}(u)^{d/r}N_{K_v/F}(v)^{d/s}c$$
for some $c\in Z(D')$, where $d$ is the least common multiple of $r,
s$ and $t$.
\end{lem}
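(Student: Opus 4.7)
The plan is to apply Lemma \ref{lemd2} to each of $u$, $v$, and $uv$ separately, raise everything to the common power $d$, and then combine the three resulting identities using the fact that $D^*/D'$ is abelian.

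A preliminary step is needed because $K_u$, $K_v$, $K_{uv}$ are only assumed to contain, not equal, $F(u)$, $F(v)$, $F(uv)$. Writing $m_u = [F(u):F]$, transitivity of the field norm gives $N_{K_u/F}(u) = N_{F(u)/F}(u)^{r/m_u}$, and Lemma \ref{lemd2} applied with $N = D^*$ produces some $\gamma \in D'$ with $u^{m_u} = N_{F(u)/F}(u)\,\gamma$. Raising to the $(r/m_u)$-th power and using that $N_{F(u)/F}(u)\in F^*$ is central, then raising again to the $(d/r)$-th power, yields
\[ u^d = N_{K_u/F}(u)^{d/r}\,\alpha_u \]
with $\alpha_u \in D'$. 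The identical argument produces $v^d = N_{K_v/F}(v)^{d/s}\,\alpha_v$ and $(uv)^d = N_{K_{uv}/F}(uv)^{d/t}\,\alpha_{uv}$ with $\alpha_v,\alpha_{uv}\in D'$.

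Next, since $D^*/D'$ is abelian, the elements $(uv)^d$ and $u^d v^d$ coincide modulo $D'$, so there exists $\beta \in D'$ with $(uv)^d = u^d v^d\,\beta$. Substituting the three displayed relations and using the centrality of the norms to move them past the elements of $D'$ gives
\[ N_{K_{uv}/F}(uv)^{d/t} = N_{K_u/F}(u)^{d/r}\,N_{K_v/F}(v)^{d/s}\,c, \]
where $c = \alpha_u\alpha_v\beta\alpha_{uv}^{-1}$ is manifestly in $D'$. On the other hand, reading the same displayed formula expresses $c$ as a quotient of elements of $F^*$, so $c \in F^*$. Since $F^* = Z(D^*)$ commutes with every element of $D'$, we conclude $c \in F^* \cap D' \subseteq Z(D')$, which is the required conclusion.

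The only point requiring any care is the translation from the minimal-polynomial degree of each element to the possibly larger degree of the ambient subfield; once that is handled by the transitivity of the field norm, the proof reduces to bookkeeping with central elements modulo the commutator subgroup, so no serious obstacle is anticipated.
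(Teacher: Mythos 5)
Your proof is correct and takes essentially the same route as the paper: the paper applies Lemma \ref{lemd2} to $u$, $v$ and $uv$, passes to the abelian group $K_1(D)=D^*/D'$ to get $\overline{N_{K_{uv}/F}(uv)}^{d/t}=\overline{N_{K_u/F}(u)}^{d/r}\,\overline{N_{K_v/F}(v)}^{d/s}$, which is exactly your bookkeeping with $\alpha_u,\alpha_v,\beta$ written multiplicatively. Your two extra touches — the norm-transitivity step $N_{K_u/F}(u)=N_{F(u)/F}(u)^{r/m_u}$ (which the paper uses implicitly, as in Corollary \ref{cord3}) and the closing observation $c\in F^*\cap D'\subseteq Z(D')$ — merely make explicit what the paper's ``and the result follows'' leaves to the reader.
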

\begin{proof}
By Lemma \ref{lemd2} we have
$\overline{N_{K_u/F}(u)}=\overline{u}^r$,
$\overline{N_{K_u/F}(v)}=\overline{v}^s$ and
$\overline{N_{K_u/F}(uv)}=\overline{uv}^t=
\overline{u}^t\overline{v}^t$ in the Whitehead group
$K_1(D)=D^*/D'$. Thus
$\overline{N_{K_u/F}(uv)}^{d/t}=\overline{N_{K_u/F}(u)}^{d/r}\overline{N_{K_u/F}(v)}^{d/s}$
and the result follows.
\end{proof}
\begin{proof}[Proof of Theorem \ref{thmd8}]
(1)$\Rightarrow$(2). By the remark before the theorem, $v$ has a
unique extension to each subfield $K$. Let $w$ be the valuation on
$D$ such that $w|_F=v$. Since $D$ is algebraic over $F$, one can
easily conclude that the value group $\Gamma_D:=w(D^*)$ is abelian.
Thus $w(a)=0$ for every $a\in D'$.

(2)$\Rightarrow$(1). Let $\Gamma_F:=v(F^*)$ be the value group of
$v$. Let $\Delta\cong \Gamma_F\otimes_{\mathbb{Z}}\mathbb{Q}$ be the
divisible hull of $\Gamma_F$. The total ordering on $\Gamma_F$
extends uniquely to a total ordering on $\Delta$, and for each
algebraic extension $L/F$ and each extension $w$ of $v$, we may view
$w(L^*)$ as a subgroup of $\Delta$. Now, for each $a\in D^*$ put
$K_a=F(a)$ and define the function $w:D^*\rightarrow \Delta$ by
$$w(a)=\frac{1}{n}v(N_{K_a/F}(a)),$$
where $n=[K_a:F]$. First we verify that $w|_{K_a}$ is the valuation
on $K_a$ extending $v$. Let $N$ be the normal closure of $K_a$ over
$F$ and $u:N\rightarrow \Delta$ be any valuation on $N$. If
$a_1,\ldots,a_n$ are the roots of the minimal polynomial of $a$ over
$F$, we know that all of them lie in $N$ and $N_{K_a/F}(a)=a_1\ldots
a_n$. For each $j$ there is an $F$-automorphism $\sigma_i$ of $N$
sending $a$ to $a_j$. Since $u|_{K_a}$ and $(u\circ
\sigma_j)|_{K_a}$ are each valuations on $K_a$ extending $v$, by
hypothesis they are the same. So, $u(b_j)=u(\sigma_j(b))=u(b)$. Thus
we have
$$w(a)=\frac{1}{n}v(N_{K_a/F}(a))=\frac{1}{n}v(a_1\ldots a_n)$$
$$=\frac{1}{n}(u(b_1)+\ldots+u(b_n))=u(b).$$
Thus, $w|_{K_a}=u|_{K_a}$ which is the valuation on $K_a$ extending
$v$. To show that $w$ is a valuation on all of $D$, take any $a,b\in
D^*$. Now, by Lemma \ref{lemd9} we have
\begin{align}\nonumber
w(ab)&=\frac{1}{t}v(N_{F(ab)/F}(ab))=\frac{1}{d}v(N_{F(ab)/F}(ab)^{d/t})\\
\nonumber &=\frac{1}{d}v(N_{F(a)/F}(a)^{d/r}N_{F(b)/F}(b)^{d/s}c)\\
\nonumber
&=\frac{1}{r}v(N_{F(a)/F}(a))+\frac{1}{s}v(N_{F(b)/F}(b))+\frac{1}{d}v(c)=w(a)+w(b).
\end{align}
Finally, assume that $b\neq -a$. Since $w|_{F(ab)}$ is a valuation
we have $w(1+a^{-1}b)\geq\min\{w(1),w(a^{-1}b)\}$. Thus, using the
multiplicative property for $w$,
\begin{align}\nonumber
w(a+b)&=w(a)+w(1+a^{-1}b)\geq w(a)+\min\{w(1),w(a^{-1}b)\}\\
\nonumber &=\min\{w(a),w(b)\},
\end{align}
as desired.
\end{proof}
\begin{rem}\label{rem}
If in Theorem \ref{thmd8}, $D$ is of finite dimensional over $F$
with index $n$ then one can easily check that for every $a\in D^*$,
$$\frac{1}{[F(a):F]}v(N_{F(a)/F}(a))=\frac{1}{n}v(N_{K/F}(a))$$
where $K$ is any maximal subfield containing $a$. Thus, in this case
we have
$$w(a)=\frac{1}{n}v(\operatorname{Nrd}_D(a)).$$
This is exactly the Wadsworth-Ershov formula for the extended
valuation.
\end{rem}

Here, we need to recall some definitions from the theory of valued
division algebras. Let $D$ be a valued $F$-central division
algebra with value group $\Gamma_D$. We denote the \textit{valuation
ring} of $D$ by $V_D=\{d\in D^*|\ v(d)\geq 0\}\cup\{0\}$, its unique
maximal ideal by $M_D=\{d\in D^*|\ v(d)>0\}\cup\{0\}$, and its
\textit{residue class division ring} by $\overline{D}$. Set
$U_D=\{d\in D^*|\ v(d)=0\}$ so that $U_D=V^*_D$. The restriction of
$v$ to $F^*$ is denoted by $w$ which defines a valuation with value
group $w(F^*)=\Gamma_F$. Also $V_F,\ M_F,\ \overline{F}$, and $U_F$
are defined similarly. Since $V_F\cap M_D=M_F$, we can consider the
residue class field $\overline{F}$ as a subalgebra of
$\overline{D}$. The valuation $w$ over $ F $ is called
\textit{Henselian} if $w$ has a unique extension to any algebraic
extension field of $F$. So by Theorem \ref{thmd8} if $F$ is
Henselian then the valuation of $F$ has a unique extension to $D$.
In this setting, $D$ is called a \textit{tame} division algebra if
$Z(\overline{D})/\overline{F}$ is separable and
$\operatorname{char}(\overline{F})\nmid n$.

Now, we return to our study of descending central series of a
division algebra by applying Lemma \ref{lemd2} to find a short and
elementary proof for Platonov's Congruence Theorem that is the key
step in reduced $K$-Theory connecting $\SK(D)$ to
$\SK(\overline{D})$ (See Section \ref{SecR}). The method that we are
going to follow was given in \cite{Haz02}.
\begin{thm}[Congruence Theorem]\label{thmd9}
Let $D$ be a tame division algebra over a Henselian field $F=Z(D)$,
of index $n$. Then $1+M_D=(1+M_F)[D^*,1+M_D]$ and $(1+M_D)\cap
D^{(1)}=[D^*,1+M_D]$, where $D^{(1)}$ is the kernel of the reduced
norm map.
\end{thm}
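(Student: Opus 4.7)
The plan is to drive the proof by the key corollary of this section, Corollary \ref{cord3}(2), applied to $N=1+M_D$, and to use the tameness hypothesis to extract $n$-th roots inside $1+M_D$. First I would record two preliminaries. Because $F$ is Henselian and $v$ is the unique extension to $D$, conjugation preserves $v$, so $V_D$, $M_D$ and $1+M_D$ are normal in $D^*$; in particular Corollary \ref{cord3}(2) applies and gives
\[
(1+M_D)^n \;\subseteq\; \operatorname{Nrd}_D(1+M_D)\,[D^*,\,1+M_D].
\]
Second, for any $a\in 1+M_D$ inside a maximal subfield $K$ of $D$, $\operatorname{Nrd}_D(a)=N_{K/F}(a)=\prod_\sigma \sigma(a)$ with $\sigma$ running over $F$-embeddings of $K$ into an algebraic closure; by uniqueness of the extension each $\sigma(a)$ lies in $1+M_{\sigma(K)}$, so the product lies in $1+M_F$. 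Hence $\operatorname{Nrd}_D(1+M_D)\subseteq 1+M_F$.

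For the first equality the inclusion $(1+M_F)[D^*,1+M_D]\subseteq 1+M_D$ is immediate. For the reverse I would show that $1+M_D$ is $n$-divisible: given $a\in 1+M_D$, the field $F(a)$ is algebraic over the Henselian field $F$, hence itself Henselian, with residue characteristic not dividing $n$ by the tameness hypothesis. Standard Hensel-type divisibility in a Henselian valued field therefore gives $b\in 1+M_{F(a)}\subseteq 1+M_D$ with $b^n=a$. Combining with the display above and with $\operatorname{Nrd}_D(1+M_D)\subseteq 1+M_F$, we conclude $a=b^n\in (1+M_F)[D^*,1+M_D]$.

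For the second equality, $[D^*,1+M_D]\subseteq (1+M_D)\cap D'\subseteq (1+M_D)\cap D^{(1)}$ is trivial. Conversely, take $a\in (1+M_D)\cap D^{(1)}$ and use the first equality to write $a=(1+f)c$ with $1+f\in 1+M_F$ and $c\in [D^*,1+M_D]\subseteq D'$. Applying $\operatorname{Nrd}_D$ gives
\[
1 \;=\; \operatorname{Nrd}_D(a) \;=\; (1+f)^n\,\operatorname{Nrd}_D(c) \;=\; (1+f)^n,
\]
where we used that $\operatorname{Nrd}_D$ is the $n$-th power on $F^*$ and is trivial on $D'$. Because $\gcd(n,\operatorname{char}\overline{F})=1$ and $F$ is Henselian, Hensel's lemma implies that the only $n$-th root of unity lying in $1+M_F$ is $1$, so $1+f=1$ and $a=c\in [D^*,1+M_D]$.

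The main obstacle is the $n$-divisibility of $1+M_D$ used in the first part: it is what really consumes the tameness hypothesis. The trick, in my view, is that one does not need any noncommutative divisibility theory — passing to the Henselian subfield $F(a)$ reduces the problem to divisibility of the multiplicative unit group of a commutative Henselian valued field, where it follows from classical Hensel's lemma applied to $X^n-a$. Everything else is bookkeeping built on Lemma \ref{lemd2} and the normality of $1+M_D$.
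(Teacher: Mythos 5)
Your proposal is correct and takes essentially the same route as the paper's proof: Corollary \ref{cord3}(2) applied to the normal subgroup $N=1+M_D$, $n$-divisibility of $1+M_D$ from tameness plus Hensel's lemma (which you usefully make explicit by reducing to the commutative Henselian field $F(a)$), and the fact that $1$ is the only $n$-th root of unity in $1+M_F$, which the paper establishes by a direct valuation expansion of $(1+f)^n-1$ rather than by Hensel uniqueness --- the same fact either way. The only real divergence is your embedding argument for $\operatorname{Nrd}_D(1+M_D)\subseteq 1+M_F$, which is redundant --- it already follows from Lemma \ref{lemd2} and normality of $1+M_D$, since $\operatorname{Nrd}_D(a)=a^n c^{-1}\in F^*\cap(1+M_D)=1+M_F$ --- and tacitly uses separability of the maximal subfield $K/F$, which tameness does guarantee because the inseparability degree is a power of $\operatorname{char}(\overline{F})$ dividing $n$.
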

\begin{proof}
First we claim that $(1+M_F)\cap D^{(1)}=1$. To prove our claim let
$1+f\in (1+M_F)\cap D^{(1)}$. Thus we have $(1+f)^n=1$. But
\begin{align}\label{eqd1}
v(0)=v((1+f)^n-1)&=v(nf+(n(n-1)/2)f^2+\ldots+f^{n})
\\ \nonumber
&=v(f)+v(n+(n(n-1)/2)f+\ldots+f^{n-1}).
\end{align}
Now, since $D$ is tame $\overline{n}\neq 0$ in $\overline{F}$ and so
$n\in U_F$, that is $v(n)=0$. On the other hand, since $v(f)>0$, we
have $v((n(n-1)/2)f+\ldots+f^{n-1})>0$. Therefore
\begin{align}\nonumber
v(n+(n(n-1)/2)f+\ldots+&f^{n-1})=
\\ \nonumber \min \{v(n), &v((n(n-1)/2)f+\ldots+f^{n-1})\}=0.
\end{align}
Hence by (\ref{eqd1}) we have $v(0)=v(f)$ and from (V1) it follows
that $f=0$ and so our claim. Now, in Corollary \ref{cord3} if we
take $N=1+M_D$ then we obtain
$$(1+M_D)^n\subseteq ((1+M_D)\cap F^*)[D^*,1+M_D].$$
Since the valuation is tame and Henselian, the Hensel's Lemma
implies that $(1+M_D)$ is $n$-divisible. Therefore
$1+M_D=(1+M_F)[D^*,1+M_D]$. Now, using the fact that $(1+M_F)\cap
D^{(1)}=1$, the theorem follows.
\end{proof}
Using the Congruence Theorem and results of \cite{Er83} and the fact
that if $D$ is totally ramified then
$\overline{D'}\cong\mu_e(\overline{F})$ where
$e=\exp({\Gamma_D/\Gamma_F})$ (see Proposition 3.1 of \cite{TW87}).
In \cite{LT93}, Tignol and Lewis presented an explicit formula for
$\SK(D)$. In fact they have shown that if $D$ is a tame division
algebra over its center $F$ of degree $n$, then $\SK(D)\cong
\mu_n(\overline{F})/\mu_e(\overline{F})$. We will obtain this result
in the next section as a particular case of a more general setting.
\begin{cor}\label{cord11}
Let $D$ be a tame and Henselian division algebra. Then
$[D^*,[D^*,1+M_D]]=[D^*,1+M_D]$, i.e., $[D^*,1+M_D]$ is
$D^*$-perfect. In particular, $[D^*,1+M_D]\subseteq \zeta_iD^*$ for
all $i>0$.
\end{cor}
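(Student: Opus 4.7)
The plan is to derive the identity $[D^*,[D^*,1+M_D]]=[D^*,1+M_D]$ directly from the Congruence Theorem (Theorem~\ref{thmd9}), which supplies the factorization $1+M_D=(1+M_F)[D^*,1+M_D]$, and then to feed this back into an induction on $i$ for the ``in particular'' statement.

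First I would fix the easy inclusion: $[D^*,[D^*,1+M_D]]\subseteq [D^*,1+M_D]$, since $1+M_D$ is normal in $D^*$ (it is a filtration piece of the valuation, stable under conjugation) and hence so is $[D^*,1+M_D]$. The substance is the reverse inclusion. Set $N=[D^*,1+M_D]$ and take an arbitrary generator $[d,1+m]$ of $N$. Using Theorem~\ref{thmd9} write $1+m=(1+f)c$ with $1+f\in 1+M_F$ and $c\in N$. Because $F=Z(D)$, the factor $1+f$ is central, so $[d,1+f]=1$. Applying the standard commutator identity $[x,yz]=[x,z]\,[x,y]^{z}$ with $x=d$, $y=1+f$, $z=c$ gives
\[
[d,1+m]=[d,(1+f)c]=[d,c]\cdot[d,1+f]^{c}=[d,c]\in[D^*,N].
\]
Since generators of $N$ already lie in $[D^*,N]$, we conclude $N\subseteq[D^*,N]$, hence $N=[D^*,N]$, which is exactly the $D^*$-perfectness claim.

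For the ``in particular'' assertion, I would proceed by induction on $i\ge 1$. For $i=1$, $N=[D^*,1+M_D]\subseteq[D^*,D^*]=\zeta_1 D^*$ by definition. Assuming $N\subseteq\zeta_{i-1}D^*$, the first part of the corollary gives
\[
N=[D^*,N]\subseteq[D^*,\zeta_{i-1}D^*]=\zeta_i D^*,
\]
completing the induction.

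The only place the hypotheses really enter is through Theorem~\ref{thmd9}: tameness and the Henselian assumption are used to ensure the decomposition $1+M_D=(1+M_F)[D^*,1+M_D]$. Once that is in hand, the argument is purely a commutator-calculus manipulation, so I do not anticipate any genuine obstacle; the single delicate point is simply to apply the correct commutator identity and to use centrality of $1+M_F\subseteq F^{*}$.
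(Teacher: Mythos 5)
Your proof is correct and is precisely the argument the paper intends: Corollary~\ref{cord11} is stated there without proof as an immediate consequence of the Congruence Theorem, and the implicit reasoning is exactly your combination of the decomposition $1+M_D=(1+M_F)[D^*,1+M_D]$, the centrality of $1+M_F\subseteq F^*$, and the commutator identity $[x,yz]=[x,z][x,y]^z$, followed by the same induction $N=[D^*,N]\subseteq[D^*,\zeta_{i-1}D^*]=\zeta_iD^*$ for the final claim. No gaps: your justification of the easy inclusion via normality of $[D^*,1+M_D]$ in $D^*$ (inherited from normality of $1+M_D$) is sound, and the hypotheses (tame, Henselian) enter only through the Congruence Theorem, exactly as you say.
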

\begin{thm}\label{thmd13}
Let $F$ be a Henselian field and $D$ be a tame and totally
ramified $F$-central division algebra of index $n$. Then
\begin{enumerate}
  \item $\zeta_iD^*=\zeta_{i+1}D^*$ for all $i\geq 2$;
  \item $\zeta_1D^*/\zeta_2D^*\cong \mathbb{Z}_e$, where
  $e=\exp(\Gamma_D/\Gamma_F)$.
\end{enumerate}
\end{thm}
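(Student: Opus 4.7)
The strategy is to pin down $\zeta_2 D^*$ exactly as the group $[D^*,1+M_D]$ appearing in the Congruence Theorem, and then read off both statements. I first record three preliminary observations that do not use total ramification: (a) the value group $\Gamma_D$ is ordered abelian, so $v([a,b])=0$ for all $a,b\in D^*$ and hence $D'\subseteq U_D$; (b) $1+M_D$ is normal in $D^*$ because $M_D$ is the unique maximal ideal of $V_D$; and (c) combining (b) with the Congruence Theorem \ref{thmd9}, one has
\[
D'\cap(1+M_D)=[D^*,1+M_D],
\]
since the inclusion $\subseteq$ follows from $D'\subseteq D^{(1)}$ and the reverse is automatic.

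The central step will be to show $[D^*,D']\subseteq [D^*,1+M_D]$, and this is exactly where the totally ramified hypothesis enters. For any $d\in D^*$, the inner automorphism $\sigma_d\colon u\mapsto d^{-1}ud$ preserves $U_D$ and fixes $F$ pointwise, so it induces an $\overline{F}$-automorphism of $\overline{D}$. Since $D$ is totally ramified, $\overline{D}=\overline{F}$, so $\overline{\sigma_d}=\id_{\overline{F}}$. Therefore $\overline{d^{-1}ud}=\overline{u}$ for every $u\in U_D$, i.e.\ $[d,u]\in 1+M_D$. Applying this with $u\in D'\subseteq U_D$ (observation (a)) and combining with the obvious $[D^*,D']\subseteq D'$, we obtain
\[
[D^*,D']\subseteq D'\cap(1+M_D)=[D^*,1+M_D],
\]
which, together with $[D^*,1+M_D]\subseteq\zeta_iD^*$ for all $i>0$ (Corollary \ref{cord11}), yields
\[
\zeta_2 D^*=[D^*,\zeta_1 D^*]=[D^*,D']=[D^*,1+M_D].
\]
Monotonicity $\zeta_iD^*\supseteq\zeta_{i+1}D^*$ sandwiched with the inclusion $[D^*,1+M_D]\subseteq\zeta_iD^*$ then gives (1): $\zeta_iD^*=[D^*,1+M_D]$ for every $i\geq2$.

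For (2), using the identification of $\zeta_2D^*$ above,
\[
\zeta_1 D^*/\zeta_2 D^*\;=\;D'/(D'\cap(1+M_D))\;\cong\;D'(1+M_D)/(1+M_D),
\]
which embeds in $U_D/(1+M_D)=\overline{D}^{\,*}=\overline{F}^{\,*}$ with image $\overline{D'}$. Invoking the totally ramified tame structure result of Tignol--Wadsworth quoted in the paragraph preceding the theorem, $\overline{D'}=\mu_e(\overline{F})$, and the tameness hypothesis (with $e\mid n$ and $\operatorname{char}(\overline{F})\nmid n$) guarantees that $\overline{F}$ contains a primitive $e$-th root of unity, so $\mu_e(\overline{F})\cong\mathbb{Z}_e$.

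The main technical point is the triviality of the conjugation action of $D^*$ on $\overline{D}$; once this is isolated, both parts follow by a short assembly using the Congruence Theorem and its corollary. Part (2) further depends on the ambient fact $\overline{D'}\cong\mu_e(\overline{F})$ from \cite{TW87}, which is the only input not proved in the excerpt itself.
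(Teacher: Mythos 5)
Your proof is correct and follows essentially the same route as the paper's: both pin down $\zeta_2D^*=[D^*,1+M_D]$, obtain (1) from Corollary \ref{cord11} (sandwiching is equivalent to the paper's appeal to $D^*$-perfectness), and obtain (2) from the reduction map $U_D\to\overline{D}^*$ together with $\overline{D'}\cong\mu_e(\overline{F})$ from \cite{TW87}. The only local variation is in the inclusion $[D^*,D']\subseteq[D^*,1+M_D]$: the paper uses $U_D=U_F(1+M_D)$ so that the central factor $U_F$ drops out of commutators directly, whereas you derive $[d,u]\in 1+M_D$ from the triviality of the induced conjugation action on $\overline{D}=\overline{F}$ and then route through the Congruence Theorem via the identity $D'\cap(1+M_D)=[D^*,1+M_D]$ --- a slightly heavier but equally valid step.
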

\begin{proof}
(1) Since $D$ is totally ramified, we have
$\overline{D}=\overline{F}$ and so $U_D=U_F(1+M_D)$. But
$D'\subseteq U_D$ and hence $D'\subseteq U_F(1+M_D)$. This yields
$\zeta_2D^*\subseteq [D^*,1+M_D]$. Now, using Corollary \ref{cord11}
implies that $\zeta_2D^*=[D^*,1+M_D]$ and thus $\zeta_2D^*$ is
$D^*$-perfect. Therefore $\zeta_2D^*=\zeta_iD^*$ for all $i\geq 2$.

(2) Consider the reduction map
$$U_D/1+M_D\rightarrow\overline{D}^* \ ,\ a(1+M_D)\mapsto\overline{a}.$$
The restriction of this map to $D'$ gives an isomorphism
$$\frac{D'}{D'\cap(1+M_D)}\cong \overline{D'}.$$
Now, from the Congruence Theorem, the equality
$\zeta_2D^*=[D^*,1+M_D]$ and the fact that
$\overline{D'}\cong\mathbb{Z}_e$ it follows that
$\zeta_1D^*/\zeta_2D^*\cong\mathbb{Z}_e$.
\end{proof}
\begin{thm}\label{thmd14}
Let $F$ be a Henselian field and $D$ be a tame and
unramified $F$-central division algebra of index $n$. Then
\begin{enumerate}
  \item $\zeta_iD^*/\zeta_{i+1}D^*\cong\zeta_i\overline{D}^*/\zeta_{i+1}\overline{D}^*$,
  for all $i\geq 1$.
  \item $[D^*,1+M_D]\subsetneq \zeta_iD^*$, for any $i\geq 1$;
\end{enumerate}
\end{thm}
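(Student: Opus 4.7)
The plan is to exploit the reduction homomorphism $\pi : U_D \twoheadrightarrow \overline{D}^*$, which for an unramified tame division algebra has kernel $1 + M_D$. A crucial preliminary observation is that since $D$ is unramified, $\Gamma_D = \Gamma_F$, which forces $D^* = F^* U_D$; consequently every commutator in $D^*$ lies in $U_D$, and so $\zeta_i D^* \subseteq [D^*,D^*] \subseteq U_D$ for all $i \geq 1$.

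For part $(1)$, I would first show by a straightforward induction on $i$ that $\pi$ maps $\zeta_i D^*$ \emph{onto} $\zeta_i \overline{D}^*$: the base case is immediate (lift $\bar a, \bar b \in \overline{D}^*$ to $a, b \in U_D$ and note $\pi([a,b]) = [\bar a, \bar b]$), and the inductive step proceeds identically. This produces a short exact sequence
\[
1 \longrightarrow \zeta_i D^* \cap (1+M_D) \longrightarrow \zeta_i D^* \xrightarrow{\ \pi\ } \zeta_i \overline{D}^* \longrightarrow 1.
\]
Since $\pi(\zeta_{i+1} D^*) = \zeta_{i+1}\overline{D}^*$, this descends to a surjection $\zeta_i D^*/\zeta_{i+1} D^* \twoheadrightarrow \zeta_i \overline{D}^*/\zeta_{i+1} \overline{D}^*$, and injectivity reduces to the inclusion $\zeta_i D^* \cap (1+M_D) \subseteq \zeta_{i+1} D^*$. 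Here is where the Congruence Theorem enters: as $\operatorname{Nrd}_D$ has abelian image, $\zeta_i D^* \subseteq D^{(1)}$ for $i \geq 1$, hence by Theorem \ref{thmd9}
\[
\zeta_i D^* \cap (1+M_D) \subseteq D^{(1)} \cap (1+M_D) = [D^*, 1+M_D],
\]
and by Corollary \ref{cord11} the latter lies in $\zeta_{i+1} D^*$.

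For part $(2)$, I would first note the trivial observation that $[D^*, 1+M_D] \subseteq 1+M_D$, which follows from the fact that for $g \in D^*$ and $1+m \in 1+M_D$ the commutator $[g, 1+m]$ reduces to $1$ in $\overline{D}^*$. The inclusion $[D^*, 1+M_D] \subseteq \zeta_i D^*$ is again Corollary \ref{cord11}. If equality held, we would have $\zeta_i D^* \subseteq 1+M_D$, hence $\zeta_i \overline{D}^* = \pi(\zeta_i D^*) = 1$. But assuming $n > 1$ (otherwise $D = F$ and the assertion is vacuous), $\overline{D}$ is a noncommutative division ring, and by Hua's theorem $\overline{D}^*$ is not soluble, hence not nilpotent, so $\zeta_i \overline{D}^* \neq 1$. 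This contradiction yields the strict inclusion.

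The main technical point is coordinating the Congruence Theorem with Corollary \ref{cord11} to prove the key inclusion $\zeta_i D^* \cap (1+M_D) \subseteq \zeta_{i+1} D^*$; once that is in hand, both parts follow from routine manipulations of the reduction map.
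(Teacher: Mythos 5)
Your proof is correct. Part (1) is essentially the paper's own argument: the paper likewise uses $\Gamma_D=\Gamma_F$ to get $D^*=F^*U_D$ (hence $\overline{\zeta_iD^*}=\zeta_i\overline{D}^*$), and combines the Congruence Theorem with Corollary \ref{cord11} to establish $(1+M_D)\cap\zeta_iD^*=[D^*,1+M_D]$, which is exactly your key inclusion $\zeta_iD^*\cap(1+M_D)\subseteq\zeta_{i+1}D^*$ packaged as the isomorphism $\zeta_iD^*/[D^*,1+M_D]\cong\zeta_i\overline{D}^*$. Part (2), however, is where you genuinely diverge. The paper specializes that isomorphism to $i=1$ to get $D'/[D^*,1+M_D]\cong\overline{D}'$, then plays a torsion dichotomy: $\overline{D}'$ is non-torsion by Corollary \ref{cord5} (a Kaplansky-type result), while $D'/\zeta_iD^*$ is torsion by Corollary \ref{cor765}, so the two subgroups cannot coincide. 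You instead observe that equality would force $\zeta_iD^*\subseteq 1+M_D$, hence $\zeta_i\overline{D}^*=1$, making $\overline{D}^*$ nilpotent and contradicting Hua's theorem that the unit group of a noncommutative division ring is not soluble. Your route is more elementary — it avoids Corollaries \ref{cord5} and \ref{cor765} entirely and needs only Hua (1950) — while the paper's route reuses machinery it has already built and, as a byproduct, exhibits the sharper fact that $D'/[D^*,1+M_D]$ is non-torsion whereas $D'/\zeta_iD^*$ is torsion of bounded exponent. One point in your favor worth noting: you make explicit the hypothesis $n>1$ (so that $\overline{D}$ is noncommutative, via tameness and unramifiedness forcing $[\overline{D}:\overline{F}]=[D:F]$), which the paper uses silently — indeed statement (2) is false verbatim when $n=1$, since then both groups are trivial.
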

\begin{proof}
(1) We first show that
$\overline{\zeta_iD^*}=\zeta_i\overline{D}^*$. To do so, consider
the following exact sequence:
$$
 \xymatrix{
1\ar[r]&F^*U_D/F^*\ar[r]&D^*/F^*\ar[r]^v&\Gamma_D/\Gamma_F\ar[r]&1.}
$$
Since $\Gamma_D=\Gamma_F$, the above sequence implies that
$D^*=F^*U_D$. Therefore, if $a\in D^*$ and $b\in\zeta_iD^*$ then the
element $a^{-1}b^{-1}ab$ may be written in the form
$\alpha^{-1}\beta^{-1}\alpha\beta$ where $\alpha,\beta\in U_D$. This
shows that $\overline{\zeta_iD^*}=\zeta_i\overline{D}^*$. On the
other hand from the Congruence Theorem and Corollary \ref{cord11} we
have
$$[D^*,1+M_D]\subseteq (1+M_D)\cap\zeta_iD^*\subseteq [D^*,1+M_D],$$
and so $(1+M_D)\cap\zeta_iD^*=[D^*,1+M_D]$. Thus the restriction of
the reduction map on the subgroup $\zeta_iD^*$ gives rise to an
isomorphism
\begin{equation}\label{eqd3}
\frac{\zeta_iD^*}{[D^*,1+M_D]}\cong \zeta_i\overline{D}^*.
\end{equation}
Therefore
$\zeta_iD^*/\zeta_{i+1}D^*\cong\zeta_i\overline{D}^*/\zeta_{i+1}\overline{D}^*$.

(2) By (\ref{eqd3}) we have an isomorphism
$$\frac{D'}{[D^*,1+M_D]}\cong \overline{D}'.$$
Also, by Corollary \ref{cord5}, $\overline{D}'$ is not a torsion
group and so $D'/[D^*,1+M_D]$ is not torsion. But, by Corollary
\ref{cor765} the group $D'/\zeta_iD^*$ is a torsion group and by
Corollary \ref{cord11}, $[D^*,1+M_D]\subseteq \zeta_iD^*$. This
implies that $[D^*,1+M_D]\subsetneq \zeta_iD^*$.
\end{proof}
\section{The functor $\CK$ and Reduced $K$-Theory}\label{SecR}
As is known, some useful tools that are used to explore the
structure of $K_1$ groups of simple Artinian rings are
``Determinant-like'' functions. Namely, if $A=\operatorname{M}_n(D)$
is a simple Artinian ring ($D$ is a division ring) then thanks to
the Dieudonn$\acute{\textrm{e}}$ determinant we have $K_1(A)\cong
K_1(D)$. Thus, the computation of $K_1(A)$ reduces to that of
$K_1(D)$. But in the case that $D$ is of finite dimension over its
center $F$, another determinant-like function is the reduced norm
map $\operatorname{Nrd}_D:D^*\rightarrow F^*$. Now, since $F^*$ is
commutative and $\operatorname{Nrd}_D$ is a multiplicative map, all
multiplicative commutators in $D^*$ are killed by
$\operatorname{Nrd}_D$ and thus we have the following exact sequence
$$
\xymatrix{ 1\ar[r]& \SK(D)\ar[r]&K_1(D)\ar[r]^{\
\operatorname{Nrd}_D}& K_1(F) }
$$
where $\SK(D)=\{a\in D^*|\operatorname{Nrd}_D(a)=1\}/D'$. In
comparison with the properties of usual determinant on matrix groups
over commutative fields, it is natural to ask if in general
$\SK(D)=1$. The question of the triviality of $\SK(D)$ was open for
many years and was called the ``Tanaka-Artin'' problem. Before 1975,
the time that Platonov gave the first examples of division algebras
with non-trivial $\SK$, the general feeling was that $\SK$ is
trivial for every division algebra and mostly the interested
researchers were taking steps in directions to prove this problem
positively. For example, in 1943 Nakayama and Matsushima showed that
if $F$ is a local field then $\SK(D)=1$ (cf. \cite{NaMa43}). Also,
in 1950 \cite{Wa50} Wang proved that if the index of $D$ is square
free or if $F$ is an algebraic number field, then $\SK(D)=1$.
However, the methods used by Platonov to compute $\SK(D)$ are called
``Reduced $K$-Theory'' these days. For a survey on $\SK$ and the
results concerning the computation of this group after Platonov see
\cite[\S 6]{Wad99} and references therein. Now, let us review some
interesting characteristics of the group $\SK$ below. (See
\cite{Mer95} for the complete list of the properties of $\SK$ and
\cite{Dra83} for the proofs.) Let $F$ be a field and $D$ be an
$F$-central division algebra. Then
\begin{itemize}
  \item For any field extension $L/F$ one has a homomorphism $\SK(D)\rightarrow \SK(D\otimes_FL)$.
  On the other hand $\SK$ enjoys a transfer map, that is, for each
  finite extension $L/F$ there exists a norm homomorphism $\SK(D\otimes_FL)\rightarrow \SK(D)$.
  Now, since $\SK(\operatorname{M}_n(L))=1$, one can deduce that $\SK(D)$ is a torsion abelian group
  of bounded exponent $\operatorname{ind}(D)$ and if the degree $[L:F]$ is relatively prime to
  $\operatorname{ind}(D)$, then $\SK(D)\hookrightarrow \SK(D\otimes_FL)$.
  \item If $D=D_1\otimes_FD_2$ and
  $\operatorname{ind}(D_1)$ and $\operatorname{ind}(D_2)$ are relatively prime, then
  $\SK(D)\cong \SK(D_1)\times \SK(D_2)$.
  \item In the case of a valued division algebra $\SK$ is stable, namely $\SK(D)\cong
  \SK(\overline{D})$, where $D$ is an unramified division algebra. Indeed $\SK(D((x)))\cong \SK(D)$.
  \item $\SK$ is stable under purely transcendental extensions, i.e., if
  $L/F$ is a purely transcendental extension then $\SK(D\otimes_FL)\cong \SK(D)$.
  Indeed $\SK(D(x))\cong \SK(D)$.
\end{itemize}
Now in contrast, for an $F$-central simple algebra $A$ consider the
map $K_1(F)\rightarrow K_1(A)$ which is induced by the inclusion and
consider the group
$$\CK(A)=\operatorname{Coker}(K_1(F)\rightarrow K_1(A))\cong A^*/F^*A'.$$
The first systematic investigation of the algebraic properties of
$\CK$ for a division algebra $D$ was initiated in \cite{HaMaHMi99}
and then was continued in \cite{Haz01}. The main point of view in
the above mentioned papers was showing that the algebraic properties
of $\CK(D)$ is closely related to those of $\SK(D)$ and some
functorial properties of $\SK(D)$ can be carried over $\CK(D)$.
Also, one can easily see that the following sequence is exact:
$$
\xymatrix{ 1\ar[r]&\displaystyle{\frac{\mu_n(F)}{Z(D')}}\ar[r]^f&
\SK(D)\ar[r]^g&\CK(D)\ar[r]^{\nu}&
\displaystyle{\frac{\operatorname{Nrd}_D(D^*)}{F^{*n}}}\ar[r]&1, }
$$
where $n=\operatorname{ind}(D)$, $f,g$ are canonical homomorphisms
and $\nu$ is induced by the reduced norm map. Therefore, we have
$\SK(D)\cong \mu_n(F)/Z(D')$ if and only if $\CK(D)\cong
\operatorname{Nrd}_D(D^*)/F^{*n}$. This observation can be used to
open a new way for the computation of $\SK(D)$ as we shall see
later. Here, we are going to review some $\SK$-like properties of
$\CK$. At first, we observe that if $A=\operatorname{M}_n(D)$ is a
central simple algebra then  thanks to the
Dieudonn$\acute{\textrm{e}}$ determinant, one has $\CK(A)\cong
D^*/F^{*n}D'$. Thus, the functor $\CK$ from the category of central
simple algebras to the category of abelian groups is not Morita
invariant. (Recall that for the functor $\SK$ we have $\SK(A)\cong
\SK(D)$.) Now, Corollary \ref{cord3} implies that $\CK(D)$ is
abelian torsion of bounded exponent $\operatorname{ind}(D)$.
Therefore, $\CK(A)$ is abelian torsion of bounded exponent
$\deg(A)$.

Let $D$ be an $F$-central division algebra and $A$ be a simple
Artinian ring such that $F\subseteq Z(A)$ and $[A:F]<\infty$. The
natural embedding of $D$ in $D\otimes_FA$ ($a\mapsto a\otimes 1$)
induces a group homomorphism $\mathcal{I}:\CK(D)\rightarrow
\CK(D\otimes_FA)$. The following proposition provides a reverse map.
\begin{prop}[Transfer Map]\label{propf1}
Let $D$ be an $F$-central division algebra and $A$ be a simple Artinian
ring such that $F\subseteq Z(A)$ and $[A:F]<\infty$. Then there is a homomorphism
$\mathcal{P}:\CK(D\otimes_FA)\rightarrow \CK(D)$ such that
$\mathcal{P}\mathcal{I}=\eta_m$, where $m=[A:F]$ and $\eta_m(x)=x^m$.
\end{prop}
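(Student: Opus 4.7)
The plan is to construct $\mathcal{P}$ via the left regular representation of $A$ on itself. Since $[A:F] = m < \infty$, sending $a \in A$ to left multiplication by $a$ gives an injective $F$-algebra homomorphism $\phi: A \hookrightarrow \operatorname{End}_F(A) \cong \operatorname{M}_m(F)$. Tensoring with $D$ (flat over $F$) produces an injective $F$-algebra homomorphism
\[
\Phi = 1_D \otimes \phi : D\otimes_F A \;\hookrightarrow\; D\otimes_F \operatorname{M}_m(F) \;\cong\; \operatorname{M}_m(D).
\]
On unit groups this yields $\Phi^*: (D\otimes_F A)^* \to \operatorname{GL}_m(D)$. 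The plan is to define $\mathcal{P}$ as the composition of $\Phi^*$ with the Dieudonn\'e determinant $\ddet : \operatorname{GL}_m(D) \to D^*/D'$, followed by the natural surjection $D^*/D' \twoheadrightarrow D^*/F^*D' = \CK(D)$.

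First I would verify that this descends to a well-defined homomorphism on $\CK(D\otimes_F A) = (D\otimes_F A)^*/F^*(D\otimes_F A)'$. The commutator subgroup $(D\otimes_F A)'$ lands in $\operatorname{GL}_m(D)'$, which is killed by $\ddet$, so it maps to the trivial element in $D^*/D'$ (hence in $\CK(D)$). For a central scalar $f \in F^*$, the element $f \otimes 1 \in (D\otimes_F A)^*$ is sent by $\Phi$ to the scalar matrix $fI_m$, whose Dieudonn\'e determinant is $f^m \in F^{*m} \subseteq F^*$; this becomes trivial after passing to $D^*/F^*D'$. Hence the composite $\mathcal{P}$ is a well-defined homomorphism $\CK(D\otimes_F A) \to \CK(D)$.

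Next I would check the identity $\mathcal{P} \circ \mathcal{I} = \eta_m$. Given $[a] \in \CK(D)$ represented by $a \in D^*$, we have $\mathcal{I}([a]) = [a \otimes 1]$. Under $\Phi$, the element $a \otimes 1$ becomes $a \otimes \phi(1) = a \otimes I_m$, which corresponds to the scalar matrix $aI_m \in \operatorname{GL}_m(D)$ under the isomorphism $D \otimes_F \operatorname{M}_m(F) \cong \operatorname{M}_m(D)$. Since $\ddet(aI_m) = a^m \pmod{D'}$, we conclude $\mathcal{P}([a\otimes 1]) = [a^m] = \eta_m([a])$ in $\CK(D)$, as required.

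The construction is conceptually routine once the regular representation is chosen; the only point requiring some care is that $F$ need not equal $Z(A)$, so $D\otimes_F A$ is not a priori simple. The mild obstacle is therefore bookkeeping: verifying that $\Phi$ is well-defined and injective without appealing to simplicity of the tensor product, which is handled by flatness of $D$ over $F$ together with injectivity of $\phi$. All subsequent steps --- descent to the quotient, and the scalar computation $\ddet(aI_m) = a^m$ --- are formal consequences of the basic properties of the Dieudonn\'e determinant.
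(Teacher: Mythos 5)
Your proof is correct and takes essentially the same route as the paper's: there, too, the map $\mathcal{P}$ is built by tensoring the left regular representation $l\colon A\to\operatorname{M}_m(F)$ with $D$ to land in $\operatorname{M}_m(D)$ and then applying the Dieudonn\'e determinant, with $\mathcal{P}\mathcal{I}=\eta_m$ verified exactly by your observation that $a\otimes 1$ maps to $aI_m$ and $\ddet(aI_m)=a^m \bmod D'$. The only difference is that you spell out the well-definedness checks (commutators and central scalars) that the paper leaves implicit.
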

\begin{proof}
Consider the following sequence when we tensor with $D$ over $F$,
\begin{equation}\label{eqf1}
\xymatrix{ D\ar[r]& D\otimes_FA\ar[r]^{1\otimes l\ \ \ \ }&
D\otimes_F\operatorname{M}_m(F)\ar[r]^{\ \ \ \cong}&
\operatorname{M}_m(D),}
\end{equation}
where $l:A\rightarrow \operatorname{M}_m(F)$ is the left regular
representation. If $a\in D$ then pushing $a$ along the sequence
\ref{eqf1} we arrive at $aI_m$ where $I_m$ is the identity matrix.
Now, the above maps and the Dieudonn$\acute{\textrm{e}}$ determinant
yield the following sequence of abelian group homomorphisms
\begin{equation}\label{eqf2}
\xymatrix{
\CK(D)\ar[r]^{\mathcal{I}\ \ \ \ }& \CK(D\otimes_FA)\ar[r]^{\mathcal{J}}&
\CK(\operatorname{M}_m(D))\ar[r]^{\ \ \ \delta}&
\CK(D).
}
\end{equation}
where $\mathcal{J}(\overline{a})=\overline{aI_m}$ and
$\delta$ is induced by the Dieudonn$\acute{\textrm{e}}$ determinant.
Now, one can observe that $\delta\mathcal{J}\mathcal{I}=\eta_m$. Thus if
we let $\mathcal{P}=\delta\mathcal{J}$ then we are done.
\end{proof}
The following corollary shows that the analogous result for the
behavior of $\SK$ under the extension of the ground field holds for
$\CK$ too.
\begin{cor}\label{corf2}
Let $D$ be an $F$-central division algebra and $A$ be a simple
Artinian ring such that $F\subseteq Z(A)$ and $[A:F]<\infty$. If
$\operatorname{ind}(D)$ and $[A:F]$ are relatively prime, then the
canonical homomorphism $\mathcal{I}:\CK(D)\rightarrow
\CK(D\otimes_FA)$ is injective.
\end{cor}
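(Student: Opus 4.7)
The plan is to deduce the injectivity of $\mathcal{I}$ directly from the transfer map constructed in Proposition \ref{propf1} by exploiting the bounded-exponent property of $\CK(D)$. Since Proposition \ref{propf1} already hands us a homomorphism $\mathcal{P}:\CK(D\otimes_FA)\to\CK(D)$ with the composition identity
\[
\mathcal{P}\mathcal{I}=\eta_m,\qquad m=[A:F],
\]
the only thing I need to argue is that $\eta_m:\CK(D)\to\CK(D)$ is itself injective under the coprimality hypothesis; once this is known, $\mathcal{I}$ is injective because any element in $\ker\mathcal{I}$ is killed by $\eta_m$.

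To see that $\eta_m$ is injective, I would invoke the bounded-exponent property of $\CK(D)$ recorded right after the statement of Proposition \ref{propf1}: as a consequence of Corollary \ref{cord3}(2) applied with $N=D^*$, one has $D^{*n}\subseteq F^*D'$ where $n=\operatorname{ind}(D)$, so $\CK(D)=D^*/F^*D'$ is a torsion abelian group of bounded exponent dividing $n$. Since $\gcd(m,n)=1$ by hypothesis, Bezout provides integers $s,t$ with $sm+tn=1$; on $\CK(D)$ this identity reads $\eta_m^{\,s}\circ\eta_n^{\,t}=\eta_1=\id$, and since $\eta_n$ acts as the identity on a group of exponent dividing $n$, we conclude that $\eta_m^{\,s}=\id$. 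In particular $\eta_m$ is an isomorphism of $\CK(D)$, hence injective.

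Combining these two observations: if $x\in\CK(D)$ satisfies $\mathcal{I}(x)=1$, then $\eta_m(x)=\mathcal{P}\mathcal{I}(x)=1$, and the injectivity of $\eta_m$ forces $x=1$. This completes the argument.

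The only potential subtlety — and hence the step I would watch most carefully — is the appeal to the bounded exponent of $\CK(D)$. The exponent bound is ultimately a corollary of Lemma \ref{lemd2} through Corollary \ref{cord3}(2), and it is essential that $n=\operatorname{ind}(D)$ (rather than some multiple depending on $A$) is the exponent we invoke, since it is precisely $\gcd(m,n)=1$ that is assumed. Everything else is formal: Proposition \ref{propf1} is already proved and supplies $\mathcal{P}$, so no further construction is needed.
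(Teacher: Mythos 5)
Your argument is correct and is essentially the paper's own proof: both hinge on the transfer identity $\mathcal{P}\mathcal{I}=\eta_m$ from Proposition \ref{propf1} together with the exponent bound on $\CK(D)$ coming from Corollary \ref{cord3}, and the paper simply runs the same coprimality step pointwise (if $\mathcal{I}(x)=1$ then $x^m=\mathcal{P}\mathcal{I}(x)=1$, while $x^n=1$, so $\gcd(m,n)=1$ forces $x=1$). One small repair to your bookkeeping: the Bezout relation $sm+tn=1$ acts \emph{additively} on endomorphisms of the abelian group $\CK(D)$, giving $x=(x^m)^s(x^n)^t$, and $\eta_n$ is the \emph{trivial} endomorphism (not the identity) on a group of exponent dividing $n$, so the correct statement is $\eta_s\circ\eta_m=\id$ rather than $\eta_m^{\,s}\circ\eta_n^{\,t}=\id$; the injectivity of $\eta_m$, and hence your proof, stands.
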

\begin{proof}
Let $\operatorname{ind}(D)=n$ and $[A:F]=m$. Suppose
$\mathcal{I}(x)=1$ for some $x\in \CK(D)$. By \ref{propf1},
$x^m=\mathcal{P}\mathcal{I}(x)=1$. But $\CK(D)$ is of bounded
exponent $n$. Hence $x^n=1$. Now, since $m$ and $n$ are relatively
prime, we conclude that $x=1$.
\end{proof}
\begin{thm}\label{thmf3}
Let $A$ and $B$ be $F$-central division algebras such that
$\operatorname{ind}(A)$ and $\operatorname{ind}(B)$ are relatively
prime. Then $\CK(A\otimes_FB)\cong \CK(A)\times \CK(B)$.
\end{thm}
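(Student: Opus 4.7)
The plan is to construct the isomorphism via the canonical inclusion maps together with their transfers from Proposition \ref{propf1}. Set $n = \operatorname{ind}(A)$ and $m = \operatorname{ind}(B)$, write $D = A\otimes_F B$, and let $\mathcal{I}_A\colon\CK(A)\to\CK(D)$ and $\mathcal{I}_B\colon\CK(B)\to\CK(D)$ be the canonical inclusions, with transfer maps $\mathcal{P}_A$ and $\mathcal{P}_B$ going the other way. Define
\begin{equation*}
\Phi\colon\CK(A)\times\CK(B)\longrightarrow\CK(D),\qquad \Phi(\bar a,\bar b)=\mathcal{I}_A(\bar a)\cdot\mathcal{I}_B(\bar b).
\end{equation*}
This is a well-defined homomorphism because the images of $A$ and $B$ in $D$ commute elementwise.

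To handle injectivity I would compute the four compositions $\mathcal{P}_\ast\mathcal{I}_\ast$. Proposition \ref{propf1} gives $\mathcal{P}_A\mathcal{I}_A=\eta_{m^2}$ and $\mathcal{P}_B\mathcal{I}_B=\eta_{n^2}$. The ``cross'' compositions vanish: for $b\in B^*$, the image of $1\otimes b$ under the embedding $D\hookrightarrow \operatorname{M}_{m^2}(A)$ (coming from the regular representation of $B$ on itself) lies in the subring $\operatorname{M}_{m^2}(F)\subseteq\operatorname{M}_{m^2}(A)$, and its Dieudonn\'e determinant is therefore an element of $F^*$, trivial in $\CK(A)=A^*/F^*A'$. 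Hence $\mathcal{P}_A\mathcal{I}_B$ is the trivial map, and symmetrically for $\mathcal{P}_B\mathcal{I}_A$. Combining these identities yields $(\mathcal{P}_A,\mathcal{P}_B)\circ\Phi=(\eta_{m^2},\eta_{n^2})$, which is an automorphism of $\CK(A)\times\CK(B)$ since the two factors have exponents dividing $n$ and $m$ respectively and $\gcd(n,m)=1$. This both proves $\Phi$ is injective and provides the natural candidate left inverse $\Psi(z)=(\mathcal{P}_A(z)^{s},\mathcal{P}_B(z)^{t})$, where integers $s,t$ are chosen with $sm^2\equiv 1\pmod n$ and $tn^2\equiv 1\pmod m$.

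The remaining and main obstacle is surjectivity, i.e. to prove $\Phi\circ\Psi=\operatorname{id}_{\CK(D)}$, which is equivalent to showing $\ker\mathcal{P}_A\cap\ker\mathcal{P}_B=\{1\}$. My plan is to compare the four-term exact sequence
\begin{equation*}
1\longrightarrow \mu_k(F)/Z(X')\longrightarrow \SK(X)\longrightarrow \CK(X)\longrightarrow \operatorname{Nrd}_X(X^*)/F^{*k}\longrightarrow 1
\end{equation*}
(with $k=\operatorname{ind}(X)$) for $X\in\{A,B,D\}$, invoking the analogous product decomposition $\SK(D)\cong\SK(A)\times\SK(B)$ recorded at the beginning of this section. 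The outer terms admit compatible splittings: $\mu_{nm}(F)\cong\mu_n(F)\times\mu_m(F)$ because $\gcd(n,m)=1$, and the Kronecker-product identity $\operatorname{Nrd}_D(a\otimes b)=\operatorname{Nrd}_A(a)^m\operatorname{Nrd}_B(b)^n$ together with coprimality of $n$ and $m$ yields the corresponding decomposition of the reduced-norm cokernels modulo $F^{*nm}$. A five-lemma diagram chase on the three stacked four-term sequences then promotes $\Phi$ to an isomorphism, completing the proof.
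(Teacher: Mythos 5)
Your construction of $\Phi$, the transfer identities, and the whole injectivity half are correct, and they run on exactly the same fuel as the paper's argument: your $\mathcal{P}_A\mathcal{I}_A=\eta_{m^2}$ is the paper's computation $\delta\phi\mathcal{I}_2\mathcal{I}_1=\eta_{n^2}$ (in its reversed index conventions), and your observation that the cross transfers vanish — the Dieudonn\'e determinant of $1\otimes l(b)\in\operatorname{M}_{m^2}(F)\subseteq\operatorname{M}_{m^2}(A)$ is $\det l(b)=N_{B/F}(b)\in F^*$, hence trivial in $\CK(A)$ — is sound. The reduction of surjectivity to $\ker\mathcal{P}_A\cap\ker\mathcal{P}_B=\{1\}$ is also correct.

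The gap is in your plan for that last step, in two places. First, a five-lemma chase needs a commutative ladder, so it needs the isomorphism $\SK(D)\cong\SK(A)\times\SK(B)$ to be the \emph{natural} one, induced by $(\bar a,\bar b)\mapsto\overline{(a\otimes 1)(1\otimes b)}$ and hence compatible with $\Phi$ under the maps $g$; the paper records that decomposition only as an abstract bullet-point property of $\SK$, cited without the map, so as written you are assuming an unverified naturality (the same issue then infects the first square, where you also need $Z(D')$ to decompose compatibly), and establishing it amounts to redoing the transfer argument at the level of $D^{(1)}$ — i.e., it is roughly as hard as the theorem itself. Second, your decomposition of the term $\operatorname{Nrd}_D(D^*)/F^{*nm}$ does not follow from the Kronecker identity alone: $\operatorname{Nrd}_D(a\otimes b)=\operatorname{Nrd}_A(a)^m\operatorname{Nrd}_B(b)^n$ controls only pure tensors, and a general element of $D^*$ is not one; you would separately need $\operatorname{Nrd}_D(D^*)=\operatorname{Nrd}_A(A^*)^m\operatorname{Nrd}_B(B^*)^n$, whose nontrivial inclusion requires its own transfer-plus-coprimality argument (e.g.\ $x^m\in\operatorname{Nrd}_A(A^*)$ via $D\otimes_FB^{op}\cong\operatorname{M}_{m^2}(A)$, $x^n\in F^{*n}$, then B\'ezout). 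The paper closes the surjectivity issue without any $\SK$ input, using only tools you already have: since $\CK(D)$ is abelian of exponent dividing $nm$ with $\gcd(n,m)=1$, it splits canonically as $\mathcal{H}\times\mathcal{K}$ into its $n$-torsion and $m$-torsion parts; $\mathcal{I}_A$ lands in $\mathcal{H}$, so $\mathcal{P}_A|_{\mathcal{H}}$ is surjective onto $\CK(A)$ because $\eta_{m^2}$ is an automorphism, and it is injective on $\mathcal{H}$ because $\mathcal{P}_A=\delta\circ\mathcal{J}$ where $\ker\delta\cong F^*A'/F^{*m^2}A'$ inside $\CK(\operatorname{M}_{m^2}(A))\cong A^*/F^{*m^2}A'$ is $m^2$-torsion, while $\mathcal{J}$ is injective on $n$-torsion (compose with one more transfer to get $\eta_{m^2}$ on $\CK(D)$); so $\mathcal{H}\cong\CK(A)$, $\mathcal{K}\cong\CK(B)$. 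The same torsion bookkeeping would let you prove $\ker\mathcal{P}_A\cap\ker\mathcal{P}_B=\{1\}$ directly: split $z=z_1z_2$ with $z_1^n=z_2^m=1$, note $\mathcal{P}_A(z_2)$ is both $n$- and $m$-torsion hence trivial, and then run the $\ker\delta$ argument on $z_1$.
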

\begin{proof}
Since $\CK(A\otimes_FB)$ is abelian of bounded exponent $mn$ where
$m=\operatorname{ind}(A)$ and $n=\operatorname{ind}(B)$ we have that
$\CK(A\otimes_FB)\cong \mathcal{H}\times\mathcal{K}$ where
$\exp(\mathcal{H})|m$ and $\exp(\mathcal{K})|n$. Consider the
following diagram of mappings
$$
\xymatrix{
\CK(A)\ar[d]^{\mathcal{I}_1}&&&\\
\CK(A\otimes_FB)\ar[d]^{\mathcal{I}_2}&&&\\
\CK(A\otimes_FB\otimes_FB^{op})\ar[r]^{\ \ \ \ \
\phi}\ar[d]^{\mathcal{I}_3}&
\CK(\operatorname{M}_{n^2}(A))\ar[r]^{\ \ \ \delta}&\CK(A)\\
\CK(A\otimes_FB\otimes_FB^{op}\otimes_FB)\ar[r]^{\ \ \ \ \ \
\widehat{\phi}}& \CK(\operatorname{M}_{n^2}(A\otimes_FB)) \ar[r]^{\
\ \ \widehat{\delta}}&\CK(A\otimes_FB),& }
$$
where $\mathcal{I}_1,\mathcal{I}_2$ and $\mathcal{I}_3$ are induced
by the natural embedding, $\phi,\widehat{\phi}$ are induced by the
usual isomorphism and $\delta,\widehat{\delta}$ are induced by the
Dieudonn$\acute{\textrm{e}}$ determinant. Now, by Proposition
\ref{propf1} we have
$\delta\phi\mathcal{I}_2\mathcal{I}_1=\eta_{n^2}$. But
$\operatorname{ind}(A)$ and $n$ are coprime. So
$\delta\phi\mathcal{I}_2\mathcal{I}_1$ is an isomorphism. Also
$\mathcal{I}_1(\CK(A))\subseteq \mathcal{H}$ as $\CK(A)$ is
$m$-torsion. Thus we conclude that
$\delta\phi\mathcal{I}_2|_{\mathcal{H}}$ is surjective. Now, we show
that $\delta\phi\mathcal{I}_2|_{\mathcal{H}}$ is injective. To
prove this, first note that if $1\neq w\in\mathcal{H}$ then
$\widehat{\delta}\widehat{\phi}\mathcal{I}_3\mathcal{I}_2(w)=\eta_{n^2}(w)
=w^{n^2}\neq 1$ and so $\mathcal{I}_2|_{\mathcal{H}}$ is injective.
Suppose $u\in\mathcal{H}$ is such that
$\delta\phi\mathcal{I}_2(u)=1$. Then
$\phi\mathcal{I}_2(u)\in\ker\delta$ and it is not hard to see that
$\phi\mathcal{I}_2(u)^{n^2}=1$. Thus $\phi\mathcal{I}_2(u^{n^2})=1$
and hence $u^{n^2}=1$ as $\phi$ and $\mathcal{I}_2|_{\mathcal{H}}$
are injective. On the other hand $u^{m}=1$. This forces that $u=1$,
in other words $\CK(A)\cong \mathcal{H}$. Similarly, it can be shown
that $\CK(B)\cong \mathcal{K}$. This completes the proof.
\end{proof}
Here, we have to point out that the same functorial property as in
Theorem \ref{thmf3} also holds for central simple algebras. In
other words, if $A$ and $B$ are $F$-central simple algebras with
coprime degrees then $\CK(A\otimes_FB)\cong \CK(A)\times \CK(B)$
(cf. \cite{HazWil06}).
In the next proposition we are going to find a scheme to compute
$\CK(D)$ as well as $\SK(D)$.
\begin{prop}\label{propf4}
Let $F$ be a Henselian field with valuation $w$ and $D$ be a
tame $F$-central division algebra of index $n$. If
$\overline{D}^*/\overline{F}^*\overline{D'}=1$ then $\CK(D)\cong
\Gamma_D/\Gamma_F$ and $\SK(D)\cong \mu_n(F)/Z(D')$.
\end{prop}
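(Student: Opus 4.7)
The plan is to produce the first isomorphism directly via the valuation map, and then deduce the second one from the four-term exact sequence relating $\SK$ and $\CK$ that was displayed earlier.

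For $\CK(D)\cong \Gamma_D/\Gamma_F$, I would consider the composite $D^*\xrightarrow{v}\Gamma_D\twoheadrightarrow \Gamma_D/\Gamma_F$. Since $\Gamma_D$ is totally ordered, it is abelian, so $D'\subseteq \ker v$; and $v(F^*)=\Gamma_F$. Thus the map factors through a surjection $\bar v\colon \CK(D)=D^*/F^*D'\twoheadrightarrow \Gamma_D/\Gamma_F$. The content is injectivity: I must show $U_D\subseteq F^*D'$ (for then $v(a)\in\Gamma_F$ forces $af^{-1}\in U_D$ for some $f\in F^*$, hence $a\in F^*D'$). Take $u\in U_D$. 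Because $D$ is tame and Henselian, the natural reduction $U_D/(1+M_D)\to \overline D^{\,*}$ is an isomorphism, and it carries $D'\cdot(1+M_D)/(1+M_D)$ onto $\overline{D'}$. The hypothesis $\overline D^{\,*}=\overline F^{\,*}\overline{D'}$ then lets me write $\bar u=\bar f\,\bar d$ with $f\in U_F$ and $d\in D'$, so $uf^{-1}d^{-1}\in 1+M_D$. By the Congruence Theorem (Theorem \ref{thmd9}), $1+M_D=(1+M_F)[D^*,1+M_D]\subseteq F^*D'$, which gives $u\in F^*D'$ as required.

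For $\SK(D)\cong \mu_n(F)/Z(D')$, I would invoke the exact sequence
\[
1\longrightarrow \frac{\mu_n(F)}{Z(D')}\xrightarrow{\,f\,} \SK(D)\xrightarrow{\,g\,}\CK(D)\xrightarrow{\,\nu\,}\frac{\Nrd_D(D^*)}{F^{*n}}\longrightarrow 1
\]
stated earlier. It suffices to show $\nu$ is injective, as that forces $g=0$ and hence $\SK(D)=\ker g\cong \mu_n(F)/Z(D')$. Combining the isomorphism $\CK(D)\cong \Gamma_D/\Gamma_F$ already obtained with the Wadsworth--Ershov formula $v(a)=\tfrac{1}{n}w(\Nrd_D(a))$ from the remark following Theorem \ref{thmd8}, the map $\nu$ becomes (after identification) the map $\Gamma_D/\Gamma_F\to \Gamma_F/n\Gamma_F$ sending $v(a)\bmod\Gamma_F$ to $n\cdot v(a)\bmod n\Gamma_F$. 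If $\nu(\bar a)=1$ then $n\,v(a)=n\,w(f)$ for some $f\in F^*$; since $\Gamma_D$ is totally ordered, it is torsion-free, so $v(a)=w(f)\in\Gamma_F$, i.e.\ $\bar a=1$ in $\CK(D)$.

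The only real obstacle is the injectivity argument for $\bar v$, which is exactly where the two hypotheses of the proposition are used in tandem: the tameness together with the Henselian property feed the Congruence Theorem to dispose of $1+M_D$, while the triviality of $\overline D^{\,*}/\overline F^{\,*}\overline{D'}$ disposes of $U_D/(1+M_D)$. Everything else is bookkeeping with the exact sequence and the Wadsworth--Ershov formula.
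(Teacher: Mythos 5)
Your argument is correct and is essentially the paper's own proof unpacked from its commutative diagram~(\ref{eq3}): the paper likewise combines the Congruence Theorem with the hypothesis $\overline{D}^*/\overline{F}^*\overline{D'}=1$ to obtain $F^*U_D=F^*D'$ (your $U_D\subseteq F^*D'$), identifies $\CK(D)$ with $\Gamma_D/\Gamma_F$ through the valuation, and forces $g$ in the four-term sequence to vanish using the Wadsworth--Ershov formula together with torsion-freeness of $\Gamma_D$ (phrased there as $\eta_n\colon \Gamma_D/n\Gamma_D\to n\Gamma_D/n^2\Gamma_D$ being an isomorphism). One cosmetic slip: the target of your induced map should be $n\Gamma_D/n\Gamma_F$, not $\Gamma_F/n\Gamma_F$, but your kernel computation showing $\nu$ is injective is unaffected.
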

\begin{proof}
Let $v$ be the extension of $w$ to $D$. Set
$NU_F=\operatorname{Nrd}_D(D^*)\cap U_F$ and let $x\in NU_F$. Then
$w(x)=0$ and $x=\operatorname{Nrd}_D(b)$ for some $b\in D^*$. Now,
by Remark \ref{rem} we have that $v(b)=0$ and thus $b\in U_D$.
Therefore, the reduced norm map induces an epimorphism
$$\operatorname{Nrd}_D:\frac{F^*U_D}{F^*D'}\rightarrow\frac{F^{*n}NU_F}{F^{*n}}.$$
Also since $\Gamma_D$ is torsion free, multiplying by $n$
induces an isomorphism
$$\eta_n:\frac{\Gamma_D}{n\Gamma_D}\rightarrow\frac{n\Gamma_D}{n^2\Gamma_D}.$$
On the other hand by Remark \ref{rem} we have
$v(\operatorname{Nrd}_D(D^*))=n\Gamma_D$. Thus, $v$ induces the two
following epimorphisms:
$$v:\CK(D)\rightarrow\frac{\Gamma_D}{n\Gamma_D}\ \ ,\ \
v:\frac{\operatorname{Nrd}_D(D^*)}{F^{*n}}\rightarrow\frac{n\Gamma_D}{n^2\Gamma_D}.$$
Now, applying the fact that $\CK(D)$ is of bounded exponent $n$, one
deduces that the following diagram is commutative with exact rows
and columns where $f,g$ are canonical maps.
\begin{equation}\label{eq3}
 \xymatrix{
&&&1\ar[d]&1\ar[d]& \\
&&&\displaystyle{\frac{F^*U_D}{F^*D'}}\ar[r]^{\operatorname{Nrd}_D}\ar[d]&
\displaystyle{\frac{F^{*n}NU_F}{F^{*n}}}\ar[r]\ar[d]& 1\\
1\ar[r]&\displaystyle{\frac{\mu_n(F)}{Z(D')}}\ar[r]^f&
\SK(D)\ar[r]^g& \CK(D)\ar[r]^{\operatorname{Nrd}_D\ \ }\ar[d]^v&
\displaystyle{\frac{\operatorname{Nrd}_D(D^*)}{F^{*n}}}\ar[r]\ar[d]^v&1 \\
&&1\ar[r]&\displaystyle{\frac{\Gamma_D}{\Gamma_F}}\ar[r]^{\eta_n}\ar[d]&
\displaystyle{\frac{n\Gamma_D}{n\Gamma_F}}\ar[r]\ar[d]&1 \\
&&&1&1& }
\end{equation}
Now, if we show that $F^*U_D=F^*D'$ then we are done. To prove this,
first note that by the Congruence Theorem we have $1+M_D\subseteq
(1+M_F)D'$. Consider the reduction map $U_D\rightarrow
\overline{D}^*$. We have the sequence
$$
\xymatrix{ \overline{D}^*\ar[r]^{\cong\ \ \ \ \ \ }&
U_D/1+M_D\ar[r]^{nat.\ \ \ \ }& U_D/(1+M_F)D'\ar[r]^{\ \
nat.}&F^*U_D/F^*D'. }
$$
Thus $\theta:\overline{D}^*/\overline{F}^*\overline{D'}\rightarrow
F^*U_D/F^*D'$ is an isomorphism. Considering the fact that
$\overline{D}^*/\overline{F}^*\overline{D'}=1$, the theorem follows.
\end{proof}
\begin{cor}\label{corf5}
Let $F$ be a Henselian field and $D$ be a tame $F$-central
division algebra of index $n$. If
$\overline{D}/\overline{F}^*\overline{D'}=1$ then $\SK(D)\cong
\mu_n(\overline{F})/\overline{D'}\cap\overline{F}$.
\end{cor}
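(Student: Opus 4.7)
The plan is to derive the corollary directly from Proposition \ref{propf4}, which already gives $\SK(D)\cong \mu_n(F)/Z(D')$ under the hypotheses. Thus it suffices to show that the residue map induces an isomorphism
\[
\mu_n(F)/Z(D') \;\cong\; \mu_n(\overline{F})/(\overline{D'}\cap \overline{F}^*).
\]

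First I would establish that the residue map gives an isomorphism $\mu_n(F)\cong \mu_n(\overline F)$. Since $D$ is tame, $\chr(\overline F)\nmid n$, so the polynomial $X^n-1$ is separable over $\overline F$ and Hensel's Lemma lifts every root in $\overline F$ uniquely to $F$, giving surjectivity. For injectivity I would repeat the opening computation of the proof of the Congruence Theorem (Theorem \ref{thmd9}): if $\zeta = 1+f\in\mu_n(F)$ with $v(f)>0$, then expanding $(1+f)^n=1$ yields $f\bigl(n+\tbinom{n}{2}f+\cdots+f^{n-1}\bigr)=0$, and tameness ($v(n)=0$) combined with the ultrametric inequality forces $f=0$.

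Next I would check that this isomorphism sends $Z(D')$ onto $\overline{D'}\cap\overline F^*$. The inclusion $Z(D')\subseteq \mu_n(F)$ is immediate: for $a\in F^*\cap D'$ one has $a^n=\Nrd_D(a)=1$. One direction, $\overline{Z(D')}\subseteq \overline{D'}\cap\overline F^*$, is clear. For the reverse, take $\overline\alpha\in \overline{D'}\cap\overline F^*$ and pick $y\in D'$ reducing to $\overline\alpha$; lifting $\overline\alpha$ to $\beta\in U_F$, write $y=\beta z$ with $z\in 1+M_D$. Then $\Nrd_D(z)=\beta^{-n}$; but $z\in 1+M_D$ forces $\Nrd_D(z)\in 1+M_F$, so $\overline\beta^{\,n}=1$ in $\overline F$, i.e.\ $\overline\alpha\in\mu_n(\overline F)$. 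Using the isomorphism of the first step, lift $\overline\alpha$ uniquely to $\zeta\in\mu_n(F)$. Then $\zeta^{-1}y\in 1+M_D$ and $\Nrd_D(\zeta^{-1}y)=\zeta^{-n}\Nrd_D(y)=1$, so the Congruence Theorem gives $\zeta^{-1}y\in (1+M_D)\cap D^{(1)}=[D^*,1+M_D]\subseteq D'$. Hence $\zeta=y(\zeta^{-1}y)^{-1}\in F^*\cap D'=Z(D')$, and $\zeta$ reduces to $\overline\alpha$, as required.

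Combining these two steps yields the desired isomorphism, and then Proposition \ref{propf4} finishes the argument. The main obstacle is the reverse inclusion in the second step: a naive residue-theoretic lift $\beta$ of $\overline\alpha$ does not land in $D^{(1)}$, and the discrepancy $\beta^n\ne 1$ must be corrected. The key observation making this work is that this discrepancy lies in $1+M_F$ (thanks to $\Nrd_D(1+M_D)\subseteq 1+M_F$), so tameness allows us to absorb it via the isomorphism $\mu_n(F)\cong\mu_n(\overline F)$, after which the Congruence Theorem places the error term inside $D'$.
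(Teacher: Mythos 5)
Your proof is correct and takes essentially the same route as the paper: the paper's proof also applies Proposition \ref{propf4} and then asserts precisely your two steps, namely that Hensel's Lemma plus tameness gives the isomorphism $\mu_n(F)\cong\mu_n(\overline{F})$ and that $Z(D')\cong \overline{Z(D')}\cong \overline{D'}\cap\overline{F}$. The only difference is that the paper labels these steps ``easy to see'' and ``not difficult to prove,'' whereas you supply the details, correctly using $\operatorname{Nrd}_D(1+M_D)=1+M_F$ and the Congruence Theorem to lift elements of $\overline{D'}\cap\overline{F}^*$ back into $Z(D')$.
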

\begin{proof}
Since the valuation is tame and Henselian, using Hensel's Lemma, it
is easy to see that $\mu_n(F)\rightarrow\mu_n(\overline{F}),
a\mapsto\overline{a}$ is an isomorphism. Also, it is not difficult
to prove that $Z(D')\cong \overline{Z(D')}\cong
\overline{D'}\cap\overline{F}$. Therefore, $\SK(D)\cong
\mu_n(F)/Z(D')\cong
\mu_n(\overline{F})/\overline{D'}\cap\overline{F}$.
\end{proof}
To continue our study, we need to recall the following result of
\cite{Er83}.
\begin{thm}[Ershov]\label{thmf6}
Let $F$ be a Henselian field and $D$ be a tame $F$-central
division algebra of index $n$. If $a\in U_D$ then
$$\overline{\operatorname{Nrd}_D(a)}=N_{Z(\overline{D})/\overline{F}}
\operatorname{Nrd}_{\overline{D}}(\overline{a})^{n/mm'}$$ where
$m=\operatorname{ind}(\overline{D})$ and
$m'=[Z(\overline{D}):\overline{F}]$.
\end{thm}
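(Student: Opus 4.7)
The plan is to compute $\operatorname{Nrd}_D(a)$ via the norm from a maximal subfield of $D$ chosen compatibly with the valuation, and then relate the residue of that norm to the reduced norm of $\overline{a}$ by means of the Henselian norm formula applied through the tower $\overline{F} \subseteq L = Z(\overline{D}) \subseteq \overline{K}$.

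First I would use the structure of tame Henselian division algebras to construct a maximal subfield $K$ of $D$ containing $a$ whose valuation behaves well: $\overline{K}$ should be a maximal subfield of the $L$-central division algebra $\overline{D}$, containing $\overline{a}$, and $K/F$ should be a defectless tame extension of Henselian valued fields. After reducing (by a density argument on the polynomial identity that will eventually be proved, or by a preliminary generic-element reduction) to the case where $\overline{a}$ generates a maximal subfield $\overline{K}$ of $\overline{D}$ over $L$, the field $\overline{K}$ can be lifted to such a $K$ by Hensel's Lemma, using tameness of $D$ to ensure that the lift remains a maximal subfield containing $a$.

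With such a $K$ fixed, one has $\operatorname{Nrd}_D(a) = N_{K/F}(a)$, and the standard Henselian norm formula for the tame defectless extension $K/F$ yields
\[
\overline{N_{K/F}(a)} \;=\; N_{\overline{K}/\overline{F}}(\overline{a})^{[\Gamma_K:\Gamma_F]}.
\]
Decomposing through the intermediate field $L \subseteq \overline{K}$, transitivity of the norm gives
\[
N_{\overline{K}/\overline{F}}(\overline{a}) \;=\; N_{L/\overline{F}}\!\bigl(N_{\overline{K}/L}(\overline{a})\bigr) \;=\; N_{L/\overline{F}}\!\bigl(\operatorname{Nrd}_{\overline{D}}(\overline{a})\bigr),
\]
the last equality because $\overline{K}$ is a maximal subfield of the $L$-central division algebra $\overline{D}$ of index $m$. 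Finally, using the fundamental equality $n^2 = [\overline{D}:\overline{F}]\cdot[\Gamma_D:\Gamma_F] = m^2 m'[\Gamma_D:\Gamma_F]$ and $[K:F] = [\overline{K}:\overline{F}]\cdot[\Gamma_K:\Gamma_F]$, one obtains $n = mm'\cdot[\Gamma_K:\Gamma_F]$, so the ramification index is exactly $[\Gamma_K:\Gamma_F] = n/(mm')$, and combining the displays gives the desired formula.

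The main obstacle is the construction of the maximal subfield $K$ with the prescribed residue behaviour and defectlessness of $K/F$: without tameness the residue of a lifted maximal subfield need not coincide with the expected maximal subfield of $\overline{D}$, and the Henselian norm formula can fail. Tameness of $D$ is used precisely here, both to ensure Hensel lifting of $\overline{K}$ and to guarantee that $K/F$ is defectless with the correct ramification index. The transition from the generic element case (where $\overline{a}$ generates a maximal subfield of $\overline{D}$) to an arbitrary $a \in U_D$ will be handled by observing that both sides of the formula come from the constant term of the reduced characteristic polynomial, which depends algebraically (hence continuously, in the valuation topology) on $a$, so a single polynomial identity over the coordinates of $a$ suffices.
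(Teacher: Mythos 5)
Your skeleton (compute $\operatorname{Nrd}_D(a)=N_{K/F}(a)$ on a maximal subfield, apply the tame Henselian residue--norm formula, push through the tower $\overline{F}\subseteq L=Z(\overline{D})\subseteq \overline{K}$, and count indices via defectlessness) is sound, and on elements where the required $K$ exists your computation does yield the formula. But both of your reduction steps have genuine gaps. First, your genericity condition is over the wrong field and the lifting claim is unsupported: asking that $\overline{a}$ generate a maximal subfield of $\overline{D}$ \emph{over $L$} does not produce a maximal subfield $K\ni a$ with residue $\overline{K}$. Concretely, let $D=(K_0/F,\sigma,\pi)$ be nicely semiramified ($K_0/F$ unramified cyclic of degree $n$, $\pi$ a uniformizer, e.g.\ over $F=\mathbb{Q}_p$). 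Then $\overline{D}=\overline{K_0}=L$ is a field, so \emph{every} unit satisfies your condition vacuously; yet for $a=1+j$ with $j^n=\pi$, the field $F(a)=F(j)$ is itself a maximal subfield, totally ramified, and it is the \emph{only} maximal subfield containing $a$ --- so no $K$ with residue $\overline{K}$ through $a$ exists, and Hensel's Lemma (which produces a field with prescribed residue but with no control that it contains $a$) cannot help. The correct generic condition is that $\overline{a}$ generate $\overline{K}$ over $\overline{F}$; and once you assume that, no lifting is needed at all, since $f(F(a)/F)\geq[\overline{F}(\overline{a}):\overline{F}]=mm'$ forces \emph{any} maximal subfield $K\supseteq F(a)$ (automatically defectless, because $\operatorname{char}(\overline{F})\nmid n$ kills the defect by Ostrowski's theorem) to have residue exactly $\overline{K}$.

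Second, the passage from the generic case to arbitrary $a\in U_D$ by ``continuity / a single polynomial identity'' does not parse: the left side is a function of the $F$-coordinates of $a$ composed with the residue map, while the right side is a polynomial function of the $\overline{F}$-coordinates of $\overline{a}$; they do not live on a common variety over a common field, and over a finite residue field the Zariski-density of your generic locus fails anyway. The honest repair uses $\operatorname{Nrd}_D(1+M_D)\subseteq 1+M_F$ (a nontrivial fact requiring defectlessness, proved in the paper as Corollary~\ref{Dnormsurj}, which you never invoke) to see that $\overline{a}\mapsto \overline{\operatorname{Nrd}_D(a)}$ is a well-defined \emph{homomorphism} $\overline{D}^*\to\overline{F}^*$, and then compares it with the (also multiplicative) right-hand side on a generating set of $\overline{D}^*$ consisting of elements generating maximal subfields over $\overline{F}$, treating finite $\overline{F}$ separately (there $\overline{D}$ is a field by Wedderburn and $\overline{D}^*$ is cyclic with a primitive generator). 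For comparison: the paper states the theorem citing \cite{Er83} without proof at that point, but effectively proves it later by the graded machinery --- Proposition~\ref{normfacts}(4) combined with Lemma~\ref{grnorm} and Corollary~\ref{nrdcor} --- where every norm is computed from the minimal polynomial $h_a\in V_F[x]$ of $a$, whose reduction is a power of the minimal polynomial of $\overline{a}$, using an \emph{arbitrary} maximal subfield of $\overline{D}$ containing $\overline{a}$. That route needs no genericity, no lifting, and works uniformly for all $a\in U_D$; your approach can be completed along the lines sketched above, but not as written.
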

Let $D$ be Henselian tame $F$-central division algebra of index $n$.
From the Congruence Theorem we have $(1+M_D)=(1+M_F)[D^*,1+M_D]$.
Taking the reduced norm from both sides we obtain
$\operatorname{Nrd}_D(1+M_D)=(1+M_F)^n$. Now, since $D$ is tame and
Henselian $(1+M_F)$ is $n$-divisible. Thus we have the following
equality:
\begin{equation}\label{eqd4}
\operatorname{Nrd}_D(1+M_D)=1+M_F.
\end{equation}
We will use this observation in the proof of the next theorem.
\begin{thm}\label{thmf7}
Let $F$ be a Henselian field and $D$ be a tame $F$-central
division algebra of index $n$. Then
\begin{enumerate}
  \item If $D$ is unramified then $\CK(D)\cong \CK(\overline{D})$ and
  $\SK(D)\cong \SK(\overline{D})$;
  \item If $D$ is totally ramified then $\CK(D)\cong \Gamma_D/\Gamma_F$ and
  $\SK(D)\cong \mu_n(\overline{F})/\mu_e(\overline{F})$ where $e=\exp(\Gamma_D/\Gamma_F)$;
  \item If $\overline{D}$ is a field, $\overline{D}/\overline{F}$ is
  a cyclic extension with $[\overline{D}:\overline{F}]=k$ and
  $N_{\overline{D}/\overline{F}}(\overline{D}^*)=\overline{F}^{*k}$ then
  $\CK(D)\cong \Gamma_D/\Gamma_F$ and
  $\SK(D)\cong \mu_n(\overline{F})/\overline{D'}\cap\overline{F}$.
\end{enumerate}
\end{thm}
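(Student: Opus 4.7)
The plan is to prove each of the three parts by specializing the commutative diagram (\ref{eq3}) from the proof of Proposition~\ref{propf4}---whose two columns give the valuation-induced exact sequences on $\CK$ and $\Nrd_D(D^*)/F^{*n}$, and whose middle row is the four-term $\SK$--$\CK$ sequence---to the valuation data of the case at hand. The three technical levers throughout are the Congruence Theorem, Ershov's formula (Theorem~\ref{thmf6}), and the $n$-divisibility of $1+M_F$ coming from tameness and Hensel's Lemma. For Part~(1), since $\Gamma_D=\Gamma_F$ one has $D^*=F^*U_D$, and reduction $U_D\to\overline{D}^*$ descends to $\CK(D)\to\CK(\overline{D})$; the Congruence Theorem identifies the kernel (via the isomorphism $\theta\colon\overline{D}^*/\overline{F}^*\overline{D'}\cong F^*U_D/F^*D'$ from Proposition~\ref{propf4}) with the trivial group, and surjectivity of $U_D\to\overline{D}^*$ delivers the desired iso. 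For $\SK$, Ershov's formula in the unramified tame case has $m=n$ and $m'=1$, giving $\overline{\Nrd_D(a)}=\Nrd_{\overline{D}}(\overline{a})$ on $U_D$, so $D^{(1)}\to\overline{D}^{(1)}$ is well-defined; injectivity modulo $D'$ uses $(1+M_F)\cap D^{(1)}=1$ (as in the proof of the Congruence Theorem), and surjectivity uses $n$-divisibility of $1+M_F$ to correct any lift.

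Part~(2) is a direct application: totally ramified means $\overline{D}=\overline{F}$, so the hypothesis $\overline{D}^*/\overline{F}^*\overline{D'}=1$ of Proposition~\ref{propf4} and Corollary~\ref{corf5} holds immediately, yielding $\CK(D)\cong\Gamma_D/\Gamma_F$ and $\SK(D)\cong\mu_n(\overline{F})/(\overline{D'}\cap\overline{F})$. The cited Tignol--Wadsworth identification $\overline{D'}\cong\mu_e(\overline{F})$ from Proposition~3.1 of \cite{TW87}, together with the containment $\overline{D'}\subseteq\overline{D}=\overline{F}$, forces $\overline{D'}\cap\overline{F}=\mu_e(\overline{F})$ and completes the formula for $\SK(D)$.

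For Part~(3), I would first specialize Ershov's formula with $m=1$ and $m'=k$ (since $\overline{D}$ is a field of degree $k$ over $\overline{F}$) to get $\overline{\Nrd_D(a)}=N_{\overline{D}/\overline{F}}(\overline{a})^{n/k}$ for $a\in U_D$. The norm hypothesis $N_{\overline{D}/\overline{F}}(\overline{D}^*)=\overline{F}^{*k}$ places this inside $\overline{F}^{*n}$, and $n$-divisibility of $1+M_F$ upgrades this to $\Nrd_D(U_D)\subseteq F^{*n}$. This collapses the top group $F^{*n}NU_F/F^{*n}$ in the right column of (\ref{eq3}) to $1$, yielding $\Nrd_D(D^*)/F^{*n}\cong n\Gamma_D/n\Gamma_F\cong\Gamma_D/\Gamma_F$. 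A diagram chase, combined with the tame-Henselian identifications $\mu_n(F)\cong\mu_n(\overline{F})$ and $Z(D')\cong\overline{D'}\cap\overline{F}$ already used in Corollary~\ref{corf5}, then delivers the two claimed isomorphisms. The main obstacle is the left column $F^*U_D/F^*D'\cong\overline{D}^*/\overline{F}^*\overline{D'}$, which is a priori nontrivial; here the cyclicity of $\overline{D}/\overline{F}$ and the Hilbert~90 interpretation of the norm hypothesis must be invoked to identify this group with the image of $\SK(D)\to\CK(D)$, so that it is absorbed into the $\SK$ factor rather than producing an extra obstruction, and the quotient reduces cleanly to $\Gamma_D/\Gamma_F$.
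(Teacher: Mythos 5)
Your parts (1) and (2) track the paper's proof closely: (2) is exactly the paper's reduction to Proposition \ref{propf4} and Corollary \ref{corf5} plus the identification $\overline{D'}=\mu_e(\overline{F})$ from \cite{TW87}, and (1) uses the same two pillars (diagram (\ref{eq3}) with $\Gamma_D=\Gamma_F$ for $\CK$; Ershov's formula, $(1+M_F)\cap D^{(1)}=1$, and $\Nrd_D(1+M_D)=1+M_F$ for $\SK$), with one small omission: to pass from $\overline{D}^*/\overline{F}^*\overline{D'}$ to $\CK(\overline{D})=\overline{D}^*\big/Z(\overline{D})^*\overline{D}'$ you also need $Z(\overline{D})=\overline{F}$ (which the paper extracts from tameness via \cite[Prop.~1.7]{jw}) and $\overline{D'}=\overline{D}'$ (from the proof of Theorem \ref{thmd14}); your sketch silently conflates these groups.

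Part (3), however, has a genuine gap at the final step. Your preliminary moves are sound --- indeed your Ershov exponent $N_{\overline{D}/\overline{F}}(\overline{a})^{n/k}$ is a more careful application of Theorem \ref{thmf6} than the paper's own statement --- and from $\Nrd_D(U_D)\subseteq F^{*n}$ you do correctly get both that the right column of (\ref{eq3}) collapses and that $F^*U_D/F^*D'$ equals $F^*D^{(1)}/F^*D'$, the image of $g\colon \SK(D)\to\CK(D)$. But identifying the left-column group with $\operatorname{im}(g)$ is not the same as killing it: exactness of (\ref{eq3}) then only yields $\CK(D)/\operatorname{im}(g)\cong\Gamma_D/\Gamma_F$, whereas the theorem asserts $\CK(D)\cong\Gamma_D/\Gamma_F$, and the formula $\SK(D)\cong\mu_n(F)/Z(D')$ requires $f$ to be surjective, i.e.\ $g$ trivial. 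So both claims need precisely $\operatorname{im}(g)=1$, equivalently $F^*U_D=F^*D'$, equivalently $\overline{D}^*=\overline{F}^*\overline{D'}$ --- which is the hypothesis of Proposition \ref{propf4}, and is what the paper actually verifies: by Hilbert 90 any $x\in\ker N_{\overline{D}/\overline{F}}$ has the form $\overline{b}\sigma(\overline{b})^{-1}$, and since the fundamental homomorphism $D^*\to\Gal(\overline{D}/\overline{F})$ is surjective, $\sigma$ is conjugation by some $c\in D^*$, so $x=\overline{bcb^{-1}c^{-1}}\in\overline{D'}$; then for any $\overline{d}$ the norm hypothesis supplies $\overline{f}\in\overline{F}^*$ with $N_{\overline{D}/\overline{F}}(\overline{d})=\overline{f}^{\,k}=N_{\overline{D}/\overline{F}}(\overline{f})$, whence $\overline{d}\,\overline{f}^{-1}\in\ker N_{\overline{D}/\overline{F}}\subseteq\overline{D'}$. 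You list the right ingredients (cyclicity, Hilbert 90, the norm hypothesis) but deploy them to prove the wrong statement: ``absorbed into the $\SK$ factor rather than producing an extra obstruction'' is exactly where the argument fails, since a nontrivial $\operatorname{im}(g)$ would make $\CK(D)$ a proper extension of $\Gamma_D/\Gamma_F$ and would falsify the $\SK$ formula as well. Once $\overline{D}^*=\overline{F}^*\overline{D'}$ is in hand, both isomorphisms follow at once from Proposition \ref{propf4} and Corollary \ref{corf5}, and your collapsing of the right column becomes superfluous.
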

\begin{proof}
(1) Since $D$ is unramified $\Gamma_D/\Gamma_F=1$ and thus by
(\ref{eq3}) and a similar argument as used in the proof of
Proposition \ref{propf4}, we have
\begin{equation}\label{eqd5}
\CK(D)\cong F^*U_D/F^*D'\cong
\overline{D}^*/\overline{F}^*\overline{D'}.
\end{equation}
On the other hand, by \cite[Prop. 1.7]{jw},
$Z(\overline{D})/\overline{F}$ is normal and
$\Gal(Z(\overline{D})/\overline{F})$ is a homomorphic image of
$\Gamma_D/\Gamma_F$. However, $D$ is a tame division algebra. So
$Z(\overline{D})/\overline{F}$ is Galois with
$\Gal(Z(\overline{D})/\overline{F})=1$. Therefore,
$Z(\overline{D})=\overline{F}$. Also, by the proof of Theorem
\ref{thmd14} we have $\overline{D'}=\overline{D}'$. So, by
(\ref{eqd5}) we conclude that $\CK(D)\cong \CK(\overline{D})$.
Moreover, if $a\in U_D$ then
$\overline{\operatorname{Nrd}_D(a)}=\operatorname{Nrd}_{\overline{D}}(\overline{a})$
by Theorem \ref{thmf6}. (Note that by the fundamental inequality we
have $\operatorname{ind}(\overline{D})=\operatorname{ind}(D)$.)
Therefore, the reduction map induces a monomorphism
$$D^{(1)}/D^{(1)}\cap(1+M_D)\rightarrow \overline{D}^{(1)}.$$
This map is in fact surjective and hence an isomorphism, for if
$\overline{a}\in \overline{D}^{(1)}$ then
$\overline{\operatorname{Nrd}_D(a)}=1$. Thus
$\operatorname{Nrd}_D(a)\in 1+M_F$. So, by (\ref{eqd4}) there exists
$b\in 1+M_D$ such that
$\operatorname{Nrd}_D(a)=\operatorname{Nrd}_D(b)$. Therefore
$ab^{-1}\in D^{(1)}$ and $ab^{-1}\mapsto \overline{a}$, as desired.
Now, from the Congruence Theorem we obtain $\SK(\overline{D})\cong
\SK(D)$.

(2) Since $D$ is totally ramified, clearly we have
$\overline{D}^*/\overline{F}^*\overline{D'}=1$. So by Proposition
\ref{propf4} we obtain $\CK(D)\cong \Gamma_D/\Gamma_F$. Also since
in this case $\overline{D'}=\mu_e(\overline{F})$ from Corollary
\ref{corf5} we conclude that $\SK(D)\cong
\mu_n(\overline{F})/\mu_e(\overline{F})$.

(3) From Proposition \ref{propf4} it is enough to prove
$N_{\overline{D}/\overline{F}}(\overline{D}^*)/\overline{F}^{*k}\cong
\overline{D}/\overline{F}^*\overline{D'}$. To show this, consider
the norm function
$N_{\overline{D}/\overline{F}}:\overline{D}^*\rightarrow
\overline{F}^*$. Now, if $x\in U_D$ then by Ershov's formula it
follows that
$\overline{\operatorname{Nrd}_D(x)}=N_{\overline{D}/\overline{F}}(\overline{x})$.
This shows that $\overline{D'}\subseteq \ker
N_{\overline{D}/\overline{F}}$. Conversely, if $x\in \ker
N_{\overline{D}/\overline{F}}$ then by Hilbert Theorem 90, there is
a $\overline{b}$ such that $x=\overline{b}\sigma(\overline{b})^{-1}$
where $\sigma$ is the generator of
$\Gal(\overline{D}/\overline{F})$. Now, since the fundamental
homomorphism $D^*\rightarrow\Gal(Z(\overline{D}/\overline{F}))$ is
surjective, it follows that $\sigma$ is of the form
$\sigma(\overline{a})=\overline{cac^{-1}}$, for some $c\in D^*$.
This implies that $x\in \overline{D'}$ and so $\ker
N_{\overline{D}/\overline{F}}= \overline{D'}$. Therefore
$$\overline{D}/\overline{F}^*\overline{D'}\cong
N_{\overline{D}/\overline{F}}(\overline{D}^*)/\overline{F}^{*k},$$
because the image of $\overline{F}^*\overline{D'}$ under
$N_{\overline{D}/\overline{F}}$ is $\overline{F}^{*k}$.
\end{proof}
\begin{cor}\label{corf8}
Let $D$ be an $F$-central division algebra of index $n$. If
$\operatorname{char}(F)\nmid n$ then $\CK(D)\cong \CK(D((x)))$ and
$\SK(D)\cong \SK(D((x)))$.
\end{cor}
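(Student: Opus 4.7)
The plan is to realize $D((x))$ as a tame unramified Henselian valued division algebra whose residue algebra is $D$ itself, and then apply Theorem~\ref{thmf7}(1).

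First I would equip $F((x))$ with its standard $x$-adic valuation $w$. With respect to $w$, the field $F((x))$ is complete, hence Henselian, with residue field $\overline{F((x))}=F$ and value group $\Gamma_{F((x))}=\mathbb{Z}$. Since $D$ is a finite-dimensional $F$-central division algebra, the ring $D((x))$ of formal Laurent series in $x$ with coefficients in $D$ is again an $F((x))$-central division algebra of index $n$ (its center is precisely $F((x))$, and $[D((x)):F((x))]=[D:F]=n^{2}$). Because $F((x))$ is Henselian, Theorem~\ref{thmd8} guarantees that $w$ extends uniquely to a valuation $v$ on $D((x))$; explicitly, $v$ is the $x$-adic valuation on $D((x))$ obtained by sending a nonzero series to the least index of its nonzero coefficient.

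Next I would identify the invariants of this extension. A direct inspection shows that $V_{D((x))}=D[[x]]$, $M_{D((x))}=xD[[x]]$, and therefore the residue division algebra is $\overline{D((x))}=D$; the value group is still $\Gamma_{D((x))}=\mathbb{Z}=\Gamma_{F((x))}$. So $D((x))/F((x))$ is unramified. Moreover, the tameness condition requires $Z(\overline{D((x))})/\overline{F((x))}$ to be separable (which is trivial since $Z(D)=F$) and $\operatorname{char}(\overline{F((x))})\nmid n$; by hypothesis $\operatorname{char}(F)\nmid n$, so $D((x))$ is tame over $F((x))$.

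Having verified the hypotheses, I can apply Theorem~\ref{thmf7}(1) directly to the unramified tame Henselian division algebra $D((x))$ over $F((x))$, obtaining
\[
\CK(D((x)))\;\cong\;\CK(\overline{D((x))})\;=\;\CK(D)\qquad\text{and}\qquad \SK(D((x)))\;\cong\;\SK(\overline{D((x))})\;=\;\SK(D),
\]
which is the desired conclusion. The only genuinely non-routine point is confirming that $D((x))$ really is the valued division algebra obtained as the unique extension of $(F((x)),w)$ with the stated residue algebra and value group; everything else is a direct invocation of the preceding theorem.
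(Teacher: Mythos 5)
Your proof is correct and is precisely the argument the paper intends: the corollary is stated as an immediate consequence of Theorem~\ref{thmf7}(1), obtained by viewing $D((x))$ as a tame unramified division algebra over the Henselian field $F((x))$ (with the $x$-adic valuation, residue algebra $\overline{D((x))}=D$ and $\Gamma_{D((x))}=\Gamma_{F((x))}=\mathbb{Z}$), exactly as you set it up. Your careful verification of the center, the tameness condition via $Z(\overline{D((x))})=F=\overline{F((x))}$, and the unique extension of the valuation fills in all the details the paper leaves implicit.
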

Here, we are going to use the above results in computing of $\CK$
and $\SK$ for some division algebras.
\begin{exa}\label{exaf9}
Recall that a field $F$ is called \textit{real Pythagorean} if
$-1\not\in F^{*2}$ and the sum of any two square elements of $F$ is
a square in $F$. Let $F$ be a real Pythagorean field and
$\mathcal{Q}$ be the ordinary quaternion algebra over $F$, i.e.,
$$\mathcal{Q}=\{a+bi+cj+dij| a,b,c,d\in F, i^2=j^2=-1, ij=-ji\}.$$
It is easy to see that $\mathcal{Q}$ is a division algebra. If
$u=a+bi+cj+dij\in \mathcal{Q}^*$ then
$\operatorname{Nrd}_{\mathcal{Q}}(u)= a^2+b^2+c^2+d^2\in F^{*2}$ as
$F$ is real Pythagorean. So there is a $v\in F^*$ such that
$v^2=\operatorname{Nrd}_{\mathcal{Q}}(u)$. Now we have
$\operatorname{Nrd}_{\mathcal{Q}}(u/v)=
\operatorname{Nrd}_{\mathcal{Q}}(u)/v^2=1$. Thus $u/v\in D^{(1)}$.
But by Wang's Theorem $\SK(D)=1$ and hence $u/v\in D'$. Therefore
$u\in F^*D'$. This implies that $\CK(\mathcal{Q})=1$. Now, we will
show that the converse is also true. Let $F$ be a field and
$\mathcal{Q}$ be the ordinary quaternion algebra over $F$. We
establish that if $\mathcal{Q}$ is a division algebra and
$\CK(\mathcal{Q})=1$ then $F$ is real Pythagorean. To prove this, first
note that since $\mathcal{Q}$ is a division ring, $-1\not\in
F^{*2}$. Also, since $\CK(\mathcal{Q})=1$ we have $D^*=F^*D'$. Now,
taking the reduced norm from both sides we conclude that
$\operatorname{Nrd}_{\mathcal{Q}}(\mathcal{Q})=F^{*2}$. Thus if
$u,v\in \mathcal{F}$ then
$u^2+v^2=\operatorname{Nrd}_{\mathcal{Q}}(u+vi) \in F^{*2}$. This
immediately implies that $F$ is real Pythagorean.

Now, let $F$ be real Pythagorean and $\mathcal{Q}$ be the quaternion
algebra over $F$. Using Corollary \ref{corf8} we conclude that
$\CK(\mathcal{Q}((x)))=1$ and so $F((x))$ (the center of
$\mathcal{Q}((x))$) is real Pythagorean. Therefore, we have proved
that if $F$ is real Pythagorean then for each $n$ the field
$F((x_1\ldots x_n))$ is real Pythagorean.
\end{exa}

\begin{exa}\label{exaf10}
Let $\mathbb{R}$ be the field of real numbers and
$F=\mathbb{R}((x_1,\ldots, x_n))$. The field $F$ has a natural
valuation $v:F\rightarrow \oplus_{i=1}^n\mathbb{Z}$ given by
\begin{equation}\label{eqd6}
v(\sum_{j_1}\ldots\sum_{j_n}c_{j_1\ldots j_n}x_1^{j_1}\ldots
x_n^{j_n})=\min \{(j_1,\ldots,j_n)|c_{j_1\ldots j_n}\neq 0\},
\end{equation}
where $\oplus_{i=1}^n\mathbb{Z}$ is given right-to-left
lexicographical ordering. The above valuation is Henselian and if
$D$ is an $F$-central division algebra then $D$ is obviously tame.
It is also clear that $\overline{F}=\mathbb{R}$. Because the
algebraically closed field $\mathbb{C}=R(\sqrt{-1})$ has a trivial
Brauer group, we conclude that $\overline{D}=\mathbb{R}$ or
$\overline{D}=\mathbb{C}$ or $\overline{D}=\mathcal{Q}$ where
$\mathcal{Q}$ is the ordinary quaternion division algebra over
$\mathbb{R}$. Now, the above example shows that in all cases
$\overline{D}^*/\overline{F}^*\overline{D'}=1$. Therefore, Corollary
\ref{corf5} yields $\SK(D)\cong \mu_n(\mathbb{R})/\overline{D'}\cap
\mathbb{R}$. But from number theory we know that $[D:F]$ is a power
of $2$. So $\mu_n(\mathbb{R})=1$. On the other hand by the work of
Wadsworth $-1\in D'$ (cf. \cite{Wad90}).  Thus we have $\SK(D)=1$.

Now let $F=\mathbb{C}((x_1,x_2,x_3,x_4))$ and $n$ a natural number.
For $j=1,3$ let $D_j=A_{\omega_n}(x_j,x_{j+1};F)$ where $\omega_n$
is the primitive $n$-th root of unity and
$A_{\omega_n}(x_j,x_{j+1};F)$ is the symbol algebra generated by
symbols $\alpha,\beta$ subject to the relations $\alpha^n=x_j$,
$\beta^n=x_{j+1}$ and $\alpha\beta=\omega_n\beta\alpha$. By example
3.6 of \cite{TW87} we know that $D=D_1\otimes_FD_3$ is an
$F$-central division algebra with $\operatorname{ind}(D)=n^2$ and
$\Gamma_D/\Gamma_F\cong
\mathbb{Z}/n\mathbb{Z}\oplus\mathbb{Z}/n\mathbb{Z}$. Thus
$e=\exp(\Gamma_D/\Gamma_F)=n$. On the other hand, $D$ is totally
ramified as $\mathbb{C}$ is algebraically closed. Therefore, from
Theorem \ref{thmf7} we have $\SK(D)\cong \mathbb{Z}/n\mathbb{Z}$.
This implies that there exists a field such that when $D$ runs over
all $F$-central division algebras, then $\SK(D)$ can be any finite
cyclic group. Note that if $D$ is an arbitrary division algebra over
$F$, then $D$ is totally ramified and $\SK(D)$ is a finite cyclic
group.
\end{exa}
In Example \ref{exaf10} we have deduced both Theorems of
\cite{Lip76}, which are obtained by using heavy machinery of reduced
$K$-Theory, as natural examples of the above theorems. In the next
example, we show that the group $\CK$ can be any finite cyclic group
of odd order.
\begin{exa}\label{exaf11}
Let $F=\mathbb{F}_2((x))$. Equipped with its natural valuation, $F$
is local and hence $Br(F)\cong \mathbb{Q}/\mathbb{Z}$. Now, since
$\exp([D])=\operatorname{ind}(D)$ for every $F$-division algebra $D$
($[D]$ is the equivalent class of $D$ in the Brauer group of $F$),
given a natural number $n$, there exists an $F$-central division
algebra $D$ such that $\operatorname{ind}(D)=n$ . Now, let $n$ be
odd. Since $F$ is Henselian, the natural valuation has a unique
extension to $D$. Clearly $\overline{F}=\mathbb{F}_2$, $D$ is tame,
and $\overline{D}/\overline{F}$ is cyclic (recall that the Brauer
group of every finite field is trivial and every finite extension of
a finite field is cyclic). At the other extreme, we have
$N_{\overline{D}/\overline{F}}
(\overline{D}^*)=\mathbb{F}_2^*=\{1\}$ which is obviously
$n$-divisible. Now, Theorem \ref{thmf7} implies that $\CK(D)\cong
\mathbb{Z}/n\mathbb{Z}$. Thus, when $[D]$ ranges over the Brauer
group of $F$ then $\CK(D)$ can be any finite cyclic group of odd
order.
\end{exa}
\begin{exa}\label{exaf12}
Let $\mathbb{C}$ be the field of complex numbers, $n_1,n_2,...,n_r$
an arbitrary sequence of positive integers and $n=n_1n_2...n_r$. If
$i$ is even, set $D_{i+1}=D_i((x_{i+1}))$ and if $i$ is odd we
define $\sigma_i:D_i\to D_i$ by $\sigma_i(x_i)=\omega_ix_i$, where
$\omega_i$ is a primitive $n_{(i+1)/2}$-th root of unity. Then
$D_{i+1}=D_i((x_{i+1},\sigma_i))$. So, we have
$D=D_{2r}=\mathbb{C}((x_1,...,x_{2r},\sigma_1, \sigma_3, ...,
\sigma_{2r-1}))$. By Hilbert's construction (cf. \cite{Dra83}),
$F=Z(D)=\mathbb{C}((x_1^{n_1}, x_2^{n_1},..., x_{2r-1}^{n_r},
x_{2r}^{n_r}))$ with $[D:F]=n^2$. Now, if we consider the natural
valuation on $D$ then
$\Gamma_D=\oplus_{j=1}^{r}(\mathbb{Z}\oplus\mathbb{Z})$ and
$\Gamma_F=\oplus_{j=1}^{r}(n_j\mathbb{Z}\oplus n_j\mathbb{Z})$. In
this setting $D$ is a tame and totally ramified as $\mathbb{C}$ is
algebraically closed. Therefore, from Theorem \ref{thmf7} we have
$\CK(D)\cong \Gamma_D/\Gamma_F\cong
\oplus_{j=1}^{r}(\mathbb{Z}/n_j\mathbb{Z}\oplus\mathbb{Z}/n_j\mathbb{Z})$.
This shows that for every abelian group $A$ there exists a division
algebra $D$ with $\CK(D)\cong A\oplus A$.
\end{exa}
\begin{thm}\label{thmf13}
Let $D$ be a tame division algebra over a Henselian field $F=Z(D)$
of index $n$. If $D$ has a maximal cyclic extension $L/F$ such that
$\overline{D}^*/\overline{L}^*\overline{D'}=1$ then $\SK(D)=1$.
\end{thm}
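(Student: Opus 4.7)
The plan is to show $D^{(1)}\subseteq D'$ directly, by lifting a residue-level decomposition supplied by the hypothesis and patching it up with the Congruence Theorem plus Hilbert~90.

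First I would observe that if $x\in D^{(1)}$ then by Remark~\ref{rem} we have $v(x)=\frac{1}{n}v(\Nrd_D(x))=0$, so $x\in U_D$ and the residue $\overline{x}\in\overline{D}^*$ is defined. Using the hypothesis $\overline{D}^*=\overline{L}^*\,\overline{D'}$ I can choose lifts $l\in U_L$ and $c\in D'\cap U_D$ with $\overline{x}=\overline{l}\,\overline{c}$, and then $x=lc(1+m)$ for some $m\in M_D$.

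The next step is to adjust $l$ so that it has trivial $L/F$-norm. Taking reduced norms in $x=lc(1+m)$ yields
\[
1=\Nrd_D(x)=N_{L/F}(l)\cdot 1\cdot \Nrd_D(1+m),
\]
and by the identity $\Nrd_D(1+M_D)=1+M_F$ obtained from the Congruence Theorem, we get $N_{L/F}(l)\in 1+M_F$. Tameness of $D$ makes $\gcd(n,\chr\overline{F})=1$, so Hensel's lemma produces $u\in 1+M_F$ with $u^n=N_{L/F}(l)^{-1}$. Setting $l':=lu\in L^*$ I then have $N_{L/F}(l')=N_{L/F}(l)u^n=1$. Since $u\in F$ is central, I repackage
\[
x \;=\; l'\cdot c\cdot\bigl(u^{-1}(1+m)\bigr)\;=\;l'\cdot c\cdot m',\qquad m':=u^{-1}(1+m)\in 1+M_D.
\]
Applying $\Nrd_D$ once more forces $\Nrd_D(m')=1$, so $m'\in (1+M_D)\cap D^{(1)}=[D^*,1+M_D]\subseteq D'$ by the Congruence Theorem.

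It remains to show $l'\in D'$. Since $L/F$ is cyclic with generator $\sigma$, Hilbert~90 writes $l'=\sigma(k)k^{-1}$ for some $k\in L^*$. By Skolem--Noether there exists $z\in D^*$ with $\sigma(a)=zaz^{-1}$ for all $a\in L$, whence $l'=zkz^{-1}k^{-1}=[z^{-1},k^{-1}]\in D'$. Combining, $x=l'\cdot c\cdot m'\in D'$, and hence $\SK(D)=1$.

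The main obstacle I expect is the bookkeeping in the first two paragraphs: one must verify that the residue equality $\overline{D}^*=\overline{L}^*\,\overline{D'}$ really supplies lifts inside $U_L$ and $D'\cap U_D$, and that after the Hensel correction the new error term $m'$ still lies in $D^{(1)}$ so that $(1+M_D)\cap D^{(1)}=[D^*,1+M_D]$ can be invoked. Tameness enters the argument in two distinct places: to make $1+M_F$ be $n$-divisible (so the Hensel correction $u$ exists), and to make the Congruence Theorem available at the very end.
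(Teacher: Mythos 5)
Your proof is correct and takes essentially the same route as the paper: both arguments lift the residue decomposition $\overline{D}^*=\overline{L}^*\,\overline{D'}$ to $D$, kill the $1+M_D$ ambiguity using the Congruence Theorem, and then show the $L$-part lies in $D'$ via Hilbert~90 combined with Skolem--Noether. The only difference is organizational: the paper first absorbs $1+M_D\subseteq(1+M_F)D'\subseteq U_LD'$ (as in Proposition~\ref{propf4}) to obtain the exact identity $U_D=U_LD'$, so that $N_{L/F}(l)=1$ holds on the nose with no correction, whereas you lift first and patch afterwards with the Hensel $n$-th root $u$ and the equality $(1+M_D)\cap D^{(1)}=[D^*,1+M_D]$ --- both steps being consequences of the same Congruence Theorem, so nothing new is needed and your bookkeeping goes through.
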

\begin{proof}
As in the proof of Proposition \ref{propf4} one may conclude that
the reduction map induces an isomorphism
$\overline{D}^*/\overline{L}^*\overline{D'}\rightarrow U_D/U_LD'$.
This yields $U_D=U_LD'$. Now, if $x\in D^{(1)}$ then $x=ld$ where
$l\in L$ and $d\in D'$. So $\operatorname{Nrd}_D(x)=N_{L/F}(l)=1$.
From the Hilbert Theorem 90, $l$ is of the form $a\sigma(a)^{-1}$ where
$\sigma$ is the generator of $\Gal(L/F)$. Now the Skolem-Noether Theorem
implies that $\sigma(a)=cac^{-1}$ for some $c\in D^*$. Therefore
$l=aca^{-1}c^{-1}\in D'$, as desired.
\end{proof}
\begin{exa}\label{exaf14}
Let $L/F$ be a cyclic extension with
$\Gal(L/F)=\langle\sigma\rangle$. Let $\operatorname{char}(F)\nmid n$
where $n=[L:F]$ and consider the division algebra $D=L((x,\sigma))$
which is obtained by the Hilbert construction. We know that
$Z(D)=F((x^n))$ and $\operatorname{ind}(D)=n$. Considering the
natural valuation of $D$ we have $\overline{D}=\overline{L}$. So
Theorem \ref{thmf13} shows that $\SK(D)=1$.
\end{exa}
In the following example we show that in contrast with $\SK$, the
group $\CK$ is not stable under purely transcendental extensions.
\begin{exa}\label{exaf15}
Let $D$ be a division algebra over its center $F$ with index $n$.
Let $\mathcal{P}$ runs over all irreducible monic polynomials of
$F[x]$ and $n_{\mathcal{P}}$ be the index of
$D\otimes_FF[x]/\mathcal{P}$. By \cite[Prop. 7]{LT93} the sequence
\begin{equation}\label{eqd7}
1\rightarrow K_1(D)\rightarrow K_1(D(x))\rightarrow
\bigoplus_{\mathcal{P}} (n_{\mathcal{P}}/n)\mathbb{Z}\rightarrow 1
\end{equation}
which is obtained from the localization of exact sequences in
algebraic $K$-Theory, is split exact. Thus, since every
$(n_{\mathcal{P}}/n)\mathbb{Z}$ is torsion-free, the sequence
(\ref{eqd7}) induces an isomorphism
$$\psi:\tau(K_1(D))\rightarrow \tau(K_1(D(x)))\ ,\ \overline{a}\mapsto \overline{a\otimes 1},$$
where $\tau(K_1(D))$ and $\tau(K_1(D(x)))$ are the torsion subgroups
of $K_1(D)$ and $K_1(D(x))$, respectively. Now, if $\overline{a}\in
\SK(D)$ then $\operatorname{Nrd}_{D(x)}(a\otimes 1)=
\operatorname{Nrd}_D(a)=1$ and so $a\otimes 1\in \SK(D(x))$.
Conversely, if $\overline{b}\in \SK(D(x))$ then there is an
$\overline{a}\in \tau(K_1(D))$ such that
$\overline{b}=\overline{a\otimes 1}$. But
$\operatorname{Nrd}_{D(x)}(a\otimes 1)=\operatorname{Nrd}_D(a)=1$.
This implies that the restriction of $\psi$ on $\SK(D)$ induces an
isomorphism $\SK(D)\rightarrow \SK(D(x))$. On the other hand since
$\CK(D)$ is the cokernel of the natural map $K_1(F)\rightarrow
K_1(D)$, applying the Snake Lemma to the commutative diagram
$$
\xymatrix{
1\ar[r]&K_1(F)\ar[r]\ar[d]&K_1(F(x))\ar[r]\ar[d]&\bigoplus_{\mathcal{P}}\mathbb{Z}\ar[r]\ar[d]&1\\
1\ar[r]&K_1(D)\ar[r]&K_1(D(x))\ar[r]&\bigoplus_{\mathcal{P}}(n_{\mathcal{P}}/n)\mathbb{Z}\ar[r]&1
}
$$
yields the following split exact sequence
\begin{equation}\label{eqd8}
1\rightarrow \CK(D)\rightarrow
\CK(D(x))\rightarrow\bigoplus_{\mathcal{P}}
\frac{\mathbb{Z}}{(n/n_{\mathcal{P}})\mathbb{Z}}\rightarrow 1.
\end{equation}
Now, consider the special case in which $D$ is the real quaternion
$\mathcal{Q}$. Since the degree of every irreducible polynomial over
$\mathbb{R}$ is either $1$ or $2$ we can easily see that
$\oplus_{\mathcal{P}}(n/n_{\mathcal{P}})\mathbb{Z} \cong
\oplus_{i=1}^{\infty}2\mathbb{Z}$. So, from Example \ref{exaf9} and
(\ref{eqd8}) we conclude that $\CK(\mathcal{Q}(x))\cong
\oplus_{i=1}^{\infty} \mathbb{Z}/2\mathbb{Z}$.
\end{exa}
Unlike $\SK$ it seems that $\CK$ is rarely trivial. In fact in
\cite{HaMaHMi99} the following conjecture was posed.
\begin{conj}\label{conjf16}
$\CK(D)$ is trivial if and only if $D$ is quaternion over a real Pythagorean field.
\end{conj}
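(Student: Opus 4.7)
The forward direction has essentially been established in Example \ref{exaf9}: when $F$ is real Pythagorean and $\mathcal{Q}$ is the ordinary quaternion division algebra over $F$, Wang's theorem gives $\SK(\mathcal{Q})=1$ since $\operatorname{ind}(\mathcal{Q})=2$ is square-free, and the Pythagorean hypothesis forces every value $a^2+b^2+c^2+d^2$ of the reduced norm to lie in $F^{*2}$; hence both end terms of the four-term exact sequence
\begin{equation*}
1\to\mu_n(F)/Z(D')\to\SK(D)\to\CK(D)\to\operatorname{Nrd}_D(D^*)/F^{*n}\to 1
\end{equation*}
vanish and $\CK(\mathcal{Q})=1$. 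The true content of the conjecture is therefore the converse, and I would attack it in three stages.

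Assume $\CK(D)=1$, so $D^*=F^*D'$, and taking reduced norms yields $\operatorname{Nrd}_D(D^*)=F^{*n}$ where $n=\operatorname{ind}(D)$. First I would extract structural consequences: for every maximal subfield $K$ of $D$ the inclusion $N_{K/F}(K^*)\subseteq\operatorname{Nrd}_D(D^*)=F^{*n}$ places severe constraints on the norm subgroups of all maximal subfields, and the displayed sequence collapses to $\SK(D)\cong\mu_n(F)/Z(D')$, which for generic $D$ should fail.

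The central and most delicate stage is to prove $n=2$. My plan is to argue by contradiction, assuming $n\geq 3$. Using the primary decomposition $\CK(D_1\otimes_FD_2)\cong\CK(D_1)\times\CK(D_2)$ of Theorem \ref{thmf3}, one reduces to the case $n=p^k$ a prime power. For $n=p$ an odd prime, I would combine Albert's criterion (Theorem \ref{thm1}) with $\operatorname{Nrd}_D(D^*)=F^{*p}$ to force cyclic structure on $D$ and derive a contradiction from the explicit form of the reduced norm in a cyclic algebra, or alternatively produce a valued lift of $D$ (in the spirit of Examples \ref{exaf10} and \ref{exaf11}) where the computation of $\CK$ via Theorem \ref{thmf7} detects nontriviality. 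For $n=2^k$ with $k\geq 2$, the transfer map of Proposition \ref{propf1} together with restriction--corestriction should reduce matters to smaller indices. This is the main obstacle: nothing purely formal in the $K$-theoretic setup forbids $n\geq 3$, so the argument requires a genuinely new ingredient --- either a cohomological obstruction detecting nontriviality of $\CK$ in higher index, or an explicit construction producing a nontrivial element of $\CK(D)$ whenever $\deg(D)\geq 3$.

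Once $n=2$ is established, $D$ is a quaternion algebra over $F$; writing $D=(\alpha,\beta/F)$, the condition $\operatorname{Nrd}_D(D^*)=F^{*2}$ says every value of the norm form $a^2-\alpha b^2-\beta c^2+\alpha\beta d^2$ lies in $F^{*2}$. Specializing $c=d=0$ shows every nonzero value $a^2-\alpha b^2$ of a binary form is a square; combined with $D$ being a division algebra, this forces $F$ to be formally real and Pythagorean, exactly as in the second half of Example \ref{exaf9}, and one may then normalize so that $D$ is the ordinary quaternion algebra over $F$.
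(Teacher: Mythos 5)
The statement you are trying to prove is a \emph{conjecture} in this paper: the authors offer no proof of it, only partial results, and the converse direction remains open to this day. What you establish correctly is exactly the part that is already known: the forward direction and the index-$2$ case of the converse are the content of Example \ref{exaf9} (your exact-sequence argument is a mild repackaging of the computation there, with $-1=[i,j]\in D'$ making the left-hand term vanish), and the reduction to prime-power index via Theorem \ref{thmf3} is how the paper itself proceeds in its partial results (Theorem \ref{thmf17}, Proposition \ref{propf19}, Theorem \ref{thmf20}). So your proposal reproduces the known perimeter of the problem but does not close it, and you candidly flag this yourself.

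The gap is your central stage, and it is worth seeing concretely why each suggested tool fails. (a) Albert's Criterion (Theorem \ref{thm1}) requires an element of order $p$ in $D^*/F^*$; triviality of $\CK(D)$, i.e.\ $D^*=F^*D'$ with $\Nrd_D(D^*)=F^{*p}$, supplies no such element, so you cannot ``force cyclic structure.'' In fact the cyclic case is precisely what Theorem \ref{thmf17} and Proposition \ref{propf19} already settle ($\NK(D)\neq 1$ for noncommutative cyclic algebras outside the quaternion/Pythagorean case, and $\NK$ is a quotient of $\CK$ since $D'\subseteq D^{(1)}$); what remains open is the \emph{noncyclic} case, and for $p\geq 5$ it is unknown whether degree-$p$ division algebras must be cyclic. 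The paper's remark following Corollary \ref{corg7} makes this explicit for the closely related group $\overline{G}(D)$: any obstruction in degree $>2$ runs into ${}_p\Br(F)$ being generated by noncyclic algebras of prime degree $p\geq 5$ with $[F(\mu_p):F]\geq 4$, one of the oldest open questions in the subject. (b) The ``valued lift'' idea points the wrong way: Theorem \ref{thmf7} computes $\CK$ only for tame division algebras over \emph{Henselian} centers, which an arbitrary $D$ need not admit, and Corollary \ref{corf8} gives $\CK(D((x)))\cong\CK(D)$, so passing to Laurent series \emph{preserves} triviality rather than detecting nontriviality (this is exactly how Example \ref{exaf9} propagates Pythagorean fields, not how one would refute them). (c) For $n=2^k$, $k\geq 2$, the transfer machinery is powerless: Corollary \ref{corf2} gives injectivity of $\mathcal{I}:\CK(D)\to\CK(D\otimes_FA)$ only when $[A:F]$ is \emph{coprime} to $\ind(D)$, while any index-lowering extension of a $2$-power algebra has even degree, so $\mathcal{P}\mathcal{I}=\eta_m$ kills the entire $2$-primary group and restriction--corestriction yields nothing. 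The strongest known result in the converse direction is Theorem \ref{thmf22} of Hazrat--Wadsworth, which handles centers finitely generated (but not algebraic) over a subfield by entirely different methods; outside that and the cyclic-decomposable case of Theorem \ref{thmf20}, your ``genuinely new ingredient'' is not a missing lemma but the open heart of Conjecture \ref{conjf16} itself.
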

The first attempt to prove Conjecture \ref{conjf16} was fulfilled in
\cite{HaVi05} where it is proved that if $D$ is a tensor
product of cyclic algebras then the answer to this conjecture is
positive. Afterwards in \cite{HaWa07}, Hazrat and Wadsworth proved that if
$F$ is finitely generated over a subfield $F_0$ but not algebraic
over $F_0$, then $F$ is NKNT, i.e., for any non-commutative division
algebra $D$ finite dimensional over $F$ (and not necessary central
over $F$), then $\NK(D)=D^*/F^*D^{(1)}$ is non-trivial and hence
$\CK(D)\neq 1$. In what follows we are going to review these
results.
\begin{thm}\label{thmf17}
Let $D$ be a noncommutative division algebra similar to a cyclic
algebra $A$. If one of the following conditions holds, then
$\NK(D)\neq 1$:
\begin{enumerate}
  \item $F$ contains a square root of $-1$;
  \item $\operatorname{char}(F)=2$;
  \item The degree of $A$ is odd.
\end{enumerate}
\end{thm}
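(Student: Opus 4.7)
The plan is to convert $\NK(D)\ne 1$ into a concrete non-$n$-th-power statement in $F$ and then exploit the cyclic presentation of $A$.

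First, the reduced norm $\operatorname{Nrd}_D\colon D^*\to F^*$ restricts on $F^*$ to $c\mapsto c^n$ (where $n=\deg D$) and has kernel $D^{(1)}$; consequently it induces an isomorphism
$$
 \NK(D) \;=\; D^*/F^*D^{(1)} \;\xrightarrow{\;\cong\;}\; \operatorname{Nrd}_D(D^*)/F^{*n}.
$$
Because $A\sim D$ in $\operatorname{Br}(F)$, one has $\operatorname{Nrd}_A(A^*)=\operatorname{Nrd}_D(D^*)$ as subgroups of $F^*$, and the task becomes: produce an element of $\operatorname{Nrd}_A(A^*)$ outside $F^{*n}$.

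Write $A=(K/F,\sigma,b)=\bigoplus_{i=0}^{m-1}Kx^i$ with $xc=\sigma(c)x$ and $x^m=b$. The generator $x$ has minimal polynomial $t^m-b$ over $F$, and a direct companion-matrix computation in the splitting $A\otimes_F K\cong\operatorname{M}_m(K)$ gives
$$
 \operatorname{Nrd}_A(x)=(-1)^{m-1}b.
$$
The sign $(-1)^{m-1}$ is harmless under each of (1)--(3): it is $+1$ in case (2) (since $-1=1$ in characteristic $2$) and in case (3) (since $m-1$ is even); in case (1) the identity $-1=(\sqrt{-1})^2$, together with a short parity analysis involving multiplication of $x$ by suitable roots of unity in $F$, reduces the sign to a manifest $n$-th power. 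Thus in every case the problem reduces to showing
$$
 b\notin F^{*n}.
$$

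The main obstacle is this last reduction, because non-triviality of $D$ in the Brauer group gives only the weaker $b\notin N_{K/F}(K^*)$. I would argue by contradiction: suppose $b=c^n$ with $c\in F^*$. Since $K/F$ is cyclic of degree $m$ with $n\mid m$, there is a unique intermediate field $L$ with $[K:L]=n$ and $[L:F]=m/n$. For $c\in F\subseteq L$ one has $N_{K/L}(c)=c^{[K:L]}=c^n=b$, so $A\otimes_F L\cong(K/L,\sigma^{m/n},b)$ is split; hence $L$ is a splitting field of $D$, which forces $n=\operatorname{ind}(D)\mid[L:F]=m/n$, i.e.\ $n^2\mid m$. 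The three hypotheses are then used to refute this: either the divisibility $n^2\mid m$ is already incompatible with the hypothesis, or one iterates the same argument on the further reduced norms $\operatorname{Nrd}_A(xk)=(-1)^{m-1}bN_{K/F}(k)$ for $k\in K^*$, using the arithmetic of $F$ afforded by each hypothesis (presence of $\sqrt{-1}$, characteristic $2$, or odd degree of $A$) to force a contradiction. This case-by-case Brauer-arithmetic refinement is the technical heart of the proof.
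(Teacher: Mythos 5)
Your opening reductions are sound and match the paper's: $\NK(D)\cong \operatorname{Nrd}_D(D^*)/F^{*n}$ via the reduced norm, $\operatorname{Nrd}_A(A^*)=\operatorname{Nrd}_D(D^*)$ via the Dieudonn\'e determinant, and $\operatorname{Nrd}_A(x)=(-1)^{m-1}b$. But the step you yourself flag as ``the technical heart'' is a genuine gap, and your sketch of it does not close. Your contradiction argument from $b=c^{n}$ is correct as far as it goes: $L$ splits $D$ and hence $n^2\mid m$. The problem is that $n^2\mid m$ is not absurd --- nothing in the hypotheses bounds the degree $m$ of the \emph{given} cyclic algebra $A$ in terms of $\operatorname{ind}(D)$, so for a fixed presentation $b$ can perfectly well lie in $F^{*n}$ while $D$ remains a nonsplit division algebra. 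The fallback ``iterate on $\operatorname{Nrd}_A(xk)=(-1)^{m-1}bN_{K/F}(k)$'' would require showing $b\notin F^{*n}N_{K/F}(K^*)$, which is again not implied by nonsplitting, and you give no argument that the iteration terminates or produces a contradiction. In short, the target $b\notin F^{*n}$ is the wrong invariant to aim for: it is a property of the presentation, not of the Brauer class.

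The paper's proof avoids this trap with two moves you are missing. First, it reduces to prime-power index: each primary component of $D$ is again similar to a cyclic algebra, and by the coprime decomposition (Theorem \ref{thmf3} and its $\NK$ analogue) one may assume $\operatorname{ind}(D)=p^{n}$. Second --- and this is the key idea --- it \emph{normalizes the presentation} by choosing a cyclic algebra $A=(E/F,\sigma,a)$ of \emph{minimal} degree $p^{m}$ similar to $D$: if $a=b^{p}$, then $[A]=[(E_0/F,\sigma,b)]$ with $[E:E_0]=p$, a cyclic algebra of strictly smaller degree, contradicting minimality. Hence $a\notin F^{*p}$ comes for free. Since $F^{*p^{n}}\subseteq F^{*p}$, it then suffices to check $\operatorname{Nrd}_A(\alpha)=(-1)^{p^m-1}a\notin F^{*p}$ --- a much weaker statement than your $b\notin F^{*n}$ --- and the sign $(-1)^{p^m-1}$ is disposed of exactly as in your case analysis: it is $1$ when $\operatorname{char}F=2$ or $p$ is odd, and when $p=2$ with $\sqrt{-1}\in F$ one has $-1\in F^{*2}$, so $-a\notin F^{*2}$ because $a\notin F^{*2}$ (your case (1), which you left vague, is just this one line). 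So the missing idea is precisely: do not fight $F^{*n}$ directly; pass to the $p$-primary component, use minimality of the cyclic degree to guarantee the slot is not even a $p$-th power, and exploit the inclusion of $n$-th powers in $p$-th powers.
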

\begin{proof}
It is not hard to see that every primary component of $D$ is similar
to a cyclic algebra. Thus by Theorem \ref{thmf3} we may assume that
$\operatorname{ind}(D)=p^n$ where $p$ is a prime number and
$n\geq1$. Put $A=(E/F,\sigma,a)$. Choose $A$ of minimal degree. Then
$\operatorname{deg}(A)=p^m$ and $a\not\in F^{*p}$, because if
$a=b^p$ then $[A]=[(E_0/F,\sigma,b)]$ where $[E:E_0]=p$,
contradicting the minimality of $\deg(A)$. Let $\alpha\in A^*$ be
such that $\sigma(l)=\alpha^{-1}l\alpha$ and $\alpha^{p^m}=a$. Now,
one can easily see that the minimal polynomial of $\alpha$ over $F$
is $x^{p^m}-a$. Thus
$\operatorname{Nrd}_A(\alpha)=N_{F(\alpha)/F}(\alpha)=
(-1)^{p^m-1}a$. But if $\operatorname{char}F=2$ or $p$ is odd then
$\operatorname{Nrd}_A(\alpha)=a\not\in F^{*p}$. Also if $F$ contains
a square root of $-1$ and $p=2$ then
$\operatorname{Nrd}_A(\alpha)=-a\not\in F^{*2}$ as $a\not\in
F^{*2}$. Therefore all of the above conditions give
$\operatorname{Nrd}_{A}(\alpha)\not\in F^{*p}$. However, thanks to
the Dieudonn$\acute{\textrm{e}}$ determinant,
$\operatorname{Nrd}_A(A^*)= \operatorname{Nrd}_D(D^*)$. Thus
$\operatorname{Nrd}_D(\alpha)\not\in F^{*p^n}$. This forces that
$\operatorname{Nrd}_D(D^*)\neq F^{*p^n}$ and so $\NK(D)\neq 1$.(Note
that $\NK(D)\cong D^*/F^*D^{(1)}$ with the isomorphism given by the
reduced norm map.)
\end{proof}
Recall that by Albert's Main Theorem \cite[p. 110]{Dra83} every
$p$-algebra is Brauer equivalent to a cyclic $p$-algebra. Using this
fact and Theorem \ref{thmf17} the following corollary is immediate.
\begin{cor}\label{corf18}
If $D$ is a $p$-division algebra then $\NK(D)\neq 1$.
\end{cor}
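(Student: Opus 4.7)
The plan is to combine Albert's Main Theorem with Theorem \ref{thmf17} in a direct way. By Albert's Main Theorem, since $D$ is a $p$-algebra, $D$ is Brauer equivalent to some cyclic $p$-algebra $A$. In particular $D$ is noncommutative similar to the cyclic algebra $A$, which puts us precisely in the hypothesis of Theorem \ref{thmf17}.

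The next step is a characteristic split. Because $D$ is a $p$-algebra, we have $\operatorname{char}(F)=p$ and $\deg(A)$ is a power of $p$. If $p=2$, then condition (2) of Theorem \ref{thmf17} is satisfied immediately. If $p$ is odd, then $\deg(A)=p^m$ is odd, so condition (3) of Theorem \ref{thmf17} applies. In either case Theorem \ref{thmf17} yields $\NK(D)\neq 1$.

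There is essentially no obstacle here: the whole content has already been packaged in Theorem \ref{thmf17}, and Albert's Main Theorem supplies the cyclic representative needed to invoke it. The only thing to be careful about is the degenerate case $D=F$, which is excluded by the convention that a $p$-division algebra is noncommutative (otherwise $\NK(F)=1$ trivially). Once that is noted, the proof reduces to the two-line case analysis described above.
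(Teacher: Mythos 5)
Your proof is correct and is essentially identical to the paper's: the paper also cites Albert's Main Theorem to replace $D$ by a Brauer-equivalent cyclic $p$-algebra and then declares the result ``immediate'' from Theorem~\ref{thmf17}, and your case split ($p=2$ via condition (2), $p$ odd via condition (3), since $\deg(A)=p^m$ is then odd) is exactly the implicit two-line argument behind that word. Your remark about excluding the commutative case is a harmless clarification the paper leaves tacit.
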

\begin{prop}\label{propf19}
Let $D$ be a cyclic division algebra. Then $\NK(D)=1$ if and only if
$D$ is the ordinary quaternion division algebra and $F$ is real
Pythagorean.
\end{prop}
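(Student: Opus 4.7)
My plan has two parts. The reverse direction is immediate from Example~\ref{exaf9}: if $D$ is the ordinary quaternion over a real Pythagorean $F$, then $\CK(D)=1$ was established there, so $\NK(D)$, being a quotient of $\CK(D)$, is also trivial.

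For the forward direction, assume $D$ is cyclic with $\NK(D)=1$, and let $n=\deg D$. Applying the contrapositive of Theorem~\ref{thmf17} to $D$ (which must simultaneously fail all three conditions) forces $\operatorname{char}(F)\neq 2$, $-1\notin F^{*2}$, and $n$ even. I then follow the construction in the proof of Theorem~\ref{thmf17}: choose a cyclic presentation $A=(E/F,\sigma,a)\sim D$ of minimal degree. Since $D$ is division, $\deg A=\ind A=n$, so $A$ is itself division and $A\cong D$; minimality also yields $a\notin F^{*2}$ (otherwise a smaller cyclic presentation would exist). Picking $\alpha\in A$ with $\alpha^n=a$ and minimal polynomial $x^n-a$, one computes $\operatorname{Nrd}_A(\alpha)=(-1)^{n-1}a=-a$.

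The hypothesis $\NK(A)=1$ forces $-a\in F^{*n}$; writing $-a=c^n=N_{E/F}(c)$ (since $c\in F^*\subseteq E^*$) gives $a\equiv -1\pmod{N_{E/F}(E^*)}$, hence $A\cong (E/F,\sigma,-1)$. Invoking the classical criterion that $(E/F,\sigma,b)$ is a division algebra of degree $[E:F]$ iff $b$ has order $[E:F]$ in $F^*/N_{E/F}(E^*)$, and observing that $(-1)^2=1$ is trivially a norm, the order of $-1$ in this quotient is at most $2$, so $n\leq 2$. Combined with $n$ even and $n\geq 2$ this gives $n=2$; thus $D$ is a quaternion algebra.

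Finally, writing $D=(-1,b)_F$ with the usual generators $i,j$ (so $i^2=-1$, $j^2=b$, $ij=-ji$), the explicit reduced norm formula $\operatorname{Nrd}_D(x+yi+zj+wij)=x^2+y^2-b(z^2+w^2)$ together with $\operatorname{Nrd}_D(D^*)=F^{*2}$ furnishes both remaining conclusions. Specializing $(x,y,z,w)=(0,0,1,0)$ yields $-b\in F^{*2}$, so $b\equiv -1\pmod{F^{*2}}$ and $D\cong (-1,-1)_F$. Specializing $z=w=0$ yields $x^2+y^2\in F^{*2}$ for all $(x,y)\neq(0,0)$ (nonzero by $-1\notin F^{*2}$), making $F$ Pythagorean; together with $-1\notin F^{*2}$ this means $F$ is real Pythagorean. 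The main technical step is the passage from $A\cong (E/F,\sigma,-1)$ to $n=2$, handled cleanly by the ``order of $b$ modulo norms'' criterion for a cyclic algebra to be division; everything else is direct calculation with the reduced norm.
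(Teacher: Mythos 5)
Your reverse direction is fine ($D'\subseteq D^{(1)}$, so $\NK(D)$ is a quotient of $\CK(D)$, which Example~\ref{exaf9} shows is trivial), and your forward argument tracks the paper's proof faithfully up to the normalization $D\cong(E/F,\sigma,-1)$: the minimal-degree cyclic presentation, $a\notin F^{*2}$, the minimal polynomial $x^n-a$, $\operatorname{Nrd}_D(\alpha)=-a$, and the first use of $\NK(D)=1$ all match (the paper extracts the sharper fact $\deg(D)=2^m$, not merely $n$ even, via the primary decomposition as in Theorem~\ref{thmf3}, and this matters below). The fatal step is the \emph{classical criterion} you invoke. What is classically true is that the order of $b$ in $F^*/N_{E/F}(E^*)$ equals the \emph{exponent} of $(E/F,\sigma,b)$, i.e., the order of its class in $\operatorname{Br}(F)$, via the isomorphism $\operatorname{Br}(E/F)\cong F^*/N_{E/F}(E^*)$. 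The \emph{if} direction of your criterion (order equal to $[E:F]$ implies division) is Wedderburn's sufficient condition and is correct; the \emph{only if} direction is false, since a degree-$n$ cyclic algebra is division exactly when its \emph{index} is $n$, and the index can strictly exceed the exponent. From $(-1)^2=1\in N_{E/F}(E^*)$ you therefore learn only $\exp(D)\leq 2$, not $n\leq 2$: cyclic division algebras of degree $4$ and exponent $2$ exist (Albert), so pure cyclic-algebra theory cannot rule out $D\cong(E/F,\sigma,-1)$ being division of degree $2^m$ with $m\geq 2$.

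The paper closes exactly this gap by spending the hypothesis $\NK(D)=1$ a \emph{second} time, whereas your proposal spends it only once (to replace $a$ by $-1$), which cannot suffice. Concretely, for $m\geq 2$ take the quadratic subfield $L=F(\beta)\subseteq E$ with $\beta^2\in F$. The centralizer $Z_D(L)\cong(E/L,\sigma^2,-1)$ is division of index $2^{m-1}>1$, which forces $\sqrt{-1}\notin L$ (otherwise $-1\in L^{*2}$ would drop the index of this cyclic algebra). Then one computes $\operatorname{Nrd}_D(\beta)=N_{L/F}(\beta)^{2^{m-1}}=(-\beta^2)^{2^{m-1}}=\beta^{2^m}$ (note the sign dies only because $2^{m-1}$ is even, which is where $n=2^m$, $m\geq2$, is used), while $\NK(D)=1$ gives $\beta=fd$ with $f\in F^*$, $d\in D^{(1)}$, hence $\operatorname{Nrd}_D(\beta)=f^{2^m}$. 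So $\beta f^{-1}$ is a root of unity of $2$-power order in $L$; since $\beta\notin F$ that order is at least $4$, putting $\sqrt{-1}\in L$ — a contradiction. This is what forces $m=1$, and only after that does your concluding quaternion computation (which is correct, and essentially the paper's own ending via $\operatorname{Nrd}_D(D^*)=F^{*2}$ and Example~\ref{exaf9}) become applicable. To repair your proof you would need to reproduce this centralizer-plus-norm argument, or some equivalent second use of $\NK(D)=1$; the step from exponent $2$ to index $2$ is not available for free.
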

\begin{proof}
Let $\NK(D)=1$. From Theorem \ref{thmf3} and Theorem \ref{thmf17} it
follows that $\deg(D)=2^m$ for some $m$ and $\sqrt{-1}\not\in F^*$.
Let $D=(E/F,\sigma,a)$ and put $n=\deg(D)$. Let $\alpha\in D^*$ be
such that $D=\oplus_{j=1}^{n-1}E\alpha^j$ and $\alpha^n=a$. As in
the proof of Theorem \ref{thmf17} we have
$\operatorname{Nrd}_D(\alpha)=-a=-\alpha^n$. But
$\operatorname{Nrd}_D(\alpha)\in F^{*n}$ because $\NK(D)=1$. So
$-\alpha^n=f^n$ for some $f\in F^*$. This yields $(\alpha
f^{-1})^n=-1$. Thus we may replace $a$ by $-1$, i.e.,
$D=(E/F,\sigma,-1)$. Now, let $m>1$ and let $L\supset F$ be a
subfield of $E$ with $[L:F]=2$. Since $m>1$ then the division
algebra $Z_D(L)\cong (E/L,\sigma^2,-1)$ has index $2^{m-1}>1$. But
if $L$ contains a square root of $-1$ then $Z_D(L)$ is not a
division algebra which is a contradiction. So $\sqrt{-1}\not\in L$.
On the other hand $L=F(\beta)$ for some $\beta\in L$ with
$\beta^2\in F$. Since the minimal polynomial of $\beta$ over $F$ is
$x^2-\beta^2$ we conclude that $N_{L/F}(\beta)=-\beta^2$. Therefore
\begin{align}\label{eqd9}
\operatorname{Nrd}_D(\beta)=N_{L/F}&(N_{E/L}(\beta))=\\ \nonumber
N_{L/F}&(\beta^{2^{m-1}})=N_{L/F}(\beta)^{2^{m-1}}=(-\beta^2)^{2^{m-1}}=\beta^{2^m}.
\end{align}
At the other extreme, $\beta=fd$ for some $f\in F$ and $d\in
D^{(1)}$ as $\NK(D)=1$. Thus $\operatorname{Nrd}_D(\beta)=f^{2^m}$.
From (\ref{eqd9}) we obtain $\beta^{2^m}=f^{2^m}$ and hence $(\beta
f)^{2^{m}}=1$. Let $j$ be the smallest nonnegative integer such that
$(\beta f)^{2^{j}}=1$. If $j=0$ or $1$ then $\beta=\pm f$ which is
absurd. So $j\geq 2$. Hence $\sqrt{-1}\in L$ and $L=F(\sqrt{-1})$.
This contradiction implies that $m=1$. Thus $D$ is a quaternion
division algebra. Now, a similar argument as above shows that every
maximal subfield of $D$ is $F$-isomorphic to $F(\sqrt{-1})$ and
therefore $D$ is the ordinary quaternion division algebra. Now, from
Example \ref{exaf9} it follows that $F$ is real Pythagorean.
\end{proof}
Here we must point out that by the work of Hazrat and Vishne, a more
general conclusion than Proposition \ref{propf19} holds for cyclic
central simple algebras.  In \cite[Cor. 2.11]{HaVi05} they showed
that if $A$ is a cyclic $F$-central simple algebra with $\NK(A)=1$
then $A$ is a matrix algebra over $F$ or over an ordinary quaternion
division algebra. Now we are in a position to prove the following
theorem.
\begin{thm}\label{thmf20}
Let $D=C_1\otimes_F\ldots\otimes_F C_k$ be an $F$-central division
algebra where $C_1,\ldots,C_k$ are cyclic algebras over $F$. If
$\NK(D)=1$ then $D$ is a cyclic division algebra. Hence $D$ is the
ordinary quaternion division algebra and $F$ is real Pythagorean.
\end{thm}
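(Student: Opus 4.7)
The plan is to reduce to prime-power index via a primary decomposition, then analyze each prime case using the explicit cyclic-algebra generator employed in the proof of Theorem \ref{thmf17}.

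First I would establish the $\NK$-analog of Theorem \ref{thmf3}: for $F$-central simple algebras $A, B$ of coprime indices, $\NK(A \otimes_F B) \cong \NK(A) \times \NK(B)$. This follows from the transfer-map argument of Proposition \ref{propf1} together with the identification $\NK(D) \cong \operatorname{Nrd}_D(D^*)/F^{*\ind D}$. Since a cyclic algebra $(L/F, \sigma, a)$ of degree $\prod p_j^{f_j}$ is Brauer-equivalent to $\bigotimes_j (L_j/F, \sigma_j, a)$, where $L_j$ is the unique cyclic subextension of $L/F$ of degree $p_j^{f_j}$, I may henceforth assume $\ind(D) = p^e$ with every $C_i$ of $p$-power degree $n_i = p^{e_i}$.

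For each non-split factor $C_i = (L_i/F, \sigma_i, a_i)$ I choose the presentation so that $a_i \notin F^{*p}$, exactly as in the minimality step in the proof of Theorem \ref{thmf17}. The generator $\alpha_i \in C_i$ with $\alpha_i^{n_i} = a_i$ satisfies $\operatorname{Nrd}_{C_i}(\alpha_i) = (-1)^{n_i - 1} a_i$. Viewing $\alpha_i$ in $D$ via the natural embedding $C_i \hookrightarrow D$, multiplicativity of the reduced norm on tensor products yields
\[
\operatorname{Nrd}_D(\alpha_i \otimes 1 \otimes \cdots \otimes 1) \;=\; \operatorname{Nrd}_{C_i}(\alpha_i)^{p^{e - e_i}} \;=\; \bigl((-1)^{n_i - 1} a_i\bigr)^{p^{e - e_i}}.
\]
The hypothesis $\NK(D) = 1$ forces this to lie in $F^{*p^e}$. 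For odd $p$ the sign vanishes, so $a_i \in F^{*n_i} \cdot \mu_{p^{e - e_i}}(F)$; combined with $a_i \notin F^{*p}$ this yields $\mu_p(F) \neq 1$, placing us in the symbol-algebra setting where a single cyclic representative of $D$ contradicts Theorem \ref{thmf17} unless $D$ is split. For $p = 2$, Theorem \ref{thmf17}(1) forces $\sqrt{-1} \notin F$, hence $\mu_{2^j}(F) = \{\pm 1\}$; so $-a_i = \pm b_i^{n_i}$, and since the $+$ case would split $C_i$ we must have $a_i = -b_i^{n_i}$, giving $C_i \sim (L_i/F, \sigma_i, -1)$.

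The main obstacle is the concluding step for $p = 2$: showing that $D \sim \bigotimes_i (L_i/F, \sigma_i, -1)$ can be a division algebra only when $\deg D = 2$. I would iterate the reduced-norm computation on suitable products of the generators $\alpha_i$, producing an element whose reduced norm is (up to sign) a product of $-1$'s; this value fails to be a $p^e$-th power in $F$ whenever $e \geq 2$, since $\sqrt{-1} \notin F$. The only surviving case is $e = 1$, whereupon $D$ is cyclic of degree $2$, and Proposition \ref{propf19} supplies the remaining conclusion that $D$ is the ordinary quaternion division algebra and $F$ is real Pythagorean.
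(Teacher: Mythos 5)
Your opening moves match the paper's: the reduction to prime-power index via an $\NK$-analogue of Theorem \ref{thmf3}, and the computation $\Nrd_D(\alpha_i\otimes 1\otimes\cdots\otimes 1)=\Nrd_{C_i}(\alpha_i)^{n/n_i}=\bigl((-1)^{n_i-1}a_i\bigr)^{n/n_i}$, which together with $\NK(D)=1$ lets you rescale each slot to a root of unity. But at the decisive point your argument has a genuine gap, in both branches. The paper's key idea, absent from your proposal, is this: once each (rescaled) slot $b_i$ is a root of unity, the $b_i$ generate a finite cyclic group $\langle b\rangle$, so every $C_i\sim(K_i/F,\sigma_i,b^{g_i})$ with a \emph{common} slot $b$; by linearity of the cyclic-algebra Brauer class in the character, $[D]$ is the class of a \emph{single} cyclic algebra of degree $\operatorname{lcm}(n_1,\ldots,n_k)$, and since $D$ is a division algebra of degree $n_1\cdots n_k$ this forces $\operatorname{lcm}(n_1,\ldots,n_k)=n_1\cdots n_k$, hence $k=1$ (the degree being a prime power), after which Proposition \ref{propf19} finishes both parities at once. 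Your odd-$p$ branch simply asserts ``a single cyclic representative of $D$''; that assertion is the entire content of the step, and $\mu_p(F)\neq 1$ alone does not supply it, since symbol algebras of different degrees $p^{e_i}$ with different slots cannot be merged without something like the common-slot argument (to merge symbols of unequal degree directly you would need $\mu_{p^e}\subseteq F$, which you do not have).

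The $p=2$ branch fails concretely. After you normalize $a_i=-1$, each generator satisfies $\Nrd_{C_i}(\alpha_i)=(-1)^{n_i-1}a_i=1$ (as $n_i$ is even), so by multiplicativity of the reduced norm \emph{every} product of the $\alpha_i$'s and their powers has reduced norm $1\in F^{*2^e}$: the proposed iteration can never produce a value $-1$. Moreover the criterion ``$-1$ fails to be a $2^e$-th power whenever $e\geq 2$'' proves too much: since $\sqrt{-1}\notin F$ one already has $-1\notin F^{*2}\supseteq F^{*2^e}$ for $e=1$, so an element of reduced norm $-1$ would equally rule out the quaternion case, where $\NK$ \emph{is} trivial --- no such element exists there, which shows your construction cannot be producing one. (Note also that your appeal to Theorem \ref{thmf17}(1) to conclude $\sqrt{-1}\notin F$ presupposes that $D$ is similar to a cyclic algebra, which at that stage is exactly the unproven claim.) If you wanted a purely norm-theoretic route avoiding the common-slot lemma, the right elements are not products of the $\alpha_i$ but elements $\beta_i$ with $\beta_i^2\in F$ generating the quadratic subfield of each $K_i$, as in the proof of Proposition \ref{propf19}: triviality of $\NK$ forces each such quadratic subfield to equal $F(\sqrt{-1})$, and for $k\geq 2$ the subalgebra $K_1\otimes_FK_2$ of $D$ would then contain $F(\sqrt{-1})\otimes_FF(\sqrt{-1})$, which has zero divisors --- but that is a different argument from the one you sketched, and as written your proof does not close.
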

\begin{proof}
By Theorem \ref{thmf3} we may assume that $\deg(D)$ is a prime
power. Let $n=\deg(D)$ and $n_i=\deg(C_i)$. For each $i$, let $K_i$
be a cyclic maximal subfield of $C_i$, and $e_i\in C_i$ an element
inducing an automorphism $\sigma_i$ of order $n_i$ of $K_i/F$. Then
$b_i=e_i^{n_i}\in F^*$. Now,
$\operatorname{Nrd}_D(z_i)=\operatorname{Nrd}_{C_i}(z_i)^{n/n_i}=
((-1)^{n_i-1}b_i)^{n/n_i}=b_i^{n/n_i}$ (note that since $n$ is a
prime power $(n_i-1)n/n_i$ is even). But since $\NK(D)=1$ we have
$b_i^{n/n_i}\in F^{*n}$. Thus multiplying $e_i$ by a central element
we may assume $b_i$ is an $n/n_i$-th root of unity. Now, let $b$ be
the generator of the group $\langle b_1,\ldots,b_k\rangle$. So every
$C_i$ is a cyclic algebra of the form $(K_i/F,\sigma_i,b^{g_i})$ for
some $g_i$ and $D$ is similar in the Brauer group to cyclic algebra
of degree $\operatorname{lcm}(n_1,\ldots,n_k)$. But since
$\operatorname{lcm}(n_1,\ldots,n_k)\leq n_1\ldots n_k(=n)$ and $D$
is a division algebra we conclude that
$\operatorname{lcm}(n_1,\ldots,n_k)=n_1\ldots n_k$. This forces that
$k=1$ (because $n$ is a prime power) and now the result follows from
Proposition \ref{propf19}.
\end{proof}
\begin{exa}\label{exaf21}
Since every division algebra over a global field is a cyclic
algebra, Theorem \ref{thmf20} implies that every global field is
NKNT. Likewise, every nonreal local field is NKNT. But the field of
real numbers $\mathbb{R}$ is not NKNT as the ordinary
$\mathbb{R}$-division algebra has trivial $\NK$. At the other
extreme $\mathbb{R}(x)$ is NKNT. For if $L$ is a finite field
extension of $\mathbb{R}(x)$ and $D$ is an $L$-central
noncommutative division algebra, then by Tsen's Theorem,
$L(\sqrt{-1})$ splits $D$ and thus $D$ is a quaternion division
algebra. Now, since $L$ is not Pythagorean $\NK(D)\neq1$ (Theorem
\ref{thmf20}).
\end{exa}
As mentioned above, a significant result concerning Conjecture
\ref{conjf16} was obtained in \cite{HaWa07}. Here we present the
main result of of that paper without proof.
\begin{thm}\label{thmf22}
Let $D$ be a noncommutative division algebra with center $F$ which
is finitely generated over some subfield $F_0$. If $\NK(D)$ is
trivial, then $[F:F_0]<\infty$.
\end{thm}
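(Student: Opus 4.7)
The plan is to argue by contradiction and induct on the transcendence degree $r = \operatorname{trdeg}(F/F_0)$. Assume $\NK(D) = 1$ while $F$ is finitely generated over $F_0$ with $[F : F_0] = \infty$, so $r \geq 1$. Writing $n = \operatorname{ind}(D)$ and using the reduced-norm identification $\NK(D) \cong \operatorname{Nrd}_D(D^*)/F^{*n}$ (from the exact sequence preceding Theorem~\ref{thmf3}), the hypothesis becomes $\operatorname{Nrd}_D(D^*) = F^{*n}$. The goal is to exhibit a $d \in D^*$ with $\operatorname{Nrd}_D(d) \notin F^{*n}$. First, by the multiplicativity of reduced norms on tensor products together with the analogue of Theorem~\ref{thmf3} transferred to $\NK$, reduce to $n = p^k$ a prime power. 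If $p = \operatorname{char}(F)$, Corollary~\ref{corf18} already gives $\NK(D) \neq 1$; so assume $p$ is invertible in $F$.

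Since $r \geq 1$, choose a discrete rank-one valuation $v \colon F^* \to \mathbb{Z}$ that is trivial on $F_0$ and whose residue field $\overline{F}$ is finitely generated over $F_0$ of transcendence degree $r-1$ (e.g., a divisorial valuation on a normal projective model of $F$ over $F_0$), arranged so that $\operatorname{char}(\overline{F}) \neq p$. Passing to the Henselization $\hat F$, the central simple algebra $D \otimes_F \hat F \cong M_s(\hat D)$ has a division component $\hat D$ of index $\hat n = n/s$. By tameness and Henselianness, $v$ extends uniquely to $\hat D$ with value group $\Gamma_{\hat D}$ and residue division algebra $\overline{\hat D}$; and by Remark~\ref{rem},
\[
v\bigl(\operatorname{Nrd}_{\hat D}(\hat d)\bigr) = \hat n \, w(\hat d), \qquad \hat d \in \hat D^*,
\]
where $w$ is the extended valuation. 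The map $D^* \hookrightarrow \hat D^*$ shows that $\operatorname{Nrd}_D(D^*) \subseteq \operatorname{Nrd}_{\hat D}(\hat D^*) \cap F^*$, so the triviality of $\NK(D)$ forces $v(\operatorname{Nrd}_D(D^*)) \subseteq n\Gamma_F$.

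Now apply the dichotomy of Theorem~\ref{thmf7}. In the \emph{ramified} case $\Gamma_{\hat D} \supsetneq \Gamma_{\hat F}$, a uniformizer $\pi \in \hat D$ satisfies $w(\pi) \notin \Gamma_F$, so $v(\operatorname{Nrd}_{\hat D}(\pi)) \notin n\Gamma_F$. Approximating $\pi$ in the valuation topology by an element of the dense image of $D$ inside $M_s(\hat D)$ produces $d \in D^*$ whose reduced norm has $v$-value outside $n\Gamma_F$, contradicting $\operatorname{Nrd}_D(D^*) = F^{*n}$. In the \emph{unramified} case, Theorem~\ref{thmf7}(1) gives $\CK(\hat D) \cong \CK(\overline{\hat D})$, and the same isomorphism passes to the $\NK$-quotient. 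The residue algebra $\overline{\hat D}$ is central simple over $Z(\overline{\hat D})$, a finite extension of $\overline F$, hence finitely generated over $F_0$ of transcendence degree at most $r-1$. Triviality of $\NK(D)$ propagates to $\NK(\overline{\hat D}) = 1$, and the inductive hypothesis forces $[Z(\overline{\hat D}) : F_0] < \infty$, i.e.\ $r = 1$; the remaining base case $r = 1$ is handled directly by the ramified analysis above, applied to an auxiliary valuation of $F$.

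The principal obstacle is the unramified case when $\overline{\hat D}$ happens to be commutative (a field), since then the induction hypothesis holds vacuously and yields no contradiction. This is circumvented by exploiting the richness of the set of discrete valuations on a finitely generated field of positive transcendence degree: one must choose a valuation for which either $\hat D$ is genuinely ramified or $\overline{\hat D}$ remains noncommutative, and showing that such a valuation exists is a nontrivial Brauer-class argument (ruling out universal splitting across every residue). A secondary technical point is the weak-approximation step in the ramified case: density of $F$ in $\hat F$ combined with the tensor-product embedding $D \hookrightarrow M_s(\hat D)$ must be converted to density of the \emph{non-scalar} images of $D$ near a chosen uniformizer of $\hat D$, which requires care with the matrix structure.
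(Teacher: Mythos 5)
The first thing to say is that the paper contains no proof of Theorem \ref{thmf22} to compare against: it is explicitly presented ``without proof,'' with the argument deferred to \cite{HaWa07}. Your skeleton --- reduce to prime-power index $n=p^k$ via the coprime decomposition, dispose of $p=\operatorname{char}(F)$ by Corollary \ref{corf18}, choose a divisorial discrete rank-one valuation trivial on $F_0$ with residue field finitely generated of transcendence degree $r-1$, Henselize, play a value-group argument against a residue-descent, and induct on $r$ --- is indeed the strategy of that cited source. But as written your proposal is not a proof, because its final paragraph concedes rather than closes the crux, and moreover misdiagnoses where the difficulty sits. Do the value bookkeeping carefully: writing $D\otimes_F\hat F\cong \operatorname{M}_s(\hat D)$ with $\hat n=n/s$ and $e=|\Gamma_{\hat D}:\Gamma_F|$, and normalizing $v(F^*)=\mathbb{Z}$, the reduced norm on $\operatorname{GL}_s(\hat D)$ (computed through the Dieudonn\'e determinant) has value set $\frac{n}{se}\,\mathbb{Z}$; since the image of $D$ is dense in $\operatorname{M}_s(\hat D)$ and $v\circ\operatorname{Nrd}$ is locally constant on $\operatorname{GL}_s(\hat D)$, every such value is realized on $D^*$. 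Triviality of $\NK(D)$, i.e.\ $\operatorname{Nrd}_D(D^*)=F^{*n}$, then forces $\frac{n}{se}\,\mathbb{Z}\subseteq n\mathbb{Z}$, hence $s=e=1$. Two consequences: any splitting at a place ($s>1$, in particular total splitting) \emph{immediately} yields $\NK(D)\neq1$, so the ``commutative residue'' scenario you single out as the principal obstacle never arises; and in the sole surviving case ($\hat D$ an unramified tame division algebra) one has $[\overline{\hat D}:\overline F]=n^2$, so the residue algebra is \emph{automatically} noncommutative, and Ershov's formula (Theorem \ref{thmf6}) descends $\NK$-triviality to it, making the inductive step for $r\geq2$ clean.

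The genuine gap is the base case $r=1$, which your sentence ``handled directly by the ramified analysis above, applied to an auxiliary valuation'' does not address. There the inductive conclusion $[Z(\overline{\hat D}):F_0]<\infty$ is no contradiction at all: noncommutative division algebras with trivial $\NK$ over small fields exist (quaternions over a real closed field, cf.\ Proposition \ref{propf19} and Example \ref{exaf9}), so obtaining one as a residue proves nothing. What must be shown is that there \emph{exists} at least one discrete valuation of $F$, trivial on $F_0$, at which $se>1$ --- equivalently, that $D$ cannot remain an inert unramified division algebra of full index $n$ at every such place (for instance by producing a maximal subfield $L$ of $D$ and a place of $F$ with more than one extension, or of local degree $<n$, in $L$). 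This existence statement is precisely the Brauer-theoretic content of \cite{HaWa07}, and your proposal names it as an obstacle without supplying it; without it the whole argument collapses at the bottom of the induction. Two smaller repairs: $D^*$ does not embed in $\hat D^*$ when $s>1$, so your containment $\operatorname{Nrd}_D(D^*)\subseteq\operatorname{Nrd}_{\hat D}(\hat D^*)\cap F^*$ should be replaced by $\operatorname{Nrd}_D=\operatorname{Nrd}_{\operatorname{M}_s(\hat D)}\vert_D$ (after which your ``secondary technical point'' about approximating a uniformizer dissolves, since local constancy of $v\circ\operatorname{Nrd}$ does all the work); and since $v$ is trivial on $F_0$ one has $\operatorname{char}(\overline F)=\operatorname{char}(F)$ automatically, so nothing needs to be ``arranged'' once $p\neq\operatorname{char}(F)$.
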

Theorem \ref{thmf22} can be restated as :
\begin{cor}
Let $F$ be field which is finitely generated but not algebraic over
some subfield $F_0$. Then $F$ is $\operatorname{NKNT}$.
\end{cor}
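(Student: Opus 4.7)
The plan is to reduce the statement directly to Theorem~\ref{thmf22} by passing from $F$ to the center of $D$. Assume $F$ is finitely generated over $F_0$ with $[F:F_0]=\infty$, and let $D$ be an arbitrary noncommutative division algebra finite dimensional over $F$. Set $E=Z(D)$. Since $[E:F]\leq [D:F]<\infty$ and $F$ is finitely generated over $F_0$, the center $E$ is also finitely generated over $F_0$. Moreover, $E\supseteq F$ and $F$ is not algebraic over $F_0$, so $E$ is not algebraic over $F_0$ either, giving $[E:F_0]=\infty$. Thus $D$ is a noncommutative division algebra with center $E$ which is finitely generated over $F_0$ but of infinite degree over $F_0$.

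The contrapositive of Theorem~\ref{thmf22} applied with the center $E$ in place of $F$ then yields $\NK(D)=D^*/E^*D^{(1)}\neq 1$. To finish, I would transfer this nontriviality from the center $E$ down to the reference subfield $F$ demanded by the definition of NKNT. Since $F\subseteq E$, we have $F^*D^{(1)}\subseteq E^*D^{(1)}$, so there is a natural surjection
\[
D^*/F^*D^{(1)}\twoheadrightarrow D^*/E^*D^{(1)},
\]
and nontriviality of the target forces nontriviality of the source. Hence $\NK(D)=D^*/F^*D^{(1)}\neq 1$ for every noncommutative $D$ with $[D:F]<\infty$, so $F$ is NKNT.

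The argument is essentially a one-line reduction, and no genuine obstacle arises: the only thing to check is that passing from $F$ to the (possibly larger) center $E$ preserves both hypotheses of Theorem~\ref{thmf22}, namely finite generation over $F_0$ and transcendence over $F_0$, which are both immediate from $[E:F]<\infty$ and $F\subseteq E$.
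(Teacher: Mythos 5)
Correct, and essentially the paper's own argument: the paper gives no separate proof, presenting the corollary as a direct restatement of Theorem~\ref{thmf22}, and the only content that restatement needs is precisely your observation that the center $E=Z(D)$ inherits finite generation over $F_0$ (being finite over the finitely generated field $F$) and non-algebraicity over $F_0$ (containing $F$). Your closing surjection $D^*/F^*D^{(1)}\twoheadrightarrow D^*/E^*D^{(1)}$ is a harmless extra that also covers the literal reading of the survey's formula $\NK(D)=D^*/F^*D^{(1)}$; under the intended reading, where $\NK(D)$ is taken relative to the center (the reading that makes the paper's ``hence $\CK(D)\neq 1$'' valid), you were already finished after applying the theorem.
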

Also from Theorem \ref{thmf22} and Example \ref{exaf21} the
following corollary is immediate.
\begin{cor}
If $D$ is a noncommutative division algebra whose center is finitely
generated over its prime subfield or over an algebraically closed
field, then $\NK(D)$ is nontrivial. Hence $\CK(D)\neq 1$.
\end{cor}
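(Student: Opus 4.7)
The plan is to derive this as a clean corollary of Theorem \ref{thmf22}, splitting the two cases on $F_0$ and reducing each to a contradiction with the assumption that $D$ is noncommutative.

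First I would suppose for contradiction that $\NK(D)=1$. Since the center $F$ is by hypothesis finitely generated over a subfield $F_0$ (either the prime subfield or an algebraically closed field), Theorem \ref{thmf22} immediately gives $[F:F_0]<\infty$. At this point I would split into two cases and show that in each case $D$ is forced to be commutative, contradicting the hypothesis.

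In the first case, $F_0$ is the prime subfield of $F$, so $F_0$ is either $\mathbb{Q}$ or $\mathbb{F}_p$ for some prime $p$, and $F$ is a finite extension of $F_0$. If $F_0=\mathbb{Q}$, then $F$ is a number field, hence a global field, and Example \ref{exaf21} asserts that every global field is NKNT; thus $\NK(D)\neq 1$, a contradiction. If $F_0=\mathbb{F}_p$, then $F$ is a finite field, and since $D$ is finite dimensional over $F$ (this is built into the definition of $F=Z(D)$ under which $\NK$ is defined here), Wedderburn's theorem forces $D=F$ to be commutative, again a contradiction.

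In the second case, $F_0$ is algebraically closed. A finite extension of an algebraically closed field is algebraic, so $F=F_0$ is algebraically closed. Consequently $D\otimes_F F=D$ must be a matrix algebra over its algebraic closure $F$; equivalently, the Brauer group of $F$ is trivial, and since $D$ is a division algebra we get $D=F$, contradicting noncommutativity. In both cases the assumption $\NK(D)=1$ fails, so $\NK(D)\neq 1$. The final assertion $\CK(D)\neq 1$ is then automatic from the natural surjection $\CK(D)=D^*/F^*D'\twoheadrightarrow D^*/F^*D^{(1)}=\NK(D)$, which exists because $D'\subseteq D^{(1)}$ (the reduced norm is multiplicative into the abelian group $F^*$). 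No step is really the hard one here; the entire content sits inside Theorem \ref{thmf22}, and the only subtlety is making sure the ``commutative center'' cases (finite fields, algebraically closed fields) are handled by Wedderburn and the triviality of the Brauer group, respectively.
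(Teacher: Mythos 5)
Your proof is correct and follows exactly the route the paper intends: the paper derives this corollary directly from Theorem~\ref{thmf22} together with Example~\ref{exaf21}, just as you do, with the residual cases (finite center via Wedderburn, algebraically closed center via the trivial Brauer group) left implicit there and merely spelled out by you. Your justification of the final implication via the surjection $\CK(D)\twoheadrightarrow\NK(D)$ coming from $D'\subseteq D^{(1)}$ also matches the paper's standing observation.
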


\begin{rem}
An inverse problem to the reduced $K$-theory\footnote{Not to be mistaken with the established ``inverse problem'', that  is to find all the abelian groups which appear as the group $\SK$.} is the problem of triviality of the group $\CK(D)$. Note that
\begin{equation}\label{uuy11}
\SK(D)=\ker\big(K_1(D) \stackrel{\Nrd}{\longrightarrow} K_1(F)\big),
\end{equation}
whereas
\begin{equation}\label{uuy22}
\CK(D)=\text{coker}\big(K_1(F) \stackrel{id.}{\longrightarrow} K_1(D)\big).
\end{equation}

In the reduced $K$-theory, the nontriviality of $\SK$ is the major question, whereas with the group $\CK$, it is the triviality which has remained an open question. The conjecture is that $\CK(D)$ is trivial if and only if $D$ is an ordinary quaternion division algebra over Pythagorean field \cite{HW09}. Recall that this conjecture  has a direct application in solving the open problem
whether a multiplicative group of a division algebra has a maximal
subgroup \cite{HW09}. Indeed, since $\CK(D)$ is torsion of
bounded exponent, if it is not
trivial, it has maximal subgroups and therefore $D^*$ has (normal)
maximal subgroups. Thus finding the maximal subgroups in $D^*$
reduces to the case that $\CK(D)$ is trivial. It was shown
in \cite{HW09} that quaternion division algebras over pythagorean fields do have (non-normal)
maximal subgroups. Thus if the above conjecture is settled
positively, one concludes that the multiplicative group of a
division algebra does have a maximal subgroup.

The groups $\SK$ and $\CK$ are both torsion groups of bounded exponent. This is one of the main properties of these groups that make them tractable. With the view of (\ref{uuy22}), for an Azumaya algebra $A$ over its center $R$, the inclusion map $\text{id}:R\rightarrow
A$ gives the following exact sequence
\begin{equation}\label{zk}
1 \rightarrow \ZK[i](A) \rightarrow K_i(R) \rightarrow K_i(A)
\rightarrow \CK[i](A) \rightarrow 1.
\end{equation}
Here $K_i$ for $i \geq 0$ are the Quillen $K$-groups and $\ZK[i](A)$,  $\CK[i](A)$ are the kernel and co-kernel of
$K_i(R) \rightarrow K_i(A)$ respectively. Clearly when $A$ is a division algebra and $i=1$, we get $\CK(D)$ defined in (\ref{uuy22}). The fact that $\CK[i](A)$ and $\ZK[i](A)$ are torsion of bounded exponent was studied in several papers (see~\cite{bant,hazhoobler}). A consequence of this, is that the $K$-theory of $A$ coincides with the $K$-theory of its base ring up to torsions.
\end{rem}

\section{Graded approach to the theory of division algebras}
As previous sections show, valued division algebras have been one of the main sources of producing (counter)examples in the theory of central simple algebras. In fact, valued division algebras have been the key ingredients in Amitsur's non-crossed product construction and in Platonov's construction of division algebras $D$ with a non-trivial reduced Whitehead group. These two constructions settled major and several-decade open problems. A glance at these important works shows the formidable technical calculations required in the presence of valuations to achieve the results.

Starting with a division algebra $D$ with a valuation,  one can construct a graded ring
\[{\gr(D)= \bigoplus_{\ga \in \Ga_D}\gr(D)_\gamma},\] where
$\Ga_D$ is the value group of $D$ and
the summands $\gr(D)_\gamma$ arise from the filtration on $D$
induced by the valuation
(see \S\ref{prel} for details). Infact, $\gr(D)$ is a graded division ring,
i.e., every nonzero homogeneous element of $\gr(D)$
is a unit.  While $\gr(D)$ has a much simpler
structure than $D$, nonetheless $\gr(D)$ provides a
remarkably good  reflection of $D$ in many ways,
particularly when the valuation on the center
$Z(D)$ is Henselian. The approach of making calculations
in $\gr(D)$, then lifting back to get nontrivial
information about $D$ has been remarkably successful.
This has provided enough motivation for a systematic study of this
correspondence, notably by Boulagouaz, Hazrat, Hwang,
Tignol and Wadsworth \cite {boulag,hwsk1,hwunitary,hwalg,hwcor,tigwad,wyan}, and to
compare certain functors defined on these objects,
notably the Brauer groups, Witt groups, and the reduced Whitehead groups.

In the case of reduced Whitehead group that we are concerned in this note,   it was proved
in \cite[Th.~4.8, Th.~5.7]{hwsk1}  that if a valuation $v$ on $Z(D)$ is
Henselian and $D$ is tame over $Z(D)$, then
\begin{equation}\label{tthhyy1}
\SK(D) \cong
\SK(\gr(D))
\end{equation}
and
\begin{equation}\label{tthhyy2}
\SK(\gr(D))\cong \SK(q(\gr(D))),
\end{equation}
where $q(\gr(D))$ is the division ring of quotients of
$\gr(D)$.  This has allowed the recovery of many of some known
calculations of $\SK(D)$ with much easier proofs, as well
as leading to the determination of $\SK(D)$ in some
new cases. In this section we demonstrate how to obtain~(\ref{tthhyy1}). Most of the material in this section are taken from~\cite{hwsk1,millar}.

\subsection{Graded division algebras}\label{prel}

Here we establish the notation and
recall some fundamental facts about graded division
algebras indexed by a totally ordered abelian group, and
about their connections with valued division algebras.
In addition, we
 establish some important  homomorphisms relating the group
structure of a valued division algebra to the group
structure of its associated graded division algebra.

Let
$R = \bigoplus_{ \ga \in \Gamma} R_{\ga}$ be a
graded ring, i.e.,
  $\Gamma$ is an abelian group,  and $R$ is a
unital ring such that each $R_{\ga}$ is a
subgroup of $(R, +)$ and
$R_{\ga} \cdot R_{\de} \subseteq R_{\ga +\de}$
for all $\ga, \de \in \Ga$. Set
\begin{itemize}
\item[] $\Gamma_R  \ = \  \{\ga \in \Ga \mid R_{\ga} \neq 0 \}$,
 \  the grade set of $R$;
\vskip0.05truein
\item[] $R^h  \ = \ \bigcup_{\ga \in
\Ga_{R}} R_{\ga}$,  \ the set of homogeneous elements of $R$.
\end{itemize}
For a homogeneous element of $R$  of degree $\gamma$, i.e., an
$r \in R_\gamma\mi{0}$, we write $\deg(r) = \gamma$.
Recall  that $R_{0}$ is a subring of $R$ and that for each $\ga \in
\Ga_{R}$, the group $R_{\ga}$ is a left and right $R_{0}$-module.
A subring $S$ of
$R$ is a \emph{graded subring} if $S= \bigoplus_{ \ga \in
\Gamma_{R}} (S \cap R_{\ga})$.  For example, the
center of $R$, denoted $Z(R)$, is a graded subring of
$R$.
If $T = \bigoplus_{ \ga \in
\Gamma} T_\gamma$ is another graded ring,
a {\it graded ring homomorphism} is a ring homomorphism
$f\colon R\to T$ with $f(R_\gamma) \subseteq T_\gamma$
for all $\gamma \in \Gamma$.  If $f$ is also bijective,
it is called a graded ring isomorphism;  we then write
$R\conggr T$.

For a graded ring $R$, a graded left $R$-module $M$ is
a left  $R$-module with a grading ${M=\bigoplus_{\ga \in \Ga'}
M_{\ga}}$,
where the $M_{\ga}$ are all abelian groups and $\Ga'$ is an
abelian group containing $\Ga$, such that $R_{\ga} \cdot
M_{\delta} \subseteq M_{\ga + \delta}$ for all $\ga \in \Ga_R,
\delta \in \Ga'$.
Then, $\Gamma_M$ and $M^h$ are
defined analogously to $\Gamma_R$ and~$R^h$.  We say that $M$ is
a {\it graded free} $R$-module if it has a base as a free
$R$-module consisting of homogeneous elements.

A graded ring $E = \bigoplus_{ \ga \in \Gamma} E_{\ga}$ is
called a \emph{graded division ring} if $\Ga$ is a
torsion-free abelian group and
every non-zero homogeneous
element of $E$ has a multiplicative inverse.
Note that the grade set  $\Ga_{E}$ is actually a group.
Also, $E_{0}$ is a division ring,
and  $E_\gamma$ is a $1$-dimensional
left and right $E_0$ vector space for every $\gamma\in \Gamma_E$.
The requirement that $\Gamma$ be torsion-free is made
because we are interested in graded division rings arising
from valuations on division rings, and all the grade groups
appearing there are torsion-free.  Recall that every
torsion-free abelian group $\Gamma$ admits total orderings compatible
with the group structure.  (For example, $\Gamma$ embeds in
$\Gamma \otimes _{\mathbb Z}\mathbb Q$ which can be given
a lexicographic total ordering using any base of it as a
$\mathbb Q$-vector space.)  By using any total ordering on
$\Gamma_E$, it is easy to see that $E$ has no zero divisors
and that $E^*$, the multiplicative group of units of $E$,
coincides with
 $E^{h} \mi \{0\}$ (cf. \cite{hwcor}, p.~78).
Furthermore, the degree map
\begin{equation}\label{degmap}
\deg\colon E^* \rightarrow \Gamma_E
\end{equation} is a group homomorphism with kernel $E_0^*$.

By an easy adaptation of the ungraded arguments, one can see
that every graded  module~$M$  over a graded division ring
$E$ is graded free, and every two
homogenous bases have the same cardinality.
We thus call $M$ a \emph{graded vector space} over $E$ and
write $\dim_E(M)$ for the rank of~$M$ as a graded free $E$-module.
Let $S \subseteq E$ be a graded subring which is also a graded
division ring.  Then, we can view $E$ as a graded left $S$-vector
space, and  we write $[E:S]$ for $\dim_S(E)$.  It is easy to
check the \lq\lq Fundamental Equality,"
\begin{equation}\label{fundeq}
[E:S] \ = \ [E_0:S_0] \, |\Gamma_E:\Gamma_S|,
\end{equation}
where $[E_0:S_0]$ is the dimension of $E_0$ as a left vector space
over the division ring $S_0$ and $|\Gamma_E:\Gamma_S|$ denotes the
index in the group $\Gamma_E$ of its subgroup $\Gamma_S$.

A \emph{graded field} $T$ is a commutative graded division ring.
Such a $T$ is an integral domain, so it has a quotient field,
which we denote $q(T)$.  It is known, see \cite{hwalg}, Cor.~1.3,
that $T$ is integrally closed in $q(T)$.  An extensive theory of
graded algebraic extensions of graded fields has been developed in
\cite{hwalg}.
For a graded field $T$, we can define a
grading on the polynomial ring $T[x]$ as follows:
Let  $\Delta$ be a totally ordered abelian group with
$\Gamma_T \subseteq \Delta$, and fix
$\theta \in \Delta$.  We have
\begin{equation}\label{homogenizable}
T[x]  \,  = \,
\textstyle\bigoplus\limits_{\ga \in
\Delta} T[x]_{\ga}, \quad
\mathrm{where}\quad
T[x]_{\ga} \  = \  \{ \textstyle\sum a_{i} x^{i} \mid a_{i} \in T^{h}, \
\deg(a_{i}) +i \theta = \ga \}.
\end{equation}
This makes $T[x]$ a graded ring, which we denote $T[x]^{\theta}$.
Note that $\Gamma_{T[x]^{\theta}} = \Gamma_T +
\langle \theta \rangle$.
 A homogeneous polynomial in $T[x]^{\theta}$
 is said to be
{\it $\theta$-homogenizable}.
If $E$ is a graded division
algebra with center $T$, and
$a \in E^{h}$ is homogeneous of
degree $\theta$, then the evaluation homomorphism
$\epsilon_a\colon  T[x]^{\theta} \ra T[a]$
given by  $f \mapsto f(a)$ is a graded ring
homomorphism.  Assuming $[T[a]:T]<\infty$, we
have $\ker(\epsilon_a)$ is a principal ideal of
$T[x]$ whose unique monic generator $h_a$ is called the
minimal polynomial of~$a$ over $T$. It is known, see \cite{hwalg},
Prop.~2.2, that
if $\deg(a) = \theta$, then $h_a$ is $\theta$-homogenizable.

If $E$ is a graded division ring, then its center $Z(E)$ is clearly
a graded field.  {\it The graded division rings considered in
this note will always be assumed finite-dimensional over their
centers.}  The finite-dimensionality assures that $E$
has a quotient division ring $q(E)$ obtained by central localization,
i.e., $q(E) = E \otimes_T q(T)$ where $T = Z(E)$. Clearly,
$Z(q(E)) = q(T)$ and $\ind(E) = \ind(q(E))$, where
the index of $E$ is defined by $\ind(E)^2 = [E:T]$.
If $S$ is a graded field which is a graded subring of $Z(E)$
and $[E:S] <\infty$,
then $E$ is said to be a {\it graded division algebra} over~$S$.

A graded division algebra $E$ with center $T$ is
said to be {\it unramified} if $\Gamma_E=\Gamma_T$.
From~(\ref{fundeq}), it follows then that $[E:S]=[E_0:T_0]$.
At the other extreme, $E$ is said to be {\it totally ramified}
if $E_0=T_0$. In a case in the middle, $E$ is
said to be {\it semiramified} if
 $E_0$ is a field and ${[E_0:T_0]=|\Gamma_E:\Gamma_T|=\ind(E)}$.
These definitions are motivated by  analogous  definitions for
valued division algebras (\cite{Wad99}).
Indeed,  if a valued division algebra
is unramified, semiramified, or totally ramfied, then so is its
associated graded division
algebra.

\subsection{Commutators of graded division rings}

In a division ring, additive and multiplicative commutators play
important roles and there are extensive results in the literature
known as commutativity theorems. The main theme in these results
is that, additive and multiplicative commutators are ``dense'' in
a division ring.  For example, if an element commutes with all
additive commutators, then it is already a central element. It
seems that this
trend continues  for the additive commutators for a graded
division ring as we will see in this section. However the multiplicative commutators are
too ``isolated'' to determine the structure of a graded division
ring.

Let $E$ be  a graded division ring with graded center $T$. A
\emph{homogeneous additive commutator} of $E$ is an element of the
form $ab-ba$ where $a,b \in E^{h}$. We will use the notation
$\lbr a,b\rbr
= ab-ba$ for $a,b \in E^{h}$ and let $\lbr H, K\rbr$
be the additive
group generated by
$\{ dk-kd : d \in
H^h, k \in K^h\}$ where $H$ and $K$ are graded subrings of $E$.
Parallel to  the theory of division rings, one can show that if all
the homogenous additive commutators of graded division ring $E$ are
central, then $E$ is a graded field. To observe this, one can carry
over the non-graded proof, {\it mutatis-mutandis}, to the graded
setting, see, e.g., \cite{Lam01}, Prop.~13.4. Alternatively, let $y \in E^h$ be an element which commutes
with homogeneous additive commutators of $E$. Then  $y$ commutes
with all (non-homogeneous) commutators of $E$. Consider $\lbr x_1 ,
x_2\rbr$ where $x_1 , x_2 \in q(E)$. Since $q(E)=E\otimes_T q(T)$,
it follows
that $y\lbr x_1 , x_2 \rbr = \lbr x_1 , x_2 \rbr y$. So $y$ commutes
 with all
commutators of $q(E)$, a division ring, thus $y \in q(T)$. But $
E^h \cap
q(T) \subseteq T^h$, proving that $y \in T^h$. Thus, $E$~is
commutative. Again parallel to the theory of division rings, one can
prove that if $K \subseteq E$ are graded division rings, with
$\lbr E,K\rbr
\subseteq K$ and $\chr(K) \neq 2$, then $K \subseteq Z(E)$ . However,
for this one it seems there is no shortcut, and one needs to
carry out a proof similar to the one for ungraded
division rings, as in  (\cite{Lam01}, Prop.~3.7, see Proposition~\ref{additivecbhthm}).

The  paragraph above shows some similar behavior between the Lie
algebra structure of division rings and that of graded division rings.
However, this analogy often fails for the multiplicative structure
of graded division algebras. For example, the Cartan-Brauer-Hua
theorem (the multiplicative analogue of the  statement above that  if $K \subseteq E$ are graded division rings, with
$\lbr E,K\rbr
\subseteq K$ and $\chr(K) \neq 2$, then $K \subseteq Z(E)$) is not
valid in the graded setting. Also, the  multiplicative group $E^*$
of a
totally ramified graded division algebra $E$ is nilpotent
(since $E' \subseteq E_0^* =T_0^* \subseteq Z(E^*)$),
while the  multiplicative group of a noncommutative division ring
is not even
solvable, cf.~\cite{Stu64}.
Furthermore,  a totally ramified graded division algebra
$E^*$ is radical over its
center $T$ (since $E^{*\exp(\Gamma_E/\Gamma_T)} \subseteq T^*$), but
this is not the case for any non-commutative division
ring (\cite{Lam01}, Th.~15.15). Nonetheless, one significant
theorem involving conjugates that can be extended to
the graded setting is the Wedderburn factorization theorem, Theorem~\ref{domainW}.

We first establish a relation between the support of $E$ and that of the additive commutator subgroup (see Millar's Thesis~\cite{millar} for more on this theme).

\begin{lem}
Let $E = \bigoplus_{\ga \in \Ga} E_{\ga}$ be a graded division
algebra over its centre $T$.
\begin{enumerate}
\item If $E$ is totally ramified, then $\emptyset
\neq \Supp ([E,E]) \subsetneqq \Ga_E$.

\item If $E$ is not totally ramified, then $\Supp (E) = \Supp
([E,E])$.
\end{enumerate}
\end{lem}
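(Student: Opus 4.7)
The plan is to treat the two parts separately, with a shared toolkit: each $E_\ga$ is one-dimensional as a left $E_0$-module; conjugation by a nonzero homogeneous $a \in E^h$ restricts to a ring automorphism $\si_a$ of $E_0$ (since $a E_0 a^{-1} \subseteq E_0$); and $[E,E]$ is a graded subgroup, its generators $ab-ba$ (for $a,b\in E^h$) being homogeneous of degree $\deg a + \deg b$. Noncommutativity of $E$ is tacit throughout, as otherwise (1) fails.

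For (1), assume $E_0 = T_0$. Nonemptiness of $\Supp([E,E])$ is immediate from noncommutativity: some pair of homogeneous elements fails to commute, producing a nonzero homogeneous commutator. The heart of the matter is the sharper claim $\Supp([E,E]) \cap \Ga_T = \emptyset$, which yields properness since $0 \in \Ga_T$. The idea is that for $\ga \in \Ga_T$ and nonzero $a \in E_\al,\ b \in E_\be$ with $\al+\be=\ga$, the one-dimensionality of $E_\be$ over $E_0 = T_0$ lets me write $b$ as a central scalar multiple of $a^{-1} t_\ga$ for any nonzero $t_\ga \in T_\ga$; both $ab$ and $ba$ then collapse to the same central element, so $[a,b] = 0$.

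For (2), assume $E_0 \supsetneq T_0$, fix an arbitrary $\ga \in \Ga_E$, and split on commutativity of $E_0$. When $E_0$ is noncommutative, I plan to show $E_\ga \not\subseteq C_E(E_0)$: writing $y = e a_0 \in E_\ga$ with $a_0 \in E_\ga$ fixed nonzero and $e \in E_0$, the demand $xy = yx$ for all $x \in E_0$ rewrites (using $a_0 x = \si_{a_0}(x) a_0$) as $xe = e\si_{a_0}(x)$; the case $e=1$ forces $\si_{a_0}=\id$, and the relation then collapses to commutativity of $E_0$, a contradiction. Hence some $y\in E_\ga$ produces a nonzero commutator $[x,y] \in E_\ga$.

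When $E_0$ is commutative, $\si_a$ depends only on $\deg a$, giving a group homomorphism $\si : \Ga_E \to \Aut(E_0)$. Its kernel contains $\Ga_T$ but cannot equal $\Ga_E$, for otherwise $E_0 \subseteq Z(E) = T$ and so $E_0 = T_0$. I choose $\de \in \Ga_E$ with $\si_\de \neq \id$ and nonzero $a_0 \in E_\de,\ b_0 \in E_{\ga-\de}$; the one-dimensionality of $E_\ga$ over $E_0$ lets me write $a_0 b_0 = \al c$ and $b_0 a_0 = \be c$ for a chosen nonzero $c\in E_\ga$ and $\al,\be \in E_0^*$. A direct computation then gives $[a_0, k b_0] = (\al \si_\de(k) - \be k)\,c$ for $k \in E_0$; vanishing for all $k$ would force $\al = \be$ (take $k=1$) and then $\si_\de = \id$, contradicting the choice of $\de$. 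I expect this commutative-$E_0$ subcase to be the main obstacle: commutators with one leg in $E_0$ may vanish on all of $E_\ga$ (precisely when $E_\ga \subseteq C_E(E_0)$), so one is forced to construct commutators whose \emph{both} legs have nonzero degree and to track the resulting twists through the $\si_\de$.
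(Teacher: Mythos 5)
Your proof is correct, but it takes a genuinely different route from the paper's, most visibly in part (2). For (1) you establish the sharper fact that $\Supp([E,E])\cap\Ga_T=\emptyset$: for $\al+\be=\ga\in\Ga_T$ you write any $b\in E_\be$ as $e\,a^{-1}t_\ga$ with $e\in E_0=T_0$ central, so \emph{every} homogeneous commutator of degree $\ga$ vanishes identically (which also disposes of sums of generators automatically, a reduction the paper has to make explicitly). The paper instead excludes only $\ga=0$, by contradiction: from a nonzero $xy-yx\in E_0$ with $\deg x+\deg y=0$ it computes $(xy-yx)y^{-1}=y^{-1}(yx)-(yx)y^{-1}$, so that $yx\in E_0=T_0$ would be simultaneously central and non-central. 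For (2) the paper has no case split on the commutativity of $E_0$: assuming every element of $E_\ga$ commutes with all homogeneous elements, it gets $E_\ga=T_\ga$, then the one-line chain $x(dy)=(dy)x=\cdots=x(yd)$ together with invertibility of $x$ forces $E_0\subseteq Z(E)$, hence $E_0=T_0$, contradicting non-total ramification; the required nonzero commutator of degree $\ga$ is then extracted as $(xy)y^{-1}-y^{-1}(xy)$. Your argument instead analyses the conjugation action on $E_0$: when $E_0$ is noncommutative you produce a commutator with one leg in $E_0$ (correctly using one-dimensionality of $E_\ga$ to reduce $E_\ga\subseteq C_E(E_0)$ to $\si_{a_0}=\id$ and then to commutativity of $E_0$), and when $E_0$ is commutative you exploit the well-defined homomorphism $\si\colon\Ga_E\to\Aut(E_0)$, whose kernel contains $\Ga_T$ but is proper since otherwise $E_0=T_0$, via the identity $[a_0,kb_0]=(\al\si_\de(k)-\be k)c$ with $\al$ invertible. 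What each buys: the paper's proof is shorter and uniform; yours is more structural --- it yields the stronger conclusion in (1), identifies exactly when commutators with a leg in $E_0$ can fail (namely $E_\ga\subseteq C_E(E_0)$), and makes explicit the grading-induced action $\si$ on $E_0$ that reappears in the semiramified computations later in this section.
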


\begin{proof}
(1) Clearly $\emptyset \neq \Supp ([E,E]) \subseteq \Ga_E$. Since
$E_0 = T_0 = Z(E) \cap E_0$ we have $E_0 \subseteq Z(E)$. Suppose $0
\in \Supp([E,E])$. Then there is an element $\sum_i (x_i y_i - y_i
x_i) \in [E,E]$, with $\deg(x_i) + \deg(y_i) = 0$ for all $i$. If
$x_i y_i - y_i x_i = 0$ for all $i$, then clearly the sum is also
zero. Thus there are non-zero homogeneous elements $x \in E_{\ga}$,
$y \in E_{\de}$ with $0 \neq xy -yx \in E_0$ and $\ga + \de =0$.

Then $(xy-yx)y^{-1} \neq 0$, as $y^{-1} \in E_{-\de} \mi 0$ and
$xy-yx \in E_{0} \mi 0$, so their product is a non-zero homogeneous
element of degree $-\de$. Since
$$
(xy-yx)y^{-1} \;\; = \;\:\, xy y^{-1} - y x y^{-1} \;\; = \;\:\,
y^{-1} y x - y x y^{-1},
$$
we have $y^{-1} (yx) \neq (yx) y^{-1}$; that is $yx \notin Z(E)$.
Since $yx \in E_0$, this contradicts the fact that $E_0 = T_0$, so
$0 \notin \Supp([E,E])$.

\vspace{3pt}

(2) It is clear that $\Supp ([E,E]) \subseteq \Ga_E$. For the
reverse containment, for $\ga \in \Ga_E$ we will show that there is
an $x \in E_{\ga}$ which does not commute with some $y \in E_{\de}$
for some $\de \in \Ga_E$. Suppose not, then $E_{\ga} \subseteq
Z(E)$, so $E_{\ga} = T_{\ga}$. Let $x \in E_{\ga}$, $d \in E_0$, $y
\in E_{\de}$ be arbitrary non-zero elements. Then
\begin{align*}
x(d y) \;\, = \;\, (d y) x  \;\, =\,\;d( y x)\;\, =\;\, d(x y) \;\,
= \;\, (d x) y \;\, = \;\, y(d x)  \;\, =\,\;(y d) x\;\, =\;\,x (y
d).
\end{align*}
So for all $d \in E_0$, $y \in E_{\de}$ we have $x(d y) =x(y d)$.
Since $x$ is a non-zero homogeneous element, it is invertible. This
implies $d y = y d$, so $E_0 = T_0$ contradicting the fact that $E$
is not totally ramified. Then there is an $x \in E_{\ga}$ which does
not commute with $y \in E_{\de}$, so $x y y^{-1} - y^{-1} x y \neq
0$ proving $\ga \in \Supp ([E,E])$.
\end{proof}

Similarly to the theory of division ring, one can establish the following.

\begin{prop}\label{additivecbhthm}
Let $K \subseteq E$ be graded division rings, with $[E,K] \subseteq
K$. If $\chr K \neq 2$, then $K \subseteq Z(E)$.

\end{prop}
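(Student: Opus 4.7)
The plan is to transport to the graded setting the classical Herstein-style proof of the corresponding result for ungraded division rings (as in Lam, \emph{A First Course in Noncommutative Rings}, Prop.~13.17), exploiting throughout the fact that every nonzero homogeneous element of a graded division ring is automatically a unit. Since $K = \bigoplus_{\gamma} K_\gamma$ is additively generated by its homogeneous part $K^h$, it suffices to show that each $k \in K^h$ lies in $Z(E)$. I will tacitly assume $K \subsetneq E$, for if $K = E$ the stated conclusion would force $E$ commutative, which cannot be deduced from the bare hypothesis.

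First I would fix $k \in K^h$ and an arbitrary $x \in E^h$, and set $c = [x,k]$. Both $xk$ and $kx$ are homogeneous of degree $\deg(x)+\deg(k)$, so $c \in E^h$, and by the Lie-ideal hypothesis $c \in K$; hence $c \in K^h$. A short direct calculation gives the identity
\[
[x^2,k] \;=\; x(xk)-(kx)x \;=\; x(kx+c)-(xk-c)x \;=\; xc+cx,
\]
and this lies in $K$ by hypothesis. Since $c \in K^h$, the hypothesis also yields $[x,c]=xc-cx \in K$. Adding the two relations and using $\operatorname{char} K \neq 2$ gives $2xc \in K$, whence $xc \in K$.

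This is the crux. If $c \neq 0$, then $c$ is a nonzero homogeneous element of the graded division ring $K$ and so is invertible in $K$, giving $x = (xc)\,c^{-1} \in K$. Contrapositively, every $x \in E^h \setminus K$ satisfies $[x,k]=0$, i.e.\ centralizes $k$. The graded hypothesis is essential here: homogeneity of $c$ makes its invertibility automatic, a step that in the ungraded Herstein proof demands a much more delicate argument.

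To finish, pick $z \in E^h \setminus K$, which exists since $K \subsetneq E$ and both are graded. By the preceding step $zk=kz$. For any $y \in K^h$, the product $zy$ is homogeneous and lies in $E^h \setminus K$ (otherwise $z=(zy)y^{-1}\in K$ since $y$ is a unit of $K$), so $zy$ also commutes with $k$. Expanding
\[
(zy)k \;=\; k(zy) \;=\; (kz)y \;=\; (zk)y \;=\; z(ky)
\]
and canceling the invertible $z$ on the left yields $yk=ky$. Thus $k$ commutes with every element of $E^h = K^h \cup (E^h \setminus K)$, so $k \in Z(E)$, and summing over $k \in K^h$ gives $K \subseteq Z(E)$. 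The main obstacle is simply organizing the identities in the right order; once $xc \in K$ is in hand, graded invertibility closes the argument almost immediately.
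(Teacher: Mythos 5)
Your proof is correct and takes essentially the same route as the paper's: the identity $[x,[x,k]]+[x^2,k]=2x[x,k]$ together with the automatic invertibility of nonzero homogeneous elements shows that every $x\in E^h\setminus K$ centralizes $K^h$, and your ``$zy$ stays outside $K$, then cancel $z$'' step is exactly the paper's trick of comparing $[a,c]$ and $[ab,c]$ for $a\in E^h\setminus K^h$, $b\in K^h$. Your explicit flagging of the tacit hypothesis $K\subsetneq E$ is a point of extra care worth keeping, since the paper's choice of $a\in E^h\setminus K^h$ silently requires it (otherwise the statement would falsely force $E$ to be commutative when $K=E$).
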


\begin{proof}
First note that the condition $[E,K] \subseteq K$ is equivalent to
$[E^{h},K^{h}] \subseteq K^{h}$. Let $a \in E^{h} \mi K^{h}$ and $c
\in K^{h}$. We will show $ac=ca$. We have $[a, [a,c]] + [a^{2},c] =
2a[a,c] \in K$. If $[a,c] \neq 0$, then, since char$K \neq 2$, this
implies $a \in K$, contradicting our choice of $a$. Thus $[a,c] =0$.

Now let $b,c \in K^{h}$. Consider $a \in E^{h} \mi K^{h}$. Then $a,
ab \in E^{h} \mi K^{h}$ and we have shown that $[a,c], [ab,c]=0$.
Then $[b,c]= a^{-1} \cdot [ab,c] =0$. It follows that $K \subseteq
Z(E)$.
\end{proof}

\subsection{The graded Wedderburn factorization theorem} \label{weddapp}

 Let $D$ be a noncommutative division ring with center $F$,
 and  let $a \in D$ with
   minimal polynomial $f$ in~$F[x]$. Any conjugate of $a$ is also a
root of this polynomial. Since the number of conjugates of $a$ is
infinite (\cite{Lam01}, 13.26), this suggests that $f$ might split
completely in $D[x]$. In fact, this is the case, and it is called the
Wedderburn factorization theorem. We now carry over this theorem to
the setting of graded division algebras.
(This is used in proving Th.~\ref{normalthm}).

\begin{thm}[Wedderburn Factorization Theorem]\label{domainW}
Let $E$ be a graded division ring with center $T$ (with
$\Gamma_E$ torsion-free abelian).
Let  $a$ be
a homogenous element   of $E$ which is algebraic over $T$ with
minimal
polynomial $h_a \in T[x]$.
Then, $h_a$ splits completely in $E$. Furthermore, there exist
$n$ conjugates $a_{1}, \ldots , a_{n} $  of~$a$ such
that  $h_a = (x-a_{n})(x-a_{n-1}) \ldots (x-a_{1})$ in $E[x]$.
\end{thm}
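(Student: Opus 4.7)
I will proceed by induction on $n=\deg h_a$, following the classical Wedderburn factorization argument for division rings. The base case $n=1$ is immediate: $h_a(x) = x-a$ and one sets $a_1=a$.

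For the inductive step $n\geq 2$: since $a \in E^h$ is a root of the $\theta$-homogenizable polynomial $h_a \in T[x]^{\theta}$ (with $\theta = \deg(a)$), and since $x - a$ is itself $\theta$-homogeneous, the right-division algorithm in $E[x]$ yields a factorization
\[
h_a(x) \;=\; g(x)(x-a)
\]
for some $g \in E[x]$ of degree $n-1$ whose coefficients are homogeneous of the appropriate degrees.

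The key technical tool is a shifted evaluation identity for polynomials over a noncommutative ring. Using the right-evaluation convention $p(c)_r := \sum p_i c^i$ when $p(x)=\sum p_i x^i$ (coefficients on the left), a direct computation exploiting that $x$ is central in $E[x]$ gives, for any $f,h \in E[x]$ and $c \in E$ with $h(c)_r \neq 0$,
\[
(fh)(c)_r \;=\; f(c')_r \cdot h(c)_r, \qquad c' \;=\; h(c)_r \,c\, h(c)_r^{-1}.
\]
I will apply this identity to $h_a = g\cdot(x-a)$, evaluated at a homogeneous conjugate $b = d a d^{-1}$ of $a$. Such a conjugate with $b\neq a$ exists because $a \notin T$ (otherwise $\deg h_a=1$): indeed, one can find a homogeneous $d \in E^*$ with $d a d^{-1}\neq a$, since otherwise every homogeneous element of $E$ would commute with $a$, forcing $a \in Z(E)=T$. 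Setting $c = (b-a)\,b\,(b-a)^{-1}$, the identity together with $h_a(b)=0$ (central coefficients) gives
\[
0 \;=\; h_a(b) \;=\; g(c)_r \cdot (b-a),
\]
and since $E$ is a domain and $b-a \neq 0$, we deduce $g(c)_r = 0$. The element $c$ is a homogeneous conjugate of $a$ of degree $\theta$, being a conjugate of $b$ by the homogeneous element $b-a \in E^h$. Hence $(x-c)$ right-divides $g$, giving $g = g_2(x-c)$, whence $h_a(x) = g_2(x)(x-a_2)(x-a_1)$ with $a_1=a$ and $a_2=c$.

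Iterating this construction at each stage produces further homogeneous conjugates $a_3,\ldots,a_n$ of $a$ and ultimately the desired factorization $h_a(x) = (x-a_n)\cdots(x-a_1)$. The main technical hurdle will be ensuring that at each step a fresh usable homogeneous conjugate of $a$ is available, so that the iteration does not collapse by forcing the newly constructed root to coincide with one of the $a_i$ already extracted. This reduces to a graded analogue of Scott's/Herstein's theorem: a non-central homogeneous element of a finite-dimensional graded division algebra has infinitely many distinct homogeneous conjugates. Such a statement can be established from the fact that the graded centralizer $C_E(a)$ is a proper graded subring of $E$ whenever $a \notin T$, combined with the graded double centralizer theorem for graded central simple algebras over $T$.
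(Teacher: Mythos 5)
Your base case, right-division lemma, and shifted-evaluation identity are all essentially sound, with one caveat you should make explicit: in a graded division ring the identity $(fh)(c)_r = f(c')_r\, h(c)_r$ requires $h(c)_r$ to be \emph{invertible}, not merely nonzero, and nonzero elements of $E$ are units only when homogeneous. Your argument survives because every evaluation happens at elements of the single component $E_\theta$, so $h(c)_r$ (e.g.\ $b-a$, or more generally $k_j(b)_r$ with all $a_i, b \in E_\theta$ and $k_j$ $\theta$-homogenizable) is homogeneous, hence a unit when nonzero. The first extraction $h_a = g_2(x-c)(x-a)$ is correct. The genuine gap is in how you propose to continue the iteration. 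To extract the $(j+1)$-st root from $h_a = g_j k_j$ with $k_j=(x-a_j)\cdots(x-a_1)$, you need a homogeneous conjugate $b$ of $a$ with $k_j(b)_r \neq 0$, and you reduce this to a graded Scott--Herstein theorem asserting that a noncentral homogeneous element has infinitely many homogeneous conjugates. That reduction fails: infinitude of the conjugacy class cannot prevent a polynomial of small degree from vanishing on the \emph{entire} class. Already in the trivially graded case $E=\mathbb{H}$, the degree-two polynomial $x^2+1$ vanishes identically on the infinite conjugacy class of $i$; so no counting argument, however the conjugates are produced, can certify the existence of a conjugate at which $k_j$ does not vanish. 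What is actually needed is a degree bound, which is precisely Step (III) of the paper's proof: a nonzero monic polynomial vanishing identically on the conjugacy class $A$ of $a$ has degree at least $\deg h_a$. Its proof is not about cardinality but is a minimal-counterexample trick exploiting the grading: one may assume the coefficient $d_i$ of a putative shorter polynomial lies in $E_{i\gamma}$, choose the minimal $j$ with $d_j\notin T$, conjugate by a unit $e$ with $ed_j\neq d_je$, and subtract; the leading coefficient $d_j-d_j'$ of the difference is homogeneous and nonzero, hence a unit, yielding a monic polynomial of strictly smaller degree vanishing on $A$, a contradiction.

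On organization: the paper does not induct root-by-root at all. It takes a factorization $h_a = g\cdot(x-a_r)\cdots(x-a_1)$ with $r$ as large as possible, shows via the shift identity that the cofactor $k=(x-a_r)\cdots(x-a_1)$ must vanish on all of $A$ (otherwise $g$ would acquire another linear right factor, contradicting maximality of $r$), and then invokes Step (III) to force $r\geq\deg h_a$. Your induction is a legitimate alternative packaging of the same engine and could be repaired by proving the graded Step (III) and substituting it for your conjugate-counting lemma; but as written, the inductive step rests on a statement that is both unproven and, more importantly, insufficient for the purpose.
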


\begin{proof} The proof is similar to Wedderburn's  original proof
for a division ring (\cite{wedd},
see also~\cite{Lam01} for a nice account of the proof).
We sketch the proof for the convenience of the reader.
For $f = \sum c_i x^i\in E[x]$ and $a\in E$, our convention is
that $f(a)$ means $\sum c_ia^i$. Since $\Gamma_E$ is torsion-free,
we have $E^* = E^h\setminus\{0\}$.
\begin{description}
\item[I] Let $f \in E[x]$ with factorization  $f=gk$ in  $E[x]$.
If $a\in E$ satisfies
$k(a) \in T\cdot E^*$, then ${f(a) =
g(a')k(a)}$, for some conjugate  $a'$ of $a$.
(Here $E$ could be  any ring with $T\subseteq Z(E)$.)
\end{description}

\begin{proof}
Let $g = \sum b_i x^i$. Then, $f = \sum b_i k x^i $, so
$f(a) = \sum b_i k(a) a^i$. But, $k(a)=te$, where $t \in T$ and
$e\in E^*$. Thus,  $f(a)=\sum b_i te a^i = \sum
b_i e a^i e^{-1} te = \sum b_i (e a e^{-1})^{i} te = g(ea
e^{-1})k(a)$.
\end{proof}

\begin{description}
\item[II]
Let $f \in E[x]$ be a non-zero polynomial. Then
$r \in E$ is a root of $f$ if and
only if $x-r$ is a right divisor of $f$ in $E[x]$.
(Here, $E$ could be  any ring.)
\end{description}

\begin{proof}
We have $x^i-r^i = (x^{i-1} + x^{i-2}r + \ldots + r^{i-1})(x-r)$
for any $i\ge1$.  Hence,
\begin{equation}\label{x-r}
f - f(r) = g\cdot (x-r)
\end{equation} for some
$g\in E[x]$.  So, if $f(r)= 0$, then $f = g\cdot(x-r)$.
Conversely, if $x-r$ is a right divisor of $f$, then
equation~\eqref{x-r} shows that $x-r$ is a right divisor of the
constant $f(r)$.  Since $x-r$ is monic, this implies that $f(r) = 0$.
\end{proof}

\begin{description}
\item[III]
If a non-zero monic polynomial $f \in E[x]$
vanishes identically on the conjugacy class $A$ of~$a$ (i.e.,
$f(b)=0$ for all  $b \in A$), then $\deg(f) \geq \deg(h_a)$.
\end{description}

\begin{proof}
Consider $f = x^m + d_1
x^{m-1} + \ldots + d_m \in E[x]$  such that $f(A)=0$ and $m <
\deg(h_a)$ with $m$ as small as possible.
Suppose $a\in E_\gamma$, so $A\subseteq E_\gamma$, as
the units of $E$ are all homogeneous. Since the
$E_{m\gamma}$-component
of $f(b)$ is $0$ for each $b\in A$, we may assume that each
$d_i \in E_{i\gamma}$.
Because $f \notin T[x]$,
some $d_i \notin T$.
Choose~$j$ minimal with $d_j\notin T$, and some  $e \in E^*$
such that $e d_j \neq d_j e$. For any $c \in E$, write $c' :=
ece^{-1}$. Thus $d'_j \neq d_j$ but $d_\ell' = d_\ell$
for $\ell<j$. Let
$f' = x^m +d_1'x^{m-1} +\ldots +d_m' \in E[x]$.
 Now, for all $b \in A$, we have  $f'(b') =[f(b)]'= 0' = 0$. Since
$eAe^{-1} = A$, this shows that $f'(A) = 0$.  Let $g = f-f'$,
which has degree $j < m$ with leading coefficient $d_j-d_j'$.
Then, $g(A) = 0$.  But, $d_j-d_j' \in E_{j\gamma}\setminus \{0\}
\subseteq E^*$.  Thus, $(d_j-d_j')^{-1}g$ is monic of degree $j <m$
in $E[x]$,
and it vanishes on $A$.  This contradicts the choice of $f$; hence,
$m\ge \deg(h_a)$.
\end{proof}

We now prove the theorem. Since $h_a(a)=0$, by (II),
$h_a \in E[x]\cdot(x-a)$. Take a factorization
$$
h_a = g \cdot (x-a_r) \ldots (x-a_1) \, ,
$$
where $g \in E[x]$, $a_1, \ldots, a_r \in A$ and $r$ is as large
as possible. Let $k = (x-a_r) \ldots (x-a_1)\in E[x]$.
We claim that
$k(A) = 0$, where $A$ is the conjugacy class of $a$. For,
suppose there exists $b \in A$ such that $k(b) \neq
0$.  Since $k(b)$ is homogenous, we have $k(b) \in E^*$.
But, $h_a = gk$, and $h_a(b) = 0$, as $b\in A$; hence,   (I)
implies that $g(b')=0$ for some  conjugate $b'$ of $b$. We can then
write $g = g_1 \cdot (x-b')$, by (II). So $h_a$ has a right factor
$(x-b')k = (x-b')(x-a_r) \ldots (x-a_1)$, contradicting our choice
of $r$.
Thus $k(A)=0$, and using~(III), we have $r\ge\deg(h_a)$, which says
that
$h_a = (x-a_r) \ldots (x-a_1)$.
\end{proof}

\begin{rem}[Dickson Theorem]
One can also see that, with the same assumptions as in
Th.~\ref{domainW}, if $a, b \in E$ have the same minimal
polynomial $h\in T[x]$, then $a$ and $b$ are conjugates. For,
$h=(x-b)k$ where $k \in T[b][x]$. But then by (III),
there exists a conjugate of $a$, say $a'$,  such that $k(a') \not
=0$. Since $h(a')=0$,  by (I) some  conjugate of $a'$ is a root
of $x-b$. (This is also deducible using the graded version of the
Skolem-Noether theorem, see \cite{hwcor}, Prop.~1.6.)
\end{rem}

\subsection{Graded division ring associated to a valued division algebra}

We recall how to associate a graded division algebra to a valued
division algebra.

Let $D$ be a division algebra finite dimensional over its
center $F$, with a valuation
$v: D^{\ast} \ra
\Ga$. So $\Ga$ is a totally ordered abelian group,
and $v$ satisifies the conditions that for all
$a, b \in D^{\ast}$,
\begin{enumerate}
\item[(i)] $ v(ab) = v(a) + v(b)$;

\item[(ii)] $v(a+b) \geq \min \{v(a),v(b) \}\;\;\;\;\; (b \neq -a).$
\end{enumerate}
Let
\begin{align*}
V_D  \ &= \  \{ a \in D^{\ast} : v(a) \geq 0 \}\cup\{0\},
\text{ the
valuation ring of $v$};\\
M_D  \ &= \  \{ a \in D^{\ast} : v(a) > 0
\}\cup\{0\}, \text{ the unique maximal left (and right) ideal
 of $V_D$}; \\
\overline{D}  \ &= \  V_D / M_D, \text{ the residue
division ring of $v$ on $D$; and} \\
\Ga_D  \ &= \  \mathrm{im}(v), \text{ the value
group of the valuation}.
\end{align*}
For background on  valued division algebras,
see \cite{jw} or the survey paper  \cite{Wad99}.
One associates to $D$ a graded division algebra
as follows:
For each $\gamma\in \Gamma_D$, let
\begin{align*}
 D^{\ge\ga}  \ &=  \
\{ d \in D^{\ast} : v(d) \geq \ga \}\cup\{0\}, \text{ an additive
subgroup of $D$}; \qquad \qquad\qquad\qquad\qquad \ \\
D^{>\ga}  \ &=  \ \{ d \in D^{\ast} : v(d) > \ga \}\cup\{0\},
\text{ a subgroup
of $D^{\ge\ga}$};   \text{ and}\\
 \gr(D)_\gamma \ &= \
D^{\ge\ga}\big/D^{>\ga}.
\end{align*}
Then define
$$
 \gr(D)  \ = \  \textstyle\bigoplus\limits_{\ga \in \Ga_D}
\gr(D)_\gamma. \ \
$$
Because $D^{>\ga}D^{\ge\de} \,+\, D^{\ge\ga}D^{>\de}
\subseteq D^{>(\ga +
\de)}$ for all $\ga , \de \in \Ga_D$, the  multiplication on
$\gr(D)$ induced by multiplication on $D$ is
well-defined, giving that $\gr(D)$ is a graded  ring, called the
{\it associated graded ring} of $D$. The
multiplicative property
(i) of  the valuation $v$ implies that $\gr(D)$ is a graded
division ring.
Clearly,
we have ${\gr(D)}_0 = \overline{D}$ and $\Ga_{\gr(D)} = \Ga_D$.
For $d\in D^*$, we write $\widetilde d$ for the image
$d + D^{>v(d)}$ of $d$ in $\gr(D)_{v(d)}$.  Thus,
the map given by $d\mapsto \widetilde d$ is
a group epimorphism $ D^* \rightarrow {\gr(D)^*}$ with
kernel~$1+M_D$.

The restriction $v|_F$ of the valuation on $D$ to its center $F$ is
a valuation on $F$, which induces a corresponding graded field  $\gr(F)$.
Then it is clear that $\gr(D)$ is a graded $\gr(F)$-algebra, and
by (\ref{fundeq})  and the Fundamental Inequality for
valued division algebras,
$$
[\gr(D):\gr(F)]  \ = \
[\overline{D}:\overline{F}] \, |\Ga_D :\Ga_F|
 \ \le  \ [D:F] \ < \infty.
$$

\subsection{Reduced Whitehead
group of a graded division algebra} \label{reducedsec}

A main theme of this part of the note is to study the correspondence
between  $\SK$ of a valued division
algebra and that of  its associated graded division algebra.
Let $A$ be an Azumaya algebra of constant rank $n^2$
over a commutative ring $R$.
Then there is a  commutative
ring $S$ faithfully flat over $R$ which splits $A$, i.e.,
$A\otimes_R S \cong M_n(S)$. For $a \in A$,
considering $a \otimes 1$ as an element of $M_n(S)$,
one then defines the {\it reduced characteristic polynomial},
 the {\it reduced trace}, and the {\it reduced
norm} of $a$ by
  $$
\chr_A(x,a) \ = \ \det(x-(a\otimes1)) \ = \
x^n-\Trd_A(a)x^{n-1}+\ldots+(-1)^n\Nrd_A(a).
$$
Using descent theory, one shows that $\chr_A(x,a)$ is independent of $S$
and the choice of isomorphism $A\otimes_RS \cong M_n(S)$, and
that $\chr_A(x,a)$ lies in $R[x]$;
furthermore,  the element $a$ is invertible in $A$ if and only if
$\Nrd_A(a)$ is invertible in $R$ (see Knus \cite{knus}, III.1.2,
and Saltman \cite{saltman}, Th.~4.3). Let $A^{(1)}$ denote the set
of elements of $A$ with the reduced
norm $1$. One then  defines the {\it reduced Whitehead group}
of $A$ to be $\SK(A)=A^{(1)}/A'$, where $A'$ denotes the commutator
subgroup of the group $A^*$ of  invertible elements of $A$.  The {\it reduced norm residue group} of $A$ is defined to be
$\SH(A)=R^*/\Nrd_A(A^*)$.  These groups are related by the exact
sequence:
\begin{equation}\label{defsk1}
1\longrightarrow \SK(A) \longrightarrow A^*/A'
\stackrel{\Nrd}{\longrightarrow} R^*  \longrightarrow \SH(A)
\longrightarrow 1
\end{equation}

Now let $E$ be a graded division algebra with
center $T$. Since $E$ is an Azumaya  algebra over~
$T$ (\cite{boulag}, Prop.~5.1 or\cite{hwcor}, Cor.~1.2), its
reduced Whitehead group $\SK(E)$ is defined.

 We have
other tools as well for computing $\Nrd_E$
and~$\Trd_E$:

\begin{prop}\label{normfacts}
Let $E$ be a graded division ring with center $T$.
Let $q(T)$ be the quotient field of
$T$, and let ${q(E) = E\otimes _T q(T)}$, which is the
quotient division ring of  $E$.
We view $E\subseteq q(E)$.
Let $n = \ind(E) = \ind(q(E))$.
Then for any $a\in E$,
\begin{enumerate}
\item $\chr_E(x,a) = \chr_{q(E)}(x,a)$,
so
\begin{equation}\label{nrdquot}
\Nrd_E(a)  \, =  \, \Nrd_{q(E)}(a)
 \quad\text{ and }
\quad \Trd_E(a)  \, = \,  \Trd_{q(E)}(a).
\end{equation}
\item If $K$ is any graded subfield of $E$
containing $T$ and $a\in K$, then
$$
\Nrd_E(a)  \,= \, N_{K/T}(a)^{n/[K:T]}
\quad \text{ and }\quad \Trd_E(a) \, =  \, \textstyle
\frac n{[K:T]}\, \Tr_{K/T}(a).
$$
\item For $\gamma\in \Gamma_E$, if
$a\in E_\gamma$ then $\Nrd_E(a)\in E_{n\gamma}$ and
$\Trd(a)\in E_\gamma$.  In particular, $E^{(1)}
\subseteq E_0$.
\item Set $\dlambda = \ind(E)\big/\big(\ind(E_0)
[Z(E_0):T_0]$\big).  If $a\in E_0$, then,
\begin{equation}\label{NrdD0}
\Nrd_E(a) \, = \, N_{Z(E_0)/T_0}\Nrd_{E_0}(a)^
 {\,\dlambda}
\in T_0
\quad \text{ and }\quad
\Trd_E(a) \, = \, \dlambda \,
\Tr_{Z(E_0)/T_0}\Trd_{E_0}(a) \in T_0.
\end{equation}
\end{enumerate}
\end{prop}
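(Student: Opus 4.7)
For part~(1), since $q(E)=E\otimes_T q(T)$ arises from $E$ by central localization, any faithfully flat commutative $T$-algebra $S$ splitting $E$ yields $S':=S\otimes_T q(T)$, which is faithfully flat over $q(T)$ and splits $q(E)$ via the canonical chain $q(E)\otimes_{q(T)}S'\cong E\otimes_T S\otimes_T q(T)\cong\operatorname{M}_n(S')$. Under these identifications $a\otimes 1$ corresponds to the same matrix in either case, so $\chr_E(x,a)=\chr_{q(E)}(x,a)$, which gives~(\ref{nrdquot}). For part~(2) I would apply~(1) to reduce the claim to the central simple algebra $q(E)$ over $q(T)$: the subfield $q(K)=K\otimes_T q(T)\subseteq q(E)$ contains $a$ with $[q(K):q(T)]=[K:T]$, so the classical subfield formula for reduced norms in central simple algebras gives $\Nrd_{q(E)}(a)=N_{q(K)/q(T)}(a)^{n/[q(K):q(T)]}$. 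The minimal polynomial of $a\in K$ over $T$ coincides with its minimal polynomial over $q(T)$, because $T$ is integrally closed in $q(T)$ (\cite{hwalg}, Cor.~1.3); hence the norms (and traces) over $K/T$ and over $q(K)/q(T)$ agree.

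For part~(3), let $a\in E_\gamma$ and write its minimal polynomial $h_a\in T[x]$, which is $\gamma$-homogenizable by Prop.~2.2 of \cite{hwalg}, as $h_a(x)=x^m+c_{m-1}x^{m-1}+\cdots+c_0$ with $c_i\in T_{(m-i)\gamma}$. Since $\chr_E(x,a)$ is a power of $h_a$ of total degree $n$ (as in the theory of central simple algebras, transferred from $q(E)$ via~(1)), one reads off $\Nrd_E(a)=\pm c_0^{n/m}\in T_{n\gamma}$, and $\Trd_E(a)$ is a rational multiple of $c_{m-1}$, hence lies in $T_\gamma$. The inclusion $E^{(1)}\subseteq E_0$ is then immediate: any $a\in E^{(1)}$ is a unit, hence homogeneous in the graded division ring $E$, say of degree $\gamma$; then $1=\Nrd_E(a)\in T_0\cap T_{n\gamma}$ forces $n\gamma=0$, and torsion-freeness of $\Gamma_E$ yields $\gamma=0$.

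For part~(4), given $a\in E_0$, part~(3) shows that $h_a$ lies in $T_0[x]$ and coincides with the minimal polynomial of $a$ regarded as an element of the division ring $E_0$. The key step is to invoke~(2) with the graded subfield $K:=T[a]\subseteq E$, which satisfies $K_0=T_0[a]$ and $[K:T]=[T_0[a]:T_0]$, yielding $\Nrd_E(a)=N_{T_0[a]/T_0}(a)^{n/[T_0[a]:T_0]}$ (the $T$-linear action of $a$ on $K$ has, in a $T_0$-basis, the same matrix as its $T_0$-linear action on $T_0[a]$). Next I would apply the ungraded analogue of~(2) inside the central simple algebra $E_0$ over $Z(E_0)$ with the subfield $Z(E_0)[a]$, combined with transitivity of field norms along the tower $T_0\subseteq Z(E_0)\subseteq Z(E_0)[a]$, to rewrite $N_{T_0[a]/T_0}(a)$ as $N_{Z(E_0)/T_0}\bigl(\Nrd_{E_0}(a)^{e}\bigr)$ for an appropriate exponent $e$; the arithmetic of these exponents collapses into the identity with exponent $\dlambda=n/(\ind(E_0)[Z(E_0):T_0])$. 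The trace formula is obtained by the identical strategy with norms replaced by traces and powers replaced by scalar multiples.

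The main obstacle is~(4): one must simultaneously track several possibly distinct degrees (the degree of $a$ over $T_0$, its degree over $Z(E_0)$, $\ind(E_0)$, and $[Z(E_0):T_0]$) and combine the graded subfield formula of~(2) with its ungraded analogue in $E_0$; only after this bookkeeping do the cancellations producing the single exponent $\dlambda$ become visible, and any inseparability of $Z(E_0)/T_0$ must be absorbed by the standard Azumaya-algebra machinery.
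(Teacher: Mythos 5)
Your proposal is correct and takes essentially the same route as the paper's proof: compatibility of the reduced characteristic polynomial with scalar extension for (1), the characteristic polynomial of multiplication on the free $T$-module $K$ (base-changed to $q(T)$) for (2), $\gamma$-homogenizability of the minimal polynomial $h_a$ for (3), and norm transitivity plus exponent bookkeeping for (4). The only difference is organizational: in (4) the paper routes the computation through a maximal subfield $L$ of $E_0$ containing $a$, so that $\Nrd_{E_0}(a)=N_{L/Z(E_0)}(a)$ enters with exponent $1$ and the single exponent $\delta$ emerges at once, whereas you use the graded subfield $T[a]$ via (2) together with the subfield $Z(E_0)[a]$ of $E_0$ — the same cancellation, with slightly more bookkeeping.
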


\begin{proof}
(1) The construction of reduced characteristic polynomials
described above is clearly compatible with scalar extensions
of the ground ring.
Hence, $\chr_E(x,a) = \chr_{q(E)}(x,a)$ (as we are identifying
$a\in E$ with
$a\otimes1$ in $E\otimes _T q(T)$\,).
The formulas in \eqref{nrdquot} follow immediately.

(2) Let $h_a = x^m  +t_{m-1}x^{m-1} + \ldots+ t_0 \in q(T)[x]$
be the minimal
polynomial of $a$ over~$q(T)$.  As noted in \cite{hwalg}, Prop.~2.2,
since the integral domain $T$ is integrally closed and $E$~is
integral over~$T$, we have $h_a\in T[x]$.  Let
$\ell_a = x^k + s_{k-1}x^{k-1} + \ldots +s_0\in T[x]$ be the
characteristic polynomial of the $T$-linear function on the
free $T$-module $K$ given by  $c\mapsto ac$.  By definition,
$N_{K/T}(a) = (-1)^ks_0$ and $\Tr_{K/T}(a) = -s_{k-1}$.
Since
$q(K) = K\otimes_T q(T)$, we have $[q(K):q(T)] = [K:T] = k$
and $\ell_a$ is also the characteristic polynomial for the
$q(T)$-linear transformation of $q(K)$ given by $q \mapsto aq$.
So, $\ell_a = h_a^{k/m}$.  Since
$\chr_{q(E)}(x,a) = h_a^{n/m}$ (see \cite{rei}, Ex.~1, p.~124),
we have
$\chr_{q(E)}(x,a) = \ell_a^{n/k}$. Therefore, using (1),
$$
\Nrd_E(a)  \ =  \ \Nrd_{q(E)}(a)  \ = \ \big[(-1)^ks_0\big]^{n/k}
  \ = \ N_{K/T}(a)^{n/k}.
$$
The formula for $\Trd_E(a)$ in (2) follows analogously.

(3) From the equalities
$\chr_E(x,a) = \chr_{q(E)}(x,a) = h_a^{n/m}$ noted in
proving (1) and (2), we have $\Nrd_E(a) = [(-1)^mt_0]^{n/m}$
and $\Trd_E(a) = -\frac nm t_{m-1}$.
As noted in \cite{hwalg}, Prop.~2.2, if $a\in E_\gamma$,
then its minimal polynomial $h_a$ is $\gamma$-homogenizable
in $T[x]$ as in \eqref{homogenizable} above.  Hence,
$t_0\in E_{m\gamma}$ and $t_{m-1}\in E_\gamma$.  Therefore,
$\Nrd_E(a) \in E_{n\gamma}$ and $\Trd(a) \in E_{\gamma}$.
If $a \in E^{(1)}$ then $a$ is homogeneous, since it is a
unit of $E$, and since $1 = \Nrd_E(a) \in E_{n\deg(a)}$,
necessarily $\deg(a) = 0$.

(4) Suppose $a\in E_0$.  Then, $h_a$ is $0$-homogenizable
in $T[x]$, i.e., $h_a\in T_0[x]$.  Hence,
$h_a$  is the minimal polynomial of
$a$ over the field $T_0$.  Therefore, if $L$ is any maximal subfield
of $E_0$ containing $a$, we have
$N_{L/T_0}(a) = [(-1)^mt_0]^{[L:T_0]/m}$. Now,
 $$
n/m  \ = \  \delta \ind(E_0)[Z(E_0):T_0]\big/m \ =
\  \delta\,[L:T_0]/m.
$$
Hence,
\begin{align*}
\Nrd_E(a) \ &= \ \big[(-1)^m t_0\big]^{n/m} \ = \
\big[(-1)^m t_0\big]^{\delta[L:T_0]/m} \ = \ N_{L/T_0}(a)^\de
 \\
&= \ N_{Z(E_0)/T_0}N_{L/Z(E_0)}(a)^\de \ = \
N_{Z(E_0)/T_0}\Nrd_{E_0}(a)^\de.
\end{align*}
The formula for $\Trd_E(a)$ is proved analogously.
\end{proof}

In the rest of this section we study the reduced Whitehead group
$\SK$ of a graded division algebra. As we mentioned
in the introduction, the motif is to show that working
in the graded setting is much easier than in the
non-graded setting.

The most successful approach to computing $\SK$ for
 division algebras over Henselian fields is due to Ershov
in \cite{Er83}, where three linked exact
sequences were constructed involving a division algebra $D$, its
residue division algebra $\ov D$,  and its group of
 units $U_D$ (see also \cite{Wad99},
 p.~425).  From these exact sequences, Ershov recovered  Platonov's
 examples \cite{platonov} of division algebras with nontrivial
$\SK$ and many more examples as well.
In this section we will easily  prove the graded version of
 Ershov's exact sequences (see diagram~\eqref{ershovsk1}),
 yielding formulas for $\SK$ of
unramified,  semiramified, and totally ramified
graded  division algebras. This will be used to show that $\SK$ of
a tame  division algebra over a Henselian field
coincides with $\SK$ of
its associated graded division algebra. We can then
readily deduce from the graded results
many   established
formulas in the literature for the reduced Whitehead
groups of  valued division algebras
(see Cor. ~\ref{sk1appl}).  This demonstrates the merit of the
graded approach.

If $N$ is a group, we denote by $N^n$ the subgroup of $N$
generated by all $n$-th powers of elements of $N$. A~
homogeneous multiplicative commutator of $E$, where $E$ is
a graded division ring, has the form $ab a^{-1}
b^{-1}$ where $a,b \in E^{*} = E^h \mi \{0\}$.
 We will  use the notation $[a,b]
= ab a^{-1} b^{-1}$ for $a,b \in E^{*}$.
Since $a$~and~$b$ are homogeneous, note that $[a,b] \in E_0$.
If $H$ and $K$ are
subsets of $E^{\ast}$, then $[H, K]$ denotes the subgroup of~
$E^{\ast}$
generated by ${\{[h,k] :h \in H, k \in
K\}}$. The group $[E^{\ast}, E^{\ast}]$ will be denoted by $E'$.

\begin{prop} \label{normalthm}
Let $E = \bigoplus_{\al \in \Ga} E_{\al}$ be a graded division
algebra with graded center $T$, with $\ind(E) = n$. Then,
\begin{enumerate}
\item If $N$ is a normal subgroup of
$E^{\ast}$, then $N^{n} \subseteq
\Nrd_E(N)[E^{\ast}, N]$.
\item $\SK(E)$ is $n$-torsion.
\end{enumerate}
\end{prop}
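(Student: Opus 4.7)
The plan is to adapt the classical argument of Lemma~\ref{lemd2} and Corollary~\ref{cord3}(2) to the graded setting, with the Graded Wedderburn Factorization Theorem (Theorem~\ref{domainW}) playing the role of its ungraded counterpart and Proposition~\ref{normfacts}(2) supplying the reduced-norm formula. A useful simplification is that every unit of $E$ is homogeneous (since $E^* = E^h\setminus\{0\}$), so every element of $N$ is automatically homogeneous, which is the only hypothesis needed to apply Theorem~\ref{domainW}.

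For part (1), I would fix $a \in N$ and let $h_a \in T[x]$ be its minimal polynomial over $T$, of degree $m := [T[a]:T]$. By Theorem~\ref{domainW}, there exist $d_1,\dots,d_m \in E^*$ with
\[ h_a(x) \;=\; (x - d_m a d_m^{-1})\,\cdots\,(x - d_1 a d_1^{-1}),\]
so comparing constant terms gives $N_{T[a]/T}(a) = (d_m a d_m^{-1})\cdots(d_1 a d_1^{-1})$. Since $N$ is normal in $E^*$, each conjugate $d_i a d_i^{-1}$ lies in $N$, and $c_i := d_i a d_i^{-1} a^{-1}$ lies in $[E^*, N]$; the latter is itself a normal subgroup of $E^*$. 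A telescoping computation in which each $c_i$ is pulled out to the left (using normality of $[E^*, N]$ to absorb the conjugations by powers of $a$) then yields
\[ a^m \;=\; N_{T[a]/T}(a)\cdot c \quad\text{for some } c \in [E^*, N].\]
Applying Proposition~\ref{normfacts}(2) with $K = T[a]$ gives $\operatorname{Nrd}_E(a) = N_{T[a]/T}(a)^{n/m}$. Since $N_{T[a]/T}(a) \in T^*$ is central and hence commutes with $c$, raising the previous equality to the $(n/m)$-th power gives
\[ a^n \;=\; \operatorname{Nrd}_E(a)\,c^{n/m} \;\in\; \operatorname{Nrd}_E(N)\cdot [E^*,N].\]
Because the right-hand side is a subgroup of $E^*$ (its first factor being central) and $N^n$ is generated by the $n$-th powers of elements of $N$, the desired inclusion $N^n \subseteq \operatorname{Nrd}_E(N)[E^*,N]$ follows.

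For part (2), I would apply (1) with $N = E^{(1)}$, which is normal in $E^*$ because $\operatorname{Nrd}_E \colon E^* \to T^*$ is a homomorphism into an abelian group. Then $\big(E^{(1)}\big)^n \subseteq \operatorname{Nrd}_E\!\big(E^{(1)}\big)[E^*,E^{(1)}] = \{1\}\cdot[E^*,E^{(1)}] \subseteq E'$, so every element of $\SK(E)=E^{(1)}/E'$ has order dividing $n$. The only delicate point I anticipate is the commutator book-keeping that reduces the ordered product $(d_m a d_m^{-1})\cdots(d_1 a d_1^{-1})$ to $a^m$ times a single element of $[E^*, N]$; this requires normality of $[E^*, N]$ in $E^*$ and is the graded analogue of the corresponding step in Lemma~\ref{lemd2}. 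Beyond this point the proof is a straightforward translation of the non-graded argument.
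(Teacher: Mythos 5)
Your proof is correct and follows essentially the same route as the paper's: both apply the graded Wedderburn Factorization Theorem~\ref{domainW} to the minimal polynomial $h_a \in T[x]$ of a homogeneous $a \in N$, rewrite the resulting product of conjugates as $a^m$ times an element of the normal subgroup $[E^*, N]$, raise to the $(n/m)$-th power using the reduced-norm formula of Proposition~\ref{normfacts}, and obtain (2) immediately by taking $N = E^{(1)}$. The only cosmetic difference is that the paper takes $h_a$ to be the minimal polynomial over $q(T)$ and then observes it lies in $T[x]$ by integral closedness, which coincides with the graded minimal polynomial over $T$ that you use.
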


\begin{proof}
Let $a \in N$ and let $h_a \in q(T)[x]$ be the minimal polynomial of
$a$ over $q(T)$, and let $m = \deg(h_a)$.
  As noted in the proof of Prop.~\ref{normfacts},
$h_a \in T[x]$ and $\Nrd_E(a) =[(-1)^mh_a(0)]^{n/m}$.
 By the graded
Wedderburn Factorization Theorem~\ref{domainW}, we have
$h_a = (x - d_1 a d_{1}^{-1}) \ldots (x - d_{m} a d_{m}^{-1})$
where each $d_{i} \in E^{*}\subseteq E^h$. Note that
$[E^*,N]$ is a normal subgroup of $E^*$, since $N$ is normal
in $E^*$.
It follows that
\begin{align*} \Nrd_{E}(a)  \ &= \
\big(d_{1} a d_{1}^{-1} \ldots d_{m} a d_{m} ^{-1}\big)^{n/m}  \ = \
 \big([d_1 , a] a [d_2 , a] a \ldots a [d_m , a] a\big)^{n/m} \\
&= a^n d_a \;\;\;\; \mathrm{where} \;\; d_a \in [E^{\ast} , N].
\end{align*}
Therefore, $a^n = \Nrd_{E}(a) d_a^{-1}\in \Nrd_E(N)[E^*,N]$,
yielding (1). (2)  is immediate  from (1) by taking ${N = E^{(1)}}$.
\end{proof}

We recall the definition of the group
$\widehat H^{-1}(G,A)$, which will appear in
our description of $\SK(E)$. For any finite
group $G$ and any $G$-module $A$,
 define the norm map
$N_G\colon A\rightarrow A$ as follows: For any $a \in A$, let
${N_G(a)=\sum_{g \in G}ga}$. Consider the $G$-module
$I_G(A)$ generated as an abelian group by
${\{a-ga: {a \in A} \text{ and }{g \in G}\}}$.
Clearly, $I_G(A) \subseteq \ker (N_G)$. Then,
\begin{equation}\label{Hminus1}
\widehat H^{-1}(G,A) \ = \ \ker (N_G)\big/I_G(A).
\end{equation}

Recall also that for an abelian group $G$, \[G\wedge G := G\otimes_{\mathbb Z} G / \langle g \otimes g \rangle,\] where $g \in G$.

\begin{thm} \label{bigdiag}
Let $E$ be any graded division ring finite
dimensional over its center $T$.
So, $Z(E_0)$ is Galois over $T_0$;
let $G = \Gal(Z(E_0)/T_0)$. Let $\dlambda =
 \ind(E)\big/\big(\ind(E_0)\,
[Z(E_0):T_0]\big)$, and let $\mu_\dlambda(T_0)$ be
the group of those
$\dlambda$-th roots of unity lying in $T_0$.
Let
$\widetilde N = N_{Z(E_0)/T_0}\circ \Nrd_{E_0}\colon
E_0^* \to T_0^*$.
Then, the rows and column of the following
diagram are exact:
\begin{equation}\label{ershovsk1}
\begin{split}
\xymatrix{
 & & 1 \ar[d] & \\
 & \SK(E_0) \ar[r] & \ker \widetilde N/[E_0^*,E^*]
\ar[r]^-{\Nrd_{E_0}} \ar[d] & \widehat H^{-1}(G,\Nrd_{E_0}(E_0^*))
\ar[r] &  1 \\
&
\Gamma_E\big/\Gamma_T \wedge \Gamma_E\big/\Gamma_T 
\ar[r]^-{\alpha} & E^{(1)}/[E_0^*,E^*]
\ar[r]  \ar[d]^{\widetilde N}& \SK(E) \ar[r] & 1\\
 & & \mu_\dlambda(T_0) \cap  \widetilde N (E_0^*) \ar[d]&\\
 & & 1 &}
 \end{split}
 \end{equation}
\end{thm}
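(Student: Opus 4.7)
The plan is to establish the three exact sequences as a graded counterpart of Ershov's linked exact sequences for valued division algebras (\cite{Er83}; see also \cite{Wad99}, p.~425), exploiting two features of the graded setting. First, Proposition~\ref{normfacts}(3) gives $E^{(1)} \subseteq E_0^*$: elements of $E^{(1)}$ are forced to be homogeneous of degree $0$. Second, Proposition~\ref{normfacts}(4) supplies the backbone identity
\[
\Nrd_E(a) \;=\; N_{Z(E_0)/T_0}\bigl(\Nrd_{E_0}(a)\bigr)^{\dlambda} \;=\; \widetilde N(a)^{\dlambda} \qquad (a \in E_0^*).
\]
Since $E^* = E^h \setminus \{0\}$, every multiplicative commutator $[x,y]$ with $x,y \in E^*$ is homogeneous of degree $0$, so $E' \subseteq E_0^* \cap E^{(1)}$; this is what allows the entire argument to take place inside $E_0$.

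For the column, I would restrict $\widetilde N$ to $E^{(1)}$. The backbone identity forces $\widetilde N(E^{(1)}) \subseteq \mu_{\dlambda}(T_0) \cap \widetilde N(E_0^*)$; conversely, if $\zeta = \widetilde N(b)$ with $\zeta^{\dlambda} = 1$, then $\Nrd_E(b) = 1$ and so $b \in E^{(1)}$, yielding surjectivity. The kernel is $\ker \widetilde N/[E_0^*, E^*]$, where the containment $\ker\widetilde N \subseteq E^{(1)}$ is automatic from the identity.

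For the middle row, I would define $\alpha(\bar\gamma \wedge \bar\de) = [x, y][E_0^*, E^*]$ for any $x \in E_\gamma^*$, $y \in E_\de^*$. Well-definedness is a short check: replacing $x$ by $tx$ ($t \in T^*$) does not change $[x, y]$ by centrality, and if $\gamma \in \Gamma_T$ we may take $x \in E_0^*$, landing $[x, y]$ in $[E_0^*, E^*]$; the same trick supplies the alternating property. Bilinearity modulo $[E_0^*, E^*]$ follows from $[ab, c] = [a, c]^b [b, c]$ together with the observation that $[a, c] \in E_0^*$, so $[a, c]^b \equiv [a, c]$ modulo $[E_0^*, E^*]$. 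The image of $\alpha$ is then the subgroup of $E^{(1)}/[E_0^*, E^*]$ generated by commutators of homogeneous elements, namely $E'/[E_0^*, E^*]$, which equals the kernel of $E^{(1)}/[E_0^*, E^*] \twoheadrightarrow E^{(1)}/E' = \SK(E)$.

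For the top row, I would invoke the graded Skolem--Noether theorem (\cite{hwcor}, Prop.~1.6): conjugation by $x \in E^*$ restricts to a $T_0$-automorphism of $E_0$ and hence to an element $\sigma_x \in G$ on $Z(E_0)$, and the resulting map $E^* \to G$ is surjective. For $a \in E_0^*$ and $x \in E^*$,
\[
\Nrd_{E_0}([x, a]) \;=\; \sigma_x\bigl(\Nrd_{E_0}(a)\bigr) \cdot \Nrd_{E_0}(a)^{-1} \;\in\; I_G\bigl(\Nrd_{E_0}(E_0^*)\bigr),
\]
so $\Nrd_{E_0}$ descends to the claimed homomorphism into $\widehat H^{-1}(G, \Nrd_{E_0}(E_0^*))$. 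Surjectivity is immediate: for $n \in \ker N_G$ with $n = \Nrd_{E_0}(a)$, one has $\widetilde N(a) = N_G(n) = 1$, so $a$ is a preimage. For the kernel, if $\Nrd_{E_0}(a)$ lies in $I_G(\Nrd_{E_0}(E_0^*))$, the identity just displayed lets us modify $a$ by a product of commutators $[b_i, x_i] \in [E_0^*, E^*]$ to reduce to $a \in E_0^{(1)}$, placing it in the image of $\SK(E_0) = E_0^{(1)}/E_0'$. The main obstacle I anticipate is the surjectivity of $E^* \to G$, which is the graded Skolem--Noether input; once this and the factorization $\Nrd_E = \widetilde N^{\,\dlambda}$ are available, the remainder is commutator bookkeeping combined with the diagram-chase at the junction of the two short exact sequences.
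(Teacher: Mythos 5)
Your proposal is correct and follows essentially the same route as the paper's proof: the same inputs (the epimorphism $E^*\to G=\Gal(Z(E_0)/T_0)$ from the graded Skolem--Noether machinery of \cite{hwcor}, Prop.~\ref{normfacts}(3),(4) giving $E^{(1)}\subseteq E_0^*$ and $\Nrd_E|_{E_0^*}=\widetilde N^{\,\dlambda}$), the same identity $\Nrd_{E_0}([a,e])=\Nrd_{E_0}(a)\sigma(\Nrd_{E_0}(a))^{-1}\in I_G(A)$ driving both the well-definedness of the vertical and horizontal maps and the kernel computation $E_0^{(1)}[E_0^*,E^*]/[E_0^*,E^*]$ for the top row. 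The only cosmetic difference is that you verify the bottom-row epimorphism $\Gamma_E/\Gamma_T\wedge\Gamma_E/\Gamma_T\to E'/[E_0^*,E^*]$ by direct commutator calculus, whereas the paper quotes the general group-theoretic Lemma~\ref{wedge} applied to $H=E_0^*\supseteq[E^*,E^*]$ with $E^*/(E_0^*Z(E^*))\cong\Gamma_E/\Gamma_T$ --- your computation is precisely the proof of that lemma inlined.
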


The map $\alpha$ in \eqref{ershovsk1} is given as follows:
For $\gamma,\delta\in \Gamma_E$, take any nonzero $x_\gamma\in
E_\gamma$ and $x_\delta\in E_\delta$.  Then,
$\alpha\big((\gamma+\Gamma_T)\wedge(\delta+\Gamma_T)\big) =
[x_\gamma,x_\delta] \ \text{mod}\ [E_0^*, E^*]$.

\begin{proof}
By Prop.~2.3 in \cite{hwcor},
$Z(E_0)/T_0$ is a Galois extension and the map
$\theta\colon E^* \rightarrow \Aut(E_0)$,
given by $e \mapsto (a\mapsto eae^{-1})$
for $a \in E_0$, induces an epimorphism
$E^* \rightarrow G =\Gal(Z(E_0)/T_0)$.
In the notation for \eqref{Hminus1} with $A
=\Nrd_{E_0}(E_0^*)$, we have $N_G$
coincides with $N_{Z(E_0)/T_0}$ on $A$.
Hence,
\begin{equation}\label{kerNG}
\ker(N_G) \ = \ \Nrd_{E_0}(\ker(\widetilde N)).
\end{equation}
Take any $e\in E^*$ and let $\sigma = \theta(e)
\in \Aut(E_0)$.
For any $a\in E_0^*$, let $h_a \in Z(T_0)[x]$ be the minimal
polynomial of $a$ over $Z(T_0)$. Then $\sigma(h_a)\in Z(T_0)[x]$
is the minimal polynomial of $\sigma(a)$ over $Z(T_0)$.
Hence, $\Nrd_{E_0}(\sigma(a)) =\sigma(\Nrd_{E_0}(a))$.
Since $\sigma|_{Z(T_0)} \in G$, this yields
\begin{equation}\label{Nrdbracket}
\Nrd_{E_0}([a,e]) \ =  \ \Nrd_{E_0}(a\sigma(a^{-1})) \ = \
\Nrd_{E_0}(a) \sigma(\Nrd_{E_0}(a))^{-1} \ \in \ I_G(A),
\end{equation}
hence $\widetilde N([a,e]) = 1$.  Thus, we have
$[E_0^*, E^*]\subseteq \ker(\widetilde N) \subseteq E^{(1)}$
with the latter inclusion from Prop.~\ref{normfacts}(4).
The formula in Prop.~\ref{normfacts}(4) also shows that
$\widetilde N(E^{(1)}) \subseteq \mu_\dlambda(T_0)$.  Thus,
the vertical maps in diagram~\eqref{ershovsk1} are well-defined, and
the column in \eqref{ershovsk1} is exact.  Because
$\Nrd_{E_0}$ maps $\ker(\widetilde N)$ onto $\ker(N_G)$ by
\eqref{kerNG} and it maps $[E_0^*,E^*]$ onto $I_G(A)$ by
\eqref{Nrdbracket} (as $\theta(E^*)$ maps onto $G$), the map
labelled $\Nrd_{E_0}$ in diagram~\eqref{ershovsk1} is surjective with
kernel $E_0^{(1)}\,[E_0^*,E^*]\big/[E_0^*, E^*]$.  Therefore, the
top row of \eqref{ershovsk1} is exact.  For the lower row,
since $[E^*,E^*] \subseteq E_0^*$ and $E^*\big/(E_0^*\, Z(E^*))
\cong \Gamma_E /\Gamma_T$, the following lemma yields an epimorphism
$\Gamma_E /\Gamma_T \wedge \Gamma_E /\Gamma_T \to
[E^*,E^*]/[E_0^*,E^*]$. Given this, the
lower row in \eqref{ershovsk1} is evidently exact.
\end{proof}

We need the following lemma whose proof is left to the reader (see~\cite{hwsk1}).

\begin{lem}\label{wedge}
Let $G$ be a group, and let $H$ be a subgroup of $G$ with $H\supseteq
[G,G]$.  Let ${B = G\big/(H\, Z(G))}$.  Then, there is an epimorphism
$B\wedge B \to [G,G]\big/[H,G]$.
\end{lem}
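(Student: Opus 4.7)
The plan is to construct the required epimorphism $B \wedge B \twoheadrightarrow [G,G]/[H,G]$ directly from the commutator map. Consider the map
\[
c \colon G \times G \longrightarrow [G,G]/[H,G], \qquad (x,y) \longmapsto [x,y]\cdot[H,G].
\]
The main task is to verify that $c$ factors through $B \wedge B$, where $B = G/(H\cdot Z(G))$.

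First I would check that $c$ is bimultiplicative. Using the standard commutator identities $[xy,z] = [x,z]^y [y,z]$ and $[x,yz] = [x,z][x,y]^z$, one sees that the extra conjugation terms $[a,b]^g$ differ from $[a,b]$ by the factor $[[a,b]^{-1}, g]$. Since $H \supseteq [G,G]$, every commutator $[a,b]$ lies in $H$, and therefore $[[a,b]^{-1}, g] \in [H,G]$. Consequently, modulo $[H,G]$ conjugation acts trivially on commutators, and $c$ becomes bimultiplicative. Next I would show $c$ vanishes on $H \cdot Z(G)$ in either argument: if $h\in H$ then $[h,y]\in [H,G]$ by definition, while if $z\in Z(G)$ then $[z,y]=1$; bimultiplicativity takes care of products. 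Thus $c$ descends to a bilinear map $B \times B \to [G,G]/[H,G]$, and since $B$ is abelian (as $G/H$ is abelian) this yields a homomorphism from $B \otimes_{\mathbb Z} B$. Finally $c(x,x)=[x,x]\cdot[H,G]=1$, so the homomorphism factors through $B \wedge B$.

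For surjectivity, note that $[G,G]/[H,G]$ is generated as an abelian group by the classes of commutators $[x,y]$ with $x,y\in G$, and each such class is by definition the image of $(xH\cdot Z(G))\wedge (yH\cdot Z(G))$ under $c$. Hence the induced map $B\wedge B \to [G,G]/[H,G]$ is an epimorphism.

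The only genuine subtlety is the step showing bimultiplicativity, which relies crucially on the hypothesis $[G,G]\subseteq H$ in order to push the conjugation correction terms into $[H,G]$; everything else is a direct verification. No further obstacles are expected.
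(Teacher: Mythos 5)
Your proof is correct and complete: the key point is exactly as you identify it, namely that $[G,G]\subseteq H$ lets you absorb the conjugation correction terms (of the form $[c,g]$ with $c=[a,b]\in H$) into $[H,G]$, making the commutator map bimultiplicative modulo $[H,G]$; it then kills $H\,Z(G)$ in each slot and the diagonal, so it factors through $B\otimes_{\mathbb Z}B$ and then $B\wedge B$, with surjectivity immediate since commutators generate $[G,G]/[H,G]$. The paper leaves this lemma's proof to the reader, deferring to \cite{hwsk1}, and your construction is precisely the standard argument intended there, so there is nothing to add beyond noting that your correction factor $[[a,b]^{-1},g]$ and the more usual $c^g=c\,[c,g]$ differ immaterially, as both land in $[H,G]$.
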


\begin{cor} \label{skunramthm}
Let $E$ be a graded division ring with
center $T$.

\begin{enumerate}
\item If $E$ is unramified, then $\SK(E) \cong \SK(E_0)$.

\item If $E$ is totally ramified, then
$\SK(E)\cong\mu_n(T_0)/\mu_e(T_0)$ where $n = \ind(E)$ and
$e$ is the exponent of $\Gamma_E/\Gamma_T$.

\item If $E$ is semiramified, then for
$G = \Gal(E_0/T_0) \cong
\Gamma_E/\Gamma_T$ there is an exact sequence
\begin{equation}\label{semiramseq1}
G\textstyle\wedge G  \ \to  \  \widehat H^{-1}(G,E_0^*)
 \ \to \  \SK(E) \  \to \  1.
\end{equation}

\item If $E$ has  maximal graded subfields $L$ and $K$ which are
respectively unramified and totally ramified
over $T$, then $E$ is semiramified and
$\SK(E)\cong \widehat H^{-1}(\Gal(E_0/T_0),E_0^*)$.
\end{enumerate}
\end{cor}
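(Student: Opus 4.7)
The plan is to derive each of (1)--(4) by specialising the universal diagram (\ref{ershovsk1}) of Theorem~\ref{bigdiag}. In each case one computes the invariants $Z(E_0)$, $\ind(E_0)$, $\dlambda$, and $\Gamma_E/\Gamma_T$, identifies which entries of the diagram collapse, and then reads off $\SK(E)$.

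For (1), the unramified hypothesis $\Gamma_E=\Gamma_T$ together with the standard structural fact that $Z(E_0)=T_0$ and $\ind(E_0)=\ind(E)$ in this case gives $G=\{1\}$ and $\dlambda=1$. Hence $\mu_\dlambda(T_0)=\{1\}$, so the column of (\ref{ershovsk1}) collapses to $E^{(1)}/[E_0^*,E^*]=\ker\widetilde N/[E_0^*,E^*]$; the top row (as $\widehat H^{-1}(1,-)=0$) collapses to $\SK(E_0)\cong \ker\widetilde N/[E_0^*,E^*]$; and the bottom row (as $\Gamma_E/\Gamma_T\wedge\Gamma_E/\Gamma_T=0$) collapses to $E^{(1)}/[E_0^*,E^*]\cong \SK(E)$. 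Composition yields $\SK(E)\cong \SK(E_0)$. For (2), the totally ramified hypothesis $E_0=T_0$ forces $\ind(E_0)=1$, $Z(E_0)=T_0$, hence $G=\{1\}$ and $\dlambda=n$; moreover $[E_0^*,E^*]=[T_0^*,E^*]=1$ because $T_0\subseteq Z(E)$, and $\widetilde N$ is the identity on $T_0^*$. Combining the (now collapsing) top row and column yields $E^{(1)}\cong \mu_n(T_0)$. The bottom row then becomes $\Gamma_E/\Gamma_T\wedge\Gamma_E/\Gamma_T\xrightarrow{\alpha}\mu_n(T_0)\to \SK(E)\to 1$, and the crucial identification is $\operatorname{im}(\alpha)=\mu_e(T_0)$; this rests on the classical fact (see \cite{TW87}, Prop.~3.1) that in a totally ramified (graded) division algebra the homogeneous commutator induces a nondegenerate alternating pairing $\Gamma_E/\Gamma_T\times\Gamma_E/\Gamma_T\to \mu_e(T_0)$ whose image generates $\mu_e(T_0)$.

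For (3), the semiramified hypothesis gives $E_0$ a field with $Z(E_0)=E_0$, $\ind(E_0)=1$, $[E_0:T_0]=n=|\Gamma_E/\Gamma_T|$, so $\dlambda=1$. The column of (\ref{ershovsk1}) therefore collapses, and since $\SK(E_0)=1$ and $\Nrd_{E_0}=\id$, the top row yields $E^{(1)}/[E_0^*,E^*]\cong \widehat H^{-1}(G,E_0^*)$. The fundamental homomorphism $E^*\to \Gal(E_0/T_0)$ is surjective; a dimension count using $[E_0:T_0]=|\Gamma_E/\Gamma_T|$ then gives $G\cong \Gamma_E/\Gamma_T$, and substituting into the bottom row of (\ref{ershovsk1}) produces exactly the sequence (\ref{semiramseq1}). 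For (4), the existence of an unramified maximal subfield $L$ and a totally ramified maximal subfield $K$ forces, via the Fundamental Equality~(\ref{fundeq}) and the equalities $[L:T]=[K:T]=\ind(E)=n$, that $[E_0:T_0]=|\Gamma_E/\Gamma_T|=n$ and $E_0=L_0$; so $E$ is semiramified. To upgrade (3) to the claimed isomorphism it suffices to show that $\alpha=0$: choosing the homogeneous representatives $x_\gamma$ inside the totally ramified subfield $K$ (possible since $\Gamma_K=\Gamma_E$) makes $[x_\gamma,x_\delta]=1$ because $K$ is commutative, so $\alpha=0$ and (\ref{semiramseq1}) degenerates to $\SK(E)\cong \widehat H^{-1}(\Gal(E_0/T_0),E_0^*)$.

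The principal obstacle is the identification $\operatorname{im}(\alpha)=\mu_e(T_0)$ in (2): the diagram-chase alone is insufficient, and one must invoke the non-trivial structural fact that the homogeneous commutators of a totally ramified graded division algebra are exactly the $e$-th roots of unity in $T_0$. The other three parts reduce to mechanical specialisations of (\ref{ershovsk1}) together with the elementary observation in (4) that a totally ramified maximal subfield furnishes pairwise commuting homogeneous coset representatives.
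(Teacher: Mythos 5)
Your proposal follows the paper's proof in all essentials: each part is obtained by specialising diagram~(\ref{ershovsk1}) of Theorem~\ref{bigdiag}, and your treatments of (3) and (4) coincide with the paper's step for step (the epimorphism $E^*\to\Gal(E_0/T_0)$ inducing the isomorphism $\Gamma_E/\Gamma_T\cong G$, and the choice of commuting homogeneous representatives inside the totally ramified maximal subfield $K$, legitimate since $\Gamma_K=\Gamma_E$, to annihilate the left map of~(\ref{semiramseq1})). In (2) your route through the bottom row is the paper's computation in different clothes: since $[E_0^*,E^*]=[E^*,T_0^*]=1$ there, your identification $\operatorname{im}(\alpha)=\mu_e(T_0)$ is literally the statement $E'=\mu_e(T_0)$, which is exactly the external fact the paper invokes after deducing $E^{(1)}\cong\mu_n(T_0)$ from the column.

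Two points need repair. First, in (1) you assert that once $\widehat H^{-1}(1,-)=0$ the top row ``collapses to $\SK(E_0)\cong\ker\widetilde N/[E_0^*,E^*]$''. The top row of~(\ref{ershovsk1}) is only exact as displayed: vanishing of the $\widehat H^{-1}$-term yields merely a surjection $\SK(E_0)\twoheadrightarrow\ker\widetilde N\big/[E_0^*,E^*]$, whose kernel is $[E_0^*,E^*]\big/[E_0^*,E_0^*]$. So your chain of collapses, as written, only produces an epimorphism $\SK(E_0)\to\SK(E)$. The missing ingredient --- which the paper records explicitly --- is that in the unramified case $E^*=E_0^*T^*$ with $T^*$ central (for $\gamma\in\Gamma_E=\Gamma_T$ pick $0\neq t\in T_\gamma$, so $E_\gamma=E_0t$), whence $[E_0^*,E^*]=[E_0^*,E_0^*]=E_0'$ and the kernel is trivial. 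This is a one-line fix, but without it the isomorphism in (1) is not established. Second, in (2) your citation of \cite{TW87}, Prop.~3.1 is to the valued-division-algebra analogue $\overline{D'}\cong\mu_e(\overline F)$; what your argument actually needs is the graded statement $E'\cong\mu_e(T_0)$, for which the paper cites \cite{hwcor}, Prop.~2.1 --- your parenthetical ``(graded)'' is doing real work and should be backed by the graded reference or a proof. With these two adjustments your proposal coincides with the paper's proof.
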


\begin{proof}
(1)   Since $E$ is unramified over $T$, we have
$E_0$ is a
central $T_0$-division algebra, ${\ind(E_0)=\ind(E)}$, and
$E^*=E_0^*T^*$. It follows that $G=\Gal(Z(E_0)/T_0)$ is trivial,
 and thus
$\widehat H^{-1}(G,\Nrd_{E_0}(E_0))$ is trivial;
also, $\dlambda=1$,
and from (\ref{NrdD0}), $\Nrd_{E_0}(a)=\Nrd_E(a)$ for all $a\in E_0$.
Furthermore, $[E_0^*, E^*] = [E_0^*, E_0^*T^*] = [E_0^*, E_0^*]$
as $T^*$ is central.
Plugging this information  into  the exact top row of diagram
(\ref{ershovsk1}) and
noting that the exact sequence extends to the left by
$1\rightarrow [E_0^*,E^*]/[E_0^*,E_0^*]\rightarrow \SK(E_0)$,
part (1) follows.

 (2) When $E$ is totally ramified, $E_0=T_0$, $\dlambda=n$,
$\widetilde N$ is  the
identity map on $T_0$, and \[{[E^*, E_0^*] = [E^*,T_0^*] = 1}.\]
By plugging all this into the exact column  of
diagram (\ref{ershovsk1}), it follows that
$E^{(1)}\cong \mu_n(T_0)$. Also by \cite{hwcor} Prop.~2.1,
$E'\cong\mu_e(T_0)$ where $e$ is the exponent of
the torsion abelian group $\Gamma_E/ \Gamma_T$.
Part (2) now follows.

 (3)  As recalled at the beginning of the proof of
Th.~\ref{bigdiag}, for any graded division algebra~$E$ with
center~$T$, we have $Z(E_0)$ is Galois over $T_0$, and there
is an epimorphism ${\theta\colon E^* \to \Gal(Z(E_0)/T_0)}$.
Clearly, $E_0^*$ and~$T^*$ lie in $\ker(\theta)$, so
$\theta$ induces an epimorphism $\theta'\colon \Gamma_E/\Gamma_T \to
\Gal(Z(E_0)/T_0)$.  When $E$ is semiramified, by definition
${[E_0:T_0] = |\Gamma_E:\Gamma_T| = \ind(E)}$ and $E_0$ is a field.
 Let $G = \Gal(E_0/T_0)$. Because ${|G| = [E_0:T_0] =
|\Gamma_E:\Gamma_T|}$, the map $\theta'$ must be an isomorphism.
In diagram~\eqref{ershovsk1}, since $\SK(E_0) = 1$ and clearly
$\delta = 1$, the exact top row and column yield
$E^{(1)}\big/[E_0^*, E^*] \cong \widehat H^{-1}(G, E_0^*)$.
Therefore, the exact row~\eqref{semiramseq1} follows from the
exact second row of diagram~\eqref{ershovsk1} and the
isomorphism $\Gamma_E/\Gamma_T \cong G$ given by~$\theta'$.

 (4) Since $L$ and $K$ are maximal subfields
of $E$, we have ${\ind(E) = [L:T] = [L_0:T_0] \le [E_0:T_0]}$
and $\ind(E) = [K:T] = |\Gamma_K:\Gamma_T| \le |\Gamma_E:\Gamma_T|$.
It follows from \eqref{fundeq} that these inequalities are
equalities, so $E_0 = L_0$ and $\Gamma_E = \Gamma_K$.
Hence, $E$ is semiramified, and (3) applies.
  Take any $\eta, \nu\in \Gamma_E/\Gamma_T$, and any inverse images
$a,b$ of $\eta, \nu$ in $E^*$.  The left map in \eqref{semiramseq1}
sends $\eta \wedge \nu$ to $aba^{-1} b^{-1}$ mod $I_G(E_0^*)$.
Since $\Gamma_E = \Gamma_K$, these $a$ and $b$ can be chosen in $K^*$,
so they commute.  Thus, the left map of \eqref{semiramseq1} is trivial
here, yielding the isomorphism of (4).
\end{proof}

The proof of the following Lemma is in~\cite{hwsk1}.

\begin{lem}\label{grnorm}
Let $F \subseteq K$ be fields with $[K:F] < \infty$. Let $v$ be a
Henselian valuation on $F$ such that $K$ is defectless over $F$.
Then, for every $a \in K^{\ast}$, with $\widetilde{a}$ its image in
${\mathrm{gr}(K)}^{\ast}$,
$$
\widetilde{N_{K/F} (a)}  \ = \
N_{\mathrm{gr}(K) /\mathrm{gr}(F)} (\widetilde{a}).
$$
\end{lem}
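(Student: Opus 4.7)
The strategy is to lift the norm computation from $K$ to a determinant of an explicit matrix over $F$, whose entries behave well with respect to the valuation, and then read off the graded norm by ``top-degree reduction'' of this matrix. Throughout, let $w$ denote the unique extension of $v$ to $K$ (which exists because $v$ is Henselian), let $n=[K:F]$, and set $\alpha = w(a)$.

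\emph{Step 1: A valuation basis.} Because $K$ is defectless over $F$, we have $[\gr(K):\gr(F)] = [\overline{K}:\overline{F}]\,|\Ga_K:\Ga_F| = [K:F] = n$. Choose a homogeneous $\gr(F)$-basis $\eta_1,\ldots,\eta_n$ of $\gr(K)$, with $\deg(\eta_i)=\ga_i$, and lift each $\eta_i$ to an element $e_i \in K$ with $w(e_i)=\ga_i$ and $\widetilde{e_i}=\eta_i$. A standard ``leading-term'' argument (writing any nontrivial $F$-linear relation, picking out the summands of minimal valuation, and reducing to a dependence of the $\eta_i$ over $\gr(F)$) shows that $e_1,\ldots,e_n$ is an $F$-basis of $K$ and moreover is a \emph{valuation basis}: for any $f_1,\ldots,f_n \in F$,
\[ w\bigl(\textstyle\sum_i f_i e_i\bigr) \ = \ \min_i\bigl(v(f_i)+\ga_i\bigr). \]

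\emph{Step 2: The matrix of multiplication by $a$.} Write $ae_i = \sum_j m_{ji}\,e_j$ with $m_{ji}\in F$; then $N_{K/F}(a)=\det(M)$ where $M=(m_{ji})$. Since $w(ae_i)=\alpha+\ga_i$, the valuation-basis property applied to $ae_i$ forces $v(m_{ji})\ge \alpha+\ga_i-\ga_j$ for all $i,j$. Define $\ov{m}_{ji}\in\gr(F)_{\alpha+\ga_i-\ga_j}$ by $\ov{m}_{ji}=\widetilde{m_{ji}}$ if equality holds and $\ov{m}_{ji}=0$ otherwise. Passing to $\gr(K)$ in the relation $ae_i=\sum_j m_{ji}e_j$, the only surviving terms of top degree $\alpha+\ga_i$ are those with $v(m_{ji})=\alpha+\ga_i-\ga_j$, giving
\[ \widetilde{a}\,\eta_i \ = \ \sum_j \ov{m}_{ji}\,\eta_j \qquad \text{in }\gr(K). \]
Thus the graded matrix $\ov{M}=(\ov{m}_{ji})$ represents the $\gr(F)$-linear map ``multiplication by $\widetilde{a}$'' on $\gr(K)$ in the homogeneous basis $\eta_1,\ldots,\eta_n$, so $N_{\gr(K)/\gr(F)}(\widetilde{a})=\det(\ov{M})$.

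\emph{Step 3: Comparing determinants.} Expanding $\det(M)=\sum_{\si\in S_n}\mathrm{sgn}(\si)\prod_i m_{\si(i),i}$, each summand has valuation $\ge\sum_i(\alpha+\ga_i-\ga_{\si(i)})=n\alpha$, with equality precisely when every factor attains its lower bound. Since Henselianness makes all conjugates of $a$ share the valuation $\alpha$, one has $v(N_{K/F}(a))=n\alpha$ exactly (e.g., via $N_{K/F}(a)=N_{F(a)/F}(a)^{[K:F(a)]}$ applied to the constant term of the minimal polynomial), so the top-degree part $\widetilde{\det(M)}\in\gr(F)_{n\alpha}$ is nonzero. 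Moreover, by construction of $\ov{M}$, the image $\widetilde{\det(M)}$ equals the sum of the ``dominant'' terms, which is exactly $\det(\ov{M})$ computed over the graded field $\gr(F)$. Combining with Step 2 gives
\[ \widetilde{N_{K/F}(a)} \ = \ \widetilde{\det(M)} \ = \ \det(\ov{M}) \ = \ N_{\gr(K)/\gr(F)}(\widetilde{a}), \]
as required.

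\emph{Main obstacle.} The only delicate point is the valuation-basis existence in Step~1, which crucially depends on the Henselian and defectless hypotheses—without them, either the fundamental equality $[\gr(K):\gr(F)]=[K:F]$ or the compatibility of $v$ with $F$-linear combinations fails, and the determinant-reduction bridge of Step~3 collapses. Once the valuation basis is in hand, the remainder is a bookkeeping identification of the ``leading part'' of a determinant with the determinant of leading parts, which is automatic because every permutation contributes a product whose total degree is the fixed value $n\alpha$.
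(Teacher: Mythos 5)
Your proof is correct, and it takes a genuinely different route from the one the paper relies on. Note first that the paper does not prove Lemma \ref{grnorm} at all --- it defers entirely to \cite{hwsk1}, where the argument runs through minimal polynomials rather than matrices: defect multiplicativity over a Henselian field makes $F(a)/F$ defectless, the minimal polynomial of $a$ over $F$ is homogenizable and its leading-term reduction is governed by the minimal polynomial of $\widetilde{a}$ over $\gr(F)$, Henselian uniqueness forces all $F$-conjugates of $a$ to share the value $w(a)$, and the norms on both sides are then read off from constant terms exactly as in Prop.~\ref{normfacts}(2), with the tower formula $N_{K/F}(a)=N_{F(a)/F}(a)^{[K:F(a)]}$ handling the general case. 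You instead argue by linear algebra over the valuation ring: defectlessness gives $[\gr(K):\gr(F)]=[K:F]$, lifting a homogeneous $\gr(F)$-basis yields a valuation basis of $K/F$ (your Step 1 is the standard valuation-independence computation and is sound), and then the matrix of multiplication by $a$ satisfies $v(m_{ji})\geq \alpha+\gamma_i-\gamma_j$, so every permutation term of $\det(M)$ has value at least $n\alpha$, and the image of $\det(M)$ in the degree-$n\alpha$ component of $\gr(F)$ is precisely $\det(\overline{M})=N_{\gr(K)/\gr(F)}(\widetilde{a})$; all the individual steps check out. What each approach buys: yours is elementary and self-contained, avoiding the graded minimal-polynomial machinery of \cite{hwalg}, and it makes transparent where the fundamental equality enters; the reference's route fits the toolkit the paper already develops (homogenizable minimal polynomials, Prop.~\ref{normfacts}) and transfers directly to reduced norms of division algebras as in Cor.~\ref{nrdcor}, where no convenient multiplication matrix over the center is available. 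One streamlining worth noting: in Step 3 you do not actually need the Henselian computation $v(N_{K/F}(a))=n\alpha$ to rule out cancellation --- since $\widetilde{a}$ is a nonzero homogeneous element of the graded field $\gr(K)$ it is a unit, so multiplication by $\widetilde{a}$ is bijective and $\det(\overline{M})=N_{\gr(K)/\gr(F)}(\widetilde{a})\neq 0$, which by itself forces $v(\det(M))=n\alpha$ and identifies the leading term; Henselianity is then needed only to provide the canonical unique extension $w$ with respect to which $\gr(K)$ is formed.
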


\begin{cor} \label{nrdcor}
Let $F$ be a field with Henselian valuation $v$, and let $D$ be a
tame $F$-central division algebra. Then for every $a \in
D^{\ast}$, $\Nrd_{\mathrm{gr}(D)}(\ti{a}) = \widetilde{\Nrd_D(a)}$.
\end{cor}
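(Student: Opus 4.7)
The plan is to reduce the identity to a norm computation in a commutative subfield, using the standard formula that expresses the reduced norm of an element in terms of a field norm, applied in parallel in both the valued and the graded settings, and then bridging the two by Lemma~\ref{grnorm}.

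More concretely, I would first fix the auxiliary subfield $K := F(a) \subseteq D$ and set $m = [K:F]$, $n = \ind(D)$. Because $v$ is Henselian and $D$ is tame over $F$, the extension $D/F$ is defectless, which gives $[\gr(D):\gr(F)] = [D:F] = n^2$ and hence $\ind(\gr(D)) = n$; tameness also yields $Z(\gr(D)) = \gr(F)$. Similarly, the subfield $K/F$ is defectless, so $\gr(K)$ is a graded subfield of $\gr(D)$ containing $\ti{a}$ with $[\gr(K):\gr(F)] = [K:F] = m$.

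With this setup in place, I would apply the standard subfield formula for the reduced norm in each setting: on the one hand
\[
\Nrd_D(a) \ = \ N_{K/F}(a)^{n/m},
\]
and on the other hand, by Proposition~\ref{normfacts}(2) applied to the graded division algebra $\gr(D)$ with the graded subfield $\gr(K)$,
\[
\Nrd_{\gr(D)}(\ti{a}) \ = \ N_{\gr(K)/\gr(F)}(\ti{a})^{n/m}.
\]
Since the principal-symbol map $x \mapsto \ti{x}$ from $D^*$ to $\gr(D)^*$ is a group homomorphism, I get $\widetilde{\Nrd_D(a)} = \widetilde{N_{K/F}(a)}^{\,n/m}$. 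Lemma~\ref{grnorm}, applied to the defectless finite extension $K/F$ of Henselian valued fields, then identifies $\widetilde{N_{K/F}(a)} = N_{\gr(K)/\gr(F)}(\ti{a})$, and chaining the three identities gives the desired equality.

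The potentially delicate point is not the calculation itself but the bookkeeping around tameness and defectlessness: one needs that $Z(\gr(D)) = \gr(F)$ and $\ind(\gr(D)) = \ind(D)$ (so the graded version of the subfield formula matches the ungraded one with the same exponent $n/m$), and that $K/F$ is defectless so that Lemma~\ref{grnorm} applies and $[\gr(K):\gr(F)] = m$. All of these are standard consequences of tameness of $D$ over the Henselian $F$, so the work is essentially citing them cleanly and then stringing together the two instances of the subfield formula with Lemma~\ref{grnorm}.
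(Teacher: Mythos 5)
Your proposal is correct and follows essentially the same route as the paper: both reduce to a commutative norm via the subfield formula of Proposition~\ref{normfacts}(2) in the graded setting (and its classical counterpart in $D$), and bridge the two with Lemma~\ref{grnorm}, using tameness to guarantee $Z(\gr(D))=\gr(F)$, $\ind(\gr(D))=\ind(D)$, and defectlessness of the subfield. The only cosmetic difference is that the paper takes a \emph{maximal} subfield $L\supseteq F(a)$, so the exponent $n/m$ collapses to $1$ and the chain of equalities is slightly shorter; your version with $K=F(a)$ and exponent $n/m$ is equally valid.
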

\begin{proof}  Recall from \S\ref{prel} that the assumption $D$ is
tame over $F$ means
that ${[D:F]=[\gr(D):\gr(F)]}$ and $\gr(F) = Z(\gr(D))$. Take any maximal
subfield $L$ of $D$ containing $a$. Then $L/F$ is defectless as
$D/F$ is defectless,
so $[\gr(L):\gr(F)] = [L:F] = \ind(D) = \ind(\gr(D))$.
  Hence,  using  Lemma~\ref{grnorm} and Prop.~\ref{normfacts}(2),
we have,
\[
\widetilde{\Nrd_D(a)}  \ = \  \widetilde{N_{L/F}(a)}
 \ = \  N_{\gr(L)/\gr(F)}(\ti{a})  \ = \  \Nrd_{\gr(D)}(\ti{a}). \qedhere
\]
\end{proof}

The next proposition will be used several times below.
It was proved by Ershov in \cite{Er83}, Prop. 2,
who refers to {Yanchevski\u\i} \cite{y} for part of the
argument.  One can find a complete proof in~\cite{hwsk1}.

\begin{prop}\label{normsurj}
Let $F \subseteq K$ be fields with Henselian valuations
 $v$ such that $[K:F]<\infty$ and $K$ is tamely ramified over $F$. Then $N_{K/F}(1+M_K)=1+M_F$.
\end{prop}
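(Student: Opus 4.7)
The plan is to establish the two inclusions separately, with the reverse inclusion being the substantive work. For the direction $N_{K/F}(1+M_K)\subseteq 1+M_F$, given $a=1+m$ with $m\in M_K$, the element $\ti a\in\gr(K)_0=\ov K$ is simply the residue class $\ov a=1$, so $\ti a=1$ in $\gr(K)^*$. Applying Lemma \ref{grnorm} then yields $\ti{N_{K/F}(a)}=N_{\gr(K)/\gr(F)}(\ti a)=1$, forcing $N_{K/F}(a)\in 1+M_F$.

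For the reverse inclusion, given $b=1+f$ with $f\in M_F$, I would find $a\in 1+M_K$ with $N_{K/F}(a)=b$ via Hensel's lemma. The key preparatory step is to exploit tameness in order to pick an element $t\in V_K$ with $\Tr_{K/F}(t)=1$. This rests on the standard fact that for a tame finite extension of Henselian valued fields one has $\Tr_{K/F}(V_K)=V_F$: one decomposes $K/F$ through its inertia subfield $K_0$, uses separability of $\ov K/\ov F$ together with Nakayama's lemma on the finitely generated $V_F$-module $V_{K_0}$ to handle the unramified step $K_0/F$, and computes $\Tr_{K/K_0}(1)=e$, a unit of $V_{K_0}$ since $\chr(\ov F)\nmid e$, to handle the totally ramified tame step $K/K_0$.

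With such a $t$ in hand, I would consider the polynomial
\[P(x)\;=\;N_{K/F}(1+xt)-b\;=\;x+c_2 x^2+\ldots+c_n x^n-f\;\in\; V_F[x],\]
whose coefficients $c_i$ are the $i$-th elementary symmetric polynomials in the conjugates of $t$ (these all lie in $V_F$ because each conjugate of $t$ lies in some $V_{\sigma K}$, and $c_1=\Tr_{K/F}(t)=1$). Reduction modulo $M_F$ gives $\ov P(x)=x\cdot(1+\ov c_2 x+\ldots+\ov c_n x^{n-1})$, which is a coprime factorization in $\ov F[x]$ of $\ov P$ into a linear factor vanishing at $0$ and a factor with nonzero constant term. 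Henselianness of $F$ then lifts this to a factorization $P(x)=(x-x_0)Q(x)$ in $V_F[x]$ with $x_0\in M_F$, so that $a=1+x_0 t\in 1+M_F\cdot V_K\subseteq 1+M_K$ satisfies $N_{K/F}(a)=b$, as required.

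The main obstacle is precisely the trace-surjectivity statement, which is where tameness is essential; in the wildly ramified case $\Tr_{K/F}(V_K)$ sits inside $M_F$, the coefficient of $x$ in $P(x)$ is no longer a unit, and the Hensel-lemma lift is unavailable in this form. Granting the trace surjectivity, the remainder of the argument is a mechanical application of Hensel's lemma.
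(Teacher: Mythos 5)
Your proof is correct, but note that the paper does not actually prove this proposition: it only cites Ershov \cite{Er83}, Prop.~2 (with part of the argument attributed to Yanchevski\u\i) and points to \cite{hwsk1} for a complete proof. So you have supplied a genuine argument where the paper defers to the literature, and your route differs from the cited one. The proof in \cite{hwsk1} (following Ershov) also passes through the tower $F\subseteq K_0\subseteq K$ with $K_0$ the inertia field, but it establishes the surjectivity onto $1+M_F$ by treating the unramified and the totally (tamely) ramified cases separately; your trace-one trick instead handles the whole tame extension in a single application of Hensel's lemma, with the tower relegated to the standard fact $\operatorname{Tr}_{K/F}(V_K)=V_F$ --- a tidy uniformization. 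Your easy inclusion via Lemma \ref{grnorm} is fine, since tame ramification implies defectlessness (and indeed the paper remarks, in the proof of Corollary \ref{Dnormsurj}, that this direction needs only defectlessness). Two small points deserve polish, though neither is a gap. First, Nakayama is both unnecessary and slightly delicate here ($V_{K_0}$ need not obviously be a finitely generated $V_F$-module for a general valuation, though in the unramified case with separable residue extension one does have $V_{K_0}=V_F[\theta]$); it is cleaner to note that $\operatorname{Tr}_{K_0/F}(V_{K_0})$ is a $V_F$-submodule of $V_F$, and once the residue computation shows it meets $1+M_F$, it contains a unit and hence equals $V_F$. Second, your $P(x)$ need not be monic (its top coefficient is $N_{K/F}(t)$, which may lie in $M_F$), so rather than the coprime-factorization form of Hensel's lemma it is safer to invoke the simple-root form, valid for arbitrary polynomials over a Henselian valuation ring: $\ov P(0)=0$ and $\ov P'(0)=\ov{c_1}=1\neq 0$, so $0$ lifts to a root $x_0\in M_F$. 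Finally, your identification of the $c_i$ as symmetric functions of the conjugates of $t$ uses separability of $K/F$; this does hold for tame extensions, but one can avoid the point entirely by observing that the $c_i$ are (up to sign) coefficients of the characteristic polynomial of multiplication by $t$, hence integral over $V_F$ and in $F$, hence in $V_F$ since $V_F$ is integrally closed.
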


\begin{cor} \label{Dnormsurj}
Let $F$ be a field with Henselian valuation $v$, and let $D$ be an
$F$-central
division algebra which is tame with respect to $v$. Then,
$\Nrd_D(1+M_D)=1+M_F$.
\end{cor}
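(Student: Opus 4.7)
\medskip

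\noindent\textbf{Proof plan for Corollary \ref{Dnormsurj}.}

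The plan is to verify the two inclusions separately. For the containment $\Nrd_D(1+M_D)\subseteq 1+M_F$, I would apply Corollary \ref{nrdcor}. Given $u\in 1+M_D$, its image in $\gr(D)_0=\ov D$ is $\widetilde u=1$, so
\[
\widetilde{\Nrd_D(u)}\;=\;\Nrd_{\gr(D)}(\widetilde u)\;=\;\Nrd_{\gr(D)}(1)\;=\;1.
\]
Since $\Nrd_D(u)\in F^\ast$, this forces $\Nrd_D(u)\in F^\ast\cap(1+M_D)=1+M_F$, as needed.

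For the reverse inclusion, the strategy is to factor the reduced norm through a maximal subfield on which the ordinary field norm is known to be surjective onto $1+M_F$ via Proposition \ref{normsurj}. Concretely, I would choose a maximal subfield $K$ of $D$, so that $[K:F]=\ind(D)=n$. The standard identity for reduced norms of central simple algebras gives
\[
\Nrd_D(a)\;=\;N_{K/F}(a)\qquad\text{for every }a\in K,
\]
(this is the valued analogue of Proposition \ref{normfacts}(2), which can be obtained by passing to a splitting field and using the regular representation of $K$). Because $v$ is Henselian, its unique extension to $K$ makes $M_K=M_D\cap K$, hence $1+M_K\subseteq 1+M_D$.

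The remaining point is that $K/F$ is tamely ramified, so Proposition \ref{normsurj} applies. Since $D$ is tame over $F$, we have $\chr(\ov F)\nmid n$ and $Z(\ov D)/\ov F$ is separable; moreover $D$ is defectless over $F$. Any subfield $K\supseteq F$ of $D$ has $[K:F]\mid n$, hence $\chr(\ov F)\nmid[K:F]$, and defectlessness of $D$ descends to $K$ by multiplicativity of ramification and residue degrees in $F\subseteq K\subseteq D$; together with $\ov K\subseteq\ov D$ being separable over $\ov F$, this yields tameness of $K/F$. Applying Proposition \ref{normsurj} then gives $N_{K/F}(1+M_K)=1+M_F$, and composing with the inclusion $1+M_K\hookrightarrow 1+M_D$ and the identity $\Nrd_D|_K=N_{K/F}$ produces the required preimage in $1+M_D$.

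The only delicate step is the verification that a maximal subfield $K$ of a tame $F$-central division algebra is itself tamely ramified and defectless over $F$; once this is in hand the rest is a direct assembly of Corollary \ref{nrdcor} and Proposition \ref{normsurj}.
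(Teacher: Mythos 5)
Your first inclusion is correct, and is only a mild variant of the paper's: you deduce $\Nrd_D(1+M_D)\subseteq 1+M_F$ from the graded compatibility of Corollary \ref{nrdcor} (for $u\in 1+M_D$ one has $\widetilde u=1$, and the kernel of $F^*\to \gr(F)^*$ is exactly $1+M_F$), whereas the paper instead notes that the first half of the proof of Proposition \ref{normsurj} uses only defectlessness, and applies it to a maximal subfield through the given element. Both routes are sound.

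The reverse inclusion, however, has a genuine gap: you take an \emph{arbitrary} maximal subfield $K$ of $D$ and argue that $K/F$ is tamely ramified from the premise that tameness of $D$ gives $\chr(\ov F)\nmid n$. That premise is not part of the notion of tameness in force in this section. As the proof of Corollary \ref{nrdcor} recalls (following \cite{hwsk1}), here $D$ tame means $[D:F]=[\gr(D):\gr(F)]$ and $Z(\gr(D))=\gr(F)$, which allows $\chr(\ov F)$ to divide $\ind(D)$. (The narrower definition requiring $\chr(\ov F)\nmid n$ appears in an earlier section of the paper, and under \emph{that} definition your Ostrowski-type argument would indeed show every subfield of $D$ is defectless and tamely ramified; but the corollary as stated belongs to the broader setting.) Under the broader notion your step fails concretely: the division algebra $D$ of degree $p$ over $F=\mathbb{Q}_p$ is tame, being split by the unramified extension of degree $p$, yet writing $D=(K/F,\sigma,p)$ with $K$ unramified, the element $x$ with $x^p=p$ generates a maximal subfield $F(x)$ that is totally and wildly ramified, so Proposition \ref{normsurj} cannot be applied to it. The missing ingredient --- and exactly what the paper's proof invokes --- is the nontrivial existence theorem (\cite{hwcor}, Prop.~4.3) that a tame division algebra contains \emph{some} maximal subfield $L$ tamely ramified over $F$; with that $L$ one concludes $1+M_F=N_{L/F}(1+M_L)=\Nrd_D(1+M_L)\subseteq \Nrd_D(1+M_D)$. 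You flagged the tame ramification of the maximal subfield as ``the only delicate step,'' and it is precisely the point where an existence result for one well-chosen subfield is needed, rather than a verification valid for all of them.
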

\begin{proof} Take any $a\in 1+M_D$ and any maximal subfield $K$ of
$D$ with $a \in K$. Then, $K$ is defectless over $F$, since $D$ is
defectless over $F$. So, $a \in 1+M_K$, and
${\Nrd_D(a)=N_{K/F}(a) \in 1+M_F}$ by the first part of the proof of
Prop.~\ref{normsurj}, which required only defectlessness, not
tameness. Thus, $\Nrd_D(1+M_D) \subseteq 1+M_F$. For the reverse
inclusion, recall from \cite{hwcor}, Prop.~4.3 that as $D$ is tame over $F$,
it has a maximal
subfield $L$ with $L$ tamely ramified over $F$. Then by
Prop.~\ref{normsurj},
$$
1+M_F \ = \ N_{L/F}(1+M_L) \ = \ \Nrd_D(1+M_L) \ \subseteq  \
\Nrd_D(1+M_D) \ \subseteq  \ 1+M_F,
$$
so equality holds throughout.
\end{proof}

We can now prove the main result of this section.
\begin{thm}\label{sk1prop}
Let $F$ be a field with Henselian valuation $v$ and let $D$ be a
tame $F$-central division algebra. Then
$\SK (D) \cong \SK (\mathrm{gr}(D))$.
\end{thm}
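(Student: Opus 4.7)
The plan is to show that the natural map $d\mapsto \tilde d$ from $D^*$ onto $\gr(D)^*$ (whose kernel is $1+M_D$) descends to an isomorphism $\bar\psi\colon\SK(D)\to\SK(\gr(D))$. First I would verify that this map restricts to $D^{(1)}\to\gr(D)^{(1)}$: this is immediate from Corollary \ref{nrdcor}, since $a\in D^{(1)}$ gives $\Nrd_{\gr(D)}(\tilde a)=\widetilde{\Nrd_D(a)}=\tilde 1=1$. As $d\mapsto\tilde d$ is a group homomorphism, it carries $D'$ into $\gr(D)'$, so the induced map $\bar\psi$ is well-defined.

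For surjectivity of $\bar\psi$, I would lift any homogeneous element. Let $y\in\gr(D)^{(1)}$, and pick $x\in D^*$ with $\tilde x=y$ (possible since $\gr(D)^*=\gr(D)^h\setminus\{0\}$ and the degree map is surjective onto $\Gamma_D$). Then by Corollary \ref{nrdcor}, $\widetilde{\Nrd_D(x)}=\Nrd_{\gr(D)}(y)=1$, so $\Nrd_D(x)\in 1+M_F$. By Corollary \ref{Dnormsurj}, there exists $u\in 1+M_D$ with $\Nrd_D(u)=\Nrd_D(x)$. Setting $a=xu^{-1}$, I get $a\in D^{(1)}$ and $\tilde a=\tilde x\cdot\widetilde{u^{-1}}=y$ (as $\tilde u=1$). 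Hence $\psi$ itself is already surjective onto $\gr(D)^{(1)}$, which in particular gives surjectivity of $\bar\psi$.

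For injectivity, I need to show that if $a\in D^{(1)}$ satisfies $\tilde a\in\gr(D)'$, then $a\in D'$. Write $\tilde a=\prod_{i}[\tilde{x_i},\tilde{y_i}]$ for suitable $x_i,y_i\in D^*$, and set $b:=\prod_{i}[x_i,y_i]\in D'$. Since $d\mapsto\tilde d$ is multiplicative, $\tilde b=\tilde a$, so $ab^{-1}\in 1+M_D$. As $b\in D'\subseteq D^{(1)}$ and $a\in D^{(1)}$, we have $ab^{-1}\in(1+M_D)\cap D^{(1)}$. Here the Congruence Theorem (Theorem \ref{thmd9}) enters decisively: it gives $(1+M_D)\cap D^{(1)}=[D^*,1+M_D]\subseteq D'$. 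Therefore $ab^{-1}\in D'$, whence $a\in D'$, as required.

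The main obstacle is precisely the injectivity step, but essentially all of its weight has been shifted onto the Congruence Theorem, which is already available. The surjectivity step is handled by combining Ershov's formula $\widetilde{\Nrd_D(a)}=\Nrd_{\gr(D)}(\tilde a)$ with the fact that $\Nrd_D$ maps $1+M_D$ onto $1+M_F$; both are in place. Thus the proof amounts to assembling these earlier ingredients around the surjection $D^*\twoheadrightarrow\gr(D)^*$.
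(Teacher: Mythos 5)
Your proposal is correct and follows essentially the same route as the paper: both rest on the epimorphism $\rho\colon D^*\to\gr(D)^*$ with kernel $1+M_D$, use Corollary \ref{nrdcor} to see $\rho(D^{(1)})\subseteq\gr(D)^{(1)}$, use Corollary \ref{Dnormsurj} to adjust a lift by an element of $1+M_D$ for surjectivity, and invoke the Congruence Theorem to identify $(1+M_D)\cap D^{(1)}=[D^*,1+M_D]\subseteq D'$, which is exactly the isomorphism of the left vertical map in the paper's diagram \eqref{diagram1}. Your injectivity step is just an explicit chase of the paper's cokernel argument, so the two proofs coincide in substance.
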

\begin{proof}
Consider the canonical surjective group homomorphism
$\rho\colon D^{\ast} \ra
\gr(D)^{\ast}$
given by $a \mapsto \ti{a}$.
Clearly, $\ker(\rho) = 1+M_D$.  If
$a\in D^{(1)} \subseteq V_D$ then $\ti a \in \gr(D)_0$ and
by Cor.~\ref{nrdcor},
$$
\Nrd_{\gr(D)}(\ti a) \ = \ \ti{\Nrd_D(a)} \ = \,1.
$$
This shows that $\rho (D^{(1)}) \subseteq \gr(D)^{(1)}$. Now
consider the diagram
\begin{equation} \label{diagram1}
\begin{split}
\xymatrix{1 \ar[r] & (1+M_D) \cap D' \ar[r] \ar[d] & D' \ar[d]
\ar[r]^-{\rho} & \gr(D)' \ar[d] \ar[r] & 1 \\
1 \ar[r] & (1+M_D)\cap D^{(1)} \ar[r] & D^{(1)} \ar[r] &
\gr(D)^{(1)} \ar@{.>}[r] & 1}
\end{split}
\end{equation}
The top row of the above diagram  is clearly exact. The Congruence
Theorem
implies that the left
vertical map in the diagram  is an isomorphism. Once we prove
that $\rho(D^{(1)}) = \gr(D)^{(1)}$, we will have the exactness of
the second row of diagram (\ref{diagram1}), and the theorem
follows by the exact sequence for cokernels.

To prove the needed surjectivity, take any $b \in \gr(D)^{\ast}$
with $\Nrd_{\gr(D)}(b) =1$. Thus $b \in \gr(D)_0$ by Th.
~\ref{normalthm}. Choose $a
\in V_D$ such that $\ti a=b$. Then we have,
$$
\overline{\Nrd_D(a)} \ = \ \ti{\Nrd_D(a)} \
= \ \Nrd_{\gr(D)}(b) \ = \, 1.
$$
Thus $\Nrd_D(a)
\in 1+M_F$. By Cor.~\ref{Dnormsurj}, since $\Nrd_D(1+M_D)=1+M_F$,
there is $c \in 1+M_D$ such that $\Nrd_D(c) = \Nrd(a)^{-1}$.  Then,
$ac\in D^{(1)}$ and $\rho(ac) =\rho(a) = b$.
\end{proof}

Having now
established that the reduced Whitehead group of a division algebra
coincides with that of its associated graded division algebra,
we can easily  deduce
stability of $\SK$ for unramified valued division algebra, due
originally to
Platonov (Cor.~3.13 in \cite{platonov}), and also a formula for $\SK$ for a
totally ramified division algebra (\cite{LT93}, p.~363, see
also \cite{Er83}, p.~70), and also a formula for $\SK$ in the nicely
semiramified case (\cite{Er83}, p.~69), as  natural consequences
of Th.~\ref{sk1prop}:

\begin{cor}\label{sk1appl}
Let $F$ be a field with Henselian valuation, and let $D$ be a
tame division algebra with center $F$.

\begin{enumerate}
\item If $D$ is unramified then $\SK(D) \cong \SK(\overline D)$

\item If $D$ is totally ramified then
$\SK(D)\cong\mu_n(\overline F)/\mu_e(\overline F)$
where $n = \ind(D)$ and $e$ is the exponent of~
$\Gamma_D /\Gamma_F$.

\item  If $D$ is semiramified, let
$G = \Gal(\ov D/\ov F) \cong \Gamma_D/\Gamma_F$. Then, there is
an exact sequence
\begin{equation}\label{maps}
G\wedge G  \ \to  \  \widehat H^{-1}(G,\ov D^*)
 \ \to \  \SK(D) \  \to \  1.
\end{equation}

\item If $D$ is nicely semiramified, then $\SK(D)\cong
\widehat H^{-1}(\Gal(\overline D/\overline F),\overline D^*)$.
\end{enumerate}
\end{cor}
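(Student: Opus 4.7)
The plan is to deduce each part of Corollary \ref{sk1appl} by combining Theorem \ref{sk1prop}, which gives the isomorphism $\SK(D) \cong \SK(\gr(D))$ under the tameness/Henselian hypothesis, with the corresponding part of Corollary \ref{skunramthm} applied to the associated graded division ring $\gr(D)$. In each case the main step is to verify that the ramification-type of $D$ as a valued division algebra is inherited by $\gr(D)$ as a graded division algebra, and that the residue data and value-group data transfer faithfully.

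For part (1), if $D$ is unramified over $F$, then by definition $\Gamma_D = \Gamma_F$, so $\Gamma_{\gr(D)} = \Gamma_{\gr(F)}$, meaning $\gr(D)$ is unramified over its center $\gr(F)$. Since $\gr(D)_0 = \overline D$, Corollary \ref{skunramthm}(1) gives $\SK(\gr(D)) \cong \SK(\overline D)$, and Theorem \ref{sk1prop} finishes the job. For part (2), when $D$ is totally ramified, $\overline D = \overline F$, hence $\gr(D)_0 = \gr(F)_0$, so $\gr(D)$ is totally ramified over $\gr(F)$ with the same index $n$ and the same quotient $\Gamma_D/\Gamma_F$, so of the same exponent $e$. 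Corollary \ref{skunramthm}(2) then gives $\SK(\gr(D)) \cong \mu_n(\overline F)/\mu_e(\overline F)$, and again Theorem \ref{sk1prop} transports this to~$D$.

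For part (3), if $D$ is semiramified, then $\overline D$ is a field with $[\overline D:\overline F] = |\Gamma_D:\Gamma_F| = \ind(D)$, and these coincide with the corresponding invariants of $\gr(D)$ over $\gr(F)$; hence $\gr(D)$ is semiramified. The Galois group $G = \Gal(\overline D/\overline F)$ is exactly $\Gal(\gr(D)_0/\gr(F)_0)$, and under the canonical isomorphism $\Gamma_D/\Gamma_F \cong \Gamma_{\gr(D)}/\Gamma_{\gr(F)}$ the identification $G \cong \Gamma_D/\Gamma_F$ is preserved. Substituting into the exact sequence \eqref{semiramseq1} of Corollary \ref{skunramthm}(3) and using Theorem \ref{sk1prop} yields the exact sequence \eqref{maps}.

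For part (4), a nicely semiramified $D$ possesses, by definition, a maximal subfield $L$ unramified over $F$ and a maximal subfield $K$ totally ramified over $F$. Both are tame and defectless (since $D$ is tame), so $\gr(L)$ and $\gr(K)$ are graded subfields of $\gr(D)$ of graded dimension $\ind(\gr(D))$ over $\gr(F)$, i.e., maximal graded subfields; moreover $\gr(L)$ is unramified and $\gr(K)$ is totally ramified over $\gr(F)$. Thus Corollary \ref{skunramthm}(4) applies and gives $\SK(\gr(D)) \cong \widehat H^{-1}(\Gal(\overline D/\overline F), \overline D^*)$, and then Theorem \ref{sk1prop} transfers this to $\SK(D)$. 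The only place where a little care is needed is in verifying that the ramification type passes cleanly from the valued to the graded side; this is essentially the fundamental equality \eqref{fundeq} together with the definitional compatibility $\gr(D)_0 = \overline D$ and $\Gamma_{\gr(D)} = \Gamma_D$, so no obstacle of substance arises.
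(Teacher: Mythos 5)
Your proposal is correct and takes essentially the same route as the paper: verify that tameness gives $Z(\gr(D))=\gr(F)$ and $\ind(\gr(D))=\ind(D)$, check case by case that the ramification type of $D$ passes to $\gr(D)$ (including, in case (4), the dimension count showing $\gr(K)$ and $\gr(L)$ are maximal graded subfields of the appropriate types), then apply Corollary \ref{skunramthm} to $\gr(D)$ and conclude via the isomorphism $\SK(D)\cong\SK(\gr(D))$ of Theorem \ref{sk1prop}. The only cosmetic difference is that the paper also invokes the graded double centralizer theorem in case (4), but the substance of the argument is identical.
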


\begin{proof}  Because $D$ is tame, $Z(\gr(D)) = \gr(F)$
and $\ind(\gr(D)) = \ind(D)$.  Therefore, for $D$ in each case
(1)--(4) here, $\gr(D)$ is in the corresponding case of
Cor.~\ref{skunramthm}.  (In case (3), that $D$ is semiramified
means  $[\ov D:\ov F] = |\Gamma_D:\Gamma_F| = \ind(D)$
and $\ov D$ is a field.  Hence $\gr(D) $ is semiramified.
In case (4),  since
$D$ is nicely semiramified, by definition (see \cite{jw}, p.~149)
it contains maximal subfields $K$ and
$L$, with $K$ unramified over $F$ and $L$ totally ramified
over~$F$.  (In fact, by \cite{mou}, Th.~2.4, $D$ is nicely
semiramified if and only if it has such maximal subfields.)
Then, $\gr(K)$ and $\gr(L)$ are maximal graded subfields
of $\gr(D)$ by dimension count and the graded double centralizer
theorem,\cite{hwcor}, Prop.~1.5(b),
with $\gr(K)$~unramified over~$\gr(F)$ and $\gr(L)$
totally ramified over $\gr(F)$. Then, so is $\gr(D)$ in the case~(4)
of Cor.~\ref{skunramthm}.)
Thus, in each case Cor.~\ref{sk1appl} for $D$ follows from
Cor.~\ref{skunramthm} for $\gr(D)$ together with the isomorphism
$\SK(D) \cong \SK(\gr(D))$ given by Th.~\ref{sk1prop}.
\end{proof}
\section{The group $G(D)$ and the existence of maximal subgroups}
In previous sections we have seen that the group $\CK(D)$ is
nontrivial in several cases. But the question whether $\CK(D)\neq 1$
in the general case is still open. Now, consider a division algebra
of index $n$ with $\CK(D)\neq1$. Since $\CK(D)$ is abelian of
bounded exponent $n$ one can easily conclude that in this case $D^*$
has a (normal) maximal subgroup. In \cite{AkMa02} it is conjectured
that each finite dimensional division algebra has a maximal (not
necessarily normal) subgroup. In \cite{HW09}, in an attempt to prove
this conjecture it was shown that if $D^*$ has no normal maximal
subgroup then there exists a noncyclic division algebra of a prime
index $p>3$. Also the authors showed that every quaternion division
algebra has a maximal subgroup. Thus the existence of maximal
subgroups reduces to the nontriviality of $\CK(D)$. Now, consider
the group $G(D)=D^*/\operatorname{Nrd}_D(D^*)D'$. By Corollary
\ref{cord3}, this group is also abelian of bounded exponent $n$.
Thus if $G(D)\neq1$ then we can conclude that $D^*$ has a (normal)
maximal subgroup. At the other extreme since $\CK(D)$ is a
homomorphic image of $G(D)$ it seems that showing the nontriviality
of $G(D)$ is easier than that of $\CK(D)$. In \cite{KM05}
Keshavarzipour and Mahdavi-Hezavehi proved that if the $\deg(D)=p^m$
for some prime $p$ and $\mu_p\subset Z(D)$ then triviality of $G(D)$
forces that $D$ is the quaternion division algebra. However, a
general criterion for the triviality of $G(D)$ was provided in
\cite{MaMo12}. This section is devoted to presenting this criterion and
then applying this criterion to achieve the main result of
\cite{HW09}. We begin our study with the following lemma. Before
stating it  we recall that a field $F$ is called \textit{Euclidean}
if $F^*=F^{*2}\times \langle-1\rangle$ and every sum of two squares
is again a square. In what follows $C_m$ stands for the cyclic group
of order $m$.
\begin{lem}\label{lemg1}
Let $F$ be a field such that $F^*=F^{*m}\times C_m$ and $p$ a prime
dividing $m$. Then, for a cyclic extension $L/F$ of degree $p$ the
following statements are equivalent:
\begin{enumerate}
  \item $N_{L/F}(L^*)\neq F^*$;
  \item $L=F(\sqrt{-1})$ and $F$ is Euclidean.
\end{enumerate}
\end{lem}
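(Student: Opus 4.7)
The strategy is to extract from the hypothesis $F^*=F^{*m}\times C_m$ the fact that $F^*/F^{*p}$ is cyclic of order $p$, and then use Kummer theory to force $p=2$ and $L=F(\sqrt{-1})$. The converse direction will be a short direct calculation with the quaternion-type norm form.

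First I would record several structural facts forced by $F^*=F^{*m}\times C_m$. Since $C_m$ is a subgroup of $F^*$ consisting of roots of unity, $\mu_m(F)=C_m$ is cyclic of order exactly $m$; in particular $|\mu_p(F)|=p$, so $\operatorname{char}(F)\neq p$ and $F$ contains a primitive $p$-th root of unity. Next I would show that $F^{*m}$ is $p$-divisible: given $x=y^m\in F^{*m}$, one may iterate the hypothesis to write $y=u^m$, so $x=(u^{m/p})^p$ with $u^{m/p}=(u^{m/p})\in F^{*m}$. Hence $F^{*p}=(F^{*m})^p\cdot C_m^p=F^{*m}\cdot C_m^p$, and since $F^{*m}\cap C_m=\{1\}$, we get $F^*/F^{*p}\cong C_m/C_m^p\cong C_p$, a group of order $p$.

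Now I would exploit this. For any cyclic extension $L/F$ of degree $p$, the containment $a^p=N_{L/F}(a)$ for $a\in F^*$ gives $F^{*p}\subseteq N_{L/F}(L^*)\subseteq F^*$, and since $[F^*:F^{*p}]=p$, the assumption $N_{L/F}(L^*)\neq F^*$ forces the equality $N_{L/F}(L^*)=F^{*p}$. Because $\mu_p\subset F$ and $\operatorname{char}(F)\neq p$, Kummer theory gives $L=F(\sqrt[p]{a})$ with $a\in F^*\setminus F^{*p}$, and a direct computation from the minimal polynomial $x^p-a$ yields $N_{L/F}(\sqrt[p]{a})=(-1)^{p+1}a$. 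If $p$ were odd this would give $a=N_{L/F}(\sqrt[p]{a})\in F^{*p}$, contradicting the choice of $a$; thus $p=2$. Then $-a=N_{L/F}(\sqrt{a})\in F^{*2}$, so writing $-a=b^2$ we obtain $L=F(\sqrt{a})=F(\sqrt{-1})$.

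To conclude (1)$\Rightarrow$(2), I would verify Euclideanness directly. Since $p=2$ and $|F^*/F^{*2}|=2$ with $-1\in C_m\setminus C_m^2$, we have $F^*=F^{*2}\sqcup(-F^{*2})$ and $-1\notin F^{*2}$. For any $x,y\in F$ not both zero, $x+y\sqrt{-1}\in L^*$ (using $-1\notin F^{*2}$ to ensure this is nonzero) and $x^2+y^2=N_{L/F}(x+y\sqrt{-1})\in N_{L/F}(L^*)=F^{*2}$, so sums of two squares are squares, and $F$ is Euclidean. The converse (2)$\Rightarrow$(1) is then immediate: if $F$ is Euclidean and $L=F(\sqrt{-1})$, every norm $x^2+y^2$ lies in $F^{*2}$, while $-1\in F^*\setminus F^{*2}$, so $N_{L/F}(L^*)\subseteq F^{*2}\subsetneq F^*$.

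The main obstacle is the initial step: one has to argue carefully with the direct product decomposition to show both that $F^{*m}$ is $p$-divisible and that the quotient $F^*/F^{*p}$ has exact order $p$. Once this bound on the index is in hand, the Kummer-theoretic dichotomy $p$ odd versus $p=2$ handles itself cleanly and the Euclidean conclusion falls out from the sum-of-two-squares interpretation of $N_{F(\sqrt{-1})/F}$.
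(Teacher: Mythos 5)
Your proposal takes essentially the same route as the paper's proof: both derive $F^*/F^{*p}\cong C_p$ from the direct-product hypothesis (the paper via $F^{*m}=F^{*m^2}=F^{*mp}$), use Kummer theory together with $N_{L/F}(\sqrt[p]{a})=(-1)^{p+1}a$ to exclude odd $p$ and identify $L=F(\sqrt{-1})$, and extract Euclideanness from $N_{L/F}(L^*)=F^{*2}$ by reading norms from $F(\sqrt{-1})$ as sums of two squares. Two small slips should be repaired: in your $p$-divisibility argument the hypothesis gives $y=u^m c$ with $c\in C_m$, so $x=y^m=u^{m^2}=\bigl(u^{m^2/p}\bigr)^p$ with $u^{m^2/p}=(u^{m/p})^m\in F^{*m}$ (your expression $(u^{m/p})^p$ equals $u^m=y$, not $x$); and the assertion $-1\in C_m\setminus C_m^2$ is false whenever $4\mid m$, but it is also unnecessary, since at that point $-1\notin F^{*2}$ already follows from the fact that $L=F(\sqrt{-1})$ is a quadratic extension of $F$.
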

\begin{proof}
(1)$\Rightarrow$(2) Clearly, $F$ contains a primitive $p$-th root of
unity. Thus, by Kummer theory $L=F(\alpha)$ for some $\alpha\in L$
with $\alpha^p=b\in F^*\setminus F^{*p}$. But, from
$F^*=F^{*m}\times C_m$ we have $F^{*m}=F^{*m^2}$ and so
$F^{*m}=F^{*mp}$ because $p$ divides $m$. Therefore,
$$\frac{F^*}{F^{*p}}\cong \frac{F^{*m}\times C_m}{F^{*mp}\times (C_m)^p}
=\frac{F^{*m}\times C_m}{F^{*m}\times (C_m)^p}\cong C_p.$$ Hence
$F^*=\langle b\rangle F^{*p}$. Now, let $p$ be odd. Since the
minimal polynomial of $\alpha$ over $F$ is $x^p-b$, we obtain
$N_{L/F}(\alpha)=(-1)^{p+1}b =b$ and hence $b\in N_{L/F}(L^*)$. On
the other hand, $F^{*p}\subseteq N_{L/F}(L^*)$ implies $F^*= \langle
b\rangle F^{*p}\subseteq N_{L/F}(L^*)$ and so $N_{L/F}(L^*)=F^*$,
which is a contradiction. Thus, $p=2$. Now, consider the case in
which $-1\in F^{*2}$. Then $\sqrt{-1}\in F$. Therefore,
$N_{L/F}(\sqrt{-1}\alpha)=b$ and consequently $N_{L/F}(L^*)=F^*$
which is also a contradiction. So we have $-1\not\in F^{*2}$,
$F^*=\langle-1\rangle\times F^{*2}$ and $L=F(\sqrt{-1})$. Since
$F^{*2}\subseteq N_{L/F}(L^*)$ and $N_{L/F}(L^*)\neq F^*$ we obtain
$N_{L/F}(L^*)=F^{*2}$. Now, given $u,v\in F^*$, from
$L=F(\sqrt{-1})$ we have $u^2+v^2=N_{L/F}(u+v\sqrt{-1})\in F^{*2}$
which means that $F$ is Euclidean. The converse (2)$\Rightarrow$(1)
is clear.
\end{proof}
To proceed our study we need to recall Merkurjev's Theorem and
Albert's Main Theorem from the theory of central simple algebras.
For the proofs see the Corollary of \cite{Mer83} and \cite[p.
109]{Alb61}, respectively.
\begin{thm}[Merkurjev]\label{thmg2}
Let $F$ be a field and $p$ be a prime number. If $[F(\mu_p):F]\leq
3$ then the ${_p}Br(F)$ (the subgroup of $Br(F)$ that is formed by
all the elements killed by $p$) is generated by cyclic algebras of
degree $p$.
\end{thm}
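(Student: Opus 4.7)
The plan is to reduce the general case to the special case $\mu_p\subset F$, which is the celebrated theorem of Merkurjev asserting that $_{p}\Br(F)$ is generated by classes of symbol (hence cyclic) algebras of degree $p$ whenever $F$ contains a primitive $p$-th root of unity. To this end, set $L=F(\mu_p)$ and $d=[L:F]$. Since $d$ divides $p-1$, the hypothesis $d\leq 3$, together with the observation that the statement is vacuous for $p=2$ and that $d\leq 2$ automatically when $p=3$, ensures $\gcd(d,p)=1$ in every situation where something must be proved.

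The first step I would take is to invoke the restriction--corestriction machinery on the Brauer group. The composition $\operatorname{cor}_{L/F}\circ\operatorname{res}_{L/F}$ acts as multiplication by $d$ on $_{p}\Br(F)$, and because $d$ is invertible modulo $p$, every class $[A]\in{_{p}\Br(F)}$ is an integer multiple of $\operatorname{cor}_{L/F}([A_L])$. By Merkurjev's theorem applied over $L$, the element $[A_L]=\operatorname{res}_{L/F}([A])\in{_{p}\Br(L)}$ can be written as a sum of classes of symbol algebras $(a_i,b_i)_{L,\zeta}$ of degree $p$. The content of the assertion thus becomes: the corestriction of a degree-$p$ cyclic algebra from $L$ lies in the subgroup of $_{p}\Br(F)$ generated by degree-$p$ cyclic algebras over $F$.

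The heart of the argument is the explicit analysis of this corestriction, which is where the bound $d\leq 3$ is genuinely used. For $d=1$ there is nothing to prove. For $d=2$ one picks $L=F(\alpha)$ with $\alpha^2\in F$, writes a symbol $\chi\cup(b)\in H^2(L,\mu_p)$ with $\chi\in H^1(L,\mathbb{Z}/p)$ and $b\in L^*$, and manipulates the projection formula together with the norm $N_{L/F}$ to exhibit $\operatorname{cor}_{L/F}(\chi\cup(b))$ as an explicit finite sum of $F$-symbols of degree $p$; the brevity of this computation is afforded by the Galois group being cyclic of order $2$. For $d=3$, one performs an analogous but substantially more involved bookkeeping over a cyclic cubic extension, using Rost-type projection-formula identities to decompose the corestriction into a sum of $F$-symbols.

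The principal obstacle is precisely this control of corestriction on symbols: the smallness of $d$ is what makes the combinatorics tractable, and for $d\geq 4$ there is no reason to expect the corestriction of a single symbol to be expressible as a bounded sum of symbols of the same degree. Thus the proof imports the full Galois-cohomological input of the Merkurjev--Suslin theorem at the level of $L$ and then relies on a delicate, case-by-case transfer argument whose length grows rapidly with $d$; the theorem's force lies exactly in the fact that this transfer remains manageable when $[F(\mu_p):F]\leq 3$.
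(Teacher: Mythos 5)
The paper does not actually prove this theorem: it is quoted with a pointer to the Corollary of \cite{Mer83} (just as Theorem \ref{thmg3} is quoted from Albert), so your proposal must stand on its own. Its scaffolding is correct and is indeed the standard reduction: with $L=F(\mu_p)$ and $d=[L:F]$ one has $\operatorname{cor}_{L/F}\circ\operatorname{res}_{L/F}=d$ on ${_p}Br(F)$, and since $d$ always divides $p-1$ (in characteristic $p$ one even has $L=F$), the coprimality $\gcd(d,p)=1$ is automatic --- no case analysis is needed, and your remark that the statement is ``vacuous for $p=2$'' is mistaken: there $d=1$ and the statement \emph{is} Merkurjev's generation-by-quaternion-algebras theorem, i.e.\ exactly the input you invoke. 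Merkurjev--Suslin over $L$ then writes $\operatorname{res}_{L/F}[A]$ as a sum of degree-$p$ symbols, so the theorem is correctly reduced to the claim that the corestriction of a degree-$p$ symbol over $L$ lies in the subgroup of ${_p}Br(F)$ generated by cyclic algebras of degree $p$.

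The genuine gap is that this reduced claim is the entire theorem, and you assert it rather than prove it. The projection formula settles only the special case you implicitly rely on: if the character slot descends, then $\operatorname{cor}_{L/F}\bigl(\operatorname{res}_{L/F}(\chi)\cup(b)\bigr)=\chi\cup\bigl(N_{L/F}(b)\bigr)$, the class of the cyclic $F$-algebra $(\chi,N_{L/F}(b))$ of degree $p$. But a general symbol over $L$ has no slot defined over $F$; worse, Kummer classes of elements of $F^*$ need not be $\Gal(L/F)$-invariant characters of $L$, because $\sigma$ moves the chosen root of unity, so $H^1(L,\mathbb{Z}/p)\cong L^*/L^{*p}$ is equivariant only after twisting by the cyclotomic character. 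The actual mechanism --- decomposing $L^*/L^{*p}$ and $K_2(L)/p$ into eigenspaces under $\Gal(L/F)$ (possible since $d\mid p-1$), identifying the characters restricted from $F$ with one specific eigenspace, and proving via $K_2$-identities that when there are at most three eigenspaces each eigencomponent of a symbol is again a sum of symbols with a descended slot --- is entirely absent from your sketch; ``Rost-type projection-formula identities'' and ``bookkeeping'' do not name it, and it is precisely where the hypothesis $[F(\mu_p):F]\leq3$ enters (no analogue is known once $d\geq4$). Two smaller defects: in characteristic $p$ your argument collapses, since $\mu_p=1$ and the Kummer-theoretic form of Merkurjev--Suslin you invoke is unavailable (that case is Albert's $p$-algebra theorem, Theorem \ref{thmg3}, which you never cite); and for $d=2$ writing $L=F(\alpha)$ with $\alpha^2\in F$ presumes characteristic $\neq2$.
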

\begin{thm}[Albert]\label{thmg3}
Let $F$ be a field with nonzero characteristic $p$. Then every
$p$-algebra is similar to a cyclic $p$-algebra.
\end{thm}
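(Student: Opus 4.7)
The plan is to reduce the claim to showing that every $p$-central division algebra $D$ (of degree $p^n$, $n\ge 1$) contains a maximal subfield $K$ which is cyclic Galois over $F$. Once this is established, the Skolem--Noether theorem produces an element $z\in D^*$ inducing a generator of $\Gal(K/F)$ by conjugation, and $z^{p^n}$ lies in $F^*$, so that $D=(K/F,\sigma,z^{p^n})$ is cyclic. Since every $p$-algebra is similar (in the Brauer group) to a $p$-central division algebra by Wedderburn's theorem, this will suffice.

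For the base case $n=1$, I would first appeal to Albert's theorem that any $p$-division algebra of degree $p$ contains a separable noncentral element $x$ (this is the essential ``separability'' input for $p$-algebras in characteristic $p$, and it is proved by analyzing $x\mapsto x^p$ on $D$). The minimal polynomial of $x$ then has degree exactly $p$ and is separable, so $F(x)/F$ is a separable extension of degree $p$; in characteristic $p$ any such extension is Artin--Schreier and therefore cyclic. This yields the cyclic maximal subfield required, and hence $D$ is cyclic of degree $p$.

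For the inductive step, the natural approach is to parametrize cyclic extensions of degree $p^n$ in characteristic $p$ using Witt vectors of length $n$ (so that a cyclic $p^n$-extension is presented by a single Witt vector $\underline a=(a_0,a_1,\ldots,a_{n-1})\in W_n(F)$ modulo $\wp(W_n(F))$, where $\wp$ is the Witt Frobenius-minus-identity), and to show that inside $D$ one can always exhibit such a Witt vector whose Artin--Schreier--Witt extension embeds as a maximal subfield of~$D$. The main obstacle I anticipate is precisely this existence statement: starting from an arbitrary separable maximal subfield $L\subset D$ (again available by Albert's separability theorem extended to higher degree), one has to lift/adjust $L$ so that its Galois closure sits inside $D$ and is actually cyclic, rather than merely abelian or of larger Galois group. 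This can be carried out by an inductive ``enlarge the cyclic tower'' argument: assuming by induction that a subfield $K_{n-1}\subset D$ corresponding to the truncated Witt vector $(a_0,\ldots,a_{n-2})$ is cyclic of degree $p^{n-1}$, one constructs an element of $D$ whose conjugation action extends the generator of $\Gal(K_{n-1}/F)$ to an order-$p^n$ automorphism, producing the desired $K_n$.

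An alternative route, avoiding the delicate Witt-vector construction, is to use the description of the $p$-torsion of the Brauer group in characteristic $p$: by Teichmüller--Albert, ${}_pBr(F)$ is generated by degree-$p$ cyclic symbols $[a,b)_p$. Combining this with the fact that symbols of degree $p^n$ (given by Witt vectors of length $n$) generate ${}_{p^n}Br(F)$, one reduces the problem to showing that a sum of cyclic $p^n$-symbols in the Brauer group is Brauer equivalent to a single cyclic $p^n$-symbol. This last step---the ``common slot'' property for cyclic $p$-algebras---is what makes the characteristic-$p$ setting easier than the characteristic-coprime-to-$p$ case, and it can be verified by an explicit manipulation of the generating Witt vectors, at the cost of some bookkeeping which I would not write out in detail here.
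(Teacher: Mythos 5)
Your primary route has a fatal flaw: it sets out to prove something much stronger than the theorem, namely that every $p$-central division algebra of degree $p^n$ itself contains a cyclic maximal subfield, i.e.\ \emph{is} a cyclic algebra. Albert's theorem only asserts Brauer \emph{similarity} to a cyclic $p$-algebra, and the cyclic algebra produced generally has degree strictly larger than $\operatorname{ind}(D)$ --- that is precisely why the word ``similar'' appears in the statement. Whether $p$-division algebras of degree $p^n$, $n\geq 2$, are cyclic is a well-known open problem; only the degree-$p$ case is known (Albert). So your inductive step --- extending a cyclic subfield $K_{n-1}\subset D$ to a cyclic maximal subfield $K_n$ by producing an element of $D$ whose conjugation action has order $p^n$ --- is not a construction anyone knows how to perform: nothing forces the Witt-vector tower to live inside $D$, and no known argument yields the required extension of the automorphism. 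There is also an error already in your base case: a separable extension of degree $p$ in characteristic $p$ need \emph{not} be Galois (its Galois closure can have group the full symmetric group), so ``separable of degree $p$, hence Artin--Schreier, hence cyclic'' is false as stated. Albert's degree-$p$ argument must additionally manufacture an element $y\in D\setminus F$ with $y^p-y\in F$, which requires a genuine computation (with inner derivations and the $p$-th power map), not just the existence of a separable noncentral element.

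Your alternative route is the correct strategy --- it is essentially Albert's own proof, in its modern Artin--Schreier--Witt formulation --- but as written it outsources the entire content of the theorem. The generation of the $p$-primary Brauer group by Witt-vector symbols $[\,\underline{a},b)$ and the identities $[\,\underline{a},b)+[\,\underline{a}',b)=[\,\underline{a}+\underline{a}',b)$ and $[\,\underline{a},b)+[\,\underline{a},b')=[\,\underline{a},bb')$ are the easy part; the theorem lives exactly in the step you defer as ``bookkeeping,'' namely merging two symbols with unrelated slots into a single cyclic class. That step needs, among other things, Witt's theorem that in characteristic $p$ every cyclic extension of degree $p^i$ embeds into a cyclic extension of degree $p^m$ for any $m\geq i$ (the characteristic-$p$ phenomenon with no analogue in characteristic zero, where roots of unity obstruct such embeddings), together with an induction on the exponent. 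Declining to write this out leaves the proof without its core. For calibration: the paper itself offers no argument here --- it quotes the result from Albert's book --- so a proof proposal has to carry the full weight of this merging lemma, and yours does not.
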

Now, let $F$ be a field with $F^*=F^{*p}$ for some prime number $ p
$. If $[F(\mu_p):F]\leq 3$, then by Merkurjev's Theorem, ${_p}Br(F)$
is generated by classes of cyclic $F$-algebras of degree $p$ and
thus every $F$-central simple algebra splits. So $Br(F)_p=0$. Also
if $\operatorname{char}(F)=p$ and $[A]\in {_p}Br(F)$, then by
Albert's Main Theorem $[A]=[(a,L/F,\sigma)]$ for some cyclic field
extension $L/F$, where $[L:F]$ is a $p$-th power. Since
$F^*=F^{*p^e}$ for each $e\in \mathbb{N}$, we conclude that
$N_{L/F}(L^*)=F^*$. So $[A]=0$ in $Br(F)$ and again we obtain
$Br(F)_p=0$.
\begin{lem}\label{lemg4}
Given a field $F$, we have the following:
\begin{enumerate}
  \item Let ${_2}Br(F)\neq 0$. Then $F^{*2}=F^{*4}$ if and only if $F$
is Euclidean;
  \item If $n$ is odd, then $F^*=F^{*n}$ if and only if $F^{*2}=F^{*2n}$;
  \item Let $D$ be a non-commutative $F$-central division algebra of index $n$. If
  $F^{*n}=F^{*n^2}$, then $F^{*2}=F^{*2n}$.
\end{enumerate}
\end{lem}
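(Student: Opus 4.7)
For Part (2), the forward direction is immediate by squaring; for the converse, $a^2 = b^{2n} = (b^n)^2$ forces $a = \pm b^n$, and since $n$ is odd, $-1 = (-1)^n \in F^{*n}$, so $a\in F^{*n}$.

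For Part (1), if $F$ is Euclidean then every $a\in F^*$ equals $\pm c^2$, whence $a^2 = c^4\in F^{*4}$, giving $F^{*2}\subseteq F^{*4}$. Conversely, assuming ${_2}Br(F)\neq 0$ and $F^{*2}=F^{*4}$: the equality forces $[F^*:F^{*2}]\leq 2$ (since $a^2 = b^4 \Rightarrow a = \pm b^2$), while ${_2}Br(F)\neq 0$ together with Merkurjev's Theorem (\ref{thmg2}) rules out $F^* = F^{*2}$, so $[F^*:F^{*2}] = 2$ with $-1\notin F^{*2}$. The only nontrivial quadratic extension is then $F(\sqrt{-1})$, whose norm group (consisting of the nonzero sums of two squares in $F$) is proper in $F^*$ (else ${_2}Br(F) = 0$), contains $F^{*2}$, and has index at most $2$ in $F^*$, so equals $F^{*2}$. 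Hence every nonzero sum of two squares is a square; with $-1\notin F^{*2}$, this makes $F$ Euclidean.

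Part (3) carries the substance, and my plan proceeds in three steps. First, the hypothesis $F^{*n} = F^{*n^2}$ yields the factorization $F^* = F^{*n}\cdot \mu_n(F)$: for $a\in F^*$, $a^n = c^{n^2} = (c^n)^n$ implies $a = c^n\zeta$ for some $\zeta\in\mu_n(F)$. Squaring, $a^2 = c^{2n}\zeta^2$, so $F^{*2} = F^{*2n}$ reduces to the claim $\mu_n(F)^2\subseteq F^{*2n}$, equivalently $\mu_n(F)\subseteq\{\pm 1\}\cdot F^{*n}$. Second, setting $m := |\mu_n(F)|$, I would verify $F^{*m} = F^{*mn} = F^{*n}$ (using $\zeta^m = 1$ in the decomposition), placing us in the setting of Lemma \ref{lemg1} with $C_m = \mu_m(F) = \mu_n(F)$; then argue by contradiction that $\mu_n(F)\not\subseteq\{\pm 1\}F^{*n}$, selecting a prime $p\mid m$ that is either odd, or $p = 2$ when $m$ is a $2$-power $\geq 4$ (in the latter case, $\sqrt{-1}\in F$ and so $F(\sqrt{-1}) = F$). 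In either case $\mu_p\subset F$. Third, by Kummer theory there is a cyclic degree-$p$ extension $L/F$ with $L\neq F(\sqrt{-1})$; Lemma \ref{lemg1} then gives $N_{L/F}(L^*) = F^*$, so every cyclic algebra $(L/F,\sigma,b)$ splits. Merkurjev's Theorem yields ${_p}Br(F) = 0$, but $p\mid n$ forces the $p$-primary component $D_p$ of $D$ to be a nontrivial division algebra, producing a nonzero class in ${_p}Br(F)$ — the desired contradiction, from which $\mu_n(F)\subseteq\{\pm 1\}\cdot F^{*n}$ follows.

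The technical crux lies in Step 2: Lemma \ref{lemg1} requires a genuine \emph{direct} product $F^* = F^{*m}\times C_m$, whereas Step 1 only supplies the product $F^* = F^{*m}\cdot\mu_m(F)$. Verifying that the intersection $F^{*m}\cap\mu_m(F)$ is trivial is subtle and is where the existence of the noncommutative $D$ becomes indispensable — note that over $\mathbb{C}$, where the intersection is nontrivial, no noncommutative central division algebra exists, so the expected conclusion of (3) would not apply there.
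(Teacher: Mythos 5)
Your parts (1) and (2) are correct and essentially match the paper's arguments (your part (1) simply inlines the $p=2$ case of Lemma \ref{lemg1} rather than citing it), and your overall strategy for (3) --- decompose $F^*=F^{*n}\mu_n(F)$, reduce the claim to $\mu_n(F)\subseteq\{\pm 1\}F^{*n}$, then eliminate every prime making $m\geq 3$ via Lemma \ref{lemg1}, Merkurjev's Theorem, and the nontrivial $p$-primary component of $D$ --- is also the paper's strategy. But your proof of (3) has a genuine gap, and it sits exactly where you flag it yourself: you never prove that $F^{*n}\cap \mu_n(F)=1$, i.e.\ that the product $F^*=F^{*m}\cdot C_m$ is direct. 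Lemma \ref{lemg1} is stated under the hypothesis $F^*=F^{*m}\times C_m$, so without directness your Step 3 invocation of it is unjustified; observing that the step is ``subtle'' and that the existence of $D$ must enter (correct, as your $\mathbb{C}$ sanity check shows) is a diagnosis, not an argument, and a proof that identifies its own missing step still has that step missing.

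The gap closes with the very same Merkurjev-plus-$D$ mechanism you deploy in Step 3, run once more at the level of the intersection; this is how the paper does it. Suppose $\mu_p\subseteq F^{*n}\cap C_m$ for some prime $p\mid m$. From $F^{*n}=F^{*n^2}$ and $p\mid n$ one gets $F^{*n}=F^{*np^j}$ for all $j$; an easy induction then lifts $\mu_p\subseteq F^{*n}$ to the whole $p$-Sylow subgroup $P$ of $C_m$ (if $\zeta$ has order $p^j$ and $\zeta^p=b^{np}$, then $\zeta b^{-n}\in\mu_p\subseteq F^{*n}$, so $\zeta\in F^{*n}$). Writing $C_m=P\times H$ with $p\nmid |H|$, so that $H=H^p$, one obtains $F^*=F^{*n}C_m=F^{*n}H=F^{*np}H^p\subseteq F^{*p}$, i.e.\ $F^*$ is $p$-divisible. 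Since $\mu_p(F)\neq 1$ forces $\operatorname{char}(F)\neq p$ and $F(\mu_p)=F$, Theorem \ref{thmg2} gives ${_p}Br(F)=0$, contradicting the nontriviality of the $p$-primary component of $D$ (as $p\mid m\mid n$). Hence $F^{*n}\cap C_m=1$ and your Steps 2--3 go through. Two small patches elsewhere: in Step 3 you should not lean on Kummer theory to \emph{produce} a degree-$p$ cyclic extension --- if none exists, or if $F^*=F^{*p}$, then ${_p}Br(F)=0$ follows from Merkurjev anyway, since every cyclic algebra $(L/F,\sigma,b)$ with $b\in F^{*p}\subseteq N_{L/F}(L^*)$ splits; and note that your reduction to $\mu_n(F)\subseteq\{\pm 1\}F^{*n}$ is in fact a mild streamlining of the paper, which in the even-$n$ case additionally deduces that $F$ is Euclidean before concluding $F^{*2}=F^{*2n}$.
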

\begin{proof}
(1) If $ F $ is Euclidean, we clearly have $F^{*2}=F^{*4}$ .
Conversely, let $F^{*2}=F^{*4}$. If $\operatorname{char}(F)=2$, then
$F^{*}=F^{*2}$. Thus the above argument yields ${_2}Br(F)=0$ which
contradicts our assumption. Thus, we may assume that
$\operatorname{char}(F)\neq 2$ and hence $-1\in F$. If $-1$ is a
square, then $F^*$ is $2$-divisible and again ${_2}Br(F)=0$ which is
absurd. So, suppose $-1$ is not a square. So that
$F^*=F^{*2}\times\langle-1\rangle$. Since ${_2}Br(F)\neq 0$ there is
a cyclic algebra of degree 2 over $F$ which is a division algebra.
If $L$ is its maximal subfield then $N_{L/F}$ is not surjective.
Hence, the result follows from Lemma \ref{lemg1}.

(2) If $F^*$ is $n$-divisible then, one may easily check that
$F^{*2}=F^{*2n}$. Conversely, suppose that $F^{*2}=F^{*2n}$. If
$a\in F$ then, there is a $b\in F^*$ such that $a^2=b^{2n}$. Since
$n $ is odd $a=(\pm b)^n\in F^{*n}$ and hence $F^{*}\subseteq
F^{*n}$, as desired.

(3) Firstly, let $\mu_n(F)=1$. Thus $F^*=F^{*n}$. If $n$ is even,
$\operatorname{char}(F)=2$ and the $2$-primary component of $D$
splits which is a contradiction. So $n$ is odd and the result
follows from (2). Now, it remains to consider the case in which
$\mu_n(F)\neq 1$. If this is the case, then $\mu_n(F)$ is a cyclic
group $C_m$ for some $m$ dividing $n$. Clearly, we have
$F^*=F^{*n}C_m$. Now, we claim that $F^{*n}\cap C_m=1$. For let
$\mu_p\subset F^{*n}\cap C_m$ for some prime $p$ dividing $m$. Since
$F^{*n}=F^{*np}$ the $p$-Sylow subgroup $ P $ of $C_m$ is contained
in $F^{*n}$. But $C_m=P\times H$ for some cyclic subgroup $H$ of
$C_m$ of order $m/|P|$. Since $F^{*n}=F^{*n^2}$ and $(p,|H|)=1$, we
conclude that $F^{*}=F^{*n}H=F^{*np}H^p=F^{*p}$. Now, the argument
before the lemma yields $Br(F)_p=0$ which is a contradiction and the
claim is established. Therefore, we obtain $F^*=F^{*n}\times C_m$
and hence $F^*=F^{*m}\times C_m$. If $m$ has an odd prime divisor,
then by Lemma \ref{lemg1}, $N_{L/F}(L^*)=F^*$ for every cyclic
extension $L/F$ of index $p$. Hence, by Merkurjev's Theorem
${_p}Br(F)=0$ which is a contradiction. Therefore, $m=1$ if
$n=\deg(D)$ is odd and $m=2^e$ for some $e\geq 0$ if $n$ is even.
However, the case that $n$ is odd yields $F^{*}=F^{*n}$ and by (2)
the result follows. So, we assume that $n$ is even. If $e=0$, then
$C_m=1$ which is a contradiction, because $\mu_n(F)\neq 1$.
Therefore $e>0$. If $e>1$, then for every quadratic extension $L/F$,
the norm map $N_{L/F}:L^*\rightarrow F^*$ is surjective, by Lemma
\ref{lemg1}. This also gives us the contradiction ${_2}Br(F)\neq0$.
Hence $e=1$ and $F$ is Euclidean. So $F^*=F^{*n}\times C_2$ and
consequently $F^{*2}=F^{*2n}$, as desired.
\end{proof}
Recall that whenever $F$ is a Euclidean field then $F^{*2}$ defines
an ordering. One may easily check that this statement is equivalent
to the definition of a Euclidean field.

We are now in a position to find a criterion for the triviality of
the group $\overline{G}(D)=D^*/\operatorname{Nrd}_D(D^*)D^{(1)}$
which is a homomorphic image of $G(D)$. Note that $\overline{G}(D)$
is isomorphic to $\operatorname{Nrd}_D(D^*)/N_{D/F}(D^*)$ where
$N_{D/F}:D^*\rightarrow F^*$ is the norm map and the required
isomorphism is given by the reduced norm.
\begin{lem}\label{lemg5}
Let $D$ be an $F$-central division algebra of even index $n$. If
$F^{*2}$ is $n$-divisible then
\begin{enumerate}
  \item the $2$-primary component of $D$ is the ordinary quaternion
division algebra over $F$;
  \item The image of $D^*$ under the reduced norm is $F^{*2}$.
\end{enumerate}
\end{lem}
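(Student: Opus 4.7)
My approach is to first extract the Euclidean structure of $F$ from the hypothesis, then identify the $2$-primary component of $D$ as the ordinary quaternion, and finally compute the reduced norm image using the tensor decomposition of $D$ into primary components.

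First, from $F^{*2} = F^{*2n}$ with $n$ even, I would derive $F^{*n} = F^{*2}$ (using $F^{*n} \subseteq F^{*2}$ since $2 \mid n$, and $F^{*2} = F^{*2n} \subseteq F^{*n}$ since $n \mid 2n$) and $F^{*2} = F^{*4}$ (from $F^{*2} = F^{*2n} \subseteq F^{*4}$ since $4 \mid 2n$). Since $D$ is non-commutative of even index, ${_2}\Br(F) \neq 0$, so Lemma~\ref{lemg4}(1) yields that $F$ is Euclidean: $F^*/F^{*2} = \{\pm 1\}$, every sum of squares in $F$ is a square, and $F(\sqrt{-1})$ is the unique quadratic extension of $F$. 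Moreover, arguing as in the proof of Lemma~\ref{lemg4}(3), $F^* = F^{*n} \times \langle -1 \rangle$.

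For part~(1), Lemma~\ref{lemg1} applied to $L = F(\sqrt{-1})$ gives $N_{L/F}(L^*) = F^{*2} \neq F^*$, so Merkurjev's Theorem~\ref{thmg2} (for $p=2$) identifies ${_2}\Br(F) = \{0, [(-1,-1/F)]\}$; that is, the only non-split quaternion $F$-algebra is the ordinary quaternion. Writing $D = D_2 \otimes_F D_{2'}$ for the decomposition into $2$-primary and odd-primary components, the hypothesis combined with the Euclidean structure of $F$ forces the $2$-primary Brauer class $[D_2]$ to have exponent $2$, so $[D_2] = [(-1,-1/F)]$ and hence $D_2 \cong (-1,-1/F)$ of index~$2$; in particular $n = 2m$ with $m$ odd. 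The technically delicate point is establishing that $\Br(F)\{2\}$ has exponent $2$, a standard fact about Pythagorean fields of level one (for instance via Elman--Lam theory) which I would cite from the literature; this is the main obstacle in the argument.

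For part~(2), by Theorem~\ref{thmf3} applied to the coprime-index tensor decomposition $D = D_2 \otimes_F D_{2'}$, one has $\operatorname{CK}(D) \cong \operatorname{CK}(D_2) \times \operatorname{CK}(D_{2'})$. For any $a = x + yi + zj + wk \in D_2^*$, $\operatorname{Nrd}_{D_2}(a) = x^2 + y^2 + z^2 + w^2$, which is a sum of four squares, hence a square in the Euclidean field $F$; consequently $\operatorname{Nrd}_D(a \otimes 1) = \operatorname{Nrd}_{D_2}(a)^m \in F^{*2m} = F^{*n} = F^{*2}$. For any $b \in D_{2'}^*$, $\operatorname{Nrd}_D(1 \otimes b) = \operatorname{Nrd}_{D_{2'}}(b)^2 \in F^{*2}$. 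Therefore $\operatorname{Nrd}_D(D^*) \subseteq F^{*2}$, and together with $F^{*2} = F^{*n} = \operatorname{Nrd}_D(F^*) \subseteq \operatorname{Nrd}_D(D^*)$ we obtain $\operatorname{Nrd}_D(D^*) = F^{*2}$, as required.
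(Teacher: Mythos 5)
Your proposal is correct in substance; part (1) follows the paper's skeleton but outsources its crux, while part (2) takes a genuinely different route. For (1), your reduction (Euclidean via Lemma~\ref{lemg4}(1), then Merkurjev) is exactly the paper's, but the point you flag as the main obstacle --- that the full $2$-primary torsion of $\Br(F)$, not merely the $2$-torsion subgroup ${_2}\Br(F)$, collapses to $\{0,[\mathcal{Q}]\}$ --- is precisely what the paper proves inline, and cheaply: for $L=F(\sqrt{-1})$ it exhibits an explicit ``half-angle'' square root of any $a+\sqrt{-1}\,b$ (using that nonnegative elements of a Euclidean field are squares), so $L^*=L^{*2}$; by Merkurjev ${_2}\Br(L)=0$, hence $\Br(L)_2=0$, so the $2$-primary class of $D$ lies in $\Br(L/F)$ and is then represented, by the splitting theorem of \cite[Th.~7, p.~64]{Dra83}, by a degree-$2$ algebra containing $L$ as a maximal subfield, i.e.\ by $\mathcal{Q}$. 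So no appeal to external Elman--Lam-type results is needed (and ``level one'' is a misnomer here: Euclidean fields are formally real, of infinite level); the fact you defer is a short consequence of the Euclidean structure plus Merkurjev, which you could simply reproduce.

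For (2), the paper cites a lemma of Draxl to get $\operatorname{Nrd}_D(a)^m\in\operatorname{Nrd}_{\mathcal{Q}}(\mathcal{Q}^*)=F^{*2}$, where $m=n/2$ is the odd complementary index, and then extracts $\operatorname{Nrd}_D(a)\in F^{*2}$ from unique $m$-divisibility of $F^{*2}$ (oddness of $m$ is what kills the sign ambiguity $\pm1$). Your route through $\CK(D)\cong\CK(D_2)\times\CK(D_{2'})$ avoids Draxl's lemma, but as written it has a compressed step: Theorem~\ref{thmf3} as stated gives only an abstract isomorphism, whereas your ``Therefore'' needs the natural form of it, namely that every element of $D^*$ is congruent modulo $F^*D'$ to a product $(a\otimes 1)(1\otimes b)$ --- this is true, and is what the proof of Theorem~\ref{thmf3} actually shows (the factor $\mathcal{H}$ equals the image of $\CK(D_2)$ under the natural map, and likewise $\mathcal{K}$), but it does not follow from the bare statement, since $D^*$ is not generated by $D_2^*\otimes 1$ and $1\otimes D_{2'}^*$. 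You should also record the one-line check $\operatorname{Nrd}_D(F^*D')=F^{*n}=F^{*2}$. With those two sentences added, your argument is complete; what it buys is independence from Draxl's norm lemma and from the odd-divisibility extraction, at the cost of leaning on the heavier machinery behind Theorem~\ref{thmf3}.
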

\begin{proof}
(1) Since $n$ is even, $F$ is Euclidean by Lemma \ref{lemg4}. Put
$L=F(\sqrt{-1})$. Given $a+\sqrt{-1}b\in L$, if we set
$$v=\frac{a}{\sqrt{a^2+b^2}}\ ,\
u=\sqrt[4]{a^2+b^2}(\sqrt{\frac{1+v}{2}}+\operatorname{sign(b)}\sqrt{-1}\sqrt{\frac{1-v}{2}}),$$
where
$$
\operatorname{sign}(b)=\left\{
   \begin{array}{ccc}
     1 & , & b>1 \\
     -1 & , & b< 1 \\
   \end{array}
 \right.,
$$
then $u^2=a+\sqrt{-1}b$. This yields $L^*=L^{*2}$ and so
$Br(L)_2=0$. Therefore the $2$-primary component of $D$ splits by
$L$. Hence, by Theorem 7 of \cite[p. 64]{Dra83} there exists an
$F$-central simple algebra $A$ containing a copy of $L$ as a maximal
subfield such that the $2$-primary component of $D$ is Brauer
equivalent to $A$. But, clearly $A$ is the ordinary quaternion
division algebra $\mathcal{Q}$. This establishes (1).

(2) By (1) $F$ is Euclidean and the $2$-primary component of $D$ is
$\mathcal{Q}$. Thus if $a\in D^*$ then
$\operatorname{Nrd}_{D}(a)^m\in
\operatorname{Nrd}_{\mathcal{Q}}(\mathcal{Q}^*)=F^{*2}$ (cf.
\cite[Lem 5, p. 158]{Dra83}). But, $F^{*2}$ is uniquely
$m$-divisible as $F$ is Euclidean. Therefore
$\operatorname{Nrd}_{D}(a) \in F^{*2}$ and so
$$\operatorname{Nrd}_{D}(D^*)\subseteq F^{*2}=F^{*2m}\subseteq \operatorname{Nrd}_{D}(D^*).$$
This forces that $\operatorname{Nrd}_{D}(D^*)=F^{*2}$.
\end{proof}
\begin{thm}\label{thmg6}
Let $D$ be an $F$-central division algebra. Then $\overline{G}(D)=1$
if and only if $F^{*2}$ is $n$-divisible.
\end{thm}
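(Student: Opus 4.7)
The plan is to first translate the hypothesis $\overline{G}(D)=1$ into an $n$-divisibility statement about the image of the reduced norm, and then push it to $F^{*2}$ (and vice versa) by invoking the preparatory Lemmas \ref{lemg4} and \ref{lemg5} already at our disposal. Recall that as remarked just before the theorem, the reduced norm induces an isomorphism $\overline{G}(D)\cong \operatorname{Nrd}_D(D^*)/N_{D/F}(D^*)$. Since $N_{D/F}(a)=\operatorname{Nrd}_D(a)^n$ for every $a\in D^*$, we have $N_{D/F}(D^*)=\operatorname{Nrd}_D(D^*)^n$, so the condition $\overline{G}(D)=1$ is equivalent to saying that $\operatorname{Nrd}_D(D^*)$ is $n$-divisible as a subgroup of $F^*$. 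The game is therefore to pass between $n$-divisibility of $\operatorname{Nrd}_D(D^*)$ and $n$-divisibility of $F^{*2}$.

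For the forward direction, suppose $\operatorname{Nrd}_D(D^*)$ is $n$-divisible. The key observation is that $F^{*n}\subseteq \operatorname{Nrd}_D(D^*)\subseteq F^*$, since $\operatorname{Nrd}_D(a)=a^n$ for $a\in F^*$. Given any $a\in F^*$, the element $a^n$ lies in $\operatorname{Nrd}_D(D^*)$, so by $n$-divisibility there exists $b\in\operatorname{Nrd}_D(D^*)$ with $a^n=b^n$; applying $n$-divisibility a second time to $b$ itself produces $c\in \operatorname{Nrd}_D(D^*)\subseteq F^*$ with $b=c^n$, and hence $a^n=c^{n^2}\in F^{*n^2}$. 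This shows $F^{*n}=F^{*n^2}$, and then Lemma \ref{lemg4}(3) (which requires only that $D$ be non-commutative; the commutative case $D=F$, $n=1$ being trivial) immediately yields $F^{*2}=F^{*2n}$, as desired.

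For the converse, I would split on the parity of $n$. If $n$ is odd, Lemma \ref{lemg4}(2) upgrades the hypothesis $F^{*2}=F^{*2n}$ to $F^*=F^{*n}$, and then from $F^{*n}\subseteq \operatorname{Nrd}_D(D^*)\subseteq F^*$ we conclude $\operatorname{Nrd}_D(D^*)=F^*$, which is trivially $n$-divisible. If $n$ is even, Lemma \ref{lemg5}(2) applies directly and gives $\operatorname{Nrd}_D(D^*)=F^{*2}$; the latter is $n$-divisible by hypothesis. In both cases $\overline{G}(D)=1$.

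There is no serious obstacle once one has the preparatory lemmas in place: the conceptual heart of the argument is simply the \emph{double} use of $n$-divisibility in the forward direction, which is what lets one jump from $a^n\in \operatorname{Nrd}_D(D^*)^n$ to $a^n\in F^{*n^2}$ and thereby match the hypothesis of Lemma \ref{lemg4}(3). The delicate point worth flagging in the write-up is verifying that the intermediate element $b$ produced by the first application of $n$-divisibility again lies in $\operatorname{Nrd}_D(D^*)$, so that a second application is legitimate; this is immediate from the fact that $\operatorname{Nrd}_D(D^*)$, not merely $F^*$, is assumed to be $n$-divisible.
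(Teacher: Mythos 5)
Your proof is correct and follows essentially the same route as the paper: translate $\overline{G}(D)=1$ into $\operatorname{Nrd}_D(D^*)=N_{D/F}(D^*)=\operatorname{Nrd}_D(D^*)^n$, deduce $F^{*n}=F^{*n^2}$ and invoke Lemma \ref{lemg4}(3) for the forward direction, then split on the parity of $n$ via Lemma \ref{lemg4}(2) and Lemma \ref{lemg5}(2) for the converse. The only cosmetic difference is that your element-wise double application of $n$-divisibility is packaged in the paper as the sandwich $\operatorname{Nrd}_D(D^*)^n\subseteq F^{*n}\subseteq \operatorname{Nrd}_D(D^*)=\operatorname{Nrd}_D(D^*)^n$, which yields $\operatorname{Nrd}_D(D^*)=F^{*n}$ and hence $F^{*n}=F^{*n^2}$ in one stroke.
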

\begin{proof}
If $\overline{G}(D)=1$, then
$$\operatorname{Nrd}_{D}(D^*)=N_{D/F}(D^*)=\operatorname{Nrd}_{D}(D^*)^n.$$
Therefore
$$\operatorname{Nrd}_{D}(D^*)^n\subseteq F^{*n}\subseteq
\operatorname{Nrd}_{D}(D^*)= \operatorname{Nrd}_{D}(D^*)^n$$ and
thus $\operatorname{Nrd}_{D}(D^*)=F^{*n}$. This gives
$F^{*n}=F^{*n^2}$ and so from Lemma \ref{lemg4} we obtain
$F^{*2}=F^{*2n}$.

To prove the converse, first suppose that $n$ is odd. This
assumption combining with Lemma \ref{lemg4} gives $F^*=F^{*n}$. So
$$\operatorname{Nrd}_{D}(D^*)=F^*=F^{*n}=N_{D/F}(D^*)$$
which is equivalent to $\overline{G}(D)=1$. Moreover, if $n$ is even
then Lemma \ref{lemg5} yields
$$\operatorname{Nrd}_{D}(D^*)=F^{*2}=F^{*2n}=N_{D/F}(D^*),$$
as desired.
\end{proof}
From Theorem \ref{thmg2}, Theorem \ref{thmg3} and Theorem
\ref{thmg6} the following corollary is immediate.
\begin{cor}\label{corg7}
Let $D$ be an $F$-central division algebra of index $p^m$ where $p$
is a prime number. If $[F(\mu_p):F]\leq 3$ then $\overline{G}(D)=1$
if and only if $D$ is the ordinary quaternion division algebra and
$F$ is Euclidean. Also, if $\operatorname{char}(F)=p$ then
$\overline{G}(D)\neq1$.
\end{cor}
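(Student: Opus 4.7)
My plan is to reduce the forward direction to Theorem~\ref{thmg6}, which turns $\overline{G}(D)=1$ into $F^{*2}=F^{*2p^m}$, and then to bifurcate on the parity of $p$ (assuming throughout that $D$ is nontrivial, i.e.\ $m\ge 1$, since the statement is otherwise vacuous). If $p$ is odd, then for any $a\in F^*$ we have $a^2=b^{2p^m}$ for some $b$, so $a=\pm b^{p^m}$; since $p^m$ is odd, $-1=(-1)^{p^m}\in F^{*p^m}$, and therefore $F^*=F^{*p^m}\subseteq F^{*p}$. By Merkurjev's Theorem~\ref{thmg2} (together with the hypothesis $[F(\mu_p):F]\le 3$) and the remark preceding Lemma~\ref{lemg4}, this forces ${_p}\Br(F)=0$, and hence $\Br(F)_p=0$ by the standard minimal-order argument on $p$-primary classes. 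But $[D]\in \Br(F)_p$ is nonzero, a contradiction; so the case $p$ odd cannot actually occur.

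If $p=2$, the divisibility reduces to $F^{*2}=F^{*4}$. Here the key dichotomy is ${_2}\Br(F)=0$ versus ${_2}\Br(F)\ne 0$: in the former case the same Brauer group argument yields a contradiction, while in the latter Lemma~\ref{lemg4}(1) shows $F$ is Euclidean and Lemma~\ref{lemg5}(1) identifies the $2$-primary component of $D$ with the ordinary quaternion algebra $\mathcal{Q}$. Since $\deg(D)=2^m$ is itself a $2$-power, $D$ coincides with its own $2$-primary component, so $D\cong\mathcal{Q}$ (and necessarily $m=1$). For the reverse direction I would observe that a Euclidean field satisfies $F^{*2}=F^{*4}$ (for $a\in F^*$ either $a=b^2$ or $a=-b^2$, and in either case $a^2=b^4$), so Theorem~\ref{thmg6} with $n=\deg(\mathcal{Q})=2$ gives $\overline{G}(\mathcal{Q})=1$.

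Finally, for the characteristic-$p$ statement I would note that $\mu_p(F)=\{1\}$ in characteristic $p$, so $[F(\mu_p):F]=1\le 3$ and the main iff applies; if $\overline{G}(D)$ were trivial with $D$ nontrivial, then $F$ would be Euclidean, hence orderable (the squares form a positive cone), and therefore of characteristic zero, contradicting $\chr(F)=p$. The proof is essentially a bookkeeping exercise on top of Theorem~\ref{thmg6} and the two preceding lemmas, so I do not anticipate a genuine obstacle; the one mild subtlety worth flagging is that in characteristic $2$ the subcase ${_2}\Br(F)\ne 0$ is automatically excluded once $F^{*2}=F^{*4}$ holds (Frobenius injectivity forces $F^*=F^{*2}$, and then Albert's Theorem~\ref{thmg3} yields $\Br(F)_2=0$), but this only streamlines the argument rather than obstructing it.
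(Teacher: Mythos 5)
Your proof is correct and takes essentially the same route the paper intends: the paper declares the corollary immediate from Theorems \ref{thmg2}, \ref{thmg3} and \ref{thmg6}, and you have simply made that explicit, reducing via Theorem \ref{thmg6} to the divisibility condition and then invoking the remark preceding Lemma \ref{lemg4} together with Lemmas \ref{lemg4}(1) and \ref{lemg5}(1) for the parity split. Your added caveats (the implicit assumption $m\geq 1$, and the Frobenius-injectivity/Albert streamlining in characteristic $2$) correspond exactly to details already embedded in the proofs of those lemmas, so nothing is missing.
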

Note that Theorem \ref{thmg6} and Lemma \ref{lemg5} essentially say
that, if we want to produce a division algebra of degree greater
than $2$ with trivial $\overline{G}$ then $\deg(D)$ should not be a
power of $2$. So we must have ${_p}Br(F)\neq 0$ for some odd prime
$p$ dividing $\deg(D)$; for each such $p$, ${_p}Br(F)$ should be
generated by noncyclic division algebras and so we must have
$[F(\mu_p):F]\geq 4$ (so $p\geq 5$) by Corollary \ref{corg7}.The
existence of such noncyclic division algebras is one of the oldest
and most challenging questions in the theory of division algebras.

Now, we follow our investigation on the existence of maximal
subgroups in division algebras. Before that we recall some notions
from the theory of groups. Let $\pi$ be a set of prime numbers. A
natural number $n$ is said to be a $\pi$-number if every prime
divisor of $n$ belongs to $\pi$. Recall that a group $G$ is called
$\mathfrak{F}_{\pi}$-perfect if it does not contain any subgroup of
a finite $\pi$-number index. If $\pi$ is the set of all primes,
every $\mathfrak{F}_{\pi}$-perfect group is called
$\mathfrak{F}$-perfect. The following theorem provides a criterion
for when $ D^* $ is $\mathfrak{F}_{\pi}$-perfect, where $\pi$ is the
set of all primes dividing $\textrm{ind}(D)$. It also
singles out the relation between the notion of
$\mathfrak{F}_{\pi}$-perfection and the triviality of $ G(D) $.
\begin{thm}\label{thmg8}
Given an $F$-central division algebra $ D $ of index $n$, the
following conditions are equivalent:
\begin{enumerate}
  \item $G(D)=1$;
  \item $\SK(D)=1$ and $F^{*2}$ is $n$-divisible;
  \item $\CK(D)=1$ and $F^{*2}$ is $n$-divisible;
  \item $D^*$ is $\mathfrak{F}_{\pi}$-perfect where
  $\pi$ is the set of all primes dividing $\deg(D)$.
\end{enumerate}
\end{thm}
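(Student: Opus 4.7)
The plan is to establish the four equivalences by separating the argument into two pieces: first the chain (1) $\Leftrightarrow$ (2) $\Leftrightarrow$ (3), which hinges on Theorem \ref{thmg6} and the four–term exact sequence ${1 \to \mu_n(F)/Z(D') \to \SK(D) \to \CK(D) \to \operatorname{Nrd}_D(D^*)/F^{*n} \to 1}$, and then (1) $\Leftrightarrow$ (4), which is a more elementary group–theoretic matter. The starting observation for the first piece is that $G(D)$ maps onto both $\overline G(D)$ (since $D' \subseteq D^{(1)}$) and $\CK(D)$ (since $\operatorname{Nrd}_D(D^*) \subseteq F^*$). Hence any of (1), (2) or (3) immediately forces $\overline G(D)=1$, and by Theorem \ref{thmg6} this is equivalent to $F^{*2}$ being $n$-divisible; I would put this condition in the background throughout.

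For (2) $\Rightarrow$ (1): if $\SK(D)=1$ then $D^{(1)}=D'$, so $\operatorname{Nrd}_D(D^*)D^{(1)}=\operatorname{Nrd}_D(D^*)D'$ and thus $G(D)=\overline G(D)=1$. The harder direction (1) $\Rightarrow$ (2) is where the main obstacle lies: I must show that $G(D)=1$ forces $\SK(D)=1$. I would first establish the key fact that $\mu_n(F)\subseteq D'$. Assuming $D$ is noncommutative (otherwise everything is trivial) and using the hypothesis that $F^{*2}$ is $n$-divisible, the analysis embedded in the proof of Lemma \ref{lemg4}(3) forces a dichotomy: either $n$ is odd and $\mu_n(F)=1$, or $n$ is even with $F$ Euclidean, $\mu_n(F)=\{\pm 1\}$, and (by Lemma \ref{lemg5}) the $2$-primary component of $D$ being the ordinary quaternion algebra $\mathcal Q$. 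In the even case the explicit computation $-1=[i,j]$ shows $-1\in \mathcal Q'\subseteq D'$, so $\mu_n(F)\subseteq D'$ holds in both sub-cases. Now take any $a\in D^{(1)}$; since $G(D)=1$, write $a=cd$ with $c\in\operatorname{Nrd}_D(D^*)\subseteq F^*$ and $d\in D'$. Then $1=\operatorname{Nrd}_D(a)=c^n$, so $c\in\mu_n(F)\subseteq D'$, giving $a\in D'$ and hence $\SK(D)=1$.

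For the equivalence (2) $\Leftrightarrow$ (3) I would plug into the four–term exact sequence. Under $F^{*2}$ $n$-divisible, one checks that $\operatorname{Nrd}_D(D^*)=F^{*n}$ in both sub-cases of the dichotomy above ($n$ odd: $F^*=F^{*n}$; $n$ even Euclidean: $F^{*n}=F^{*2}=\operatorname{Nrd}_D(D^*)$ by Lemma \ref{lemg5} and Lemma \ref{lemg4}(3)). Combined with $\mu_n(F)\subseteq D'$, the sequence collapses to an isomorphism $\SK(D)\cong\CK(D)$, so each is trivial iff the other is. Alternatively, (3) $\Rightarrow$ (1) can be argued directly: $\CK(D)=1$ says $D^*=F^*D'$, and since $F^*=\operatorname{Nrd}_D(D^*)$ in the odd case, while $F^*=F^{*2}\cup(-F^{*2})=\operatorname{Nrd}_D(D^*)\cup(-1)\operatorname{Nrd}_D(D^*)\subseteq\operatorname{Nrd}_D(D^*)\cdot D'$ in the even case (using $-1\in D'$), one gets $D^*=\operatorname{Nrd}_D(D^*)D'$, i.e.\ $G(D)=1$.

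Finally for (1) $\Leftrightarrow$ (4), the crucial input is Corollary \ref{cord3}(2): $\operatorname{Nrd}_D(D^*)\subseteq D^{*n}D'$. Consequently for every prime $p\in\pi$ (i.e.\ $p\mid n$), $\operatorname{Nrd}_D(D^*)D'\subseteq D^{*p}D'$, making $D^*/D^{*p}D'$ a quotient of $G(D)$. If $G(D)=1$ then $D^*/D^{*p}D'=1$ for every $p\in\pi$, so $D^*$ admits no quotient of prime order in $\pi$; any normal subgroup of finite $\pi$-number index would, via its composition series, yield such a quotient, and an arbitrary subgroup of finite $\pi$-number index contains its normal core, which still has $\pi$-number finite index. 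Conversely if $G(D)\neq 1$, then as $G(D)$ is abelian torsion of exponent dividing $n$, it has a cyclic quotient of prime order $p\mid n$, giving a proper subgroup of $D^*$ of index $p\in\pi$. The main difficulty of the proof remains the forced passage through the quaternion calculation in Paragraph 2; once $\mu_n(F)\subseteq D'$ is secured, the rest is bookkeeping with the exact sequence and Corollary \ref{cord3}.
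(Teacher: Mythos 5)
Your chain (1)$\Leftrightarrow$(2)$\Leftrightarrow$(3) is correct, and it travels the same road as the paper: Theorem \ref{thmg6} to translate triviality of $\overline{G}(D)$ into $n$-divisibility of $F^{*2}$, the odd/even dichotomy extracted from Lemmas \ref{lemg1}, \ref{lemg4} and \ref{lemg5} via Merkurjev, and the four-term exact sequence (the paper organizes this as a commutative diagram comparing the sequences for $G(D),\overline{G}(D)$ and for $\mu_n(F)/Z(D')$, $\SK(D)$, $\CK(D)$, $\NK(D)$, and identifies $\SK(D)$ with the subquotient $SK_n(D)$). Two of your shortcuts are genuine simplifications: the direct element chase $a=cd$, $c^n=1$, $c\in\mu_n(F)\subseteq D'$ replaces the paper's $SK_n(D)$ bookkeeping; and you obtain $-1\in D'$ from Lemma \ref{lemg5}(1) together with the explicit commutator $[i,j]=-1$ in $\mathcal{Q}$, whereas the paper invokes Wadsworth's general theorem \cite{Wad90} that $-1$ is a commutator in every division algebra of even degree --- your elementary version suffices precisely because the standing hypothesis forces the $2$-primary component to be $\mathcal{Q}$. (One citation nit: $\operatorname{Nrd}_D(D^*)\subseteq D^{*n}D'$ is read off from the identity $a^n=\operatorname{Nrd}_D(a)c$, $c\in D'$, inside the proof of Corollary \ref{cord3}(2), not from its stated inclusion, which goes the other way; the fix is one line.)

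The genuine gap is in (1)$\Rightarrow$(4), at ``any normal subgroup of finite $\pi$-number index would, via its composition series, yield such a quotient.'' A composition series of the finite group $D^*/N$ produces a quotient of $D^*$ of prime order only if its top composition factor is cyclic; a priori $D^*/N$ could be, or surject onto, a nonabelian finite simple group, which has no quotient of prime order at all, and then your argument yields nothing. What rules this out is exactly the deep theorem of Rapinchuk--Segev--Seitz (the Main Theorem of \cite{RSegSei02}): every finite quotient of the multiplicative group of a finite-dimensional division algebra is soluble. The paper invokes this at precisely this point; your proof silently assumes it, and it is not recoverable from composition-series formalities.

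A second, smaller defect: your reduction from arbitrary to normal subgroups asserts that the normal core of a subgroup of finite $\pi$-number index ``still has $\pi$-number finite index.'' This is false: the core of a subgroup of index $k$ only has index dividing $k!$, which need not be a $\pi$-number (a subgroup of index $3$ can have core of index $6$), and even granting solubility of $D^*/\mathrm{core}$, its prime-order quotients can all have order outside $\pi$ (think of a quotient $\operatorname{Sym}(3)$ with $\pi=\{3\}$, whose abelianization is $C_2$). To be fair, the paper itself passes from ``not $\mathfrak{F}_{\pi}$-perfect'' directly to ``contains a normal subgroup of $\pi$-number index'' without comment, so it glosses the same reduction; but it does supply the Rapinchuk--Segev--Seitz input, without which the normal case of your argument already fails.
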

\begin{proof}
Firstly, we observe that a routine calculation shows that the
following diagram is commutative with exact rows:
\begin{equation}\label{eqg1}
 \xymatrix{
1\ar[r] & SK_n(D)\ar[r]^{f_1}\ar[d]_{i_1}&\SK(D)\ar[r]^{g_1}
\ar[d]_{1_{\SK(D)}}
& G(D)\ar[r]^{\operatorname{Nrd}_D} \ar[d]_{\pi_1}& \overline{G}(D)\ar[r]\ar[d]_{\pi_2}& 1\\
 1\ar[r] & \displaystyle{\frac{\mu_n(F)}{Z(D')}}\ar[r]^{f_2}&\SK(D)\ar[r]^{g_2} &\CK(D)\ar[r]^{\operatorname{Nrd}_D}&
 \NK(D)\ar[r] &1 ,}
\end{equation}
where the maps $f_1,f_2,g_1,g_2$ are canonical homomorphisms, $i_1$
is induced by inclusion, $\pi_1,\ \pi_2$ are natural projections,
and
$$
SK_n(D)=\frac{\mu_n(F)\cap \operatorname{Nrd}_{D/F}(D^*)}{Z(D')\cap
\operatorname{Nrd}_{D/F}(D^*)}.
$$
(Recall that $Z(D')=\mu_n(F)\cap D'$.)

(1)$\Rightarrow$(2) If $G(D)=1$ then $\overline{G}(D)=1$ and hence
$F^{*2}=F^{*2n}$ by Theorem \ref{thmg6}. But, from the exactness of
the first row of (\ref{eqg1}), we have
\begin{equation}\label{eqg2}
\SK(D)\cong SK_n(D).
\end{equation}
Now, if $n$ is odd then $F^*$ is $n$-divisible by Lemma \ref{lemg4}.
By the Merkurjev Theorem $F$ contains no $n$-th roots of unity and hence
by (\ref{eqg2}) we conclude that $\SK(D)=1$. Moreover, if $n$ is
even then by Lemma \ref{lemg4}, $F$ is Euclidean and
$\operatorname{Nrd}_{D/F}(D^*)=F^{*2}$ (Lemma \ref{lemg5}). However,
$F^{*2}$ contains no $n$-th roots of unity as $F$ is Euclidean and
again by (\ref{eqg2}) the result follows.

(2)$\Rightarrow$(3) Since $\SK(D)=1$ we have
\begin{equation}\label{eqg3}
\CK(D)=\NK(D).
\end{equation}
On the other hand $\overline{G}(D)=1$ by Theorem \ref{thmg6}. Hence
$\NK(D)=1$ because $\pi_2$ is an epimorphism. Therefore, by
(\ref{eqg3}) we conclude that $\CK(D)=1$.

(3)$\Rightarrow$(1) Theorem \ref{thmg6} gives us
$\overline{G}(D)=1$. Furthermore, from the assumption $\CK(D)=1$ and
the exactness of the second row of (\ref{eqg1}) we obtain
\begin{equation}\label{eqg4}
\SK(D)\cong \frac{\mu_n(F)}{Z(D')}.
\end{equation}
Thus, if $n$ is odd then by Lemma \ref{lemg4} and the Merkurjev Theorem
we conclude that $\mu_n(F)=1$. Hence (\ref{eqg4}) yields $\SK(D)=1$.
This forces that $G(D)=1$ because the first row of (\ref{eqg1}) is
exact. Finally, let $n$ be even. In this case Lemma \ref{lemg4}
shows that $F$ is Euclidean and so $\mu_n(F)=\{1,-1\}$. On the other
hand, we know that in every division algebra of even degree $-1$
belongs to the commutator subgroup (\cite{Wad90}). So
$\mu_n(F)=Z(D')$ and by (\ref{eqg4}) we conclude that $\SK(D)=1$.
Now, from (\ref{eqg1}) it follows that $G(D)=1$.

(1)$\Rightarrow$(4) If $G(D)=1$, then we have $\overline{G}(D)=1$.
So, $\operatorname{Nrd}_{D}(D^*)$ is $n$-divisible and hence it is
$p$-divisible for every prime number $p$ dividing $n$. Now, suppose
that $D^*$ is not $\mathfrak{F}_{\pi}$-perfect. Thus it contains a
normal subgroup $N$ of index $k$ for some $\pi$-number $k$. By the
Main Theorem of \cite{RSegSei02}, $D^*/N$ is a finite soluble group.
Therefore, it contains a normal maximal subgroup $\overline{M}$ of
index $p$, where $p$ divides $k$ and so does $n$. Hence $D^*$
contains a normal maximal subgroup $M$ of index $p$ for some prime
divisor $p$ of $n$. But $D'\subseteq M$ as $M\lhd D^*$. Now, if
$\operatorname{Nrd}_{D}(D^*)\subseteq M$, then
$\operatorname{Nrd}_{D}(D^*)D'\subseteq M\subsetneq D^*$ and so
$G(D)\neq 1$, which is a contradiction. Moreover, if
$\operatorname{Nrd}_{D}(D^*)\nsubseteq M$ then
$D^*=\operatorname{Nrd}_{D}(D^*)M$ and hence
$$\frac{\operatorname{Nrd}_{D}(D^*)}{\operatorname{Nrd}_{D}(D^*)\cap M}\cong \frac{\operatorname{Nrd}_{D}(D^*)M}{M}
\cong \frac{D^*}{M}\cong C_p.$$ This guarantees that
$\operatorname{Nrd}_{D}(D^*)$ has a normal subgroup of index $p$
which contradicts the fact that $\operatorname{Nrd}_{D}(D^*)$ is
$p$-divisible. Therefore $D^*$ is $\mathfrak{F}_{\pi}$-perfect.

(4)$\Rightarrow$(1) It is clear.
\end{proof}

Let $\mathcal{Q}$ be the real quaternion division algebra. Example
\ref{exaf9} shows that $\mathcal{Q}((x))$ has a trivial $\CK$. But
$Z(\mathcal{Q}((x)))=\mathbb{R}((x))$ has a natural discrete rank 1 valuation
and hence $\mathbb{R}((x))^2\neq \mathbb{R}((x))^4$. So
$G(\mathcal{Q}((x)))\neq 1$ by Theorem \ref{thmg8}. Therefore, even for the
case $\CK(D)=1$, $D^*$ may have a normal maximal subgroup. This
observation shows that to approach the problem of the existence of
normal maximal subgroups in $D^*$ it is better to work with $G(D)$
instead of $\CK(D)$.

Now, let $G$ be a group (possibly nonabelian). We say that $G$ is
divisible if $G/G'$ is divisible as an abelian group. Equivalently,
$G$ is divisible if for every natural number $n$ and $g\in G$ there
are elements $b\in G$ and $h\in G'$ such that $g=b^nh$. One can
easily show that $G$ is divisible if and only if it contains no
normal maximal subgroup. In the following theorem we will see that
the notions of divisibility and $\mathfrak{F}$-perfection are the
same in the setting of division algebras.
\begin{thm}\label{thmg9}
Given an $F$-central division algebra $D$ of degree $n$, then the
following conditions are equivalent:
\begin{enumerate}
  \item $D^*$ is divisible or equivalently $D^*$ has no normal maximal subgroup;
  \item $D^*$ is $\mathfrak{F}$-perfect;
  \item $\SK(D)=1$, $F^*$ is divisible if $n$ is odd and $F^{*2}$ is
  divisible if $n$ is even.
\end{enumerate}
\end{thm}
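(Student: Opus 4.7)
The plan is to prove (1) $\Leftrightarrow$ (2) by a general group-theoretic argument invoking the Rapinchuk--Segev--Seitz finiteness theorem, and then to bounce back and forth between (2) and (3) using Theorem \ref{thmg8} together with a ``divisibility upgrade'' that comes from the fact that $\mathfrak{F}$-perfection is not restricted to the primes dividing $n$.

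For (1) $\Rightarrow$ (2) I would argue by contrapositive: if $D^*$ has a subgroup $H$ of finite index, then its normal core $N$ is a normal subgroup of $D^*$ of finite index, and by the Main Theorem of \cite{RSegSei02} the quotient $D^*/N$ is a finite soluble group. A nontrivial finite soluble group has a normal subgroup of prime index, so pulling back gives a proper normal subgroup of $D^*$ with cyclic quotient of prime order; this is a normal maximal subgroup, contradicting divisibility. The converse (2) $\Rightarrow$ (1) is immediate because a normal maximal subgroup has prime, and hence finite, index.

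For (3) $\Rightarrow$ (1) I would use that $\SK(D) = 1$ forces $D' = D^{(1)}$, so the reduced norm induces $D^*/D' \cong \operatorname{Nrd}_D(D^*)$. When $n$ is odd, divisibility of $F^*$ gives $F^* = F^{*n} \subseteq \operatorname{Nrd}_D(D^*) \subseteq F^*$, so $D^*/D' \cong F^*$ is divisible, which is precisely (1). When $n$ is even, divisibility of $F^{*2}$ yields $F^{*2} = F^{*4}$; since the $2$-primary component of $D$ is a nontrivial $F$-central division algebra, ${_2}\Br(F) \neq 0$, so Lemma \ref{lemg4}(1) makes $F$ Euclidean, and then Lemma \ref{lemg5}(2) identifies $\operatorname{Nrd}_D(D^*) = F^{*2}$, again making $D^*/D'$ divisible.

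Conversely, (2) $\Rightarrow$ (3) proceeds as follows. $\mathfrak{F}$-perfection implies in particular $\mathfrak{F}_\pi$-perfection for $\pi$ the set of primes dividing $n$, so Theorem \ref{thmg8} already delivers $\SK(D) = 1$ and $F^{*2}$ being $n$-divisible. The additional ingredient I would exploit is that $D^*/D'$ is divisible, because any finite quotient of $D^*/D'$ lifts to a finite quotient of $D^*$ and is therefore trivial; consequently $\operatorname{Nrd}_D(D^*)$ is a divisible subgroup of $F^*$. For $n$ odd, Lemma \ref{lemg4}(2) translates ``$F^{*2}$ is $n$-divisible'' into $F^* = F^{*n}$, forcing $F^* = \operatorname{Nrd}_D(D^*)$, which is divisible. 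For $n$ even, Lemma \ref{lemg4}(1) applied to $F^{*2} = F^{*4}$ with ${_2}\Br(F)\neq 0$ shows $F$ is Euclidean, and Lemma \ref{lemg5}(2) then identifies $\operatorname{Nrd}_D(D^*)$ with $F^{*2}$, which is therefore divisible.

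The main obstacle is the (1) $\Leftrightarrow$ (2) equivalence: without the deep result of \cite{RSegSei02} one cannot in general deduce $\mathfrak{F}$-perfection from the absence of a \emph{normal} maximal subgroup, since a priori a finite-index subgroup of $D^*$ could have a non-soluble (even non-abelian simple) quotient. The remaining bridge between (2) and (3) is essentially bookkeeping on top of Theorem \ref{thmg8}, upgrading ``$F^{*2}$ is $n$-divisible'' to full divisibility by playing the $\mathfrak{F}$-perfection of $D^*/D'$ against the identifications $\operatorname{Nrd}_D(D^*) = F^*$ (for odd $n$) or $\operatorname{Nrd}_D(D^*) = F^{*2}$ (for even $n$).
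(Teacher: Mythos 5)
Your proposal is correct and takes essentially the same approach as the paper: the Rapinchuk--Segev--Seitz solubility theorem of \cite{RSegSei02} gives $(1)\Leftrightarrow(2)$ exactly as in the paper (which reuses the argument from $(1)\Rightarrow(4)$ of Theorem \ref{thmg8}), and the equivalence with $(3)$ rests on the same combination of Theorem \ref{thmg8} with Lemmas \ref{lemg4} and \ref{lemg5} to identify $\operatorname{Nrd}_D(D^*)$ with $F^*$ (for $n$ odd) or $F^{*2}$ (for $n$ even) and transfer divisibility through the reduced norm. The only cosmetic deviation is that you close the cycle via $(2)\Rightarrow(3)$ and use $\SK(D)=1$ (i.e.\ $D'=D^{(1)}$, so $D^*/D'\cong\operatorname{Nrd}_D(D^*)$), where the paper argues via $G(D)=1$ and the factorization $D^*=\operatorname{Nrd}_D(D^*)D'$ --- the same computation in slightly different clothing.
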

\begin{proof}
$(1)\Rightarrow(2)$ If $D^*$ has a subgroup of finite index, then it
contains a normal subgroup of finite index. Now, a similar argument
as in the proof of $(1)\Rightarrow(4)$ in Theorem \ref{thmg8} one
can find a normal maximal subgroup which is a contradiction. So
$D^*$ is $\mathfrak{F}$-perfect.

$(2)\Rightarrow(1)$ It is clear from the definition.

$(1)\Rightarrow(3)$ If $D^*$ is divisible then from Theorem
\ref{thmg8} it follows that $\SK(D)=1$. Now, let $n$ be odd. If this
is the case, then $F^{*}$ is $n$-divisible by Theorem \ref{thmg8}.
Thus $\operatorname{Nrd}_D(D^*)=F^{*}$ and hence $F^*$ is divisible
as $D^*$ is divisible. Moreover, if $n$ is even then
$\operatorname{Nrd}_D(D^*)=F^{*2}$ by Lemma \ref{lemg5}. This
implies that $F^{*2}$ is divisible.

$(3)\Rightarrow(1)$ As in the proof of $(1)\Rightarrow(3)$ we have
$\operatorname{Nrd}_D(D^*)=F^{*}$ if $n$ is odd and
$\operatorname{Nrd}_D(D^*)=F^{*2}$ if $n$ is even. In either case
$\operatorname{Nrd}_D(D^*)$ is divisible. On the other hand by
Theorem \ref{thmg8} we know that $G(D)=1$ and so
$D^*=\operatorname{Nrd}_D(D^*)D'$. Therefore
$$\frac{D^*}{D'}\cong \frac{\operatorname{Nrd}_D(D^*)}{\operatorname{Nrd}_D(D^*)\cap D'}$$
and hence $D^*$ is divisible.
\end{proof}
Recall that if $D$ is a division algebra then thanks to the
Dieudonn$\acute{\textrm{e}}$ determinant the group
$\operatorname{GL}_n(D)/\operatorname{SL}_n(D)$ is isomorphic to
$D^*/D'$ for every natural number $n$. Therefore, Theorem
\ref{thmg6} is valid for every central simple algebra.

Let $F$ be a Euclidean field with divisible $F^{*2}$ (for example a
real closed field). If $\mathcal{Q}$ is the quaternion division
algebra over $F$ then from Theorem \ref{thmg9} it follows that $\mathcal{Q}^*$
has no normal maximal subgroup. However, in the case that
$F=\mathbb{R}$ in \cite{Ma01} it was shown that the subgroup
$\mathbb{C}\cup \mathbb{C}j$ is a (nonnormal) maximal subgroup of
$\mathcal{Q}^*$. Also, a different approach to prove that the real
quaternion has a maximal subgroup was considered in
\cite{AkEbMoSa03}. But the existence of a maximal subgroup in a
quaternion division algebra, in the general case, was demonstrated
in \cite{HW09} by a refinement of the argument given in \cite{Ma01}.
Here we are going to show what the nature of such a subgroup of
$\mathcal{Q}^*$ can be. Since $F$ is assumed to be Euclidean, then
it has a valuation ring $V$ which is determined by the ordering:
$$V=\{b\in F|\ |b|\leq n\ \textrm{for some}\ n\in \mathbb{N}\},$$
with maximal ideal
$$M=\{b\in F|\ |b|\leq 1/n\ \textrm{for every}\ n\in \mathbb{N}\}.$$
Thus, $F\setminus V$ is the set of all elements ``infinitely large''
relative to the rational numbers $\mathbb{Q}\subset F$. Also, $M$ is
the set of all elements of $F$ ``infinitesimal'' relative to
$\mathbb{Q}$. Let $P$ be the ``purely imaginary part'' of
$\mathcal{Q}$, i.e., $P=\{bi+cj+dk|\ b,c,d\in F\}$ and
$$S(P)=\{\alpha\in P|\ \norm{\alpha}=1\}$$
be the unit sphere in $P$ (note that for $x\in \mathcal{Q}$, we have
$\norm{x}=\sqrt{\operatorname{Nrd}_{\mathcal{Q}^*}(x)}$ ). Suppose
that
$$\Delta=\{\alpha\in S(P)|\ \norm{\alpha-i}\in M\},$$
the set of elements ``infinitesimally near'' to $i$. In this setting
one can observe that
$$G=\{x\in \mathcal{Q}^*|\ i^x\in \Delta\cup-\Delta\}$$
is a maximal subgroup of $\mathcal{Q}^*$ (see \cite{HW09}).

By combining the above observation, Theorem \ref{thmg9}, Corollary
\ref{corg7} and Lemma 3 of \cite{May72} which asserts that every
divisible field of characteristic zero contains all primitive roots
of unity, we obtain:
\begin{thm}\label{thmg10}
Let $D$ be an $F$-central division algebra and suppose that $D^*$
has no maximal subgroups. Then,
\begin{enumerate}
  \item If $\deg(D)$ is even, then $D=\mathcal{Q}\otimes_FE$ where
  $E$ is an $F$-central division algebra of odd degree, and $F$ is Euclidean
  (so $\operatorname{char}(F)=0$) with $F^{*2}$ divisible;
  \item If $\deg(D)$ is odd, then $\operatorname{char}(F)>0$, $\operatorname{char}(F)\nmid \deg(D)$
  and $F^*$ is divisible;
  \item In either case, there is an odd prime number $p$ dividing
  $\deg(D)$; for each such $p$ we have $[F(\mu_p):F]\geq 4$ (so $p\geq 5$) and
  ${_p}Br(F)$ is generated by noncyclic algebras of degree $p$.
\end{enumerate}
\end{thm}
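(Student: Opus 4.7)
The plan is to translate the non-existence of maximal subgroups into a divisibility statement via Theorem~\ref{thmg9}, and then apply Corollary~\ref{corg7} to each primary component of $D$. Since $D^*$ has no maximal subgroup at all, it has in particular no normal maximal subgroup, so Theorem~\ref{thmg9} gives $\SK(D)=1$ together with the divisibility of $F^*$ (when $n:=\deg(D)$ is odd) or of $F^{*2}$ (when $n$ is even). Decompose $D=\bigotimes_p D_p$ into its primary components, writing $\deg(D_p)=p^{m_p}$. In both parity cases $F^{*2}$ is $n$-divisible, hence $p^{m_p}$-divisible for every prime $p\mid n$, so Theorem~\ref{thmg6} yields $\overline{G}(D_p)=1$ for each such $p$.

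For part~(1) assume $n$ is even. Applied to $D_2$, Lemma~\ref{lemg5} identifies $D_2$ with the ordinary quaternion algebra $\mathcal{Q}$ and shows that $F$ is Euclidean, which in turn forces $\chr(F)=0$. The primary decomposition then reads $D\cong\mathcal{Q}\otimes_F E$ with $\deg(E)$ odd, and the one non-routine point is to exclude the case $E=F$, i.e.\ $D=\mathcal{Q}$: here the discussion of \cite{HW09} reviewed immediately before the theorem exhibits an explicit (non-normal) maximal subgroup $G\subseteq\mathcal{Q}^*$, built from the valuation ring associated with the Euclidean ordering of $F$. This contradicts the hypothesis, so $E\neq F$, and in particular $n$ admits an odd prime divisor which will feed into part~(3).

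For part~(2) assume $n$ is odd, so $F^*$ is divisible. If $\chr(F)=0$, then by May's Lemma (Lemma~3 of~\cite{May72}) $F$ already contains every root of unity, so $[F(\mu_p):F]=1\le 3$ for every prime $p\mid n$; Corollary~\ref{corg7} applied to $D_p$ would then force $D_p$ to be the ordinary quaternion division algebra, which is impossible since $\deg(D_p)=p^{m_p}$ is odd. Hence $\chr(F)=q>0$. If in addition $q\mid n$, the last sentence of Corollary~\ref{corg7} gives $\overline{G}(D_q)\neq 1$, contradicting the first paragraph; so $\chr(F)\nmid n$.

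For part~(3) pick any odd prime $p\mid n$, which exists by the previous paragraphs. A further application of Corollary~\ref{corg7} to $D_p$ rules out $[F(\mu_p):F]\le 3$, since this would again force the impossible quaternion conclusion; so $[F(\mu_p):F]\ge 4$, and because $[F(\mu_3):F]\le 2$ this excludes $p=3$ and forces $p\ge 5$. For the final noncyclic statement I combine the divisibility from paragraph one with $p$ being odd: in case~(2) $F^*$ is already divisible, while in case~(1) the identity $a^2=c^{2p}$ together with $-1=(-1)^p$ upgrades divisibility of $F^{*2}$ to the equality $F^*=F^{*p}$. Every element of $F^*$ is therefore a $p$-th power and hence a norm from any cyclic degree-$p$ extension of $F$, so every cyclic algebra of degree $p$ over $F$ splits. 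Since the $p$-primary component of $D$ is nontrivial, a suitable power of $[D_p]$ is a nonzero element of ${_p}Br(F)$, which can consequently only be generated by classes of noncyclic algebras of degree~$p$. The main obstacle in the whole argument is the exclusion of $D=\mathcal{Q}$ in part~(1), for which one must appeal to the explicit maximal-subgroup construction of~\cite{HW09}; once that is granted, the remainder is uniform bookkeeping on primary components coupled with Corollary~\ref{corg7}.
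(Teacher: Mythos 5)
Your proof is correct and takes essentially the same approach as the paper: the paper's own ``proof'' is just the one-sentence combination of the maximal-subgroup observation for $\mathcal{Q}^*$ from \cite{HW09}, Theorem \ref{thmg9}, Corollary \ref{corg7} and Lemma 3 of \cite{May72}, and your argument is a faithful expansion of exactly these ingredients, with Theorem \ref{thmg6} and Lemma \ref{lemg5} supplying the primary-component bookkeeping. The two points you gloss over --- the implicit assumption that $D$ is noncommutative (needed so that an odd prime divisor of $\deg(D)$ exists in part (3)) and the somewhat loose meaning of ``${}_p\mathrm{Br}(F)$ is generated by noncyclic algebras of degree $p$'' (what is actually proved is that ${}_p\mathrm{Br}(F)\neq 0$ while every cyclic algebra of degree $p$ splits) --- are glossed in precisely the same way by the paper itself.
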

Finally, we give some additional results
concerning the group $\overline{G}(D)$ and maximal subgroups of
$D^*$. For the proofs of these theorems see \cite{MaMo12}.

\begin{thm}
If $A$ and $B$ are $F$-central simple algebras of coprime degrees,
then
$\overline{G}(A\otimes_FB)\cong\overline{G}(A)\times\overline{G}(B)$.
\end{thm}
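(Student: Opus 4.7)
The plan is to mirror almost verbatim the proof of Theorem \ref{thmf3}, replacing $\CK$ by $\overline{G}$ throughout. First I would record the two basic structural facts I need: (i) for a central simple algebra $D$ of degree $n$ over $F$, the group $\overline{G}(D)\cong \operatorname{Nrd}_D(D^*)/\operatorname{Nrd}_D(D^*)^n$ is abelian of bounded exponent dividing $n=\deg(D)$ (immediate from the definition together with Corollary \ref{cord3}), and (ii) $\overline{G}$ is Morita-invariant in the weak sense that $\overline{G}(\operatorname{M}_r(D))\cong \overline{G}(D)$, via the Dieudonné determinant. Setting $m=\deg(A)$ and $k=\deg(B)$ with $\gcd(m,k)=1$, fact (i) forces $\overline{G}(A\otimes_F B)$ to split canonically as $\mathcal{H}\times \mathcal{K}$ with $\exp(\mathcal{H})\mid m$ and $\exp(\mathcal{K})\mid k$.

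Next, I would construct the analogue of the transfer machinery of Proposition \ref{propf1}. The natural inclusion $A\hookrightarrow A\otimes_F B$, $a\mapsto a\otimes 1$, induces a homomorphism $\mathcal{I}_A:\overline{G}(A)\to\overline{G}(A\otimes_F B)$ (well-defined because $\operatorname{Nrd}_{A\otimes B}(a\otimes 1)=\operatorname{Nrd}_A(a)^k$ and $A^{(1)}\subseteq (A\otimes_F B)^{(1)}$). For the transfer, I would factor the composite
\[
A\otimes_F B\xrightarrow{\,1\otimes\ell\,}A\otimes_F\operatorname{M}_{k^2}(F)\cong \operatorname{M}_{k^2}(A),
\]
where $\ell$ is the left regular representation of $B$, and then compose with the map $\overline{G}(\operatorname{M}_{k^2}(A))\xrightarrow{\sim}\overline{G}(A)$ coming from the Dieudonné determinant. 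This yields $\mathcal{P}_A:\overline{G}(A\otimes_F B)\to\overline{G}(A)$ with $\mathcal{P}_A\circ\mathcal{I}_A=\eta_{k^2}$, the $k^2$-power map. Since $\gcd(m,k^2)=1$ and $\overline{G}(A)$ is $m$-torsion, $\eta_{k^2}$ is an automorphism of $\overline{G}(A)$; in particular $\mathcal{I}_A$ is injective.

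To upgrade injectivity to an isomorphism onto $\mathcal{H}$, I would set up the same three-storey diagram used in the proof of Theorem \ref{thmf3}: apply $\mathcal{I}$ and $\mathcal{P}$ twice more, passing through $A\otimes_F B\otimes_F B^{op}\cong \operatorname{M}_{k^2}(A)$ and then $A\otimes_F B\otimes_F B^{op}\otimes_F B$. The composite $\overline{G}(A)\to\overline{G}(A\otimes_F B)\to\overline{G}(A\otimes_F B\otimes_F B^{op})\to\overline{G}(A)$ equals $\eta_{k^2}$, so is an isomorphism; this forces $\mathcal{I}_A|_{\mathcal{H}}$ to be an isomorphism of $\overline{G}(A)$ onto $\mathcal{H}$. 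Swapping the roles of $A$ and $B$ gives $\overline{G}(B)\cong\mathcal{K}$, and these two identifications assemble to the claimed decomposition.

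The step I expect to cost the most care is verifying well-definedness of $\mathcal{P}_A$ on $\overline{G}$, rather than on $\CK$. Whereas for $\CK$ one only needs that the Dieudonné determinant kills commutators and respects scalars, here one must also show that the transfer sends $\operatorname{Nrd}_{A\otimes B}((A\otimes B)^*)\cdot(A\otimes B)^{(1)}$ into $\operatorname{Nrd}_A(A^*)\cdot A^{(1)}$. This reduces to the compatibility identities $\operatorname{Nrd}_{\operatorname{M}_{k^2}(A)}(aI_{k^2})=\operatorname{Nrd}_A(a)^{k^2}$ and the analogous statement for elements of reduced norm one, plus the fact that reduced norms under the isomorphism $A\otimes_F B\otimes_F B^{op}\cong \operatorname{M}_{k^2}(A)$ correspond correctly; these are routine but must be checked explicitly to legitimize the whole transfer argument in the $\overline{G}$-setting.
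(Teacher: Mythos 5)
Your plan fails at the very first map, not at the transfer $\mathcal{P}_A$ that you flagged as the delicate point. Two of your ingredients are unavailable for $\overline{G}$. First, your fact (ii) is false: by the Dieudonn\'e determinant, $\overline{G}(\operatorname{M}_r(D))\cong \operatorname{Nrd}_D(D^*)/\operatorname{Nrd}_D(D^*)^{r\deg(D)}$, so for instance $\overline{G}(\operatorname{M}_r(F))\cong F^*/F^{*r}$ while $\overline{G}(F)=1$; just as the paper notes explicitly for $\CK$, the functor $\overline{G}$ is \emph{not} Morita invariant, and your ``isomorphism'' $\overline{G}(\operatorname{M}_{k^2}(A))\to\overline{G}(A)$ is at best a well-defined homomorphism with kernel of exponent dividing $k^2$ (that defect alone could be absorbed, as in the proof of Theorem \ref{thmf3}). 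The fatal problem is $\mathcal{I}_A$ itself. For $\CK$ the subgroup being killed is $F^*A'$, and $F^*$ maps to $F^*$ under any inclusion; for $\overline{G}$ you must kill $\operatorname{Nrd}_A(A^*)A^{(1)}$, and your identity $\operatorname{Nrd}_{A\otimes B}(a\otimes 1)=\operatorname{Nrd}_A(a)^k$ only shows that the class of the scalar $c=\operatorname{Nrd}_A(a)$ in $\overline{G}(A\otimes_FB)$ is $k$-torsion, not that it is trivial. Indeed, a scalar $c$ lies in $\operatorname{Nrd}_{A\otimes B}((A\otimes B)^*)(A\otimes B)^{(1)}$ if and only if $c\zeta\in \operatorname{Nrd}_{A\otimes B}((A\otimes B)^*)$ for some $\zeta\in\mu_{mk}(F)$, and since $\operatorname{Nrd}_{A\otimes B}((A\otimes B)^*)=\operatorname{Nrd}_A(A^*)\cap\operatorname{Nrd}_B(B^*)$ (coprime degrees), this genuinely fails: take $F=\mathbb{Q}(\sqrt{2})$, $A=\operatorname{M}_3(F)$, $B=(\frac{-1,-1}{F})$, which is a division algebra ramified at both real places, so that $\operatorname{Nrd}_B(B^*)$ is the group of totally positive elements by the Hasse--Schilling--Maass norm theorem. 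The scalar $\sqrt{2}\,I_3$ represents $1$ in $\overline{G}(A)\cong F^*/F^{*3}$, but its image in $\overline{G}(\operatorname{M}_3(B))\cong \operatorname{Nrd}_B(B^*)/\operatorname{Nrd}_B(B^*)^6$ is nontrivial, because neither $\sqrt{2}$ nor $-\sqrt{2}$ is totally positive and $\mu_6(F)=\{\pm1\}$. So $a\mapsto a\otimes 1$ does not induce a map on $\overline{G}$, and the whole three-storey diagram has nothing to act on.

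Note that the survey gives no proof of this theorem (it refers to \cite{MaMo12}), so the fair comparison is with the argument that actually works, which abandons functoriality of $\overline{G}$ in favor of the reduced-norm description. Writing $W_A=\operatorname{Nrd}_A(A^*)$, $W_B=\operatorname{Nrd}_B(B^*)$, $W=\operatorname{Nrd}_{A\otimes B}((A\otimes B)^*)$, one has $\overline{G}(A)\cong W_A/W_A^m$, $\overline{G}(B)\cong W_B/W_B^k$, $\overline{G}(A\otimes_FB)\cong W/W^{mk}$. The elementary inclusions $W_A^k\subseteq W$ and $W_B^m\subseteq W$ (your computation with $a\otimes 1$ and $1\otimes b$) combine with the classical fact $W=W_A\cap W_B$ (easy direction by B\'ezout, hard direction by the projection formula for reduced norms) to give, via $um+vk=1$, canonical isomorphisms $W/W^m\cong W_A/W_A^m$ and $W/W^k\cong W_B/W_B^k$: surjectivity because $x\equiv x^{vk}\ (\mathrm{mod}\ W_A^m)$ with $x^{vk}\in W$, injectivity because $x^m,x^k\in W$ force $x\in W$. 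Combined with the coprime splitting $W/W^{mk}\cong W/W^m\times W/W^k$, this proves the theorem. So the purely group-theoretic splitting step of your plan survives, but the two identifications must come from norm theory, not from inclusion-induced and transfer maps on $\overline{G}$.
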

\begin{thm}
If $D$ is an $F$-central division algebra with $\overline{G}(D)=1$,
then the following assertions are equivalent:
\begin{enumerate}
  \item $D$ contains an absolutely irreducible nilpotent subgroup;
  \item $D$ is a nilpotent crossed product division algebra whose exponent in $Br(F)$ is equal to $\deg(D)$;
  \item $D$ is the ordinary quaternion division algebra and $F$ is Euclidean.
\end{enumerate}
\end{thm}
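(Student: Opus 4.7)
The plan is to prove the cycle $(3) \Rightarrow (1) \Rightarrow (2) \Rightarrow (3)$, with the first two implications constructive and the third doing the main work via Theorem \ref{thmg6} and Lemma \ref{lemg5}. For $(3) \Rightarrow (1)$ and $(3) \Rightarrow (2)$: if $D = \mathcal{Q}$ is the ordinary quaternion algebra over a Euclidean field $F$, then $G = \langle i, j\rangle \cong Q_8$ is nilpotent of class $2$, and since $\{1, i, j, ij\}$ forms an $F$-basis of $\mathcal{Q}$ the subgroup $G$ is absolutely irreducible, which yields $(1)$; also $F(i)/F$ is a Galois maximal subfield of $\mathcal{Q}$ with cyclic (hence nilpotent) Galois group and $\deg(\mathcal{Q}) = \exp(\mathcal{Q}) = 2$, which yields $(2)$.

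For $(1) \Rightarrow (2)$, let $G \leq D^*$ be absolutely irreducible and nilpotent. Since a nilpotent group is trivially abelian-by-nilpotent, Corollary \ref{cormaini} shows $D$ is a crossed product over a Galois maximal subfield $F[A]$ (with $A$ a maximal abelian normal subgroup of $G$) whose Galois group $G/A$ is nilpotent as a quotient of $G$, so $D$ is a nilpotent crossed product. To establish $\exp(D) = \deg(D)$, one decomposes the finite nilpotent group $F^*G/F^*$ into the direct product of its Sylow $p$-subgroups and lifts this to a tensor factorization $D = \bigotimes_p D_p$, in which each primary component $D_p$ contains an absolutely irreducible nilpotent $p$-subgroup and is therefore a symbol algebra (by a prime-power analogue of Theorem \ref{thm3}); each symbol satisfies $\exp = \deg$, and coprimality of the $\deg(D_p)$ then yields $\exp(D) = \deg(D)$.

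For $(2) \Rightarrow (3)$, set $n = \deg(D) = \exp(D)$; Theorem \ref{thmg6} gives that $F^{*2}$ is $n$-divisible. The nilpotent Galois group decomposes as a direct product of its Sylow subgroups, inducing a tensor factorization $D = \bigotimes_{p \mid n} D_p$ with $\deg(D_p) = p^{a_p}$; coprimality of the $\deg(D_p)$ and the equality $\exp(D) = \deg(D)$ force $\exp(D_p) = \deg(D_p)$ for every $p$, and Theorem \ref{thmg6} then transports $\overline{G}(D_p) = 1$ to each factor (since $p^{a_p} \mid n$). If $n$ is even, Lemma \ref{lemg5} produces $F$ Euclidean and $D_2 \cong \mathcal{Q}$, so $a_2 = 1$. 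It remains to show $D_p$ splits for every odd prime $p \mid n$; Lemma \ref{lemg4}(2) gives $F^* = F^{*p}$. In the base case $a_p = 1$, $D_p = (L/F, \sigma, b)$ is cyclic of degree $p$, and writing $b = c^p \in F^{*p}$ one computes $\operatorname{Nrd}_{D_p}(zc^{-1}) = 1$ with $z^p = b$, so $\zeta := zc^{-1}$ satisfies $\zeta^p = 1$; since $\zeta \neq 1$ (else $z \in F$) and the subfield $F(\zeta) \subseteq D_p$ has degree dividing both $p$ and $p-1$, we conclude $\mu_p \subset F$, and then Kummer theory together with $F^* = F^{*p}$ forces $L = F$, a contradiction. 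For $a_p > 1$ the reduction to this base case is the principal obstacle; one chooses a central subgroup of order $p$ in the Galois $p$-group, passes to the fixed subfield $K$ of index $p$ in $L$, and analyzes the $K$-central division algebra $C_{D_p}(K)$ of degree $p^{a_p-1}$, the difficulty being to preserve both the nilpotent-crossed-product structure and the hypothesis $\overline{G}(C_{D_p}(K)) = 1$ when the base field changes from $F$ to $K$; this is addressed via iteration through the central series of the Galois $p$-group combined with a corestriction-type argument transporting divisibility from $F$ to $K$.
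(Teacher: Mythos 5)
You should first know that the survey itself contains no proof of this theorem: it is stated among ``additional results'' with the remark ``For the proofs of these theorems see \cite{MaMo12}'', so your attempt can only be judged on its own merits, and on those merits it has two genuine gaps. The fatal one is in $(1)\Rightarrow(2)$: your argument never invokes the standing hypothesis $\overline{G}(D)=1$, and without that hypothesis the implication is simply false. Any division biquaternion algebra $D=Q_1\otimes_F Q_2$ contains the central product of two copies of $Q_8$ (the group generated by $i_1,j_1,i_2,j_2$), a finite $2$-group, hence a nilpotent absolutely irreducible subgroup; yet $\exp(D)\le 2<4=\deg(D)$. So at least one step of your chain must fail, and in fact two do: the asserted ``prime-power analogue of Theorem \ref{thm3}'' is false (a division biquaternion over $\mathbb{Q}$ has such a subgroup but cannot be a symbol algebra of degree $4$, since symbol algebras of degree $4$ require $\mu_4\subset F$), and the claim ``each symbol satisfies $\exp=\deg$'' is false for prime-power degree $p^a$ with $a\ge 2$ (over a field containing $\mu_4$, a division biquaternion is cyclic of degree $4$ by Albert's theorem, hence a symbol algebra, but still of exponent $2$). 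In addition, the finiteness of $F^*G/F^*$ is asserted without justification; it is true, but it needs an argument (absolute irreducibility gives $Z(G)\subseteq F^*$; commutators from the upper central series land in $D'\cap F^*\subseteq \mu_n(F)$, forcing the central factors to have exponent dividing $|\mu_n(F)|$; and the bounded-exponent torsion of $K^*/F^*$ is finite for $K=F[B]/F$ finite Galois), and none of this is routine enough to suppress.

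The second gap is in $(2)\Rightarrow(3)$, where you openly leave the case $a_p>1$ unresolved and dispatch it with an unspecified ``iteration \ldots combined with a corestriction-type argument transporting divisibility from $F$ to $K$.'' That is not a proof, and the proposed mechanism points the wrong way: corestriction transports norm data from $K$ down to $F$, whereas what you need, by Theorem \ref{thmg6}, is a divisibility statement about $K^{*2}$ for the $K$-central algebra $C_{D_p}(K)$, and ``$F^{*2}$ is $n$-divisible'' does not pass to a degree-$p$ extension $K/F$. Note that your only substantive uses of the hypothesis $\overline{G}(D)=1$ (Theorem \ref{thmg6}, Lemma \ref{lemg4}, Lemma \ref{lemg5}, $F^*=F^{*p}$) handle exactly the even part and the base case $a_p=1$ --- which, to your credit, you do correctly, including the clean root-of-unity argument $\zeta=zc^{-1}$, $[F(\zeta):F]$ dividing both $p$ and $p-1$. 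But the higher prime-power case is precisely where the real obstructions live (for $p\ge 5$ one may have $[F(\mu_p):F]\ge 4$, so Merkurjev's theorem is unavailable; compare Theorem \ref{thmg10}(3)), so the argument stops short of the theorem's actual content. A route more consonant with the survey's toolkit would be to prove $(1)\Rightarrow(3)$ directly: as in the proof of Theorem \ref{thm3}, a nonabelian absolutely irreducible nilpotent $G$ yields $1\neq a\in\zeta_{t-1}G\subseteq G'\cap Z(G)\subseteq D'\cap F^*$, whence $a^n=\operatorname{Nrd}_D(a)^{\vphantom{1}}=1$ and $\mu_n(F)\neq 1$; under $\overline{G}(D)=1$ this forces $n$ even, $F$ Euclidean, and $D\cong\mathcal{Q}\otimes_F E$ with $\deg(E)$ odd by Lemma \ref{lemg5} --- after which eliminating $E$ is the remaining hard step, the same one your proposal leaves open.
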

\begin{thm}
Let $D$ be a tame division algebra of index $ n $ over its Henselian
center $F$. If either of the following conditions holds, then we
have $G(D)\cong\overline{G}(D)\cong\Gamma_D/n\Gamma_D$:
\begin{enumerate}
  \item $D$ is unramified and $G(\overline{D})=1$;
  \item $D$ is totally ramified and
  $\overline{F}^*$ is $n$-divisible;
  \item $D$ is semiramified, $\overline{D}/\overline{F}$ is a cyclic
  extension,
and $N_{\overline{D}/\overline{F}}(\overline{D}^*)$ is
$n$-divisible.
\end{enumerate}
\end{thm}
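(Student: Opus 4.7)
The strategy is uniform across the three hypotheses: I will establish the single inclusion
\[
U_D\ \subseteq\ \Nrd_D(D^*)\,D',
\]
from which both isomorphisms follow simultaneously. Indeed, the extended valuation $v\colon D^*\to\Gamma_D$ is surjective with kernel $U_D$, and by Remark~\ref{rem} we have $v(\Nrd_D(D^*))=n\Gamma_D$; since $D'\subseteq D^{(1)}\subseteq U_D$, the valuation induces a commutative diagram of natural surjections
\[
G(D)\ \twoheadrightarrow\ \overline G(D)\ \twoheadrightarrow\ \Gamma_D/n\Gamma_D,
\]
whose respective kernels are $U_D/(U_D\cap\Nrd_D(D^*)D')$ and $U_D/(U_D\cap\Nrd_D(D^*)D^{(1)})$. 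Both kernels collapse once the inclusion above is known, forcing $G(D)\cong\overline G(D)\cong\Gamma_D/n\Gamma_D$.

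To prove the inclusion, I argue at the residue level. Given $u\in U_D$ with residue $\overline u\in\overline D^*$, the plan is to find $v\in U_D$ and $d\in D'$ with $\overline u=\overline{\Nrd_D(v)}\cdot\overline d$ in $\overline D^*$. Once this is achieved, $u\,\Nrd_D(v)^{-1}d^{-1}$ reduces to $1$, hence lies in $1+M_D$; by the Congruence Theorem (Theorem~\ref{thmd9}) this group is contained in $(1+M_F)D'$, and by Corollary~\ref{Dnormsurj} we have $1+M_F=\Nrd_D(1+M_D)\subseteq\Nrd_D(D^*)$, so $u\in\Nrd_D(D^*)D'$. The tools for producing the residue decomposition are Ershov's formula (Theorem~\ref{thmf6}), which identifies $\overline{\Nrd_D(a)}$ for $a\in U_D$, together with the elementary observation that any residue commutator $[\overline a,\overline b]\in\overline D^*$ lifts to the commutator $[a,b]\in D'$ of any chosen unit lifts $a,b\in U_D$.

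Each of the three hypotheses supplies the residue decomposition in a different way. In case~(1), $\overline D$ is $\overline F$-central of index $n$, so Ershov's formula degenerates to $\overline{\Nrd_D(a)}=\Nrd_{\overline D}(\overline a)$, and the hypothesis $G(\overline D)=1$ means $\overline D^*=\Nrd_{\overline D}(\overline D^*)\overline D'$, which furnishes the decomposition directly. In case~(2), $\overline D=\overline F$ forces $U_D=U_F(1+M_D)$, so it suffices to show $U_F\subseteq\Nrd_D(D^*)$; tameness plus Hensel's lemma make $1+M_F$ $n$-divisible, and combined with the $n$-divisibility of $\overline F^*=U_F/(1+M_F)$ this forces $U_F=U_F^n\subseteq F^{*n}=\Nrd_D(F^*)\subseteq\Nrd_D(D^*)$. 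In case~(3), $\overline D$ is cyclic Galois over $\overline F$ with generator $\sigma$, and Ershov's formula collapses to $\overline{\Nrd_D(a)}=N_{\overline D/\overline F}(\overline a)$; conjugation by any $\pi\in D^*$ whose valuation generates $\Gamma_D/\Gamma_F$ realises $\sigma$ on $\overline D$, so $[\pi,a]$ has residue $\sigma(\overline a)\overline a^{-1}$, and combined with Hilbert~90 this shows that the image of $D'$ in $\overline D^*$ is exactly $\ker N_{\overline D/\overline F}$. Writing $\overline u=\overline w\,\overline d$ with $\overline d\in\ker N_{\overline D/\overline F}$ and $\overline w\in N_{\overline D/\overline F}(\overline D^*)$ then reduces to finding $\overline w\in N_{\overline D/\overline F}(\overline D^*)$ with $\overline w^{\,n}=N_{\overline D/\overline F}(\overline u)$, and this is precisely what the $n$-divisibility of $N_{\overline D/\overline F}(\overline D^*)$ guarantees.

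The principal obstacle is case~(3): one must simultaneously track the Galois action on residues of commutators (through Hilbert~90 applied to the cyclic residue extension) and match reduced-norm residues modulo $n$-th powers inside the subgroup $N_{\overline D/\overline F}(\overline D^*)\subseteq\overline F^*$. It is this last matching that pins down $n$-divisibility as both the natural and the necessary ingredient, and that distinguishes the present assertion $G(D)\cong\Gamma_D/n\Gamma_D$ from the coarser $\CK$-computation $\CK(D)\cong\Gamma_D/\Gamma_F$ of Theorem~\ref{thmf7}(3).
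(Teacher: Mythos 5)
The paper itself gives no proof of this theorem --- it is one of the results quoted from \cite{MaMo12} --- so there is nothing in the text to compare with line by line; measured against the paper's parallel computations of $\CK$ and $\SK$ (Proposition \ref{propf4} and Theorem \ref{thmf7}), your argument is correct and deploys the same toolkit in the natural way. Your reduction of both isomorphisms to the single inclusion $U_D\subseteq \operatorname{Nrd}_D(D^*)D'$ is sound, since by Remark \ref{rem} one has $v(\operatorname{Nrd}_D(D^*))=n\Gamma_D$ while $D'\subseteq D^{(1)}\subseteq U_D$, so the valuation really does induce the surjections onto $\Gamma_D/n\Gamma_D$ with the kernels you describe. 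Your descent mechanism --- a residue-level decomposition combined with the Congruence Theorem and $\operatorname{Nrd}_D(1+M_D)=1+M_F$, i.e.\ (\ref{eqd4}) or Corollary \ref{Dnormsurj} --- is exactly how Proposition \ref{propf4} passes between $D$ and $\overline{D}$. Each case then closes correctly: in (1), tameness and unramifiedness give $Z(\overline{D})=\overline{F}$ and $\operatorname{ind}(\overline{D})=n$, so the exponent $n/mm'$ in Ershov's formula (Theorem \ref{thmf6}) is $1$ and $G(\overline{D})=1$ supplies the decomposition; in (2), the chain $U_F=U_F^n(1+M_F)=U_F^n\subseteq F^{*n}=\operatorname{Nrd}_D(F^*)$ is right, using Hensel $n$-divisibility of $1+M_F$; in (3), your identification of the image of $D'$ in $\overline{D}^*$ with $\ker N_{\overline{D}/\overline{F}}$ is the same Hilbert~90/fundamental-homomorphism step the paper carries out in proving Theorem \ref{thmf7}(3), and since $N_{\overline{D}/\overline{F}}$ restricted to $\overline{F}^*$ is the $n$-th power map, the norm-matching does reduce precisely to $n$-divisibility of the subgroup $N_{\overline{D}/\overline{F}}(\overline{D}^*)$.

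Two polish points, neither a gap. In case (3) your phrase about a $\pi$ whose value generates $\Gamma_D/\Gamma_F$ tacitly assumes that quotient is cyclic; this is in fact true here, because for a semiramified algebra the fundamental homomorphism induces an isomorphism $\Gamma_D/\Gamma_F\cong\Gal(\overline{D}/\overline{F})$, but all your argument actually needs is its surjectivity, so it is cleaner to invoke that directly. Also, you overload the letter $v$ as both the valuation and an element of $U_D$; rename one of them.
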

\begin{thm}
Let $D$ be an $F$-central division algebra of index $n\geq1$ such
that $[D]\neq [\mathcal{Q}]$ in $Br(F)$, where $\mathcal{Q}$ is the
ordinary quaternion division algebra. Moreover, assume that
$[D]=\sum_{j=1}^k[D_j]$, where $D_j$'s are $F$-central cyclic
division algebras. If $D^*$ contains a divisible maximal subgroup,
then $D=F$.
\end{thm}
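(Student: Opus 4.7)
Plan: Assume for contradiction that $D\neq F$, so $n\ge 2$, and let $M$ be a divisible maximal subgroup of $D^*$. The strategy is to combine the divisibility $M=M^nM'$ with Corollary \ref{cord3}(2) to force $\operatorname{Nrd}_D(M)\subseteq F^{*n}$, and then either deduce $\NK(D)=1$ (to be contradicted via the cyclic-decomposition hypothesis and Theorem \ref{thmf20}) or $\overline{G}(D)=1$ (to be contradicted directly via Theorem \ref{thmg6}).

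First, since $M/M'$ is divisible abelian, so is its image $\operatorname{Nrd}_D(M)\subseteq F^*$, and hence $\operatorname{Nrd}_D(M)=\operatorname{Nrd}_D(M)^n\subseteq F^{*n}$. I then distinguish three cases. \emph{(i)} If $F^*\not\subseteq M$, maximality gives $F^*M=D^*$, so every $a=fm\in D^*$ satisfies $\operatorname{Nrd}_D(a)=f^n\operatorname{Nrd}_D(m)\in F^{*n}$, yielding $\NK(D)=1$. \emph{(ii)} If $F^*\subseteq M$ but $D^{(1)}\not\subseteq M$, then maximality applied to $MD^{(1)}$ gives $MD^{(1)}=D^*$, whence $\operatorname{Nrd}_D(D^*)=\operatorname{Nrd}_D(M)\subseteq F^{*n}$, again $\NK(D)=1$. \emph{(iii)} If $F^*\subseteq M$ and $D^{(1)}\subseteq M$, then $D'\subseteq M$ forces $M\triangleleft D^*$ with $D^*/M$ abelian simple, hence $\cong C_p$ for some prime $p$. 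Here I apply Corollary \ref{cord3}(2) to $N=M$: combined with $M=M^nM'$ and $M',[D^*,M]\subseteq D'\subseteq M$, this yields $M\subseteq F^*D'$; maximality then forces either $F^*D'=D^*$ (which reduces to the conclusion $\NK(D)=1$ of case \emph{(i)}) or $M=F^*D'=F^*D^{(1)}$. In the latter event the reduced norm induces $M/\mu_n(F)D'\cong F^{*n}$, and the divisibility of $M/M'$ transfers to this quotient (noting $M'=D''\subseteq\mu_n(F)D'$), so $F^{*n}$ is divisible. In particular $F^{*n}=F^{*n^2}$, so Lemma \ref{lemg4}(3) gives $F^{*2}=F^{*2n}$, and Theorem \ref{thmg6} yields $\overline{G}(D)=1$; but $\overline{G}(D)$ surjects onto $D^*/F^*D^{(1)}=D^*/M\cong C_p$, a contradiction.

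Cases \emph{(i)} and \emph{(ii)} leave only the conclusion $\NK(D)=1$, at which point invoking Theorem \ref{thmf20} (whose hypothesis is precisely the cyclic decomposition $[D]=\sum[D_j]$) forces $[D]\in\{0,[\mathcal{Q}]\}$ in $Br(F)$; since $[D]\neq[\mathcal{Q}]$ by assumption, $[D]=0$ and $D=F$, contradicting $D\neq F$. The hard part will be this final invocation: Theorem \ref{thmf20} as stated applies only when $D$ literally equals a tensor product of cyclic algebras, whereas here $D$ is only Brauer equivalent to $\bigotimes D_j$, so the gap must be bridged by redoing the reduced-norm computation from the proof of \ref{thmf20} inside the central simple algebra $A=\bigotimes D_j\cong M_m(D)$ via the Dieudonn\'e determinant, or by invoking the central simple analogue of Proposition \ref{propf19} due to Hazrat and Vishne.
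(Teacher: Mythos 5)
Your case analysis is sound, and it is worth noting at the outset that the paper itself states this theorem without proof (deferring to \cite{MaMo12}), so the comparison can only be with what your argument actually delivers. Steps (i)--(iii) all check out against the paper's toolkit: the divisibility of $\operatorname{Nrd}_D(M)\subseteq F^{*n}$, the maximality dichotomies, and in case (iii) the chain ``Corollary \ref{cord3}(2) $\Rightarrow M=F^*D'=F^*D^{(1)} \Rightarrow F^{*n}=\operatorname{Nrd}_D(M)$ divisible $\Rightarrow$ Lemma \ref{lemg4}(3) $\Rightarrow$ Theorem \ref{thmg6} $\Rightarrow$ contradiction'' is correct. (One slip there: the kernel of $\operatorname{Nrd}_D$ restricted to $M$ is $D^{(1)}$, not $\mu_n(F)D'$ --- note $\mu_n(F)\subseteq D^{(1)}$ --- so the isomorphism is $M/D^{(1)}\cong F^{*n}$; this is harmless, since $F^{*n}$ is divisible anyway as a homomorphic image of $M/M'$.)

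The genuine gap is exactly where you flagged it, and neither of your proposed bridges closes it. (a) The Hazrat--Vishne result \cite[Cor.~2.11]{HaVi05} requires $A$ to be a \emph{cyclic} central simple algebra with $\NK(A)=1$; here $A=D_1\otimes_F\cdots\otimes_F D_k$ is not cyclic in general, and under $\NK(D)=1$ the Dieudonn\'e determinant gives $\NK(A)\cong \operatorname{Nrd}_D(D^*)/F^{*\deg(A)}=F^{*n}/F^{*mn}$ (where $A\cong\operatorname{M}_m(D)$), which need not be trivial --- so the hypothesis of that theorem is simply not available. (b) Redoing the computation of Theorem \ref{thmf20} inside $A$ yields only $(\pm b_j)^{N/n_j}\in F^{*n}$ with $N=mn$, whereas the paper's normalization of $b_j$ to a root of unity uses $b_j^{n/n_j}\in F^{*n}$ together with $n_j\mid n$; under Brauer cancellation one can have $\operatorname{ind}(D)<\deg(D_j)$ (e.g.\ over a global field, where every class is cyclic, take $[D]=[D_1]+[D_2]$ with $\deg(D_j)=p^2$ and $\operatorname{ind}(D)=p$), so $n/n_j$ need not even be an integer and the argument breaks. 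Nor can you shortcut via Theorem \ref{thmg6}: $\NK(D)=1$ alone gives $\overline{G}(D)\cong F^{*n}/F^{*n^2}$, which is not trivially $1$. The repair is to \emph{not} discard the divisibility of $\operatorname{Nrd}_D(M)$ in cases (i)--(ii). In (ii), $\operatorname{Nrd}_D(D^*)=\operatorname{Nrd}_D(M)$ is divisible, so $F^{*n}=F^{*n^2}$ and Lemma \ref{lemg4}(3) gives $F^{*2}=F^{*2n}$; in (i), centrality of $F^*$ forces $M\trianglelefteq D^*=F^*M$, hence $D'\subseteq M$, and a short argument (splitting off the divisible group $\operatorname{Nrd}_D(M)$ of index $1$ or $p$ in $F^{*n}$) again yields $F^{*2}=F^{*2n}$. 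Once $F^{*2}=F^{*2n}$ holds in every case, the cyclic hypothesis finishes the proof directly and Theorem \ref{thmf20} is not needed at all: for each odd prime $p\mid n$ one gets $F^*=F^{*p}$, so every cyclic algebra of odd $p$-power degree $(K/F,\sigma,b)$ splits because $b\in F^{*p^t}\subseteq N_{K/F}(K^*)$, killing the odd-primary part of $[D]=\sum_j[D_j]$, while Lemma \ref{lemg5} identifies the $2$-primary part with $[\mathcal{Q}]$ --- contradicting $[D]\neq[\mathcal{Q}]$ and $D\neq F$.
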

\section{Radicable division algebras}
Let $D$ be a division ring. Recall that $D$ is called \textit{right}
(\textit{left}) \textit{algebraically closed} if every polynomial
equation with coefficients in $D$ has a right (left) root in $D$.
When $D$ is of finite dimension over its center, there is a perfect
description of such a division algebra. By Niven-Jacobson Theorem if
$Z(D)$ is a real closed field then $D$ is right
as well as left algebraically closed (see \cite[p. 255]{Lam01} or 
\cite{Niv41}). Also, by a theorem of Baer if
$D$ is right algebraically closed then $D$ is the ordinary
quaternion division algebra and $Z(D)$ is a real closed field (\cite[p. 255]{Lam01}). Now,
combining these results gives
\begin{thm}\label{thmr1}
The finite dimensional right algebraically closed division algebras
are precisely the division algebras of quaternions over a real
closed field. These division algebras are also left algebraically
closed.
\end{thm}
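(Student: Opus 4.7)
The plan is to assemble the theorem by combining the two classical results cited just before the statement, namely the Niven--Jacobson theorem and Baer's theorem, with no additional ingredients needed. There are two directions to verify, the characterization and the left-closedness, and both reduce to invoking the right result in the right order.

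First I would prove the forward direction. Assume that $D$ is a finite dimensional division algebra over its center $F=Z(D)$ and that $D$ is right algebraically closed. Baer's theorem applies directly in this situation and forces $D$ to be the ordinary quaternion algebra and $F$ to be real closed. There is nothing to add at this step beyond quoting Baer's theorem; the whole content of the forward direction is that Baer's conclusion is already exactly the desired description.

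Next I would handle the converse. Suppose $F$ is a real closed field and $D$ is the ordinary quaternion algebra over $F$, that is $D=F\oplus Fi\oplus Fj\oplus Fij$ with $i^2=j^2=-1$ and $ij=-ji$. Then the Niven--Jacobson theorem asserts precisely that every polynomial equation with coefficients in $D$ has a right root in $D$, so $D$ is right algebraically closed. This shows that the two classes coincide.

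Finally, the second sentence of the theorem is immediate from the same Niven--Jacobson theorem: that theorem produces both right and left roots (indeed the statement in \cite{Lam01} is symmetric, and any case one may apply the theorem to the opposite algebra $D^{\mathrm{op}}$, which is itself an ordinary quaternion algebra over the same real closed field $F$, to get left roots from right roots). Combining the three paragraphs yields the full equivalence and the left-closedness with essentially no calculation; the only point of care is that one must have both halves of Baer's theorem available, and the hardest external input, which I would treat as a black box, is Niven--Jacobson's existence of roots for polynomials over the real quaternions.
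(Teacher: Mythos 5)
Your proposal is correct and follows exactly the route the paper takes: the paper states Baer's theorem (right algebraic closure forces the ordinary quaternions over a real closed center) and the Niven--Jacobson theorem (quaternions over a real closed field are right and left algebraically closed) in the two sentences preceding the theorem and then simply says ``combining these results gives'' the statement. Your three paragraphs are just a slightly more explicit unpacking of that same combination.
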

However, our point of view here is to explore how much the above
classification depends on the group theoretic structure of $D$. To
setup our problem, for a moment we come back to the commutative
case. Let $F$ be an algebraically closed field. Then its unit group
is divisible. More exactly, $F^*$ is isomorphic to a group of one of
the following types:
\begin{itemize}
  \item[(i)] $\mathbb{Q}/\mathbb{Z}\oplus\mathbb{Q}^{\lambda}$;
  \item[(ii)] $\oplus_{q\neq p}\mathbb{Z}_{q^{\infty}}\oplus\mathbb{Q}^{\lambda}$
   for a prime number $p$.
\end{itemize}
for an infinite cardinal $\lambda$. Conversely, all groups of the
above types (i) and (ii) are isomorphic to the unit group of a
suitable algebraically closed field (cf. \cite[pp. 106-107]{Kar88}).
Moreover, by a theorem in \cite[p. 107]{Kar88} the unit group of a
field $F$ is isomorphic to the multiplicative group of a real closed
field if and only if $F^*\cong \mathbb{Q}^{\lambda}\times
\mathbb{Z}_2$ for some infinite cardinal $\lambda$.

On the other hand, a natural generalization of the notion of
divisibility in the nonabelian setting is the notion of
``radicability''. We recall that a (nonabelian) group is called
\textit{radicable} if for each $g\in G$ and each $n\in \mathbb{N}$
there exists an element $h\in G$ such that $h^n=g$.  A division ring
$ D $ is called radicable if $ D^* $ is a radicable group. In
particular, when $ D^* $ is abelian the notions of divisibility and
radicability coincide and $ D $ is called a divisible (otherwise
indivisible) field. From the definition, it follows that the
multiplicative group of each right (left) algebraically closed
division ring is radicable. So, it can be of interest to explore
what happens when the multiplicative group of a division ring is
radicable. In this direction, in \cite{MaMo11} all division algebras
with radicable multiplicative groups with indivisible center have
been classified. More precisely, the authors provided a new version
of Theorem \ref{thmr1} for radicable division algebras. Here, our
aim is to present the main theorem of  \cite{MaMo11} as a
consequence of the results appeared in the previous section. We
proceed by recalling some definitions and facts from \cite{MaMo11}.

Let $F$ be a field and $K/F$ be a finite extension. Recall that $K$
is called a radical extension if there are finite intermediate
fields $F\subseteq K_1\subseteq K_2\subseteq\ldots\subseteq K_n=K$
such that for all $1\leq i\leq n$, $K_{i+1}=K_{i}(\alpha_i)$ with
$\alpha_i^{m_i}\in K_i$ for some $m_i\in \mathbb{N}$. $F$ is called
\textit{radically closed} if it possesses no radical extensions.
\begin{prop}\label{propr2}
Let $F$ be a field. Then there exists a unique, up to isomorphism,
extension $F_{\operatorname{rad}}/F$ which is radically closed and
contained in any radically closed algebraic extension of $F$.
\end{prop}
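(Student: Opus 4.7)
The plan is to imitate the classical construction and characterization of the algebraic closure, with ``radically closed'' replacing ``algebraically closed.'' First, fix an algebraic closure $F_{\operatorname{alg}}$ of $F$ and let $\mathcal{R}$ be the collection of all finite radical extensions of $F$ contained in $F_{\operatorname{alg}}$. Observe that $\mathcal{R}$ is closed under compositum: if $K_1 = F(\alpha_1,\ldots,\alpha_r)$ and $K_2 = F(\beta_1,\ldots,\beta_s)$ are built via radical towers over $F$, then the tower $F \subseteq F(\alpha_1) \subseteq \ldots \subseteq K_1 \subseteq K_1(\beta_1)\subseteq \ldots \subseteq K_1K_2$ is again a radical tower. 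Consequently $\mathcal{R}$ is directed under inclusion, and I define $F_{\operatorname{rad}} := \bigcup_{K\in\mathcal{R}} K$, which is an algebraic subfield of $F_{\operatorname{alg}}$ containing $F$.

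Next I verify that $F_{\operatorname{rad}}$ is radically closed. Suppose $L/F_{\operatorname{rad}}$ is a radical extension, so $L = F_{\operatorname{rad}}(\gamma_1,\ldots,\gamma_n)$ with $\gamma_i^{m_i} \in F_{\operatorname{rad}}(\gamma_1,\ldots,\gamma_{i-1})$. Only finitely many elements of $F_{\operatorname{rad}}$ appear as coefficients in these relations, and they all lie in some single $K \in \mathcal{R}$. Then $K(\gamma_1,\ldots,\gamma_n)$ is obtained from $F$ by concatenating the radical tower defining $K$ with the radical tower defining $L$ over $K$, hence lies in $\mathcal{R}$, hence in $F_{\operatorname{rad}}$. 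Thus $L = F_{\operatorname{rad}}$.

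For the minimality (embedding) property, the key remark is: if $M/F$ is a radically closed algebraic extension and $\beta \in M$, then for every $m \in \mathbb{N}$ each root of $x^m - \beta$ lies in $M$; otherwise adjoining such a root $\alpha$ yields $M(\alpha)$, a proper radical extension of $M$. Given any $K = F(\alpha_1,\ldots,\alpha_n) \in \mathcal{R}$, I build an $F$-embedding $\phi\colon K \hookrightarrow M$ step by step along the radical tower: having defined $\phi$ on $K_{i-1}$, I need a root in $M$ of $x^{m_i} - \phi(\alpha_i^{m_i})$, which exists by the remark. A Zorn's lemma argument on partial embeddings of subfields of $F_{\operatorname{rad}}$ into $M$ then yields an $F$-embedding $F_{\operatorname{rad}} \hookrightarrow M$.

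Uniqueness up to isomorphism follows by applying the minimality property symmetrically: if $F_1, F_2$ are both radically closed algebraic extensions of $F$ with the stated property, each embeds $F$-linearly into the other. A standard Zorn's lemma argument on partial $F$-isomorphisms between subfields of $F_1$ and $F_2$, extended at each stage using the same root-existence remark (applied in both $F_1$ and $F_2$), produces an $F$-isomorphism $F_1 \cong F_2$. The principal obstacles to watch for are (i) verifying rigorously that a compositum of radical extensions is a radical extension (a purely tower-concatenation check), and (ii) the Zorn step for extending embeddings to $F$-isomorphisms, which is routine but must ensure that both the ``up'' extension in $F_1$ and the ``matching up'' step in $F_2$ can always be carried out simultaneously. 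With these in place, existence, uniqueness, and the universal containment follow.
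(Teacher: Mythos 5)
Your proof is correct and follows essentially the same route as the paper: both construct $F_{\operatorname{rad}}$ inside a fixed algebraic closure as the radical closure of $F$ (the paper via an iterated tower of splitting fields of the polynomials $x^n-a$, you via the directed union of all finite radical subextensions of $F$ --- these yield the same field), then verify radical closedness and minimality. Your write-up supplies the embedding and uniqueness details that the paper dismisses as following ``from the standard theorems of field theory,'' which is a useful elaboration rather than a different argument.
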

\begin{proof}
Let $F_{\operatorname{alg}}$ be a fixed algebraic closure of $F$.
Set $L_0= F$ and for $i>0$ let $L_i$ be the splitting field of the
family $\{x^n-a|\ a\in L_{i-1}\ ,\ n\in \mathbb{N}\}$ in
$F_{\operatorname{alg}}$. Put
$F_{\operatorname{rad}}:=\cup_{i=0}^{\infty}L_i$. Clearly,
$F_{\operatorname{rad}}$ is radically closed as well as divisible.
Now, one can easily check that every radical extension of $L$ in
$F_{\operatorname{alg}}$ is contained in $F_{\operatorname{rad}}$.
Also, the uniqueness follows from the standard theorems of field
theory.
\end{proof}
Proposition \ref{propr2} provides us a facility to introduce the
notion of \textit{radical closure} and \textit{radically real closed
field}. For every field $F$, we refer to the extension
$F_{\operatorname{rad}}/F$ as the radical closure of $F$. Note that
Proposition \ref{propr2} ensures that $F_{\operatorname{rad}}$ is
independent of the choice of the algebraic closure of $F$. Moreover,
we say that $F$ is radically real closed field if $\sqrt{-1}\not\in
F$ and $F_{\operatorname{rad}}=F(\sqrt{-1})$.
\begin{exa}\label{exar3}
From field theory, recall that there are some polynomial
equations in $\mathbb{Q}[x]$ that are not soluble by radicals. Using
this fact, one can easily observe that
$\mathbb{Q}_{\operatorname{rad}}\neq\mathbb{Q}_{\operatorname{alg}}$.
Also, it is not hard to check that if $a\in
\mathbb{Q}_{\operatorname{rad}}$, then the complex conjugate of $a$
is contained in $\mathbb{Q}_{\operatorname{rad}}$, i.e.,
$\overline{a}\in \mathbb{Q}_{\operatorname{rad}}$. Now, consider the
automorphism $\sigma: \mathbb{Q}_{\operatorname{rad}}\rightarrow
\mathbb{Q}_{\operatorname{rad}}$, given by $a\mapsto \overline{a}$.
Since $\sqrt{-1}\in \mathbb{Q}_{\operatorname{rad}} $ we conclude
that $\sigma\neq 1$. Moreover, it is clear that
$\sigma^2=1_{\mathbb{Q}_{\operatorname{rad}}}$. Hence $\sigma$ has
order 2 which yields $F(\sqrt{-1})=\mathbb{Q}_{\operatorname{rad}}$
where $F$ is the fixed field of $\sigma$. This shows that $F$ is
radically real closed and is not real closed, because
$\mathbb{Q}_{\operatorname{rad}}$ is not algebraically closed.
\end{exa}
The next theorem can be viewed as an analogue of the classical
Frobenius Theorem
 (cf. \cite[p. 208]{Lam01}).
\begin{thm}\label{thmr4}
If $K$ is a divisible finite field extension of $F$, then the
following statements are equivalent:
 \begin{enumerate}
  \item $F$ is indivisible;
  \item $F$ is radically real closed;
  \item $Br(F)\cong \mathbb{Z}_2$.
 \end{enumerate}
Moreover, if one of the above conditions holds, then $F^*$ is
isomorphic to the multiplicative group of a real closed field and
the ordinary $F$-quaternion division algebra $\mathcal{Q}$ is the
only noncommutative division algebra with center $F$.
\end{thm}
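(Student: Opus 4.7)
The plan is to reduce to working with $L := F_{\operatorname{rad}}$. Since $K$ is divisible we have $K^* = K^{*n}$ for every $n$, so $K$ contains every $n$-th root of every one of its elements; thus $K$ is radically closed and contains $F_{\operatorname{rad}}$, giving $[L:F] \leq [K:F] < \infty$. The easy implications $(2) \Rightarrow (1)$ and $(3) \Rightarrow (1)$ follow at once: radical real closedness places $-1$ outside $F^{*2}$, while a nontrivial Brauer group forces a non-split cyclic algebra, hence a non-square in $F$; either way $F$ is indivisible.

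The substantive implication is $(1) \Rightarrow (2)$, which I plan to establish by induction on $[L:F]$. Assume $F$ is indivisible, so $L \supsetneq F$. First I would rule out positive characteristic: if $\chr(F) = p > 0$, divisibility of $L$ forces $L$ to contain both the perfect closure of $F$ and $\ov{\mathbb{F}_p}$ (via roots of unity), and finiteness of $[L:F]$ then forces $F$ perfect and $F \supseteq \ov{\mathbb{F}_p}$; if $F$ has any transcendental $t$, the iterated radicals $\sqrt[n]{t} \in L$ generate an infinite subextension, so $F = \ov{\mathbb{F}_p}$, which is already divisible and contradicts (1). Hence $\chr(F) = 0$ and $L/F$ is finite Galois; write $G = \Gal(L/F)$. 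I would first rule out odd prime factors of $|G|$: if $p$ is an odd prime dividing $|G|$, take a subgroup $H \leq G$ of order $p$ with fixed field $E = L^H$, so $[L:E] = p$; since $\gcd(p, |\Aut(\mu_p)|) = \gcd(p, p-1) = 1$, $\zeta_p \in E$, and Kummer theory gives $L = E(\alpha)$ with $\alpha^p = a \in E^* \setminus E^{*p}$. Divisibility of $L$ produces $\beta \in L$ with $\beta^p = \alpha$, and for the generator $\sigma$ of $\Gal(L/E)$ one computes $\sigma(\beta) = \zeta\beta$ for some primitive $p^2$-th root of unity $\zeta$ (the alternative $\operatorname{ord}(\zeta) \mid p$ would force $\alpha \in E$, contradicting $a \notin E^{*p}$). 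Writing $\sigma(\zeta) = \zeta^r$ with $r \equiv 1 \pmod p$ (as $\sigma$ fixes $\zeta_p = \zeta^p \in E$) and using $\sigma^p(\beta) = \beta$ gives $\zeta^{1+r+\cdots+r^{p-1}} = 1$. Expanding $r = 1 + kp$ modulo $p^2$ and using $\tfrac{p(p-1)}{2} \equiv 0 \pmod p$ for $p$ odd reduces the exponent to $p \pmod{p^2}$, yielding $\zeta^p = \zeta_p = 1$, a contradiction. Thus $|G| = 2^m$. If $m \geq 2$, pick any intermediate field $F'$ with $[F':F] = 2$; then $[L:F'] < [L:F]$ and $L/F'$ is divisible, so by the inductive hypothesis $F'$ is radically real closed with $L = F'(\sqrt{-1})$, forcing $[L:F'] = 2$ and hence $m = 2$. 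A brief case analysis on $G$ (cyclic $\mathbb{Z}/4$ or Klein $V_4$) combined with the fact that $\sqrt{-1} \in L$ produces a degree-$2$ intermediate field $F(\sqrt{-1})$ which must coincide with some such $F'$, contradicting $\sqrt{-1} \notin F'$. Hence $m = 1$, $|G| = 2$, and $L = F(\sqrt{-1})$ with $\sqrt{-1} \notin F$, proving (2).

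For $(2) \Rightarrow (3)$, when $F$ is radically real closed, $L = F(\sqrt{-1})$ is divisible, so for every cyclic prime-degree extension $L'/L$ one has $L^* = L^{*p} \subseteq N_{L'/L}(L'^*)$, whence every cyclic $p$-algebra over $L$ is split; combined with Merkurjev's Theorem~\ref{thmg2} (applicable since $\mu_p \subseteq L$) this forces $\Br(L) = 0$. Hence $\Br(F) \cong H^2(\mathbb{Z}/2, L^*) \cong F^*/N_{L/F}(L^*)$. Writing $\sqrt{a} = c + d\sqrt{-1} \in L$ for any $a \in F^*$ (possible by divisibility of $L$) and comparing real and imaginary parts yields the dichotomy $F^* = F^{*2} \sqcup (-F^{*2})$; moreover, $-1 \notin N_{L/F}(L^*) = \{c^2+d^2 : c, d \in F\} \setminus \{0\}$, since a relation $c^2+d^2 = -1$ would yield $1 + e^2 \in -F^{*2}$ for $e = c/d$ and iterate to a violation of the dichotomy. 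Thus $F^*/N_{L/F}(L^*) \cong \mathbb{Z}/2$ and $\Br(F) \cong \mathbb{Z}_2$.

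Finally, for the moreover statement, the dichotomy combined with the structure $L^* \cong \mathbb{Q}/\mathbb{Z} \oplus \mathbb{Q}^{\lambda}$ (divisible abelian in characteristic zero) under complex conjugation (which inverts roots of unity and splits the torsion-free part into $\pm 1$-eigenspaces of equal infinite dimension $\lambda$) gives $F^* \cong \mathbb{Z}/2 \oplus \mathbb{Q}^{\lambda}$, which is the multiplicative group of a real closed field by the fact cited just before the theorem. Since $\Br(F) \cong \mathbb{Z}_2$ is generated by $[\mathcal{Q}]$ and any noncommutative $F$-central division algebra has index dividing $\exp(\Br(F)) = 2$, the only such algebra is $\mathcal{Q}$ itself. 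The main technical obstacles throughout will be the Artin--Schreier-type cocycle computation ruling out odd primes in $|G|$, the induction-plus-case-analysis ruling out $|G| = 2^m$ for $m \geq 2$, and the verification that $-1$ is not a norm from $L$; each requires careful but essentially standard Galois-theoretic bookkeeping.
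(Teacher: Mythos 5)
Your overall strategy for $(1)\Rightarrow(2)$ --- an Artin--Schreier-style induction on $[F_{\operatorname{rad}}:F]$, with the Kummer cocycle computation killing odd primes (which, as far as it goes, is correct) --- is genuinely different from the paper's, but it rests on a foundational claim that is false as stated. From $K^*=K^{*n}$ you infer that $K$ contains \emph{every} $n$-th root of each of its elements, hence is radically closed and contains $F_{\operatorname{rad}}$. Divisibility only provides \emph{one} $n$-th root; to get all of them you need all roots of unity in $K$. In characteristic zero this is precisely Lemma~3 of \cite{May72}, which the paper invokes and you never do; in characteristic $p$ it is simply false: $\mathbb{F}_2$ is divisible ($\mathbb{F}_2^*=1$) yet lacks $\mu_3$, so $\mathbb{F}_4=\mathbb{F}_2(\omega)$ with $\omega^3=1$ is a proper radical extension of a divisible field. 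This is not cosmetic, because the finiteness $[F_{\operatorname{rad}}:F]<\infty$ carrying your whole induction rests on it, and your elimination of positive characteristic then becomes circular: you use finiteness of $[L:F]$ (with $L=F_{\operatorname{rad}}$) to force $F=\ov{\mathbb{F}_p}$, but that finiteness was obtained from exactly the inference that fails in characteristic $p$. The paper avoids presupposing $F_{\operatorname{rad}}\subseteq K$ altogether: it takes a \emph{minimal} divisible intermediate field $E\subseteq K$ and a maximal proper subfield $L\supseteq F$ of $E$, uses injectivity of divisible abelian groups to split $L^*=N_{E/L}(E^*)\times C_m$ with $m\neq 1$, and applies Lemma~\ref{lemg1} to conclude $L$ is Euclidean --- which forces $\chr(F)=0$ --- and only afterwards identifies $E=F(\sqrt{-1})=F_{\operatorname{rad}}$ via May's lemma.

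There is a second genuine gap in your $(2)\Rightarrow(3)$: you claim that a relation $c^2+d^2=-1$ would ``iterate to a violation of the dichotomy'' $F^*=F^{*2}\sqcup(-F^{*2})$. It would not: that dichotomy is perfectly compatible with $-1$ being a sum of two squares --- in $\mathbb{F}_q$ with $q\equiv 3\pmod 4$ one has $F^*=F^{*2}\sqcup(-F^{*2})$ while every element, $-1$ included, is a sum of two squares. What actually excludes $-1\in N_{L/F}(L^*)$ is divisibility used once more, as in the paper: $N_{L/F}(L^*)$ is divisible, hence a direct summand of $F^*$; it is proper (otherwise $F^*$ would be divisible, forcing $-1\in F^{*2}$ against $\sqrt{-1}\notin F$), so its complement is a nontrivial subgroup of exponent $2$, necessarily $\{\pm 1\}$, and directness gives $-1\notin N_{L/F}(L^*)$ --- this is exactly the decomposition $F^*=N_{F_{\operatorname{rad}}/F}(F_{\operatorname{rad}}^*)\times C_2$ feeding into Lemma~\ref{lemg1}. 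Finally, a smaller but real error: ``any noncommutative $F$-central division algebra has index dividing $\exp(Br(F))=2$'' invokes a false principle (the exponent divides the index, not conversely; biquaternion algebras have exponent $2$ and index $4$). The correct argument, available to you since you proved $Br(L)=0$, is that $L=F(\sqrt{-1})$ splits every class and the index divides the degree of any finite splitting field, so every class has index at most $2$; the paper cites Draxl for the corresponding embedding statement. With these three repairs (May's lemma up front, the norm-splitting argument for $-1$, and the splitting-field bound on the index) your route can be made to work, but as written it does not stand.
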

\begin{proof}
(1)$\Rightarrow$(2). Let $E$ be a minimal extension of $F$ contained
in $K$ with divisible unit group. Note that since $F^*$ is not
divisible we have $1<[E:F]<\infty$. Let $L$ be a maximal subfield of
$E$ containing $F$. Clearly $L^*$ is indivisible. Now, we claim that
$L^*=N_{E/L}(E^*)\times C_m$ for some $m\neq 1$ dividing $[E:L]$. To
establish our claim, set $[E:L]=n$. Since $N_{E/L}(E^*)$ is
divisible and $L^*$ is indivisible, it follows that
$L^*=N_{E/L}(E^*)\times N$ for some non-trivial subgroup $N$ of
$F^*$. (Note that since $N_{E/L}(E^*)$ is divisible, it is injective
as a $\mathbb{Z}$-module.) On the other hand, we know that $L^{*n}$
is contained in $N_{E/L}(E^*)$. So $N\cong L^*/N_{E/L}(E^*)$ has
exponent $n$ and thus every element of $N$ is a root of the
polynomial $x^n-1$ in $L$. Therefore, $N$ is a finite cyclic
subgroup of $L^*$ of order dividing $n$, as desired. Now, let $p$ be
a prime divisor of $m$ and $P=\langle \alpha\rangle$ be the
$p$-Sylow subgroup of $C_m$. Clearly $\alpha^{1/p}\in E\setminus L$
and hence $L\subsetneqq L(\alpha^{1/p})\subseteq E$. By the
maximality of $ L $ we obtain $E=L(\alpha^{1/p})$. But $\mu_p\subset
L$ as $p$ divides $m$. Thus, by Kummer Theory $E/L$ is a cyclic
extension with $[E:L]=p$. Since $N_{E/L}(E^*)\neq L^*$, from Lemma
\ref{lemg1}, we conclude that $L$ is Euclidean. Hence
$\sqrt{-1}\not\in L$ and so $\sqrt{-1}\not\in F$. If $F(\sqrt{-1})$
is indivisible, we may apply the above argument to obtain
$\sqrt{-1}\not\in F(\sqrt{-1})$, which is a contradiction.
Therefore, $F(\sqrt{-1})$ is divisible and hence $E=F(\sqrt{-1})$.
Now, since $\operatorname{char}(F)=0$, by Lemma 3 of \cite{May72},
$E$ contains all roots of unity. Thus, for every $a\in E$ the
polynomial $x^n-a$ splits in $E[x]$. Therefore, $E$ has no proper
radical extension. This yields $F\subsetneqq
F_{\operatorname{rad}}\subseteq E=F(\sqrt{-1})$ and thus
$F_{\operatorname{rad}}=F(\sqrt{-1})$.

(2)$\Rightarrow$(3). By a similar argument as above, we have
$F^*=N_{F_{\operatorname{rad}}/F}(F_{\operatorname{rad}}^*)\times
C_2$. This clearly implies that $F^{*}=F^{*2}\times C_2$. Now, Lemma
\ref{lemg1} shows that $F$ is Euclidean and thus
$\operatorname{char}(F)=0$. Here we observe that since the ordinary
quaternion algebra $\mathcal{Q}$ over $F$ is a division algebra we
have $Br(F)\neq 0$. At the other extreme, since $F(\sqrt{-1})$ is
divisible of characteristic zero, it contains all roots of unity and
thus by Merkurjev Theorem $F(\sqrt{-1})$ has a trivial Brauer group.
Therefore, each $F$-central simple algebra splits by $F(\sqrt{-1})$.
Now, if $0\neq[D]\in Br(F)$ then by Theorem 7 of \cite[p. 64]{Dra83}
there exists an $F$-central simple algebra $A$ such that $[D]=[A]$
and $F(\sqrt{-1})$ is a maximal subfield of $A$. Thus, $A$ is a
quaternion division algebra and hence $A=\mathcal{Q}$ because the
only quaternion division algebra over a Euclidean field is the
ordinary one. This implies that $Br(F)=\{[F],[\mathcal{Q}]\}\cong
\mathbb{Z}_2$.

(3)$\Rightarrow$(1). Since $Br(F)\cong \mathbb{Z}_2$, we conclude
that ${_2}Br(F)\neq 0$. But, by Merkurejv Theorem ${_2}Br(F)$ is
generated by division algebras of degree 2. This forces that $F^*$
is indivisible, because every division algebra of degree 2 is
cyclic.

Finally, as we have seen above, we have $F^*=F^{*2}\times
\langle-1\rangle$. Now, by a theorem of \cite[p. 107]{Kar88} it
follows that $F^*$ is isomorphic to the unit group of a suitable
real closed field.
\end{proof}
\begin{thm}
Let $F$ be an indivisible field. If $D$ is an $F$-central division
algebra, then the following statements are equivalent:
 \begin{enumerate}
  \item $ D $ contains a divisible subfield $K$ containing $F$;
  \item $ F $ is radically real closed and $ D $ is the ordinary quaternion division
  algebra $\mathcal{Q}$;
  \item $D$ is radicable.
 \end{enumerate}
Furthermore, if one of the above conditions holds, then $F^*$ is
isomorphic to the multiplicative group of a real closed field.
\end{thm}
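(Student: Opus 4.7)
The plan is to prove the cyclic implications $(1) \Rightarrow (2) \Rightarrow (3) \Rightarrow (1)$, with the ``moreover'' clause about $F^{*}$ falling out of Theorem~\ref{thmr4} once $(2)$ is in hand.

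For $(1) \Rightarrow (2)$, since $D$ is finite-dimensional over $F$, the divisible subfield $K$ supplied by $(1)$ is automatically a finite extension of $F$; moreover $K \supsetneq F$, for otherwise $F = K$ would be divisible, contradicting indivisibility of $F$. Theorem~\ref{thmr4} then applies verbatim: $F$ is radically real closed, $Br(F) \cong \mathbb{Z}_2$, and the only noncommutative $F$-central division algebra is the ordinary quaternion algebra $\mathcal{Q}$. Since $D = F$ would force $K \subseteq F$, we must have $D \neq F$, hence $D = \mathcal{Q}$.

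For $(2) \Rightarrow (3)$, the hypothesis together with Theorem~\ref{thmr4} gives that $F$ is Euclidean and $F(\sqrt{-1}) = F_{\operatorname{rad}}$ is radically closed, hence divisible as a multiplicative group. For an arbitrary $q \in \mathcal{Q}^{*}$ and $n \in \mathbb{N}$, we distinguish: if $q \in F^{*}$, then $q \in F(\sqrt{-1})^{*}$ admits an $n$-th root in the divisible group $F(\sqrt{-1})^{*} \subseteq \mathcal{Q}^{*}$; if $q \in \mathcal{Q} \setminus F$, then $F(q)$ is a quadratic extension of $F$, and since $F$ Euclidean gives $F^{*}/F^{*2} = \{\pm 1\}$ so that there is a unique quadratic extension up to isomorphism, we have $F(q) \cong F(\sqrt{-1})$ is divisible, and $q$ has an $n$-th root in $F(q) \subseteq \mathcal{Q}$.

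The main work is in $(3) \Rightarrow (1)$. Radicability of $D$ forces $D^{*}$ to be divisible in the sense of Theorem~\ref{thmg9}, so that theorem gives $\operatorname{SK}(D) = 1$ together with either $F^{*}$ divisible (when $n$ is odd) or $F^{*2}$ divisible (when $n$ is even); indivisibility of $F$ rules out the first alternative, forcing $n$ even and $F^{*2}$ divisible, whence by Lemma~\ref{lemg4} the field $F$ is Euclidean, in particular $\operatorname{char}(F) = 0$ and $F$ is infinite. I will show that any maximal subfield $K$ of $D$ (which exists, properly contains $F$ since $D \neq F$, and satisfies $C_D(K) = K$) is divisible, which yields $(1)$. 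For a primitive element $b \in K^{*}$ (i.e., $F(b) = K$) and any $m \geq 1$, radicability supplies $c \in D^{*}$ with $c^{m} = b$; since $c$ commutes with $b$, maximality yields $c \in C_D(F(b)) = C_D(K) = K$, so primitive elements have $m$-th roots in $K$. The key obstacle is the non-primitive case, handled as follows: since $K/F$ is finite separable, it has only finitely many intermediate subfields, so the set of non-primitive elements of $K$ is a finite union of proper $F$-subspaces of $K$; for fixed $b \in K^{*}$, the set of $d \in K$ with either $d$ or $bd$ non-primitive is likewise a finite union of proper $F$-subspaces (noting that $b^{-1}L$ is an $F$-subspace whenever $L$ is), and such a union cannot exhaust $K$ since $F$ is infinite. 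Choosing $d \in K^{*}$ with both $d$ and $bd$ primitive, the primitive case produces $e, c \in K$ with $e^{m} = d$ and $c^{m} = bd$, and then $b = (bd)/d = (ce^{-1})^{m}$ in the field $K$, completing the proof that $K$ is divisible.

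Finally, the ``moreover'' clause of Theorem~\ref{thmr4}, applied under condition $(2)$, yields that $F^{*}$ is isomorphic to the multiplicative group of a real closed field.
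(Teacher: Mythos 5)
Your proposal is correct, and its hard leg takes a genuinely different route from the paper's. The paper organizes the proof as $(1)\Leftrightarrow(2)$ via Theorem~\ref{thmr4}, then $(2)\Rightarrow(3)$ and $(3)\Rightarrow(2)$; its proof of $(3)\Rightarrow(2)$ invokes Theorem~\ref{thmg10} to decompose $D=\mathcal{Q}\otimes_FE$ with $E$ of odd degree and $F$ Euclidean, and then eliminates $E$ by a reduced norm computation inside the centralizer $C_D(F(\sqrt{-1}))$ combined with Merkurjev's theorem and Lemma 3 of \cite{May72}. You instead close the cycle $(1)\Rightarrow(2)\Rightarrow(3)\Rightarrow(1)$: your $(2)\Rightarrow(3)$ is essentially the paper's argument (uniqueness of the quadratic extension via Kummer theory), but your $(3)\Rightarrow(1)$ replaces the Brauer-group gymnastics with Theorem~\ref{thmg9} --- radicable implies divisible in the paper's sense, so indivisibility of $F$ forces $n$ even and $F^{*2}$ divisible, whence $F$ is Euclidean of characteristic zero by Lemma~\ref{lemg4} (here you should say explicitly that ${_2}Br(F)\neq 0$, which holds because the $2$-primary component of $D$ is a nontrivial division algebra; Lemma~\ref{lemg4}(1) needs this hypothesis) --- followed by a clean elementary argument that any maximal subfield $K$ is divisible: a primitive $b\in K^*$ gets its $m$-th root $c$ for free since $c$ commutes with $c^m=b$ and $C_D(K)=K$, while the non-primitive case is dispatched by noting that $K/F$ is separable with finitely many intermediate fields, $F$ is infinite, so a finite union of proper $F$-subspaces cannot cover $K$, allowing you to choose $d$ with $d$ and $bd$ both primitive and to write $b=(ce^{-1})^m$ in the commutative field $K$. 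What this buys is the complete avoidance of Theorem~\ref{thmg10} and of the Merkurjev/May machinery in that leg: the identification $D=\mathcal{Q}$ and the elimination of any odd-degree factor are delegated to the ``moreover'' clause of Theorem~\ref{thmr4} through your $(1)\Rightarrow(2)$ (where you correctly rule out $D=F$, since otherwise $K=F$ would be divisible); the price is that $(3)\Rightarrow(2)$ is obtained only indirectly, via the detour through $(1)$. The final clause about $F^*$ being the multiplicative group of a real closed field does indeed fall out of Theorem~\ref{thmr4} exactly as you say.
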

\begin{proof}
Clearly (1) and (2) are equivalent by Theorem \ref{thmr4}.

(2)$\Rightarrow$(3). Since $F(\sqrt{-1})^*$ is divisible, for each
$a\in F^*$ and $n\in \mathbb{N}$ there exists a $b\in
F(\sqrt{-1})\subseteq D^*$ such that $b^n=a$. Now, if $\alpha\in
D^*\setminus F^*$, then $F(\alpha)$ is a quadratic subfield of $D$.
But $F(\alpha)$ is separable over $F$ as $\operatorname{char}(F)=0$
(note that every radically real closed field is Euclidean and thus
has characteristic zero). Hence $F(\alpha)/F$ is a cyclic extension.
On the other hand, Kummer Theory asserts that quadratic extensions
of $F$ are in one to one correspondence with the subgroups of
$F^*/F^{*2}\cong\langle-1\rangle$. Therefore, $F(\sqrt{-1})$ is the
only quadratic extension of $F$, up to isomorphism. This forces that
$F(\alpha)\cong F(\sqrt{-1})$ and consequently $F(\alpha)$ is
divisible. So, for each $n\in\mathbb{N}$, there is a $\beta\in
F(\alpha)$ such that $\beta^n=\alpha$. This shows that $D^*$ is
radicable.

(3)$\Rightarrow$(2). By definition $D^*$ is divisible and hence from
Theorem \ref{thmg10} it follows that $D=\mathcal{Q}\otimes_FE$,
where $E$ has odd degree and $F$ is Euclidean. So we must prove that
$\deg(E)=1$. Otherwise, let $s=\deg(E)>1$. First suppose that
$F(\sqrt{-1})$ contains an $s$-th root of unity. So $\mu_p\subset
F(\sqrt{-1})$ for at least one prime dividing $s$ and for such a
prime, $[F(\mu_p):F]$ is equal or less than 3. On the other hand as
in the proof of Theorem \ref{thmr4} we can prove that
$\operatorname{Nrd}_D(D^*)$ is divisible and
$F^*=\operatorname{Nrd}_D(D^*)\times \langle-1\rangle$. Now, since
$\operatorname{Nrd}_D(D^*)$ and $\langle-1\rangle$ are $p$-divisible
we conclude that $F^*$ is $p$-divisible. At this stage, Merkurejv
Theorem yields ${_p}Br(F)=0$ which is a contradiction. Thus, we are
left with the case that $F(\sqrt{-1})$ has no $s$-th root of unity.
Here, we claim that $F(\sqrt{-1})$ is divisible. To prove our claim,
first we note that since $F(\sqrt{-1})^*$ is 2-divisible (see the
proof of Lemma \ref{lemg5}), we must show that $F^*=F^{*p}$ for
every odd prime $p$. Let $u\in F(\sqrt{-1})^*\setminus F^*$ and $p$
be an odd prime number. Put $E=C_D(F(\sqrt{-1}))$. By the
Centralizer Theorem, $E$ is an $F(\sqrt{-1})$-central division
algebra with $\deg(E)=s\neq1$. Since $D^*$ is radicable, there is a
$w\in D^*$ such that $w^{sp}=u$. Because $w\in C_D(u)$ we have $w\in
C_D(F(u))=C_D(F(\sqrt{-1}))=E$. Taking the reduced norm, we obtain
$\operatorname{Nrd}_{E}(w)^{sp}=u^s$. Since $F(\sqrt{-1})$ contains
no $s$-th roots of unity we conclude that
$\operatorname{Nrd}_{E}(w)^p=u$. Thus $u\in F(\sqrt{-1})^{*p}$.
Also, a similar argument as in the proof of (2)$\Rightarrow$(3)
shows that $F^*$ is $p$-divisible. Thus our claim is established.
But, the divisibility of $F(\sqrt{-1})^*$ is in contrast with Lemma
3 of \cite{May72} which asserts that $F(\sqrt{-1})$ contains all
primitive roots of unity. Thus $s=1$ and hence $D=\mathcal{Q}$, as
required.

Finally, for $u$ and $p$ as above there exists $v\in D^*$ such that
$v^p=u$. But $v\in C_D(F(\sqrt{-1}))=F(\sqrt{-1})$ as $F(\sqrt{-1})$
is a maximal subfield. Therefore, $F(\sqrt{-1})$ is radically closed
and hence $F$ is radically real closed.
\end{proof}
\begin{exa}
Consider the ordinary quaternion division algebra over
$\mathbb{Q}_{\operatorname{rad}}$. By the above theorem this
division algebra is radicable. But, it is not algebraically closed,
otherwise by Baer's Theorem $\mathbb{Q}_{\operatorname{rad}}$
would be real closed which is a contradiction.
\end{exa}
\bibliographystyle{plain}
\bibliography{HMMotiee}
\end{document}